\DeclareFontShape{OMX}{cmex}{m}{n}{
	<-7.5> cmex7
	<7.5-8.5> cmex8
	<8.5-9.5> cmex9
	<9.5-> cmex10
}{}
\newtheorem{lem}{Lemma}[section]
\newtheorem{cor}[lem]{Corollary}
\newtheorem{prop}[lem]{Proposition}
\newtheorem{thm}[lem]{Theorem}
\newtheorem{def-prop}[lem]{Definition-Proposition}
\newtheorem{def-lem}[lem]{Definition-Lemma}
\theoremstyle{definition}
\newtheorem{definition}[lem]{Definition}
\newtheorem{example}[lem]{Example}
\newtheorem{convention}[lem]{Convention}
\theoremstyle{definition}
\newtheorem{remark}[lem]{Remark}
\newtheorem{notation}[lem]{Notation}
\newtheorem{observation}[lem]{Observation}
\newenvironment{mat}
{\left(\begin{smallmatrix}}
	{\end{smallmatrix}\right)}
\newcommand{\Hom}{\operatorname{Hom}}
\newcommand{\End}{\operatorname{End}}
\newcommand{\Gal}{\operatorname{Gal}}
\newcommand{\supp}{\operatorname{supp}}
\newcommand{\Ind}{\operatorname{Ind}}
\newcommand{\cInd}{\operatorname{c-Ind}}
\newcommand{\GL}{\operatorname{GL}}
\newcommand{\bQ}{\mathbf{Q}}
\newcommand{\bZ}{\mathbf{Z}}
\newcommand{\bF}{\mathbf{F}}
\newcommand{\Fpbar}{\overline{\mathbf{F}}_p}
\newcommand{\Zpbar}{\overline{\mathbf{Z}}_p}
\newcommand{\Qpbar}{\overline{\mathbf{Q}}_p}
\newcommand{\wt}{\widetilde}
\newcommand{\ol}{\overline}
\newcommand{\cH}{\mathcal{H}}
\newcommand{\cI}{\mathcal{I}}
\newcommand{\cT}{\mathcal{T}}
\newcommand{\Ext}{\operatorname{Ext}}
\newcommand{\Mod}{\operatorname{Mod}}
\newcommand{\sm}{\operatorname{sm}}
\newcommand{\Gtilde}{\widetilde{G}}
\newcommand{\sq}{\operatorname{sq}}
\newcommand{\coker}{\operatorname{coker}}
\newcommand{\Sym}{\operatorname{Sym}}
\newcommand{\Hhat}{\widehat{H}}
\newcommand{\soc}{\operatorname{soc}}
\newcommand\noloc{%
	\nobreak
	\mspace{6mu plus 1mu}
	{:}
	\nonscript\mkern-\thinmuskip
	\mathpunct{}
	\mspace{2mu}
}
\begin{document}
	
	\title{The mod-$p$ representation theory of the metaplectic cover of $\GL_2(\bQ_p)$}
	\author{Robin Witthaus}
	\email{robin.witthaus@stud.uni-due.de}
	\date{\today}

	\begin{abstract}
		Half-integral weight modular forms are naturally viewed as automorphic forms on the so-called metaplectic covering of  $\GL_2(\mathbf{A}_{\mathbf{Q}})$ -- a central extension by the roots of unity $\mu_2$ in $\mathbf{Q}$. For an odd prime number $p$, we give a complete classification of the smooth irreducible genuine mod-$p$ representations of the corresponding covering of $\GL_2(\bQ_p)$ by showing that the functor of taking pro-$p$-Iwahori-invariants and its left adjoint define a bijection onto the set of simple right modules of the pro-$p$ Iwahori Hecke algebra. As an application of our investigation of the irreducible subquotients of the universal module over the spherical Hecke algebra depending on some weight, we prove that being of finite length is equivalent to being finitely generated and admissible. Finally, we explain a relation to locally algebraic irreducible unramified genuine principal series representations.
	\end{abstract}
	
	\maketitle
	
	\tableofcontents
	
	\section{Introduction} 
		The Shimura correspondence \cite{shimura} is a map $\operatorname{Sh}$ from the space of half-integral weight cusp forms to the space of even-integral weight modular forms, satisfying $\operatorname{Sh}\circ T_{p^2} = T_p\circ \operatorname{Sh}$ for each prime $p$, where $T_{p^2}$ is the half-integral weight Hecke operator defined by Shimura and $T_p$ is the usual integral weight Hecke operator, see also \cite{Purkait}. This correspondence has been reformulated in terms of automorphic representations by Gelbart and Piatetski-Shapiro \cite{Gelbart_and_co} as well as, independently and more generally, by Flicker \cite{flicker}: letting $\mathbf{A}_{\bQ}$ denote the ring of rational ad\`{e}les, they consider the \textit{metaplectic cover}
		\begin{equation*}
			1\to \mu_2\to \wt{\GL}_2(\mathbf{A}_{\bQ})\to \GL_2(\mathbf{A}_{\bQ})\to 1,
		\end{equation*}
		which is a central topological extension by the roots of unity $\mu_2=\{\pm 1\}$ in $\bQ$. It is constructed from corresponding local metaplectic covers
		\begin{equation*}
			1\to \mu_2\to \wt{\GL}_2(\bQ_p)\to \GL_2(\bQ_p)\to 1
		\end{equation*}
		as $p$ ranges over all places, see \cite{Gelbart_book}. These local extensions are defined using the degree-two Hilbert symbol, and they split over the maximal compact open subgroup if $2<p<\infty$. The quadratic reciprocity law implies that the global covering splits over $\GL_2(\bQ)$, and Shimura's correspondence is then incarnated as a map
		\[
		\left\{\begin{array}{c}
			\text{Genuine complex}\\
			\text{automorphic representations}\\
			\text{of $\wt{\GL}_2(\mathbf{A}_{\bQ})$}
		\end{array}\right\} \to \left\{\begin{array}{c}
			\text{Complex}\\
			\text{automorphic representations}\\
			\text{of $\GL_2(\mathbf{A}_{\bQ})$}
		\end{array}\right\},
		\]
		where ``genuine'' means that the subgroup $\mu_2$ acts non-trivially. This map is defined by means of local Shimura correspondences relating those smooth irreducible admissible complex representations of $\wt{\GL}_2(\bQ_p)$ which are genuine to those of $\GL_2(\bQ_p)$ (here we restrict to finite places $p$).\\

		Given a prime $\ell$, it is natural to ask whether there exists an $\ell$-adic analogue of the local Shimura correspondence for the group $\wt{\GL}_2(\bQ_p)$, attaching an irreducible admissible unitary $\ell$-adic Banach space representation of $\GL_2(\bQ_p)$ to a genuine one of the covering group. If $\ell \neq p$, then passing to smooth vectors still remembers a large part of the representation (the smooth vectors will be dense). In fact, Vign\'{e}ras \cite{Vigneras_Banach} proved that taking smooth vectors and remembering the commensurability class of the smooth part of an invariant unit ball sets up an equivalence onto a suitable category. Since smooth representations do not see the topology of the coefficient field, one may then try to use the existing complex local correspondence to define an $\ell$-adic one. The case $\ell=p$ is more subtle (and therefore more exciting to study) as there are usually no smooth vectors at all and it is not clear how to possibly formulate a $p$-adic local Shimura correspondence. By passing to the mod-$p$ reduction of an invariant unit ball, any such correspondence should give rise to a semi-simple mod-$p$ local Shimura correspondence (for $p>2$), which seems easier to attack. Hence, instead of directly looking for a $p$-adic correspondence, it is reasonable to first study the mod-$p$ situation. In order to do this, it will be necessary---or at least very useful---to have a good understanding of the smooth irreducible admissible mod-$p$ representations of $\wt{\GL}_2(\bQ_p)$. On such an object, the central subgroup $\mu_2$ either acts trivially, i.e.\ the representation descends to $\GL_2(\bQ_p)$, or it acts via the non-trivial character in which case the representation is called genuine.\\

		The smooth irreducible $\Fpbar$-linear representations of $\GL_2(\bQ_p)$ (admitting a central character) are well-understood by the work of Barthel--Livn\'{e} \cite{Barthel-Livne} and Breuil \cite{Breuil_I}: Letting $Z\cong \bQ_p^{\times}$ denote the center of $\GL_2(\bQ_p)$, their approach is based on the spherical Hecke algebras $\End_{\GL_2(\bQ_p)}(\cInd^{\GL_2(\bQ_p)}_{\GL_2(\bZ_p)Z}(\sigma))$, where $\sigma$ is an irreducible representation of $\GL_2(\bF_p)$ viewed as a representation of $\GL_2(\bZ_p)Z$ via inflation and letting the central element $p$ act via a fixed non-zero scalar. Namely, Barthel and Livn\'{e} show that the Hecke algebra is a polynomial ring $\Fpbar[T]$ in a single Hecke operator $T$, corresponding to the double coset $\GL_2(\bZ_p)Z\begin{mat}1 & 0\\0 & p^{-1}\end{mat}\GL_2(\bZ_p)Z$ in a suitable sense, and they observe that any of the irreducible objects admitting a central character is a quotient of the cokernel of a linear polynomial in $T$, for some $\sigma$. For $\lambda\in \Fpbar^{\times}$, they compute the cokernel $\coker(T-\lambda)$ to be, up to semi-simplification, equal to a principal series, which is generically irreducible and otherwise of length two; they called the remaining irreducible representations, i.e.\ those arising as a quotient of $\coker(T)$ for some $\sigma$, supersingular. It was Breuil who proved the cokernel of $T$ to be in fact irreducible by computing the subspace of vectors fixed by the standard pro-$p$ Iwahori subgroup $I_1$, implying that any smooth irreducible $\Fpbar$-representation of $\GL_2(\bQ_p)$ admitting a central character is admissible -- Berger \cite{Berger_central} later proved the existence of central characters for any smooth irreducible $\Fpbar$-representation of $\GL_2(\bQ_p)$. 
		
		The consideration of $I_1$-invariants was studied more generally by Vign\'{e}ras \cite{Vigneras} who proved that passing to $I_1$-invariants defines a bijection between the set of isomorphism classes of smooth irreducible (admissible) $\Fpbar$-representations of $\GL_2(\bQ_p)$ and the set of isomorphism classes of (finite dimensional) simple right modules of the pro-$p$ Iwahori Hecke algebra $\cH_{\GL_2(\bQ_p)}=\End_{\GL_2(\bQ_p)}(\cInd^{\GL_2(\bQ_p)}_{I_1}(\mathbf{1}))$, where $\mathbf{1}$ denotes the trivial $\Fpbar^{\times}$-valued character of $I_1$. We emphasize that her result relies on Breuil's previously mentioned computation. Ollivier \cite{Ollivier} took this further by establishing that, at least after fixing a central character, the adjoint pair $(-\otimes_{\cH_{\GL_2(\bQ_p)}} \cInd^{\GL_2(\bQ_p)}_{I_1}(\mathbf{1}), (-)^{I_1})$ defines an equivalence between the category of right $\cH_{\GL_2(\bQ_p)}$-modules and the category of smooth $\Fpbar$-representations of $\GL_2(\bQ_p)$ generated by their $I_1$-invariants.\\

	On the other hand, to the best of the author's knowledge, the smooth \textit{genuine} mod-$p$ representation theory of $\wt{\GL}_2(\bQ_p)$ has not yet been investigated. The aim of this work is to fill this gap in the literature and prove a complete classification result for the irreducible objects. It is the point of view of Vign\'{e}ras and Ollivier that we will use to tackle this task. We also obtain a clear picture in terms of the approach of Barthel-Livn\'{e} in the genuine setting. Despite not finding any existing results for the covering of the general linear group, we point out that Peskin \cite{peskin} studied the smooth mod-$p$ representation theory of the metaplectic covering of $\operatorname{SL}_2(\bQ_p)$ by considering spherical Hecke algebras; however, it only allowed her to obtain a classification up the supersingular representations. We are optimistic that, by studying the restriction of the irreducible $\wt{\GL}_2(\bQ_p)$-representations to the covering of $\operatorname{SL}_2(\bQ_p)$, one should be able to recover and complete her classification.\\

	Finally, we note that Breuil \cite{Breuil_II} computed, depending on small weights, the mod-$p$ reduction of locally algebraic unramified principal series, the unitary completions of which were later \cite{berger_breuil} shown to be irreducible admissible $p$-adic Banach space representations of $\GL_2(\bQ_p)$.
	
	 The metaplectic counterpart of this task is interesting for two reasons: 1) it might shed some light on how to ``lift'' a mod-$p$ Shimura correspondence to a $p$-adic one and 2) his computations led Breuil to a formulation of a semi-simple mod-$p$ local Langlands correspondence (LLC) for $\GL_2(\bQ_p)$, which ultimately led to a $p$-adic LLC \cite{cdp}, and so a possible semi-simple mod-$p$ \textit{metaplectic} LLC for $\GL_2(\bQ_p)$ might give some valuable information about a $p$-adic version.
	 
	 We check that Breuil's computations also go through in our setting if the weight is at most $p-1$. While we do not attempt to formulate a mod-$p$ Shimura correspondence, we do establish a relation between the smooth genuine mod-$p$ representation theory of the metaplectic covering group of $\GL_2(\bQ_p)$ and Galois representations in \cite{witthaus_montreal}.

	\subsection{Results} Fix an odd prime number $p$ and let $\Fpbar$ be our field of coefficients.\footnote{In the main text we work over an arbitrary algebraically closed field of characteristic $p$.} Put $G=\GL_2(\bQ_p)$ so that the metaplectic covering takes the form
	\[
	1\to \mu_2\to \wt{G}\to G\to 1.
	\]
	It splits over the maximal compact open subgroup $K=\GL_2(\bZ_p)$ and thus uniquely over the pro-$p$ group $I_1$, so that we can unambiguously identify $\wt{I}_1=I_1\times \mu_2$ as groups. Let $\iota\colon \mu_2\hookrightarrow \bF_p^{\times}\subset \Fpbar^{\times}$ denote the non-trivial character, and let $\Mod^{\sm}_{\Gtilde,\iota}(\Fpbar)$ denote the category of smooth $\Fpbar$-representations of $\Gtilde$, which are genuine, i.e.\ on which the central subgroup $\mu_2$ acts via $\iota$. Let $\cH=\End_{\Gtilde}(\cInd^{\Gtilde}_{\wt{I}_1}(\iota))$ be the genuine version of the pro-$p$ Iwahori Hecke algebra. Denoting by $\Mod_{\cH}$ the category of right $\cH$-modules, we have the adjoint pair $(\cT,\cI)$,
	\[
	\cT(-)=(-)\otimes_{\cH} \cInd^{\Gtilde}_{\wt{I}_1}(\iota)\colon \Mod_{\cH} \rightleftarrows \Mod^{\sm}_{\Gtilde,\iota}(\Fpbar) \noloc (-)^{I_1}=\cI(-).
	\]
	
	\begin{thm}
		The adjoint pair $(\cT,\cI)$ defines a bijection
		\[
		\left\{\begin{array}{c}
			\text{Simple right} \\ \text{$\cH$-modules}\end{array}\right\} \cong \left\{\begin{array}{c}
			\text{Smooth irreducible genuine}\\
			\text{$\Fpbar$-representations of $\wt{G}$}
		\end{array}\right\},
		\]
		where both sides are considered up to isomorphism.
	\end{thm}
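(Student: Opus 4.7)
The plan is to adapt the Barthel–Livné/Breuil/Vignéras strategy for $\GL_2(\bQ_p)$ to the metaplectic setting. For any nonzero smooth genuine $\pi$, the space $\pi^{I_1}$ is nonzero (since $I_1$ is pro-$p$, the base field has characteristic $p$, and $\pi^{I_1}$ coincides with the $\iota$-isotypic part under $\wt I_1 = I_1 \times \mu_2$), and Frobenius reciprocity endows it with a natural right $\cH$-module structure. I aim to prove: (i) if $\pi$ is smooth irreducible genuine then $\pi^{I_1}$ is simple; (ii) non-isomorphic irreducible $\pi$ give non-isomorphic $\pi^{I_1}$; and (iii) every simple right $\cH$-module arises this way. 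Step (iii) is packaged via the adjunction $(\cT,\cI)$: the counit $\cT(\pi^{I_1}) \to \pi$ is surjective because $\pi$ is generated by its $I_1$-invariants, and one shows $\cT(M)$ has a unique irreducible quotient with $I_1$-fixed part equal to $M$ for every simple $M$.

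Step one is the Barthel–Livné style classification. I would first verify that every smooth irreducible genuine $\pi$ admits a central character, adapting Berger's argument to the cover; then $\pi$ is a quotient of $\cInd_{\wt K \wt Z}^{\wt G}(\wt\sigma)$ for some genuine weight $\wt\sigma$. As the introduction indicates, the spherical Hecke algebra $\End_{\wt G}(\cInd_{\wt K \wt Z}^{\wt G}(\wt\sigma))$ must be computed; I expect it to be a polynomial ring $\Fpbar[T]$ in a single Hecke operator attached to a suitable lift of $\begin{mat}1 & 0 \\ 0 & p^{-1}\end{mat}$, so that $\pi$ is a quotient of $\coker(T-\lambda)$ for some $\lambda \in \Fpbar$. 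Each case is then treated separately: for $\lambda \neq 0$ one identifies $\coker(T-\lambda)$ with a genuine unramified principal series and, using the Iwasawa decomposition, computes $\pi^{I_1}$ directly and matches it with the appropriate standard module of $\cH$.

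The hard case is $\lambda = 0$. Here one needs the metaplectic analogue of Breuil's coset-by-coset computation to show that $\coker(T)$ is already irreducible with $I_1$-invariants of the minimal possible dimension. The difficulty is that the nontriviality of the $2$-cocycle defining $\wt G$ enters the combinatorics of $\wt K \wt Z$-translates and of the Iwahori action, so explicit cocycle formulas must be set up carefully and tracked through a somewhat intricate calculation; in particular one must confirm that the normaliser of $\wt I_1$ modulo $\wt I_1$ still acts on $\coker(T)^{I_1}$ by a supersingular character in the expected way. I expect this to be the principal technical obstacle of the proof.

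Finally, a metaplectic Iwahori–Matsumoto presentation of $\cH$ is needed to enumerate its simple right modules. Once each simple module is matched with one of the $\pi^{I_1}$ produced above, and conversely, the bijection assembles via adjunction: simplicity of $\pi^{I_1}$ combined with surjectivity of the counit $\cT(\pi^{I_1}) \twoheadrightarrow \pi$ gives the inverse direction, since the kernel of the counit has no nonzero $I_1$-invariants and is therefore zero; while for simple $M$, the sum of all subrepresentations of $\cT(M)$ having zero $I_1$-invariants is a maximal subrepresentation whose quotient is the desired irreducible. The same book-keeping yields injectivity in (ii), completing the proof.
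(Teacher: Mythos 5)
Your overall architecture (classify simple $\cH$-modules, show $\cI(\pi)$ is simple and $\cT$ recovers $\pi$, assemble via adjunction) is the right frame, but the technical core of your plan rests on a fact that is false in the metaplectic setting. You propose to handle the supersingular case by proving the analogue of Breuil's theorem, namely that $\coker(T)$ on the compactly induced weight is irreducible with minimal $I_1$-invariants. In this paper's situation the relevant spherical Hecke operator $\wt{T}$ is attached to a lift of $\begin{mat}1&0\\0&p^{-2}\end{mat}$ (not $\begin{mat}1&0\\0&p^{-1}\end{mat}$: the centre of $\wt{G}$ is $Z^{\sq}\times\mu_2$, so only even powers of $p$ are available, and no Hecke function supported on odd-valuation diagonal cosets exists), and $\cInd^{\wt{G}}_{\wt{K}Z(\wt{G})}(\tilde{\sigma}_r)/(\wt{T})$ is \emph{not} irreducible: it has length two, being an extension of two distinct supersingular representations, split exactly when $r=\tfrac{p-1}{2}$ (Theorem \ref{thm_JH}). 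So the step you flag as ``the principal technical obstacle'' is not merely hard, it is aimed at proving a wrong statement, and the supersingular representations cannot be obtained as full cokernels; they come in pairs inside each cokernel, mirroring the pair structure of supersingular Hecke modules.

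The paper circumvents this by running the argument in the opposite direction: for a supersingular simple module $M$ it proves directly that $\cT(M)$ is irreducible, using a weight-cycling ``Basic Lemma'' to show $\cT(M)=\pi_M[F^{-1}]$ is already irreducible as a $P$-representation, and then establishes $\cI(\cT(M))\cong M$ via the short exact sequence $0\to M\to \pi_M\oplus\tilde{\Pi}\pi_M\to\cT(M)\to 0$ and an $H^1(I_1/Z_1,M)$ computation; the Jordan--H\"older factors of $\coker(\wt{T})$ are deduced only afterwards. Relatedly, your closing assertion that ``the kernel of the counit has no nonzero $I_1$-invariants and is therefore zero'' is exactly the nontrivial content being assumed: since any nonzero smooth representation has nonzero $I_1$-invariants, this claim is equivalent to injectivity of the counit, and proving it (for principal series via the big-cell filtration, for supersingulars via the cohomological argument above) is where the real work lies. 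If you want to keep the Barthel--Livn\'e entry point, you can still use it for what the paper uses it for---admissibility and the finite-length criterion (Proposition \ref{quotient_of_coker}, Theorem \ref{fg+adm=fl})---but the bijection itself needs the Vign\'eras--Ollivier style analysis of $\cT$ on each class of simple modules, not an irreducibility theorem for $\coker(\wt{T})$.
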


	\noindent This result is a consequence of theorems \ref{thm_ps}, \ref{thm_ss} and \ref{fg+adm=fl}, the last of which says that being of finite length as a smooth $\wt{G}$-representation is equivalent to being finitely generated and admissible, as well as a careful analysis of the objects appearing on the left-hand side. In order to describe the latter, we follow the approach of Vign\'{e}ras in \cite{Vigneras} and decompose the Hecke algebra into a direct sum of smaller algebras whose centers we compute to be either isomorphic to $\Fpbar[X,Y,Z^{\pm 1}]/(XY)$ (regular case) or to $\Fpbar[\mathfrak{Z},Z^{\pm 1}]$ (non-regular case). We show that a simple $\cH$-module is finite dimensional and thus admits a central character. It will be called supersingular if it is killed by $X$ and $Y$ (resp.\ by $\mathfrak{Z}$), otherwise we call the module principal. This leads to the following result, which is a summary of propositions \ref{simple_regular_ss}, \ref{simple_regular_principal} and \ref{simple_non-regular}.
	
	\begin{thm}
		The simple right $\cH$-modules fall into two subclasses:
		\begin{enumerate}
			\item[{\rm (a)}] Principal modules
			\item[{\rm (b)}] Supersingular modules.
		\end{enumerate}
		Each of these subclasses is in itself described explicitly.
	\end{thm}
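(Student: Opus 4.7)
The plan is to follow Vign\'{e}ras's block-decomposition strategy, carefully adapted to the genuine setting. Using the canonical splitting $\wt{I}_1 = I_1\times \mu_2$ and the fact that the finite Iwahori quotient $\wt{I}/\wt{I}_1$ is a finite abelian group of order prime to $p$, I would first introduce the orthogonal central idempotents in $\cH$ attached to its genuine characters $\chi$, obtaining a product decomposition $\cH = \prod_\chi \cH_\chi$. Since every simple right $\cH$-module is supported on a unique block $\cH_\chi$, it suffices to classify the simples of each $\cH_\chi$ separately.

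For a fixed $\chi$, I would give an explicit presentation of $\cH_\chi$ by two generators coming from the two affine simple reflections together with an invertible element recording the action of the center of $\Gtilde$; the braid and quadratic relations are the standard Iwahori--Matsumoto ones, but twisted by the metaplectic $2$-cocycle. The presentation bifurcates into the regular case, where $\chi^s\neq \chi$ under the Weyl involution, and the non-regular case, where $\chi^s = \chi$. A standard symmetrization computation then identifies the center as $\Fpbar[X,Y,Z^{\pm 1}]/(XY)$ (regular) or $\Fpbar[\mathfrak{Z},Z^{\pm 1}]$ (non-regular), and exhibits $\cH_\chi$ as a finitely generated module over its center. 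This last point forces every simple right $\cH_\chi$-module to be finite-dimensional over $\Fpbar$ and hence, by Schur's lemma, to admit a central character $\mathfrak{z}$.

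It remains to analyze the finite-dimensional fibre algebra obtained by specializing the center along each $\mathfrak{z}$. One calls $\mathfrak{z}$ \emph{supersingular} if it kills $X$ and $Y$ (regular case) or $\mathfrak{Z}$ (non-regular case), and \emph{principal} otherwise. In the principal case the specialized algebra is small enough to be diagonalized directly, and one finds that simples are one-dimensional and correspond to characters of a commutative subquotient; in the supersingular case the specialized algebra is a slightly larger but still completely explicit finite-dimensional algebra, yielding a finite list of simples parametrized by the block $\chi$ together with an eigenvalue of the invertible central generator. The main obstacle is the careful bookkeeping of the sign twists introduced by the metaplectic cocycle, both in the relations defining $\cH_\chi$ and in the Weyl-group action used to distinguish the regular and non-regular cases; once these twists are under control, the arguments of Vign\'{e}ras adapt to yield the explicit description claimed in propositions \ref{simple_regular_ss}, \ref{simple_regular_principal} and \ref{simple_non-regular}.
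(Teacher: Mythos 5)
Your overall strategy is the same Vign\'{e}ras-style one the paper follows (block decomposition, computation of centers, finite-dimensionality and central characters, then analysis of the fibres), and your route to finite-dimensionality---module-finiteness over an affine center plus the Nullstellensatz---is a legitimate, slightly more uniform substitute for the paper's ad hoc arguments in Lemmas \ref{simple_fd_reg} and \ref{simple_fd_non-reg}. However, the decomposition you start from is wrong as stated: the idempotents $e_{\chi}$ attached to the individual genuine characters of $\wt{I}/\wt{I}_1$ are \emph{not} central in $\cH$. One has $T_se_{\chi}=e_{\chi^s}T_s$ and, crucially, $T_{\Pi}e_{\chi}=e_{\chi^s[1,0]}T_{\Pi}$ (Lemma \ref{identities_Hecke_operators_II} (v)), so the central idempotents are the sums of the $e_{\psi}$ over an orbit of characters under the \emph{twisted} action of $N_G(T)/(T\cap I)$ (Definition \ref{definition_orbit}); these orbits have four or eight elements, and the resulting decomposition is $\cH=\bigoplus_{O}\cH(O)$, not a product over single characters. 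A simple module is supported on one orbit, not on one character: the principal modules have nonzero $e_{\psi}$-components for several $\psi$ in the orbit, and identifications such as $\operatorname{SS}(\chi,z)\cong \operatorname{SS}(\chi^s[1,0],z)$ (Remark \ref{inter}) are invisible in your block-by-block setup. The invertibility of $T_{\Pi}$ then yields the Morita reductions $\cH(O)\cong M_{4\times 4}(\cH(\chi\oplus\chi[1,0]))$, resp.\ $M_{4\times 4}(\cH(\mathbf{1}))$ (Corollaries \ref{Morita_regular}, \ref{Morita_non-regular}), which is the structural step your plan is missing and which is what makes the explicit presentations tractable.

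Relatedly, the dichotomy governing the shape of the component algebra is not $\chi^s\neq\chi$ versus $\chi^s=\chi$, but \emph{square}-regularity: $(\chi^{-1}\chi^s)^2\neq 1$ versus $(\chi^{-1}\chi^s)^2=1$. Because $\Pi$-conjugation twists characters by $[0,1]$, a character with $\chi^s=\chi[1,1]\neq\chi$ behaves non-regularly: its orbit is the $\Pi$-orbit and, after replacing $\chi$ by $\chi^s[0,1]$ and twisting, one lands in $\cH(\mathbf{1})$ with the quadratic relation $S_{0,0}^2=-S_{0,0}$. Under your condition such characters would be routed to the regular branch, and both the presentation and the resulting list of simples would come out wrong; this is exactly the new metaplectic feature that the hand-waving about ``sign twists'' does not resolve. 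Finally, a smaller slip in your anticipated answer: after specializing the center it is the \emph{supersingular} fibres that split into one-dimensional characters (two of them, Proposition \ref{simple_regular_ss}), while each \emph{principal} fibre carries a unique simple module which is two-dimensional over the component algebra (Propositions \ref{simple_regular_principal} and \ref{simple_non-regular}); your description inverts this, though an honest computation of the fibre algebras would of course correct it.
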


	As a consequences of the two theorems, we obtain an explicit description of the smooth irreducible genuine $\Fpbar$-representations of $\wt{G}$. In particular, there are only two kinds of such objects: principal series and supersingular representations (i.e.\ those not occuring as a subquotient of a principal series). From the point of view of modules over the pro-$p$ Iwahori Hecke algebra, the supersingular objects naturally come in pairs. Moreover, it turns out that a genuine principal series is always irreducible; they are in bijective correspondence with pairs of smooth characters on the subgroup $S$ of squares in $\bQ_p^{\times}$: We have an equality of groups $\wt{B}_2=B_2\times \mu_2$, where $B_2$ is the subgroup of $G$ consisting of those upper-triangular matrices having elements of even valuation on the diagonal, and the correspondence is given by $(\chi_1,\chi_2)\mapsto \Ind^{\wt{G}}_{\wt{B}_2}(\chi_1'\otimes \chi_2' \boxtimes \iota)$ for any extension $\chi_i'$ of $\chi_i$ to the subgroup $p^{2\bZ}\bZ_p^{\times}$.\\
	
	As for the spherical Hecke algebras, having fixed an explicit splitting $\wt{K}\cong K\times \mu_2$, we may form the smooth irreducible genuine $\Fpbar$-representation $\Sym^r(\Fpbar^2)\boxtimes \iota$ of $\wt{K}$ for $0\leq r\leq p-1$.  The center $Z(\Gtilde)$ being equal to the product $Z^{\sq}\times \mu_2$, we may extend it to a smooth genuine $\wt{K}Z(\Gtilde)$-representation by letting the central element $p^2$ act trivially. Inside the spherical Hecke algebra $\End_{\Gtilde}(\cInd^{\Gtilde}_{\wt{K}Z(\wt{G})}(\Sym^r(\Fpbar^2)\boxtimes \iota))$ one then has the Hecke operator $\wt{T}$ corresponding to the double coset represented by a lift of $\begin{mat}
		1 & 0\\0 & p^{-2}
	\end{mat}$, which also exists in characteristic $0$, i.e.\ if $\Fpbar$ is replaced by $\Qpbar$ or $\Zpbar$. We summarize propositions \ref{Hecke_poly}, \ref{prop_lattice} and Theorem \ref{thm_JH} as follows.

\begin{thm}
	Assume that $0\leq r\leq p-1$ and let $a_p\in \ol{\bZ}_p$.
	
	\begin{enumerate}
		\item[{\rm (i)}] The spherical Hecke algebra $\End_{\Gtilde}(\cInd^{\Gtilde}_{\wt{K}Z(\wt{G})}(\Sym^r(\Fpbar^2)\boxtimes \iota))=\Fpbar[\wt{T}]$ is a polynomial ring in the Hecke operator $\wt{T}$.
		
		\item[{\rm (ii)}] The canonical map
		\[
		\cInd^{\wt{G}}_{\wt{K}Z(\wt{G})}(\Sym^r(\ol{\bZ}_p^2)\boxtimes \iota)/(\wt{T}-a_p)\hookrightarrow 	\cInd^{\wt{G}}_{\wt{K}Z(\wt{G})}(\Sym^r(\ol{\bQ}_p^2)\boxtimes \iota)/(\wt{T}-a_p)
		\]
		is injective and defines a $\wt{G}$-stable bounded lattice.
		
		\item[{\rm (iii)}] Let $\bar{a}_p\in \ol{\bF}_p$ denote the reduction of $a_p$ modulo the maximal ideal in $\ol{\bZ}_p$.
		
		\begin{itemize}
			\item If $\bar{a}_p$ is non-zero, then $\cInd^{\wt{G}}_{\wt{K}Z(\wt{G})}(\Sym^r(\ol{\bF}_p^2)\boxtimes \iota)/(\wt{T}-\bar{a}_p)$ is a principal series representation.
			\item If $\bar{a}_p=0$, then $\cInd^{\wt{G}}_{\wt{K}Z(\wt{G})}(\Sym^r(\ol{\bF}_p^2)\boxtimes \iota)/(\wt{T})$ is an extension of two distinct supersingular representations, which is split if and only if $r=\frac{p-1}{2}$.
		\end{itemize}
	\end{enumerate}
\end{thm}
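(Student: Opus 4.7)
The plan is to adapt the Barthel--Livn\'e--Breuil program for $\GL_2(\bQ_p)$ to the metaplectic setting, using the classification of simple $\cH$-modules and the equivalence in the first theorem to translate the analysis in part (iii) into a purely $\cH$-module question.

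For (i), I would first combine the Cartan decomposition of $G$ with the fact that $Z(\wt{G})$ only covers the square scalars (together with the central character of $\sigma$) to show that the spherical Hecke algebra has an $\Fpbar$-basis $(T_n)_{n\geq 0}$ indexed by double cosets of $\wt{K}Z(\wt{G})$ containing a fixed lift $g_n$ of $\mathrm{diag}(1, p^{-2n})$; only even valuations contribute because $p^2 \in Z(\wt{G})$ acts trivially on $\sigma$ whereas $p$ does not lift centrally. The identification $T_n = \wt{T}^n$ is then obtained by induction: enumerating the right $\wt{K}Z(\wt{G})$-cosets inside $\wt{K}Z(\wt{G})g_n\wt{K}Z(\wt{G})$ via Iwahori coordinates and computing $\wt{T} \cdot T_n$ explicitly produces a three-term recursion $\wt{T} T_n = T_{n+1} + c_n T_{n-1}$, so polynomiality follows exactly as in \cite{Barthel-Livne}.

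For (ii), I would work in the explicit $\ol{\bZ}_p$-basis of $\cInd^{\wt{G}}_{\wt{K}Z(\wt{G})}(\Sym^r(\ol{\bZ}_p^2)\boxtimes \iota)$ indexed by cosets at each depth of the Bruhat--Tits tree and compute the action of $\wt{T}$ with respect to it. A triangularity argument (the action of $\wt{T}$ raises depth by one with invertible leading coefficient) simultaneously shows that $\wt{T}-a_p$ is injective on the integral compact induction, that every element of the $\ol{\bQ}_p$-quotient has a canonical finite-support representative modulo $(\wt{T}-a_p)$, and that the image of the integral compact induction is a $\wt{G}$-stable bounded lattice.

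For (iii), set $V_{\bar{a}_p} := \cInd^{\wt{G}}_{\wt{K}Z(\wt{G})}(\Sym^r(\Fpbar^2)\boxtimes\iota)/(\wt{T}-\bar{a}_p)$. A Mackey-type computation using the Iwahori decomposition of $\wt{K}Z(\wt{G})g_n\wt{K}Z(\wt{G})$ identifies $(V_{\bar{a}_p})^{I_1}$ as a two-dimensional right $\cH$-module supported on a specific block (regular or non-regular depending on $r$), and the action of the distinguished central generators $X$, $Y$ (resp.\ $\mathfrak{Z}$) can be read off directly. When $\bar{a}_p \neq 0$, one such generator acts invertibly, so $(V_{\bar{a}_p})^{I_1}$ is a principal $\cH$-module; the first theorem and the functor $\cT$ then produce the claimed principal series. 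When $\bar{a}_p = 0$, these generators vanish on $(V_0)^{I_1}$, forcing both Jordan--H\"older factors to be supersingular, and a dimension count combined with the pairing observation from the second theorem identifies them as a pair of \emph{distinct} supersingular modules. The remaining and most delicate task is to determine whether $(V_0)^{I_1}$ is split as an $\cH$-module and to show this happens exactly for $r=(p-1)/2$. I would attack this by computing the action of the generators on an explicit basis of $(V_0)^{I_1}$, where the condition $r=(p-1)/2$ should appear as the precise locus where an off-diagonal matrix coefficient (a Gauss-sum-like expression in $r$) vanishes; this interaction between the genuine twist $\iota$ and the weight $r$ is the place where the argument will require the most care.
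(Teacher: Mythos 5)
Your overall program (Barthel--Livn\'e--Breuil adapted to $\wt{G}$, with the pro-$p$ Iwahori Hecke module classification doing the work in (iii)) is the same as the paper's, but two of your key mechanisms would fail as stated. In (ii), the claim that ``$\wt{T}$ raises depth by one with invertible leading coefficient'' is false for $r\geq 1$: the value of the Hecke kernel at a lift of $\begin{mat}1&0\\0&p^{-2}\end{mat}$ acts on $\Sym^r(\Zpbar^2)$ by the diagonal matrix with entries $p^{2(r-i)}$, so each leading block of $\wt{T}^+$ is \emph{not} invertible over $\Zpbar$ (it dies mod $p$ except on one weight vector). Consequently your triangularity argument gives injectivity over $\Qpbar$ but not the actual content of (ii), namely saturation: if $f$ is integral and $f=(\wt{T}-a_p)g$ with $g$ rational, then $g$ is integral. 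This is exactly where Breuil's method is needed: one uses all the coset components of $\wt{T}^+(g)$ simultaneously, observing that they are values of expressions $\sum_i a_i[\lambda]^i$ of degree $\leq r\leq p-1$ at all $\lambda\in\bF_p$, and invokes the interpolation lemma (the paper's Lemma \ref{vanishes_for_all_lambda}) to force integrality of the coefficients; the hypothesis $r\leq p-1$ is essential here and does not appear in your argument. A smaller but real gap of the same flavour occurs in (i): double cosets of $\wt{K}Z(\wt{G})$ containing lifts of $\operatorname{diag}(p^n,p^m)$ with $n$ or $m$ odd \emph{do} exist (the center only covers $p^{2\bZ}$); the point is that no nonzero genuine bi-equivariant function is supported on them, which requires the cocycle computation with the finite torus $H$ (the $\lambda^{m\frac{p-1}{2}}\mu^{n\frac{p-1}{2}}$ twist), not merely the remark that $p$ does not lift centrally.

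In (iii) your plan conflates the quotient of Hecke modules $\cI\bigl(\cInd^{\wt{G}}_{\wt{K}Z(\wt{G})}(\tilde{\sigma}_r)\bigr)/(\wt{T}-\bar{a}_p)$ with the $I_1$-invariants of the quotient representation: taking invariants is not right exact, so a ``Mackey-type computation'' does not identify $(V_{\bar a_p})^{I_1}$, and asserting it is two-dimensional is essentially assuming the conclusion. The paper instead computes $\cI(\cInd)$ explicitly as $T_se_{\chi}\cH_{p^2=1}$ (resp.\ $(T_s+1)e_{\mathbf 1}\cH_{p^2=1}$), proves that $\wt{T}$ acts there as the central element $\mathscr{Z}_O$ (Lemma \ref{T=Z}, itself a nontrivial computation), and then compares $\cT$ of the resulting module $M(\chi,\lambda)$ with $V_{\bar a_p}$ by a support-radius argument (Observation \ref{observation}): specific $I_1$-invariant elements such as $[1,x^r]T_{\Pi}T_s$ lie in $B_1$ and hence cannot lie in the image of $\wt{T}$, which is what guarantees that the supersingular constituents survive and that the comparison map is an isomorphism. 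Your sketch has no substitute for this step. Finally, the splitting criterion at $r=\frac{p-1}{2}$ is not a Gauss-sum vanishing to be discovered by explicit matrix coefficients; it is structural on the Hecke side: for $r=\frac{p-1}{2}$ the relevant character lies in the non-square-regular orbit with $\chi\neq\chi^s$ and $M(\chi,0)=\operatorname{SS}(0,1)\oplus\operatorname{SS}(-1,1)$ splits, whereas for $r\in\{0,p-1\}$ ($\chi=\chi^s$) and for square-regular $r$ one gets non-split extensions (Propositions \ref{M(chi,lambda)_regular} and \ref{M(chi,lambda)_non_regular}); without this case division your ``most delicate task'' has no concrete handle.
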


\noindent The pairs of genuine supersingular representations appearing in part (iii) are precisely those alluded to above in terms of the classification of simple $\cH$-modules.

In Corollary \ref{loc_alg_descr}, we also give a description of the locally algebraic representation appearing in part (ii) of the theorem in terms of locally algebraic \textit{irreducible} $\Qpbar$-representations -- in fact, it will generically be a locally algebraic irreducible unramified principal series. This is deduced from the corresponding statement \cite{Breuil_II} for the group $G$ by using Savin's local Shimura correspondence \cite{Savin} formulated in terms of Iwahori Hecke algebras.

While we have not investigated the existence (let alone its mod-$p$ reduction) of a $\wt{G}$-stable bounded lattice in such a locally algebraic representation in case $p\leq r\leq 2(p-1)$, we still expect the metaplectic version of \cite[Proposition 5.3.4.1]{Breuil_II} to be true. We can prove a necessary consequence of such a result, namely the non-vanishing of the Ext-space in part (iii) of the following proposition, see Proposition \ref{extension_PS}. For two smooth characters $\chi_1,\chi_2$ of $A=p^{2\bZ}\bZ_p^{\times}$, define the corresponding normalized principal series by $\tilde{\pi}(\chi_1,\chi_2)=\Ind^{\wt{G}}_{\wt{B}_2}((\chi_1\otimes \chi_2 \omega^{-1})\boxtimes \iota)$ only depending on the restriction of $\chi_1,\chi_2$ to the subgroup $S$ of squares in $\bQ_p^{\times}$, where $\omega \colon \bQ_p^{\times}\twoheadrightarrow \bF_p^{\times}$ is the projection onto the torsion part.

	\begin{prop}
	Let $\chi_1,\chi_2$ and $\chi_1',\chi_2'$ be two pairs of smooth characters $A\to \ol{\bF}_p^{\times}$.
	\begin{enumerate}
		\item[{\rm (i)}] The space
		\[
		\Ext^1_{\wt{G}}(\tilde{\pi}(\chi_1,\chi_2),\tilde{\pi}(\chi_1',\chi_2'))=0
		\]
		vanishes
		unless $(\chi'_1|_S,\chi'_2|_S)$ is equal to $(\chi_1|_S,\chi_2|_S)$ or $(\chi_2|_S,\chi_1|_S)$.
		
		\item[{\rm (ii)}] The map
		\[
		\Ext^1_{T_2}(\chi_1\otimes \chi_2\omega^{-1},\chi_1\otimes \chi_2\omega^{-1})\xrightarrow{\cong} 		\Ext^1_{\wt{G}}(\tilde{\pi}(\chi_1,\chi_2),\tilde{\pi}(\chi_1,\chi_2)),
		\]
		induced by the exact functor $\Ind^{\wt{G}}_{\wt{B}_2}(-\boxtimes \iota)$, is an isomorphism, where $T_2=\begin{mat}
			A & 0\\0 & A
		\end{mat}$.
		
		\item[{\rm (iii)}] If $\chi_1|_S \neq \chi_2|_S$, then
		\[
		\dim_{\ol{\bF}_p}	\Ext^1_{\wt{G}}(\tilde{\pi}(\chi_1,\chi_2),\tilde{\pi}(\chi_2,\chi_1))=1.
		\]
	\end{enumerate}
\end{prop}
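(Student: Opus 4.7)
The plan is to reduce all three statements to Ext-computations on the Borel via Frobenius reciprocity for the exact parabolic induction $\Ind^{\wt{G}}_{\wt{B}_2}$, and then to exploit a Bruhat-type filtration on the restriction of a principal series. Since $\wt{B}_2\backslash \wt{G}$ is compact, smooth parabolic induction is exact and right adjoint to restriction; hence it preserves injectives, and derived Frobenius reciprocity yields
\[
\Ext^n_{\wt{G}}\bigl(\pi,\,\Ind^{\wt{G}}_{\wt{B}_2}\sigma\bigr)\;\cong\;\Ext^n_{\wt{B}_2}\bigl(\pi|_{\wt{B}_2},\,\sigma\bigr)
\]
for every $n\geq 0$. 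Combined with the splitting $\wt{B}_2=B_2\times \mu_2$, this reduces all three claims to Ext-computations between smooth $B_2$-representations.

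The geometric input is a two-step $B_2$-equivariant filtration of $\tilde{\pi}(\chi_1,\chi_2)|_{B_2}$ coming from the Bruhat decomposition, with graded pieces the character $\chi_1\otimes \chi_2\omega^{-1}$ (closed cell) and a representation $M$ assembled from the $B_2$-orbits inside $BwB$---of which there are four, since $B/B_2\cong (\mathbf{Z}/2)^2$---each built from the $w$-twisted characters on $T_2$. Applying $\Hom_{B_2}(-,\chi_1'\otimes \chi_2'\omega^{-1})$ produces a long exact sequence whose terms all descend to torus Ext-groups: the closed-cell contribution is identified with $\Ext^*_{T_2}(\chi,\chi')$ via Hochschild--Serre for $B_2=T_2\ltimes N$, which degenerates because $N\cong \bQ_p$ is divisible and the coefficients are of characteristic $p$, so $H^{>0}(N,\Fpbar)=0$; the open-cell contribution is controlled by torus Hom's and Ext$^1$'s involving the $w$-swapped pair $(\chi_2,\chi_1)$ via adjunction applied to the $B_2$-orbits. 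Part~(i) then follows by direct inspection: a non-zero term in the long exact sequence forces $(\chi_1',\chi_2')|_S$ to coincide with $(\chi_1,\chi_2)|_S$ (closed cell) or $(\chi_2,\chi_1)|_S$ (open cell). Part~(iii) is the analogous analysis under the hypothesis $(\chi_1',\chi_2')=(\chi_2,\chi_1)$ and $\chi_1|_S\neq \chi_2|_S$: the closed-cell term vanishes by~(i) and the open-cell term collapses to a one-dimensional $\Hom_{T_2}$.

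For part~(ii), one verifies that the map induced by $\Ind^{\wt{G}}_{\wt{B}_2}(-\boxtimes \iota)$ factors as $\Ext^1_{T_2}(\chi,\chi)\cong \Ext^1_{B_2}(\chi,\chi)\to \Ext^1_{\wt{G}}(\tilde{\pi},\tilde{\pi})$, with the second arrow coming from the long exact sequence; this arrow is an isomorphism because the open-cell $\Ext^1$-terms either vanish or are killed by the connecting map, by the analysis used in~(i). The main obstacle will be the careful cocycle bookkeeping in the metaplectic cover: a lift of the Weyl element need not intertwine the splittings $\wt{B}_2=B_2\times \mu_2$ and $\wt{wB_2w^{-1}}=wB_2w^{-1}\times \mu_2$, so one must track how the metaplectic cocycle interacts with the $w$-twisted characters, and one must correctly enumerate the $B_2$-cosets inside $BwB$. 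These are technical issues rather than conceptual obstacles, but they pin down the precise form of $M$ and hence the numerology underlying parts~(ii) and~(iii).
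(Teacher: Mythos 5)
Your skeleton (Shapiro's lemma for $\Ind^{\wt{G}}_{\wt{B}_2}$, the two-step filtration of the principal series restricted to the Borel with closed-cell character and big-cell piece, Hochschild--Serre for $B_2=T_2\ltimes U$ with $H^{>0}(U,k)=0$) matches the paper's, but the proposal has a genuine gap at exactly the step that carries all the content: the computation of $\Ext^1_{B_2}$ of the big-cell representation against a character. The big cell is $B$-equivariantly $\mathscr{C}^{\infty}_c(\bQ_p,k)\otimes\chi^s$, i.e.\ a \emph{compact} induction from the torus $T$, which is a closed but not open subgroup of $B$; so there is no adjunction converting $\Ext^i_{B_2}(\text{big cell},\chi')$ into torus Ext-groups, and your phrase ``controlled by torus Hom's and Ext$^1$'s \ldots via adjunction applied to the $B_2$-orbits'' does not produce the needed identification, let alone the exact one-dimensionality in (iii) or the injectivity needed in (ii). The paper does not compute these groups directly either: it proves (Lemma \ref{Ext_bc_G}) that, when $\chi'\neq\chi$, $\Ext^1_B(\pi(\chi)^{\operatorname{bc}},\chi')\cong\Ext^1_G(\pi(\chi),\pi(\chi'))$, and that the connecting map $\Ext^1_B(\pi(\chi)^{\operatorname{bc}},\chi)\to\Ext^2_B(\chi,\chi)$ is injective, by importing the known $\GL_2(\bQ_p)$-results of \cite{ord_parts_II} (Proposition \ref{extension_PS_G}) through the short exact sequence $0\to\pi(\chi)^{\operatorname{bc}}\to\pi(\chi)\to\chi\to 0$; the genuine case is then reduced to the $G$-case via the isomorphism $\tilde{\pi}(\chi)^{\operatorname{bc}}\cong\Ind^{\wt{B}}_{\wt{B}_2}(\pi(\chi)^{\operatorname{bc}}|_{B_2}\boxtimes\iota)$ and Mackey theory, which turns the big-cell Ext into a sum of sixteen groups $\Ext^1_B(\pi(\chi)^{\operatorname{bc}},\chi'[i,j](a,b))$. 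Without either this reduction to the known $G$-computation or a substitute (e.g.\ Emerton's ordinary-parts machinery to handle Ext's out of $\cInd^B_T$), parts (ii) and (iii) are not established.

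A second, smaller omission concerns (ii): after the Mackey decomposition one must arrange that among the sixteen twisted characters $\chi[i,j](a,b)$ only the untwisted one can contribute a nonzero $\Ext^1_B(\pi(\chi)^{\operatorname{bc}},-)$, and this requires choosing the extension of $\chi|_{T^{\sq}}$ to $T$ so that the relation $\chi=\chi^s\otimes(\omega\otimes\omega^{-1})$, if it holds on $T^{\sq}$, already holds on $T$ (the paper's Lemma \ref{good_extension}); the paper's remark following the proposition explains that the naive compatibility one might hope for fails, so this choice is not mere bookkeeping. Your closing paragraph acknowledges ``cocycle bookkeeping'' in general terms but does not identify this point, and the count in (ii) would not come out correctly without it.
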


\noindent This proposition is proved by comparing it to the situation for the group $G$, where the statement is known, see for example \cite{ord_parts_II}.\\

\noindent \textbf{Acknowledgements.} This work is part of the author's PhD thesis and he would like to heartily thank his advisor Vytautas Pa\v{s}k\={u}nas for suggesting the topic at hand and for fruitful discussions during the past years. Special thanks go
to Florian Herzig for his lectures \cite{herzig_notes} at the Spring School on the mod-$p$ Langlands Correspondence,
which took place in Essen (online) in April 2021; Section \ref{basic_lemma_section} is more or less an excerpt of his material. This work was partially funded by the DFG
Graduiertenkolleg 2553.

\subsection{Conventions and notations}\label{conv_notations} Fix an odd prime number $p$ and an algebraically closed field $k$ of characteristic $p$. Denote by $\bQ_p$ the completion of the rationals $\bQ$ with respect to the $p$-adic valuation $v\colon \bQ_p\to \bZ\cup \{\infty\}$. We use the standard notations
\[
G=\GL_2(\bQ_p) \supset B=\begin{pmatrix}
	\bQ_p^{\times} & \bQ_p\\0& \bQ_p^{\times}
\end{pmatrix}\supset T=\begin{pmatrix}
	\bQ_p^{\times} & 0\\0 & \bQ_p^{\times}
\end{pmatrix}\supset Z,
\]
where $Z\cong \bQ_p^{\times}$ denotes the center of $G$, and we let $K=\GL_2(\bZ_p)$ be the standard maximal compact open subgroup of $G$.
It will be convenient to also introduce the following subgroups of $\bQ_p^{\times}$:
\[
\mu_2=\{\pm 1\}, \hspace{0.1cm} A=p^{2\bZ}\bZ_p^{\times} \supset S=\{x^2 : x\in \bQ_p^{\times}\}.
\]

\noindent For a topological group $\mathscr{G}$, we denote by $\Mod^{\sm}_{\mathscr{G}}(k)$ the category of $k$-linear smooth representations of $\mathscr{G}$. Here, smooth means that the stabilizer of each vector is open.

\smallskip
If $\mathscr{H}\subset \mathscr{G}$ is a closed subgroup and $\pi\in \Mod^{\sm}_{\mathscr{H}}(k)$, we denote the smooth induced representation by $\Ind^{\mathscr{G}}_{\mathscr{H}}(\pi)$. It is right adjoint to the restriction functor $(-)|_{\mathscr{H}}$, which we will refer to as Frobenius reciprocity. 

If $\mathscr{H}$ is an open subgroup, we denote the  compact induction of $\pi$ by $\cInd^{\mathscr{G}}_{\mathscr{H}}(\pi)$. It is the subrepresentation of $\Ind^{\mathscr{G}}_{\mathscr{H}}(\pi)$ consisting of those functions having finite support modulo $\mathscr{H}$, and defines a left adjoint to the restriction functor $(-)|_{\mathscr{H}}$, which we will also refer to as Frobenius reciprocity.

For $\sigma_1,\sigma_2\in \Mod^{\sm}_{\mathscr{H}}(k)$, we will identify $\Hom_{\mathscr{G}}(\cInd^{\mathscr{G}}_{\mathscr{H}}(\sigma_1),\cInd^{\mathscr{G}}_{\mathscr{H}}(\sigma_2))$ with the $k$-vector space
\[
\left\{\varphi\colon \mathscr{G}\to \Hom_{k}(\sigma_1,\sigma_2) : \begin{array}{l}\diamond \hspace{0.1cm} \varphi(h_2gh_1)=\sigma_2(h_2)\varphi(g)\sigma_1(h_1), \forall h_1,h_2\in \mathscr{H}, g\in \mathscr{G}\\
	\diamond \hspace{0.1cm} |\mathscr{H}\setminus \supp(\varphi)/\mathscr{H}|<\infty\end{array}\right\}
\]
with left $\End_{\mathscr{G}}(\cInd^{\mathscr{G}}_{\mathscr{H}}(\sigma_2))$-, resp.\ right $\End_{\mathscr{G}}(\cInd^{\mathscr{G}}_{\mathscr{H}}(\sigma_1))$-module structure given by the convolution product
\[
(\varphi_1 \varphi_2)(g)=\sum_{x\in \mathscr{H}\setminus \mathscr{G}} \varphi_1(gx^{-1})\varphi_2(x), \text{ for all $g\in \mathscr{G}$.}
\]

For a subgroup $\mathscr{H}\subset \mathscr{G}$, $\pi\in \Mod^{\sm}_{\mathscr{H}}(k)$ and $g\in \mathscr{G}$, we denote by $\pi^{g}=\pi(g(-)g^{-1})\in \Mod^{\sm}_{g^{-1}\mathscr{H}g}(k)$ the conjugated representation.

\smallskip

If $\mathscr{G}$ is abelian, we write $\mathscr{G}^{\sq}=\{g^2 : g\in \mathscr{G}\}$ for the subgroup of squares.

\smallskip

For a ring $R$, we denote by $\Mod_R$ the category of right $R$-modules and by $M_{n\times n}(R)$ the ring of $n\times n$-matrices with entries in $R$, for $n\in \bZ_{\geq 1}$.

\section{The metaplectic covering group}

The metaplectic covering group is defined by an explicit cocycle relying on the Hilbert symbol, which we briefly remind the reader of following \cite[V. §3]{Neukirch_AZ}.

\subsection{Hilbert symbol}

Let $L=\bQ_{p}(\sqrt{\smash[b]{\bQ_p^{\times}}})=\bQ_{p^2}(\sqrt{\smash[b]{p}})$ be the unique abelian extension of $\bQ_p$ with norm group $S$.  By local class field theory and Kummer theory, respectively, we have isomorphisms $\Gal(L/\bQ_p)\cong \bQ_p^{\times}/S$ and $\Hom(\Gal(L/\bQ_p),\mu_2)\cong \bQ_p^{\times}/S$. In particular, the evaluation map $\Gal(L/\bQ_p)\times \Hom(\Gal(L/\bQ_p),\mu_2)\to \mu_2$
is identified with a non-degenerate bilinear map
\[
(-,-)\colon \bQ_p^{\times}/S\times \bQ_p^{\times}/S\to \mu_2,
\]
which is called the \textit{Hilbert symbol}. We will usually view it as a bilinear map on $\bQ_p^{\times}\times \bQ_p^{\times}$ via inflation.

Denote by $\omega\colon \bQ_p^{\times}\twoheadrightarrow [\bF_p^{\times}]\cong \bF_p^{\times}$ the projection onto the torsion part, where $[-]$ denotes the Teichmüller lift. By \cite[V. Satz (3.4)]{Neukirch_AZ}, we then have
\begin{equation}\label{HS}
	(a,b)=\omega\left((-1)^{v(a)v(b)} \frac{b^{v(a)}}{a^{v(b)}}\right)^{\frac{p-1}{2}} \text{ for all $a,b\in \bQ_p^{\times}$.}
\end{equation}
From this one deduces the following properties of the Hilbert symbol.

\begin{lem}\label{Hilbert_symbol_identities}
	Let $a,a',b,b'\in \bQ_p^{\times}$. Then
	\begin{enumerate}
		\item[{\rm (i)}] $(aa',b)=(a,b)(a',b)$;
		\item[{\rm (ii)}] $(a,bb')=(a,b)(a,b')$;
		\item[{\rm (iii)}] $(a,b)^{-1}=(b,a)$;
		\item[{\rm (iv)}] $(a,1-a)=1$ and $(a,-a)=1$;
		\item[{\rm (v)}] $(a,x)=1$ for all $x\in \bQ_p^{\times}$ if and only if $a\in S$;
		\item[{\rm (vi)}] $(a,b)=(b,a)$;
		\item[{\rm (vii)}] If $a,b\in A$, then $(a,b)=1$.
	\end{enumerate}
\end{lem}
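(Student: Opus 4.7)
The plan is to deduce each of the seven properties directly from the explicit formula \eqref{HS}, treating the easy bilinearity and symmetry statements first, and then handling the Steinberg relation and non-degeneracy by slightly more work. Throughout I will use that $\omega$ is a group homomorphism $\bQ_p^{\times}\to \bF_p^{\times}$ and that $\mu_2$ is annihilated by $2$.

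\textbf{Easy properties (i), (ii), (iii), (vi), (vii).} For (i), expanding the formula and using additivity of $v$, together with $b^{v(aa')}=b^{v(a)}b^{v(a')}$ and $(aa')^{v(b)}=a^{v(b)}(a')^{v(b)}$, immediately splits the argument of $\omega$ into a product and gives $(aa',b)=(a,b)(a',b)$. Property (ii) follows either by the same argument or from (i) and (vi). For (iii), comparing $(b,a)$ and $(a,b)$ shows that the arguments of $\omega$ are inverses of each other (the sign $(-1)^{v(a)v(b)}$ is symmetric in $a,b$ and squares to $1$), giving $(b,a)=(a,b)^{-1}$. Since $(a,b)\in\mu_2$ we have $(a,b)^{-1}=(a,b)$, which is exactly (vi). For (vii), if $a,b\in A$, then $v(a),v(b)$ are even, so $(-1)^{v(a)v(b)}=1$ and $b^{v(a)}/a^{v(b)}$ is a square in $\bQ_p^{\times}$; its image under $\omega$ is a square in $\bF_p^{\times}$, and squares in $\bF_p^{\times}$ are killed by raising to the $(p-1)/2$-th power.

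\textbf{The Steinberg-type relation (iv).} The identity $(a,-a)=1$ is a one-line check: the argument of $\omega$ becomes $(-1)^{v(a)(v(a)+1)}=1$, since $v(a)(v(a)+1)$ is the product of consecutive integers. For $(a,1-a)=1$ I plan a short case analysis on $v(a)$. If $v(a)\ge 0$ then $v(1-a)\ge 0$, and either $v(a)=0$ or $v(1-a)=0$ makes the argument of $\omega$ a $1$-unit, whose $\omega$-image is $1$. If $v(a)=-n<0$, I rewrite $1-a=-a(1-a^{-1})$ to get $v(1-a)=v(a)=-n$ and substitute into \eqref{HS}: the sign $(-1)^{n^2}$ combines with $(a/(1-a))^n=(-1/(1-a^{-1}))^n$ so that the remaining factor is a power of $1-a^{-1}\in 1+p\bZ_p$, whose $\omega$-image is again $1$. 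This is the step I expect to be slightly fiddly, but it is elementary.

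\textbf{Non-degeneracy (v).} One direction is immediate from (i): if $a=c^2\in S$, then $(a,x)=(c,x)^2=1$ for all $x$. For the converse I will check this on the four cosets of $S$ in $\bQ_p^{\times}$. Since $p$ is odd and $1+p\bZ_p$ is $2$-divisible, $\bQ_p^{\times}/S$ has order $4$ with representatives $1,u,p,up$, where $u\in\bZ_p^{\times}$ reduces to a non-square mod $p$. It then suffices to compute that $(u,p)$ and $(p,u)$ are equal to $\omega(u)^{\pm(p-1)/2}=-1$ (since $\omega(u)$ is a non-square in $\bF_p^{\times}$), so that each non-trivial coset has a non-trivial pairing against some element; alternatively, this is the content of the non-degeneracy statement recalled from class field theory just before formula \eqref{HS}, which can simply be cited.
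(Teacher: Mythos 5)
Your proof is correct, but it takes a genuinely different route from the paper for parts (i)--(v): the paper simply cites \cite[V. Satz (3.2)]{Neukirch_AZ} for these and only verifies (vi) and (vii) directly from the explicit formula (\ref{HS}), whereas you derive everything---bilinearity, antisymmetry, the Steinberg-type relations $(a,1-a)=(a,-a)=1$, and (most of) non-degeneracy---by hand from (\ref{HS}). Your computations check out: bilinearity and (iii), (vi), (vii) are immediate from additivity of $v$ and multiplicativity of $\omega$ together with $\mu_2$ being $2$-torsion; for $(a,1-a)=1$ the case split on $v(a)$ works as you describe, the key point being that whenever a nonzero exponent occurs the corresponding base ($1-a$, $a$, or $1-a^{-1}$) is a $1$-unit, killed by $\omega$. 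What your approach buys is self-containedness (no appeal to Neukirch beyond the formula itself); what the paper's citation buys is brevity and no case analysis.

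One small caveat in your treatment of (v): knowing only $(u,p)=(p,u)=-1$ does not by itself force the coset of $up$ to pair non-trivially with something---if one imagined $(u,u)=(p,p)=-1$, then $up$ would pair trivially with both $u$ and $p$. You need the (immediate) extra computation $(u,u)=1$ from (\ref{HS}), after which $(up,u)=(p,u)=-1$ settles that coset. Alternatively, your fallback of citing the non-degeneracy of the pairing on $\bQ_p^{\times}/S$ built into the definition preceding (\ref{HS}) is perfectly adequate and is in effect what the paper relies on.
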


\begin{proof}
	Parts (i)-(v) are contained in \cite[V. Satz (3.2)]{Neukirch_AZ}, while (vi) and (vii) are immediate from (\ref{HS}) noting that an element in $\mu_2$ is self-inverse.
\end{proof}

\subsection{Definition and first properties}

The \emph{metaplectic covering of $G$} is the central topological extension
\begin{equation}\label{metaplectic_covering}
	1\to \mu_2\to \Gtilde \to G\to 1
\end{equation}
defined by the two-cocycle $\sigma\colon G\times G\to \mu_2$,
\begin{equation}\label{cocycle}
	\sigma(g_1,g_2)=\left(\frac{\mathfrak{c}(g_1g_2)}{\mathfrak{c}(g_1)},\frac{\mathfrak{c}(g_1g_2)}{\mathfrak{c}(g_2)} \det(g_1)\right) \text{ for all $(g_1,g_2)\in G\times G$},
\end{equation}
where $\mathfrak{c}\colon G\to \bQ_p^{\times}$ is the function $\mathfrak{c}\left(\begin{mat}
	a & b\\ c & d
\end{mat}\right)=\begin{cases}
	c \text{ if $c\neq 0$}\\
	d \text{ if $c=0$,}
\end{cases}$ cf.\ \cite[p.\ 41]{Kazhdan_Patterson}.

	In other words, $\Gtilde = G\times \mu_2$ and the product of two elements $(g_i,\zeta_i)$, $i=1,2$, is equal to $(g_1,\zeta_1)(g_2,\zeta_2)=(g_1g_2,\zeta_1\zeta_2 \sigma(g_1,g_2))$.

\begin{notation} Given a subset $D\subset G$, we denote by $\wt{D}$ its preimage in $\wt{G}$ under the projection map $\wt{G}\to G$. For $g\in G$, we put $\tilde{g}=(g,1)$.
\end{notation}

\begin{example}\label{law_torus}
	The multiplication law on the covering group $\wt{T}$ of the torus $T$ is given by
	\[
	\left(\begin{mat}
		x_1 & 0\\0 & y_1
	\end{mat},1\right)	\left(\begin{mat}
		x_2 & 0\\0 & y_2
	\end{mat},1\right)=	\left(\begin{mat}
		x_1x_2 & 0\\0 & y_1y_2
	\end{mat},(y_2,x_1)\right),
	\]
	for all $x_i,y_i\in \bQ_p^{\times}$, $i=1,2$. In particular, it is not abelian, cf.\ Lemma \ref{centers} (ii) below. We also note that for all $x,y\in \bQ_p^{\times}$,
	\[
	\left(\begin{mat}
		x & 0\\0 & y
	\end{mat},1\right)^{-1}=\left(\begin{mat}
		x^{-1} & 0\\0 & y^{-1}
	\end{mat}, (y,x)\right) \text{ and } \left(\begin{mat}
		0 & x\\ y & 0
	\end{mat},1\right)^{-1}=\left(\begin{mat}
		0 & y^{-1}\\x^{-1} & 0
	\end{mat},1\right).
	\]
\end{example}

\begin{lem}\label{split_subgroups}
	The metaplectic covering (\ref{metaplectic_covering}) splits over the subgroups $K=\GL_2(\bZ_p)$, $B_2=\begin{mat}
		A & \bQ_p\\0 & A
	\end{mat}$ and $B_S=\begin{mat}
		\bQ_p^{\times} & \bQ_p\\0 & S
	\end{mat}$. More precisely:
	\begin{enumerate}
		\item[{\rm (i)}] The map $K\to \mu_2$ given by 
		\[
		g=\begin{pmatrix}
			a & b\\c & d
		\end{pmatrix}\mapsto \begin{cases}(c, d {\det(g)}^{-1}) \text{ if } c\in p\bZ_p\setminus \{0\}\\ 1 \text{ otherwise}\end{cases}
		\]
		defines a splitting $K\times \mu_2\cong \wt{K}$.
		\item[{\rm (ii)}] We have equalities of groups $\wt{B}_2=B_2 \times \mu_2$, $\wt{B}_S=B_S\times \mu_2$.
	\end{enumerate}
\end{lem}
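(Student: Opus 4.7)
The plan is to treat the two parts separately, starting with (ii), which is a direct calculation, and then addressing (i), which requires a more delicate case analysis.

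For part (ii), the strategy is to compute the cocycle explicitly on the relevant subgroups. For $g_i = \begin{pmatrix} a_i & b_i \\ 0 & d_i \end{pmatrix}$ we have $\mathfrak{c}(g_i) = d_i$ and $\mathfrak{c}(g_1 g_2) = d_1 d_2$, so formula (\ref{cocycle}) yields
\[
\sigma(g_1, g_2) = (d_2,\, a_1 d_1^2).
\]
By bilinearity and the fact that $(-, d_1^2) = 1$ (Lemma \ref{Hilbert_symbol_identities}(v)), this collapses to $(d_2, a_1)$. For $g_1, g_2 \in B_2$ both $a_1$ and $d_2$ lie in $A$, so the Hilbert symbol vanishes by Lemma \ref{Hilbert_symbol_identities}(vii); for $g_1, g_2 \in B_S$ we have $d_2 \in S$, so it vanishes again by (v). In both cases the cocycle is trivial on the subgroup, giving the claimed direct product decomposition.

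For part (i), the task is equivalent to verifying the cocycle identity
\[
s(k_1 k_2) = s(k_1)\, s(k_2)\, \sigma(k_1, k_2) \quad \text{for all } k_1, k_2 \in K,
\]
since then $k \mapsto (k, s(k))$ is a group homomorphism $K \to \wt G$ splitting the projection. The plan is to carry out a case analysis according to which of the three strata $\{0\}$, $\bZ_p^\times$, or $p\bZ_p \setminus \{0\}$ the bottom-left entries $c_1$ of $k_1$, $c_2$ of $k_2$, and $c_3$ of $k_1 k_2$ lie in. In each case, the definition of $\mathfrak{c}$ makes both sides of the identity into explicit Hilbert symbols in the entries of $k_1, k_2$. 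Two observations significantly streamline the analysis: first, when all three $c_i$ are units in $\bZ_p^\times$, formula (\ref{HS}) forces $\sigma(k_1, k_2) = 1$ since only valuations contribute, and $s(k_1) = s(k_2) = s(k_1 k_2) = 1$ by definition, so the identity holds automatically; second, the constraints $\det(k_i) \in \bZ_p^\times$ and $c_3 = c_1 a_2 + d_1 c_2$ restrict which combinations of strata can actually occur.

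The main obstacle is the bookkeeping in the remaining cases, where at least one of the $c_i$ lies in $p\bZ_p \setminus \{0\}$; here one must verify that the Hilbert symbols produced by $s(k_1), s(k_2), s(k_1 k_2)$ combine with $\sigma(k_1, k_2)$ to cancel to $1$. Each individual check is a short manipulation using bilinearity together with parts (iii)-(vii) of Lemma \ref{Hilbert_symbol_identities} and formula (\ref{HS}), but organizing them all coherently requires care. No single case is deep, and I expect the total argument to be routine provided the case distinction is set up cleanly.
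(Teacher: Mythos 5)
Your part (ii) is correct and is exactly the paper's argument with the details written out: the paper likewise proves (ii) by observing that $\sigma$ restricted to $B_2$ and $B_S$ is trivial, which is your computation $\sigma(g_1,g_2)=(d_2,a_1d_1^2)=(d_2,a_1)$ followed by parts (v) and (vii) of Lemma \ref{Hilbert_symbol_identities}.

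For part (i), however, what you have written is a plan rather than a proof, and the part you leave out is precisely where the content of the statement lies. The reduction to the identity $s(k_1k_2)=s(k_1)\,s(k_2)\,\sigma(k_1,k_2)$ is correct, and so is the observation that the case where all three lower-left entries are units is trivial (for odd $p$ two units pair trivially under the Hilbert symbol). But the remaining cases --- some $c_i$ equal to $0$ or lying in $p\bZ_p\setminus\{0\}$ --- are exactly the Kubota/Kazhdan--Patterson verification, and you do not carry them out; declaring them ``routine'' does not verify them. They contain genuinely delicate configurations, for instance $c_1,c_2\neq 0$ with $c_3=c_1a_2+d_1c_2$ equal to $0$ or of strictly larger valuation, where one needs $(a,-a)=1$, $(a,1-a)=1$ and, crucially, the precise normalization $d\det(g)^{-1}$ in the splitting function for the symbols to cancel; the cancellation is not automatic and would fail for a naively modified $s$. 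The paper sidesteps all of this by citing Kazhdan--Patterson, p.\ 43, where this computation is done. So either carry out the full case analysis (it does succeed, but it consists of several nontrivial Hilbert-symbol manipulations, not mere bookkeeping) or quote the reference as the paper does; as it stands, part (i) is not established by your proposal.
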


\begin{proof}
	Part (i) is contained in \cite[p.\ 43]{Kazhdan_Patterson}, while part (ii) follows from the cocycle $\sigma$ being trivial when restricted to the subgroups $B_2$ and $B_S$, respectively, which is a consequence of Lemma \ref{Hilbert_symbol_identities}.
\end{proof}

\begin{convention} From now on we will fix the splitting on $K$ defined in part (i) of the lemma and view subgroups of $K$ as subgroups of $\wt{K}\subset \wt{G}$.
\end{convention}

	\begin{lem}\label{centers}
	We have the following equalities of groups describing the centers of $\wt{G}$ and $\wt{T}$, respectively:
	\begin{enumerate}	
		\item[{\rm (i)}] $Z(\wt{G}) = Z^{\sq}\times \mu_2$;
		\item[{\rm (ii)}] $Z(\wt{T})= T^{\sq}\times \mu_2$.
	\end{enumerate}
\end{lem}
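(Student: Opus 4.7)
The plan is to characterize centrality by translating the group law into a cocycle-symmetry condition. Since $\mu_2$ is central in both $\wt{G}$ and $\wt{T}$ by construction of the extension, it suffices to decide for which $g$ the element $(g,1)$ is central. Multiplying out $(g,1)(h,\eta)$ and $(h,\eta)(g,1)$ via the cocycle $\sigma$, centrality of $(g,1)$ in $\wt{G}$ (resp.\ $\wt{T}$) amounts to $gh=hg$ together with $\sigma(g,h)=\sigma(h,g)$ for every $h\in G$ (resp.\ $T$). In particular, the projection of $Z(\wt{G})$ to $G$ is contained in $Z(G)=Z$.

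For (ii), I would feed Example \ref{law_torus} into this criterion. For $g=\mathrm{diag}(x_1,y_1)$ and $h=\mathrm{diag}(x_2,y_2)$, one reads off $\sigma(g,h)=(y_2,x_1)$ and $\sigma(h,g)=(y_1,x_2)$, so centrality of $(g,1)$ in $\wt{T}$ is equivalent to $(y_2,x_1)=(y_1,x_2)$ for all $x_2,y_2\in\bQ_p^{\times}$. Specializing $x_2=1$ and using that $(y_1,1)=1$ (bilinearity, or part (v) of Lemma \ref{Hilbert_symbol_identities}), this forces $(y_2,x_1)=1$ for all $y_2$, whence $x_1\in S$ by part (v); symmetrically, $y_1\in S$. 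Conversely, if $x_1,y_1\in S$, both symbols vanish by (v), proving $Z(\wt{T})=T^{\sq}\times\mu_2$.

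For (i), the inclusion $Z(\wt{G})\subseteq Z^{\sq}\times\mu_2$ is now formal: any central element of $\wt{G}$ is automatically central in the subgroup $\wt{T}$, and its projection lies in $Z$, so it sits in $Z(\wt{T})\cap\wt{Z}=(T^{\sq}\times\mu_2)\cap(Z\times\mu_2)=Z^{\sq}\times\mu_2$ by (ii). For the reverse inclusion I would verify directly that $(zI,1)$ is central in $\wt{G}$ for every $z\in S$. A check with the definition of $\mathfrak{c}$ (splitting on whether the bottom-left entry of $h$ is zero, both cases giving the same answer) shows $\mathfrak{c}(zI\cdot h)=z\,\mathfrak{c}(h)$, hence from (\ref{cocycle})
\[
\sigma(zI,h)=\bigl(\mathfrak{c}(h),\,z\det(zI)\bigr),\qquad \sigma(h,zI)=\bigl(z,\,\mathfrak{c}(h)\det(h)\bigr).
\]
Since $z\in S$, parts (i)--(v) of Lemma \ref{Hilbert_symbol_identities} (especially the vanishing criterion (v)) collapse both expressions to $1$, so the cocycle is symmetric and $(zI,1)$ is central.

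The steps are almost entirely bookkeeping; the only mild subtlety is the case distinction in the definition of $\mathfrak{c}$, which is resolved at once by noting that $\mathfrak{c}(zI\cdot h)/\mathfrak{c}(zI)=\mathfrak{c}(h)$ and $\mathfrak{c}(zI\cdot h)/\mathfrak{c}(h)=z$ irrespective of which branch is used. Beyond that, the proof is a mechanical application of the Hilbert-symbol identities of Lemma \ref{Hilbert_symbol_identities}, with property (v) doing the real work in both directions.
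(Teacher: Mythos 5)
Your proof is correct and follows essentially the same route as the paper: centrality is reduced to symmetry of the cocycle, the hard inclusion in (ii) is forced by the non-degeneracy property (v) of the Hilbert symbol via the multiplication law of Example \ref{law_torus}, and (i) is deduced from the inclusion $Z(\wt{G})\subset Z(\wt{T})\cap\wt{Z}$. The only (harmless) differences are that you check the easy inclusions by a direct cocycle computation and extract $t\in T^{\sq}$ in one step from (v), whereas the paper invokes Lemma \ref{split_subgroups} for the easy inclusions and tests commutation against $K\cap T$ and the elements $\mathrm{diag}(p,1)$, $\mathrm{diag}(1,p)$ in two stages.
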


\begin{proof}
	Note that by Lemma \ref{split_subgroups} the groups on the right-hand side of the equation are indeed subgroups of $\wt{G}$. Property (v) of the Hilbert symbol in Lemma \ref{Hilbert_symbol_identities} implies that they are contained in the respective center. For proving the converse inclusions, we first consider part (ii): Let $(t,\zeta)\in Z(\wt{T})$. We may assume that $\zeta=1$ as $\mu_2$ is a central subgroup. By the formula for the multiplication law in Example \ref{law_torus}, the commuting of $\tilde{t}$ with elements in $K\cap T$ forces the diagonal entries of $t$ to lie in $A$, and the commuting with  $\left(\begin{mat}
		p & 0\\0 & 1
	\end{mat},1\right)$ and $\left(\begin{mat}
		1 & 0\\0 & p
	\end{mat},1\right)$ then forces $t\in T^{\sq}$, which finishes the proof of (ii). Part (i) now follows from the inclusion $Z(\wt{G})\subset Z(\wt{T})\cap \wt{Z}$.
\end{proof}

	\subsection{Genuine representations and further notations}\label{gen_reps_and_notations} Fix the non-trivial character $\iota\colon \mu_2\hookrightarrow \bF_p^{\times}\subset k^{\times}$. Let $\mathscr{G}\subset G$ be a closed subgroup.

\begin{definition}
	We let $\Mod^{\operatorname{sm}}_{\wt{\mathscr{G}},\iota}(k)$ denote the full subcategory of $\Mod^{\sm}_{\wt{\mathscr{G}}}(\Fpbar)$ consisting of \textit{genuine} representations, i.e.\ those objects on which $\mu_2$ acts via the non-trivial character $\iota$.
\end{definition}

Since $p$ is prime to the order of the central subgroup $\mu_2$, we have a decomposition of categories
\[
\Mod^{\operatorname{sm}}_{\wt{\mathscr{G}}}(k)=\Mod^{\operatorname{sm}}_{\wt{\mathscr{G}},\iota}(k) \times \Mod^{\operatorname{sm}}_{\mathscr{G}}(k),
\]
where the second factor is viewed as a full subcategory via inflation.\\

Recall that a smooth representation $\pi\in \Mod^{\operatorname{sm}}_{\wt{\mathscr{G}}}(k)$ is admissible if the subspace $\pi^{\mathscr{H}}$ of $\mathscr{H}$-fixed vectors is finite dimensional for every open subgroups $\mathscr{H}\subset \mathscr{G}$. Since the identity in $\mathscr{G}$ admits a basis of open neighbourhood consisting of pro-$p$ groups and we are working with mod-$p$ coefficients, admissibility can be checked for a single open pro-$p$ subgroup $\mathscr{H}$, see \cite[Proposition 36]{herzig}. Examples of open pro-$p$ subgroups of $K$ (and hence of $\wt{G}$) are given by the congruence subgroup of level $m\geq 1$ and the pro-$p$ Iwahori subgroup:
\[
K_m=1+p^{m}M_{2\times 2}(\bZ_p), \hspace{0.1cm} I_1=\begin{pmatrix}
	1+p\bZ_p & \bZ_p\\p\bZ_p & 1+p\bZ_p
\end{pmatrix}.
\]
Let $I=\begin{pmatrix}
	\bZ_p^{\times} & \bZ_p\\ p\bZ_p & \bZ_p^{\times}
\end{pmatrix}$ denote the Iwahori subgroup and set $H=\begin{pmatrix}
	[\bF_p^{\times}] & 0\\0 & [\bF_p^{\times}]
\end{pmatrix}$, we have the semi-direct product decomposition $I=I_1 \rtimes H$.\\

Finally, let us agree on the following notation: given a subgroup $C\subset G$ together with a splitting $\wt{C}\cong C\times \mu_2$, we denote the image of an object $\pi$ under the induced equivalence $\Mod^{\sm}_C(k) \cong \Mod^{\sm}_{\wt{C},\iota}(k)$ by $\pi\boxtimes \iota$.

\section{The pro-$p$ Iwahori Hecke algebra}

In this section, we study the pro-$p$ Iwahori Hecke algebra by decomposing it into smaller algebras which we then describe explicitly, enabling us to determine the simple modules. 

By Lemma \ref{split_subgroups} (i), the metaplectic covering splits over the maximal compact open subgroup and thus also over the pro-$p$ Iwahori subgroup $I_1$. Since $p$ is odd, any such splitting is unique and we in fact have an equality of groups $\wt{I}_1=I_1\times \mu_2$ because $1+p\bZ_p\subset S$ so that the splitting in the cited lemma is trivial on $I_1$. We will simply view the elements of $I_1$ as elements of $\wt{I}_1$ and drop the $\mu_2$-component (which is $1$) from the notation. Taking the trivial character $\mathbf{1}$ on $I_1$, we obtain the genuine $k^{\times}$-valued character $\mathbf{1}\boxtimes \iota$ on $\wt{I}_1$, which is just the inflation of $\iota$.

\begin{definition}
	The \textit{pro-$p$ Iwahori Hecke algebra $\cH$} is the endomorphism ring of the compact induction $\cInd^{\wt{G}}_{\wt{I}_1}(\mathbf{1}\boxtimes \iota)$ in the category of $\wt{G}$-representations, i.e.\ $\cH=\End_{\wt{G}}(\cInd^{\wt{G}}_{\wt{I}_1}(\mathbf{1}\boxtimes \iota))$.
\end{definition}

As agreed upon in our list of conventions and notations \ref{conv_notations}, we will identify $\cH$ with the $k$-algebra
\[
\left\{\varphi\colon I_1\setminus \Gtilde/I_1\to k : \begin{array}{l}\diamond \hspace{0.1cm} \varphi(g \zeta)=\zeta \varphi(g), \forall g\in \Gtilde, \zeta\in \mu_2\\
	\diamond \hspace{0.1cm} |\supp(\varphi)|<\infty\end{array}\right\},
\]
the product being given by convolution.

\subsection{Generators and relations} In this subsection, we prove some useful identities of Hecke operators by following \cite{coeffsystems}, which treats the case of $\GL_2$ over a local field. We start by writing down a basis of $\cH$ as a $k$-vector space. Let $N_G(T)$ be the normalizer of $T$ in $G$. The following two elements of $N_G(T)$ will play a prominent role in this work:
\[
s=\begin{pmatrix}
	0 & 1\\ 1 & 0
\end{pmatrix}\text{ and } \Pi =\begin{pmatrix}
	0 & 1\\p & 0
\end{pmatrix}.
\]
Putting $W_1:=N_G(T)/(I_1\cap T)$, we have the decomposition
\begin{equation}\label{pro-p_decomp}
	\wt{G}=\bigsqcup_{w\in W_1} \wt{I}_1 \tilde{w} \wt{I}_1,
\end{equation}
see also \cite[Lemma 2.0.4]{coeffsystems}. The ``genuine characteristic functions'' of the double cosets appearing in this decomposition will define a $k$-basis of $\cH$ -- provided they are well-defined, which we now check. 

\begin{lem}\label{wd_operator}
	For all $g\in I_1$ and $n\in N_G(T)$, $\tilde{n}g(\tilde{n})^{-1}=(ngn^{-1},1)$. In particular, for all $n\in N_G(T)$, $\mu_2\cap I_1 \tilde{n}I_1(\tilde{n})^{-1}=\{1\}$.
\end{lem}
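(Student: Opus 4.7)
The plan is to unpack the conjugation using the explicit cocycle (\ref{cocycle}) and reduce the lemma to verifying a Hilbert-symbol identity on the nose. Since the splitting $I_1 \hookrightarrow \wt{I}_1$ is trivial (as recalled at the beginning of this section), I identify $g \in I_1$ with $(g,1) \in \wt{G}$; combined with the inversion formula $\tilde{n}^{-1} = (n^{-1},\sigma(n,n^{-1}))$ and the product law on $\wt{G}$ this gives
\[
\tilde{n}\,g\,\tilde{n}^{-1} \;=\; \bigl(ngn^{-1},\;\sigma(n,g)\,\sigma(n,n^{-1})\,\sigma(ng,n^{-1})\bigr),
\]
so the first assertion becomes the statement that the three Hilbert symbols in parentheses multiply to $1 \in \mu_2$ for every $n \in N_G(T)$ and $g \in I_1$.

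I would verify this by case analysis along $N_G(T) = T \sqcup Ts$: either $n=\mathrm{diag}(a,d)$ is diagonal, or $n$ is antidiagonal. In each case, I further split on whether the lower-left entry of $g$ is zero, so that $\mathfrak{c}(g)$ reads off explicitly either the lower-left entry $p\gamma$ or the lower-right entry $1+p\delta$. Each of the four resulting configurations is handled by substituting into (\ref{cocycle}) and simplifying via Lemma \ref{Hilbert_symbol_identities}. The crucial simplifications are that $1+p\bZ_p \subset S$, so the diagonal entries of $g$ and $\det(g)$ pair trivially with anything by (v); that squares are killed the same way; and that residual signs collapse through $(x,-x)=1$. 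In the diagonal case, the nontrivial factor $(d,a)$ produced by $\tilde{t}^{-1}$ (cf.\ Example \ref{law_torus}) is precisely cancelled by a $(d,a)$-contribution from $\sigma(ng,n^{-1})$, which originates in the cube $d^3$ that appears when simplifying $(\mathfrak{c}(tgt^{-1})/\mathfrak{c}(t^{-1}))\,\det(tg)$; in the antidiagonal case every one of the three symbols vanishes on its own, thanks to $1+p\alpha$-type factors sitting in $S$.

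The second (``in particular'') clause is then immediate from the first. For $g_1,g_2 \in I_1$,
\[
(g_1,1)\,\tilde{n}\,(g_2,1)\,\tilde{n}^{-1} \;=\; (g_1,1)(ng_2n^{-1},1) \;=\; \bigl(g_1 \cdot ng_2n^{-1},\;\sigma(g_1,ng_2n^{-1})\bigr);
\]
membership in $\mu_2$ forces the $G$-component to be trivial, hence $ng_2n^{-1}=g_1^{-1} \in I_1$, and the surviving $\mu_2$-component is $\sigma(g_1,g_1^{-1})=1$ since $\sigma$ vanishes on $I_1 \times I_1$. The main obstacle is purely the bookkeeping in the first step: with two splits on $n$, two on $g$, and three Hilbert symbols per configuration, it is easy to lose a $(d,a)$-type factor, but no conceptual difficulty arises beyond careful use of Lemma \ref{Hilbert_symbol_identities}.
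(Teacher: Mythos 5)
Your proposal is correct and follows essentially the same route as the paper: reduce to the cocycle identity $\sigma(n,g)\,\sigma(n,n^{-1})\,\sigma(ng,n^{-1})=1$ and check it case by case for $n\in T$ versus $n\in sT$ together with the vanishing or not of a matrix entry of $g$, using Example \ref{law_torus}, Lemma \ref{Hilbert_symbol_identities} and $1+p\bZ_p\subset S$ (the paper leaves exactly this computation, and the ``in particular'' deduction you spell out, to the reader). One bookkeeping remark: in the antidiagonal case the relevant dichotomy is $b=0$ versus $b\neq 0$, since it is $\mathfrak{c}(ngn^{-1})$, whose lower-left entry is a unit multiple of $b$, rather than $\mathfrak{c}(g)$ that governs $\sigma(ng,n^{-1})$; with that adjustment the cancellations you describe are exactly the ones that occur.
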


\begin{proof}
	Recall that  $N_G(T)=T\sqcup sT$. Let $n\in N_G(T)$ and write $g=\begin{mat}
		a & b\\c & d
	\end{mat}\in I_1$. The end of Example \ref{law_torus} gives a formula for the inverse $(\tilde{n})^{-1}$ of $\tilde{n}=(n,1)$. Distinguishing between the cases $c=0$ and $c\neq 0$, if $n\in T$, and the cases $b=0$ and $b\neq 0$, if $n\in sT$, the lemma follows from the definition of the cocycle (\ref{cocycle}) and the identities of the Hilbert symbol in Lemma \ref{Hilbert_symbol_identities}, bearing in mind that $1+p\bZ_p\subset S$.
\end{proof}

\begin{definition}
	For $w\in W_1$, let $T_w\in \mathcal{H}$ be the unique function with
	\[
	\supp(T_w)=\wt{I}_1\tilde{w} \wt{I}_1 \text{ and } T_w(\tilde{w})=1.
	\]
	If $n\in N_G(T)$, then we also write $T_n$ for the operator defined by the class of $n$ in $W_1$.
\end{definition}

\begin{cor}
	The operators $T_w$, for $w\in W_1$, are well-defined and the collection $\{T_w\}_{w\in W_1}$ is a $k$-basis of $\cH$.
\end{cor}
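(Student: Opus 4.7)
The plan is to deduce the corollary directly from the preceding lemma together with the Bruhat-type decomposition (\ref{pro-p_decomp}).

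First I would address well-definedness. The would-be function $T_w$ is required to be bi-$I_1$-invariant, to satisfy the twisting relation $\varphi(g\zeta)=\zeta\varphi(g)$, to vanish outside the double coset $\wt{I}_1\tilde w\wt I_1$, and to take the value $1$ at $\tilde w$. Using $\wt I_1=I_1\times \mu_2$, every element of $\wt I_1\tilde w\wt I_1$ can be written as $i_1\tilde w i_2\zeta$ with $i_1,i_2\in I_1$ and $\zeta\in\mu_2$, and the only consistency obstruction is that this expression be independent of the choice modulo the twisting relation. Concretely, if $i_1\tilde w i_2=\tilde w\zeta$ in $\wt G$ with $i_j\in I_1$, one rewrites this as $\zeta=i_1(\tilde w i_2\tilde w^{-1})\in I_1\tilde w I_1\tilde w^{-1}$. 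Lemma \ref{wd_operator} asserts $\mu_2\cap I_1\tilde w I_1\tilde w^{-1}=\{1\}$, so necessarily $\zeta=1$. Hence the assignment
\[
i_1\tilde w i_2\zeta\ \longmapsto\ \zeta
\]
is well-defined on $\wt I_1\tilde w\wt I_1$, and extending by zero outside this double coset yields the required $T_w\in\cH$; uniqueness is automatic since $T_w(\tilde w)=1$ pins down the value on the entire double coset via the twisting and bi-$I_1$-invariance.

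Next I would verify the basis property. Take any $\varphi\in\cH$. By the decomposition (\ref{pro-p_decomp}) the support of $\varphi$, being both a union of $(\wt I_1,\wt I_1)$-double cosets (from bi-$I_1$-invariance plus the twisting relation, which preserves the nonvanishing locus under the central action of $\mu_2$) and finite modulo $\wt I_1$ on both sides, is contained in a finite subunion $\bigsqcup_{w\in S}\wt I_1\tilde w\wt I_1$ for some finite $S\subset W_1$. On each such double coset, exactly the same well-definedness argument shows that $\varphi$ is determined by its value at $\tilde w$, and in fact $\varphi|_{\wt I_1\tilde w\wt I_1}=\varphi(\tilde w)\cdot T_w|_{\wt I_1\tilde w\wt I_1}$. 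Summing yields $\varphi=\sum_{w\in S}\varphi(\tilde w)T_w$, so $\{T_w\}_{w\in W_1}$ spans $\cH$. Linear independence is immediate from the disjointness of the supports in (\ref{pro-p_decomp}): evaluating a relation $\sum_{w}c_w T_w=0$ at $\tilde w$ gives $c_w=0$ for every $w$.

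The only nontrivial input is Lemma \ref{wd_operator}, which has already been proved; so there is no genuine obstacle here. If anything, the mild subtlety to flag is checking that the support of $\varphi\in\cH$ really is a union of \emph{genuine} double cosets $\wt I_1\tilde w\wt I_1$ rather than merely of $I_1$-double cosets, but this follows at once from the centrality of $\mu_2$ together with $\varphi(g\zeta)=\zeta\varphi(g)$.
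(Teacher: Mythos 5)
Your argument is essentially the paper's own proof: well-definedness of each $T_w$ on its double coset via the identity $\mu_2\cap I_1\tilde{w}I_1(\tilde{w})^{-1}=\{1\}$ from Lemma \ref{wd_operator}, and the basis property from the decomposition (\ref{pro-p_decomp}), and those steps are carried out correctly. The one point you omit is that $T_w$ is indexed by a class $w\in W_1=N_G(T)/(I_1\cap T)$, so ``well-defined'' also requires independence of the chosen representative $n\in N_G(T)$: you must check that replacing $n$ by $nt$ with $t\in T\cap I_1$ changes neither the double coset $\wt{I}_1\tilde{w}\wt{I}_1$ nor the normalization $T_w(\tilde{w})=1$. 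This follows because $\widetilde{nt}=\tilde{n}\tilde{t}$ for all $t\in T\cap I_1$ (the cocycle is trivial here since the entries of $t$ lie in $1+p\bZ_p\subset S$), so $\tilde{t}\in \wt{I}_1$ with trivial $\mu_2$-component and bi-$I_1$-invariance gives $T_w(\widetilde{nt})=T_w(\tilde{n})$; the paper devotes its first sentence to exactly this check, and your write-up should include it as well. With that small addition your proof is complete and coincides with the paper's.
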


\begin{proof}
	For $t\in T\cap I_1$, we have $\tilde{g}\tilde{t}=\wt{gt}$ for all $g\in G$, so the condition $T_w(\tilde{w})=1$ is independent of the choice of a representative of $w$. By Lemma \ref{wd_operator}, the operator $T_w$ is then well-defined. The decomposition (\ref{pro-p_decomp}) implies the assertion about the basis.
\end{proof}

\begin{remark}\label{conv_basis}
	Let $n_1,n_2\in N_G(T)$. Writing $I_1n_2I_1=\sqcup_{x\in I_1/(I_1 \cap n_2^{-1}I_1n_2)} I_1n_2x$, the convolution product of $T_{n_1}$ and  $T_{n_2}$ reads
	\begin{equation}\label{T_{n_1}T_{n_2}}
		(T_{n_1}T_{n_2})(g)=\sum_{x\in I_1/(I_1\cap n_2^{-1}I_1n_2)} T_{n_1}(g x^{-1} (\tilde{n}_2)^{-1}) \text{ for all $g\in \wt{G}$.}
	\end{equation}
	In particular, if $n_2$ or $n_1$ normalizes $I_1$, then $T_{n_1}T_{n_2}=\zeta T_{n_1n_2}$, where $\zeta\in \mu_2$ is such that $(n_1n_2,1)(n_2,1)^{-1}=(n_1,\zeta)$.
\end{remark}	

\begin{definition}
	Let $\Hhat=\Hom(H,\bF_p^{\times})=\Hom(H,k^{\times})$ be the group of $k^{\times}$-valued characters of $H$. For $\chi\in \Hhat$, define the Hecke operator
	\[
	e_{\chi}=\frac{1}{|H|} \sum_{h\in H} \chi(h) T_h.
	\]
\end{definition}

\begin{lem}\label{identities_Hecke_operators_I} The following identities hold true for all characters $\chi,\chi'\in \Hhat$:
	\begin{enumerate}
		\item[{\rm (i)}] $e_{\chi}^2 = e_{\chi}$;
		\item[{\rm (ii)}] $e_{\chi} e_{\chi'} = 0$ if $\chi\neq \chi'$;
		\item[{\rm (iii)}] $\sum_{\chi\in \Hhat} e_{\chi} = 1$;
		\item[{\rm (iv)}] $T_s e_{\chi} = e_{\chi^s}T_s$;
		\item[{\rm (v)}] $T_s^2 e_{\chi} = \begin{cases}
			0 \text{ if } \chi\neq \chi^s\\
			-\chi(\begin{mat}
				1 & 0\\ 0 & -1
			\end{mat}) T_se_{\chi} \text{ if } \chi=\chi^s.
		\end{cases}$
	\end{enumerate}
\end{lem}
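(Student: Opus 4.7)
The plan is to treat parts (i)--(iii) via the group-algebra structure of $H$. The key preliminary is that the map $k[H] \to \cH$, $h \mapsto T_h$, is an algebra homomorphism. By Remark~\ref{conv_basis}, $T_{h_1}T_{h_2} = \zeta\, T_{h_1 h_2}$ with $\zeta \in \mu_2$ determined by the cocycle, and $\zeta = 1$ either because the $K$-splitting in Lemma~\ref{split_subgroups}(i) is trivial on diagonal matrices (they have $c=0$), or equivalently by (\ref{cocycle}) combined with Lemma~\ref{Hilbert_symbol_identities}(vii). Since $|H| = p - 1$ is invertible in $k$, the $e_\chi$ are then the standard orthogonal idempotents of this abelian group algebra and (i)--(iii) follow from character orthogonality.

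For (iv), analogous cocycle computations (using Lemma~\ref{Hilbert_symbol_identities}(iv),(vii) via Remark~\ref{conv_basis}) give $T_s T_h = T_{sh}$ and $T_{shs^{-1}} T_s = T_{sh}$ for every $h \in H$, hence $T_s T_h = T_{shs^{-1}} T_s$. Substituting into the definition of $e_\chi$ and changing variables $h \mapsto shs^{-1}$ produces $T_s e_\chi = e_{\chi^s} T_s$, where $\chi^s(h) := \chi(s^{-1}hs)$.

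The main work is (v), for which I would first compute $T_s^2$ explicitly. Formula~(\ref{T_{n_1}T_{n_2}}) together with the decomposition $I_1/(I_1 \cap sI_1 s) = I_1/K_1 = \{u(\lambda) : \lambda \in \bF_p\}$ gives
\[
(T_s T_s)(g) = \sum_{\lambda \in \bF_p} T_s\bigl(g\, \tilde u(-\lambda)\, \tilde s\bigr).
\]
Using $\tilde s \tilde u(-\lambda) \tilde s = \widetilde{u^-(-\lambda)}$ (after verifying the pertinent Hilbert symbols vanish), support considerations show that $T_s^2$ lives in $\bigoplus_{h \in H} k\, T_{hs}$. Evaluating at $g = \tilde h \tilde s$ and carrying out an Iwahori factorization $h u^-(-\lambda) = u(x)\, h''\, s\, u(y)$ with $h'' = \begin{mat} a/[\lambda] & 0 \\ 0 & -[\lambda] d \end{mat}$ (where $h = \begin{mat} a & 0 \\ 0 & d \end{mat}$) shows that the only $\lambda$ with a non-zero term is $[\lambda] = a$, and only when $h = h_a := \begin{mat} a & 0 \\ 0 & -a^{-1} \end{mat}$ with $a \in \bF_p^\times$; since all factors then lie in $K$ and the $K$-splitting is a group homomorphism, this contribution equals $1$. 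Hence
\[
T_s^2 = \Bigl(\sum_{a \in \bF_p^\times} T_{h_a}\Bigr) T_s.
\]

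Combining with (iv) and the identity $T_h e_{\chi'} = \chi'(h)^{-1} e_{\chi'}$ (immediate from (i)--(iii)), we obtain $T_s^2 e_\chi = \bigl(\sum_a \chi^s(h_a)^{-1}\bigr) e_{\chi^s} T_s$. Writing $\chi = \chi_1 \otimes \chi_2$, one computes $\sum_a \chi^s(h_a)^{-1} = \chi_1(-1) \sum_a (\chi_1/\chi_2)(a)$, which vanishes when $\chi_1 \neq \chi_2$ (equivalent to $\chi \neq \chi^s$) by character orthogonality and equals $-\chi\bigl(\begin{mat} 1 & 0 \\ 0 & -1 \end{mat}\bigr)$ when $\chi_1 = \chi_2$ (using $|\bF_p^\times| = -1$ in $k$); in the latter case $e_{\chi^s} T_s = T_s e_\chi$ by (iv), yielding (v). The main obstacle is the explicit computation of $T_s^2$: one must simultaneously track the Iwahori factorization in $K$ and verify that all cocycles arising from lifting matrices from $G$ to $\wt G$ reduce to trivial Hilbert symbols via Lemma~\ref{Hilbert_symbol_identities}.
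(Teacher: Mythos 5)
Your argument is essentially correct, but it takes a genuinely different route from the paper. The paper disposes of the lemma in one line: all operators involved are supported in $\wt{K}$, so the identities can be checked in $\End_{\wt{K}}(\Ind^{\wt{K}}_{\wt{I}_1}(\mathbf{1}\boxtimes \iota))$, which via the fixed splitting $\wt{K}\cong K\times \mu_2$ is identified with $\End_{K}(\Ind^{K}_{I_1}(\mathbf{1}))$, and then the classical identities for $\GL_2$ are quoted from the coefficient-systems reference. You instead redo the computation inside the metaplectic Hecke algebra, verifying at each step (via Remark~\ref{conv_basis}, the cocycle (\ref{cocycle}) and Lemma~\ref{Hilbert_symbol_identities}) that the relevant signs are trivial; the underlying reason your checks succeed is exactly the mechanism the paper exploits, namely that every element you meet ($H$, $s$, the unipotents $u$, $u^-$ with unit entries, $h_a s$) has lower-left entry zero or a unit, so the splitting of Lemma~\ref{split_subgroups}(i) takes the value $1$ on it and all cocycle corrections vanish. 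Your approach costs an explicit Iwahori factorization and Hilbert-symbol bookkeeping but is self-contained; the paper's costs nothing but delegates the actual quadratic relation to an external source. Your factorization $h u^-(-[\lambda])=u(\ast)\,\mathrm{diag}(a[\lambda]^{-1},-[\lambda]d)\,s\,u(\ast)$, the resulting formula $T_s^2=\bigl(\sum_{a}T_{h_a}\bigr)T_s$ with $h_a=\begin{mat} a & 0\\ 0 & -a^{-1}\end{mat}$, and the character sum giving $0$ resp.\ $-\chi\bigl(\begin{mat}1&0\\0&-1\end{mat}\bigr)$ (via $p-1=-1$ in $k$) all check out against the stated relation (v).

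Two small corrections are needed. First, $|H|=(p-1)^2$, not $p-1$; this does not affect anything since $(p-1)^2$ is still invertible in $k$. Second, your claim that ``support considerations show $T_s^2$ lives in $\bigoplus_{h\in H}kT_{hs}$'' is not quite accurate: the identity double coset $\wt{I}_1$ does occur in $\wt{I}_1\tilde{s}\wt{I}_1\tilde{s}\wt{I}_1$ (take $\lambda=0$), so a priori $T_s^2$ also has a component along the unit of $\cH$. Evaluating at the identity, that coefficient is a sum of $p$ equal unit contributions (all cocycles trivial, as in your other checks), hence equals $p=0$ in $k$; this is the mod-$p$ disappearance of the $q$-term of the classical quadratic relation, and it must be said explicitly, since your evaluation only at $g=\tilde{h}\tilde{s}$ would not detect it. With these two points patched, the proof is complete.
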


\begin{proof}
	The identities can be checked in the endomorphism ring of the induction $\Ind^{\wt{K}}_{\wt{I}_1}(\mathbf{1}\boxtimes \iota)$. Via the splitting $\wt{K}\cong K\times \mu_2$, this is identified with the endomorphism ring of $\Ind^{K}_{I_1}(\mathbf{1})$, so the lemma follows from \cite[Lemma 2.0.10, Lemma 2.0.12]{coeffsystems}. 
\end{proof}

\begin{definition}\label{twisted_char}
	Given a character $\chi\in \Hhat$ and a pair $(i,j)\in \bZ/2\bZ$, define the twisted character $\chi[i,j]\in \Hhat$ by
	\begin{align*}
		\chi[i,j]\left(\begin{mat}
			[\lambda] & 0\\0 & [\mu]
		\end{mat}\right)=\chi\left(\begin{mat}
			[\lambda] & 0\\0 & [\mu]
		\end{mat}\right) (\lambda^{i} \mu^{j})^{\frac{p-1}{2}}, \text{for all $\lambda,\mu\in \bF_p^{\times}$.}
	\end{align*}
\end{definition}

Recall that we put $\Pi=\begin{mat}
	0 &1\\p & 0
\end{mat}\in N_G(T)$, which  normalizes $I_1$. We will make use of Remark \ref{conv_basis} as well as the identities in Example \ref{law_torus} during the proof of the following lemma.

	\begin{lem}\label{identities_Hecke_operators_II} Let $m\in \bZ_{\geq 0}$. The following assertions hold true.
	\begin{enumerate}
		\item[{\rm (i)}] $T_{\Pi}$ is invertible with inverse $T_{\Pi^{-1}}$;
		\item[{\rm (ii)}] $T_{\Pi}^2=T_{\Pi^2}$;
		\item[{\rm (iii)}] $T_{\Pi}^4=(-1)^{\frac{p-1}{2}}T_{\Pi^4}$ is central, and $T_nT_{\Pi^4}=T_{n\Pi^{4}}=T_{\Pi^4n}$ for all $n\in N_G(T)$;
		\item[{\rm (iv)}] $T_{\Pi^2}T_s=T_{\Pi^2 s}=T_{s\Pi^2}=(-1)^{\frac{p-1}{2}}T_sT_{\Pi^2}$;
		\item[{\rm (v)}]  for all $\chi\in \Hhat$, $T_{\Pi} e_{\chi} = e_{\chi^s[1,0]} T_{\Pi}$;
		\item[{\rm (vi)}] $(T_s T_{\Pi})^m = T_{(s\Pi)^m}$;
		\item[{\rm (vii)}]  $(T_{\Pi}T_s)^m=T_{(\Pi s)^m}$;
		\item[{\rm (viii)}] $T_{(s\Pi)^m}T_s=T_{(s\Pi)^m s}=T_sT_{(\Pi s)^m}$;
		\item[{\rm (ix)}] $T_{\Pi} T_{(s\Pi)^m}=T_{\Pi (s\Pi)^m}=T_{(\Pi s)^m\Pi}=T_{(\Pi s)^m}T_{\Pi}$;
	\end{enumerate}
\end{lem}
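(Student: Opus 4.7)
The plan is to reduce every identity, via Remark \ref{conv_basis}, to a cocycle computation $\sigma(n_1, n_2)$, which in turn is a Hilbert symbol evaluation handled by Lemma \ref{Hilbert_symbol_identities}. I would exploit throughout that $\Pi$ (and hence every power of $\Pi$) normalizes $I_1$, so that any product having a factor of a power of $T_\Pi$ is directly governed by Remark \ref{conv_basis} as $\sigma(n_1,n_2) T_{n_1 n_2}$.

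For (i)--(iii) the computations are short. Parts (i), (ii), (iii) follow from $\sigma(\Pi, \Pi^{\pm 1}) = 1$ (both reduce via bilinearity to Hilbert symbols with a $1$ entry or of the form $(a,-a)$) and $\sigma(\Pi^2, \Pi^2) = (p, p^3) = (p,p)^3 = (-1)^{(p-1)/2}$, using bilinearity together with $(p,p) = (p,-1) = (-1)^{(p-1)/2}$ (Lemma \ref{Hilbert_symbol_identities} (iv) and formula (\ref{HS})). The centrality of $T_{\Pi^4}$ in (iii) then follows because $\Pi^4 = p^2 I$ is central in $\widetilde{G}$ by Lemma \ref{centers} (i), and $\sigma(n, p^2 I) = (p^2, \cdot) = 1$ for every $n \in N_G(T)$, by Lemma \ref{Hilbert_symbol_identities} (v) since $p^2 \in S$. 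For (iv), compute $\sigma(\Pi^2, s) = (1, p^3) = 1$ and $\sigma(s, \Pi^2) = (p,-1) = (-1)^{(p-1)/2}$; combined with the matrix equality $\Pi^2 s = s \Pi^2$, this gives the full chain. For (v), writing $h = \mathrm{diag}([\lambda], [\mu]) \in H$, one checks $\sigma(\Pi, h) = ([\lambda], -[\lambda][\mu]^{-1}) = 1$ (Hilbert symbols of units are trivial by (\ref{HS})) and $\sigma(h^s, \Pi) = (p, [\mu]) = [\mu]^{(p-1)/2}$, yielding $T_\Pi T_h = [\mu]^{(p-1)/2} T_{h^s} T_\Pi$; reindexing the sum defining $e_\chi$ via $h \mapsto h^s$ turns $\chi$ into $\chi^s$ and inserts the character $h \mapsto [\lambda(h)]^{(p-1)/2}$, which is precisely the twist $\chi^s[1, 0]$.

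For (vi)--(ix) I would proceed by induction on $m$. Part (ix) is a direct application of Remark \ref{conv_basis}: one computes $\sigma(\Pi, (s\Pi)^m) = (p^m, -p^{m+2}) = ((-1)^{(p-1)/2})^{m(m+3)}$, which equals $1$ since $m(m+3)$ is always even, and similarly $\sigma((\Pi s)^m, \Pi) = (p, p^{2m}) = 1$ since $p^{2m} \in S$. The base cases $m = 1$ of (vi) and (vii) are $\sigma(s, \Pi) = \sigma(\Pi, s) = 1$. Given (vi) and (vii), part (viii) is then automatic by associativity $(T_s T_\Pi)^m T_s = T_s (T_\Pi T_s)^m$, combined with the matrix identity $(s\Pi)^m s = s(\Pi s)^m$. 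For the inductive step of (vi), I would write $(T_s T_\Pi)^{m+1} = T_s T_\Pi T_{(s\Pi)^m} = T_s T_{\Pi(s\Pi)^m}$ via (ix), and use the matrix identity $\Pi(s\Pi)^m = s(s\Pi)^{m+1}$ to reduce $T_s T_{\Pi(s\Pi)^m}$ to $T_{(s\Pi)^{m+1}}$; the cocycle that eventually appears involves only Hilbert symbols of the form $(p, p^{2k})$ (trivial by $p^{2k} \in S$) or symbols of units (trivial by (\ref{HS})).

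\textbf{Main obstacle.} The delicate point is the inductive step of (vi)--(vii) at the product $T_s T_{\Pi(s\Pi)^m}$, where neither factor normalizes $I_1$ and Remark \ref{conv_basis} cannot be invoked directly; my strategy is to multiply on the right by the invertible $T_\Pi$ from (i) and iterate Remark \ref{conv_basis} on the resulting longer expression, or equivalently to expand the general convolution and show only one summand survives. The underlying bookkeeping principle, which makes the calculation tractable, is that every surviving Hilbert symbol has either one argument in $S$ (trivial by Lemma \ref{Hilbert_symbol_identities} (v)) or is a symbol of two units in $\mathbf{Z}_p^\times$ (trivial by (\ref{HS})); the only non-trivial contribution is $(p, -1) = (p, p) = (-1)^{(p-1)/2}$, which must be tracked carefully as it surfaces in (iii) and (iv).
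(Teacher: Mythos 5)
Your handling of (i)--(v) and (ix) is fine and is in substance the paper's own argument: in each of those products one factor is attached to an element normalizing $I_1$ (a power of $\Pi$, or an element of $H$), so Remark \ref{conv_basis} reduces everything to cocycle evaluations, and your Hilbert-symbol computations (e.g.\ $\sigma(\Pi^2,\Pi^2)=(p,p)=(-1)^{\frac{p-1}{2}}$, $\sigma(h^s,\Pi)=(p,[\mu])=\mu^{\frac{p-1}{2}}$) are correct. The genuine gap is exactly at the point you flag, the inductive step of (vi)--(vii), and your primary strategy for it would fail. Right-multiplying $T_sT_{\Pi(s\Pi)^m}$ by $T_{\Pi}$ produces, up to a sign, $T_s\,T_{n}$ with $n=\Pi(s\Pi)^m\Pi=\operatorname{diag}(p,p^{m+1})$, and neither $s$ nor $n$ normalizes $I_1$; no amount of further multiplication by powers of $T_{\Pi}$ changes this, since Remark \ref{conv_basis} only covers products in which one factor is attached to a length-zero element. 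Identities (vi)--(vii) are length-additivity (braid-type) relations, and they cannot be obtained from the normalizer case alone: a priori the product of two double-coset operators of positive length is a linear combination of basis elements, not a single $T_w$ (compare Lemma \ref{identities_Hecke_operators_I}(v), where $T_s^2e_{\chi}$ is not of this form). Your fallback, ``expand the convolution and show only one summand survives,'' is precisely the paper's proof, but it is the entire content of the step and is not carried out: one must prove the support containment $I_1\operatorname{diag}(p^m,1)I_1\cdot\operatorname{diag}(p,1)I_1=I_1\operatorname{diag}(p^{m+1},1)I_1$, then evaluate at $((s\Pi)^{m+1},1)$ using the coset decomposition of $I_1s\Pi I_1$ and check that exactly one coset representative contributes, with cocycle value $1$ (here Lemma \ref{wd_operator} is what keeps the signs away). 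The bookkeeping principle you state about which Hilbert symbols can occur is plausible but is asserted, not verified, and it is exactly this verification that distinguishes $T_{(s\Pi)^{m+1}}$ from a sum of basis elements.

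A secondary gap: (viii) is not ``automatic'' from (vi)--(vii). Associativity only gives the outer equality $T_{(s\Pi)^m}T_s=T_sT_{(\Pi s)^m}$; the identification with the middle term $T_{(s\Pi)^ms}$, i.e.\ the assertion that this product is again a single normalized double-coset operator, is an additional claim, which the paper proves by the same support-and-evaluation computation as (vi). (It can alternatively be rescued from (vi): write $T_{(s\Pi)^m}T_s=T_{(s\Pi)^{m+1}}T_{\Pi^{-1}}$, apply Remark \ref{conv_basis} since $\Pi^{-1}$ normalizes $I_1$, and check that the resulting cocycle sign $\sigma((s\Pi)^ms,\Pi)$ equals $1$ --- but some such argument must be supplied.)
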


\begin{proof}
	(i) This follows from $(\Pi,1)^{-1}=(\Pi^{-1},1)$.
	
	(ii) This follows from $(\Pi^2,1)(\Pi^{-1},1)=(1,1)$.
	
	(iii) The first equality follows from $(\Pi^2,1)^2=(\Pi^4,(p,p))=(\Pi^4,(-1)^{\frac{p-1}{2}})$. This element is central element in $\wt{G}$ by Lemma \ref{centers} (i), which implies the other equalities.
	
	(iv) The first equality follows from $(\Pi^2 s,1)(s,1)=(\Pi^2,1)$, the second one is trivial, and the last equality follows from $(s\Pi^2,1)(\Pi^2,1)^{-1}=(s\Pi^2,1)(\Pi^{-2},(-1)^{\frac{p-1}{2}})=(s,(-1)^{\frac{p-1}{2}})$.
	
	(v) Let  $h=\begin{mat}
		[\lambda] & 0\\ 0 & [\mu]
	\end{mat}\in H$ and denote by $h^s=shs$ its conjugate by $s$. The two equalities $(\Pi h,1)(h^{-1},1)=(\Pi,1)$ and $(h^s \Pi,1)(\Pi^{-1},1)=(h^s,\mu^{\frac{p-1}{2}})$ translate into
	\[
	T_{\Pi} T_h = T_{\Pi h}= T_{h^s\Pi}=\mu^{\frac{p-1}{2}} T_{h^s} T_{\Pi},
	\]
	which implies the assertion.
	
	(vi) The case $m=1$ follows from the equality $(s\Pi,1)(\Pi^{-1},1)=(s,1)$. It now suffices to prove $T_{(s\Pi)^m}T_{s\Pi}=T_{(s\Pi)^{m+1}}$ for all $m\geq 1$. Since the support of the product of two operators is contained in the product of the supports (as follows from the definition of the convolution product), the left-hand side has support contained in the preimage in $\wt{G}$ of
	\[
	I_1\begin{mat}
		p^m & 0\\0 & 1
	\end{mat}I_1 \begin{mat}
		p & 0\\0 &1
	\end{mat}I_1=\bigcup_{\lambda\in \bF_p} I_1 \begin{mat}
		p^m & 0\\0 & 1
	\end{mat} \begin{mat}
		1 & [\lambda]\\0 & 1
	\end{mat} \begin{mat}
		p & 0\\0 & 1
	\end{mat}I_1=I_1\begin{mat}
		p^{m+1} &0\\0 & 1
	\end{mat}I_1.
	\]
	The decomposition $I_1 s\Pi I_1 = \sqcup_{\lambda\in \bF_p} I_1 s\Pi \begin{mat}
		1 & 0\\p[\lambda] & 1
	\end{mat}$ yields that the evaluation of $T_{(s\Pi)^m}T_{s\Pi}$ at $((s\Pi)^{m+1},1)$ is equal to
	\[
	\sum_{\lambda\in \bF_p} T_{(s\Pi)^m}\left(((s\Pi)^{m+1},1) \begin{mat}
		1 & 0\\ -p[\lambda] & 1
	\end{mat} (s\Pi,1)^{-1}\right).
	\]
	If $\lambda\neq 0$, then $(s\Pi)^{m+1} \begin{mat}
		1 & 0\\ -p[\lambda] & 1
	\end{mat} (s\Pi)^{-1}$ does not lie in $I_1(s\Pi)^mI_1$, so only the summand corresponding to $\lambda=0$ contributes and gives the value $1$.
	
	(vii) This is analogous to (vi).
	
	(viii) Since $I_1 (s\Pi)^m I_1 sI_1=I_1(s\Pi)^m sI_1$, it is enough to evaluate at $((s\Pi)^m s,1)$. Now $I_1 sI_1=\sqcup_{\lambda\in \bF_p} I_1 s\begin{mat}
		1 & [\lambda]\\0 & 1
	\end{mat}$, so
	\[
	T_{(s\Pi)^m}T_s(((s\Pi)^m s,1))=\sum_{\lambda\in \ol{\bF}_p} T_{(s\Pi)^m}\left(((s\Pi)^ms,1)\begin{mat}
		1 & [\lambda]\\0 & 1
	\end{mat}(s,1)\right).
	\]
	If $\lambda\neq 0$, then $(s\Pi)^m \begin{mat}
		1 & 0\\ [\lambda] & 1
	\end{mat}$ does not lie in $I_1 (s\Pi)^m I_1$, so only the summand corresponding to $\lambda=0$ contributes and gives the value $1$. This proves the first equality. The second one is proved similarly.
	
	(ix) The first identity follows from
	\[
	(\Pi (s\Pi)^m,1)((s\Pi)^{-m},1)=(\Pi,(p^{-m},p)(-p^{m+1},p^{-m}))=(\Pi,1),
	\]
	the second one is trivial, the last one follows from $((\Pi s)^m \Pi,1)(\Pi^{-1},1)=((\Pi s)^m,1)$.
\end{proof}

\begin{cor}\label{TPi_iso-cInd}
	The operator $T_{\Pi}$ restricts to a $\wt{G}$-equivariant isomorphism
	\[
	T_{\Pi}\colon \cInd^{\wt{G}}_{\wt{I}}(\chi\boxtimes \iota)\xrightarrow{\cong} \cInd^{\wt{G}}_{\wt{I}}(\chi^s[1,0]\boxtimes \iota),
	\]
	for each $\chi\in \Hhat$.
\end{cor}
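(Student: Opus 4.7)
The plan is to exploit the $\wt{G}$-equivariant decomposition
\[
V := \cInd^{\wt{G}}_{\wt{I}_1}(\mathbf{1}\boxtimes \iota) \;\cong\; \bigoplus_{\chi \in \Hhat} \cInd^{\wt{G}}_{\wt{I}}(\chi \boxtimes \iota),
\]
in which the $\chi$-summand is precisely the image of the idempotent $e_\chi$ acting on the right of $V$. Granting this, the commutation $T_\Pi e_\chi = e_{\chi^s[1,0]} T_\Pi$ of Lemma \ref{identities_Hecke_operators_II}(v) forces $T_\Pi$ to send the $\chi$-summand into the $\chi^s[1,0]$-summand: for $v \in e_\chi V$,
\[
T_\Pi v = T_\Pi e_\chi v = e_{\chi^s[1,0]} T_\Pi v \in e_{\chi^s[1,0]} V.
\]
Invertibility of $T_\Pi$ (Lemma \ref{identities_Hecke_operators_II}(i)) then upgrades this to an isomorphism: multiplying (v) by $T_\Pi^{-1} = T_{\Pi^{-1}}$ on both sides yields the analogous commutation $T_{\Pi^{-1}} e_{\chi^s[1,0]} = e_\chi T_{\Pi^{-1}}$, which furnishes the inverse on the corresponding summands.

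To set up the decomposition I would use induction in stages along $\wt{I}_1 \subset \wt{I} \subset \wt{G}$. The splitting fixed in Lemma \ref{split_subgroups}(i) is trivial on $H$ (the entry $c$ vanishes on diagonal matrices), hence $\wt{I} = \wt{I}_1 \rtimes H$ and $\wt{I}/\wt{I}_1 \cong H$. Since $|H| = (p-1)^2$ is invertible in $k$, the regular representation of $H$ decomposes as $\bigoplus_{\chi \in \Hhat} \chi$, giving
\[
\cInd^{\wt{I}}_{\wt{I}_1}(\mathbf{1}\boxtimes \iota) \;\cong\; \bigoplus_{\chi \in \Hhat} \chi \boxtimes \iota
\]
as $\wt{I}$-representations; applying $\cInd^{\wt{G}}_{\wt{I}}(-)$ produces the stated decomposition. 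The identification of the $\chi$-summand with the image of $e_\chi$ is the standard one: on $\wt{I}_1$-invariants the Hecke operator $T_h$ for $h \in H$ recovers (using Remark \ref{conv_basis}, since $h$ normalizes $I_1$) the action of $h$ coming from the ambient $\wt{G}$-representation, so that $e_\chi$ is the projector onto the $\chi$-isotypic component.

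The only real obstacle is this last piece of bookkeeping—matching the convolution convention for $T_h$ with the ambient $\wt{G}$-action on $\wt{I}_1$-invariants—which must be done carefully in order to confirm that $e_\chi$ really lands in $\cInd^{\wt{G}}_{\wt{I}}(\chi\boxtimes \iota)$ and not in a twisted summand. Once that identification is settled, what remains is just the two-line Hecke manipulation above.
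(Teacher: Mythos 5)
Your proposal is correct and follows essentially the same route as the paper: the paper's proof likewise identifies $\cInd^{\wt{G}}_{\wt{I}}(\chi\boxtimes \iota)$ with $e_{\chi}\cInd^{\wt{G}}_{\wt{I}_1}(\mathbf{1}\boxtimes \iota)$ and then invokes Lemma \ref{identities_Hecke_operators_II} (v) together with the invertibility in (i). The extra work you sketch (induction in stages and the bookkeeping identifying the $\chi$-summand with the image of $e_{\chi}$) is precisely the identification the paper asserts via Lemma \ref{identities_Hecke_operators_I} and the decomposition (\ref{decomp_cInd}), and it checks out.
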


\begin{proof}
	By definition, $T_{\Pi}$ is a $\wt{G}$-equivariant endomorphism on $\cInd^{\wt{G}}_{\wt{I}_1}(\mathbf{1}\boxtimes \iota)$. The assertion follows from Lemma \ref{identities_Hecke_operators_II} (v) and (i), noting that $ \cInd^{\wt{G}}_{\wt{I}}(\chi\boxtimes \iota)=e_{\chi}\cInd^{\wt{G}}_{\wt{I}_1}(\mathbf{1}\boxtimes \iota)$.
\end{proof}

	\subsection{Decomposing the pro-$p$ Iwahori Hecke algebra} It follows from Lemma \ref{identities_Hecke_operators_I} that we have the decomposition of $\wt{G}$-representations
\begin{equation}\label{decomp_cInd}
	\cInd^{\wt{G}}_{\wt{I}_1}(\mathbf{1}\boxtimes \iota)=\bigoplus_{\chi\in \Hhat} e_{\chi} \cInd^{\wt{G}}_{\wt{I}_1}(\mathbf{1}\boxtimes \iota)=\bigoplus_{\chi\in \Hhat} \cInd^{\wt{G}}_{\wt{I}}(\chi\boxtimes \iota).
\end{equation}

As we will see below, there are non-zero intertwinings between two direct summands if and only if the two respective characters are conjugate in the following sense: The group $\wt{N_G(T)}$ acts on the set of genuine characters of $\wt{H}$ by conjugation. Since $\mu_2$ is central, this action descends to $N_G(T)$. Given $\chi\in \Hhat$ and $n\in N_G(T)$, we let $\chi^{\tilde{n}}\in \Hhat$ be the unique character satisfying $\chi^{\tilde{n}}\boxtimes \iota = (\chi\boxtimes \iota)^{\tilde{n}}$ as genuine characters on $\wt{H}=H\times \mu_2$. This depends only on the image of $n$ in $N_G(T)/(T\cap I)$ and defines an action of the latter group on $\Hhat$. In particular, the subgroup generated by $\Pi$ acts on $\Hhat$.

\begin{definition}\label{definition_orbit}
	Let $\chi\in \Hhat$.
	\begin{enumerate}
		\item[{\rm (i)}] Define $O_{\chi}=\left\{\chi^{\tilde{w}} : w\in N_G(T)/(T\cap I)\right\}$ to be the \textit{orbit of $\chi$}. Denote the set of all orbits by $\mathcal{O}$.
		\item[{\rm (ii)}] The \textit{$\Pi$-orbit of $\chi$} is the orbit of $\chi$ under the action of the subgroup generated by $\Pi$, i.e.\ it is equal to $\chi^{\tilde{\Pi}^{\bZ}}$.
	\end{enumerate}
\end{definition}

	Recall from Definition \ref{twisted_char} that to $\chi$ and each pair $(i,j)\in (\bZ/2\bZ)^2$ we have associated a twisted character $\chi[i,j]$.

\begin{remark}\label{Pi_equiv_Remark}
	For $h=\begin{mat}
		[\lambda] & 0\\ 0 &[\mu]
	\end{mat}\in H$, one has $\tilde{\Pi} \tilde{h}(\tilde{\Pi})^{-1}=(h^s,\mu^{\frac{p-1}{2}})$, i.e.\ for $\chi\in \Hhat$, $\chi^{\tilde{\Pi}}=\chi^s[0,1]$. Thus, the $\Pi$-orbit of $\chi$ is equal to
	\[
	\chi^{\wt{\Pi}^{\bZ}}=\left\{\chi,\chi^s[0,1],\chi[1,1],\chi^s[1,0]\right\}.
	\]
\end{remark}

	\begin{lem}\label{orbit-description}
	Let $\chi\in \Hhat$.
	\begin{enumerate}
		\item[{\rm (i)}] If $(\chi^{-1}\chi^s)^2  \neq 1$, then the map
		\begin{align*}
			\bZ/2\bZ \times (\bZ/2\bZ)^2&\cong O_{\chi}\\
			(a,(i,j)) & \mapsto \chi^{s^{a}}[i,j]
		\end{align*}
		is bijective.
		\item[{\rm (ii)}] If $(\chi^{-1}\chi^s)^2 = 1$, then
		the map
		\begin{align*}
			(\bZ/2\bZ)^2&\cong O_{\chi}\\
			(i,j) & \mapsto \chi[i,j]
		\end{align*}
		is bijective.
	\end{enumerate}
\end{lem}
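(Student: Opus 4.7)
The plan is to bootstrap from Remark \ref{Pi_equiv_Remark} by exploiting the fact that $N_G(T)/(T\cap I)$ is generated by the classes of $s$ and $\Pi$. First I would record the identities
\[
s\Pi=\begin{mat}p & 0\\0 & 1\end{mat}, \qquad \Pi s=\begin{mat}1 & 0\\0 & p\end{mat},
\]
which show that $\{s,\Pi\}$ lifts a generating set of $N_G(T)=T\sqcup sT$ modulo $T\cap I$, since $T/(T\cap I)$ is free abelian on the images of $\operatorname{diag}(p,1)$ and $\operatorname{diag}(1,p)$. This reduces the determination of $O_\chi$ to iterating the two actions $\chi\mapsto \chi^s$ (from $s$, which lies in the fixed splitting of $\wt K$) and $\chi\mapsto \chi^s[0,1]$ (from $\tilde\Pi$, by Remark \ref{Pi_equiv_Remark}).

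Next, using the easy identity $[i,j]^s=[j,i]$, I would verify that the set
\[
\Sigma_\chi=\bigl\{\chi^{s^a}[i,j]:a\in \bZ/2\bZ,\ (i,j)\in (\bZ/2\bZ)^2\bigr\}
\]
is closed under both of these actions and obviously contains $\chi$; hence $O_\chi=\Sigma_\chi$. In particular, the two maps in the statement are surjective onto $O_\chi$, and only injectivity remains. To analyze it, I would suppose $\chi^{s^a}[i,j]=\chi^{s^{a'}}[i',j']$: if $a=a'$ then $[i-i',j-j']$ is trivial, forcing $(i,j)=(i',j')$ in $(\bZ/2\bZ)^2$; if $a\ne a'$, rearranging yields $\chi^{-1}\chi^s=[i-i',j-j']^{\pm 1}$, a character of order dividing $2$, and squaring gives $(\chi^{-1}\chi^s)^2=1$.

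This is precisely the dichotomy of the lemma. Under the hypothesis of (i) only the case $a=a'$ can occur, which gives the bijection $\bZ/2\bZ\times(\bZ/2\bZ)^2\cong O_\chi$. Under the hypothesis of (ii), I would write $\chi^{-1}\chi^s=[i_0,j_0]$ for the unique such pair, from which $\chi^s[i,j]=\chi[i+i_0,j+j_0]$ and hence $O_\chi=\{\chi[i,j]:(i,j)\in (\bZ/2\bZ)^2\}$; on this four-element set the stated map is injective by the $a=a'$ argument already handled. The only mildly subtle point is keeping track of the $\mu_2$-contribution from the metaplectic cocycle in the conjugation action of $\tilde\Pi$, but this is exactly the content of Remark \ref{Pi_equiv_Remark}; the rest of the proof is routine bookkeeping with twisted characters.
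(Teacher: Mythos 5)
Your proposal is correct and follows essentially the same route as the paper: generation of $N_G(T)/(T\cap I)$ by $s$ and $\Pi$, the formula $\chi^{\tilde\Pi}=\chi^s[0,1]$ from Remark \ref{Pi_equiv_Remark} together with $(\chi\boxtimes\iota)^{\tilde s}=\chi^s\boxtimes\iota$ to pin down the orbit, and the observation that a coincidence $\chi=\chi^{s}[i,j]$ forces $(\chi^{-1}\chi^s)^2=1$, which yields exactly the injectivity/surjectivity dichotomy of the two cases. The only point to make explicit is that closure of $\Sigma_\chi$ under the two generator actions gives $O_\chi\subseteq\Sigma_\chi$, while the reverse inclusion follows because each of the eight twists is visibly realized by a word in $\tilde s,\tilde\Pi$ -- a triviality in your setup, and the same bookkeeping the paper performs.
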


\begin{proof} The group $N_G(T)/(T\cap I)$ is generated by the elements $\Pi$ and $s$. Remark \ref{Pi_equiv_Remark} gives a description of the $\Pi$-orbit. Since $(\chi\boxtimes \iota)^{\tilde{s}}=\chi^s\boxtimes \iota$, we deduce that the map in (i) is surjective. Since we cannot have $\chi = \chi[i,j]$ for $0 \neq (i,j)\in (\bZ/2\bZ)^2$, the map in (ii) is injective. If additionally $(\chi^{-1}\chi^s)^2 \neq 1$, then we can neither have $\chi=\chi^s[i,j]$ for any non-zero pair $(i,j)$, showing that the map in (i) is also injective. If $(\chi^{-1}\chi^s)^2=1$, we may choose $t\in \bZ/2\bZ$ such that $\chi^{-1}\chi^s\left(\begin{mat} [\lambda] & 0\\0 & [\mu]\end{mat}\right) = (\lambda^{-1}\mu)^{t\frac{p-1}{2}}$ for all $\lambda,\mu\in \bF_p^{\times}$, i.e.\ $\chi^s=\chi[t,t]$, proving that the map in (ii) is surjective.
\end{proof}

	\begin{definition}
	Let $\chi\in \Hhat$.
	\begin{enumerate}
		\item[{\rm (i)}] We say that $\chi$ is \textit{square-regular} if $(\chi^{-1}\chi^s)^2 \neq 1$.
		\item[{\rm (ii)}] We say that $\chi$ is \textit{non-square-regular} if $(\chi^{-1}\chi^s)^2 = 1$, i.e.\ $\chi^s=\chi[t,t]$ for some $t\in \bZ/2\bZ$.
		\item[\rm (iii)] We say that an orbit $O\in \mathcal{O}$ is (non-)square-regular if one, or equivalently every, character in it is so.
	\end{enumerate}
\end{definition}

\begin{definition}
	\begin{enumerate}
		\item[{\rm (i)}] For an orbit $O\in \mathcal{O}$, define the \textit{Hecke algebra of $O$} by
		\[
		\mathcal{H}(O)=\End_{\wt{G}}(\bigoplus_{\chi\in O} \cInd^{\wt{G}}_{\wt{I}}(\chi\boxtimes \iota)).
		\]
		\item[{\rm (ii)}]
		For $\chi_1,\chi_2\in \Hhat$, put
		\[
		\mathcal{H}(\chi_1,\chi_2)=\End_{\wt{G}}(\cInd^{\wt{G}}_{\wt{I}}(\chi_1\boxtimes \iota),\cInd^{\wt{G}}_{\wt{I}}(\chi_2\boxtimes \iota)).
		\]
		If $\chi_1=\chi_2$, we simply write $\mathcal{H}(\chi_1)$.
	\end{enumerate}
\end{definition}

	\begin{prop}
	The decomposition (\ref{decomp_cInd}) induces a direct sum decomposition of $k$-algebras
	\[
	\mathcal{H}=\bigoplus_{O\in \mathcal{O}} \mathcal{H}(O).
	\]
\end{prop}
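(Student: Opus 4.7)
My plan is to combine the matrix-algebra structure on endomorphisms of the direct sum (\ref{decomp_cInd}) with a single substantive vanishing statement, namely that $\cH(\chi_1,\chi_2)=0$ unless $O_{\chi_1}=O_{\chi_2}$.

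First I would note that (\ref{decomp_cInd}) immediately yields a $k$-linear decomposition $\cH=\bigoplus_{\chi_1,\chi_2\in\Hhat}\cH(\chi_1,\chi_2)$, and by Lemma \ref{identities_Hecke_operators_I}(i)-(iii) the idempotents $e_\chi$ make composition respect this decomposition exactly like matrix multiplication: $\cH(\chi_1,\chi_2)\cdot\cH(\chi_2',\chi_3)\subset\cH(\chi_1,\chi_3)$, with the product being zero whenever $\chi_2\neq\chi_2'$. Granting the vanishing statement, it follows formally that for each orbit $O\in\mathcal{O}$ the subspace $\cH(O)=\bigoplus_{\chi_1,\chi_2\in O}\cH(\chi_1,\chi_2)$ is closed under composition, and that blocks corresponding to distinct orbits do not interact, giving the desired decomposition as $k$-algebras.

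The actual work is therefore to prove the vanishing. Realising $\cH(\chi_1,\chi_2)$ via the convention of Section \ref{conv_notations} as the space of finitely-supported-mod-$\wt I$ functions $\varphi:\wt G\to k$ with two-sided genuine equivariance for the pair $(\chi_2\boxtimes\iota,\chi_1\boxtimes\iota)$, I would use the decomposition (\ref{pro-p_decomp}), refined by $I=I_1\rtimes H$, to see that the support of any non-zero $\varphi$ meets some double coset $\wt I\tilde n\wt I$ with $n\in N_G(T)$. Direct inspection on the generators $s$ and $\Pi$ (cf.\ the computation preceding Lemma \ref{identities_Hecke_operators_II}(v)) shows that $N_G(T)/(T\cap I)$ normalises $H$, so $\tilde n^{-1}\tilde h\tilde n\in\wt H\subset\wt I$ for every $h\in H$. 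Then the identity $\tilde h\tilde n=\tilde n\cdot(\tilde n^{-1}\tilde h\tilde n)$ combined with left and right equivariance gives
\[
\chi_2(h)\,\varphi(\tilde n)=\varphi(\tilde h\tilde n)=\varphi(\tilde n)\cdot(\chi_1\boxtimes\iota)(\tilde n^{-1}\tilde h\tilde n),
\]
and varying $h\in H$ forces $\chi_2\boxtimes\iota=(\chi_1\boxtimes\iota)^{\tilde n^{-1}}$, i.e.\ $\chi_2\in O_{\chi_1}$ by Definition \ref{definition_orbit}.

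The only delicate point is the bookkeeping of the cocycle in the conjugation $\tilde n^{-1}\tilde h\tilde n$: a priori this differs from $\widetilde{n^{-1}hn}$ by a factor in $\mu_2$. However, because both characters in play are genuine with the same restriction $\iota$ to $\mu_2$, this factor is precisely absorbed by the definition of the $N_G(T)/(T\cap I)$-action on $\Hhat$ recalled in Section \ref{gen_reps_and_notations}, so the computation collapses to the orbit relation as claimed.
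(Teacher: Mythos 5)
Your proof is correct, but it reaches the key vanishing statement by a different route than the paper. The paper, following Vign\'eras, restricts $\cInd^{\wt{G}}_{\wt{I}}(\chi_2\boxtimes\iota)$ to $\wt{I}$ via Mackey's formula along the decomposition $\wt{G}=\sqcup_{n}\wt{I}\tilde{n}\wt{I}$, applies Frobenius reciprocity twice to reduce non-vanishing of $\cH(\chi_1,\chi_2)$ to an equality of characters on $\wt{I}_n=\wt{I\cap n^{-1}In}$, and then uses $\wt{I}=I_1\wt{I}_n$ together with triviality of smooth characters on the pro-$p$ group $I_1$ to conclude that $\chi_1$ and $\chi_2$ are conjugate. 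You instead work directly with the bi-$(\chi_2\boxtimes\iota,\chi_1\boxtimes\iota)$-equivariant functions: a non-zero element of $\cH(\chi_1,\chi_2)$ must be non-zero at some $\tilde{n}$ with $n\in N_G(T)$, and since $N_G(T)$ normalises $H$ (because $T$ centralises $H$ and $s$ permutes its entries), evaluating the equivariance at $\tilde{h}\tilde{n}=\tilde{n}(\tilde{n}^{-1}\tilde{h}\tilde{n})$ forces $\chi_2\boxtimes\iota=(\chi_1\boxtimes\iota)^{\tilde{n}^{-1}}$ on $\wt{H}$, which is exactly the orbit relation as the action on $\Hhat$ is defined through genuine characters of $\wt{H}$. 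The two arguments have the same mathematical core (comparison of characters on a conjugation-stable subgroup after reducing modulo $I_1$), but yours trades the formal machinery of Mackey theory and adjunction for an explicit double-coset computation; the paper's version is shorter to state and generalises more readily, while yours makes the role of $H$ and of the genuineness bookkeeping completely transparent. Your remark that the $\mu_2$-cocycle ambiguity in $\tilde{n}^{-1}\tilde{h}\tilde{n}$ is absorbed by the definition of $\chi^{\tilde{n}}$ is the right observation and closes the only delicate point. The surrounding reduction via the idempotents $e_\chi$ and the block-matrix structure is also fine and matches what the paper takes for granted.
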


\begin{proof}
	We have to show that $\Hom_{\wt{G}}(\cInd^{\wt{G}}_{\wt{I}}(\chi_1\boxtimes \iota), \cInd^{\wt{G}}_{\wt{I}}(\chi_2\boxtimes \iota))= 0$ if $\chi_1$ and $\chi_2$ lie in distinct orbits. This can be proved as in \cite[p.\ 9]{Vigneras}: For $n\in N_G(T)$, let $I_n=I\cap n^{-1}In$. Since $I=I_1\rtimes H$, we have $I=I_1 I_n$. Using the decomposition $\wt{G}=\sqcup_{n\in N_G(T)/(T\cap I)} \wt{I} \tilde{n}\wt{I}$, Mackey's formula reads
	\[
	\cInd^{\wt{G}}_{\wt{I}}(\chi_2\boxtimes \iota)|_{\wt{I}}\cong \bigoplus_{n\in N_G(T)/(T\cap I)} \Ind^{\wt{I}}_{\wt{I}_n}((\chi_2\boxtimes \iota)^{\tilde{n}}).
	\]
	Therefore, using Frobenius reciprocity twice, the non-vanishing of the Hom-space of interest is equivalent to $(\chi_1\boxtimes \iota)|_{\wt{I}_n} = (\chi_2\boxtimes \iota)^{\tilde{n}}|_{\wt{I}_n}$ for some $n\in N_G(T)$. Smooth characters are trivial on the pro-$p$ group $I_1$, so the equality $\wt{I}=I_1\wt{I}_n$ completes the proof.
\end{proof}

	We finish this section with two lemmas, the first of which will allow us to reduce statements in the non-square-regular case to a consideration of the trivial character, while the second one is a computational preliminary for the upcoming sections, where we determine the Hecke algebra of an orbit explicitly.

\begin{lem}\label{twist}
	Let $\psi\colon \bQ_p^{\times}\to k^{\times}$ be a smooth character. Then the map
	\begin{align*}
		\cInd^{\wt{G}}_{\wt{I}_1}(\mathbf{1}\boxtimes \iota)\otimes \psi\circ \det &\xrightarrow{\cong} 	\cInd^{\wt{G}}_{\wt{I}_1}(\mathbf{1}\boxtimes \iota)\\
		f &\mapsto [(g,\zeta)\mapsto \psi(\det(g))f(g,\zeta)]
	\end{align*}
	is a $\wt{G}$-equivariant isomorphism. The resulting $k$-algebra automorphism 
	\[
	\operatorname{tw}_{\psi}\colon \cH=\End_{\wt{G}}(	\cInd^{\wt{G}}_{\wt{I}_1}(\mathbf{1}\boxtimes \iota)\otimes \psi\circ \det) \cong \cH,
	\]
	where the equality is induced by the functor $(-)\otimes \psi\circ \det$, identifies $\cH(O_{\chi})$ with $\cH(O_{\chi \psi|_H})$.
\end{lem}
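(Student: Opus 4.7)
The plan is to verify the stated isomorphism directly, then analyze the induced automorphism $\operatorname{tw}_\psi$ at the level of the idempotents $e_\chi$, and finally match orbits.

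The first assertion is essentially formal: $\psi\circ\det\colon\wt G\to k^\times$ is a smooth character (factoring through the projection $\wt G\to G$), so multiplication by it defines a well-defined, $\wt G$-equivariant map $\cInd^{\wt G}_{\wt I_1}(\mathbf{1}\boxtimes\iota)\otimes\psi\circ\det\to\cInd^{\wt G}_{\wt I_1}(\mathbf{1}\boxtimes\iota)$ with inverse multiplication by $\psi^{-1}\circ\det$. Functoriality of $(-)\otimes\psi\circ\det$ then produces the algebra automorphism $\operatorname{tw}_\psi$ of $\cH$; concretely, writing the multiplication isomorphism as $M$, we have $\operatorname{tw}_\psi(\varphi)=M\circ\varphi\circ M^{-1}$.

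Next, I would compute $\operatorname{tw}_\psi$ on the basis $\{T_w\}_{w\in W_1}$. Identify $\varphi\in\cH$ with $\varphi\cdot f_0\colon\wt G\to k$, where $f_0\in\cInd^{\wt G}_{\wt I_1}(\mathbf{1}\boxtimes\iota)$ is the generator supported on $\wt I_1$; under this identification $T_w$ corresponds to the biinvariant function with support $\wt I_1\tilde w\wt I_1$ and value $1$ at $\tilde w$. Since $\det(I_1)\subset 1+p\bZ_p$ and $k^\times$ has no $p$-torsion, $\psi$ is trivial on $1+p\bZ_p$, so $M$ fixes $f_0$ and scales $T_w$ by $\psi(\det w)$. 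Unwinding the formula $\operatorname{tw}_\psi(\varphi)=M\circ\varphi\circ M^{-1}$ yields $\operatorname{tw}_\psi(T_w)=\psi(\det w)\,T_w$. Specializing to $w=h\in H$ and noting that $(\psi\circ\det)|_H$ is the character denoted $\psi|_H$ in the lemma, I conclude
\[
\operatorname{tw}_\psi(e_\chi)=\frac{1}{|H|}\sum_{h\in H}\chi(h)\psi|_H(h)T_h=e_{\chi\psi|_H}.
\]

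Finally, by the decomposition $\cH=\bigoplus_O\cH(O)$, we have $\cH(O)=e_O\cH e_O$ with $e_O=\sum_{\chi'\in O}e_{\chi'}$, so the lemma reduces to the orbit identity $\{\chi'\psi|_H\colon\chi'\in O_\chi\}=O_{\chi\psi|_H}$. This is the only subtle point, since the action $\eta\mapsto\eta^{\tilde n}$ on $\Hhat$, defined via conjugation of the associated genuine characters of $\wt H$, is \emph{not} multiplicative in general. The way around this: the character $(\psi\circ\det)|_{\wt H}$ is the restriction of a character of the ambient group $\wt G$, hence is fixed by conjugation by any $\tilde n\in\wt{N_G(T)}$. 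Decomposing $(\chi\psi|_H)\boxtimes\iota$ as the product of the genuine character $\chi\boxtimes\iota$ with the non-genuine character $(\psi\circ\det)|_{\wt H}$ and using this invariance, one finds $(\chi\psi|_H)^{\tilde n}=\chi^{\tilde n}\,\psi|_H$, whence $O_{\chi\psi|_H}=O_\chi\cdot\psi|_H$. I expect this last orbit-matching argument to be the main obstacle, as it requires carefully distinguishing the genuine and non-genuine conjugation actions on $\Hhat$.
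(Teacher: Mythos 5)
Your proof is correct and takes essentially the same route as the paper, whose own proof is a one-liner: it declares the well-definedness and $\wt{G}$-equivariance straightforward, notes that twisting by $\psi^{-1}$ gives the inverse, and leaves the statement about orbits as ``clear.'' Your explicit computation $\operatorname{tw}_{\psi}(T_w)=\psi(\det w)T_w$, hence $\operatorname{tw}_{\psi}(e_{\chi})=e_{\chi\psi|_H}$, together with the observation that $(\psi\circ\det)|_{\wt{H}}$ is conjugation-invariant (being the restriction of a character of $\wt{G}$), so that $(\chi\psi|_H)^{\tilde{n}}=\chi^{\tilde{n}}\psi|_H$ and thus $O_{\chi\psi|_H}=O_{\chi}\cdot\psi|_H$, simply supplies the details the paper leaves implicit, and it correctly handles the one genuinely delicate point (the twisted conjugation action on $\Hhat$ not being multiplicative).
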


\begin{proof}
	Checking that the map is well-defined $\wt{G}$-equivariant is straight forward. Replacing $\psi$ by its inverse shows that the map is an isomorphism. The rest is clear.
\end{proof}

\begin{lem}\label{Hecke_Lemma}
	Let $r,s\in \bZ$ and let $\zeta\in \mu_2$.
	\begin{enumerate}
		\item[{\rm (i)}] Let $x=\begin{mat}
			a & b\\ c & d
		\end{mat}\in I$. Then $y=\begin{mat}
			a & p^{s-r}b\\ p^{r-s}c & d
		\end{mat}\in I$ if and only if $s-r+v(b)\geq 0$ and $r-s+v(c)\geq 1$. In this case, the equality
		\[
		(x,\zeta)\left(\begin{mat}
			p^r & 0\\ 0 & p^s
		\end{mat},1\right) = \left(\begin{mat}
			p^r & 0\\ 0 & p^s
		\end{mat},1\right)(y,1)
		\]
		holds if and only if
		\[
		\zeta= \begin{cases}
			\omega(\det(x))^{r\frac{p-1}{2}} \text{ if } c \neq 0\\
			\omega\left(\det(x)^{s} d^{r-s}\right)^{\frac{p-1}{2}} \text{ if } c=0.
		\end{cases}
		\]
		
		\item[{\rm (ii)}] Let $x=\begin{mat}
			a & b\\ c & d
		\end{mat}\in I$. Then $y=\begin{mat}
			d & p^{r-s}c\\ p^{s-r}b & a
		\end{mat}\in I$ if and only if $r-s+v(c)\geq 0$ and $s-r+v(b)\geq 1$. In this case, the equality
		\[
		(x,\zeta)\left(\begin{mat}
			0 & p^{r}\\ p^{s} & 0
		\end{mat},1\right)=\left(\begin{mat}
			0 & p^{r}\\ p^{s} & 0
		\end{mat},1\right)(y,1)
		\]
		holds if and only if
		\[
		\zeta = \begin{cases}
			\omega\left(d^{v(b)+v(c)} \det(x)^{s-v(c)}\right)^{\frac{p-1}{2}} \text{ if } c\neq 0, b\neq 0\\
			\omega\left(d^{r-s+v(c)} \det(x)^{s-v(c)}\right)^{\frac{p-1}{2}} \text{ if } c\neq 0, b=0\\
			\omega\left(d^{v(b)} \det(x)^{s}\right)^{\frac{p-1}{2}} \text{ if } c=0, b\neq 0\\
			\omega\left(d^{r-s} \det(x)^{s}\right)^{\frac{p-1}{2}} \text{ if } c=0,b=0.
		\end{cases}
		\]
	\end{enumerate}
\end{lem}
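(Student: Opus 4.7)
The plan is to verify the matrix-level conditions by direct linear algebra and then to compute the unique scalar $\zeta \in \mu_2$ via the cocycle formula (\ref{cocycle}). For part (i), set $g_2 = \left(\begin{mat} p^r & 0 \\ 0 & p^s \end{mat}, 1\right)$ and compute the conjugate $g_2^{-1} x\, g_2$ inside $G$; requiring this to lie in $I$ forces the two displayed inequalities, and the computation itself shows the resulting matrix equals $y$. Expanding the multiplication rule on $\wt G$ for the asserted identity and using that $\mu_2$ is $2$-torsion, one obtains
\[
\zeta = \sigma(\mathrm{diag}(p^r,p^s), y) \cdot \sigma(x, \mathrm{diag}(p^r,p^s)),
\]
which reduces everything to a Hilbert symbol calculation.

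This calculation splits into two cases according as $c$ vanishes, since $c$ governs the value of $\mathfrak{c}$ at $x$ and at $y$. When $c \neq 0$, one finds $\mathfrak{c}(x) = c$, $\mathfrak{c}(g_2) = p^s$ and $\mathfrak{c}(xg_2)=\mathfrak{c}(g_2 y) = c p^r$, so that (\ref{cocycle}) produces
\[
\sigma(x, g_2) = (p^r,\, cp^{r-s}\det x), \qquad \sigma(g_2, y) = (cp^{r-s},\, p^{r+2s}).
\]
Since $p^{2s}$ and $(cp^{r-s})^2$ are Hilbert-trivial by Lemma \ref{Hilbert_symbol_identities}(v), combining the two symbols by bilinearity, symmetry and skew-symmetry collapses everything to $(p^r, \det x)$. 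As $\det x \in \bZ_p^{\times}$ has valuation $0$, formula (\ref{HS}) evaluates this to $\omega(\det x)^{r(p-1)/2}$. The case $c = 0$ is handled analogously, using $\mathfrak{c}(x)=\mathfrak{c}(y)=d$; absorbing $p^{2s} \in S$ and $d^2 \in S$ collapses the product to $(p^s, a)\cdot (d, p^r) = \omega(a^s d^r)^{(p-1)/2}$, which coincides with $\omega(\det(x)^s d^{r-s})^{(p-1)/2}$ because $\det x = ad$.

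Part (ii) follows the same mechanism with the anti-diagonal element $g_2' = \left(\begin{mat} 0 & p^r \\ p^s & 0 \end{mat}, 1\right)$, whose inverse is recorded at the end of Example \ref{law_torus}; conjugation then yields $y$ explicitly. Now $\mathfrak{c}(x)$ depends on whether $c$ vanishes and $\mathfrak{c}(y)$ on whether $b$ vanishes, which is the origin of the four subcases in the statement. In each one, expanding (\ref{cocycle}) produces a product of Hilbert symbols that includes the factor $\det(g_2') = -p^{r+s}$; this sign is absorbed via $(d,d) = (d,-1)^{-1}$ (a consequence of Lemma \ref{Hilbert_symbol_identities}(ii), (iv) applied to $(d,-d)=1$), after which repeated use of $p^{2k} \in S$ and $u^2 \in S$ for $u \in \bZ_p^\times$ reduces each subcase to the stated explicit formula.

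The main difficulty is bookkeeping rather than technique: one must keep careful track of which entry of each of $x$, $y$, $xg_2$, $g_2 y$ is non-zero in order to apply $\mathfrak{c}$ correctly, and repeatedly appeal to $\omega(u)^{p-1}=1$ for $u \in \bZ_p^\times$ together with the final exponent $(p-1)/2$ to see that apparent mismatches in intermediate powers of $d$ and $\det x$ vanish. The explicit formula (\ref{HS}) for the Hilbert symbol and the trivialization of even powers of $p$ via Lemma \ref{Hilbert_symbol_identities}(v) do essentially all of the work.
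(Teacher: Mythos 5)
Your computation is correct and is exactly the verification the paper has in mind: its proof of Lemma \ref{Hecke_Lemma} is simply "this is straightforward," meaning precisely the direct expansion of the cocycle (\ref{cocycle}) and evaluation of the resulting Hilbert symbols via (\ref{HS}) and Lemma \ref{Hilbert_symbol_identities} that you carry out. I checked your symbol manipulations in part (i) and spot-checked the four subcases of part (ii); they reproduce the stated values of $\zeta$ (up to even exponents, which is harmless since $\omega(u)^{p-1}=1$), so there is no gap.
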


\begin{proof}
	This is straight forward.
\end{proof}

\subsection{The Hecke algebra of a regular orbit} In this section, we fix a square-regular orbit $O$ and a character $\chi\in O$. Define
\[
\cH(\chi\oplus \chi[1,0])=\End_{\wt{G}}(\bigoplus_{t=0,1} \cInd^{\wt{G}}_{\wt{I}}(\chi[t,0]\boxtimes \iota))=\begin{pmatrix}
	\cH(\chi) & \cH(\chi[1,0],\chi)\\
	\cH(\chi,\chi[1,0]) & \cH(\chi[1,0])
\end{pmatrix}.
\]
By Lemma \ref{orbit-description}, Remark \ref{Pi_equiv_Remark} and Corollary \ref{TPi_iso-cInd}, we have an isomorphism of algebras $\cH(O)\cong M_{4\times 4}(\cH(\chi\oplus \chi[1,0]))$ and so:

\begin{cor}\label{Morita_regular}
	The functor 
	\begin{align*}
		\Mod_{\cH(O)}&\xrightarrow{\sim} \Mod_{\cH(\chi\oplus \chi[1,0])}\\
		M&\mapsto M(e_{\chi}+e_{\chi[1,0]})
	\end{align*}
	is an equivalence of categories.
\end{cor}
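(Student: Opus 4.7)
The plan is to extract the corollary directly from the algebra isomorphism $\cH(O) \cong M_{4\times 4}(\cH(\chi \oplus \chi[1,0]))$ asserted in the preceding paragraph, together with the standard Morita equivalence for matrix algebras. The whole content of the statement is to identify the functor $M \mapsto M(e_\chi + e_{\chi[1,0]})$ as the usual Morita equivalence $N \mapsto N \cdot E_{11}$ under this isomorphism.

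First I would unpack the isomorphism $\cH(O) \cong M_{4 \times 4}(\cH(\chi \oplus \chi[1,0]))$ concretely, since this is where all the work is. By Lemma \ref{orbit-description}(i), square-regularity of $O$ gives a bijection $\bZ/2 \times (\bZ/2)^2 \cong O$, so $|O|=8$. Remark \ref{Pi_equiv_Remark} says the action of $\tilde{\Pi}$ sends $\chi'$ to $(\chi')^s[0,1]$, and an elementary check shows that the $\Pi$-orbits of $\chi$ and of $\chi[1,0]$ are disjoint and each of size $4$. Hence
\[
\bigoplus_{\chi' \in O} \cInd^{\wt G}_{\wt I}(\chi' \boxtimes \iota) \;=\; \bigoplus_{i=0}^{3} \cInd^{\wt G}_{\wt I}(\chi^{\tilde{\Pi}^i} \boxtimes \iota) \;\oplus\; \bigoplus_{i=0}^{3} \cInd^{\wt G}_{\wt I}(\chi[1,0]^{\tilde{\Pi}^i} \boxtimes \iota).
\]
By Corollary \ref{TPi_iso-cInd}, for every $\chi' \in O$ powers of $T_\Pi$ yield $\wt G$-equivariant isomorphisms $\cInd^{\wt G}_{\wt I}(\chi' \boxtimes \iota) \cong \cInd^{\wt G}_{\wt I}(\chi \boxtimes \iota)$ or $\cong \cInd^{\wt G}_{\wt I}(\chi[1,0] \boxtimes \iota)$ according to which $\Pi$-orbit $\chi'$ belongs to. Collecting these, one gets a $\wt G$-equivariant isomorphism
\[
\bigoplus_{\chi' \in O} \cInd^{\wt G}_{\wt I}(\chi' \boxtimes \iota) \;\cong\; \bigl(\cInd^{\wt G}_{\wt I}(\chi \boxtimes \iota) \oplus \cInd^{\wt G}_{\wt I}(\chi[1,0] \boxtimes \iota)\bigr)^{\oplus 4},
\]
and passing to endomorphism rings yields $\cH(O) \cong M_{4\times 4}(\cH(\chi \oplus \chi[1,0]))$.

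Next I would track the idempotent $e_\chi + e_{\chi[1,0]}$ through this isomorphism. By construction, $e_\chi$ projects onto the summand $\cInd^{\wt G}_{\wt I}(\chi \boxtimes \iota)$ indexed by $\chi$ itself (i.e.\ the $i=0$ summand of the first $\Pi$-orbit sum), and similarly $e_{\chi[1,0]}$ projects onto the $i=0$ summand of the second $\Pi$-orbit sum. Under the above isomorphism these two projections sum to the projection onto the first of the four copies of $\cInd^{\wt G}_{\wt I}(\chi \boxtimes \iota) \oplus \cInd^{\wt G}_{\wt I}(\chi[1,0] \boxtimes \iota)$, i.e.\ to the matrix unit $E_{11} \in M_{4\times 4}(\cH(\chi \oplus \chi[1,0]))$.

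Finally, I would invoke standard Morita theory: for any ring $R$, the idempotent $E_{11} \in M_{n}(R)$ satisfies $M_n(R) \cdot E_{11} \cdot M_n(R) = M_n(R)$ (it is a full idempotent), and the functor $N \mapsto N \cdot E_{11}$ is an equivalence $\Mod_{M_n(R)} \xrightarrow{\sim} \Mod_{E_{11} M_n(R) E_{11}} = \Mod_R$. Applied with $n=4$ and $R = \cH(\chi \oplus \chi[1,0])$, together with the identification of the previous step, this gives exactly the functor $M \mapsto M(e_\chi + e_{\chi[1,0]})$ and shows it is an equivalence. The main obstacle is really only the bookkeeping in the second step---making sure that the $T_\Pi$-identifications place $e_\chi$ and $e_{\chi[1,0]}$ precisely on the diagonal of the same matrix unit so that their sum is genuinely a full idempotent of the matrix algebra; everything else is formal.
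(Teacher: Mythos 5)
Your proposal is correct and follows essentially the route the paper intends: the corollary is stated as an immediate consequence of the isomorphism $\cH(O)\cong M_{4\times 4}(\cH(\chi\oplus\chi[1,0]))$, which the paper justifies by the same three references (Lemma \ref{orbit-description}, Remark \ref{Pi_equiv_Remark}, Corollary \ref{TPi_iso-cInd}) you use, and then standard Morita theory for the full idempotent $e_{\chi}+e_{\chi[1,0]}$, corresponding to a matrix unit, does the rest. Your bookkeeping of the two disjoint $\Pi$-orbits of size four and the identification $e_{\chi}+e_{\chi[1,0]}\leftrightarrow E_{11}$ is exactly the content left implicit in the paper.
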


\noindent In order to understand the simple objects on the left-hand side of the equivalence, it therefore suffices to study the algebra $\cH(\chi\oplus \chi[1,0])$ and its simple modules.\\

For $t_1,t_2\in \bZ/2\bZ$, we view the Hecke module $\mathcal{H}(\chi[t_1,0],\chi[t_2,0])$ as the space
\[
\left\{\varphi\colon \wt{G}\to k : \begin{array}{l}\diamond \hspace{0.1cm} \varphi((x,\zeta_x)g(y,\zeta_y))=(\chi[t_2,0]\boxtimes \iota)((x,\zeta_x)) \varphi(g)(\chi[t_1,0]\boxtimes \iota)((y,\zeta_y))\\ \phantom{\diamond}\text{ for all } g\in \wt{G}, x,y\in I,\zeta_x,\zeta_y\in \mu_2\\
	\diamond \hspace{0.1cm} |\wt{I}\setminus \supp(\varphi)/\wt{I}|<\infty\end{array}\right\}
\]
with left $\mathcal{H}(\chi[t_2,0])$-, resp.\ right $\mathcal{H}[t_1,0])$-action given by the convolution product. As a consequence of Lemma \ref{Hecke_Lemma} and the regularity assumption on $\chi$, we obtain:

\begin{cor}\label{basis_regular_Hecke}
	For $t_1,t_2\in \bZ/2\bZ$, $\mathcal{H}(\chi[t_1,0],\chi[t_2,0])$ has $k$-basis $\{T_{i,t_1-t_2+j}\}_{i,j\in 2\bZ}$, where $T_{i,t_1-t_2+j}$ is uniquely determined by
	\[
	\supp(T_{i,t_1-t_2+j})=\wt{I}\left(\begin{mat}
		p^{i} &0\\0 &p^{t_1-t_2+j}
	\end{mat},1\right)\wt{I}, \hspace{0.2cm} T_{i,t_1-t_2+j}\left(\left(\begin{mat}
		p^{i} &0\\0 &p^{t_1-t_2 + j}
	\end{mat},1\right)\right)=1.
	\]
\end{cor}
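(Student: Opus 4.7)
The plan is to parameterize the elements of $\mathcal{H}(\chi[t_1,0], \chi[t_2,0])$ by their supports on $\wt{I}$-double cosets, using the decomposition $\wt{G} = \bigsqcup_{w \in N_G(T)/(T\cap I)} \wt{I}\tilde{w}\wt{I}$ from (\ref{pro-p_decomp}). For each representative $w$, the subspace of such functions supported on $\wt{I}\tilde{w}\wt{I}$ is at most one-dimensional, being determined by the scalar $\varphi(\tilde{w})$; non-vanishing is equivalent to the following $\wt{I}$-equivariance identity: for every pair $x, y \in \wt{I}$ with $x\tilde{w} = \tilde{w}y$, one has $(\chi[t_2,0]\boxtimes\iota)(x) = (\chi[t_1,0]\boxtimes\iota)(y)$. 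Since $I_1$ lies in the kernel of any smooth character and $I = I_1 \rtimes H$, it suffices to test this on elements $x \in H$, with the corresponding $y \in H$ and $\mu_2$-correction $\zeta$ read off from Lemma \ref{Hecke_Lemma}.

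For a diagonal representative $w = \begin{pmatrix} p^i & 0 \\ 0 & p^j \end{pmatrix}$, Lemma \ref{Hecke_Lemma}(i) applied to $x = h = \begin{pmatrix} [\lambda] & 0 \\ 0 & [\mu] \end{pmatrix}$ yields $y = h$ together with $\iota(\zeta) = \lambda^{j(p-1)/2} \mu^{i(p-1)/2}$. Substituting into the equivariance identity and simplifying gives
\[
\lambda^{(t_2 - t_1 - j)(p-1)/2} \mu^{-i(p-1)/2} = 1 \text{ for all } \lambda, \mu \in \bF_p^\times,
\]
which is equivalent to $i \in 2\bZ$ and $j \in (t_1 - t_2) + 2\bZ$. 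Normalizing $\varphi(\tilde{w}) = 1$ in this case produces precisely the operator $T_{i, t_1 - t_2 + j}$ from the statement.

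For an anti-diagonal representative $w = \begin{pmatrix} 0 & p^r \\ p^s & 0 \end{pmatrix}$, Lemma \ref{Hecke_Lemma}(ii) applied to $x = h$ yields $y = h^s$ with $\iota(\zeta) = \lambda^{s(p-1)/2} \mu^{r(p-1)/2}$, and the equivariance identity becomes an equality of $H$-characters of the form $\chi^{-1}\chi^s = [a, b]$ for certain $(a, b) \in \bZ^2$ depending on $(r, s, t_1, t_2)$. Since any twisted character $[a, b]$ is $\mu_2$-valued, such an equality would force $(\chi^{-1}\chi^s)^2 = 1$, contradicting the square-regularity assumption. Hence no anti-diagonal $w$ contributes.

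Combining the two cases produces exactly the basis $\{T_{i, t_1 - t_2 + j}\}_{i, j \in 2\bZ}$ claimed in the corollary. The only subtle step is the careful tracking of the Hilbert-symbol cocycle via Lemma \ref{Hecke_Lemma}; square-regularity then enters only to eliminate the anti-diagonal contributions.
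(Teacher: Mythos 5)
Your proof is correct and follows the same route the paper intends: the paper derives Corollary \ref{basis_regular_Hecke} directly from Lemma \ref{Hecke_Lemma} together with square-regularity, exactly as you do by checking the character-compatibility on $H$ over each double coset $\wt{I}\tilde{w}\wt{I}$, $w\in N_G(T)/(T\cap I)$, with the diagonal cosets giving the parity conditions and the antidiagonal ones excluded by $(\chi^{-1}\chi^s)^2\neq 1$. The only cosmetic slip is citing the $\wt{I}_1$-double-coset decomposition (\ref{pro-p_decomp}) while actually using the Iwahori-level decomposition indexed by $N_G(T)/(T\cap I)$; your sign conventions also differ from the direct computation but agree modulo $2$, which is all that matters.
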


\begin{prop}\label{Hecke_algebra_regular_char}
	For each $t\in \bZ/2\bZ$, the map
	\begin{align*}
		k[X,Y,Z^{\pm 1}]/(XY)&\xrightarrow{\cong} \mathcal{H}(\chi[t,0])\\
		Z&\mapsto T_{2,2}\\
		Z^{-1} & \mapsto T_{-2,-2}\\
		X&\mapsto T_{2,0}\\
		Y&\mapsto T_{0,2}
	\end{align*}
	is a well-defined $k$-algebra isomorphism. In particular, the right-hand side is commutative.
\end{prop}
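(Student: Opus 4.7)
Write $\phi\colon k[X,Y,Z^{\pm 1}]/(XY)\to \mathcal{H}(\chi[t,0])$ for the proposed map. The strategy is to (a) verify that the defining relations $ZZ^{-1}=1$ and $XY=0$ hold in $\mathcal{H}(\chi[t,0])$, (b) verify that $\phi$ takes values in $\mathcal{H}(\chi[t,0])$ (so is a ring homomorphism), (c) show $\phi$ is surjective by expressing each basis element $T_{i,j}$ as a monomial in $T_{2,2}^{\pm 1}$ and one of $T_{2,0},T_{0,2}$, and (d) conclude bijectivity by matching bases against Corollary \ref{basis_regular_Hecke}.

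\textbf{Step 1 (the element $T_{2,2}$ is central and invertible).} By Lemma \ref{centers}(i), since $p^2\in S$, the lift $(p^2 I_2,1)$ of the scalar matrix $p^2 I_2$ lies in $Z(\wt{G})$. Consequently, $T_{2,2}$ is central in $\mathcal{H}$, so commutes with every other operator. A direct evaluation via Remark \ref{conv_basis} shows $T_{2,2}T_{-2,-2}=T_{0,0}=1$; the only cocycle factor to track is $(p^{-2},p^2)$, which is trivial by Lemma \ref{Hilbert_symbol_identities}(v) since $p^2\in S$. This handles the $Z$-part of the presentation and gives commutativity of $T_{2,2}^{\pm 1}$ with the other generators for free.

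\textbf{Step 2 (the vanishing relation $T_{2,0}T_{0,2}=0$).} This is the main obstacle. Putting $n_1=\mathrm{diag}(p^2,1)$ and $n_2=\mathrm{diag}(1,p^2)$, the convolution formula (\ref{T_{n_1}T_{n_2}}) gives
\[
(T_{2,0}T_{0,2})(g)=\sum_{x\in I_1/(I_1\cap n_2^{-1}I_1 n_2)} T_{2,0}\bigl(g\, x^{-1}\,\tilde{n}_2^{-1}\bigr),
\]
and one computes $I_1\cap n_2^{-1}I_1n_2=\begin{mat}1+p\bZ_p & p^2\bZ_p\\p\bZ_p & 1+p\bZ_p\end{mat}$, so the sum has $p^2$ terms. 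One checks that the only double coset in the support is the one containing $\tilde{n}_1\tilde{n}_2=(\mathrm{diag}(p^2,p^2),1)$, and evaluating each summand using Lemma \ref{Hecke_Lemma}(i) to track the cocycle factors, each contribution is either $0$ or a Hilbert-symbol scalar; the number of non-zero contributions, respectively their total sum, is divisible by $p$ and hence vanishes in characteristic $p$. This is the characteristic-$p$ feature that forces the relation $XY=0$.

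\textbf{Step 3 (generation and conclusion).} Using a similar (but easier) induction with the convolution formula, one shows $T_{2,0}^n = T_{2n,0}$ and $T_{0,2}^n=T_{0,2n}$ for $n\geq 1$: the point is that the product of the two Iwahori-double cosets lies in a single new double coset and each such convolution contributes a single non-zero term (again controlled by Lemma \ref{Hecke_Lemma}). Combined with the centrality of $T_{2,2}$ from Step 1, this yields $T_{2,2}^m T_{2,0}^n = T_{2(m+n),2m}$ and $T_{2,2}^m T_{0,2}^n = T_{2m,2(m+n)}$ for all $m\in\bZ,\,n\geq 0$. Hence $\phi$ sends the standard $k$-basis $\{Z^aX^b : a\in\bZ,\,b\geq 0\}\cup\{Z^aY^c : a\in\bZ,\,c\geq 1\}$ of $k[X,Y,Z^{\pm 1}]/(XY)$ bijectively onto the basis $\{T_{i,j}\}_{i,j\in 2\bZ}$ from Corollary \ref{basis_regular_Hecke}. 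This proves $\phi$ is a $k$-algebra isomorphism, and commutativity of $\mathcal{H}(\chi[t,0])$ is inherited from the (commutative) presentation.
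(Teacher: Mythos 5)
Your Steps 1 and 3 are fine and run parallel to the paper's proof, which obtains the same facts from Lemma \ref{identities_Hecke_operators_II} (iii), (vi), (vii) together with Corollary \ref{basis_regular_Hecke}: centrality and invertibility of $T_{2,2}$ (coming from the fact that a lift of $\mathrm{diag}(p^2,p^2)$ is central in $\wt{G}$), the power identities $T_{2,0}^m=T_{2m,0}$, $T_{0,2}^m=T_{0,2m}$, and the matching of monomials with the basis $\{T_{i,j}\}_{i,j\in 2\bZ}$.

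The gap is in Step 2, and it sits exactly in the sentence ``one checks that the only double coset in the support is the one containing $\tilde{n}_1\tilde{n}_2$.'' That claim is not a characteristic-$p$ phenomenon; it is precisely where the square-regularity of $\chi$ must enter, and your argument never uses regularity. The product of supports $\wt{I}\tilde{n}_1\wt{I}\cdot\wt{I}\tilde{n}_2\wt{I}$ meets not only the double coset of $\mathrm{diag}(p^2,p^2)$ but also $I$-double cosets inside $K\,\mathrm{diag}(p^3,p)\,K$ (take $n_1xn_2$ with $x\in I$ whose lower-left entry has valuation exactly $1$). Your count of the $p^2$ unipotent coset representatives only shows that the coefficient of $T_{2,2}$ in the product is $p^2\cdot 1=0$; it says nothing about components supported on those other cosets. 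That such components cannot occur in $\mathcal{H}(\chi[t,0])$ is exactly Corollary \ref{basis_regular_Hecke} (i.e.\ Lemma \ref{Hecke_Lemma} combined with square-regularity), or, in the paper's formulation, the relation $T_s^2e_{\psi}=0$ for regular $\psi$ from Lemma \ref{identities_Hecke_operators_I} (v), which the paper applies after writing $T_{2,0}T_{0,2}$ in terms of $(T_sT_{\Pi})^2(T_{\Pi}T_s)^2e_{\chi[t,0]}$. The omission is not cosmetic: your computation treats the two factors symmetrically and makes no reference to $\chi$, so run in the non-square-regular algebra $\cH(\mathbf{1})$ for the opposite order it would equally ``prove'' $T_{0,2}T_{2,0}=0$, whereas by Proposition \ref{H(1)} one has $T_{0,2}T_{2,0}=S_{0,2}S_{0,0}^2S_{0,2}=-S_{0,2}T_{2,0}=-S_{0,4}\neq 0$ there, supported on an antidiagonal coset. (Relatedly, to get a well-defined homomorphism out of the commutative ring $k[X,Y,Z^{\pm1}]/(XY)$ you also need to control $T_{0,2}T_{2,0}$ in $\mathcal{H}(\chi[t,0])$, which Step 2 does not address; the paper's route via $T_s^2e_{\psi}=0$ handles both orders at once.) Your direct convolution computation can be repaired: first invoke Corollary \ref{basis_regular_Hecke} together with the elementary-divisor analysis above to conclude that $T_{2,0}T_{0,2}$ must be a scalar multiple of $T_{2,2}$, and only then apply your mod-$p$ count to see that the scalar is zero — but as written the key reduction is asserted, not proved, and its true source (regularity) is misattributed to characteristic $p$.
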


\begin{proof}
	The proposition follows from Corollary \ref{basis_regular_Hecke} and the three points
	\begin{enumerate}
		\item[{\rm (a)}] $T_{2,2}^M T_{i,j} =T_{i+2M,j+2M}=T_{i,j}T_{2,2}^M$ for all $i,j\in 2\bZ$ and $M\in \bZ$;
		\item[{\rm (b)}] $T_{2,0}^m =T_{2m,0}$ and $T_{0,2}^m=T_{0,2m}$ for all $m\in \bZ_{\geq 0}$;
		\item[{\rm (c)}] $T_{2,0}T_{0,2}=0$;
	\end{enumerate}
	which we quickly explain:
	
	\begin{enumerate}
		\item[{\rm (a)}] follows from Lemma \ref{identities_Hecke_operators_II} (iii);
		
		\item[{\rm (b)}] follows from Lemma \ref{identities_Hecke_operators_II} (vi) and (vii);
		
		\item[{\rm (c)}] follows from Lemma \ref{identities_Hecke_operators_II} (vi), (vii) (applied to $m=1$) and Lemma \ref{identities_Hecke_operators_I} (v).
	\end{enumerate}
\end{proof}

	Fix now integers $0\leq t_1,t_2\leq 1$. We want to compute $\mathcal{H}(\chi[t_1,0],\chi[t_2,0])$ as a left $\mathcal{H}(\chi[t_2,0])$- and right $\mathcal{H}(\chi[t_1,0])$-module, respectively. In view of the previous proposition, we may assume that $t_1\neq t_2$.

\begin{prop}\label{Hecke_module_regular_char}
	For $0\leq t_1\neq t_2\leq 1$, the maps of left $\cH(\chi[t_2,0])$-, resp.\ right $\cH(\chi[t_1,0])$-modules,
	\begin{align*}
		\cH(\chi[t_2,0])/(T_{2,0})\oplus \cH(\chi[t_2,0])/(T_{0,2})&\xrightarrow{\cong} \cH(\chi[t_1,0],\chi[t_2,0])\\
		(1,0)&\mapsto T_{0,1}\\
		(0,1)&\mapsto T_{2,1}
	\end{align*}
	and
	\begin{align*}
		\cH(\chi[t_1,0])/(T_{2,0})\oplus \cH(\chi[t_1,0])/(T_{0,2})&\xrightarrow{\cong} \cH(\chi[t_1,0],\chi[t_2,0])\\
		(1,0)&\mapsto T_{0,1}\\
		(0,1)&\mapsto T_{2,1}
	\end{align*}
	are well-defined isomorphisms.
\end{prop}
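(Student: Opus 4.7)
The plan is to verify each map is well-defined and bijective by computing the images of a natural $k$-basis of the source. I describe the argument for the left $\cH(\chi[t_2,0])$-module map; the right module map is handled symmetrically.

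\textbf{Step 1 (Well-definedness).} I must show $T_{2,0}\cdot T_{0,1}=0$ and $T_{0,2}\cdot T_{2,1}=0$ in $\cH(\chi[t_1,0],\chi[t_2,0])$. Working in $\cH$ after inserting the relevant idempotents, I rewrite $T_{2,0}=(T_sT_\Pi)^2$ and $T_{0,2}=(T_\Pi T_s)^2$ using Lemma \ref{identities_Hecke_operators_II} (vi), (vii). Commuting idempotents past $T_\Pi$ via Lemma \ref{identities_Hecke_operators_II} (v), each product collapses to an expression containing a factor of the form $T_s^2\,e_{\psi}$ where $\psi$ lies in the $\Pi$-orbit of $\chi[t_1,0]$. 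Since $O$ is square-regular, one checks using Remark \ref{Pi_equiv_Remark} that $\psi\neq \psi^s$ for each such $\psi$, so Lemma \ref{identities_Hecke_operators_I} (v) yields $T_s^2\,e_\psi=0$.

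\textbf{Step 2 (Image of the natural basis).} By Proposition \ref{Hecke_algebra_regular_char}, a $k$-basis of $\cH(\chi[t_2,0])/(T_{2,0})$ is given by the classes of $Y^aZ^b=T_{2b,2a+2b}$ for $a\geq 0$, $b\in\bZ$, and similarly $\cH(\chi[t_2,0])/(T_{0,2})$ has basis given by the classes of $X^aZ^b=T_{2a+2b,2b}$. I compute
\[
T_{2b,2a+2b}\cdot T_{0,1}\quad\text{and}\quad T_{2a+2b,2b}\cdot T_{2,1}
\]
directly, using the concatenation identities of Lemma \ref{identities_Hecke_operators_II} (vi)--(ix) together with a length count in the extended affine Weyl group: in both cases the enlarged exponent is added to the ``correct'' diagonal entry, so the supports of the two factors combine without cancellation. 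The outcome is
\begin{align*}
T_{2b,2a+2b}\cdot T_{0,1} &= c_1(a,b)\cdot T_{2b,2a+2b+1},\\
T_{2a+2b,2b}\cdot T_{2,1} &= c_2(a,b)\cdot T_{2a+2b+2,2b+1},
\end{align*}
for scalars $c_i(a,b)\in k^\times$ whose non-vanishing follows by tracking the explicit cocycle formulas in Lemma \ref{Hecke_Lemma}.

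\textbf{Step 3 (Conclusion).} By Corollary \ref{basis_regular_Hecke}, a $k$-basis of $\cH(\chi[t_1,0],\chi[t_2,0])$ is $\{T_{i,t_1-t_2+j}\}_{i,j\in 2\bZ}$, i.e.\ the set of operators $T_{i,k}$ with $i$ even and $k$ of parity opposite to $i$ (since $t_1-t_2\in\{\pm 1\}$ is odd). Step 2 exhibits the image of the first summand as $\{T_{i,k}: i\text{ even},\ k\text{ odd},\ k>i\}$ and the image of the second summand as $\{T_{i,k}: i\text{ even},\ k\text{ odd},\ i>k\}$; these sets are disjoint (automatic since $i\neq k$) and together exhaust the basis. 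Hence the proposed map is a $k$-linear isomorphism, which together with Step 1 establishes the left-module claim. The right-module statement follows by applying the same argument on the other side.

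The main obstacle is carrying out Step 2 precisely: the scalars $c_i(a,b)$ depend on delicate sign contributions coming from the Hilbert symbol via Lemma \ref{Hilbert_symbol_identities} and the second part of Lemma \ref{Hecke_Lemma}, and one needs to confirm that no cocycle-induced factor of the form $\omega(\cdots)^{(p-1)/2}$ causes an unwanted vanishing. Once this bookkeeping is in place, Steps 1 and 3 are formal.
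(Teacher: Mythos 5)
Your argument is correct and follows essentially the same route as the paper's proof: establish the annihilation relations $T_{2,0}T_{0,1}=0=T_{0,2}T_{2,1}$ (and their right-hand analogues) using $T_s^2e_{\psi}=0$ for the square-regular characters in the orbit, then match the monomial bases of the two cyclic summands with the basis $\{T_{i,t_1-t_2+j}\}_{i,j\in 2\bZ}$ of Corollary \ref{basis_regular_Hecke} via the concatenation identities of Lemma \ref{identities_Hecke_operators_II}. The only remark worth making is that your concern about the scalars $c_i(a,b)$ is unnecessary: writing $T_{0,1+2m}=T_{(\Pi s)^{1+2m}}e_{\chi[t_1,0]}$, $T_{2(m+1),1}=T_{\Pi^2(s\Pi)^{2m+1}}e_{\chi[t_1,0]}$, $T_{2m,0}=T_{(s\Pi)^{2m}}e_{\chi[t,0]}$, $T_{0,2m}=T_{(\Pi s)^{2m}}e_{\chi[t,0]}$, the products are computed exactly (with scalar $1$) by Lemma \ref{identities_Hecke_operators_II} (vi)--(ix), and any cocycle contribution of the form $\omega(\cdots)^{\frac{p-1}{2}}$ is a sign, so it can never produce an unwanted vanishing.
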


\begin{proof}
	By Corollary \ref{basis_regular_Hecke}, the space $\cH(\chi[t_1,0],\chi[t_2,0])$ has $k$-basis $\{T_{i,1+j}\}_{i,j\in 2\bZ}$. The proposition now follows from the identities, for $m\in \bZ_{\geq 0}$,
	\begin{enumerate}
		\item[{\rm (a)}] $T_{2,0}T_{0,1}=0=T_{0,1}T_{2,0}$;
		\item[{\rm (b)}] $T_{0,2}T_{2,1}=0=T_{2,1}T_{0,2}$;
		\item[{\rm (c)}] $T_{2,2}T_{i,1+j}=T_{i+2,1+j+2}=T_{i,1+j}T_{2,2}$ for all $i,j\in 2\bZ$;
		\item[{\rm (d)}] $T_{0,2m}T_{0,1}=T_{0,1+2m}=T_{0,1}T_{0,2m}$;
		\item[{\rm (e)}] $T_{2m,0}T_{2,1}=T_{2(m+1),1}=T_{2,1}T_{2m,0}$.
	\end{enumerate}
	To check these, write $T_{0,1+2m}=T_{(\Pi s)^{1+2m}}e_{\chi[t_1,0]}$, $T_{2(m+1),1}=T_{\Pi^2 (s\Pi)^{2m+1}}e_{\chi[t_1,0]}$ and, for $t\in \{t_1,t_2\}$, $T_{2,2}=T_{\Pi^4}e_{\chi[t,0]}$, $T_{2m,0}=T_{(s\Pi)^{2m}}e_{\chi[t,0]}$, $T_{0,2m}=T_{(\Pi s)^{2m}}e_{\chi[t,0]}$. The points (a)-(e) now follow from Lemma \ref{identities_Hecke_operators_II}, and Lemma \ref{identities_Hecke_operators_I} (v) to prove (a) and (b).
\end{proof}

\begin{remark}\label{left_right_agree}
	The points (a)-(e) show that the left action of $\cH(\chi[t_1,0])$ agrees with the right action of $\cH(\chi[t_2,0])$ on $\cH(\chi[t_1,0],\chi[t_2,0])$ under the isomorphism $\cH(\chi[t_1,0])\cong \cH(\chi[t_2,0])$ resulting from Proposition \ref{Hecke_algebra_regular_char}. Moreover, as a consequence of the proposition, we obtain that $\cH(\chi[1,0],\chi)\cong \cH(\chi,\chi[1,0])$ as modules over the commutative ring $\cH(\chi)\cong \cH(\chi[1,0])$.
\end{remark}

Recall that we defined
$\cH(\chi\oplus \chi[1,0])=\begin{mat}
	\cH(\chi) & \cH(\chi[1,0],\chi)\\
	\cH(\chi,\chi[1,0]) & \cH(\chi[1,0])
\end{mat}$. By Proposition \ref{Hecke_algebra_regular_char}, the $k$-algebras on the diagonal may be identified with $k[X,Y,Z^{\pm 1}]/(XY)$.

\begin{cor}\label{center_reg}
	The center $Z(\cH(\chi\oplus \chi[1,0]))$ of $\cH(\chi\oplus \chi[1,0])$ is isomorphic to the $k$-algebra $k[X,Y,Z^{\pm 1}]/(XY)$ via the diagonal embedding.
\end{cor}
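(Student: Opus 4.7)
The plan is to exhibit the diagonal embedding $\delta\colon k[X,Y,Z^{\pm 1}]/(XY)\to \cH(\chi\oplus \chi[1,0])$, sending $r$ to $\begin{mat} r & 0 \\ 0 & \phi(r)\end{mat}$ where $\phi\colon \cH(\chi)\cong \cH(\chi[1,0])$ is the composition of the two isomorphisms of Proposition \ref{Hecke_algebra_regular_char}, and to verify that it is an isomorphism onto $Z(\cH(\chi\oplus \chi[1,0]))$. Injectivity is automatic since $\phi$ is one, so the real work is to prove that its image is contained in the center and exhausts it.

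For the first inclusion, the two diagonal blocks $\cH(\chi),\cH(\chi[1,0])$ are commutative by Proposition \ref{Hecke_algebra_regular_char}, so the only thing to check is that a diagonal element $\begin{mat} a & 0 \\ 0 & \phi(a)\end{mat}$ commutes with an arbitrary off-diagonal element. For an upper-right block $m\in \cH(\chi[1,0],\chi)$ this reduces to the identity $a\cdot m = m\cdot \phi(a)$, which is exactly Remark \ref{left_right_agree} (transported through $\phi$); the lower-left block is symmetric. For the reverse inclusion, let $z$ be central. Commuting $z$ with the two orthogonal idempotents coming from the direct sum decomposition $\cInd^{\wt{G}}_{\wt{I}}(\chi\boxtimes \iota)\oplus \cInd^{\wt{G}}_{\wt{I}}(\chi[1,0]\boxtimes \iota)$ forces $z$ to be block diagonal, $z=\begin{mat} a & 0\\ 0 & d\end{mat}$ with $a\in \cH(\chi)$ and $d\in \cH(\chi[1,0])$. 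Commuting $z$ with any $m\in \cH(\chi[1,0],\chi)$ then gives $a\cdot m=m\cdot d$, and applying Remark \ref{left_right_agree} rewrites this as $m\cdot(\phi(a)-d)=0$ for all $m$.

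It therefore suffices to show that $\cH(\chi[1,0],\chi)$ is a faithful right $\cH(\chi[1,0])$-module, as that yields $d=\phi(a)$ and places $z$ in the image of $\delta$. By Proposition \ref{Hecke_module_regular_char}, as a right $\cH(\chi[1,0])$-module $\cH(\chi[1,0],\chi)\cong \cH(\chi[1,0])/(T_{2,0})\oplus \cH(\chi[1,0])/(T_{0,2})$, so its annihilator is $(T_{2,0})\cap(T_{0,2})$; transporting through the isomorphism of Proposition \ref{Hecke_algebra_regular_char} this becomes $(X)\cap(Y)$ in $k[X,Y,Z^{\pm 1}]/(XY)$, which is zero since the latter ring admits the $k[Z^{\pm 1}]$-basis $\{1,X,X^2,\dots,Y,Y^2,\dots\}$ splitting off $(X)$ from $(Y)$. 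The main obstacle is purely bookkeeping --- keeping the left and right actions straight and remembering that Remark \ref{left_right_agree} silently uses $\phi$ to identify $\cH(\chi)$ with $\cH(\chi[1,0])$; once that is parsed, everything is a short matrix computation.
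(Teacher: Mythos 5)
Your proof is correct and follows essentially the same route as the paper: idempotents force a central element to be block diagonal, Remark \ref{left_right_agree} gives the containment of the diagonal copy in the center, and the equality of the two diagonal entries comes from Proposition \ref{Hecke_module_regular_char} together with $XY=0$. The only cosmetic difference is that you package the final step as faithfulness of the off-diagonal right module (annihilator $(T_{2,0})\cap(T_{0,2})=0$), whereas the paper tests commutation against the two generators $\mathbf{T}_{0,1}$ and $\mathbf{T}_{2,1}$ directly — the same computation in substance.
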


\begin{proof}
	Considering elementary diagonal matrices, an element $b\in Z(\cH(\chi\oplus \chi[1,0]))$ needs to be diagonal, say $b=\operatorname{diag}(P_0,P_{1})$, where each $P_t\in k[X,Y,Z^{\pm 1}]/(XY)$ with $Z=T_{2,2},X=T_{2,0},Y=T_{0,2}\in \mathcal{H}(\chi[t,0])$. 
	Letting $\mathbf{T}_{0,1}=\begin{mat}
		0 & T_{0,1}\\0 & 0
	\end{mat}$ and $\mathbf{T}_{2,1}=\begin{mat}
		0 & T_{2,1}\\0 & 0
	\end{mat}$,
	the equality $b\mathbf{T}_{0,1}=\mathbf{T}_{0,1}b$ forces $P_1(0,Y,Z)=P_0(0,Y,Z)$, while the equality $b\mathbf{T}_{2,1}=\mathbf{T}_{2,1}b$ forces $P_1(X,0,Z)=P_0(X,0,Z)$. Since $XY=0$, this implies $P_1=P_0$. It remains to show that such elements indeed lie in the center, which follows from Remark \ref{left_right_agree}.
\end{proof}

	We now compute to which extend the off-diagonal entries contribute to the diagonal.

\begin{lem} \label{diagonal_contribution}
	Let $0\leq t_1\neq t_2 \leq 1$. The image of the composition map
	\[
	\mathcal{H}(\chi[t_2,0],\chi[t_1,0])\otimes_{\mathcal{H}(\chi[t_2,0])} \mathcal{H}(\chi[t_1,0],\chi[t_2,0]) \to \mathcal{H}(\chi[t_1,0])
	\]
	is equal to the ideal $(T_{2,0},T_{0,2})$ generated by $T_{2,0}$ and $T_{0,2}$. More precisely,
	\begin{itemize}
		\item $T_{0,1}\circ T_{2,1}=0=T_{2,1}\circ T_{0,1}$;
		\item $T_{0,1}\circ T_{0,1}=T_{0,2}$;
		\item $T_{2,1}\circ T_{2,1}=T_{4,2}=T_{2,2}T_{2,0}$.
	\end{itemize}
\end{lem}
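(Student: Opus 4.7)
The strategy is to translate each asserted identity into a relation in the big Hecke algebra $\cH$, using the explicit expressions for the relevant double coset operators, and then reduce to Lemmas \ref{identities_Hecke_operators_I} and \ref{identities_Hecke_operators_II} exactly in the spirit of the proof of Proposition \ref{Hecke_module_regular_char}. Concretely, I will use
\[
T_{0,1} = T_{\Pi s}\,e = T_{\Pi}T_s\,e, \qquad T_{2,1} = T_{\Pi^2 s\Pi}\,e = T_{\Pi^2}T_s T_{\Pi}\,e,
\]
\[
T_{0,2} = T_{(\Pi s)^2}\,e,\qquad T_{2,0} = T_{(s\Pi)^2}\,e,\qquad T_{2,2} = T_{\Pi^4}\,e,
\]
with $e = e_{\chi[t_1,0]}$ or $e_{\chi[t_2,0]}$ depending on which Hecke bimodule the operator belongs to. These expressions follow by tracking supports and normalisations via Remark \ref{conv_basis} (all the $\mu_2$-factors appearing in $\sigma$ are trivial on the relevant pairs), exactly as in Proposition \ref{Hecke_module_regular_char}.

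For $T_{0,1}\circ T_{0,1}=T_{0,2}$, Lemma \ref{identities_Hecke_operators_II}(vii) with $m=2$ gives $(T_{\Pi}T_s)^2=T_{(\Pi s)^2}$, so after absorbing the idempotent $e_{\chi[t_2,0]}$ produced in the middle of the composition into the surrounding operators, we immediately obtain the asserted equality. For $T_{2,1}\circ T_{2,1}=T_{4,2}=T_{2,2}T_{2,0}$, I expand $T_{2,1}^2 = T_{\Pi^2}T_s T_{\Pi}\cdot T_{\Pi^2}T_s T_{\Pi}$, push $T_{\Pi^2}$ through $T_s$ at the cost of a factor $(-1)^{(p-1)/2}$ (Lemma \ref{identities_Hecke_operators_II}(iv)), use $T_{\Pi^2}^2 = T_{\Pi}^4 = (-1)^{(p-1)/2}T_{\Pi^4}$ from parts (ii) and (iii), and apply (vi) with $m=2$ to fold $T_s T_{\Pi} T_s T_{\Pi} = T_{(s\Pi)^2}$; the two signs cancel and leave $T_{\Pi^4}T_{(s\Pi)^2}\,e$. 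The centrality of $T_{\Pi^4}$ from (iii) gives $T_{\Pi^4}T_{(s\Pi)^2}\,e=T_{2,2}T_{2,0}$, and its value at $\Pi^{4}(s\Pi)^{2}$ identifies it with $T_{4,2}$.

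The vanishing $T_{0,1}\circ T_{2,1}=0 = T_{2,1}\circ T_{0,1}$ is where the square-regularity hypothesis enters, and this will be the main conceptual step. Starting with
\[
T_{0,1}\circ T_{2,1} = T_{\Pi}T_s\cdot T_{\Pi^2}T_s T_{\Pi}\,e_{\chi[t_1,0]},
\]
I commute $T_s$ past $T_{\Pi^2}$ via (iv) to get $(-1)^{(p-1)/2}T_{\Pi^3} T_s^2 T_{\Pi}\,e_{\chi[t_1,0]}$. Lemma \ref{identities_Hecke_operators_II}(v) together with \ref{identities_Hecke_operators_I}(v) now show that $T_s^2$ applied after $T_{\Pi}$ is controlled by whether the character $\chi[t_1,0]^{\tilde{\Pi}} = \chi^s[1,t_1]$ equals its own $s$-conjugate $\chi[t_1,1]$; that equality would force $\chi^s\chi^{-1}=[t_1-1,1-t_1]$, hence $(\chi^s\chi^{-1})^2=1$, contradicting the square-regularity of $\chi$. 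Therefore $T_s^2\,e_{\chi^s[1,t_1]}=0$ by \ref{identities_Hecke_operators_I}(v), and the whole composition vanishes. The argument for $T_{2,1}\circ T_{0,1}$ is symmetric.

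The only real obstacle is the bookkeeping of the $\mu_2$-cocycle signs when commuting $T_{\Pi^2}$ past $T_s$ and $T_{\Pi}$, since a single miscounted factor $(-1)^{(p-1)/2}$ would turn the equality $T_{2,1}^2=T_{4,2}$ into a spurious sign or convert a zero into a non-zero operator. Once the accounting is discharged — always reduced to parts (ii)--(vii) of Lemma \ref{identities_Hecke_operators_II} plus Lemma \ref{identities_Hecke_operators_I}(v) — each of the three claims follows in one or two lines.
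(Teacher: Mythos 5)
Your argument is correct and is essentially the paper's own proof: write $T_{0,1},T_{2,1},T_{0,2},T_{2,0},T_{2,2}$ in terms of $T_{\Pi}$, $T_s$ and the idempotents as in the proof of Proposition \ref{Hecke_module_regular_char}, and deduce the three composition identities from Lemma \ref{identities_Hecke_operators_II} together with Lemma \ref{identities_Hecke_operators_I} (v), square-regularity entering exactly through $T_s^2e_{\psi}=0$ for the relevant $\psi\neq\psi^s$ (the idempotent produced by Lemma \ref{identities_Hecke_operators_II} (v) is $e_{(\chi[t_1,0])^s[1,0]}=e_{\chi^s[1,t_1]}$, which is what you actually use, and note $T_{\Pi}T_{\Pi^2}=(-1)^{\frac{p-1}{2}}T_{\Pi^3}$, so your intermediate ``$T_{\Pi^3}$'' should read $T_{\Pi}^3$ --- harmless, since that term vanishes anyway). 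The only step to add is the one-line deduction of the first assertion, which the paper draws from Proposition \ref{Hecke_module_regular_char}: the two bimodules are generated by $T_{0,1}$ and $T_{2,1}$ and composition is $\cH(\chi[t_1,0])$-bilinear, so the image is the ideal generated by the four composites, namely $(T_{0,2},T_{2,2}T_{2,0})=(T_{0,2},T_{2,0})$ because $T_{2,2}$ is invertible.
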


\begin{proof}
	By Proposition \ref{Hecke_module_regular_char}, we have
	\[
	\mathcal{H}(\chi[t_2,0],\chi[t_1,0])=\mathcal{H}(\chi[t_1,0])T_{0,1} + \mathcal{H}(\chi[t_1,0])T_{2,1}
	\]
	and
	\[
	\mathcal{H}(\chi[t_1,0],\chi[t_2,0])=T_{0,1}\mathcal{H}(\chi[t_1,0])+T_{2,1}\mathcal{H}(\chi[t_1,0]).
	\]
	Since the composition map is $\mathcal{H}(\chi[t_1,0])$-equivariant from both sides, it is enough to compute how the generators compose, i.e.\ it is enough to check the claimed identities. These follow from Lemma \ref{identities_Hecke_operators_II} and Lemma \ref{identities_Hecke_operators_I} (v) by writing the generators $T_{0,1}$ and $T_{2,1}$ as in the proof of Proposition \ref{Hecke_module_regular_char}.
\end{proof}

\subsubsection{Simple modules} In order to determine the simple modules, we first show that the center acts via scalars in the coefficient field $k$ and then study those with a fixed central character.

\begin{lem}\label{simple_fd_reg}
	Any simple right $\cH(\chi\oplus \chi[1,0])$-module admits a central character.
\end{lem}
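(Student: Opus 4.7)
The plan is to show that any simple right $\cH(\chi\oplus \chi[1,0])$-module is annihilated by a maximal ideal of the center, and then appeal to the Nullstellensatz. The starting observation is that $\cH:=\cH(\chi\oplus\chi[1,0])$ is a finite module over its center $Z\cong k[X,Y,Z^{\pm 1}]/(XY)$ (identified in Corollary \ref{center_reg}): by Proposition \ref{Hecke_algebra_regular_char}, each of the two diagonal subalgebras is literally $Z$, while Proposition \ref{Hecke_module_regular_char} together with Remark \ref{left_right_agree} shows that each off-diagonal piece is generated by two elements ($T_{0,1}$ and $T_{2,1}$) as a $Z$-module. Consequently $\cH$ is generated by at most six elements over $Z$.

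Given a simple right $\cH$-module $M$, simplicity yields $M=m\cH$ for any $0\neq m\in M$, which then makes $M$ finitely generated as a $Z$-module. Let $\mathfrak{a}=\operatorname{Ann}_Z(M)$. The central claim is that $\mathfrak{a}$ is a maximal ideal of $Z$. Suppose for contradiction that there is a maximal ideal $\mathfrak{m}\supsetneq\mathfrak{a}$ of $Z$. Because $Z$ is central in $\cH$, the subspace $\mathfrak{m}M$ is an $\cH$-submodule of $M$; it is non-zero since $\mathfrak{m}\not\subseteq\mathfrak{a}$, and therefore equals $M$ by simplicity. Localizing at $\bar{\mathfrak{m}}=\mathfrak{m}/\mathfrak{a}$ and applying Nakayama's lemma to the finitely generated module $M_{\bar{\mathfrak{m}}}$ over the local ring $(Z/\mathfrak{a})_{\bar{\mathfrak{m}}}$ gives $M_{\bar{\mathfrak{m}}}=0$. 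Running this for every maximal $\bar{\mathfrak{m}}$ of $Z/\mathfrak{a}$ and invoking the local-global principle for modules forces $M=0$, the desired contradiction.

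Since $Z$ is a finitely generated $k$-algebra and $k$ is algebraically closed, the Nullstellensatz identifies $Z/\mathfrak{a}$ with $k$, so $Z$ acts on $M$ through a character $Z\twoheadrightarrow Z/\mathfrak{a}\cong k$, as required. The main conceptual ingredient is the finite-module property of $\cH$ over $Z$; once this is in place, the rest is a standard commutative-algebra argument, with the only mild subtlety being the promotion of ``annihilator is a radical ideal'' to ``annihilator is maximal'' via the Nakayama step above.
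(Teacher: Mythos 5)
Your proof is correct, but it takes a genuinely different route from the paper. You first establish that $\cH(\chi\oplus\chi[1,0])$ is module-finite over its center $Z\cong k[X,Y,Z^{\pm 1}]/(XY)$ (the diagonal corners are cyclic over $Z$, and by Proposition \ref{Hecke_module_regular_char} each off-diagonal corner is spanned over $Z$ by $T_{0,1}$ and $T_{2,1}$), and then run a general argument: a simple module is cyclic, hence finitely generated over $Z$, its central annihilator $\mathfrak{a}$ is maximal by the Nakayama/local-global step (your contradiction hypothesis is equivalent to $\mathfrak{a}$ not being maximal, so every maximal ideal of $Z/\mathfrak{a}$ does arise from some $\mathfrak{m}\supsetneq\mathfrak{a}$, and the localization argument applies to all of them), and Zariski's lemma gives $Z/\mathfrak{a}\cong k$. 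The paper instead lets the kernel of the central action be a prime ideal, reduces to the localized polynomial ring $(k[Y])[Z^{\pm 1}]$, enumerates its primes, and excludes the non-maximal ones by hand: using $X=0$ on $M$ and the composition relations of Lemma \ref{diagonal_contribution} it exhibits an explicit two-generator spanning set of $M$ over $C[Z^{\pm 1}]$ (resp.\ $k[Y]$), contradicting the module structure over the fraction field. Your approach is more conceptual and buys more: module-finiteness over an affine center immediately gives that every simple module is finite dimensional over $k$ (since $Z$ then acts through $k$), a fact the paper only extracts from the subsequent explicit classification, and the same argument would apply verbatim in the non-square-regular case, replacing the separate trick in Lemma \ref{simple_fd_non-reg}. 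The paper's hands-on case analysis, on the other hand, stays entirely inside the explicit presentation of the algebra and produces the concrete generators that its later proofs (e.g.\ of Propositions \ref{simple_regular_ss} and \ref{M(chi,lambda)_regular}) reuse.
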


\begin{proof}
	To ease notation, put $R=\cH(\chi\oplus \chi[1,0])$, and identify the center of $R$ with $k[X,Y,Z^{\pm 1}]/(XY)$ as in Corollary \ref{center_reg}.
	Let $M$ be a simple right $R$-module. The kernel of the central action map $k[X,Y,Z^{\pm 1}]/(XY)\to \End_R(M)$ must be a prime ideal. In particular, it contains $X$ or $Y$ and we may without loss of generality assume that it contains $X$. Thus, the action factors through the localized polynomial ring $(k[Y])[Z^{\pm 1}]$ over the PID $k[Y]$. The prime ideals in this ring are
	\begin{enumerate}
		\item[{\rm (a)}] $(0)$;
		\item[{\rm (b)}] $(g(Z))$ for an irreducible polynomial $g(Z)\in (k[Y])[Z^{\pm 1}]$;
		\item[{\rm (c)}] $(Y-y, Z-z)$ for $y\in k$ and $z\in k^{\times}$.
	\end{enumerate}
	We need to exclude that the kernel of the map $(k[Y])[Z^{\pm 1}]\to \End_R(M)$ falls into (a) or (b). In case (a) and (b), $M$ is naturally a $C=k(Y)$-vector space. Since $T_{2,0}=X=0$ on $M$, the last relation in Lemma \ref{diagonal_contribution} implies that $\begin{mat}
		0 & T_{2,1}\\T_{2,1} & 0
	\end{mat}^2$ kills $M$. In particular, we may choose some $0\neq v\in M$ with $v\begin{mat}
	0 & T_{2,1}\\T_{2,1} & 0
\end{mat}=0$. We may assume that $ve_{\chi[t,0]}=v$ for either $t=0$ or $t=1$. By Lemma \ref{diagonal_contribution}, the $C$-subvector space 
\begin{equation}\label{sub}
C[Z^{\pm 1}]v + C[Z^{\pm 1}]v\begin{mat}
	0 & T_{0,1}\\ T_{0,1} & 0
\end{mat}\subset M.
\end{equation}
is stable under the action of $R$ and must therefore be equal to $M$. In particular, $M$ is finitely generated as a module over $C[Z^{\pm 1}]$ and so the $C[Z^{\pm 1}]$-module structure on $M$ cannot extend to $C(Z)$, which excludes (a). In case (b), replacing $C$ by $k[Y]$ in (\ref{sub}) yields an $R$-stable subspace and we deduce that $M$ is finitely generated over $k[Y]$, which is impossible since $M$ is a $C$-vector space by assumption.
\end{proof}

Fix now a central character $\xi\colon k[X,Y,Z^{\pm 1}]/(XY)\to k$ corresponding to the $k$-rational point $(x,y,z)$ and put
\[
\cH(\chi\oplus \chi[1,0])_{\xi}=\cH(\chi\oplus \chi[1,0])\otimes_{Z(\cH(\chi\oplus \chi[1,0])),\xi} k.
\]

\begin{prop}\label{simple_regular_ss}
	Assume that $x=0=y$. Up to isomorphism, there exist precisely two simple right $\cH(\chi\oplus \chi[1,0])_{\xi}$-modules, namely the $1$-dimensional $\cH(\chi\oplus \chi[1,0])_{\xi}$-algebras
	\[
	\operatorname{pr}_t\colon \cH(\chi\oplus \chi[1,0])_{\xi} \twoheadrightarrow \cH(\chi[t,0])/(T_{2,0},T_{0,2},T_{2,2}-z)\cong k, \hspace{0.1cm} \text{ for } t=0,1,
	\]
	induced by projecting to the diagonal entry in $\mathcal{H}(\chi[t,0])$.
\end{prop}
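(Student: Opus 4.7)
The plan is to recognize $R := \cH(\chi \oplus \chi[1,0])_{\xi}$ as a finite-dimensional $k$-algebra with a square-zero two-sided ideal and semisimple quotient $k \times k$, and then invoke the elementary classification of simples over such an algebra.

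First, I would combine Proposition \ref{Hecke_algebra_regular_char} and Proposition \ref{Hecke_module_regular_char} with the specialization $X = Y = 0$, $Z = z$ to obtain an explicit description of $R$. Each diagonal component $e_{\chi[t,0]} R e_{\chi[t,0]}$ collapses to $k$ (since $X = Y = 0$ leaves only the scalar $z$ in the Laurent variable $Z$), and each off-diagonal component $e_{\chi[t_1,0]} R e_{\chi[t_2,0]}$ with $t_1 \neq t_2$ is spanned by the images of $T_{0,1}$ and $T_{2,1}$. Let $J \subset R$ denote the two-sided ideal formed by the off-diagonal part; the quotient map then yields $R/J \cong k \times k$, with the two coordinate projections matching $\operatorname{pr}_0$ and $\operatorname{pr}_1$ of the statement.

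The heart of the argument is the verification that $J^2 = 0$. It suffices to show that each of the four composable products of off-diagonal generators vanishes in $R$. Applying Lemma \ref{diagonal_contribution} with both orderings $(t_1, t_2) = (0,1)$ and $(1,0)$, these products equal $T_{0,2} = Y$, $T_{2,2} T_{2,0} = ZX$, or $0$; upon specialization by $\xi$ with $x = y = 0$, all of these become zero, so $J^2 = 0$.

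Finally, for any simple right $R$-module $M$, the submodule $MJ$ equals $0$ or $M$. If $MJ = M$, then $M \cdot J = (MJ) \cdot J = M \cdot J^2 = 0$, and hence $M = MJ = 0$, contradicting simplicity. Thus $MJ = 0$ and $M$ factors through $R/J \cong k \times k$, whose only simple right modules are the two one-dimensional coordinate factors, matching $\operatorname{pr}_0$ and $\operatorname{pr}_1$. They are non-isomorphic because the central idempotents $e_\chi$ and $e_{\chi[1,0]}$ act by distinct scalars on them. The only conceptual input is the square-zero nature of $J$ under the supersingular specialization $x = y = 0$; the remainder is formal, so I do not expect any serious obstacle.
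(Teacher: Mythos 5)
Your proposal is correct. The setup is the same as the paper's: you use Propositions \ref{Hecke_algebra_regular_char} and \ref{Hecke_module_regular_char} to identify $\cH(\chi\oplus\chi[1,0])_{\xi}$ with the algebra having diagonal $k\times k$ and off-diagonal part $(k\oplus k)\oplus(k\oplus k)$ spanned by the images of $T_{0,1},T_{2,1}$, and the key input is Lemma \ref{diagonal_contribution} together with the supersingular specialization $x=y=0$, exactly as in the paper's identification (\ref{H(O/Pi)_xi}). Where you diverge is the finishing step: you observe that the off-diagonal part $J$ is a square-zero two-sided ideal with $R/J\cong k\times k$, so every simple right module is killed by $J$ and must be one of the two coordinate characters; the paper instead argues by hand with right ideals, showing that any right ideal not contained in $\ker(\operatorname{pr}_0)$ or $\ker(\operatorname{pr}_1)$ contains a unit, so the two kernels are the only maximal right ideals. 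Your radical-style argument is a bit slicker and more standard; the paper's is more explicit but proves the same thing. Two cosmetic points: calling $J$ a two-sided ideal before checking $J^2=0$ is mildly circular, since products of off-diagonal elements a priori land in the diagonal --- but the computation you do immediately afterwards (the products are $0$, $T_{0,2}=Y$, and $T_{2,2}T_{2,0}=ZX$, all killed by $\xi$) is precisely what makes $J$ an ideal, so there is no gap; and the idempotents $e_{\chi},e_{\chi[1,0]}$ are not central in $R$, though this does not matter for distinguishing the two simple modules, since they act by different scalars and any module isomorphism is equivariant for the right action.
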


\begin{proof}
	It follows from Lemma \ref{diagonal_contribution} and the assumption on $x$ and $y$ that the projection maps $\operatorname{pr}_t$ are indeed $k$-algebra homomorphisms. Since the respective target is $1$-dimensional over $k$, they define simple $\cH(\chi\oplus \chi[1,0])_{\xi}$-modules. Moreover, these are pairwise non-isomorphic as the kernels are distinct two-sided maximal ideals.
	
	For showing that these are the only simple right modules, we make use of propositions \ref{Hecke_algebra_regular_char} and \ref{Hecke_module_regular_char} to identify
	\begin{equation}\label{H(O/Pi)_xi}
		\cH(\chi\oplus \chi[1,0])_{\xi}\cong \begin{pmatrix}
			k & k\oplus k\\ k \oplus k & k
		\end{pmatrix}
	\end{equation}
	with multiplication $\begin{pmatrix}
		a & b\\ c & d
	\end{pmatrix} \begin{pmatrix}
		a' & b'\\ c' & d'
	\end{pmatrix}=\begin{pmatrix}
		a a' & ab'+d'b\\
		a'c+dc' & dd'
	\end{pmatrix}$, where $k$ acts on $k\oplus k$ via the usual componentwise scalar multiplication. Let now $I\subset \cH(\chi\oplus \chi[1,0])_{\xi}$ be a right ideal neither contained in $\ker(\operatorname{pr}_0)$ nor in $\ker(\operatorname{pr}_1)$. We claim that $I$ is the whole ring. Indeed, by assumption $I$ contains elements $E_0,E_1$, where the $(t+1)$-th entry on the diagonal in $E_t$ is non-zero. By scaling we may assume that this non-zero entry is equal to $1$. Since $I$ is a right ideal, we may further assume that $E_t$ contains only one non-zero column for $t=0,1$. Hence, $E_0+E_1$ is of the form $\begin{pmatrix}
		1 & b\\c & 1
	\end{pmatrix}$ for some $b,c\in k \oplus k$. Since $\begin{pmatrix}
		1 & b\\c & 1
	\end{pmatrix}\begin{pmatrix}
		1 & -b\\-c & 1
	\end{pmatrix}=\begin{pmatrix}
		1 & 0\\0 & 1
	\end{pmatrix}$, we are done.
\end{proof}

\begin{prop}\label{simple_regular_principal} Assume that $x\neq 0$ or $y\neq 0$. Up to isomorphism, there exists a unique simple right $\cH(\chi\oplus \chi[1,0])_{\xi}$-module, namely the $2$-dimensional space $kv\oplus kw$ uniquely determined by $ve_{\chi}=v$ and $we_{\chi[1,0]}=w$.
\end{prop}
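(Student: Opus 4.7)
\medskip
\noindent\textbf{Proof proposal.} The plan is to show that under the hypothesis $(x,y)\neq (0,0)$ the algebra $\cH(\chi\oplus\chi[1,0])_\xi$ is isomorphic to the matrix algebra $M_{2\times 2}(k)$; the proposition then follows from Artin--Wedderburn, which yields a unique simple module, of dimension $2$, realised as $(kv\oplus kw)\cong k\cdot e_\chi \oplus k\cdot e_{\chi[1,0]}$.

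First, I compute the $k$-dimension of each ``block'' of the matrix presentation
\[
\cH(\chi\oplus\chi[1,0])_\xi=\begin{pmatrix} \cH(\chi)_\xi & \cH(\chi[1,0],\chi)_\xi\\ \cH(\chi,\chi[1,0])_\xi & \cH(\chi[1,0])_\xi\end{pmatrix}.
\]
By Proposition \ref{Hecke_algebra_regular_char}, each diagonal block is
\[
k[X,Y,Z^{\pm 1}]/(XY,\,X-x,\,Y-y,\,Z-z),
\]
which is $1$-dimensional (note that $xy=0$ since $X Y=0$ in the centre, and $z\neq 0$ since $Z$ is invertible). By Proposition \ref{Hecke_module_regular_char}, an off-diagonal block decomposes as a sum of a cyclic module generated by $T_{0,1}$ annihilated by $T_{2,0}$ and one generated by $T_{2,1}$ annihilated by $T_{0,2}$; after imposing $\xi$, the first summand survives iff $x=0$ and the second iff $y=0$. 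The hypothesis $(x,y)\neq (0,0)$ together with $xy=0$ ensures that exactly one of these summands survives, and it is $1$-dimensional. Hence each off-diagonal block is $1$-dimensional and $\dim_k \cH(\chi\oplus\chi[1,0])_\xi=4$.

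Next, I exhibit matrix units. If $y\neq 0$ (so $x=0$), set $e_{12}=T_{0,1}$ in $\cH(\chi[1,0],\chi)$ and $e_{21}=y^{-1}T_{0,1}$ in $\cH(\chi,\chi[1,0])$; if $x\neq 0$ (so $y=0$), set instead $e_{12}=T_{2,1}$ in $\cH(\chi[1,0],\chi)$ and $e_{21}=(zx)^{-1}T_{2,1}$ in $\cH(\chi,\chi[1,0])$. Lemma \ref{diagonal_contribution} together with the step~1 identifications of the diagonal blocks then yields $e_{12}e_{21}=e_\chi$ and $e_{21}e_{12}=e_{\chi[1,0]}$, while the other products $e_{12}^2$ and $e_{21}^2$ vanish because they lie in off-diagonal-times-off-diagonal positions of the same kind. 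Combined with the idempotents $e_\chi,e_{\chi[1,0]}$ this gives a full system of matrix units in a $4$-dimensional algebra, so $\cH(\chi\oplus\chi[1,0])_\xi\cong M_{2\times 2}(k)$.

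Finally, by Artin--Wedderburn the category of right modules over $M_{2\times 2}(k)$ is semisimple with a unique simple, the standard $2$-dimensional module. Pulling this back, the decomposition under the orthogonal idempotents $e_\chi,e_{\chi[1,0]}$ (whose sum is $1$) identifies the simple module with $kv\oplus kw$ where $v=v\cdot e_\chi$ and $w=w\cdot e_{\chi[1,0]}$, exactly as claimed. The only mild obstacle is making the correct case distinction between $y\neq 0$ and $x\neq 0$ when choosing the matrix units; once this choice is in place, everything reduces to the Wedderburn structure theorem.
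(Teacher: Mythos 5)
Your proposal is correct, but it takes a different route from the paper's proof, so a comparison is worthwhile. You compute that $\cH(\chi\oplus\chi[1,0])_{\xi}$ is four-dimensional -- each diagonal corner becomes $k$, and under $\xi$ exactly one of the two cyclic summands of each off-diagonal block from Proposition \ref{Hecke_module_regular_char} survives (the $T_{0,1}$-summand iff $x=0$, the $T_{2,1}$-summand iff $y=0$) -- and then use the composition formulas of Lemma \ref{diagonal_contribution} to rescale $T_{0,1}$ (case $x=0$, $y\neq 0$) or $T_{2,1}$ (case $y=0$, $x\neq 0$, where the relation $T_{2,1}\circ T_{2,1}=T_{2,2}T_{2,0}$ forces the normalization $(zx)^{-1}$, which you correctly include) into a full set of matrix units, giving $\cH(\chi\oplus\chi[1,0])_{\xi}\cong M_{2\times 2}(k)$ and concluding by Artin--Wedderburn. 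The paper argues with less computation: it uses Lemma \ref{diagonal_contribution} only to see that the two-sided ideal generated by $e_{\chi}$ contains the image of the composition map, namely the ideal $(T_{2,0},T_{0,2})$, which under $\xi$ contains the unit $x$ or $y$; hence $e_{\chi}$ (and likewise $e_{\chi[1,0]}$) is a full idempotent, so $\cH(\chi\oplus\chi[1,0])_{\xi}$ is Morita equivalent to the corner ring $e_{\chi}\cH(\chi\oplus\chi[1,0])_{\xi}e_{\chi}=\cH(\chi)\otimes_{k[X,Y,Z^{\pm 1}]/(XY),\xi}k\cong k$, which yields uniqueness of the simple module, and applying the same argument to $e_{\chi[1,0]}$ shows $M=Me_{\chi}\oplus Me_{\chi[1,0]}$ with both summands one-dimensional. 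Your approach pays for the extra bookkeeping of the off-diagonal dimensions under the central character (and the case split on which of $x,y$ vanishes), but it buys the stronger structural statement that $\cH(\chi\oplus\chi[1,0])_{\xi}$ is literally a $2\times 2$ matrix algebra over $k$, whereas the paper's Morita argument with full idempotents sidesteps any dimension count. Both arguments ultimately rest on the same inputs, Propositions \ref{Hecke_algebra_regular_char}, \ref{Hecke_module_regular_char} and Lemma \ref{diagonal_contribution}.
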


\begin{proof}
	Consider the idempotent $e_{\chi}=\begin{pmatrix}
		1 & 0\\0 & 0
	\end{pmatrix}$ in $\cH(\chi\oplus \chi[1,0])_{\xi}$. Since $x$ or $y$ is non-zero, Lemma \ref{diagonal_contribution} implies that $\cH(\chi\oplus \chi[1,0])_{\xi}e_{\chi}\cH(\chi\oplus \chi[1,0])_{\xi}=\cH(\chi\oplus \chi[1,0])_{\xi}$, i.e.\ $e_{\chi}$ is a full idempotent. Hence, $\cH(\chi\oplus \chi[1,0])_{\xi}$ is Morita equivalent to the field
	\[
	e_{\chi}\cH(\chi\oplus \chi[1,0])_{\xi}e_{\chi}=\cH(\chi)\otimes_{k[X,Y,Z^{\pm 1}]/(XY),\xi} k\cong k
	\]
	and therefore admits unique simple right module. The same works with $e_{\chi}$ replaced by $e_{\chi[1,0]}$. Hence, if $M$ denotes the unique simple right $\cH(\chi\oplus \chi[1,0])_{\xi}$-module, we have $M=Me_{\chi}\oplus Me_{\chi[1,0]}$ with each summand being $1$-dimensional.
\end{proof}

Under the isomorphism $Z(\cH(O))\cong Z(\cH(\chi\oplus \chi[1,0]))\cong k[X,Y,Z^{\pm 1}]/(XY)$ resulting from the Morita equivalence Corollary \ref{Morita_regular}, the elements $X,Y,Z$ correspond to $T_{(s\Pi)^2},T_{(\Pi s)^2}, T_{\Pi^4}$, respectively.

	\begin{definition}\label{def_simple_reg}
	\begin{enumerate}
		\item[{\rm (i)}] We say that a simple right $\cH(O)$-module is \textit{supersingular} if it is killed by the central elements $T_{(s\Pi)^2}$ and $T_{(\Pi s)^2}$. Otherwise, we say that the module is \textit{principal}.
		\item[{\rm (ii)}] For $\chi\in O$ and $z\in k^{\times}$, denote by $\operatorname{SS}(\chi,z)$ the unique supersingular $\cH(O)$-module with $\operatorname{SS}(\chi,z)e_{\chi}\neq 0$ and the central element $T_{\Pi^{4}}$ acting by the scalar $z$.
		\item[{\rm (iii)}] For $\chi\in O$ and $(x,y,z)\in k^{3}$ with $xy=0\neq z$ and $x\neq 0$ or $y\neq 0$, denote by $\operatorname{PS}(\chi,x,y,z)$ the unique principal $\cH(O)$-module with $\operatorname{PS}(\chi,x,y,z)e_{\chi}\neq 0$ such that $(T_{(s\Pi)^2},T_{(\Pi s)^2},T_{\Pi^4})$ act on $\operatorname{PS}(\chi,x,y,z)e_{\chi}$ via the scalars $(x,y,z)$.
	\end{enumerate}
\end{definition}

\begin{remark}\label{inter}
	The only intertwinings between the simple right $\cH(O)$-modules are $\operatorname{SS}(\chi,z)\cong \operatorname{SS}(\chi^s[1,0],z)$ and
	\[
	\operatorname{PS}(\chi[1,0],x,y,z)\cong \operatorname{PS}(\chi,x,y,z)\cong \operatorname{PS}(\chi^s[1,0],y,x,z)
	\]
	for $\chi\in O$. For the last isomorphism note that $T_{\Pi} (T_sT_{\Pi})^2 T_{\Pi}^{-1}=(T_{\Pi}T_s)^2$ and $T_{\Pi} (T_{\Pi}T_s)^2T_{\Pi}^{-1}=(T_sT_{\Pi})^2$, which follows from Lemma \ref{identities_Hecke_operators_II} (ii), (iv).
\end{remark}

\subsection{The Hecke algebra of a non-regular orbit}\label{Hecke_non_reg} In this section, we fix a non-square-regular orbit $O$ and a character $\chi\in O$. Possibly replacing $\chi$ by $\chi^s[0,1]$, we may assume that $\chi=\chi^s$. Twisting appropriately as in Lemma \ref{twist}, we may assume that $\chi=\mathbf{1}$ is the trivial character. By Lemma \ref{orbit-description} and Remark \ref{Pi_equiv_Remark}, the orbit $O$ of agrees with the $\Pi$-orbit of $\mathbf{1}$ and thus, by Corollary \ref{TPi_iso-cInd}, we have an algebra isomorphism $\cH(O)\cong M_{4\times 4}(\cH(\mathbf{1}))$ and so:

\begin{cor}\label{Morita_non-regular}
	The functor 
	\begin{align*}
		\Mod_{\cH(O)}&\xrightarrow{\sim} \Mod_{\cH(\mathbf{1})}\\
		M&\mapsto Me_{\mathbf{1}}
	\end{align*}
	is an equivalence of categories.
\end{cor}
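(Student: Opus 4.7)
The strategy is straightforward given the preceding setup: I would first promote the text preceding the statement into a precise algebra isomorphism $\cH(O) \cong M_{4 \times 4}(\cH(\mathbf{1}))$, and then invoke standard Morita theory.

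The identification of $O$ with the $\Pi$-orbit of $\mathbf{1}$ is the key input. Under our running hypotheses $\chi = \mathbf{1}$ and $\mathbf{1}^s = \mathbf{1}$, Lemma \ref{orbit-description}(ii) gives $O = \{\mathbf{1}[i,j] : (i,j) \in (\bZ/2\bZ)^2\}$, and Remark \ref{Pi_equiv_Remark} identifies this very set with $\mathbf{1}^{\wt{\Pi}^{\bZ}}$. Consequently, Corollary \ref{TPi_iso-cInd} applied to the powers $T_\Pi^i$ (for $i=0,1,2,3$) provides $\wt{G}$-equivariant isomorphisms $\cInd^{\wt{G}}_{\wt{I}}(\mathbf{1}\boxtimes \iota) \xrightarrow{\cong} \cInd^{\wt{G}}_{\wt{I}}(\chi'\boxtimes \iota)$ for each $\chi' \in O$, so
\[
\bigoplus_{\chi' \in O} \cInd^{\wt{G}}_{\wt{I}}(\chi' \boxtimes \iota) \cong \cInd^{\wt{G}}_{\wt{I}}(\mathbf{1} \boxtimes \iota)^{\oplus 4}
\]
as $\wt{G}$-representations. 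Taking endomorphism rings yields $\cH(O) \cong M_{4\times 4}(\cH(\mathbf{1}))$, and under this isomorphism the idempotent $e_{\mathbf{1}}$ becomes a rank-one diagonal matrix unit $E_{11}$.

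The equivalence of module categories is now a purely formal consequence of Morita theory: for any unital $k$-algebra $A$ and any $n \geq 1$, the functor $M \mapsto M E_{11}$ is a Morita equivalence $\Mod_{M_{n \times n}(A)} \xrightarrow{\sim} \Mod_A$, with quasi-inverse $N \mapsto N^{\oplus n}$. Applied to $A = \cH(\mathbf{1})$ and $n = 4$, this is precisely the functor $M \mapsto M e_{\mathbf{1}}$, which finishes the proof. I do not anticipate any real obstacle here; the substantive content is simply that, in contrast to the regular case (where $O$ contained two distinct $\Pi$-orbits and forced the intermediate algebra $\cH(\chi \oplus \chi[1,0])$ to appear), the non-regular orbit collapses to a single $\Pi$-orbit, so no auxiliary subalgebra is needed.
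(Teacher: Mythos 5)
Your proposal is correct and follows the same route as the paper: the text immediately preceding the corollary establishes that the non-square-regular orbit coincides with the $\Pi$-orbit of $\mathbf{1}$ (via Lemma \ref{orbit-description} and Remark \ref{Pi_equiv_Remark}), deduces $\cH(O)\cong M_{4\times 4}(\cH(\mathbf{1}))$ from Corollary \ref{TPi_iso-cInd}, and the corollary is then exactly the standard Morita equivalence $M\mapsto Me_{\mathbf{1}}$ you describe. No gaps.
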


	Viewing $\cH(\mathbf{1})$ as the $k$-algebra of genuine bi-$I$-invariant functions $\wt{G}\to k$ endowed with the convolution product, Lemma \ref{Hecke_Lemma} yields:

\begin{cor}\label{basis_H(1)}
	The algebra $\cH(\mathbf{1})$ has $k$-basis $\{T_{i,j},S_{i,j}\}_{i,j\in 2\bZ}$, where $T_{i,j}$ and $S_{i,j}$ are uniquely determined by 
	\[
	\supp(T_{i,j})=\wt{I}\left(\begin{mat}
		p^{i} & 0\\ 0 & p^{j}
	\end{mat},1\right)\wt{I}, \hspace{0.2cm} T_{i,j}\left(\left(\begin{mat}
		p^{i} & 0\\ 0 & p^{j}
	\end{mat},1\right)\right)=1
	\]
	and
	\[
	\supp(S_{i,j})=\wt{I}\left(\begin{mat}
		0&p^{i}\\ p^{j} & 0
	\end{mat},1\right)\wt{I}, \hspace{0.2cm} S_{i,j}\left(\left(\begin{mat}
		0 & p^{i}\\  p^{j} & 0
	\end{mat},1\right)\right)=1,
	\]
	respectively.
\end{cor}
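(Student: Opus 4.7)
The plan is to use the double-coset decomposition of $\wt G$ modulo $\wt I$ and determine, coset by coset, when a non-zero genuine bi-$I$-equivariant function can be supported on it.

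Using the splitting $\wt I=I\times\mu_2$ of Lemma~\ref{split_subgroups}(i), the Iwahori decomposition of $G$ lifts to $\wt G=\bigsqcup_n \wt I\,\tilde n\,\wt I$, where $n$ ranges over representatives of $N_G(T)/(T\cap I)$; concretely, one may take $n=\begin{mat}p^i&0\\0&p^j\end{mat}$ and $n=\begin{mat}0&p^i\\p^j&0\end{mat}$ for $(i,j)\in\bZ^2$. Correspondingly, $\cH(\mathbf 1)$ decomposes as a direct sum over $n$ of its subspaces of functions supported on $\wt I\tilde n\wt I$, each of which is at most one-dimensional, being determined by the value at $\tilde n$.

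For each such $n$, a candidate function $\varphi$ with $\varphi(\tilde n)=1$ is well-defined as a genuine bi-$I$-equivariant function if and only if, for every $x\in I$ with $y:=n^{-1}xn\in I$, the scalar $\zeta\in\mu_2$ determined by the identity $(x,1)(n,1)=(n,1)(y,\zeta)$ satisfies $\zeta=\tau(x)\tau(y)$, where $\tau\colon K\to\mu_2$ is the splitting of Lemma~\ref{split_subgroups}(i). The twist by $\tau$ enters because, under the splitting, the element $x\in I$ corresponds to $(x,\tau(x))\in\wt I$, on which $\mathbf 1\boxtimes\iota$ evaluates to $\iota(\tau(x))$; matching the left and right equivariance of $\varphi$ at $\tilde n$ yields exactly the stated identity.

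The key step is a case-by-case verification of $\zeta=\tau(x)\tau(y)$ across the four sub-cases of Lemma~\ref{Hecke_Lemma} ($b=0$ or $b\ne 0$, combined with $c=0$ or $c\ne 0$), for both diagonal $n$ using part~(i) and antidiagonal $n$ using part~(ii). The explicit formula for $\tau$ reads $\tau(x)=\omega(d/\det(x))^{v(c)(p-1)/2}$ when $c\ne 0$ (and $\tau(x)=1$ otherwise); combined with the observation that $v(bc)\ge 1$ inside $I$ forces $\omega(\det(x))=\omega(ad)$, this simplifies $\tau(x)\tau(y)$ to an explicit $(p-1)/2$-power of a product of $\omega(a)$ and $\omega(d)$. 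Comparing parities of the exponents of $\omega(a)$ and $\omega(d)$ on the two sides of the identity shows that it holds universally if and only if both $i$ and $j$ are even. This casework is the primary technical obstacle; it is mechanical but demands careful bookkeeping. Once the existence of $T_{i,j}$ and $S_{i,j}$ is established precisely for $(i,j)\in(2\bZ)^2$, their pairwise distinct supports give $k$-linear independence, and by construction they exhaust all non-zero summands in the decomposition of $\cH(\mathbf 1)$, yielding the claimed basis.
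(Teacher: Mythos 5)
Your proposal is correct and is essentially the paper's own argument: the paper's proof is exactly the terse observation that, viewing $\cH(\mathbf{1})$ as genuine bi-$I$-equivariant functions, Lemma \ref{Hecke_Lemma} decides coset by coset (over the decomposition $\wt{G}=\bigsqcup_{n\in N_G(T)/(T\cap I)}\wt{I}\tilde{n}\wt{I}$) which double cosets carry a well-defined function, and your consistency condition $\zeta=\tau(x)\tau(y)$ with $\tau$ the splitting of Lemma \ref{split_subgroups}(i) is precisely the check being invoked. Your parity bookkeeping (necessity from $x\in H$, sufficiency using $\omega(\det x)=\omega(ad)$) fills in the details the paper leaves implicit, with no genuine deviation in method.
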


\begin{prop}\label{H(1)}
	The $k$-algebra $\cH(\mathbf{1})$ is generated by $T_{2,2},T_{-2,-2},S_{0,0},S_{0,2}$, and the following is a full set of relations: $T_{2,2}$ is central and invertible with inverse $T_{-2,-2}$, $S_{0,0}^2=-S_{0,0}$ and $S_{0,2}^2=0$.
\end{prop}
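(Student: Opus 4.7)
The strategy has three steps: verify the four claimed relations in $\cH(\mathbf{1})$, observe that they give a normal form spanning the abstract algebra $R$ they present, and match this spanning set bijectively to the basis of $\cH(\mathbf{1})$ from Corollary \ref{basis_H(1)}. The centrality of $T_{2,2} = T_{\Pi^4}e_\mathbf{1}$ and its invertibility with inverse $T_{-2,-2}$ are immediate from Lemma \ref{identities_Hecke_operators_II}(i), (iii). The relation $S_{0,0}^2 = -S_{0,0}$ follows from Lemma \ref{identities_Hecke_operators_I}(v) applied to $\chi = \mathbf{1}$, noting that $\mathbf{1}$ is $s$-fixed and $\mathbf{1}(\operatorname{diag}(1,-1)) = 1$.

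The key relation $S_{0,2}^2 = 0$ is proved as follows. Using Lemma \ref{identities_Hecke_operators_II}(vi) and (ix), I write $S_{0,2} = T_{\Pi s\Pi}e_\mathbf{1} = T_\Pi T_s T_\Pi e_\mathbf{1}$; expanding $S_{0,2}^2$, collecting $T_\Pi T_\Pi = T_{\Pi^2}$ by (ii), and then simplifying $T_s T_{\Pi^2}T_s = (-1)^{(p-1)/2}T_{\Pi^2}T_s^2$ via (iv) yields
\[
S_{0,2}^2 = (-1)^{(p-1)/2}\, T_\Pi T_{\Pi^2} T_s^2 T_\Pi\, e_\mathbf{1}.
\]
Applying Lemma \ref{identities_Hecke_operators_II}(v) to commute $e_\mathbf{1}$ through $T_\Pi$ (which replaces $e_\mathbf{1}$ by $e_{\mathbf{1}[1,0]}$) gives $(-1)^{(p-1)/2}T_\Pi T_{\Pi^2}(T_s^2 e_{\mathbf{1}[1,0]})T_\Pi$. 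Since $(\mathbf{1}[1,0])^s = \mathbf{1}[0,1] \neq \mathbf{1}[1,0]$, the first case of Lemma \ref{identities_Hecke_operators_I}(v) forces $T_s^2 e_{\mathbf{1}[1,0]} = 0$, whence $S_{0,2}^2 = 0$.

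Let $R$ denote the abstract $k$-algebra presented by $A, A^{-1}, B, C$ subject to the four relations. By centrality of $A$ together with $B^2 = -B$ and $C^2 = 0$, every element of $R$ is a $k$-linear combination of elements $A^n w$ with $n\in \bZ$ and $w$ a (possibly empty) alternating word in $B, C$. The verified relations give a well-defined $k$-algebra map $R\to \cH(\mathbf{1})$ sending $A\mapsto T_{2,2},\,B\mapsto S_{0,0},\,C\mapsto S_{0,2}$. Inductive computations using Lemma \ref{identities_Hecke_operators_II}(vi)-(ix) show that this map sends $(BC)^m\mapsto T_{2m,0}$, $(CB)^m\mapsto T_{0,2m}$, $(BC)^m B\mapsto S_{2m,0}$, and $(CB)^m C\mapsto S_{0,2m+2}$ for $m\geq 0$. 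Since $A$ is central and corresponds to the index shift $(i,j)\mapsto (i+2,j+2)$ on both $T_{i,j}$ and $S_{i,j}$, a case analysis on $(i,j)\in 2\bZ\times 2\bZ$ exhibits a bijection between the spanning set $\{A^n w\}$ and the basis $\{T_{i,j}, S_{i,j}\}_{i,j\in 2\bZ}$ of Corollary \ref{basis_H(1)}, establishing surjectivity and simultaneously ruling out any further relations in $R$.

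The main subtlety is the verification of $S_{0,2}^2 = 0$: a direct convolution computation would involve summing over the $p$ cosets in the decomposition of $\wt{I}\tilde{\Pi s\Pi}\wt{I}$ (each contributing $1$), giving $p = 0$ in characteristic $p$; the conceptually cleaner route above reduces the vanishing to the non-$s$-fixedness of the twisted character $\mathbf{1}[1,0]$.
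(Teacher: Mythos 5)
Your proposal is correct and follows essentially the same route as the paper: verify the four relations via Lemma \ref{identities_Hecke_operators_II} together with Lemma \ref{identities_Hecke_operators_I}(v) (your careful derivation of $S_{0,2}^2=0$ is exactly the intended use of the idempotent identities), and then match the normal-form words in the generators to the basis of Corollary \ref{basis_H(1)} using the product formulas $(S_{0,0}S_{0,2})^m=T_{2m,0}$, $(S_{0,2}S_{0,0})^m=T_{0,2m}$, $T_{2m,0}S_{0,0}=S_{2m,0}$, $S_{0,2}T_{2m,0}=S_{0,2(m+1)}$ and the central shift by $T_{2,2}$, which is precisely the paper's argument (its items (a)--(c)). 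You merely spell out the details the paper leaves implicit; the concluding side remark about a direct convolution computation is not load-bearing and does not affect correctness.
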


\begin{proof}
	Writing $T_{2,2}=T_{\Pi^{4}}e_{\mathbf{1}}$, $T_{-2,-2}=T_{\Pi^{-4}}e_{\mathbf{1}}$, $S_{0,0}=T_se_{\mathbf{1}}$ and $S_{0,2}=T_{\Pi s \Pi} e_{\mathbf{1}}$, the relations follow from Lemma \ref{identities_Hecke_operators_II}, and Lemma \ref{identities_Hecke_operators_I} (v) to prove $S_{0,2}^2=0$. Furthermore, for all $m\in \bZ_{\geq 1}$,
	\begin{enumerate}
		\item[{\rm (a)}] $(S_{0,0}S_{0,2})^m=T_{2m,0}$,  $(S_{0,2}S_{0,0})^m=T_{0,2m}$;
		\item[{\rm (b)}] $T_{2m,0}S_{0,0}=S_{2m,0}$, \hspace{0.3cm} $S_{0,2}T_{2m,0}=S_{0,2(m+1)}$;
		\item[{\rm (c)}] $T_{2,2} T_{i,j}=T_{i+2,j+2}$, \hspace{0.2cm} $T_{2,2}S_{i,j}=S_{i+2,j+2}$ for all $i,j\in 2\bZ$.
	\end{enumerate}
	Indeed, (a) follows from (vi) and (vii), (b) follows from  (viii) and (ix), while (c) follows from (iii) of Lemma \ref{identities_Hecke_operators_II}.
	Corollary \ref{basis_H(1)} now finishes the proof.
\end{proof}

\begin{cor}\label{Z(R)}
	The center of $\cH(\mathbf{1})$ is a localized polynomial ring in two variables, namely the $k$-algebra morphism
	\begin{align*}
		k[\mathfrak{Z},Z^{\pm 1}]&\xrightarrow{\cong} Z(\cH(\mathbf{1}))\\
		\mathfrak{Z}&\mapsto S_{0,0}S_{0,2}+S_{0,2}S_{0,0}+S_{0,2}\\
		Z&\mapsto T_{2,2}
	\end{align*}
	is a well-defined isomorphism.		
\end{cor}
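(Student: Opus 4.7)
The plan is to directly verify that the displayed map is well-defined, injective, and surjective, working within the presentation of $\cH(\mathbf{1})$ supplied by Proposition \ref{H(1)}.

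For well-definedness, $Z=T_{2,2}$ is central and invertible by Proposition \ref{H(1)}, so I only need to show that $\mathfrak{Z}$ is central. As $\cH(\mathbf{1})$ is generated by $Z^{\pm 1}, S_{0,0}, S_{0,2}$ and $\mathfrak{Z}$ trivially commutes with $Z^{\pm 1}$, it suffices to verify that $[\mathfrak{Z}, S_{0,0}] = [\mathfrak{Z}, S_{0,2}] = 0$. Using only $S_{0,0}^2 = -S_{0,0}$ and $S_{0,2}^2 = 0$, both $S_{0,0}\mathfrak{Z}$ and $\mathfrak{Z} S_{0,0}$ collapse to $S_{0,0}S_{0,2}S_{0,0}$, and analogously both $S_{0,2}\mathfrak{Z}$ and $\mathfrak{Z} S_{0,2}$ collapse to $S_{0,2}S_{0,0}S_{0,2}$.

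The main calculational input is the formula
\[
\mathfrak{Z}^m = T_{2m,0} + T_{0,2m} + S_{0,2m} \quad (m \geq 1),
\]
which I would prove by induction on $m$ using identities (a)--(c) from the proof of Proposition \ref{H(1)} together with the square relations. Combined with the shift rule $Z^n T_{i,j} = T_{i+2n, j+2n}$ and $Z^n S_{i,j} = S_{i+2n, j+2n}$ (from Proposition \ref{H(1)}(c)), this makes injectivity transparent: the image of any $\sum_{m \geq 0,\, n \in \bZ} a_{m,n}\mathfrak{Z}^m Z^n$ is a $k$-linear combination of pairwise distinct elements of the basis of Corollary \ref{basis_H(1)}, hence vanishes iff every $a_{m,n}$ does.

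The main obstacle is surjectivity. Using the shift rule again, $\cH(\mathbf{1})$ is a free $k[Z^{\pm 1}]$-module on $\{1, S_{0,0}\} \cup \{T_{2m,0}, T_{0,2m}, S_{2m,0}, S_{0,2m}\}_{m \geq 1}$, so a general $x \in \cH(\mathbf{1})$ is uniquely of the form
\[
x = a + b S_{0,0} + \sum_{m \geq 1}\bigl(c_m T_{2m,0} + d_m T_{0,2m} + e_m S_{2m,0} + f_m S_{0,2m}\bigr)
\]
with coefficients in $k[Z^{\pm 1}]$. A direct computation yields, for each $m \geq 1$, the commutators $[S_{0,0}, T_{2m,0}] = -T_{2m,0} - S_{2m,0}$, $[S_{0,0}, T_{0,2m}] = T_{0,2m} + S_{2m,0}$, $[S_{0,0}, S_{2m,0}] = 0$, and $[S_{0,0}, S_{0,2m}] = T_{2m,0} - T_{0,2m}$; collecting basis coefficients in $[S_{0,0}, x] = 0$ forces $c_m = d_m = f_m$ for every $m$, so that by the formula for $\mathfrak{Z}^m$ one has $x = a + bS_{0,0} + \sum_m c_m\mathfrak{Z}^m + \sum_m e_m S_{2m,0}$. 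Since $\mathfrak{Z}$ is already central, the remaining non-trivial commutators $[S_{0,2}, S_{0,0}] = T_{0,2} - T_{2,0}$ and $[S_{0,2}, S_{2m,0}] = T_{0,2(m+1)} - T_{2(m+1),0}$ show that $[S_{0,2}, x] = 0$ holds if and only if $b = 0$ and every $e_m = 0$, leaving $x$ in the image. The bulk of the work is the careful bookkeeping of these commutator actions on the $k[Z^{\pm 1}]$-basis, though no single computation is difficult.
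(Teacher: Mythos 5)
Your argument is correct and follows essentially the same route as the paper: it works from the presentation and monomial basis of $\cH(\mathbf{1})$ in Proposition \ref{H(1)} and the key identity $\mathfrak{Z}^m=(S_{0,0}S_{0,2})^m+(S_{0,2}S_{0,0})^m+(S_{0,2}S_{0,0})^{m-1}S_{0,2}=T_{2m,0}+T_{0,2m}+S_{0,2m}$, which is exactly the paper's induction formula. The only difference is that you spell out the commutator bookkeeping (forcing $c_m=d_m=f_m$, $b=0$, $e_m=0$) that the paper's proof leaves implicit when it asserts the shape of a central element.
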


\begin{proof}
	The explicit description of the algebra $\cH(\mathbf{1})$ in the previous proposition implies that the center is equal to the set of elements, which are up to an integral power of $T_{2,2}$ of the form
	\[
	bT_{2,2}^M + \sum_{i\geq 1} a_i T_{2,2}^{N_i} (S_{0,0}S_{0,2})^{i}+\sum_{i\geq 1} a_i T_{2,2}^{N_i}(S_{0,2}S_{0,0})^{i}+\sum_{i\geq 0} a_{i+1}T_{2,2}^{N_{i+1}}(S_{0,2}S_{0,0})^{i}S_{0,2} 
	\]
	for $b,a_i\in k$, $M\in \bZ$ and $N_i\in \bZ_{\geq 0}$.
	An induction shows moreover that
	\[
	\mathfrak{Z}^{n}=(S_{0,0}S_{0,2})^n + (S_{0,2}S_{0,0})^n + (S_{0,2}S_{0,0})^{n-1}S_{0,2}
	\]
	for all $n\in \bZ_{\geq 1}$, finishing the proof.
\end{proof}

\subsubsection{Simple modules} 

\begin{lem}\label{simple_fd_non-reg}
	Any simple right $\cH(\mathbf{1})$-module is finite dimensional.
\end{lem}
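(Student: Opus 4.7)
The plan is to show that $\cH(\mathbf{1})$ is finitely generated as a module over its center $Z(\cH(\mathbf{1})) \cong k[\mathfrak{Z},Z^{\pm 1}]$ (Corollary \ref{Z(R)}), and then deduce the finite-dimensionality of simple modules by combining Schur's lemma with a Nakayama argument and the Nullstellensatz.

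For the first (and main) step, I claim that the four elements $1, S_{0,0}, S_{0,2}, S_{0,2}S_{0,0}$ generate $\cH(\mathbf{1})$ as a $k[\mathfrak{Z},Z^{\pm 1}]$-module. Using $Z=T_{2,2}$ to shift, it suffices to express the basis elements $T_{0,0}, T_{2m,0}, T_{0,2m}, S_{2m,0}$ for $m\geq 1$ and $S_{0,2m}$ for $m\geq 0$ in this form. Combining Proposition \ref{H(1)}, the identities in Lemma \ref{identities_Hecke_operators_II} and the formula $\mathfrak{Z}^n = T_{2n,0} + T_{0,2n} + S_{0,2n}$ from the proof of Corollary \ref{Z(R)}, a direct computation yields
\[
\mathfrak{Z}^n\cdot S_{0,0}=S_{2n,0},\qquad \mathfrak{Z}^n\cdot S_{0,2}=S_{0,2n+2},\qquad \mathfrak{Z}^n\cdot(S_{0,2}S_{0,0})=T_{0,2n+2},
\]
and hence $T_{2n,0}=\mathfrak{Z}^n-T_{0,2n}-S_{0,2n}$ lies in the submodule generated by these four elements. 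Verifying these identities by hand using $S_{0,0}^2=-S_{0,0}$, $S_{0,2}^2=0$ and the centrality of $\mathfrak{Z}$ is the main (though routine) computational obstacle.

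Once finite generation is established, let $M$ be a simple right $\cH(\mathbf{1})$-module and put $D:=\End_{\cH(\mathbf{1})}(M)$, a division $k$-algebra by Schur's lemma. The center $Z(\cH(\mathbf{1}))$ maps into the center of $D$, with prime kernel $\mathfrak{p}$; set $C_0:=k[\mathfrak{Z},Z^{\pm 1}]/\mathfrak{p}$. Choose $0\neq v\in M$; by simplicity $M=v\cdot\cH(\mathbf{1})$, and by the first step $M$ is a finitely generated $C_0$-module. Every nonzero $d\in C_0$ acts invertibly on $M$, since its image in $D$ is a unit. Suppose $\mathfrak{p}$ were not maximal, so that $C_0$ is a Noetherian domain but not a field; then some nonzero $d\in C_0$ lies in a maximal ideal $\mathfrak{m}\subset C_0$. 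Localizing at $\mathfrak{m}$ does not change $M$ (every element of $C_0\setminus\mathfrak{m}$ already acts invertibly), and $dM=M$ forces $\mathfrak{m}_{\mathfrak{m}}M_{\mathfrak{m}}=M_{\mathfrak{m}}$; Nakayama's lemma then gives $M=M_{\mathfrak{m}}=0$, contradicting $v\neq 0$. Hence $\mathfrak{p}$ is maximal, and by the Nullstellensatz together with the algebraic closedness of $k$, we get $C_0=k$. Consequently $M$ is finitely generated over $k$, i.e.\ finite-dimensional, as desired.
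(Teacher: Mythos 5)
Your proof is correct, but it takes a genuinely different route from the paper's. You first prove that $\cH(\mathbf{1})$ is module-finite over its center $k[\mathfrak{Z},Z^{\pm 1}]$, generated by $1,S_{0,0},S_{0,2},S_{0,2}S_{0,0}$, and your identities do check out against the relations recorded in the proofs of Proposition \ref{H(1)} and Corollary \ref{Z(R)}: from $(S_{0,0}S_{0,2})^m=T_{2m,0}$, $(S_{0,2}S_{0,0})^m=T_{0,2m}$, $T_{2m,0}S_{0,0}=S_{2m,0}$, $S_{0,2}T_{2m,0}=S_{0,2(m+1)}$ and $S_{0,0}^2=-S_{0,0}$, $S_{0,2}^2=0$ one gets exactly $\mathfrak{Z}^n=T_{2n,0}+T_{0,2n}+S_{0,2n}$, $\mathfrak{Z}^nS_{0,0}=S_{2n,0}$, $\mathfrak{Z}^nS_{0,2}=S_{0,2n+2}$ and $\mathfrak{Z}^n(S_{0,2}S_{0,0})=T_{0,2n+2}$, so after shifting by $Z^{\pm 1}=T_{\pm 2,\pm 2}$ every basis element of Corollary \ref{basis_H(1)} lies in the span of your four generators. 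Your second step (Schur's lemma, passing to the domain $C_0=k[\mathfrak{Z},Z^{\pm 1}]/\mathfrak{p}$ acting invertibly, the Nakayama localization argument to force $\mathfrak{p}$ maximal, and Zariski's lemma to get $C_0=k$) is the standard argument for algebras module-finite over an affine center and is carried out correctly. The paper argues differently: it splits into cases according to whether $S_{0,0}+1$ kills the simple module $M$; if so, $M$ descends to the commutative finite type algebra $\cH(\mathbf{1})/(S_{0,0}+1)$ and the Nullstellensatz applies, and if not, one picks $0\neq v\in M(S_{0,0}+1)$ and repeats the prime-ideal case analysis of Lemma \ref{simple_fd_reg}, with $vS_{0,2}$ playing the role of the off-diagonal element used there. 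Your route buys a stronger structural statement (finite generation of $\cH(\mathbf{1})$ over its center), which yields finite-dimensionality and the existence of central characters for all simple modules in one stroke and makes no case distinction; the paper's route is shorter given that the square-regular argument is already in place and avoids writing out the module-generation computation, which in your write-up is asserted rather than fully verified and is the one place where the details should be spelled out.
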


\begin{proof}
	Let $M$ be a simple right $\cH(\mathbf{1})$-module. If $S_{0,0}+1$ kills this module, then $M$ descends to a module over the commutative finite type $k$-algebra $\cH(\mathbf{1})/(S_{0,0}+1)$ and we are done by Hilbert's Nullstellensatz. Hence, we may assume that there exists some non-zero $v\in M(S_{0,0}+1)$. In particular, $vS_{0,0}=0$. Writing $Z(\cH(\mathbf{1}))=k[\mathfrak{Z},Z^{\pm 1}]$ as in Corollary \ref{Z(R)}, one can now argue as in the square-reguler case, cf.\ proof of Lemma \ref{simple_fd_reg} replacing $v\begin{mat}
		0 & T_{0,1}\\ T_{0,1} & 0
	\end{mat}$ by $vS_{0,2}$.
\end{proof}

\begin{prop}\label{simple_non-regular} The simple right $\cH(\mathbf{1})$-modules are up to isomorphism as follows:
	\begin{enumerate}
		\item[{\rm (SS)}] For each $(\lambda,z)\in \{0,-1\} \times k^{\times}$, the character $\cH(\mathbf{1})\to k$, $T_{2,2}\mapsto z, S_{0,2}\mapsto 0, S_{0,0}\mapsto \lambda$.
		\item[{\rm (PS)}] For each $(\lambda,z)\in k^{\times}\times k^{\times}$, the $2$-dimensional space $kv\oplus kw$ with basis $v,w$ and action determined by $vT_{2,2}=zv, \hspace{0.1cm} vS_{0,2}=w, \hspace{0.1cm} wS_{0,0}=\lambda v.$
	\end{enumerate}
	There are no isomorphisms between objects associated to different parameters.
\end{prop}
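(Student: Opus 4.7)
The plan is to reduce to classifying the simple modules of a small finite-dimensional quotient algebra. By Lemma \ref{simple_fd_non-reg} any simple right $\cH(\mathbf{1})$-module $M$ is finite dimensional; since $k$ is algebraically closed, Schur's lemma together with Corollary \ref{Z(R)} gives a central character $(\mathfrak{z},z)\in k\times k^{\times}$, so $M$ is a simple module over
\[
R_{\mathfrak{z},z}:=\cH(\mathbf{1})\otimes_{k[\mathfrak{Z},Z^{\pm 1}],(\mathfrak{z},z)}k.
\]
Using Proposition \ref{H(1)} and the relation $S_{0,0}S_{0,2}+S_{0,2}S_{0,0}+S_{0,2}=\mathfrak{z}$ (which lets one rewrite $S_{0,2}S_{0,0}$ in terms of $S_{0,0}S_{0,2}$, $S_{0,2}$ and $1$), $R_{\mathfrak{z},z}$ is $4$-dimensional with $k$-basis $\{1,S_{0,0},S_{0,2},S_{0,0}S_{0,2}\}$. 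The classification therefore amounts to finding all simple right modules of this explicit $4$-dimensional algebra for each parameter $(\mathfrak{z},z)$, and matching them with the list (SS), (PS).

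In the case $\mathfrak{z}=0$ the idea is that $S_{0,2}$ kills $M$. Indeed, the subspace $MS_{0,2}\subset M$ is a submodule: $MS_{0,2}\cdot S_{0,2}=MS_{0,2}^{2}=0$, and $MS_{0,2}\cdot S_{0,0}=M(-S_{0,0}S_{0,2}-S_{0,2})\subset MS_{0,2}$ using the central relation. If $MS_{0,2}=M$, applying $S_{0,2}$ once more gives $M=0$. Thus $MS_{0,2}=0$, and $M$ is a simple module over $R_{0,z}/(S_{0,2})\cong k[S_{0,0}]/(S_{0,0}^{2}+S_{0,0})\cong k\times k$, giving exactly the two $1$-dimensional modules of (SS) with $\lambda\in\{0,-1\}$.

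In the case $\mathfrak{z}\neq 0$ the plan is to show $R_{\mathfrak{z},z}$ is Morita equivalent to $k$ via the orthogonal idempotents $e_{0}=1+S_{0,0}$ and $e_{-1}=-S_{0,0}$ (well defined since $p$ is odd). A direct computation in the $4$-dimensional algebra, repeatedly using $S_{0,2}S_{0,0}=\mathfrak{z}-S_{0,0}S_{0,2}-S_{0,2}$ and $S_{0,0}S_{0,2}S_{0,0}=\mathfrak{z}S_{0,0}$, yields
\[
(e_{-1}S_{0,2}e_{0})(e_{0}S_{0,2}e_{-1})=-\mathfrak{z}^{2}e_{-1}\neq 0,
\]
so $e_{-1}\in R_{\mathfrak{z},z}e_{0}R_{\mathfrak{z},z}$. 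Since $e_{0}+e_{-1}=1$, $e_{0}$ is a full idempotent; moreover $e_{0}R_{\mathfrak{z},z}e_{0}=ke_{0}$ (another short computation showing $e_{0}S_{0,2}e_{0}=\mathfrak{z}e_{0}$ and $e_{0}S_{0,0}=0$), so $R_{\mathfrak{z},z}$ is Morita equivalent to $k$ and admits a unique simple module. Its dimension is $\dim_{k}R_{\mathfrak{z},z}e_{0}=2$. To see it has the stated shape, pick $v$ with $ve_{-1}=v$ and set $w=vS_{0,2}$; then $wS_{0,0}=vS_{0,2}S_{0,0}=v(\mathfrak{z}-S_{0,0}S_{0,2}-S_{0,2})=\mathfrak{z}v$, matching (PS) with $\lambda=\mathfrak{z}\in k^{\times}$ and $z$ the scalar action of $T_{2,2}$.

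Finally, non-isomorphism between modules with distinct parameters is automatic, since the parameters are read off from the central character and, within a given central character, from the eigenvalue of $S_{0,0}$ (in the $1$-dimensional case). The only real computational step is the identity $(e_{-1}S_{0,2}e_{0})(e_{0}S_{0,2}e_{-1})=-\mathfrak{z}^{2}e_{-1}$ — the rest is bookkeeping inside a $4$-dimensional algebra — so I expect no serious obstacle beyond carrying out that calculation cleanly.
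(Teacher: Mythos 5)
Your argument is correct, but it is not the route the paper takes: the paper's proof simply invokes finite-dimensionality (Lemma \ref{simple_fd_non-reg}) and then runs Vign\'eras's argument from \cite[Theorem A.2]{Vigneras} -- if $\dim_k M>1$ one shows $S_{0,2}S_{0,0}$ cannot be nilpotent, picks an eigenvector $v$ with non-zero eigenvalue $\lambda$, and gets $M=kv\oplus kvS_{0,2}$. You instead fix the central character $(\mathfrak{z},z)$ of $k[\mathfrak{Z},Z^{\pm1}]$ (legitimate, by Lemma \ref{simple_fd_non-reg} plus Schur over the algebraically closed $k$), pass to the quotient spanned by $1,S_{0,0},S_{0,2},S_{0,0}S_{0,2}$, kill $S_{0,2}$ when $\mathfrak{z}=0$, and for $\mathfrak{z}\neq 0$ show $e_0=1+S_{0,0}$ is a full idempotent with $e_0R_{\mathfrak{z},z}e_0=ke_0$; I checked your key identity $(e_{-1}S_{0,2}e_0)(e_0S_{0,2}e_{-1})=\mathfrak{z}^2S_{0,0}=-\mathfrak{z}^2e_{-1}$ and it is right, as are $e_0S_{0,2}e_0=\mathfrak{z}e_0$ and $S_{0,0}S_{0,2}S_{0,0}=\mathfrak{z}S_{0,0}$. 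In effect you are redoing for the non-regular orbit exactly what the paper does for the regular one (Propositions \ref{simple_regular_ss} and \ref{simple_regular_principal}): fixed central character, explicit small quotient algebra, full-idempotent/Morita argument. What your version buys is self-containedness and structural transparency (for $\mathfrak{z}\neq0$ the fibre is visibly a $2\times2$ matrix algebra, for $\mathfrak{z}=0$ the semisimplification is $k\times k$), and uniformity with the regular case; what the paper's citation buys is brevity and avoidance of the central-character reduction. Two small points to tidy: under the Morita equivalence the unique simple right module is $e_0R_{\mathfrak{z},z}$, so the relevant dimension is $\dim_k e_0R_{\mathfrak{z},z}$ rather than $\dim_k R_{\mathfrak{z},z}e_0$ (both are $2$, and your explicit $v,w$ construction settles the shape anyway), and one should note that $v$ and $w=vS_{0,2}$ are linearly independent -- immediate since $w\in kv$ together with $S_{0,2}^2=0$ would force $\mathfrak{z}v=wS_{0,0}$ to vanish.
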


\begin{proof}
	By the previous lemma, each simple right $\cH(\mathbf{1})$-module $M$ is finite dimensional. The same poof used by Vign\'{e}ras in \cite[Theorem A.2]{Vigneras} now works here: If $\dim_k M > 1$, then one shows that $S_{0,2}S_{0,0}$ cannot be nilpotent on $M$, so one may choose an eigenvector $v\in M$ with non-zero eigenvalue $\lambda\in k^{\times}$ and one has $M=kv\oplus kvS_{0,2}$.
\end{proof}

\begin{remark}\label{central_char_non-regular}
	The center $Z(\cH(\mathbf{1}))\cong k[\mathfrak{Z},Z^{\pm 1}]$, see Lemma \ref{Z(R)}, acts on the module of type (SS) (resp.\ (PS)) corresponding to the parameter $(\lambda,z)$ via the central character corresponding to the $k$-rational point $(0,z)$ (resp.\ $(\lambda,z)$).
\end{remark}

Using the Morita equivalence Corollary \ref{Morita_non-regular}, we identify the center of $\cH(O)$ with $Z(\cH(\mathbf{1}))\cong k[\mathfrak{Z},Z^{\pm 1}]$.

\begin{definition}\label{def_simple_nreg}
	\begin{enumerate}
		\item[{\rm (i)}] We say that a simple right $\cH(O)$-module is \textit{supersingular} if the center $k[\mathfrak{Z},Z^{\pm 1}]$ acts on it via a $k$-rational point of the form $(0,z)$ for $z\in k^{\times}$. Otherwise, we say that the module is \textit{principal}.
		\item[{\rm (ii)}] For $(\lambda,z)\in \{0,-1\}\times k^{\times}$, denote by $\operatorname{SS}(\lambda,z)$ the unique supersingular $\cH(O)$-module such that $\operatorname{SS}(\lambda,z)e_{\mathbf{1}}$ is equal to the character on $\cH(\mathbf{1})$ corresponding to $(\lambda,z)$ as in Proposition \ref{simple_non-regular}.
		\item[{\rm (iii)}] For $(\lambda,z)\in k^{\times}\times k^{\times}$, denote by $\operatorname{PS}(\lambda,z)$ the unique right $\cH(O)$-module such that $\operatorname{PS}(\lambda,z)e_{\mathbf{1}}$ is equal to the $2$-dimensional $\cH(\mathbf{1})$-module corresponding to $(\lambda,z)$ as in Proposition \ref{simple_non-regular}.
	\end{enumerate}
\end{definition}

\subsection{Extensions of simple modules}\label{Extensions}
Using \cite[§6]{Breuil_Paskunas} as a guideline, we compute the extensions of the simple modules determined in the previous sections in the category of $\cH$-modules on which $T_{\Pi^4}$ acts via a fixed scalar. Applying Lemma \ref{twist} to an appropriate unramified character, we may and do assume that $T_{\Pi^4}$ acts trivially. We put
\[
\cH_{p^2=1}=\cH/(T_{\Pi^4}-1)
\]
and use a similar notation for the smaller algebras $\cH(O)$, $\cH(\chi)$, etc.\ There can only be non-trivial extensions of two simple modules if they descend to the Hecke algebra of the same orbit. We therefore fix an orbit $O$ and we may and do without loss of generality assume that $O=O_{\mathbf{1}}$ in the non-square-regular case. Define the element
\[
\cH\ni \mathscr{Z}_O=\begin{cases}
	(T_{\Pi}T_s)^2+(T_sT_{\Pi})^2 \text{ if $O$ is square-regular}\\
	\sum_{i=0}^3 T_{\Pi}^{-i}\left((T_{\Pi}T_s)^2+(T_sT_{\Pi})^2 + T_{\Pi}T_sT_{\Pi}\right)e_{\mathbf{1}}T_{\Pi}^{i} \text{ if $O=O_{\mathbf{1}}$.}
\end{cases}
\]
which, when projected to $\cH(O)$, is central by Corollary \ref{center_reg}, resp.\ \ref{Z(R)}.

We start with two lemmas, the first of which we record for later reference.

\begin{lem}\label{T_s_for_later} We have equalities of right $\cH_{p^2=1}$-modules:
	\begin{enumerate}
		\item[{\rm (i)}] If $\chi\neq \chi^s$, then $T_s e_{\chi}\mathcal{H}_{p^2=1}=\ker\left(T_s\colon e_{\chi^s}\mathcal{H}_{p^2=1} \twoheadrightarrow e_\chi T_s\cH_{p^2=1} \right)$;
		\item[{\rm (ii)}] $T_se_{\mathbf{1}}\cH_{p^2=1}=\ker\left(T_s+1\colon e_{\mathbf{1}}\cH_{p^2=1}\twoheadrightarrow e_{\mathbf{1}}(T_s+1)\cH_{p^2=1}\right)$;
		\item[{\rm (iii)}] $(T_s+1)e_{\mathbf{1}}\cH_{p^2=1}=\ker\left(T_s\colon e_{\mathbf{1}}\cH_{p^2=1} \twoheadrightarrow e_{\mathbf{1}}T_s\cH_{p^2=1}\right)$;
	\end{enumerate}
	where the map $T_s$, resp.\ $T_s+1$, is given by acting from the left.
\end{lem}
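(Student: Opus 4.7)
The plan is to derive all three parts from the quadratic relations for $T_s$ given by Lemma \ref{identities_Hecke_operators_I}(v): $T_s^2 e_\chi = 0$ when $\chi \neq \chi^s$, while $T_s^2 e_{\mathbf{1}} = -T_s e_{\mathbf{1}}$ because $\mathbf{1}(\operatorname{diag}(1,-1)) = 1$.

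Parts (ii) and (iii) follow formally from the resulting identity $T_s(T_s+1) e_{\mathbf{1}} = 0$, which reflects the semisimplicity of the quotient $k[T_s]/(T_s(T_s+1)) \cong k \times k$. The inclusions ``$\subseteq$'' arise from premultiplying this relation. For the reverse inclusions: in (ii), if $(T_s+1)x = 0$ with $x \in e_{\mathbf{1}} \cH_{p^2=1}$, then $T_s x = -x$ and hence $x = T_s(-x) \in T_s e_{\mathbf{1}} \cH_{p^2=1}$; in (iii), if $T_s x = 0$, then $(T_s+1) x = x$, placing $x$ in $(T_s+1) e_{\mathbf{1}} \cH_{p^2=1}$.

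For part (i), the easy inclusion $T_s e_\chi \cH_{p^2=1} \subseteq \ker(T_s\colon e_{\chi^s} \cH_{p^2=1} \to e_\chi T_s \cH_{p^2=1})$ is immediate from $T_s^2 e_\chi = 0$. The reverse inclusion will be the main obstacle: it is equivalent to the exactness of the two-periodic complex
\[
\cdots \xrightarrow{T_s} e_\chi \cH_{p^2=1} \xrightarrow{T_s} e_{\chi^s} \cH_{p^2=1} \xrightarrow{T_s} e_\chi \cH_{p^2=1} \xrightarrow{T_s} \cdots
\]
of right $\cH_{p^2=1}$-modules (with differentials given by left multiplication by $T_s$), or equivalently to the freeness of $(e_\chi + e_{\chi^s})\cH_{p^2=1}$ as a left $k[T_s]/(T_s^2)$-module.

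To establish this freeness, I would exploit the invertibility of $T_\Pi$ in $\cH_{p^2=1}$ (recall $T_\Pi^4 = (-1)^{(p-1)/2}$ by Lemma \ref{identities_Hecke_operators_II}(iii)) together with the isomorphisms $T_\Pi\colon e_{\chi'}\cH \xrightarrow{\sim} e_{(\chi')^{\tilde\Pi}}\cH$ of Corollary \ref{TPi_iso-cInd}. After twisting via Lemma \ref{twist} in the non-regular case where $\chi \neq \chi^s$ (so that $\chi^s = \chi[1,1]$), this reduces the problem to an analysis within the Morita-equivalent algebra $\cH(\chi\oplus\chi[1,0])$ of Corollary \ref{Morita_regular}, in which $T_s$ is identified, up to composition with $T_\Pi$-isomorphisms, with left multiplication by a generator of the off-diagonal Hecke bimodule $\cH(\chi[1,0],\chi)$. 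Proposition \ref{Hecke_module_regular_char} provides the explicit decomposition $\cH(\chi[1,0],\chi) \cong \cH(\chi)/(T_{2,0}) \oplus \cH(\chi)/(T_{0,2})$, and the equality of kernel and image follows from the multiplication rules (a)-(e) in the proof of that proposition. The main difficulty is the systematic tracking of cocycle corrections from the Hilbert symbol when translating between $\Pi$-orbits, a bookkeeping that the identities of Lemma \ref{identities_Hecke_operators_II} are designed to streamline.
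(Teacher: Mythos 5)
Your formal argument for (ii) and (iii) is correct, and it is in fact more economical than the paper's: from $T_se_{\mathbf{1}}=e_{\mathbf{1}}T_s$ and $T_s(T_s+1)e_{\mathbf{1}}=0$ (Lemma \ref{identities_Hecke_operators_I}) both equalities follow exactly as you describe, with no need for the Morita reduction or the presentation of $\cH(\mathbf{1})$. Your plan for (i) in the square-regular case is also essentially the paper's route: act by the invertible $T_{\Pi}$ on the left and read the equality off the explicit module structure; note, however, that the rules (a)--(e) alone do not suffice, since the kernel computation involves products of two off-diagonal generators (e.g.\ $T_{2,1}T_{0,1}=0$, $T_{2,1}T_{2,1}=T_{2,2}T_{2,0}$), for which you also need Lemma \ref{diagonal_contribution}.

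The genuine gap is in (i) for the non-square-regular case $\chi^s=\chi[1,1]\neq\chi$. You propose to handle it ``after twisting via Lemma \ref{twist}'' and then to argue inside $\cH(\chi\oplus\chi[1,0])$ via Corollary \ref{Morita_regular} and Proposition \ref{Hecke_module_regular_char}. This step fails: twisting by $\psi\circ\det$ replaces $\chi$ by $\chi\,\psi|_H$ and leaves $\chi^{-1}\chi^s$ unchanged, so it can never convert a non-square-regular orbit into a square-regular one, and every structural input you invoke (the basis of Corollary \ref{basis_regular_Hecke}, the isomorphism $\cH(\chi)\cong k[X,Y,Z^{\pm 1}]/(XY)$, the bimodule decomposition of Proposition \ref{Hecke_module_regular_char}, the Morita equivalence of Corollary \ref{Morita_regular}) is established only under the square-regularity hypothesis. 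Indeed, for $\chi=\mathbf{1}[1,0]$ one has $\cH(\chi)\cong\cH(\mathbf{1})$ (conjugate by $T_{\Pi}$), which is non-commutative, so the decomposition you want to quote simply does not exist there. The missing argument is the one the paper supplies: using $e_{\chi}T_{\Pi}=T_{\Pi}e_{\mathbf{1}}$, $T_{\Pi}T_sT_{\Pi}=T_{\Pi s\Pi}$ and the invertibility of $T_{\Pi}$, statement (i) translates into $S_{0,2}\cH(\mathbf{1})_{p^2=1}=\ker\bigl(S_{0,2}\colon \cH(\mathbf{1})_{p^2=1}\to S_{0,2}\cH(\mathbf{1})_{p^2=1}\bigr)$, which one then checks from the presentation of $\cH(\mathbf{1})$ in Proposition \ref{H(1)} (equivalently the basis of Corollary \ref{basis_H(1)}): left multiplication by $S_{0,2}$ kills the basis words beginning with $S_{0,2}$ (as $S_{0,2}^2=0$) and carries the remaining basis words bijectively onto those beginning with $S_{0,2}$, so kernel and image coincide. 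Without this (or an equivalent use of the non-regular structure theory), part (i) is only proved for square-regular $\chi$.
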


\begin{proof}
	It suffices to prove the equality after applying the idempotent cutting out $\cH(O)_{p^2=1}$. We distinguish between the square-regular and non-square-regular case.
	
	Assume that $O$ is square-regular. By the Morita equivalence of Corollary \ref{Morita_regular}, it is enough to prove the statement after applying $e:=e_{\chi}+e_{\chi[1,0]}$ from the right. Applying the invertible element $T_{\Pi}$ from the left, the desired equality becomes equivalent to
	\[
	T_{\Pi}T_s e_{\chi} \cH_{p^2=1}e=\ker(T_sT_{\Pi}^{-1}\colon T_{\Pi} e_{\chi^s}\mathcal{H}_{p^2=1}e\to e_\chi T_s\cH_{p^2=1}e\subset \cH(\chi\oplus \chi[1,0])_{p^2=1}).
	\]
	Using Lemma \ref{identities_Hecke_operators_II} (v), (vi) and (vii), this is equivalent to
	\begin{align*}
		&\begin{mat}
			0 & T_{0,1}\\T_{0,1} & 0
		\end{mat} \begin{mat}
			\cH(\chi)_{p^2=1}&\cH(\chi[1,0],\chi)_{p^2=1}\\
			0 & 0
		\end{mat}\\
		&=\ker\left(\begin{mat}
			0 & T_{0,-1}\\T_{0,-1} & 0
		\end{mat}\colon \begin{mat}
			0 & 0\\ \cH(\chi,\chi[1,0])_{p^2=1} & \cH(\chi[1,0])_{p^2=1}
		\end{mat}\to \cH(\chi\oplus \chi[1,0])_{p^2=1}\right)\\
		&=\ker\left(\begin{mat}
			0 & T_{2,1}\\T_{2,1} & 0
		\end{mat}\colon \begin{mat}
			0 & 0\\ \cH(\chi,\chi[1,0])_{p^2=1} & \cH(\chi[1,0])_{p^2=1}
		\end{mat}\to \cH(\chi\oplus \chi[1,0])_{p^2=1}\right).
	\end{align*}
	For the last equality we used that the central element $T_{2,2}$ is invertible.
	The result now follows from Proposition \ref{Hecke_algebra_regular_char}, Proposition \ref{Hecke_module_regular_char} and Lemma \ref{diagonal_contribution}.
	
	Assume now that $O=O_{\mathbf{1}}$. By the Morita equivalence \ref{Morita_non-regular}, it is enough to prove the equalities after applying the idempotent $e_{\mathbf{1}}$. In this case, (ii) and (iii) translate into
	\[
	S_{0,0}\cH(\mathbf{1})_{p^2=1}=\ker\left(S_{0,0}+1\colon \cH(\mathbf{1})_{p^2=1}\twoheadrightarrow (S_{0,0}+1)\cH(\mathbf{1})_{p^2=1}\right)
	\]
	and
	\[
	(S_{0,0}+1)\cH(\mathbf{1})_{p^2=1}=\ker\left(S_{0,0}\colon \cH(\mathbf{1})_{p^2=1}\twoheadrightarrow S_{0,0}\cH(\mathbf{1})_{p^2=1}\right).
	\]
	For (i), note that by applying $T_{\Pi^2}$, we may replace $\chi$ by $\chi[1,1]$ and so assume that $\chi=\mathbf{1}[1,0]$. Thus, $e_{\chi}T_{\Pi}=T_{\Pi} e_{\mathbf{1}}$, see Lemma \ref{identities_Hecke_operators_II} (v). Acting by $T_{\Pi}$ from the left, we find that (iii) is equivalent to
	\begin{align*}
		T_{\Pi}T_sT_{\Pi} \cH(\mathbf{1})_{p^2=1}&=\ker\left(T_s T_{\Pi}^{-1}\colon \cH(\mathbf{1})_{p^2=1}\to e_{\mathbf{1}[1,0]}T_s\cH_{p^2=1}e_{\mathbf{1}}\right)\\
		&=\ker\left(T_{\Pi}^{-1}T_sT_{\Pi}^{-1}\colon \cH(\mathbf{1})_{p^2=1}\twoheadrightarrow T_{\Pi}^{-1}T_sT_{\Pi}^{-1}\cH(\mathbf{1})_{p^2=1}\right),
	\end{align*}
	where in the last equality we used the injectivity of $T_{\Pi}^{-1}$. This just means that
	\begin{align*}
		S_{0,2} \cH(\mathbf{1})_{p^2=1}&=\ker\left(T_{-2,-2}S_{0,2}\colon \cH(\mathbf{1})_{p^2=1}\twoheadrightarrow S_{0,2}\cH(\mathbf{1})_{p^2=1}\right)\\
		&=\ker\left(S_{0,2}\colon \cH(\mathbf{1})_{p^2=1}\twoheadrightarrow S_{0,2}\cH(\mathbf{1})_{p^2=1}\right)
	\end{align*}
	The assertions now follow from the description of $\cH(\mathbf{1})$ in Proposition \ref{H(1)}.	
\end{proof}

By similar arguments, one proves the following lemma.

\begin{lem}\label{Z-lambda}
	For all $\lambda\in k$ and all $\chi\in O$, the endomorphism
	\[
	\mathscr{Z}_{O}-\lambda\colon e_{\chi}\cH_{p^2=1}\to e_{\chi}\cH_{p^2=1}
	\]
	is injective.
\end{lem}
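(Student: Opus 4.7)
Since $e_\chi\in\cH(O)_{p^2=1}$, we have $e_\chi\cH_{p^2=1}=e_\chi\cH(O)_{p^2=1}$, and by Corollary \ref{center_reg} resp.\ \ref{Z(R)} the image of $\mathscr{Z}_O$ in $\cH(O)_{p^2=1}$ is central, so left and right multiplication by $\mathscr{Z}_O-\lambda$ agree. The strategy is to transport the question across the Morita equivalence of Corollary \ref{Morita_regular} resp.\ \ref{Morita_non-regular} to reduce to injectivity of multiplication by a designated generator of the center of the smaller algebra on an explicit cyclic free module over it, then verify this by the presentations already computed.

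\medskip

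\textbf{Square-regular case.} Under $M\mapsto M(e_\chi+e_{\chi[1,0]})$ the right module $e_\chi\cH(O)_{p^2=1}$ corresponds to $\cH(\chi)_{p^2=1}\oplus\cH(\chi[1,0],\chi)_{p^2=1}$, and by the paragraph preceding Definition \ref{def_simple_reg} combined with Lemma \ref{identities_Hecke_operators_II} (vi), (vii), the element $\mathscr{Z}_O=(T_\Pi T_s)^2+(T_sT_\Pi)^2$ acts as left multiplication by the central element $X+Y\in k[X,Y]/(XY)$. By Propositions \ref{Hecke_algebra_regular_char} and \ref{Hecke_module_regular_char} this module is a direct sum of the cyclic $k[X,Y]/(XY)$-modules $k[X,Y]/(XY)$, $k[Y]$, and $k[X]$. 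On the last two, $X+Y-\lambda$ acts as $Y-\lambda$ resp.\ $X-\lambda$, which is injective in a polynomial ring. On $k[X,Y]/(XY)$, writing an element in the basis $\{1,X^i,Y^j\}_{i,j\geq 1}$ and expanding $(X+Y-\lambda)f$ via $X\cdot Y^j=Y\cdot X^i=0$ (for $i,j\geq 1$) yields a triangular system in the coefficients whose only solution is zero.

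\medskip

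\textbf{Non-square-regular case.} The Morita equivalence $M\mapsto Me_\mathbf{1}$, together with the isomorphisms of Corollary \ref{TPi_iso-cInd} for $\chi\neq \mathbf{1}$, identifies $e_\chi\cH(O)_{p^2=1}$ with $\cH(\mathbf{1})_{p^2=1}$ as a right $\cH(\mathbf{1})_{p^2=1}$-module, with $\mathscr{Z}_O$ acting as left multiplication by $\mathfrak{Z}=S_{0,0}S_{0,2}+S_{0,2}S_{0,0}+S_{0,2}$ from Corollary \ref{Z(R)}. Using the relation $S_{0,0}^2=-S_{0,0}$ of Proposition \ref{H(1)}, set $e=-S_{0,0}$; then $e$ and $1-e$ are orthogonal idempotents giving a Peirce decomposition of $A=\cH(\mathbf{1})_{p^2=1}$ into four $\mathfrak{Z}$-stable blocks. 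Exploiting $S_{0,2}\cdot 1\cdot S_{0,2}=S_{0,2}^2=0$ to rewrite any alternating word of the form $S_{0,2}(1-e)S_{0,2}=-S_{0,2}eS_{0,2}$, one checks by induction that the two diagonal blocks are polynomial algebras $k[b]$ and $k[a]$ with $b=-eS_{0,2}e$ and $a=(1-e)S_{0,2}(1-e)$, and that the two off-diagonal blocks are free of rank one, generated by $c=eS_{0,2}(1-e)$ and $c'=(1-e)S_{0,2}e$, satisfying $bc=ca$ and $b c'=c'a$. A direct computation gives $\mathfrak{Z}\cdot(1-e)=a$, $\mathfrak{Z}\cdot e=b$, $\mathfrak{Z}c=ca$, $\mathfrak{Z}c'=c'a$, so on each block $\mathfrak{Z}-\lambda$ acts as multiplication by $a-\lambda$ or $b-\lambda$, which is injective in a polynomial ring.

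\medskip

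\textbf{Main obstacle.} The combinatorial heart is the Peirce computation in the non-square-regular case: verifying that every alternating word in $e,1-e,S_{0,2}$ reduces, via the single relation $S_{0,2}\cdot 1\cdot S_{0,2}=0$, to a monomial in the designated polynomial generators, and that this reduction exhibits the four blocks as free modules of rank one over their respective polynomial centers. Identifying $\mathscr{Z}_O$ with $\mathfrak{Z}$ after the Morita transfer is a bookkeeping exercise with Lemma \ref{identities_Hecke_operators_II}. Once both are in place, injectivity is immediate since polynomial rings are integral domains.
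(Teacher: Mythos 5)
Your argument is correct and is essentially the paper's intended one: the paper disposes of this lemma with ``similar arguments'' to Lemma \ref{T_s_for_later}, i.e.\ exactly your reduction via the Morita equivalences \ref{Morita_regular} and \ref{Morita_non-regular} (together with twisting by powers of the invertible $T_{\Pi}$) to explicit injectivity statements in $\cH(\chi\oplus\chi[1,0])_{p^2=1}$ and $\cH(\mathbf{1})_{p^2=1}$, which are then read off from Propositions \ref{Hecke_algebra_regular_char}, \ref{Hecke_module_regular_char} and \ref{H(1)}; your Peirce decomposition with respect to $e=-S_{0,0}$ is just a clean way of organizing that last computation, and your identification of $\mathscr{Z}_O$ with $X+Y$, resp.\ $\mathfrak{Z}$, after the transfer is accurate. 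One bookkeeping slip in the non-square-regular case: since $c'=(1-e)S_{0,2}e$ satisfies $bc'=0=c'a$ by Peirce orthogonality, the meaningful relations are $ac'=c'b$ and hence $\mathfrak{Z}c'=c'b$, not $\mathfrak{Z}c'=c'a$ as you wrote; this is harmless, because $(1-e)\cH(\mathbf{1})_{p^2=1}e$ is still free of rank one with $\mathfrak{Z}-\lambda$ acting through multiplication by $b-\lambda$ on $k[b]$, so injectivity holds on all four blocks and therefore on $e_{\chi}\cH_{p^2=1}$.
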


\begin{definition}\label{M(chi,lambda)_def}
	For $\lambda\in k$ and a character $\chi\in O$, define the $\cH_{p^2=1}$-modules $M(\chi,\lambda)$ by the short exact sequence
	\[
	0\to T_se_{\chi}\cH_{p^2=1}\xrightarrow{\mathscr{Z}_{O}-\lambda} T_se_{\chi}\cH_{p^2=1}\to M(\chi,\lambda)\to 0.
	\]
	For $O=O_{\mathbf{1}}$, we also define $M(\lambda)$ by
	\[
	0\to (T_s+1)e_{\mathbf{1}}\cH_{p^2=1}\xrightarrow{\mathscr{Z}_{O}-\lambda} (T_s+1)e_{\mathbf{1}}\cH_{p^2=1}\to M(\lambda)\to 0.
	\]
\end{definition}

\begin{remark}\label{twist_TPi^2}
	Using the relations $e_{\chi}T_{\Pi}^2=T_{\Pi}^2e_{\chi[1,1]}$, $T_sT_{\Pi}^2=(-1)^{\frac{p-1}{2}}T_{\Pi}^2T_s$ and the invertibility of $T_{\Pi}$ (see Lemma \ref{identities_Hecke_operators_II}), we obtain an isomorphism $T_{\Pi}^2\colon T_se_{\chi}\cH_{p^2=1}\cong T_se_{\chi[1,1]}\cH_{p^2=1}$ of right $\cH_{p^2=1}$-modules inducing an isomorphism on the quotients $M(\chi,\lambda)\cong M(\chi[1,1],\lambda)$.
\end{remark}

\begin{prop}\label{M(chi,lambda)_regular}
	Assume that $O$ is square-regular. Let $\chi\in O$ and $\lambda\in k$.
	\begin{enumerate}
		\item[{\rm (i)}] If $\lambda\neq 0$, then $M(\chi,\lambda)\cong \operatorname{PS}(\chi,0,\lambda,1)$.
		\item[{\rm (ii)}] If $\lambda=0$, then there is a non-split short exact sequence
		\[
		0\to \operatorname{SS}(\chi[1,0],1)\to M(\chi,0)\to \operatorname{SS}(\chi,1)\to 0
		\]
		of $\cH_{p^2=1}$-modules.
	\end{enumerate}
\end{prop}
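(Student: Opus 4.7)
My plan has three movements. First, reduce to an explicit cyclic module. Since $T_\Pi^4 = \pm 1$ in $\cH_{p^2=1}$ by Lemma \ref{identities_Hecke_operators_II}(iii), $T_\Pi$ is invertible, and $T_\Pi T_s = T_{\Pi s}$ by Lemma \ref{identities_Hecke_operators_II}(vii), with $\Pi s = \begin{mat}1 & 0\\0 & p\end{mat}$. Left-multiplication by $T_\Pi$ therefore yields an isomorphism of right $\cH_{p^2=1}$-modules $T_s e_\chi \cH_{p^2=1} \xrightarrow{\sim} T_{0,1} \cH_{p^2=1}$, where $T_{0,1} \in \cH(\chi, \chi[1,0])$ is the basis element from Corollary \ref{basis_regular_Hecke}. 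Centrality of $\mathscr{Z}_O$ descends this to $M(\chi, \lambda) \cong T_{0,1} \cH_{p^2=1}/(\mathscr{Z}_O - \lambda)$.

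Second, via the Morita equivalence of Corollary \ref{Morita_regular} (applying $e_\chi + e_{\chi[1,0]}$ on the right), I study the cyclic right $\cH(\chi \oplus \chi[1,0])_{p^2=1}$-module generated by $T_{0,1}$, whose diagonal entries are $k[X,Y]/(XY)$ after setting $Z=1$ (Proposition \ref{Hecke_algebra_regular_char}). Computing $T_{0,1} \cH_{p^2=1}$ against $e_\chi$ and $e_{\chi[1,0]}$ separately, and using the composition identities of Lemma \ref{diagonal_contribution} (in particular $T_{0,1} T_{2,0} = 0$ and $T_{0,1} \circ T_{0,1} = T_{0,2}$) together with Proposition \ref{Hecke_module_regular_char}, one finds that the $e_\chi$-component is the right $\cH(\chi)_{p^2=1}$-module $\cH(\chi)_{p^2=1}/(T_{2,0}) \cong k[Y]$ and the $e_{\chi[1,0]}$-component is the ideal $(Y) \subset \cH(\chi[1,0])_{p^2=1}$.

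Third, take the quotient by $\mathscr{Z}_O - \lambda$. The element $\mathscr{Z}_O = (T_\Pi T_s)^2 + (T_s T_\Pi)^2$ projects to $X+Y$ in each diagonal copy of $k[X,Y]/(XY)$ by Lemma \ref{identities_Hecke_operators_II}(vi), (vii); since $X$ acts as zero on both components, quotienting by $\mathscr{Z}_O-\lambda$ amounts to setting $Y = \lambda$. This produces a $2$-dimensional module $kv \oplus kw$ with $v$ in the $e_\chi$-component (image of $T_{0,1}$) and $w$ in the $e_{\chi[1,0]}$-component (image of $Y$). One more application of $T_{0,1} \circ T_{0,1} = T_{0,2}$ yields $v \cdot T_{0,1} = w$ and $w \cdot T_{0,1} = \lambda v$, interpreting the two instances of $T_{0,1}$ in $\cH(\chi[1,0], \chi)$ and $\cH(\chi, \chi[1,0])$ respectively.

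For $\lambda \neq 0$, the module is simple with central character $(X,Y,Z) = (0,\lambda,1)$, hence equals $\operatorname{PS}(\chi, 0, \lambda, 1)$ by Definition \ref{def_simple_reg}. For $\lambda = 0$, the relation $w \cdot T_{0,1} = 0$ makes $kw$ a submodule, identified with $\operatorname{SS}(\chi[1,0], 1)$ since it lies in the $e_{\chi[1,0]}$-component, with quotient $\operatorname{SS}(\chi, 1)$; non-splitness is immediate from $v \cdot T_{0,1} = w \neq 0$, which prevents lifting the quotient to a submodule. The main challenge throughout is keeping track of which Hecke bimodule each $T_{0,1}$ belongs to and the corresponding position in the matrix representation of $\cH(\chi \oplus \chi[1,0])_{p^2=1}$; once set up, everything follows from the composition formulas of Lemma \ref{diagonal_contribution}.
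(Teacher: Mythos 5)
Your argument is correct and follows essentially the same route as the paper's proof: translate by the invertible operator $T_{\Pi}$, pass through the Morita equivalence of Corollary \ref{Morita_regular}, identify the resulting module explicitly via Propositions \ref{Hecke_algebra_regular_char}, \ref{Hecke_module_regular_char} and Lemma \ref{diagonal_contribution} as two-dimensional with basis the images of $T_{0,1}$ and $T_{0,2}$, read off the central character $(0,\lambda,1)$, and for $\lambda=0$ establish non-splitness by the very same composition $T_{0,1}\circ T_{0,1}=T_{0,2}$ that the paper uses. Only cosmetic points differ (the relation $T_{0,1}T_{2,0}=0$ is from Proposition \ref{Hecke_module_regular_char} rather than Lemma \ref{diagonal_contribution}, and the asserted simplicity of the $2$-dimensional module when $\lambda\neq 0$ is a one-line check from the explicit action before invoking Definition \ref{def_simple_reg}).
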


\begin{proof} Put $e=e_{\chi}+e_{\chi[1,0]}$. The Morita equivalence \ref{Morita_regular} reduces to proving the statement after applying $e$. Acting by $T_{\Pi}$ from the left defines an isomorphism between $M(\chi,\lambda)$ and the cokernel of $\mathscr{Z}_O-\lambda$ acting on $T_{\Pi}T_se_{\chi}\cH_{p^2=1}$, which we denote by  $T_{\Pi}M(\chi,\lambda)$. It follows from propositions \ref{Hecke_module_regular_char} and \ref{Hecke_algebra_regular_char} that the images of the elements $T_{\Pi}T_s e_{\chi}=\begin{mat}
		0 & 0\\T_{0,1} & 0
	\end{mat}$, $T_{\Pi}T_se_{\chi}T_{\Pi}T_s=T_{(\Pi s)^2}e_{\chi[1,0]}=\begin{mat}
		0 & 0\\0 & T_{0,2}
	\end{mat}$ form a basis of the cokernel of the map
	\[
	\begin{mat}
		T_{2,0}+T_{0,2} -\lambda & 0\\ 0 & T_{2,0}+T_{0,2}-\lambda
	\end{mat} \colon \begin{mat}
		0 & 0\\ T_{0,1}\cH(\chi)_{p^2=1} & (T_{0,2})
	\end{mat} \to \begin{mat}
		0 & 0\\ T_{0,1}\cH(\chi)_{p^2=1} & (T_{0,2})
	\end{mat},
	\]
	where $(T_{0,2})$ is the ideal generated by $T_{0,2}$ in $\cH(\chi[1,0])_{p^2=1}$. Moreover, the central elements $(T_{2,0},T_{0,2},T_{2,2})$ act on the cokernel via the scalars $(0,\lambda,1)$. 
	
	Part (i) follows from the uniqueness in Proposition \ref{simple_regular_principal}. 
	
	For (ii), note that the $1$-dimensional subspace of $T_{\Pi}M(\chi,0)e$ spanned by the image of $\begin{mat}
		0 & 0\\ 0 & T_{0,2}
	\end{mat}$ is $\cH(\chi\oplus \chi[1,0])$-stable and must therefore be isomorphic to $\operatorname{SS}(\chi[1,0],0)e$. The cokernel of the resulting inclusion $\operatorname{SS}(\chi[1,0])e\hookrightarrow T_{\Pi}M(\chi,0)e$ is $1$-dimensional and $e_{\chi}$ acts trivially on it, so we obtain an exact sequence of the desired form, by Proposition \ref{simple_regular_ss}. It is non-split for if $\alpha,\beta\in k$ such that $\alpha \begin{mat}
		0 & 0\\ T_{0,1} & 0
	\end{mat}+\beta \begin{mat}
		0 & 0\\0 & T_{0,2}
	\end{mat}$ is fixed by $e_{\chi}$, then $\beta=0$, but the $1$-dimensional subspace spanned by $\begin{mat}
		0 & 0\\ T_{0,1} & 0
	\end{mat}$ is not $\cH(\chi\oplus \chi[1,0])$-stable: $\begin{mat}
		0 & 0\\ T_{0,1} & 0
	\end{mat} \begin{mat}0 & T_{0,1}\\0 & 0 \end{mat}=\begin{mat}
		0 & 0\\0 & T_{0,2}
	\end{mat}$, by Lemma \ref{diagonal_contribution}.
\end{proof}

\begin{prop}\label{M(chi,lambda)_non_regular} Assume that $O=O_{\mathbf{1}}$. Let $\chi\in O$ and $\lambda\in k$.
	\begin{enumerate}
		\item[{\rm (i)}] If $\lambda\neq 0$, then $M(\chi,\lambda)\cong \operatorname{PS}(\lambda,1) \cong M(\lambda)$.
		\item[{\rm (ii)}] If $\lambda=0$, then $M(\chi,0)=\operatorname{SS}(0,1)\oplus \operatorname{SS}(-1,1)$ for $\chi\neq \chi^{s}$, and for $\chi=\chi^{s}$ there are non-split exact sequences
		\[
		0\to \operatorname{SS}(0,1)\to M(\chi,0)\to \operatorname{SS}(-1,1)\to 0
		\]
		and
		\[
		0\to \operatorname{SS}(-1,1)\to M(0)\to \operatorname{SS}(0,1)\to 0.
		\]
	\end{enumerate}
\end{prop}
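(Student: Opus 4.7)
The plan is to mimic the strategy of Proposition \ref{M(chi,lambda)_regular}, using the Morita equivalence of Corollary \ref{Morita_non-regular} to reduce to identifying the cokernel $M(\chi, \lambda) e_{\mathbf{1}}$ (and $M(\lambda) e_{\mathbf{1}}$ when defined) as a right $\cH(\mathbf{1})_{p^2=1}$-module, and matching it against the classification in Proposition \ref{simple_non-regular}. A preliminary computation shows that $\mathscr{Z}_O e_{\mathbf{1}}$ coincides with the generator $\mathfrak{Z}$ of $Z(\cH(\mathbf{1})_{p^2=1})$ from Corollary \ref{Z(R)}: in the defining sum only the $i = 0$ summand survives the $(e_{\mathbf{1}}, e_{\mathbf{1}})$-projection (the others being killed because conjugation by $T_\Pi^i$ moves $e_{\mathbf{1}}$ to $e_{\psi_i}$ with $\psi_i \neq \mathbf{1}$ for $i = 1, 2, 3$), and this summand reduces to $T_{0, 2} + T_{2, 0} + S_{0, 2}$ by the identifications $(T_\Pi T_s)^2 e_{\mathbf{1}} = T_{0, 2}$, $(T_s T_\Pi)^2 e_{\mathbf{1}} = T_{2, 0}$ and $T_\Pi T_s T_\Pi e_{\mathbf{1}} = S_{0, 2}$.

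For $\chi = \chi^s$, Remark \ref{twist_TPi^2} reduces us to $\chi = \mathbf{1}$, so $T_s e_{\mathbf{1}} \cH_{p^2=1} e_{\mathbf{1}} = S_{0, 0} \cH(\mathbf{1})_{p^2=1}$. Using Proposition \ref{H(1)}, I would exhibit the $k$-basis $\{T_{2n, 0}\}_{n \geq 1} \cup \{S_{2n, 0}\}_{n \geq 0}$ and check that $\mathfrak{Z}$ acts as the shift $T_{2n, 0} \mapsto T_{2(n+1), 0}$, $S_{2n, 0} \mapsto S_{2(n+1), 0}$. The cokernel of $\mathfrak{Z} - \lambda$ is therefore two-dimensional with basis given by the classes of $T_{2, 0}$ and $S_{0, 0}$; direct verification of the $\cH(\mathbf{1})_{p^2=1}$-action then yields $\operatorname{PS}(\lambda, 1)$ for $\lambda \neq 0$, and for $\lambda = 0$ identifies the class of $T_{2, 0}$ as a one-dimensional subrepresentation isomorphic to $\operatorname{SS}(0, 1)$ with quotient $\operatorname{SS}(-1, 1)$; the extension is non-split because the identity $S_{0, 0} \cdot S_{0, 2} = T_{2, 0}$ forbids the existence of a complementary submodule. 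The same strategy applied to $(S_{0, 0} + 1) \cH(\mathbf{1})_{p^2=1}$ yields $M(\lambda)$, with the roles of $\operatorname{SS}(0, 1)$ and $\operatorname{SS}(-1, 1)$ in the extension interchanged because $(S_{0, 0} + 1) S_{0, 0} = 0$.

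For $\chi \neq \chi^s$, Remark \ref{twist_TPi^2} again reduces us to $\chi = \mathbf{1}[1, 0]$. In this case $T_s e_\chi \cH_{p^2=1} e_{\mathbf{1}}$ lives in $\cH(\mathbf{1}, \mathbf{1}[0, 1])$, which is a rank-one free right $\cH(\mathbf{1})$-module via left multiplication by $T_\Pi^3$. Transferring via $T_\Pi^{-3}$ and applying the Hecke identities of Lemma \ref{identities_Hecke_operators_II} (in particular $T_\Pi T_s T_\Pi = T_{\Pi s \Pi}$ and the central-up-to-sign behaviour of $T_{\Pi^2}$), the submodule simplifies to $S_{0, 2} \cH(\mathbf{1})_{p^2=1}$, whose basis $\{T_{0, 2n}, S_{0, 2n}\}_{n \geq 1}$ is again shifted by $\mathfrak{Z}$. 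The two-dimensional cokernel with basis $\{T_{0, 2}, S_{0, 2}\}$ yields $\operatorname{PS}(\lambda, 1)$ for $\lambda \neq 0$; for $\lambda = 0$, the crucial difference from the $\chi = \chi^s$ case is that $S_{0, 2}^2 = 0$ (in contrast to $S_{0, 0}^2 = -S_{0, 0}$) makes $S_{0, 0}$ act diagonalisably on the cokernel with distinct eigenvalues $-1$ and $0$, producing the split decomposition $\operatorname{SS}(-1, 1) \oplus \operatorname{SS}(0, 1)$.

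The main obstacle is the transfer step in the third paragraph: one must carefully carry out the commutation of $T_\Pi$-powers past $T_s$ using the central-up-to-sign property of $T_{\Pi^2}$ from Lemma \ref{identities_Hecke_operators_II} (iv), tracking signs $(-1)^{(p-1)/2}$, in order to confirm the identification of the transferred submodule with $S_{0, 2} \cH(\mathbf{1})_{p^2=1}$. Once this is in hand, the remaining basis computations and the matching with Proposition \ref{simple_non-regular} parallel those of the $\chi = \chi^s$ case.
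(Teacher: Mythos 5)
Your proposal is correct and follows essentially the same route as the paper: Morita reduction to $\cH(\mathbf{1})_{p^2=1}$, transfer of the case $\chi\neq\chi^s$ to $S_{0,2}\cH(\mathbf{1})_{p^2=1}$ by left multiplication with a power of $T_{\Pi}$ (exactly as in the proof of Lemma \ref{T_s_for_later}), and an explicit two-dimensional cokernel computation matched against Proposition \ref{simple_non-regular}, with the same bases ($S_{0,2},S_{0,2}S_{0,0}$, resp.\ $S_{0,0},S_{0,0}S_{0,2}$, resp.\ $S_{0,0}+1,(S_{0,0}+1)S_{0,2}$) as in the paper. The only detail to spell out is that for $\lambda=0$ and $\chi\neq\chi^s$ the splitting also uses that $S_{0,2}$ kills the entire cokernel (i.e.\ $wS_{0,2}=S_{0,4}\equiv 0$), not merely the diagonalisability of $S_{0,0}$, but this is immediate from your shift description of the $\mathfrak{Z}$-action.
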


\begin{proof}
	It is enough to prove the statements after applying $e_{\mathbf{1}}$.
	If $\chi\neq \chi^{s}$, then, arguing as in the proof of Lemma \ref{T_s_for_later}, $M(\chi,\lambda)$ is isomorphic to the cokernel of the endomorphism on $S_{0,2}\cH(\mathbf{1})_{p^2=1}$ defined by the central element $\mathfrak{Z}-\lambda \in k[\mathfrak{Z},Z^{\pm 1}]\cong Z(\cH(\mathbf{1}))$ (see Corollary \ref{Z(R)}). Using the explicit description of $\cH(\mathbf{1})$ in Proposition \ref{H(1)}, one checks that a basis for this cokernel as a $k$-vector space is given by the images $v$ and $w$ of the two elements $S_{0,2}$ and $S_{0,2}S_{0,0}$, respectively. By construction, the center $k[\mathfrak{Z},Z^{\pm 1}]$ acts on it via the rational point $(\lambda,z)$, which implies (i) by the uniqueness in Proposition \ref{simple_non-regular}, see also Remark \ref{central_char_non-regular}. If $\lambda=0$, then write the cokernel as
	\[
	k(v+w)\oplus kw.
	\]
	Since $wS_{0,0}=-w$, $wS_{0,2}=0$, i.e.\ $kw=\operatorname{SS}(-1,1)$, and $(v+w)S_{0,0}=0$, $(v+w)S_{0,2}=0$, i.e.\ $k(v+w)=\operatorname{SS}(0,1)$, this settles the case $\chi\neq \chi^{s}$.
	
	If $\chi=\chi^{s}$, we may assume that $\chi=\mathbf{1}$ by Remark \ref{twist_TPi^2}. In this case, a basis of $M(\chi,\lambda)$ (resp.\ $M(\lambda)$) is given by the images $v$ and $w$ of $S_{0,0}$ and $S_{0,0}S_{0,2}$ (resp.\ $S_{0,0}+1$ and $(S_{0,0}+1)S_{0,2}$). Again by uniqueness of the principal module with central character corresponding to the rational point $(\lambda,z)$, we deduce (i). Assume that $\lambda=0$, then for $M(\chi,\lambda)$, $vS_{0,0}=-v$, $vS_{0,2}=w$ and $wS_{0,0}=0$, $wS_{0,2}=0$, so we obtain a non-split short exact sequence
	\[
	0\to kw=\operatorname{SS}(0,1)e_{\mathbf{1}}\to M(\chi,\lambda)e_{\mathbf{1}}\to \operatorname{SS}(-1,1)e_{\mathbf{1}}\to 0.
	\]
	For $M(0)$ on the other hand, we have $vS_{0,0}=0$, $vS_{0,2}=w$ and $wS_{0,0}=-w$, $wS_{0,2}=0$, so we obtain a non-split short exact sequence
	\[
	0\to kw=\operatorname{SS}(-1,1)e_{\mathbf{1}}\to M(0)e_{\mathbf{1}}\to \operatorname{SS}(0,1)e_{\mathbf{1}}\to 0,
	\]
	finishing the proof in case $\chi=\chi^s$.
\end{proof}

If a non-split extension between simple modules exists, then their central characters have to coincide. In particular, there are no non-split extensions neither of supersingular and principal modules nor of different principal modules.

\subsubsection{Extensions of supersingular modules} In order to compute the extensions of supersingular modules, we will distinguish between the square-regular case and the non-square-regular case.\\

\noindent \textit{Square-regular case.} Assume that $O$ is square-regular and write
\[
\cH(\chi\oplus \chi[1,0])=\begin{pmatrix}
	\cH(\chi) & \cH(\chi[1,0],\chi)\\
	\cH(\chi,\chi[1,0]) & \cH(\chi[1,0])
\end{pmatrix}
\]
and
identify
\[
\cH(\chi\oplus \chi[1,0])_{\xi}\cong \begin{pmatrix}k & k\oplus k\\ k \oplus k & \ol{\bF}_p
\end{pmatrix}
\]
as in (\ref{H(O/Pi)_xi}), where $\xi$ is the central character corresponding to the singular rational  point $(0,0,1)$, which we may view as a central character of $\cH(O)$ via the Morita equivalence \ref{Morita_regular}.

\begin{definition}
	For $\lambda,\mu \in k$, define $E_{\lambda,\mu}$ to be the unique $\cH(O)$-module so that $E_{\lambda,\mu}(e_{\chi}+e_{\chi[1,0]})$ is the $2$-dimensional $\cH(\chi\oplus \chi[1,0])_{\xi}$-module $kv\oplus kw$ with
	\[
	v\begin{mat}
		a & (b_1,b_2)\\ (c_1,c_2) & d
	\end{mat}=dv, \hspace{0.2cm} w\begin{mat}
		a & (b_1,b_2)\\ (c_1,c_2) & d
	\end{mat}=aw+(\lambda b_1+\mu b_2)v
	\]
	for all $\begin{mat}
		a & (b_1,b_2)\\ (c_1,c_2) & d
	\end{mat}\in \cH(\chi\oplus \chi[1,0])_{\xi}$.
\end{definition}

\begin{prop}
	\begin{enumerate}
		\item[{\rm (i)}] The map
		\begin{align*}
			k^2 & \xrightarrow{\cong} \Ext^1_{\cH(O)}(\operatorname{SS}(\chi,1),\operatorname{SS}(\chi[1,0],1))\\
			(\lambda,\mu) & \mapsto E_{\lambda,\mu}
		\end{align*}
		is a well-defined $k$-linear isomorphism.
		\item[{\rm (ii)}] The space $\Ext^1_{\cH(O)_{p^2=1}}(\operatorname{SS}(\chi,1),\operatorname{SS}(\chi,1))=0$ is trivial.
	\end{enumerate}
\end{prop}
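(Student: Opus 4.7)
The plan is to use the Morita equivalence of Corollary \ref{Morita_regular} to transport everything to the algebra $R := \cH(\chi\oplus\chi[1,0])$, and then to parameterize extensions of one-dimensional simple modules by $1$-cocycles. Throughout I will write $R_{p^2=1}\cong\bigl(\begin{smallmatrix}A & B_1\\ B_2 & A'\end{smallmatrix}\bigr)$ with $A = A' = k[X,Y]/(XY)$ and the bimodule structure of Proposition \ref{Hecke_module_regular_char}, and $R_\xi = R/(X,Y,T_{\Pi^4}-1)\cong \bigl(\begin{smallmatrix}k & k\oplus k\\ k\oplus k & k\end{smallmatrix}\bigr)$ with the product given in (\ref{H(O/Pi)_xi}).

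For (i), the two simple modules share the central character $\xi$ corresponding to $(0,0,1)$, but the associated ring characters $\operatorname{pr}_\chi,\operatorname{pr}_{\chi[1,0]}\colon R\to k$ differ on the idempotent $e_\chi$. A short centrality argument (writing $w\cdot X = \delta v$ in an extension $E = kv\oplus kw$ and comparing $(w\cdot X)\cdot m$ with $w\cdot(Xm) = w\cdot(mX)$) then forces $X$, $Y$ and $T_{\Pi^4}-1$ to annihilate $E$, so we may work inside $R_\xi$. Choosing $v,w$ with $we_\chi = w$, the $R_\xi$-action on $E$ is encoded in a $k$-linear map $\phi\colon R_\xi\to k$ with $\phi(1)=0$ satisfying the cocycle identity $\phi(m_1m_2) = \operatorname{pr}_\chi(m_1)\phi(m_2) + \phi(m_1)\operatorname{pr}_{\chi[1,0]}(m_2)$, modulo the coboundaries $\phi_s(m) = \operatorname{pr}_\chi(m)s - s\operatorname{pr}_{\chi[1,0]}(m)$. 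Testing the cocycle against $m_1 = e_\chi$ with $m_2$ varying over each block forces $\phi|_{B_2} = 0$ and expresses $\phi|_{A'}$ in terms of $\phi(e_\chi)$; other choices of $m_1,m_2$ give no new constraints because off-diagonal blocks in $R_\xi$ compose to zero. Modding out the one-dimensional family of coboundaries leaves exactly the pair $(\lambda,\mu) = (\phi(T_{0,1}),\phi(T_{2,1}))\in k^2$ free, and unwinding the construction of $E_{\lambda,\mu}$ matches this parameterization.

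For (ii), the two simples coincide, so the cocycle condition is symmetric with $\operatorname{pr}_\chi$ on both sides and the centrality trick above no longer forces $X,Y$ to act trivially. I will therefore work directly in $R_{p^2=1}$. Testing the cocycle against $m_1 = e_\chi$ again kills $\phi|_{B_2}$ and $\phi|_{A'}$ and forces $\phi_A(1) = 0$; testing against $m_1\in B_1$, $m_2 = e_\chi$ (whose product is $0$ in $R_{p^2=1}$) kills $\phi|_{B_1}$. The decisive step is to kill $\phi_A$ on $X$ and $Y$: by Lemma \ref{diagonal_contribution}, the composition $B_1\otimes B_2\to A$ sends $T_{0,1}\otimes T_{0,1}\mapsto Y$ and $T_{2,1}\otimes T_{2,1}\mapsto X$ (using $Z=1$), while the right-hand side of the cocycle identity vanishes for $m_1,m_2$ on off-diagonal blocks (as $\operatorname{pr}_\chi$ is zero there); so $\phi_A(X) = \phi_A(Y) = 0$, and a straightforward induction on degree in $A$ yields $\phi_A\equiv 0$. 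Hence $\phi\equiv 0$ and no non-trivial extensions exist.

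The main obstacle is the block-wise bookkeeping against the matrix algebra structure, and in particular making proper use of the composition $B_1\otimes B_2\to A$ from Lemma \ref{diagonal_contribution} in (ii)—this is precisely the ingredient that prevents (ii) from producing extension classes analogous to those of (i).
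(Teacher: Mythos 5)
Your proposal is correct and follows essentially the same route as the paper: the same Morita reduction to $\cH(\chi\oplus\chi[1,0])$, the same central-character/centrality reduction to the fibre $\cH(\chi\oplus\chi[1,0])_{\xi}$ in (i), and the same decisive use of Lemma \ref{diagonal_contribution} (expressing $X=T_{2,0}$ and $Y=T_{0,2}$, with $Z=1$, as composites of the off-diagonal generators) in (ii). The only difference is cosmetic bookkeeping: you phrase the classification via Hochschild $1$-cocycles/derivations with respect to $\operatorname{pr}_{\chi}$ and $\operatorname{pr}_{\chi[1,0]}$, whereas the paper argues directly on elements, diagonalizing $e_{\chi}$ for the surjectivity in (i) and using the idempotent identity $(e_{\chi}-1)^2=1-e_{\chi}$ to start the vanishing argument in (ii).
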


\begin{proof}
	(i) Since the two supersingular modules are non-isomorphic, any non-trivial extension admits a central character, so the Ext-space in question is equal to
	\[
	\Ext^1_{\cH(O)_{\xi}}(\operatorname{SS}(\chi,1),\operatorname{SS}(\chi[1,0],1))\cong \Ext^1_{\cH(\chi\oplus \chi[1,0])_{\xi}}(\operatorname{SS}(\chi,1)e,\operatorname{SS}(\chi[1,0],1)e),
	\]
	where $e=e_{\chi}+e_{\chi[1,0]}$ and the isomorphism uses the Morita equivalence \ref{Morita_regular}. Write $E_{\lambda,\mu}e=k v\oplus k w$ as in its definition. Let $\bar{w}$ denote the image of $w$ in $E_{\lambda,\mu}e/kv$, so that we have a short exact sequence
	\[
	0\to kv \to E_{\lambda,\mu}e\to k\bar{w}\to 0
	\]
	It is now clear from the definition of $E_{\lambda,\mu}$ that the kernel (resp.\ cokernel) is just $\operatorname{SS}(\chi[1,0],1)e$ (resp.\ $\operatorname{SS}(\chi,1)e$) as $\cH(\chi\oplus \chi[1,0])_{\xi}$-modules, see also Proposition \ref{simple_regular_ss}. 
	
	One verifies that the assignment is $k$-linear. If the extension $E_{\lambda,\mu}e$ splits, then there exist $\alpha,\beta\in k$ such that $\alpha v +\beta w$ is fixed by the idempotent $e_{\chi}$, which forces $\alpha = 0$ and so $\lambda,\mu=0$. This proves injectivity.
	
	Conversely, let $0\to kv\to E \to k\bar{w}\to 0$ be an extension of the two supersingular modules. Then the operator $e_{\chi}$ on $E$ is diagonalizable with distinct eigenvalues, so we may choose an eigenvector $w$ with eigenvalue $1$. Since $\begin{mat}
		0 & (1,0)\\ 0& 0
	\end{mat}$ and $\begin{mat}
		0 & (0,1)\\0 & 0
	\end{mat}$ act trivially on $k\bar{w}$, we can find $\lambda,\mu\in k$ so that they act on $w$ via $\lambda v$ and $\mu v$, respectively. In other words, $E=E_{\lambda,\mu}$.
	
	(ii) We need to show that there are no non-trivial self-extensions of $\operatorname{SS}(\chi,1)e$ in the category of $\cH(\chi\oplus \chi[1,0])_{p^2=1}$-modules. Let $E$ be such an extension. The element $(e_{\chi}-1)^2=1-e_{\chi}$ kills $E$ and so for each $f\in E$,
	\[
	f\begin{mat}
		a & b\\ c & d
	\end{mat}=fe_{\chi} \begin{mat}
		a & b\\ c & d
	\end{mat}=fe_{\chi} \begin{mat}
		a & b\\ c & d
	\end{mat}e_{\chi}=f\begin{mat}
		a & 0\\ 0 & 0
	\end{mat} \text{ for all $\begin{mat}
			a & b\\c & d
		\end{mat}\in \cH(\chi\oplus \chi[1,0])_{p^2=1}$.}
	\]
	In particular, elements of the form $\begin{mat}
		0 & \ast\\0 & 0
	\end{mat}$ or $\begin{mat}
		0 & 0\\\ast & 0
	\end{mat}$ have to kill $E$. By Lemma \ref{diagonal_contribution}, this implies that also the elements of the form $\begin{mat}
		a & 0\\0 & 0
	\end{mat}$ for $a\in (T_{2,0},T_{0,2})$ kill $E$. Since $\cH(\chi)_{p^2=1}=k \oplus (T_{2,0},T_{0,2})$ by Proposition \ref{Hecke_algebra_regular_char}, we are done.
\end{proof}

\begin{remark}
	One may check that $M(\chi,0)\cong E_{1,0}$ and $M(\chi^{s}[1,0],0)\cong E_{0,1}$.
\end{remark}

\noindent \textit{Non-square-regular case.} Assume that $O=O_{\mathbf{1}}$.

\begin{prop}
	Let $M_1,M_2$ be supersingular $\cH(O)_{p^2=1}$-modules. Then
	\[
	\dim_{k}\Ext^1_{\cH(O)_{p^2=1}}(M_1,M_2)=1.
	\]
\end{prop}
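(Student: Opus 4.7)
The plan is to reduce, via the Morita equivalence of Corollary \ref{Morita_non-regular}, to a computation in the category of right $\cH(\mathbf{1})_{p^2=1}$-modules: applying the idempotent $e_{\mathbf{1}}$ yields a canonical isomorphism $\Ext^1_{\cH(O)_{p^2=1}}(M_1,M_2) \cong \Ext^1_{\cH(\mathbf{1})_{p^2=1}}(M_1 e_{\mathbf{1}}, M_2 e_{\mathbf{1}})$. By Proposition \ref{H(1)}, together with the normalization $T_{\Pi^4} = 1$ (so $T_{2,2}$ acts as the identity), this smaller algebra is simply $k\langle s,t\rangle/(s^2+s,\, t^2)$ with $s = S_{0,0}$ and $t = S_{0,2}$. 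By Proposition \ref{simple_non-regular}, the supersingular modules $\operatorname{SS}(\lambda,1) e_{\mathbf{1}}$ for $\lambda \in \{0,-1\}$ are the one-dimensional characters sending $s \mapsto \lambda$ and $t \mapsto 0$.

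From there I would classify the two-dimensional extensions $0 \to \operatorname{SS}(\mu,1) e_{\mathbf{1}} \to E \to \operatorname{SS}(\lambda,1) e_{\mathbf{1}} \to 0$ explicitly. Choosing a basis $\{v,w\}$ adapted to the filtration gives $vs = \mu v$, $vt = 0$, and leaves the structure determined by $ws = \lambda w + \alpha v$, $wt = \beta v$ for some scalars $\alpha,\beta \in k$. A short computation shows that $t^2 = 0$ is automatic, while $s^2 + s = 0$ reduces, using $\lambda^2 = -\lambda$, to the single constraint $\alpha(\lambda + \mu + 1) = 0$. Similarly, two such data $(\alpha,\beta)$ and $(\alpha',\beta')$ determine isomorphic extensions if and only if $\beta = \beta'$ and $\alpha - \alpha' \in (\lambda - \mu) k$; in particular, the split extensions are exactly those with $\beta = 0$ and $\alpha \in (\lambda - \mu) k$.

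A case analysis then finishes the argument. If $\lambda \neq \mu$, so that $\lambda + \mu + 1 = 0$, then $\alpha$ is unconstrained, but $\lambda - \mu \in k^{\times}$ makes every value of $\alpha$ already achievable by a splitting; hence $\Ext^1$ is the line spanned by the class of $\beta$. If $\lambda = \mu$, then $\lambda + \mu + 1 \neq 0$ forces $\alpha = 0$, while the splitting condition also collapses to $\alpha = \beta = 0$; again $\Ext^1$ is one-dimensional with $\beta$ as the sole parameter. The main obstacle I foresee is purely book-keeping, namely carefully passing the Morita equivalence through the quotient $\cH \twoheadrightarrow \cH_{p^2=1}$ and keeping track of the single relation $s^2 + s = 0$ in odd characteristic; once this is in place, the uniform appearance of dimension $1$ in all four cases is governed by the single identity $\alpha(\lambda + \mu + 1) = 0$, and the non-split extensions already built in Proposition \ref{M(chi,lambda)_non_regular} (ii) provide explicit generators in the two off-diagonal cases.
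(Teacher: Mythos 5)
Your proposal is correct and follows essentially the same route as the paper: Morita reduction via $e_{\mathbf{1}}$ to $\cH(\mathbf{1})_{p^2=1}\cong k\langle S_{0,0},S_{0,2}\rangle/(S_{0,0}^2+S_{0,0},\,S_{0,2}^2)$, followed by an explicit classification of two-dimensional extensions. The only (cosmetic) difference is that the paper first diagonalizes $S_{0,0}$ using $S_{0,0}(S_{0,0}+1)=0$ and then identifies every extension as a scalar multiple of one explicit non-split $E$, whereas you keep the off-diagonal parameter $\alpha$, cut it down by the relation $\alpha(\lambda+\mu+1)=0$, and quotient by the coboundaries $\alpha-\alpha'\in(\lambda-\mu)k$ — the same computation organized as a cocycle count.
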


\begin{proof}
	It is enough to prove this in the category of $\cH(\mathbf{1})_{p^2=1}$-modules after applying $e_{\mathbf{1}}$. Write $M_2=\operatorname{SS}(\lambda,1)$ for $\lambda\in \{0,-1\}$. Define an extension
	\[
	0\to M_1e_{\mathbf{1}}\to E\to M_2e_{\mathbf{1}}\to 0
	\]
	by putting $E=kv\oplus kpw$ with $kv=M_1e_{\mathbf{1}}$ and $wS_{0,0}=\lambda w$ and $wS_{0,2}=v$. This is a well-defined extension which is non-split (as $S_{0,2}$ acts non-trivially on $E$). Conversely, given any other such extension, say $F$, we may diagonalize the action of $S_{0,0}$ since $S_{0,0}(S_{0,0}+1)=0$. Choose a basis of eigenvectors $v,w$ with $v\in M_1e_{\mathbf{1}}$. Then necessarily $wS_{0,0}=\lambda w$. Since $S_{0,2}$ acts trivially on $M_2e_{\mathbf{1}}$, there exists some $\mu\in k$ with $wS_{0,2}=\mu v$. In conclusion, the extension $F$ is equal to the $\mu$-scalar multiple of the extension $E$.
\end{proof}

\subsubsection{Extensions of principal modules} 
In the square-regular case, we may write a principal module as $M(\chi,\lambda)$ for a suitable $0\neq \lambda\in k$ and some $\chi\in O$, see Proposition \ref{M(chi,lambda)_regular} and the intertwinings in Remark \ref{inter}. In case $O=O_{\mathbf{1}}$, we may write each principal module of $\cH(O)_{p^2=1}$ as $M(\lambda)$ for a suitable $\lambda\in k^{\times}$, see Proposition \ref{M(chi,lambda)_non_regular}. The following assertion can now either be proved as in \cite[Corollary 6.6]{Breuil_Paskunas}, resp.\ \cite[Corollary 6.7]{Breuil_Paskunas}, or is also a consequence of our later Ext-computations of principal series representations, see Remark \ref{Iwahori_ext}.

\begin{prop}\label{self_ext_pro_hecke_reg} Let $\lambda \in k^{\times}$.
	\begin{enumerate}
		\item[{\rm (i)}] If $O=O_{\chi}$ is square-regular, then the space $\Ext^1_{\cH(O)_{p^2=1}}(M(\chi,\lambda),M(\chi,\lambda))$ is $1$-dimensional with basis
		\[
		0\to M(\chi,\lambda)\xrightarrow{\mathscr{Z}_O-\lambda} T_s e_{\chi}\cH_{p^2=1} /(\mathscr{Z}_O-\lambda)^2\to M(\chi,\lambda)\to 0.
		\]
		\item[{\rm (ii)}] If $O=O_{\mathbf{1}}$, then the space $\Ext^1_{\cH(O)_{p^2=1}}(M(\lambda),M(\lambda))$ is $1$-dimensional with basis given by the extension
		\[
		0\to M(\lambda)\xrightarrow{\mathscr{Z}_{O}-\lambda} (T_s+1)e_{\mathbf{1}}\cH_{p^2=1}/(\mathscr{Z}_{O}-\lambda)^2\to M(\lambda)\to 0.
		\]
	\end{enumerate}
\end{prop}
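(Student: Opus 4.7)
The plan is to follow the strategy of \cite[Corollary 6.6, 6.7]{Breuil_Paskunas}, computing the Ext group directly from the defining two-term presentation. Let $M$ denote $M(\chi,\lambda)$ in case (i) and $M(\lambda)$ in case (ii); correspondingly write $P$ for $T_s e_\chi \cH_{p^2=1}$ or $(T_s+1)e_{\mathbf{1}}\cH_{p^2=1}$, so that the defining sequence reads $0 \to P \xrightarrow{\mathscr{Z}_O - \lambda} P \to M \to 0$. The proposed non-split self-extension is realised by $N := P/(\mathscr{Z}_O - \lambda)^2 P$: by Lemma \ref{Z-lambda}, multiplication by $\mathscr{Z}_O - \lambda$ is injective on $P$, so $N$ sits in an exact sequence $0 \to M \to N \to M \to 0$. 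Non-splitness follows since $\mathscr{Z}_O - \lambda$ acts as zero on any split self-extension but non-trivially on $N$ --- the cyclic generator of $P$ is not in $(\mathscr{Z}_O - \lambda) P$, hence its image under multiplication by $\mathscr{Z}_O - \lambda$ is a non-zero class in $N$.

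For the upper bound, apply $\Hom_{\cH_{p^2=1}}(-, M)$ to the defining resolution. The induced endomorphism on each $\Hom^i(-, M)$ is multiplication by the central element $\mathscr{Z}_O - \lambda$; since this kills $M$, it acts as zero on every such space, and the long exact sequence fragments. One first obtains an isomorphism $\Hom(M, M) \xrightarrow{\sim} \Hom(P, M)$, and then a short exact sequence $0 \to \Hom(P, M) \to \Ext^1(M, M) \to \Ext^1(P, M) \to 0$. As $M$ is simple over the algebraically closed field $k$, $\Hom(M, M) = k$, and the claim $\dim \Ext^1(M, M) = 1$ reduces to showing $\Ext^1(P, M) = 0$.

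To verify this vanishing, use Lemma \ref{T_s_for_later} to produce a short exact sequence $0 \to P' \to Q \to P \to 0$ with $Q = e_\chi \cH_{p^2=1}$ (respectively $e_{\mathbf{1}}\cH_{p^2=1}$) projective, and $P' = T_s e_{\chi^s}\cH_{p^2=1}$ (respectively $T_s e_{\mathbf{1}}\cH_{p^2=1}$). Applying $\Hom(-, M)$ identifies $\Ext^1(P, M)$ with the cokernel of $\Hom(Q, M) \to \Hom(P', M)$. In the square-regular case, any homomorphism out of $P'$ lands in $Me_{\chi^s}$, which vanishes because the principal module $M$ is supported only on the weights $\{\chi, \chi[1,0]\}$ (Proposition \ref{simple_regular_principal}) and square-regularity forces $\chi^s$ to lie outside this pair. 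In the non-square-regular case, $Me_{\mathbf{1}}$ is two-dimensional and the map $\Hom(Q, M) = Me_{\mathbf{1}} \to \Hom(P', M) \subseteq Me_{\mathbf{1}}$ is given by $v' \mapsto v' S_{0,0}$; one checks directly from the formulas of Proposition \ref{simple_non-regular} and the explicit action $vS_{0,0} = -v$, $wS_{0,0} = \lambda v$ that this map surjects onto the one-dimensional target.

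The main obstacle is the surjectivity verification in the non-square-regular case, which requires computing inside the two-dimensional principal module; the square-regular case by contrast collapses immediately to a weight-support consideration. As noted by the author, the entire Hecke-algebraic computation may alternatively be sidestepped by appealing to later Ext-computations for principal series smooth representations of $\wt{G}$ transported to the Hecke side via pro-$p$-Iwahori invariants (Remark \ref{Iwahori_ext}).
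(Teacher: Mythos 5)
Your overall architecture is sound and matches the route the paper points to (the Breuil--Pa\v{s}k\={u}nas-style computation): the long exact sequence obtained from $0\to P\xrightarrow{\mathscr{Z}_O-\lambda}P\to M\to 0$ together with centrality of $\mathscr{Z}_O$ correctly yields $\dim_k\Ext^1(M,M)=1+\dim_k\Ext^1(P,M)$, the class $P/(\mathscr{Z}_O-\lambda)^2P$ is indeed the stated non-split extension, and the presentation $0\to P'\to Q\to P\to 0$ from Lemma \ref{T_s_for_later} reduces everything to the surjectivity of $\Hom(Q,M)\to\Hom(P',M)$. The non-square-regular verification is essentially correct (to pin down that $\Hom(P',M)$ is at most one-dimensional, use $S_{0,0}(S_{0,0}+1)=0$: any $f(S_{0,0})=m$ must satisfy $mS_{0,0}=-m$, which forces $m\in kv$, and $v=-vS_{0,0}$ lies in the image of restriction).

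However, the square-regular case contains a genuine error. You claim $Me_{\chi^s}=0$ because ``$M$ is supported only on the weights $\{\chi,\chi[1,0]\}$'', citing Proposition \ref{simple_regular_principal}. That proposition describes the Morita-cut module $M(e_\chi+e_{\chi[1,0]})$ over $\cH(\chi\oplus\chi[1,0])_{\xi}$; but $P$, $P'$, $Q$ are right ideals of $\cH_{p^2=1}$, so the relevant $\Hom$- and $\Ext$-spaces are computed over the full block, where $M=M(\chi,\lambda)$ is $8$-dimensional with $Me_{\psi}\neq 0$ (in fact one-dimensional) for \emph{every} $\psi$ in the square-regular orbit $O$ --- as one sees from the principal series model $\tilde{\pi}(\cdot)^{I_1}$, whose $I_1$-invariants carry all eight $H$-eigencharacters of $O$ (Lemma \ref{action_Fij}). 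So $\Hom(P',M)\subset Me_{\chi^s}$ does not vanish for the reason you give, and in this case the image of the restriction map $\Hom(Q,M)\to\Hom(P',M)$ is actually $Me_{\chi}T_s=0$, so surjectivity really requires $\Hom(P',M)=0$ and needs an argument. The correct one: since $\mathscr{Z}_O$ is central and acts on $M$ by $\lambda$, any homomorphism $P'\to M$ factors through $P'/(\mathscr{Z}_O-\lambda)P'=M(\chi^s,\lambda)$, which by Proposition \ref{M(chi,lambda)_regular} (i) is the simple module $\operatorname{PS}(\chi^s,0,\lambda,1)$; by Remark \ref{inter} this is isomorphic to $\operatorname{PS}(\chi,\lambda,0,1)$ and hence, as $\lambda\neq 0$, not to $M\cong\operatorname{PS}(\chi,0,\lambda,1)$ (the pair of scalars by which $(T_{(s\Pi)^2},T_{(\Pi s)^2})$ act on the $e_{\chi}$-line is an isomorphism invariant). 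Thus $\Hom(P',M)=0$ and $\Ext^1(P,M)=0$, so your final conclusion stands, but the weight-support justification must be replaced by this (or an equivalent) argument.
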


\section{Genuine representation theory}\label{genuine_rep_theory}

In this part of the paper, we explain a complete classification result for the smooth irreducible genuine $k$-representations of $\wt{G}$ as introduced in Section \ref{gen_reps_and_notations}. We do this by working out the strategy outlined in the introduction. For $\pi\in\Mod^{\operatorname{sm}}_{\wt{G},\iota}(k)$, the $I_1$-invariants
	\begin{equation}\label{I_1-inv_can_iso}
	\pi^{I_1}\cong \Hom_{I_1}(\mathbf{1},\pi|_{I_1})\cong \Hom_{\wt{I}_1}(\mathbf{1}\boxtimes \iota, \pi|_{\wt{I}_1})\cong \Hom_{\wt{G}}(\cInd^{\wt{G}}_{\wt{I}_1}(\mathbf{1}\boxtimes \iota),\pi)
\end{equation}
carry a natural right action of the pro-$p$ Iwahori Hecke algebra $\cH$. We show that the adjoint pair $(\mathcal{T},\mathcal{I})$ defined by
\begin{align*}
	\mathcal{I}\colon \Mod^{\operatorname{sm}}_{\wt{G},\iota}(k)&\to  \Mod_{\mathcal{H}}\\
	\pi & \mapsto \mathcal{I}(\pi)=\pi^{I_1},
\end{align*}
and
\begin{align*}
	\hspace{1.85cm}\mathcal{T}\colon \Mod_{\mathcal{H}}&\to \Mod^{\operatorname{sm}}_{\wt{G},\iota}(k)\\
	M&\mapsto \mathcal{T}(M)=M\otimes_{\cH} \cInd^{\wt{G}}_{\wt{I}_1}(\mathbf{1}\boxtimes \iota),
\end{align*}
induces a bijection
\[
\left\{\begin{array}{c}
	\text{Simple right}\\
	\text{$\cH$-modules}
\end{array}\right\} \cong \left\{\begin{array}{c}
	\text{Smooth irreducible genuine}\\
	\text{$k$-representations of $\wt{G}$}
\end{array}\right\},
\]
where both sides are considered up to isomorphism. The results of the previous sections provide a classification of the left-hand side, and in particular show that there are only two kinds of simple modules: principal and supersingular modules. We will prove the desired bijection by considering the two kinds of modules separately. Before we start, let us extend the list of notations given by sections \ref{conv_notations} and \ref{gen_reps_and_notations}.

\subsection{Preliminaries}\label{prelim_and_not}

We define subgroups of $B$ and $T$ by
\begin{alignat*}{7}
	&B_2=\begin{pmatrix}
		A &\bQ_p\\0 & A
	\end{pmatrix} &&\supset T_2&&=\begin{pmatrix}
		A & 0\\0 & A
	\end{pmatrix}, &U=\begin{pmatrix}
		1 & \bQ_p\\0 & 1
	\end{pmatrix}\\
	&B_S=\begin{pmatrix}
		\bQ_p^{\times} & \bQ_p\\0 & S
	\end{pmatrix}&&\supset T_S && = \begin{pmatrix}
	\bQ_p^{\times} & 0\\0 & S
\end{pmatrix}&& &&\\
&P=\begin{pmatrix}
		\bQ_p^{\times} & \bQ_p\\0 & 1
	\end{pmatrix} &&\supset P^+&&=\begin{pmatrix}
		\bZ_p\setminus \{0\} & \bZ_p\\0 & 1
	\end{pmatrix}\supset  \hspace{0.1cm} &\Gamma=\begin{pmatrix}
		\bZ_p^{\times} & 0\\0 & 1
	\end{pmatrix}
\end{alignat*}

We identify $k\llbracket U\cap I\rrbracket =k\llbracket X\rrbracket$, the power series ring in the variable $X=\begin{mat}
	1 & 1\\0 & 1
\end{mat}-1$. Following \cite{emerton_coherent_rings}, we let $F=\begin{mat}
	p & 0\\0&1
\end{mat}$ act on $U\cap I$ via conjugation: $F\begin{mat}
	1 & b\\0 & 1
\end{mat}F^{-1}=\begin{mat}
	1 & pb\\0 &1
\end{mat}$ for all $b\in \bZ_p$, inducing the flat $k$-algebra endomorphism on $k\llbracket X\rrbracket$ given by $X\mapsto X^p$, and we let $k\llbracket X\rrbracket [F]$ be the non-commutative polynomial ring in $F$: $Ff(X)=F(f(X))F=f(X^p)F$ for all $f(X)\in k\llbracket X\rrbracket$. We also define $k\llbracket X\rrbracket [F,\Gamma]$ to be the twisted group ring so that the elements of $\Gamma$ commute with $F$ and $\gamma X=((X+1)^{\gamma}-1) \gamma$ for $\gamma\in \Gamma$.

\smallskip

If $M$ is a $k\llbracket X\rrbracket$-module, we denote the subspace of $X$-torsion elements (i.e.\ those killed by $X$) in $M$  by $M[X]$. 

\smallskip

The category of smooth $P^+$-representations is naturally a full subcategory of the category of left $k\llbracket X\rrbracket [F,\Gamma]$-modules.

\smallskip

By Lemma \ref{split_subgroups}, we have $\wt{B}_S=B_S\times \mu_2\supset \wt{P}=P\times \mu_2$ and $\wt{B}_2=B_2\times \mu_2$ as groups, which we will from now on use without further mention. In particular, we will simply view $B_S,P$ and $B_2$ as subgroups of the corresponding covering.\\

Without further comment, we will make use of the following lemma.
\begin{lem}
	Let $\pi \in \Mod^{\sm}_{\wt{G},\iota}(k)$ and $n\in N_G(T)$. Under the isomorphism (\ref{I_1-inv_can_iso}), the action of the Hecke operator $T_n$ on $\pi^{I_1}$ is given by
	\[
	vT_n=\sum_{x\in I_1/(I_1\cap (n^{-1}I_1n))} x^{-1} (\tilde{n})^{-1}v, \text{ for all $v\in \pi^{I_1}$.}
	\]
\end{lem}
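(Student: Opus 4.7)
The plan is to unwind the Frobenius reciprocity isomorphism (\ref{I_1-inv_can_iso}) explicitly and then apply the convolution description of the pro-$p$ Iwahori Hecke algebra from Subsection~3.1. Concretely, I would let $f_0 \in \cInd^{\wt{G}}_{\wt{I}_1}(\mathbf{1}\boxtimes\iota)$ denote the genuine characteristic function of $\wt{I}_1$, namely the unique element supported on $\wt{I}_1$ with $f_0(1)=1$. Under the chain (\ref{I_1-inv_can_iso}), an element $v \in \pi^{I_1}$ corresponds to the unique $\wt{G}$-equivariant homomorphism $\varphi_v \colon \cInd^{\wt{G}}_{\wt{I}_1}(\mathbf{1}\boxtimes\iota) \to \pi$ characterized by $\varphi_v(f_0)=v$, since $f_0$ generates the compact induction as a $\wt{G}$-representation. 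The right $\cH$-action on the rightmost term of (\ref{I_1-inv_can_iso}) is pre-composition with endomorphisms, so $v \cdot T_n = \varphi_v(T_n \cdot f_0)$, reducing the lemma to the computation of $T_n\cdot f_0$.

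For this computation, I would use that under the identification of $\cH$ with genuine bi-$\wt{I}_1$-invariant functions on $\wt{G}$, the endomorphism $T_n$ sends $f_0$ to the function itself, regarded as an element of $\cInd^{\wt{G}}_{\wt{I}_1}(\mathbf{1}\boxtimes\iota)$: in other words, $T_n \cdot f_0$ is supported on $\wt{I}_1\tilde{n}\wt{I}_1$ and takes the value $1$ at $\tilde{n}$. Decomposing $\wt{I}_1\tilde{n}\wt{I}_1 = \bigsqcup_{x} \wt{I}_1\tilde{n}x$ with $x$ ranging over a fixed set of representatives for $I_1 / (I_1 \cap n^{-1}I_1 n)$, and using that $\wt{G}$ acts on the compact induction by right translation (so that $(\tilde{n}x)^{-1}\cdot f_0$ is the genuine characteristic function supported on $\wt{I}_1\tilde{n}x$), I would then read off
\[
T_n \cdot f_0 = \sum_{x \in I_1 / (I_1 \cap n^{-1}I_1 n)} (\tilde{n}x)^{-1}\cdot f_0.
\]
Applying $\varphi_v$ and invoking its $\wt{G}$-equivariance immediately gives the claimed formula $v \cdot T_n = \sum_x x^{-1}\tilde{n}^{-1}v$.

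The one genuinely technical point---and the main obstacle---is the well-definedness of the right-hand side under change of coset representatives, which I would verify using Lemma \ref{wd_operator}. Indeed, if $j \in I_1 \cap n^{-1}I_1 n$ so that $x' = jx$ is another representative of the coset of $x$, then $\tilde{n} j^{-1} \tilde{n}^{-1}$ equals $(nj^{-1}n^{-1},1) \in \wt{I}_1$ with trivial $\mu_2$-component, hence acts trivially on $v \in \pi^{I_1}$. Consequently,
\[
(x')^{-1}\tilde{n}^{-1}v = x^{-1} j^{-1} \tilde{n}^{-1} v = x^{-1}\tilde{n}^{-1}(\tilde{n}j^{-1}\tilde{n}^{-1}) v = x^{-1}\tilde{n}^{-1}v,
\]
so the sum depends only on the cosets, not on the chosen representatives. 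Beyond this check, the entire argument is a routine bookkeeping exercise with Frobenius reciprocity and the convolution product, so I do not anticipate any further difficulty.
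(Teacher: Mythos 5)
Your proposal is correct and follows essentially the same route as the paper: both unwind the Frobenius reciprocity isomorphism via the genuine characteristic function of $\wt{I}_1$, compute its image under $T_n$ as the sum of translates coming from the coset decomposition $\wt{I}_1\tilde{n}I_1=\bigsqcup_x \wt{I}_1\tilde{n}x$, and conclude by $\wt{G}$-equivariance. Your additional verification of independence of coset representatives via Lemma \ref{wd_operator} is a correct (if implicit in the paper) supplement, not a divergence in method.
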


\begin{proof}
	This follows from the definition, see also \cite[Corollary 2.6]{coeffsystems}: Letting $\varphi\in \cInd^{\wt{G}}_{\wt{I}_1}(\mathbf{1}\boxtimes \iota)$ be the unique function with support $\wt{I}_1$ and value $1$ at the identity, the isomorphism $\pi^{I_1}\cong \Hom_{\wt{G}}(\cInd^{\wt{G}}_{\wt{I}_1}(\mathbf{1}\boxtimes \iota),\pi)$ is given by $f(\varphi)\mapsfrom f$. Since $\wt{I}_1\tilde{n}I_1=\sqcup_{x\in I_1/(I_1\cap n^{-1}I_1n)} \wt{I}_1\tilde{n}x$, we have $T_n(\varphi)=\sum_{x\in I_1/(I_1\cap n^{-1}I_1n)} x^{-1}(\tilde{n})^{-1}\varphi$. Applying the $\wt{G}$-equivariant map $f$ gives the assertion.
\end{proof}

As in \cite[Corollary 2.0.11]{coeffsystems}, Lemma \ref{identities_Hecke_operators_I} has the following consequence.
\begin{cor}\label{cut}
	Given $\pi \in \Mod^{\operatorname{sm}}_{\wt{G},\iota}(k)$, the $I_1$-invariants $\pi^{I_1}\in \Mod^{\sm}_{\wt{I},\iota}(k)$ decomposes as
	\[
	\pi^{I_1}=\bigoplus_{\chi\in \Hhat} \pi^{I_1}e_{\chi},
	\]
	and $\wt{I}$ acts on $\pi^{I_1}e_{\chi}$ via the character $\chi\boxtimes \iota$.
\end{cor}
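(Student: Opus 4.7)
The plan is to establish the decomposition and the character action in two distinct steps, each a direct consequence of material already available.

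First I would derive the decomposition from formal properties of the Hecke idempotents. Lemma \ref{identities_Hecke_operators_I} parts (i), (ii), (iii) assert that $\{e_\chi\}_{\chi\in\Hhat}$ is a complete orthogonal system of idempotents in $\cH$. Since $\pi^{I_1}$ is a right $\cH$-module via the isomorphism (\ref{I_1-inv_can_iso}), writing any $v\in\pi^{I_1}$ as $v=\sum_{\chi}ve_\chi$ and observing that $ve_\chi\cdot e_{\chi'}=\delta_{\chi,\chi'}ve_\chi$ gives the claimed direct sum decomposition $\pi^{I_1}=\bigoplus_{\chi\in\Hhat}\pi^{I_1}e_\chi$.

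Next I would identify the left $\wt{I}$-action on each summand. Using the splitting $\wt{I}=I_1\cdot(H\times\mu_2)$ (which is licit because $\wt{I}_1=I_1\times\mu_2$ and $H\subset K$ maps to $\wt{K}$ via the fixed splitting), the action of $I_1$ on $\pi^{I_1}$ is trivial and the central subgroup $\mu_2$ acts via $\iota$ by genuineness. So it remains only to show that $H$ acts on $\pi^{I_1}e_\chi$ via $\chi$. The formula in the lemma just above Corollary \ref{cut}, applied to $n=h\in H\subset N_G(T)$ (noting that $H$ normalizes $I_1$, so the sum collapses to a single term), gives
\[
vT_h=\tilde{h}^{-1}v\quad\text{for all }v\in\pi^{I_1},\ h\in H.
\]

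Finally I would verify the character action by a direct computation: for $h'\in H$,
\[
\tilde{h}'\cdot(ve_\chi)=\tfrac{1}{|H|}\sum_{h\in H}\chi(h)\,\tilde{h}'\tilde{h}^{-1}v.
\]
The point now is that by Example \ref{law_torus} and Lemma \ref{Hilbert_symbol_identities}(vii), all the Hilbert symbols appearing in $\tilde{h}'\tilde{h}^{-1}$ are trivial because the entries of $h,h'$ lie in $[\bF_p^\times]\subset A$; hence $\tilde{h}'\tilde{h}^{-1}=\widetilde{h'h^{-1}}$. Re-indexing by $k=h'h^{-1}$ produces the factor $\chi(h')$ and returns the sum defining $ve_\chi$, so $\tilde{h}'\cdot ve_\chi=\chi(h')ve_\chi$. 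This shows $\wt{H}$ acts on $\pi^{I_1}e_\chi$ via $\chi\boxtimes\iota$, completing the proof.

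The only mildly delicate step is the last one, namely the vanishing of cocycle contributions when multiplying lifts of elements of $H$. This is not really an obstacle, since it is an immediate consequence of Lemma \ref{Hilbert_symbol_identities}(vii) together with the explicit formulas in Example \ref{law_torus}; it is nonetheless the only spot where the metaplectic proof genuinely uses information beyond the analogous statement \cite[Corollary 2.0.11]{coeffsystems} for $G$.
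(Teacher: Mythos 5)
Your proposal is correct and follows essentially the same route as the paper, which simply cites the orthogonality identities of Lemma \ref{identities_Hecke_operators_I} together with the analogue \cite[Corollary 2.0.11]{coeffsystems}: the idempotents give the decomposition, and the formula $vT_h=(\tilde h)^{-1}v$ gives the $H$-eigencharacter. Your extra verification that the cocycle is trivial on $H$ (via Example \ref{law_torus} and Lemma \ref{Hilbert_symbol_identities}(vii)), so that $\tilde h'(\tilde h)^{-1}=\widetilde{h'h^{-1}}$ and the reindexing yields the factor $\chi(h')$, is exactly the point that makes the reduction to the non-metaplectic argument legitimate.
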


In the study of the Hecke algebra of a non-square-regular orbit in Section \ref{Hecke_non_reg}, we used the $k$-algebra automorphism $\operatorname{tw}\colon \cH\cong \cH$ of Lemma \ref{twist} induced by twisting. We will also do this on the representation theoretic side and we now check that this is harmless.

\begin{lem}\label{twist_lemma}
	Let $\psi\colon \bQ_p^{\times}\to k^{\times}$ be a smooth character. Let $\operatorname{tw}_{\psi}\colon \cH\cong \cH$ be the automorphism induced by twisting by $\psi\circ {\det}$. For each $M\in \Mod_{\cH}$, there is a $\wt{G}$-equivariant isomorphism
	\[
	\cT(M)\otimes \psi\circ {\det} \cong \cT(\operatorname{tw}_{\psi}^{\ast}(M)),
	\]
	which is natural in $M$.
\end{lem}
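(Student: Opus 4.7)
The plan is to apply the functor $M\otimes_{\cH}(-)$ to the $\wt{G}$-equivariant isomorphism
\[
\Phi\colon \cInd^{\wt{G}}_{\wt{I}_1}(\mathbf{1}\boxtimes \iota)\otimes \psi\circ \det \xrightarrow{\cong} \cInd^{\wt{G}}_{\wt{I}_1}(\mathbf{1}\boxtimes \iota)
\]
provided by Lemma \ref{twist}, while keeping careful track of how $\Phi$ transports the right $\cH$-module structures on the two sides.

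First, since tensoring with the smooth character $\psi\circ \det$ is an autoequivalence of $\Mod^{\sm}_{\wt{G},\iota}(k)$ that does not interact with the Hecke action, I record the canonical $\wt{G}$-equivariant identification
\[
\cT(M)\otimes \psi\circ \det \;=\; M\otimes_{\cH}\bigl(\cInd^{\wt{G}}_{\wt{I}_1}(\mathbf{1}\boxtimes \iota)\otimes \psi\circ \det\bigr).
\]
The core of the argument is then to recognise $\Phi$ as an isomorphism of $(\wt{G},\cH)$-bimodules, once the right $\cH$-structure on the target is twisted by the automorphism $\operatorname{tw}_{\psi}$. This is essentially what Lemma \ref{twist} encodes by construction: for each $\varphi\in \cH$ one has the intertwining relation $\Phi\circ \varphi = \operatorname{tw}_{\psi}(\varphi)\circ \Phi$ on the common underlying $k$-vector space. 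Applying $\operatorname{id}_M\otimes \Phi$ and invoking the standard change-of-scalars identity for tensor products along a ring automorphism then yields the desired $\wt{G}$-equivariant isomorphism $\cT(M)\otimes \psi\circ \det \cong \cT(\operatorname{tw}_{\psi}^{\ast}(M))$. Naturality in $M$ is automatic, as the construction depends only on the fixed bimodule isomorphism $\Phi$ together with the bifunctor $-\otimes_{\cH}-$.

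The only subtle point is the bookkeeping: I must commit to a convention for $\operatorname{tw}_{\psi}^{\ast}(M)$, i.e.\ whether scalars are restricted along $\operatorname{tw}_{\psi}$ or along $\operatorname{tw}_{\psi}^{-1}$, and then verify that the direction of the twist produced by the identity $\Phi\circ \varphi = \operatorname{tw}_{\psi}(\varphi)\circ \Phi$ matches it. I expect this to be the sole nontrivial check; beyond it, the mathematical content is entirely encoded in Lemma \ref{twist} and in the definition of the tensor product.
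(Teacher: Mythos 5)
Your proof is correct, but it takes a different route from the paper's. The paper never manipulates the tensor product directly: it invokes Yoneda's lemma for the adjoint pairs $((-)\otimes \psi\circ\det,(-)\otimes\psi^{-1}\circ\det)$, $(\cT,\cI)$ and $(\operatorname{tw}_{\psi}^{\ast},\operatorname{tw}_{\psi,\ast})$, reducing the statement to a natural isomorphism of right adjoints $\cI(\pi\otimes\psi^{-1}\circ\det)\cong\operatorname{tw}_{\psi,\ast}\cI(\pi)$, which is then read off from the defining relation $j\circ(h\otimes\psi\circ\det)=\operatorname{tw}_{\psi}(h)\circ j$ of Lemma \ref{twist} applied inside $\Hom_{\wt{G}}(\cInd^{\wt{G}}_{\wt{I}_1}(\mathbf{1}\boxtimes\iota),\pi)$. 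You stay on the left-adjoint side: you regard $j$ (your $\Phi$) as an isomorphism of $(\wt{G},\cH)$-bimodules once the $\cH$-action on the target is twisted through $\operatorname{tw}_{\psi}$, apply $M\otimes_{\cH}(-)$, and conclude by change of scalars; this is more elementary (no appeal to Yoneda) and exhibits the isomorphism explicitly as the identity on underlying vector spaces, at the cost of the convention check you flag. That check does come out consistently with the paper: in the paper's proof $\operatorname{tw}_{\psi,\ast}\cI(\pi)$ is $\cI(\pi)$ with $h$ acting through $\operatorname{tw}_{\psi}(h)$, so its adjoint $\operatorname{tw}_{\psi}^{\ast}(M)$ is $M$ with $h$ acting through $\operatorname{tw}_{\psi}^{-1}(h)$, and this is exactly the module your change-of-scalars step produces from the relation $\Phi\circ h=\operatorname{tw}_{\psi}(h)\circ\Phi$. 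In short, both arguments rest on the same input (Lemma \ref{twist}); the paper's adjunction argument buys freedom from this bookkeeping, while yours buys an explicit natural map.
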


\begin{proof}
	Recall that $\operatorname{tw}_{\psi}$ is given by $h\mapsto j (h\otimes \psi\circ \det) j^{-1}$ for $h\in \cH$ , where $j\colon \cInd^{\wt{G}}_{\wt{I}_1}(\mathbf{1}\boxtimes \iota)\otimes \psi\circ \det \cong \cInd^{\wt{G}}_{\wt{I}_1}(\mathbf{1}\boxtimes \iota)$ is the canonical isomorphism.
	
	Using the adjoint pairs $((-)\otimes \psi\circ \det,(-)\otimes \psi^{-1}\circ \det)$, $(\cT,\cI)$ and $(\operatorname{tw}_{\psi}^{\ast},\operatorname{tw}_{\psi,\ast})$, it suffices, by Yoneda's Lemma, to show that for each genuine $\wt{G}$-representation $\pi$,
	\[
	\cI(\pi\otimes \psi^{-1}\circ \det)\cong\operatorname{tw}_{\psi,\ast}\cI(\pi)
	\]
	as $\cH$-modules, natural in $\pi$. The left-hand side is equal to
	\begin{align*}
		&\Hom_{\wt{G}}(\cInd^{\wt{G}}_{\wt{I}_1}(\mathbf{1}\boxtimes \iota),\pi \otimes \psi^{-1}\circ \det)\\
		&=\Hom_{\wt{G}}(\cInd^{\wt{G}}_{\wt{I}_1}(\mathbf{1}\boxtimes \iota)\otimes \psi\circ \det,\pi)\\
		&\cong \Hom_{\wt{G}}(\cInd^{\wt{G}}_{\wt{I}_1}(\mathbf{1}\boxtimes \iota),\pi)
	\end{align*}
	where the $\cH$-action in the second line is given by precomposition with $h\otimes (\psi\circ \det)$ for $h\in \cH$ and the isomorphism is induced by $j$, so that the induced $\cH$-action in the last line is given by precomposition with $j (h\otimes \psi \circ \det)j^{-1}$ for $h\in \cH$, which proves the lemma.
\end{proof}

Speaking of twisting, the elements of $\wt{Z}$ define interesting $\wt{G}$-equivariant isomorphisms. Besides the already introduced smooth character $\omega\colon \bQ_p^{\times}\twoheadrightarrow [\bF_p^{\times}]\cong \bF_p^{\times}\subset k^{\times}$, we denote the unramified character mapping $p$ to a fixed element $y\in k^{\times}$ by $\mu_{y}$.

\begin{lem}\label{Ztilde_action}
	For all $\left(\begin{mat}
		x & 0\\0 & x
	\end{mat},\xi\right)\in \wt{Z}$ and $(g,\zeta)\in \wt{G}$, 
	\[
	\left(\begin{mat}
		x & 0\\0 & x
	\end{mat},\xi\right) (g,\zeta) \left(\begin{mat}
		x & 0\\0 & x
	\end{mat},\xi\right)^{-1}=\left(g,\zeta \left(\mu_{-1}^{v(x)} \omega^{v(x)} \mu_{\omega(x)}\right)^{\frac{p-1}{2}}(\det(g))\right).
	\]
	In particular, for $\pi\in \Mod^{\sm}_{\wt{G},\iota}(k)$, the map
	\[
	\left(\begin{mat}
		x & 0\\0 & x
	\end{mat},\xi\right) \colon \pi\xrightarrow{\cong} \pi \otimes \left(\mu_{-1}^{v(x)} \omega^{v(x)} \mu_{\omega(x)}\right)^{\frac{p-1}{2}}\circ {\det}
	\]
	is a $\wt{G}$-equivariant isomorphism.
\end{lem}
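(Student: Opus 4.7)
The approach is a direct computation with the cocycle $\sigma$ and the Hilbert symbol identities of Lemma \ref{Hilbert_symbol_identities}. Writing $z = \begin{mat} x & 0 \\ 0 & x \end{mat}$, the first observation is that $z$ is central in $G$, so the first coordinate of the commutator $(z,\xi)(g,\zeta)(z,\xi)^{-1}$ is simply $g$. The relation $(z,\xi)(z,\xi)^{-1} = (1,1)$ combined with $\xi^2 = 1$ gives $(z,\xi)^{-1} = (z^{-1},\xi\sigma(z,z^{-1}))$, and expanding the triple product via the cocycle law reduces the statement to identifying
\[
\sigma(z,g)\sigma(z,z^{-1})\sigma(zg,z^{-1}) \in \mu_2
\]
with the announced value.

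The three cocycles can be evaluated via (\ref{cocycle}) using $\mathfrak{c}(z) = x$, $\mathfrak{c}(z^{-1}) = x^{-1}$ and the crucial simplification $\mathfrak{c}(zg) = x\,\mathfrak{c}(g)$ (hence $\mathfrak{c}(zgz^{-1}) = \mathfrak{c}(g)$), which holds uniformly whether the lower-left entry of $g$ vanishes or not. Setting $\mathfrak{c} := \mathfrak{c}(g)$, one reads off $\sigma(z,g) = (\mathfrak{c}, x^3)$, $\sigma(z,z^{-1}) = (x^{-1}, x^3)$, and $\sigma(zg,z^{-1}) = (x^{-1}, \mathfrak{c}\, x^3 \det(g))$. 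Expanding these by bilinearity (Lemma \ref{Hilbert_symbol_identities} (i), (ii)), using that values in $\mu_2$ are self-inverse so that $(x^{-1},-) = (x,-)$ and $(-,x)(x,-) = 1$ by symmetry (vi), and using $(x,x)^2 = 1$, the whole product collapses to $(x, \det(g))$.

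Finally, to match the character expression in the statement, I would substitute into the explicit formula (\ref{HS}), writing
\[
(x,\det g) = \bigl(\omega(-1)^{v(x)v(\det g)}\, \omega(\det g)^{v(x)}\, \omega(x)^{-v(\det g)}\bigr)^{\frac{p-1}{2}},
\]
and recognize the three factors as $\mu_{-1}^{v(x)}(\det g)$, $\omega^{v(x)}(\det g)$, and $\mu_{\omega(x)}(\det g)^{-1}$, respectively. The minus sign in the last exponent washes out after taking the $(p-1)/2$-th power, since $\omega(x) \in \bF_p^\times$ gives $\mu_{\omega(x)}^{p-1} = 1$ and thus $\mu_{\omega(x)}^{-(p-1)/2} = \mu_{\omega(x)}^{(p-1)/2}$. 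The "in particular" clause is then a formal consequence: letting $\chi$ denote the displayed character, the commutator identity reads $(z,\xi)(g,\zeta)\,v = \chi(\det g)\, (g,\zeta)(z,\xi)\,v$ for all $v \in \pi$, which is precisely the $\wt{G}$-equivariance of $(z,\xi)\colon \pi \to \pi \otimes \chi \circ \det$. The only delicate point is careful bookkeeping of signs in $\mu_2$; there is no substantive obstacle.
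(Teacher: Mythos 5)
Your proof is correct and follows exactly the route the paper intends: the paper's own proof is just the remark that this is a straightforward computation with the cocycle (\ref{cocycle}), and your evaluation of $\sigma(z,g)\sigma(z,z^{-1})\sigma(zg,z^{-1})=(x,\det g)$ together with the translation via (\ref{HS}) (including the sign in the exponent of $\omega(x)$ washing out modulo $p-1$) fills in those details accurately.
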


\begin{proof}
	This is a straight forward computation using the definition of the cocycle (\ref{cocycle}) defining $\wt{G}$.
\end{proof}

We finish the section with a quick observation.

\begin{lem}\label{no_fd}
	There are no finite dimensional objects in $\Mod^{\sm}_{\wt{G},\iota}(k)$.
\end{lem}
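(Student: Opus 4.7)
The plan is to proceed by contradiction: assume $\pi\in \Mod^{\sm}_{\wt G,\iota}(k)$ is nonzero and finite-dimensional, and extract a contradiction from the non-triviality of the metaplectic cover restricted to $\SL_2(\bQ_p)$. First, since $\pi$ is smooth and $\dim_k\pi<\infty$, intersecting the stabilizers of a finite $k$-basis of $\pi$ produces an open subgroup of $\wt G$ contained in the kernel of $\rho\colon\wt G\to \GL(\pi)$; hence $N_\pi:=\ker\rho$ is an open normal subgroup of $\wt G$.

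Next, I would push $N_\pi$ down along the quotient map $\wt G\twoheadrightarrow G$: the image $\ol{N_\pi}$ is an open normal subgroup of $G=\GL_2(\bQ_p)$. A short group-theoretic step—using that $\PSL_2(\bQ_p)$ is (abstractly, hence topologically) simple so that the only open normal subgroup of $\SL_2(\bQ_p)$ is itself, combined with the determinant isomorphism $G/\SL_2(\bQ_p)\cong \bQ_p^\times$—then forces $\SL_2(\bQ_p)\subseteq \ol{N_\pi}$.

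Now I would distinguish the two possibilities for $\mu_2\cap N_\pi$. If $\mu_2\subseteq N_\pi$, then $\mu_2$ acts trivially on $\pi$, directly contradicting the genuineness assumption that $\mu_2$ acts through $\iota$. If $\mu_2\cap N_\pi=\{1\}$, then the projection $\wt G\to G$ restricts to a continuous group-theoretic bijection $N_\pi\xrightarrow{\sim}\ol{N_\pi}$—an isomorphism of topological groups, since both sides are locally compact Hausdorff and the map is open—and composing its inverse with the inclusion $\SL_2(\bQ_p)\hookrightarrow \ol{N_\pi}$ yields a continuous splitting of the restricted central extension
\[
1\to \mu_2\to \wt{\SL_2(\bQ_p)}\to \SL_2(\bQ_p)\to 1.
\]

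The principal obstacle is then to exclude such a splitting. This is the classical non-triviality of the metaplectic double cover $\Mp_2(\bQ_p)=\wt{\SL_2(\bQ_p)}$ of $\SL_2(\bQ_p)$ for odd $p$, a theorem of Kubota and Matsumoto; alternatively, it can be extracted with a modest direct computation from the explicit Hilbert-symbol cocycle \ref{cocycle} recorded earlier (for instance, the relation $\sigma(\begin{mat}p & 0\\0 & p^{-1}\end{mat},\begin{mat}p^{-1} & 0\\0 & p\end{mat})=(p,p)=(-1)^{(p-1)/2}$ handles the case $p\equiv 3\pmod 4$, with a similar argument using a non-square unit in $\bZ_p^{\times}$ for $p\equiv 1\pmod 4$). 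Once this non-splitting is invoked, both cases of the dichotomy lead to a contradiction, so no such $\pi$ can exist.
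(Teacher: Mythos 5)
Your main route does work, but only if you genuinely invoke the classical theorem (Weil, Kubota, Matsumoto/Moore) that the double cover $1\to\mu_2\to\wt{\operatorname{SL}_2(\bQ_p)}\to\operatorname{SL}_2(\bQ_p)\to 1$ is non-split, and this makes it a substantially heavier argument than the paper's, which is self-contained. (Also note, for a genuine $\pi$ the case $\mu_2\subset N_\pi$ never occurs, so your whole proof rests on that non-splitting; and the step ``the only open normal subgroup of $\operatorname{SL}_2(\bQ_p)$ is itself'' needs slightly more than simplicity of $\operatorname{PSL}_2(\bQ_p)$ — that alone only gives an open normal subgroup of index at most two, and you should add perfectness of $\operatorname{SL}_2(\bQ_p)$, or that unipotents are squares, to finish.) The paper instead reduces to $\pi$ irreducible, shows by conjugating a congruence subgroup $K_m$ by powers of $F$ and $sFs$ that the image of the kernel in $G$ contains $U$ and $sUs$, hence $\operatorname{SL}_2(\bQ_p)$ (the same conclusion you reach via simplicity), deduces that $\wt{\operatorname{SL}_2(\bQ_p)}$ acts by scalars, so that $\pi|_{P\cap T}$ is still irreducible and $\pi$ is a character of $\wt{G}$; but a character kills the commutator subgroup, which contains $[\wt{T},\wt{T}]=\mu_2$ by Lemma \ref{centers} (ii) (Example \ref{law_torus} plus non-degeneracy of the Hilbert symbol), contradicting genuineness. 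Thus the paper never needs to know anything about the extension over $\operatorname{SL}_2(\bQ_p)$.

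The genuine gap is your proposed ``modest direct computation''. Exhibiting one value of the cocycle, say $\sigma(g,g^{-1})=(p,p)=-1$ for $g=\begin{mat}p & 0\\ 0 & p^{-1}\end{mat}$, only shows that the tautological section $g\mapsto(g,1)$ is not a homomorphism; it says nothing about whether $\sigma|_{\operatorname{SL}_2\times\operatorname{SL}_2}$ is a coboundary. Compare Lemma \ref{split_subgroups} (i): the cocycle is also non-trivial on $K\times K$, yet the cover splits over $K$, the splitting being corrected by a non-trivial $\mu_2$-valued function. Nor can an easy commutator test save this: for commuting diagonal elements $\operatorname{diag}(a,a^{-1})$, $\operatorname{diag}(b,b^{-1})$ of $\operatorname{SL}_2(\bQ_p)$ one computes from Example \ref{law_torus} that the commutator of their standard lifts is $(b,a)^{-1}(a,b)=(a,b)^2=1$ by Lemma \ref{Hilbert_symbol_identities}, so lifts of such commuting pairs already commute and no single symbol value detects non-splitting. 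If you keep your structure, you must honestly cite the non-triviality of the metaplectic extension (Kubota; Matsumoto/Moore via the Hilbert symbol as a non-trivial Steinberg symbol); otherwise the paper's commutator-in-$\wt{T}$ argument is the cheaper and self-contained way out.
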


\begin{proof}
	Let $\pi$ be a finite dimensional and smooth $\wt{G}$-representation, which we may assume to be irreducible. Its kernel is an open subgroup of $\wt{G}$. In particular, it contains a congruence subgroup $K_m$ for some $m\geq 1$. Projecting to $G$ and conjugating by $F^n$ (resp.\ $(sFs)^n$) for all $n\geq 0$, shows that the image of the kernel in $G$ contains $U$ as well as $sUs$ and thus $\operatorname{SL}_2(\bQ_p)$. We conclude that  each element of $\wt{\operatorname{SL}_2(\bQ_p)}$ acts on $\pi$ by $\pm 1$ (hence by scalars). It follows that the restriction of $\pi$ to the subgroup $P\cap T\cong \bQ_p^{\times}$ is still irreducible and must therefore be $1$-dimensional. However, the commutator subgroup of $\wt{G}$ contains $[\wt{T},\wt{T}]=\mu_2$ (cf.\ Lemma \ref{centers} (ii)), contradicting genuinity.
\end{proof}

\subsection{Weights}
We remind the reader of the well-known classification of smooth irreducible $k$-representations of $K$, so-called \textit{weights}.

\begin{definition}\label{sym_def}
	For $r\in \bZ_{\geq 0}$, define $\Sym^{r}(k^2)$ to be the space of homogeneous degree-$r$ polynomials in $k[x,y]$ with $\GL_2(\bF_p)$-action defined by
	\[
	\begin{mat}
		a & b\\ c & d
	\end{mat}P(x,y)=P(ax+cy,bx+dy), \text{ for all $\begin{mat}
			a & b\\c & d
		\end{mat}\in \GL_2(\bF_p)$, $P(x,y)\in \Sym^{r}(k^2)$.}
	\]
	We will view it as representation of $K$ via inflation along the reduction map.
\end{definition}

\begin{prop}\label{class_weights}
	The map
	\begin{align*}
		\left\{
		(a,b)\in \bZ^2 : \begin{array}{l}
			0\leq a-b\leq p-1\\
			0\leq b < p-1
		\end{array}\right\} &\to \left\{\begin{array}{c}\text{Smooth irreducible}\\ \text{$k$-representations of $K$}\end{array}\right\}\\
		(a,b)&\mapsto \Sym^{a-b}(k^2)\otimes {\det}^b
	\end{align*}
	is well-defined and bijective, where the right-hand side is considered up to isomorphism.
\end{prop}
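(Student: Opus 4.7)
The plan is to first reduce the problem to the representation theory of the finite group $\GL_2(\bF_p)$ and then to classify irreducible $k$-representations of the latter by extracting a highest weight. Let $V$ be a smooth irreducible $k$-representation of $K$. Smoothness implies that the kernel of the action is open, so it contains some congruence subgroup $K_m$ for $m\geq 1$. The first principal congruence subgroup $K_1$ is a pro-$p$ group, and since we are working over a field of characteristic $p$ and $V$ is (automatically) finite-dimensional as an irreducible smooth representation of the profinite group $K$, the standard argument (a non-trivial finite $p$-group acting on a non-zero finite-dimensional $k$-vector space has non-zero fixed vectors, cf.\ \cite[Proposition 36]{herzig} adapted to the finite setting) shows that $V^{K_1}\neq 0$. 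By irreducibility, $V=V^{K_1}$, so $V$ factors through the quotient $K/K_1\cong \GL_2(\bF_p)$.

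Thus it suffices to classify the smooth irreducible $k$-representations of $\GL_2(\bF_p)$. For this I would consider the subgroup $\ol{U}=\begin{mat}1 & \bF_p \\ 0 & 1\end{mat}$; it is a $p$-group, so $V^{\ol{U}}\neq 0$. The diagonal torus $\ol{T}$ normalises $\ol{U}$ and hence acts on $V^{\ol{U}}$, so I can choose a $\ol{T}$-eigenvector $0\neq v\in V^{\ol{U}}$ corresponding to some character $\ol{T}\to k^{\times}$, $\operatorname{diag}(\lambda,\mu)\mapsto \lambda^a \mu^b$ with $(a,b)\in (\bZ/(p-1))^2$. Choosing lifts $0\leq a-b\leq p-1$ and $0\leq b\leq p-2$ (with at most one ambiguity when $a\equiv b$ which I would resolve by convention, e.g.\ $a=b$, i.e.\ $a-b=0$), Frobenius reciprocity relative to the upper-triangular Borel $\ol{B}\subset \GL_2(\bF_p)$ gives a non-zero $\GL_2(\bF_p)$-equivariant map $\cInd^{\GL_2(\bF_p)}_{\ol{B}}(\chi_{a,b})\to V$; composing with the unique (up to scalar) embedding $\Sym^{a-b}(k^2)\otimes \det^b\hookrightarrow \cInd^{\GL_2(\bF_p)}_{\ol{B}}(\chi_{a,b})$ sending $x^{a-b}$ to the appropriate highest weight vector, and invoking the irreducibility of $\Sym^r(k^2)$ for $0\leq r\leq p-1$ (which can be checked directly by a Mischke-style highest weight argument, since a non-zero subrepresentation must contain a $\ol{U}$-fixed vector, which is a scalar multiple of $x^r$, and then $\ol{U}^{-}$ acts to generate the full space), produces an injection $\Sym^{a-b}(k^2)\otimes \det^b\hookrightarrow V$. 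By irreducibility of $V$, this is an isomorphism, proving surjectivity of the map in the statement.

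For well-definedness and injectivity, I would compute the socle of $\Sym^{a-b}(k^2)\otimes \det^b$ under $\ol{B}$ and read off the character $\chi_{a,b}$; the constraints $0\leq a-b\leq p-1$ and $0\leq b< p-1$ then ensure that distinct pairs give rise to distinct characters (using $\det^{p-1}=\mathbf{1}$ on $\ol{T}$), hence to non-isomorphic representations. A dimension count, or equivalently a count of $p$-regular conjugacy classes of $\GL_2(\bF_p)$, confirms that all $p(p-1)$ representations have been exhausted. The main technical point is the irreducibility of $\Sym^r(k^2)$ for $0\leq r\leq p-1$ and the calculation of its $\ol{U}$-invariants; the rest is a routine packaging of the highest weight formalism and the fact that mod-$p$ smooth representations of pro-$p$ groups have non-zero invariants.
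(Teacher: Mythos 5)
Your reduction to $\GL_2(\bF_p)$ and the overall highest-weight strategy are the standard ones (the paper itself simply refers to \cite[Proposition 14]{herzig}), but two of your steps are wrong as written, and both fail at the same spot. First, the embedding $\Sym^{a-b}(k^2)\otimes{\det}^b\hookrightarrow \Ind^{\GL_2(\bF_p)}_{\ol{B}}(\chi_{a,b})$ you compose with does not exist unless $\chi_{a,b}=\chi_{b,a}$: by Frobenius reciprocity such a map corresponds to a $\ol{B}$-equivariant quotient of $\Sym^{a-b}(k^2)\otimes{\det}^b$ isomorphic to $\chi_{a,b}$, and the $\ol{U}$-coinvariants of this weight carry the character $\chi_{b,a}$ (the image of $y^{a-b}$), not $\chi_{a,b}$. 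With the conventions in force, the weight whose highest-weight character is $\chi_{a,b}$ is the \emph{quotient} of $\Ind_{\ol{B}}(\chi_{a,b})$, the subrepresentation being $\Sym^{p-1-(a-b)}(k^2)\otimes{\det}^{a}$; this is exactly the content of Proposition \ref{JH_weight}. (Even in the case $a\equiv b$, where the induction is semisimple and an embedding does exist, you would still have to rule out that its image lies in the kernel of your surjection onto $V$.) Second, the ambiguity at $a\equiv b \pmod{p-1}$ cannot be resolved ``by convention'': the weights $\mathbf{1}\otimes{\det}^b$ and $\Sym^{p-1}(k^2)\otimes{\det}^b$ have the \emph{same} $\ol{T}$-character on their one-dimensional $\ol{U}$-invariants, so with the convention $a-b=0$ your recipe never produces the twists of $\Sym^{p-1}(k^2)$, and for the same reason your injectivity argument (``distinct pairs give distinct characters'') fails precisely for the pairs $(b,b)$ and $(b+p-1,b)$, which must instead be separated by dimension ($1$ versus $p$) or another invariant.

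Both problems disappear if you use the structure of the induced representation instead of a nonexistent embedding: your $\ol{T}$-eigenvector in $V^{\ol{U}}$ gives, by Frobenius reciprocity, a surjection $\Ind^{\GL_2(\bF_p)}_{\ol{B}}(\chi_{a,b})\twoheadrightarrow V$, and by Proposition \ref{JH_weight} the source has exactly the two Jordan-H\"older factors $\Sym^{a-b}(k^2)\otimes{\det}^b$ and $\Sym^{p-1-(a-b)}(k^2)\otimes{\det}^{a}$, both of which are of the form appearing in Proposition \ref{class_weights} after normalizing parameters into the stated range; hence $V$ is such a weight, which is all that surjectivity requires. (If you want to identify which factor $V$ is, compare $\ol{U}$-invariant eigencharacters when $\chi_{a,b}\neq\chi_{b,a}$ and dimensions otherwise; alternatively, embed $V$ into $\Ind_{\ol{B}}(\chi_{b,a})$ via its $\ol{U}$-coinvariants and use that the socle of that induction is $\Sym^{a-b}(k^2)\otimes{\det}^b$.) Your remaining ingredients --- irreducibility of $\Sym^r(k^2)$ for $0\leq r\leq p-1$ together with the one-dimensionality of its $\ol{U}$-invariants, pairwise non-isomorphy within the parameter range (now using dimension as well as the highest-weight character), and, if you wish, the count of $p(p-1)$ $p$-regular classes --- are sound and complete the argument.
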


\begin{proof}
	See for example \cite[Proposition 14]{herzig}.
\end{proof}

	\begin{prop}\label{JH_weight}
	Given $(a,b)\in \bZ^2$ with $0\leq a-b\leq p-1$ and $0\leq b < p-1$, there is a short exact sequence of $K$-representations,
	\[
	0\to \Sym^{p-1-(a-b)}(k^2)\otimes {\det}^{a}\to \Ind^{K}_I(\omega^{a}\otimes \omega^{b})\to \Sym^{a-b}(k^2)\otimes {\det}^b\to 0.
	\]
\end{prop}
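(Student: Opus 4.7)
The approach is to build the two maps by hand via Frobenius reciprocity and close the argument with a dimension count. First, I note that $K_1 \subset I_1$ acts trivially on each of the three representations in sight, so everything factors through the finite reduction $K \twoheadrightarrow \GL_2(\bF_p)$; in particular $\dim \Ind^K_I(\omega^a \otimes \omega^b) = [\GL_2(\bF_p) : B(\bF_p)] = p+1$.

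For the surjection, I would exhibit the line $k \cdot x^{a-b} \subset \Sym^{a-b}(k^2) \otimes {\det}^b$, which is $U(\bF_p)$-fixed (directly from Definition~\ref{sym_def}) and on which $H$ acts via $\operatorname{diag}([\alpha],[\beta]) \mapsto \alpha^{a-b} \cdot (\alpha\beta)^b = \omega^a(\alpha) \omega^b(\beta)$. Frobenius reciprocity then produces a nonzero $K$-morphism $\Ind^K_I(\omega^a \otimes \omega^b) \to \Sym^{a-b}(k^2) \otimes {\det}^b$, which is surjective since the target is irreducible by Proposition~\ref{class_weights}.

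Dually, I would compute the $U(\bF_p)$-coinvariants of $\Sym^{p-1-(a-b)}(k^2) \otimes {\det}^a$: this space is one-dimensional, spanned by the image of $y^{p-1-(a-b)}$, and $H$ acts via $\operatorname{diag}([\alpha],[\beta]) \mapsto \beta^{p-1-(a-b)} \cdot (\alpha\beta)^a = \omega^a(\alpha) \omega^{p-1+b}(\beta) = \omega^a(\alpha) \omega^b(\beta)$, using $\omega^{p-1} = 1$. A second application of Frobenius reciprocity produces a nonzero $K$-morphism $\Sym^{p-1-(a-b)}(k^2) \otimes {\det}^a \to \Ind^K_I(\omega^a \otimes \omega^b)$, necessarily injective because the source is irreducible.

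To assemble the short exact sequence, I would argue that the composition of the injection with the surjection vanishes: this composition goes between two irreducible $K$-representations, and $\Sym^{r}(k^2) \otimes {\det}^{c} \cong \Sym^{r'}(k^2) \otimes {\det}^{c'}$ (with $0 \leq r,r' \leq p-1$) only when $r = r'$ and $c \equiv c' \pmod{p-1}$; here $r = r'$ forces $a-b = (p-1)/2$, under which the determinantal exponents differ by $a-b = (p-1)/2 \not\equiv 0 \pmod{p-1}$ since $p > 2$. Hence the composition is zero, so the image of the injection lies in the kernel of the surjection, and the matching dimension count $p+1 - (a-b+1) = p-1-(a-b)+1$ upgrades this inclusion to equality. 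The only mildly delicate point is the non-isomorphism check, which is precisely where the odd prime hypothesis is used; everything else is pure bookkeeping of $H$-characters.
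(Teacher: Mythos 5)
Your proof is correct and is essentially the paper's own argument: the paper's proof is exactly "Frobenius reciprocity and counting dimensions" (citing Herzig's notes), and you have simply spelled out the two reciprocity maps via the $U(\bF_p)$-invariant line $kx^{a-b}$ and the coinvariant line spanned by $y^{p-1-(a-b)}$, together with the non-isomorphism check from Proposition \ref{class_weights} to see the composite vanishes. All the computations (the $H$-characters, the dimension count $p+1$, and the case $a-b=\tfrac{p-1}{2}$ where $p$ odd is used) are accurate.
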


\begin{proof}
	This follows from Frobenius reciprocity and counting dimensions, see for example \cite[p.\ 23]{herzig}.
\end{proof}

\begin{definition}\label{ss_weights}
	Given a character $\psi\in \Hhat$, define the semi-simple $K$-representation $W(\psi)$ by
	\[
	W(\psi)=\bigoplus_{\sigma: \sigma^{I_1}=\psi} \sigma,
	\]
	where the direct sum is over all smooth irreducible $K$-representations $\sigma$ with $\sigma^{I_1}=\psi$.
\end{definition}

\begin{remark}
	If $\psi\neq \psi^{s}$, then $W(\psi)$ is irreducible. Otherwise, $W(\psi)$ consists of the two constituents of $\Ind^K_I(\psi)$.
\end{remark}

\begin{definition}
	The maximal semi-simple subrepresentation of a smooth $K$-representation $\pi$ will be denoted by $\soc_K(\pi)$ and is called the \textit{$K$-socle of $\pi$}.
\end{definition}

\subsection{Principal series representations}

\begin{definition}
	A \textit{genuine principal series representation} of $\wt{G}$ is a representation of $\wt{G}$ isomorphic to the smooth parabolic induction $\Ind^{\wt{G}}_{\wt{B}}(\tau)$ of some smooth irreducible genuine $k$-representation $\tau$ of $\wt{T}$ viewed as a representation of $\wt{B}$ via inflation.
\end{definition}

In order to understand these objects, an investigation of the $\wt{T}$-representations we are inducing is necessary, which is the content of the next section.
	
\subsubsection{Smooth irreducible genuine $\wt{T}$-representations} 
The classification result we are about to prove closely follows \cite[Chapter 5]{McNamara}, which deals with smooth genuine complex representations in a more general setting, see especially Corollary 5.2 in \textit{loc.\ cit.} It says that a smooth irreducible genuine $\wt{T}$-representation is uniquely determined by its central character. Before stating the result, we need two lemmas.

\begin{lem}\label{torus_irred_fd}
	Any smooth irreducible genuine $k$-representation of $\wt{T}$ is finite dimensional.
\end{lem}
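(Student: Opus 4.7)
The plan is to establish that $Z(\wt{T})$ acts on $\pi$ via a character and then to conclude $\dim_k\pi\le[\wt{T}:Z(\wt{T})]=16$, where this index follows from Lemma \ref{centers}(ii) together with $[\bQ_p^{\times}:S]=4$ for odd $p$. I would first trivialize the pro-$p$ part of $Z(\wt{T})$: since $1+p\bZ_p\subset S$, the compact open subgroup $T\cap I_1$ is contained in $T^{\sq}\times\mu_2=Z(\wt{T})$, hence is normal in $\wt{T}$, and being pro-$p$ it has nonzero fixed vectors on $\pi$ (as $k$ has characteristic $p$). Irreducibility then forces $T\cap I_1$ to act trivially, so $\pi$ descends to a representation of the discrete quotient $\wt{T}/(T\cap I_1)$. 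The subgroup $Z:=Z(\wt{T})/(T\cap I_1)$ is a finitely generated abelian group of the form $\bZ^2\times F$ with $F$ finite, so $R:=k[Z]$ is a Noetherian finitely generated commutative $k$-algebra whose simple modules are all $1$-dimensional by Hilbert's Nullstellensatz.

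Next, I would extract a candidate central character. Fix $0\ne v\in\pi$; the cyclic $R$-submodule $W=Rv\subset\pi$ is Noetherian, so it admits a simple quotient $W\twoheadrightarrow k\chi$ for some smooth character $\chi\colon Z(\wt{T})\to k^{\times}$. Let $\mathfrak{m}\subset R$ be the maximal ideal kernel of $\chi$, so that $\mathrm{Ann}_R(v)\subseteq\mathfrak{m}$. The subspace $\mathfrak{m}\pi\subseteq\pi$ is $\wt{T}$-stable because $Z(\wt{T})$ is central in $\wt{T}$ (so $\mathfrak{m}$ is conjugation-invariant), and by irreducibility it equals either $0$ or $\pi$.

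To rule out $\mathfrak{m}\pi=\pi$, I apply Nakayama's lemma after localizing at $\mathfrak{m}$. Writing $\wt{T}=\bigsqcup_{i=1}^{16}g_iZ(\wt{T})$, centrality yields $\pi=\sum_{i=1}^{16}R\cdot(g_iv)$, so $\pi$ is finitely generated over $R$ and $\pi_{\mathfrak{m}}$ is finitely generated over the local ring $R_{\mathfrak{m}}$. If $\mathfrak{m}\pi=\pi$, then $\mathfrak{m}R_{\mathfrak{m}}\cdot\pi_{\mathfrak{m}}=\pi_{\mathfrak{m}}$, whence $\pi_{\mathfrak{m}}=0$ by Nakayama, forcing some $s\in R\setminus\mathfrak{m}$ to annihilate $v$---contradicting $\mathrm{Ann}_R(v)\subseteq\mathfrak{m}$. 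Therefore $\mathfrak{m}\pi=0$, so $Z(\wt{T})$ acts via $\chi$, and then $\pi=\sum_{i=1}^{16}k\cdot(g_iv)$ shows $\dim_k\pi\le 16$. The main obstacle is the Nakayama step, which substitutes for the countable-dimension version of Schur's lemma (inapplicable when $k$ is countable, e.g.\ $\Fpbar$) to guarantee the central character.
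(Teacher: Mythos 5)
Your argument is correct, but it takes a genuinely different route from the paper. The paper filters the index-$16$ inclusion $Z(\wt{T})=T^{\sq}\times\mu_2\subset\wt{T}$ by a chain of index-two subgroups and invokes the standard Clifford-theory fact that restriction of an irreducible representation to an index-two subgroup is semisimple of length at most two; iterating, it reduces the lemma to smooth irreducible representations of the abelian group $Z(\wt{T})$ (equivalently $T^{\sq}$), which are $1$-dimensional by the Nullstellensatz since $T^{\sq}$ is finitely generated abelian modulo its pro-$p$ part. You instead first kill $T\cap I_1$ (pro-$p$, central, characteristic $p$), then produce a character of $Z(\wt{T})$ directly: using that $\pi$ is generated over $R=k[Z(\wt{T})/(T\cap I_1)]$ by the $16$ translates $g_iv$, a maximal ideal $\mathfrak{m}\supseteq\operatorname{Ann}_R(v)$ must satisfy $\mathfrak{m}\pi=0$ by irreducibility plus Nakayama after localization, so $Z(\wt{T})$ acts through $\chi$ and $\dim_k\pi\le[\wt{T}:Z(\wt{T})]=16$. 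Both proofs rest on the same two structural inputs (Lemma on centers and the Nullstellensatz for the finitely generated abelian quotient), but yours trades the Clifford filtration for a commutative-algebra argument that establishes the central character without Schur's lemma (which is delicate for countable $k$ such as $\Fpbar$); this is a pleasant byproduct, since the paper only obtains the central character afterwards, in Definition-Proposition \ref{class_T}, as a consequence of finite-dimensionality. The paper's route is shorter; your bound $16$ is of course not sharp (the irreducibles are $4$-dimensional), but that is irrelevant for the lemma.
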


\begin{proof}
	The proof is inspired by \cite[Lemma 2.4]{labesse_langlands_1979}. By Lemma \ref{centers}, $Z(\wt{T})=T^{\sq}\times \mu_2$. Filtering the inclusion $Z(\wt{T})\subset \wt{T}$ by subgroups of successive index two, and observing that, if $G_1\subset G_2$ is an inclusion of groups of index two, then the restriction of an irreducible $G_2$-representation to $G_1$ is semi-simple of length at most two, the lemma reduces to the statement that a smooth irreducible genuine $Z(\wt{T})$-representation, or equivalently a smooth irreducible $T^{\sq}$-representation, is finite dimensional. Modulo the pro-$p$ part, $T^{\sq}$ is a finitely generated abelian group, so Hilbert's Nullstellensatz implies that a smooth irreducible $k$-representation of it is in fact $1$-dimensional.
\end{proof}

\begin{lem}\label{conj_ij}
	Let $i,j\in \bZ$. For all $\begin{mat}
		a & b\\ 0 & d
	\end{mat}\in B_2$, 
	\[
	\left(\begin{mat}
		p^{i} & 0\\0 & p^{j}
	\end{mat},1\right)\begin{mat}
		a & b\\0 & d
	\end{mat}	\left(\begin{mat}
		p^{i} & 0\\0 & p^{j}
	\end{mat},1\right)^{-1}=\left(\begin{mat}
		a & p^{i-j}b\\0 & d
	\end{mat}, \omega(d)^{i\frac{p-1}{2}} \omega(a)^{j\frac{p-1}{2}}\right).
	\]
\end{lem}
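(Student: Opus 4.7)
\medskip

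\noindent\textbf{Proof plan.} The statement is a direct calculation in $\wt{G}$, so the strategy is simply to apply the cocycle formula \eqref{cocycle}, the inverse formula of Example \ref{law_torus}, and the Hilbert symbol identities of Lemma \ref{Hilbert_symbol_identities}. Writing $t=\begin{mat}p^i & 0\\ 0 & p^j\end{mat}$ and $b_0=\begin{mat}a & b\\ 0 & d\end{mat}$, the matrix component of the conjugate is the elementary computation $tb_0t^{-1}=\begin{mat}a & p^{i-j}b\\ 0 & d\end{mat}$, so the only real task is to track the $\mu_2$-component through the two products $(t,1)(b_0,1)$ and $(tb_0,\sigma(t,b_0))(t^{-1},\zeta_t)$.

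The plan is first to evaluate $\sigma(t,b_0)$. Since the lower-left entries of $t$, $b_0$ and $tb_0$ all vanish, $\mathfrak{c}$ reads off the diagonal: $\mathfrak{c}(t)=p^j$, $\mathfrak{c}(b_0)=d$, $\mathfrak{c}(tb_0)=p^j d$, so \eqref{cocycle} gives $\sigma(t,b_0)=(d,p^{i+2j})$. Bilinearity splits this as $(d,p^i)(d,p^{2j})$; the second factor is trivial by Lemma \ref{Hilbert_symbol_identities}(vii) since $d,p^{2j}\in A$, and the first is $\omega(d)^{i(p-1)/2}$ by \eqref{HS}. Next, using Example \ref{law_torus} to write $(t,1)^{-1}=(t^{-1},(p^j,p^i))$, right-multiplication produces a second cocycle $\sigma(tb_0,t^{-1})=(p^{-j},p^{i+2j}ad^2)$ (here $\det(tb_0)=p^{i+j}ad$ contributes the extra factor $d$, squaring with $\mathfrak{c}(tb_0)/d=p^j$).

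Expanding $(p^{-j},p^{i+2j}ad^2)$ by bilinearity, the factors $(p^{-j},p^{2j})$ and $(p^{-j},d^2)$ are trivial as they involve squares. The factor $(p^{-j},p^i)$ pairs with the contribution $(p^j,p^i)$ coming from $(t,1)^{-1}$, and by \eqref{HS} one has $(p^j,p^i)(p^{-j},p^i)=((-1)^{ij})^{p-1}=1$. What survives is $(p^{-j},a)$; since $v(a)$ is even the sign term in \eqref{HS} disappears, and writing $a=p^{v(a)}u$ with $u\in\bZ_p^{\times}$ yields $(p^{-j},a)=\omega(u)^{-j(p-1)/2}=\omega(a)^{j(p-1)/2}$, the last equality using that $\omega(a)^{p-1}=1$. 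Collecting all $\mu_2$-contributions produces exactly $\omega(d)^{i(p-1)/2}\omega(a)^{j(p-1)/2}$.

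The main ``obstacle'' is purely bookkeeping the various $(-1)^{v(x)v(y)}$ signs from \eqref{HS}: they either vanish because one of the valuations involved is even (as for $v(a)$, $v(d^2)$, $v(p^{2j})$), or they occur in pairs that cancel (as for the two Hilbert symbols involving $p^i$ and $p^{\pm j}$). Once one is organized about splitting symbols by bilinearity and recognizing squares early, no subtlety remains.
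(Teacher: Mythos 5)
Your computation is correct and is exactly the argument the paper has in mind: its proof simply quotes the inverse formula $\bigl(\begin{smallmatrix}p^i&0\\0&p^j\end{smallmatrix},1\bigr)^{-1}=\bigl(\begin{smallmatrix}p^{-i}&0\\0&p^{-j}\end{smallmatrix},(-1)^{ij\frac{p-1}{2}}\bigr)$ from Example \ref{law_torus} and leaves the cocycle evaluation, which you carry out in full, as a ``simple computation''. Your bookkeeping of $\sigma(t,b_0)=(d,p^{i+2j})$, $\sigma(tb_0,t^{-1})=(p^{-j},p^{i+2j}ad^2)$, and the cancellations using $a,d\in A$ matches the intended proof, so nothing further is needed.
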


\begin{proof}
	By Example \ref{law_torus}, $\left(\begin{mat}
		p^{i} & 0\\0 & p^{j}
	\end{mat},1\right)^{-1}=\left(\begin{mat}
		p^{-i} & 0\\0 & p^{-j}
	\end{mat},(-1)^{ij\frac{p-1}{2}}\right)$. The lemma is now a simple computation based on the definition of the cocycle (\ref{cocycle}) defining $\wt{G}$.
\end{proof}

\begin{notation}\label{notation_[i,j]}
	For $i,j\in \bZ$ and a smooth representation $\pi$ of a subgroup of $\wt{B}$, denote by
	\[
	 \pi[j,i]=\pi\otimes (\omega^{j\frac{p-1}{2}}\otimes \omega^{i\frac{p-1}{2}})
	 \]
	 the twist of $\pi$ by the character $\omega^{j\frac{p-1}{2}}\otimes \omega^{i\frac{p-1}{2}}\colon \wt{B}\twoheadrightarrow B\twoheadrightarrow T\to \mu_2\subset k^{\times}$ restricted to the subgroup of consideration.
\end{notation}

\begin{def-prop}\label{class_T}
	The map
	\begin{align*}
		\left\{\begin{array}{c}\text{Smooth characters}\\ T^{\sq}\to k^{\times}\end{array}\right\} &\xrightarrow{\cong} \left\{\begin{array}{c}
			\text{Smooth irreducible genuine}\\
			\text{$k$-representations of $\wt{T}$}
		\end{array}\right\}\\
		\chi\colon T^{\sq}\to k^{\times} & \mapsto \tau_{\chi}:=\Ind^{\wt{T}}_{\wt{T}_2}\left(\chi'\boxtimes \iota\right)
	\end{align*}
	is a well-defined bijection, where $\chi'$ is any extension of $\chi$ to $T_2$ and the right-hand side is considered up to isomorphism.
\end{def-prop}

\begin{proof}
	Note that, since $k$ is divisible and so injective in the category of abelian groups, any (smooth) character of $T^{\sq}$ extends to a (smooth) character of $T_2$. Let now $\chi$ be a smooth character on $T_2$. By Lemma \ref{conj_ij} we have
	\begin{equation}\label{Ind|_{T_2}}
		\Ind^{\wt{T}}_{\wt{T}_2}(\chi\boxtimes \iota)|_{T_2}=\bigoplus_{i,j\in \bZ/2\bZ} \chi[j,i],
	\end{equation}
	which is the direct sum over all distinct characters extending $\chi|_{T^{\sq}}$. This proves irreducibility as well as the independence of the chosen extension.
	
	Conversely, by Lemma \ref{torus_irred_fd}, any smooth irreducible genuine $k$-representation $\tau$ of $\wt{T}$ is finite dimensional and thus admits a central character, i.e.\ there exists a smooth character $\chi\colon T^2\to k^{\times}$ with $\chi\subset \tau|_{T^{\sq}}$. Using Frobenius reciprocity and that $\Ind^{T_2}_{T^{\sq}}(\chi)$ is the direct sum of the characters extending $\chi$ (since $T_2/T^{\sq}$ is finite of order prime to $p$), we deduce that there exists a character $\chi'$ of $T_2$ extending $\chi$ with $\chi'\subset \tau|_{T_2}$. Using Frobenius reciprocity once again, we obtain a non-trivial $\wt{T}$-equivariant morphism $\Ind^{\wt{T}}_{\wt{T}_2}(\chi'\boxtimes \iota)\to \tau$, which has to be an isomorphism as both sides are irreducible.
\end{proof}

\begin{remark}
	\begin{enumerate}
		\item[{\rm (i)}] In the previous proposition, the subgroup $\wt{T}_2$ can be replaced by any other maximal abelian subgroup of $\wt{T}$, for example by $\wt{T}_S$.
		\item[{\rm (ii)}] A genuine principal series of $\wt{G}$ is of the form $\Ind^{\wt{G}}_{\wt{B}_2}(\chi\boxtimes \iota)$ for a smooth character $\chi\colon T_2\to k^{\times}$ viewed as a character of $B_2$ via inflation.
	\end{enumerate}
\end{remark}

	\begin{definition}\label{def_std_basis}
	For a smooth character $\chi\colon T_2\to k^{\times}$, define the \textit{standard basis} $\{f_{i,j}\}_{i,j=0,1}$ of $\tau_{\chi}$ by letting $f_{i,j}$ be the unique function with
	\[
	\supp(f_{i,j})=\wt{T}_2\left(\begin{mat}
		p^{i} & 0\\0 & p^{j}
	\end{mat},1\right) \text{ and } f_{i,j}\left(\left(\begin{mat}
		p^{i} & 0\\0 & p^{j}
	\end{mat},1\right)\right)=1.
	\]
\end{definition}

\begin{lem}\label{action_standard_basis}
	For all $i,j=0,1$, the following hold true.
	\begin{enumerate}
		\item[{\rm (i)}] $f_{i,j}$ is an $H$-eigenvector with eigencharacter $\chi|_H[j,i]$;
		\item[{\rm (ii)}] $\left(\begin{mat}
			1 & 0\\ 0 & p^{-1}
		\end{mat},1\right) f_{i,0} = (-1)^{i\frac{p-1}{2}} f_{i,1}$;
		\item[{\rm (iii)}] $\left(\begin{mat}
			1 & 0\\ 0 & p^{-1}
		\end{mat},1\right) f_{i,1} = (-1)^{i\frac{p-1}{2}}\chi\left(\begin{mat}
			1 & 0\\0 & p^{-2}
		\end{mat}\right) f_{i,0}$;
		\item[{\rm (iv)}] $\left(\begin{mat}
			p^{-1} & 0\\ 0 & 1
		\end{mat},1\right)f_{0,j} = f_{1,j}$;
		\item[{\rm (v)}] $\left(\begin{mat}
			p^{-1} & 0\\ 0 & 1
		\end{mat},1\right)f_{1,j} = \chi\left(\begin{mat}
			p^{-2} & 0\\0 & 1
		\end{mat}\right)f_{0,j}$.
	\end{enumerate}
\end{lem}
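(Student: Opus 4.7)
The plan is to verify each of (i)--(v) directly from the definition of the induced action $(g\cdot f)(x)=f(xg)$ on $\tau_\chi=\Ind^{\wt T}_{\wt T_2}(\chi\boxtimes\iota)$, combined with the multiplication law in $\wt T$ from Example~\ref{law_torus}. Each basis element $f_{i,j}$ is supported on the single right $\wt T_2$-coset $\wt T_2\cdot x_{i,j}$, where $x_{i,j}=\left(\begin{mat}p^{i}&0\\0&p^{j}\end{mat},1\right)$; hence left translation by any $g\in\wt T$ permutes these four cosets and sends $f_{i,j}$ to a scalar multiple of some $f_{i',j'}$.

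The core computation in each part is to identify the new pair $(i',j')$ and to write uniquely $x_{i',j'}\cdot g=t\cdot x_{i,j}$ with $t\in\wt T_2$, so that $g\cdot f_{i,j}=(\chi\boxtimes\iota)(t)\cdot f_{i',j'}$. The $T_2$-component of $t$ is immediate from the underlying $G$-component, while the $\mu_2$-component is a product of Hilbert symbols of the restricted shapes $(p^a,p^b)$, $(p^a,[\lambda])$ and $([\mu],[\lambda])$; by formula (\ref{HS}) these evaluate to $(-1)^{ab(p-1)/2}$, $\omega([\lambda])^{a(p-1)/2}$ and $1$, respectively. Under $\iota$, the class $\omega([\lambda])^{(p-1)/2}\in\{\pm 1\}$ identifies with $\lambda^{(p-1)/2}\in k^{\times}$, which is exactly the twisting factor in Definition~\ref{twisted_char}. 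Part~(i) then yields the eigencharacter $\chi|_H[j,i]$ directly. In (ii)--(v), the $T_2$-component of $t$ is either trivial or equals the central element $\begin{mat}1&0\\0&p^{-2}\end{mat}$ (resp.\ $\begin{mat}p^{-2}&0\\0&1\end{mat}$), producing the character values in (iii) (resp.~(v)), while the $\mu_2$-component contributes the sign $(-1)^{i(p-1)/2}$ (coming from $(p^{\pm1},p^i)$) in (ii) and~(iii), or collapses to $1$ in (iv) and~(v).

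The only real difficulty is the bookkeeping of $\mu_2$-cocycle contributions that arise when multiplying and inverting elements $(g,1)\in\wt T$. This is substantially eased by the fact that $1+p\bZ_p\subset S$, so every Hilbert symbol in play depends only on Teichmüller parts and collapses to an explicit $\pm 1$; once this observation is in hand, the five identities reduce to short mechanical verifications.
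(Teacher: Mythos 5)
Your proposal is correct and follows essentially the same route as the paper: unwind the induced action on the coset representatives $x_{i,j}=\left(\begin{mat}p^{i}&0\\0&p^{j}\end{mat},1\right)$ via the multiplication law of Example \ref{law_torus} (equivalently Lemma \ref{conj_ij} for part (i)) and evaluate the resulting Hilbert-symbol signs, exactly as the paper does. The only cosmetic differences are that you compute (iii) and (v) directly rather than deducing them from (ii) and (iv) via the central character, and your phrase ``left translation'' should really refer to right translation of the support by $g^{-1}$ — but your displayed relation $x_{i',j'}g=t\,x_{i,j}$, $g\cdot f_{i,j}=(\chi\boxtimes\iota)(t)f_{i',j'}$ is the correct one, so this does not affect the argument.
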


\begin{proof}
	Part (i) follows from Lemma \ref{conj_ij}. Part (ii) follows from the equation $\left(\begin{mat}
		p^{i} & 0\\ 0 & p
	\end{mat}\right) \left(\begin{mat}
		1 & 0\\ 0 & p^{-1}
	\end{mat},1\right)=\left(\begin{mat}
		p^{i} & 0\\ 0 & 1
	\end{mat}, (-1)^{i\frac{p-1}{2}}\right)$, see Example \ref{law_torus}. Part (iii) follows from (ii) since the element $\begin{mat} 1 & 0\\ 0 & p^{-2}
	\end{mat}$ is central and $\tau_{\chi}$ has central character $\chi$. The remaining two parts are proved analogously.
\end{proof}

\subsubsection{Irreducibility of principal series representations}

Let us fix a smooth character $\chi\colon T_2\to k^{\times}$ and denote by
\[
\tilde{\pi}(\chi)=\Ind^{\wt{G}}_{\wt{B}}(\tau_{\chi})
\]
the corresponding principal series. The goal of this section it to prove:

\begin{prop}\label{PS_irred}
	The genuine principal series $\tilde{\pi}(\chi)$ is irreducible as a representation of $\wt{G}$. Moreover, $\tilde{\pi}(\chi)\cong \tilde{\pi}(\psi)$ for another smooth character $\psi\colon T_2\to k^{\times}$ if and only if $\chi|_{T^{\sq}}=\psi|_{T^{\sq}}$.
\end{prop}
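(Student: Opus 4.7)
The plan is to analyse the pro-$p$ Iwahori invariants $\tilde\pi(\chi)^{I_1}$ as a right $\cH$-module, identify it with a principal simple module from the classification of the preceding sections, and deduce irreducibility. Since $\tau_\chi = \Ind^{\wt T}_{\wt T_2}(\chi' \boxtimes \iota)$ for any extension $\chi'$ of $\chi|_{T^{\sq}}$ to $T_2$, induction in stages gives
\[
\tilde\pi(\chi) \cong \Ind^{\wt G}_{\wt B_2}(\chi' \boxtimes \iota).
\]
Together with Definition-Proposition \ref{class_T}, this already shows that the isomorphism class of $\tilde\pi(\chi)$ depends only on $\chi|_{T^{\sq}}$, which settles the ``if'' direction of the second assertion.

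To compute $\tilde\pi(\chi)^{I_1}$, I would decompose $\wt G$ into $\wt B_2$-$\wt I_1$ double cosets by combining the Iwasawa decomposition $G = BK$ with the Bruhat refinement $K = I \sqcup IsI$ and the coset description $B_2 \backslash B \cong T/T_2 \cong (\bZ/2\bZ)^2$. Fixing explicit representatives yields a finite basis of $\tilde\pi(\chi)^{I_1}$, whose $\wt H$-eigencharacters are computable from Lemma \ref{action_standard_basis}. Using the formula for the Hecke operators $T_n$ preceding Corollary \ref{cut}, together with Lemma \ref{identities_Hecke_operators_II} and Lemma \ref{Hecke_Lemma}, I would compute the action of the central Hecke operators $T_{\Pi^4}$, $T_{(s\Pi)^2}$, and $T_{(\Pi s)^2}$ in the square-regular case (resp.\ $T_{\Pi^4}$ and $\mathfrak Z$ in the non-square-regular case), and check that $T_{\Pi^4}$ acts by an invertible scalar while at least one of the remaining central operators acts non-trivially. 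By Propositions \ref{simple_regular_principal} and \ref{simple_non-regular}, this identifies $\tilde\pi(\chi)^{I_1}$ with a principal simple $\cH$-module; in particular it is simple.

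Granting simplicity, irreducibility of $\tilde\pi(\chi)$ reduces to the claim that $\tilde\pi(\chi)$ is generated as a $\wt G$-representation by $\tilde\pi(\chi)^{I_1}$: any non-zero sub-representation $V \subset \tilde\pi(\chi)$ has non-zero $I_1$-invariants, since $I_1$ is pro-$p$ and we work in characteristic $p$; by simplicity $V^{I_1} = \tilde\pi(\chi)^{I_1}$, forcing $V = \tilde\pi(\chi)$. To establish generation I would decompose $\tilde\pi(\chi)|_{\wt K}$ via Mackey into a direct sum of compactly induced $\wt K$-representations, each of whose irreducible $K$-constituents contains an $I_1$-fixed line already lying in $\tilde\pi(\chi)^{I_1}$, so that the sub-$\wt G$-representation generated by $\tilde\pi(\chi)^{I_1}$ contains a generating $\wt K$-subspace.

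For the converse of the bijection statement, any isomorphism $\tilde\pi(\chi) \cong \tilde\pi(\psi)$ induces an isomorphism of simple $\cH$-modules $\tilde\pi(\chi)^{I_1} \cong \tilde\pi(\psi)^{I_1}$, and the central Hecke eigenvalues computed above then determine $\chi|_{T^{\sq}}$ uniquely, forcing $\chi|_{T^{\sq}} = \psi|_{T^{\sq}}$. The main obstacle is the explicit Hecke action computation in the second paragraph: the non-trivial cocycle \eqref{cocycle} introduces Hilbert-symbol factors whenever a conjugation by $\tilde s$ or $\tilde \Pi$ is performed, and Lemma \ref{Hecke_Lemma} is the essential bookkeeping tool for managing these signs.
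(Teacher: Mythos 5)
Your overall route --- compute $\tilde{\pi}(\chi)^{I_1}$ explicitly from a double coset decomposition, identify it via its $H$-eigenspace decomposition and central Hecke eigenvalues with the principal simple module of Proposition \ref{simple_regular_principal}, resp.\ \ref{simple_non-regular}, and deduce both irreducibility and the intertwining statement from the Hecke side --- is essentially the paper's proof; the computation you describe is carried out there with the basis $\{F^0_{i,j},F^1_{i,j}\}$ and Lemma \ref{action_Fij}, and your treatment of the ``if'' direction and of the converse matches the paper as well.

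The genuine gap is your justification of the reduction step, namely that $\tilde{\pi}(\chi)$ is generated as a $\wt{G}$-representation by $\tilde{\pi}(\chi)^{I_1}$. A Mackey decomposition of $\tilde{\pi}(\chi)|_{\wt{K}}$ cannot deliver this: the restriction to $\wt{K}$ is infinite dimensional, and in a smooth representation of a compact group the subrepresentation generated by any finite-dimensional subspace is finite dimensional, so the $8$-dimensional space $\tilde{\pi}(\chi)^{I_1}$ generates no ``generating $\wt{K}$-subspace'' in any useful sense. Moreover, the claim that each irreducible $K$-constituent contains an $I_1$-fixed line lying in $\tilde{\pi}(\chi)^{I_1}$ is false: there are infinitely many constituents (with multiplicity), they are only subquotients, and their $I_1$-invariants need not lift to $I_1$-invariants of $\tilde{\pi}(\chi)$. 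Generation genuinely requires translating by non-compact elements. The standard fix, which is what the paper uses (see the footnote in its proof, going back to Vign\'eras), is that every compact open subset of $\wt{B}\backslash\wt{G}\cong B\backslash G$ is a finite disjoint union of sets of the form $B\backslash BI_1g$, because the Iwahori decomposition for $K_m$ gives $BK_m=BI_1\begin{mat}p^{m-1}&0\\0&1\end{mat}$; hence every $K_m$-invariant function in the induction is a finite sum of $\wt{G}$-translates of $I_1$-invariant functions. With this replacement for your Mackey argument, the rest of your proposal goes through and coincides with the paper's proof.
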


\noindent The if-direction of the proposition trivially holds true as it does already for $\tau_{\chi}$. In particular, to prove irreducibility, we may replace $\chi$ by one of its twists $\chi[i,j]$. In the non-square-regular case, we may and do therefore assume that $\chi|_H=\chi|_H^s$. 

\begin{definition}
	For $i,j=0,1$, define elements $F_{i,j}^0,F_{i,j}^1\in \tilde{\pi}(\chi)^{I_1}$ uniquely determined by 
	\[
	\supp(F_{i,j}^0)=\wt{B}I_1 \text{ and } F_{i,j}^{0}\left(\begin{mat}
		1 & 0\\0 & 1
	\end{mat},1\right)=f_{i,j}
	\]
	and
	\[
	\supp(F_{i,j}^1)=\wt{B}\tilde{s}I_1 \text{ and } F_{i,j}^1(\tilde{s})=f_{i,j}.
	\]
\end{definition}

\noindent The smooth character $\chi|_{T^{\sq}}$ being trivial on the pro-$p$ group $T\cap I_1$ implies that these elements are well-defined. Since $\{f_{i,j}\}_{i,j=0,1}$ is a $k$-basis of $\tau_{\chi}$, the decomposition $\wt{G}=\wt{B}I_1 \sqcup \wt{B}\tilde{s}I_1$ immediately implies that $\{F_{i,j}^0,F_{i,j}^1\}_{i,j=0,1}$ is a $k$-basis of $\tilde{\pi}(\chi)^{I_1}$.

\begin{lem}\label{action_Fij}
	For all $i,j=0,1$, the following hold true.
	\begin{enumerate}
		\item[{\rm (i)}] $F_{i,j}^0e_{\chi|_H[j,i]} = F_{i,j}^0$ and $F_{i,j}^1 e_{\chi|_H[j,i]^s}=F_{i,j}^1$;
		\item[{\rm (ii)}] $F_{i,0}^0 T_{\Pi} = (-1)^{i\frac{p-1}{2}}F_{i,1}^1$ and $F_{i,1}^0T_{\Pi} = (-1)^{i\frac{p-1}{2}}\chi\left(\begin{mat}
			1 & 0\\0 & p^{-2}
		\end{mat}\right) F_{i,0}^1$;
		\item[{\rm (iii)}] $F^1_{0,j}T_{\Pi} = F_{1,j}^0$ and $F^1_{1,j}T_{\Pi} = \chi\left(\begin{mat}
			p^{-2} & 0\\0 & 1
		\end{mat}\right)F_{0,j}^0$;
		\item[{\rm (iv)}] $F^0_{i,j}T_s = F_{i,j}^1$.
	\end{enumerate}
\end{lem}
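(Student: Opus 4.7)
All four identities will be verified by unwinding the explicit formula $vT_n=\sum_{x\in I_1/(I_1\cap n^{-1}I_1n)}x^{-1}(\tilde{n})^{-1}v$ for $v\in\pi^{I_1}$ (the unnumbered lemma preceding Corollary \ref{cut}), combined with the characterization of $F^0_{i,j},F^1_{i,j}$ by their support and their value at the distinguished point ($1$ or $\tilde{s}$), and the diagonal action formulas for $\{f_{i,j}\}$ collected in Lemma \ref{action_standard_basis}.

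For part (i), the plan is to translate the statement into the claim that $F^0_{i,j}$ is an $H$-eigenvector with character $\chi|_H[j,i]$ and $F^1_{i,j}$ an $H$-eigenvector with character $\chi|_H[j,i]^s$ under the group action on $\tilde{\pi}(\chi)^{I_1}$; under the identification of the right $\cH$-action with the left $\wt{I}$-action, the idempotent $e_\psi$ then cuts out the $\psi$-isotypic component by Corollary \ref{cut}. For $F^0_{i,j}$ this follows immediately from its $\wt{B}$-equivariance together with Lemma \ref{action_standard_basis}(i). For $F^1_{i,j}$ the preliminary step is to verify $\tilde{s}h\tilde{s}^{-1}=\widetilde{shs}$ for $h\in H$, which reduces to a Hilbert-symbol computation that vanishes by Lemma \ref{Hilbert_symbol_identities}(iv) and (vii); the same lemma then applies.

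For parts (ii)--(iv), the strategy is identical in each case. Since $\Pi$ normalizes $I_1$ (and its cover, by Lemma \ref{wd_operator}), the sum collapses and $FT_{\Pi}=\tilde{\Pi}^{-1}F$. For $T_s$, the matrices $x_\lambda=\left(\begin{mat} 1 & [\lambda]\\0 & 1\end{mat},1\right)$, $\lambda\in\bF_p$, form a set of coset representatives of $I_1/(I_1\cap s^{-1}I_1s)$, so $FT_s=\sum_\lambda x_\lambda^{-1}\tilde{s}^{-1}F$. In each case I would then (a) determine the support of the translated function using the Iwahori--Bruhat decomposition $\wt{G}=\wt{B}I_1\sqcup\wt{B}\tilde{s}I_1$ together with the identity $\tilde{\Pi}=\left(\begin{mat}1 & 0\\0 & p\end{mat},1\right)\tilde{s}$ (cocycle trivial by $(p,-p)=1$); (b) evaluate at the relevant base point; for (iv) the $p$-fold sum collapses to its $\lambda=0$ term, since $\tilde{s}x_\lambda\tilde{s}^{-1}=\left(\begin{mat}1 & 0\\-[\lambda] & 1\end{mat},1\right)$ (cocycle trivial by Lemma \ref{Hilbert_symbol_identities}(iv)) lies in $\wt{B}\tilde{s}I_1\setminus\wt{B}I_1$ for $\lambda\neq 0$, where $F^0_{i,j}$ vanishes; and (c) invoke Lemma \ref{action_standard_basis}(ii)--(v) to rewrite the resulting value as the claimed scalar multiple of $f_{i',j'}$. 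For instance, in the first identity of (ii) one computes $\tilde{s}\tilde{\Pi}^{-1}=\left(\begin{mat}1 & 0\\0 & p^{-1}\end{mat},1\right)$ (cocycle trivial by $(1/p,-1/p)=1$), whose action on $f_{i,0}$ is precisely $(-1)^{i\frac{p-1}{2}}f_{i,1}$ by Lemma \ref{action_standard_basis}(ii).

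The only real obstacle is disciplined bookkeeping of the two-cocycle $\sigma$ in each product of lifts: the scalars $(-1)^{i\frac{p-1}{2}}$ in (ii) and the central values $\chi\left(\begin{mat}1 & 0\\0 & p^{-2}\end{mat}\right)$, $\chi\left(\begin{mat}p^{-2} & 0\\0 & 1\end{mat}\right)$ in (ii) and (iii) emerge precisely because each relevant cocycle vanishes (a consequence of Lemma \ref{Hilbert_symbol_identities} and the containment $1+p\bZ_p\subset S$), which then leaves a diagonal lift acting on some $f_{i,j}$, to be computed by Lemma \ref{action_standard_basis}. No further conceptual ingredient is required.
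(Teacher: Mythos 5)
Your proposal is correct and follows essentially the same route as the paper: parts (i)--(iii) by direct evaluation against the standard basis via Lemma \ref{action_standard_basis} (with exactly the cocycle checks you indicate), and part (iv) by unwinding the coset decomposition $I_1sI_1=\bigsqcup_{\lambda\in\bF_p} I_1 s\begin{mat}1 & [\lambda]\\ 0 & 1\end{mat}$. The only cosmetic difference is in (iv): you pin down $F^0_{i,j}T_s$ by checking each summand is supported in the big cell $\wt{B}\tilde{s}I_1$ and evaluating at $\tilde{s}$, whereas the paper first uses part (i) together with $T_se_{\chi}=e_{\chi^s}T_s$ to constrain $F^0_{i,j}T_s$ to the span of $F^1_{i,j}$ and $F^0_{j,i}$ and then evaluates at both $1$ and $\tilde{s}$ --- the same computation organized slightly differently.
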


\begin{proof}
	Parts (i) to (iii) follow from Lemma \ref{action_standard_basis}. For part (iv) recall that in the non-square-regular case we agreed to choose $\chi$ so that $\chi|_H=\chi|_H^s$. Part (i) together with Lemma \ref{identities_Hecke_operators_I} (iv) implies that $F_{i,j}^0T_s$ is a linear combination of $F_{i,j}^1$ and $F_{j,i}^0$ (in the square-regular case, even just a scalar multiple of $F_{i,j}^1$). It therefore suffices to evaluate at $\tilde{s}$ and $1$. Now, $I_1sI_1=\sqcup_{\lambda\in \bF_p} I_1 s\begin{mat}
		1 & [\lambda]\\0 & 1
	\end{mat}$, and so
	\[
	F_{i,j}^0T_s=\sum_{\lambda\in \bF_p} 
	\begin{mat}
		1 & [\lambda]\\0 & 1
	\end{mat}\tilde{s}F_{i,j}^0.
	\]
	Since $\supp(F_{i,j}^0)=\wt{B}I_1$, each summand evaluated at $1$ gives $0$, while only the summand corresponding to $\lambda=0$ contributes with value $1$ if one evaluates at $\tilde{s}$. This concludes the proof.
\end{proof}

	\begin{proof}[Proof of Proposition \ref{PS_irred}]
	It is enough to prove that $\tilde{\pi}(\chi)^{I_1}$ is a simple right $\cH$-module. Indeed, this follows from $\tilde{\pi}(\chi)^{I_1}$ generating $\tilde{\pi}(\chi)$ as a $\wt{G}$-representation, which in turn is a consequence of the fact\footnote{E.g.\ see \cite[p. 24 (a)]{Vigneras}; a shorter proof: the Iwahori decomposition for the congruence subgroup $K_m$, for $m\geq 1$, implies that $BK_m=BI_1 \begin{mat}
			p^{m-1} & 0\\0 & 1
		\end{mat}$.} that each compact open subgroup of $\wt{B}\setminus \wt{G}\cong B\setminus G$ is a finite disjoint union of subsets of the form $B\setminus BI_1g$ for suitable $g\in G$. In order to prove the irreducibility of $\tilde{\pi}(\chi)^{I_1}$ as an $\cH$-module, first note that $\tilde{\pi}(\chi)^{I_1}e_{O_{\chi|_H}}=\tilde{\pi}(\chi)^{I_1}$, where $e_{O_{\chi|_H}}\in \cH$ is the idempotent cutting out $\cH(O_{\chi|_H})$. We distinguish two cases.
	
		If $\chi|_H$ is square-regular, it suffices to prove that $\tilde{\pi}(\chi)^{I_1}(e_{\chi|_H}+e_{\chi|_H[1,0]})$ is simple as a module over $\cH(\chi|_H\oplus \chi|_H[1,0])$, see Corollary \ref{Morita_regular}. By Lemma \ref{action_Fij}, we have
		\[
		\tilde{\pi}(\chi)^{I_1}(e_{\chi|_H}+e_{\chi|_H[1,0]})=k F_{0,0}^0 + k F_{0,1}^0,
		\]
		and the Hecke operators $(T_{(s\Pi)^2},T_{(\Pi s)^2},T_{\Pi^4})$ act on this space via the scalars	$\left(\chi\left(\begin{mat}
			p^{-2} & 0\\0 & 1
		\end{mat}\right),0,\chi\left(\begin{mat}
			p^{-2} & 0\\ 0 & p^{-2}
		\end{mat}\right)\right)$.
		The uniqueness in Proposition \ref{simple_regular_principal} now gives 
		\begin{equation}\label{ps_reg}
			\tilde{\pi}(\chi)^{I_1}\cong \operatorname{PS}\left(\chi|_H,	\left(\chi\left(\begin{mat}
				p^{-2} & 0\\0 & 1
			\end{mat}\right),0,\chi\left(\begin{mat}
				p^{-2} & 0\\ 0 & p^{-2}
			\end{mat}\right)\right)\right).
		\end{equation}
		
		Assume now that $\chi|_H=\chi|_H^s$. By twisting, we may even assume that $\chi|_H=\mathbf{1}$. By Lemma \ref{action_Fij}, we have
		\[
		\tilde{\pi}(\chi)^{I_1}e_{\mathbf{1}}=kF_{0,0}^0 + k F_{0,0}^1
		\]
		and the center $k[\mathfrak{Z},Z^{\pm 1}]$ of $\cH(\mathbf{1})$ (Corollary \ref{Z(R)}) acts via the character corresponding to the $k$-rational point $\left(\chi\left(\begin{mat}
			p^{-2} & 0\\0 & 1
		\end{mat}\right),\chi\left(\begin{mat}
			p^{-2} & 0\\ 0 & p^{-2}
		\end{mat}\right)\right)$. Thus, by Proposition \ref{simple_non-regular} and the remark thereafter,
		\begin{equation}\label{ps_nreg}
			\tilde{\pi}(\chi)^{I_1}=\operatorname{PS}\left(\chi\left(\begin{mat}
				p^{-2} & 0\\0 & 1
			\end{mat}\right),\chi\left(\begin{mat}
				p^{-2} & 0\\ 0 & p^{-2}
			\end{mat}\right)\right).
		\end{equation}
	
	\noindent This concludes the proof of the irreducibility of $\tilde{\pi}(\chi)$. 
	Concerning the intertwinings: in the square-regular case, the isomorphism (\ref{ps_reg}) together with Remark \ref{inter} implies that $\tilde{\pi}(\chi)\cong \tilde{\pi}(\psi)$ for another smooth character $\psi$ on $T_2$ only if $\psi|_H=\chi|_H[i,j]$ for some $i,j\in \bZ/2\bZ$,  $\chi\left(\begin{mat}
		p^{-2} & 0\\0 & 1
	\end{mat}\right)=\psi\left(\begin{mat}
		p^{-2} & 0\\0 & 1
	\end{mat}\right)$ and $\chi\left(\begin{mat}
		p^{-2} & 0\\ 0 & p^{-2}
	\end{mat}\right)=\psi\left(\begin{mat}
		p^{-2} & 0\\ 0 & p^{-2}
	\end{mat}\right)$, i.e.\ only if $\chi|_{T^{\sq}}=\psi|_{T^{\sq}}$. 
	
	In the non-square-regular case, an isomorphism $\tilde{\pi}(\chi)^{I_1}\cong \tilde{\pi}(\psi)^{I_1}$ of $\cH$-modules implies that $O_{\chi|_H}=O_{\psi|_H}$, i.e.\ $\psi|_H=\chi|_H[i,j]$ for some $i,j\in \bZ/2\bZ$. From the isomorphism (\ref{ps_nreg}) we can read off the values of $\chi$ and $\psi$ at $\begin{mat}
		p^{-2} & 0\\0 & 1
	\end{mat}$ and $\begin{mat}
		p^{-2} & 0\\ 0 & p^{-2}
	\end{mat}$, so in total $\chi|_{T^{\sq}}=\psi|_{T^{\sq}}$.
\end{proof}

\subsubsection{From principal Hecke modules to principal series representations} The proof of the previous proposition shows that the assignment $\pi \mapsto \cI(\pi)$ defines a bijection between the isomorphism classes of principal $\cH$-modules and the isomorphism classes of (irreducible) genuine principal series representations of $\wt{G}$. For showing that the inverse is given by the functor $\cT$, we need to check that the canonical map
\begin{equation}\label{can_map_PS}
	\cT(\cI(\tilde{\pi}(\chi)))\to \tilde{\pi}(\chi)
\end{equation}
is an isomorphism. To ease notation, put $M=\cI(\tilde{\pi}(\chi))$. After an unramified twist, we may and do also assume that $\chi|_{H}=\mathbf{1}$ in the non-square-regular case, cf.\ Lemma \ref{twist_lemma}. The map (\ref{can_map_PS}) is non-trivial (by adjunction), so the source must be non-zero. In particular, the unit $M\to \cI\cT(M)$ is an injection, via which we will view $M=\tilde{\pi}(\chi)^{I_1}$ as a subspace of $\cT(M)$.\\

Denote by 
\[
\tilde{\pi}(\chi)^{\operatorname{bc}}=\left\{f\in \tilde{\pi}(\chi) : \supp(f)\subset \wt{B}\tilde{s}\wt{B}\right\}
\]
the $\wt{B}$-stable subspace consisting of those functions whose support is contained in the big cell $\wt{B}\tilde{s}\wt{B}$. We thus have an exact sequence of $\wt{B}$-representations
\[
0\to \tilde{\pi}(\chi)^{\operatorname{bc}}\to \tilde{\pi}(\chi)\to \tau_{\chi}\to 0
\]
induced by evaluation at the identity. In order to prove the canonical map (\ref{can_map_PS}) to be an isomorphism, we will construct an analogue of this sequence for the object $\cT(M)$ and show that the canonical map induces an isomorphism on the outer terms of the short exact sequences.\\

For a smooth $k$-representation $\tau$ of $\wt{T}$, denote by $\mathscr{C}^{\infty}_c(\bQ_p,\tau)$ the space of locally constant compactly supported $\tau$-valued functions on $\bQ_p$. We endow it with a $\wt{B}$-action via the formula
\begin{equation}\label{B_action}
	\left[\left(\begin{mat}
		a & b\\0 & d
	\end{mat},\zeta\right).L\right](x)=\tau\left(\left(\begin{mat}
		d & 0\\0 & a
	\end{mat},\zeta (a,d^{-1})\right)\right)L\left(\frac{b+xd}{a}\right),
\end{equation}
for all $\begin{mat}
	a & b\\0 & d
\end{mat}\in B$, $\zeta\in \mu_2$ and $L\in \mathscr{C}^{\infty}_c(\bQ_p,\tau)$.

If $\tau=\mathbf{1}$ is the trivial representation of $T$ viewed as a $\wt{T}$-representation via inflation, we simply write $\mathscr{C}^{\infty}_c(\bQ_p,k)$.

\begin{lem}\label{bigcell_iso}
	We have the chain of $\wt{B}$-equivariant isomorphisms
	\[
	\tilde{\pi}(\chi)^{\operatorname{bc}}\cong \mathscr{C}^{\infty}_c(\bQ_p,\tau_{\chi}) \cong \mathscr{C}^{\infty}_c(\bQ_p,k)\otimes \tau_{\chi^s},
	\]
	where the first isomorphism is given by $f\mapsto \left[x\mapsto f\left(\begin{mat}
		0 & 1\\1 & x
	\end{mat},1\right)\right]$ and the second one is given by $[x\mapsto L(x)v]\mapsfrom L\otimes v$ implicitly identifying $\tau_{\chi^s}=\tau_{\chi}^{\tilde{s}}$.
\end{lem}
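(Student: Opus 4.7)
The plan is to handle the two isomorphisms in the chain separately, using the Bruhat decomposition for the first and a direct reinterpretation for the second, with cocycle computations as the main technical tool.

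For the first isomorphism, I start from the Bruhat decomposition $\wt{G}=\wt{B}\sqcup \wt{B}\tilde{s}U$, which follows because the matrix product $\tilde{s}\,\tilde{u}(x)=\bigl(\begin{smallmatrix} 0 & 1\\ 1 & x\end{smallmatrix}\bigr)$ (with trivial $\mu_2$-component, since $\sigma(s,u(x))=1$ by the explicit cocycle \eqref{cocycle}) parametrises the open cell. Any $f\in \tilde{\pi}(\chi)^{\operatorname{bc}}$ is therefore determined by the function $L(x):=f(\tilde{s}\,\tilde{u}(x))$, smoothness of $f$ translates to local constancy of $L$, and compact support modulo $\wt{B}$ translates into compactness of $\operatorname{supp}(L)\subset \bQ_p$; this gives a $k$-linear bijection $\tilde{\pi}(\chi)^{\operatorname{bc}}\xrightarrow{\sim}\mathscr{C}^{\infty}_c(\bQ_p,\tau_{\chi})$.

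The equivariance is the content of a single cocycle calculation. Given $\tilde{b}'=\bigl(\begin{smallmatrix}a & b\\ 0 & d\end{smallmatrix}\bigr,\zeta)$, a direct matrix computation shows
\[
s\,u(x)\begin{pmatrix}a & b\\ 0 & d\end{pmatrix}=\begin{pmatrix}d & 0\\ 0 & a\end{pmatrix}\,s\,u\!\left(\tfrac{b+xd}{a}\right),
\]
and applying the cocycle \eqref{cocycle} to both sides yields a $\mu_2$-discrepancy equal to the Hilbert symbol $(a,-a/d)$, which by Lemma \ref{Hilbert_symbol_identities} (iv) and (ii) equals $(a,d^{-1})$. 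Substituting and using the relation $f(\tilde{b}\tilde{g})=\tau_\chi(\tilde{b})f(\tilde{g})$, the right translation $(\tilde{b}'.f)(\tilde{s}\tilde{u}(x))=f(\tilde{s}\tilde{u}(x)\tilde{b}')$ turns into exactly the formula \eqref{B_action}. This settles the first isomorphism.

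For the second isomorphism, observe that the identification $\tau_{\chi^s}=\tau_\chi^{\tilde{s}}$ (legitimate by the uniqueness in def-prop \ref{class_T}, since the two sides share the same central character) gives the same underlying $k$-vector space; only the $\wt{T}$-action is twisted by $\tilde{t}\mapsto \tilde{s}\tilde{t}\tilde{s}^{-1}$. A computation parallel to the one above, using $\tilde{s}^{-1}=\tilde{s}$ (as $\sigma(s,s)=1$), yields
\[
\tilde{s}\,\left(\begin{mat}a & 0\\ 0 & d\end{mat},\zeta\right)\tilde{s}^{-1}=\left(\begin{mat}d & 0\\ 0 & a\end{mat},\zeta(a,d^{-1})\right),
\]
which is precisely the matrix appearing in \eqref{B_action}. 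Consequently the map $L\otimes v\mapsto[x\mapsto L(x)v]$ intertwines the natural tensor product action of $\wt{B}$ on $\mathscr{C}^{\infty}_c(\bQ_p,k)\otimes\tau_{\chi^s}$ (where the first factor uses \eqref{B_action} with $\tau=\mathbf{1}$, trivialising the cocycle contribution and the $\tau$-twist, and the second factor uses the $\wt{T}$-action via $\wt{B}\twoheadrightarrow\wt{T}$) with the action on $\mathscr{C}^{\infty}_c(\bQ_p,\tau_\chi)$ defined by \eqref{B_action}. Bijectivity on the level of vector spaces is immediate from the definition of the tensor product of functions with values.

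The only step that is not formal bookkeeping is the Hilbert symbol simplification $(a,-a/d)=(a,d^{-1})$ and the verification that the same symbol appears on both sides of the $\tilde{s}$-conjugation; everything else is organising the data, so no real obstacle is expected.
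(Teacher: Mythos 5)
Your proof is correct and follows essentially the same route as the paper's (which only sketches it): parametrize the big cell as $\wt{B}\tilde{s}U$ using that $\sigma(s,u(x))=1$, check equivariance by the cocycle computation giving the discrepancy $(a,-a/d)=(a,d^{-1})$, and identify the value factor with $\tau_\chi^{\tilde{s}}=\tau_{\chi^s}$ via the conjugation formula $\tilde{s}\left(\begin{mat}a&0\\0&d\end{mat},\zeta\right)\tilde{s}^{-1}=\left(\begin{mat}d&0\\0&a\end{mat},\zeta(a,d^{-1})\right)$; all of these identities check out against (\ref{cocycle}) and Lemma \ref{Hilbert_symbol_identities}. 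One small correction: the compactness of $\supp(L)$ does not come from ``compact support modulo $\wt{B}$'' --- that condition is automatic for every element of the full smooth induction since $\wt{B}\backslash\wt{G}$ is compact, and by itself it would not rule out, say, $L\equiv 1$. The actual reason (the one the paper gives) is that $f$ vanishes on $\wt{B}$ and is smooth, hence vanishes on a neighbourhood $\wt{B}K_m$ of $\wt{B}$, and since $su(x)\in BK_m$ for $|x|\gg 0$ (write $su(x)=\begin{mat}-x^{-1}&1\\0&x\end{mat}\begin{mat}1&0\\x^{-1}&1\end{mat}$), one gets $L(x)=0$ for $|x|$ large; this is a one-line repair and does not affect the rest of your argument.
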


\begin{proof}
	The equality $\wt{B}\tilde{s}\wt{B}=\wt{B}\tilde{s}U$ implies the first isomorphism. Here, note that the function $L$ attached to $f$ is locally constant since so is $f$; it has compact support since $f$ is locally constant around the identity element of $\wt{G}$. The last isomorphism is clear. One checks that these are indeed $\wt{B}$-equivariant.
\end{proof}

Define $M_0=\bigoplus_{i,j=0,1} k F_{i,j}^1 \subset \tilde{\pi}(\chi)^{\operatorname{bc}}\cap M$.

\begin{cor}\label{bc_generated}
	The $\wt{B}$-representation $\tilde{\pi}(\chi)^{\operatorname{bc}}$ is generated by $M_0$.
\end{cor}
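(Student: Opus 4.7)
The plan is to transport the statement through the chain of $\wt{B}$-equivariant isomorphisms in Lemma \ref{bigcell_iso} and then use the standard fact that the space $\mathscr{C}^{\infty}_c(\bQ_p,k)$ is generated, as a $\wt{B}$-representation (even as a $\wt{P}$-representation) via the action (\ref{B_action}), by the characteristic function $\mathbf{1}_{\bZ_p}$ of $\bZ_p$.

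First I would compute where the elements $F_{i,j}^1$ go under the isomorphism of Lemma \ref{bigcell_iso}. For $x\in \bZ_p$, the identity $\begin{mat} 0 & 1\\ 1 & x\end{mat}=\tilde{s}\begin{mat} 1 & x\\0 & 1\end{mat}$ with $\begin{mat} 1 & x\\0 & 1\end{mat}\in I_1$ together with the $\wt{I}_1$-invariance of $F_{i,j}^1$ shows that $F_{i,j}^1\bigl((\begin{smallmatrix} 0 & 1\\1 & x\end{smallmatrix}),1\bigr)=f_{i,j}$. For $x\in \bQ_p\setminus \bZ_p$, the Iwasawa-like factorization
\[
\begin{mat} 0 & 1\\1 & x\end{mat}=\begin{mat} 1 & x^{-1}\\0 & 1\end{mat}\begin{mat} -x^{-1} & 0\\0 & x\end{mat}\begin{mat} 1 & 0\\x^{-1} & 1\end{mat},
\]
with $x^{-1}\in p\bZ_p$, shows that this element lies in $\wt{B}I_1$, hence outside $\supp(F_{i,j}^1)$. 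Consequently, under the composite isomorphism of Lemma \ref{bigcell_iso}, $F_{i,j}^1$ is identified with $\mathbf{1}_{\bZ_p}\otimes f_{i,j}$, and therefore $M_0$ corresponds to $\mathbf{1}_{\bZ_p}\otimes \tau_{\chi^s}$ inside $\mathscr{C}^{\infty}_c(\bQ_p,k)\otimes \tau_{\chi^s}$.

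Next, using the explicit formula (\ref{B_action}), the action of $U$ on $\mathscr{C}^{\infty}_c(\bQ_p,\tau_{\chi})$ is by translation in the $\bQ_p$-variable (with trivial action on the $\tau_{\chi}$-factor), while acting by $(\begin{smallmatrix} a & 0\\0 & 1\end{smallmatrix},1)\in \wt{P}$ on $\mathbf{1}_{\bZ_p}\otimes v$ yields $\mathbf{1}_{a\bZ_p}\otimes \tau_{\chi}\bigl((\begin{smallmatrix} 1 & 0\\0 & a\end{smallmatrix}),1\bigr)v$ (the Hilbert symbol $(a,1)$ being trivial). Since $\tau_{\chi}\bigl((\begin{smallmatrix} 1 & 0\\0 & a\end{smallmatrix}),1\bigr)$ is an invertible operator on the finite-dimensional space $\tau_{\chi^s}$, running $v$ through all of $\tau_{\chi^s}$ shows that the $\wt{B}$-submodule generated by $M_0$ contains $\mathbf{1}_{a\bZ_p}\otimes \tau_{\chi^s}$ for every $a\in \bQ_p^{\times}$. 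Translating afterwards by $U$ produces $\mathbf{1}_{t+a\bZ_p}\otimes \tau_{\chi^s}$ for every $t\in \bQ_p$ and $a\in \bQ_p^{\times}$.

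Since the characteristic functions $\mathbf{1}_{t+a\bZ_p}$, as $t$ and $a$ vary, span $\mathscr{C}^{\infty}_c(\bQ_p,k)$, the $\wt{B}$-submodule generated by $M_0$ contains $\mathscr{C}^{\infty}_c(\bQ_p,k)\otimes \tau_{\chi^s}=\tilde{\pi}(\chi)^{\operatorname{bc}}$, which finishes the proof. The only mildly delicate step is the first one, keeping track of the cocycle term $\zeta$ and the swap $(\begin{smallmatrix} a & b\\0 & d\end{smallmatrix})\leftrightarrow (\begin{smallmatrix} d & 0\\0 & a\end{smallmatrix})$ in the definition (\ref{B_action}), but no genuine difficulty arises since only the diagonal element $(\begin{smallmatrix} a & 0\\0 & 1\end{smallmatrix})$ is needed and $(a,1)=1$.
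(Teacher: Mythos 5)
Your proposal is correct and follows essentially the same route as the paper: identify $F_{i,j}^1$ with $\mathbf{1}_{\bZ_p}\otimes f_{i,j}$ via Lemma \ref{bigcell_iso} and then observe that the indicator function $\mathbf{1}_{\bZ_p}$ generates $\mathscr{C}^{\infty}_c(\bQ_p,k)$ under the action (\ref{B_action}). You merely spell out the generation step (dilations and translations producing all $\mathbf{1}_{t+a\bZ_p}$) that the paper calls evident, and your cocycle bookkeeping in the first step is accurate.
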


\begin{proof}
	Under the isomorphism $\tilde{\pi}(\chi)^{\operatorname{bc}}\cong \mathscr{C}^{\infty}_c(\bQ_p,k)\otimes \tau_{\chi^s}$ of the previous lemma, the function $F_{i,j}^1$ corresponds to the element $\mathbf{1}_{\bZ_p}\otimes f_{i,j}$, where $\mathbf{1}_{\bZ_p}$ is the indicator function of $\bZ_p$. It suffices to show that $\mathscr{C}^{\infty}_c(\bQ_p,k)$ is generated as a $B_2$-representation by this indicator function, which is evident from the definition of the action (\ref{B_action}).
\end{proof}

Recall that $X=\begin{mat}
	1 & 1\\ 0 & 1
\end{mat}-1\in k\llbracket U\cap I\rrbracket\cong k\llbracket X\rrbracket$ and $F=\begin{mat}
	p & 0\\0 & 1
\end{mat}$. 

\begin{definition}
	Define $\cT(M)^{\operatorname{bc}}\subset \cT(M)$ to be the $\wt{B}$-subrepresentation generated by $M_0$.
	
	We also let $\cT(M)^{\operatorname{bc}}_0=k\llbracket X\rrbracket[F]M_0\subset \cT(M)^{\operatorname{bc}}$ be the $P^+$-subrepresentation generated by $M_0$; this is $\wt{Z}$-stable.
\end{definition}

\noindent We will prove that $\cT(M)^{\operatorname{bc}}=\cT(M)^{\operatorname{bc}}_0[F^{-1}]$ is irreducible as a $\wt{B}$-representation. In order to do this, we first show that $\cT(M)^{\operatorname{bc}}_0$ is irreducible as a $P^+\wt{Z}$-representation, for which we determine its $X$-torsion.

\begin{lem}\label{recursion_ps}
	For all $i,j=0,1$, up to a non-zero scalar,
	\[
	\sum_{\lambda\in \bF_p} (X+1)^{[\lambda]}F F_{i,j}^1 = F_{i,j+1}^1.
	\]
\end{lem}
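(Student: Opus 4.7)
The strategy is to compute both sides directly as elements of $\tilde\pi(\chi) = \Ind^{\wt G}_{\wt B}(\tau_\chi)$, using the standard left action $(g\cdot f)(h) = f(hg)$. Since $(X+1)^{[\lambda]} F = \begin{mat}p & [\lambda]\\0 & 1\end{mat}$ lies in $B_S$, the splittings of Lemma \ref{split_subgroups} force its canonical lift to $\wt G$ to have trivial $\mu_2$-component; denote this lift by $g_\lambda := (\begin{mat}p & [\lambda]\\0 & 1\end{mat},1)$. Setting $\Phi := \sum_{\lambda\in \bF_p} g_\lambda\cdot F_{i,j}^1$, the plan is to verify
\begin{enumerate}
\item[(a)] $\Phi\in \tilde\pi(\chi)^{I_1}$,
\item[(b)] $\Phi(\mathbf{1}) = 0$, and
\item[(c)] $\Phi(\tilde s)$ is a nonzero scalar multiple of $f_{i,j+1}$.
\end{enumerate}
Since the basis $\{F^0_{i',j'}, F^1_{i',j'}\}_{i',j'}$ of $\tilde\pi(\chi)^{I_1}$ is characterised by its values at $\mathbf{1}$ and $\tilde s$ respectively, (a)--(c) will force $\Phi = c\cdot F_{i,j+1}^1$ with $c\in k^\times$.

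Point (b) will be immediate: each $g_\lambda$ lies in $\wt B\subset \wt B I_1$, and the Bruhat-disjointness $\wt B I_1\cap \wt B\tilde s I_1 = \emptyset$ (following from the valuation characterisation $G = BI_1\sqcup BsI_1$ via the dichotomy $v(c) > v(d)$ vs.\ $v(d)\geq v(c)$ on the bottom row of $g=\begin{mat}a & b\\ c & d\end{mat}$) then gives $F_{i,j}^1(g_\lambda) = 0$ for every $\lambda$. For (c), the cocycle formula (\ref{cocycle}) together with $(p, -p) = 1$ (Lemma \ref{Hilbert_symbol_identities}(iv)) yields $\tilde s g_\lambda = (\begin{mat}0 & 1\\p & [\lambda]\end{mat}, 1)$, and the same valuation criterion forces $\lambda = 0$ for a non-vanishing contribution. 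For $\lambda = 0$, one has $\begin{mat}0 & 1\\p & 0\end{mat} = \operatorname{diag}(1,p)\cdot s$ with trivial cocycle (since $(1,p^2)=1$), so $\Phi(\tilde s) = \tau_\chi(\widetilde{\operatorname{diag}(1,p)}) f_{i,j}$. A brief direct computation using Example \ref{law_torus} and Definition \ref{def_std_basis} then shows $\widetilde{\operatorname{diag}(1,p)} f_{i,j} = c_{i,j}\cdot f_{i, j+1}$ with $c_{i,j}\in k^\times$ explicit: up to sign $(-1)^{i(p-1)/2}$, the scalar is $\chi(\operatorname{diag}(1,p^2))$ when $j=0$ and $1$ when $j=1$.

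For (a), given $u\in I_1$ with top-right entry $\beta\in \bZ_p$ and reduction $\bar\beta\in \bF_p$, an elementary matrix computation in $G$ produces $u g_\lambda^0 = g_{\lambda+\bar\beta}^0 u''$ for a unique $u''\in I_1$, where $g_\lambda^0 := \begin{mat}p & [\lambda]\\0 & 1\end{mat}$. The main technical step is to verify this identity lifts to $\wt G$ without a cocycle residue, i.e.\ $\sigma(u, g_\lambda^0) = \sigma(g_{\lambda+\bar\beta}^0, u'')$. A Hilbert-symbol computation---separating the cases $\gamma = 0$ and $\gamma\neq 0$ for the bottom-left entry $\gamma$ of $u$, and repeatedly invoking $1 + p\bZ_p\subset S$ along with the symmetry $(a, b) = (b, a)$ of Lemma \ref{Hilbert_symbol_identities}(vi)---reduces the equality to identities such as $(p, p\gamma) = (p\gamma, p)$ in the generic case, which hold trivially. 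Reindexing the sum by $\lambda\mapsto \lambda - \bar\beta$ then yields $(u,1)\cdot \Phi = \Phi$. This cocycle bookkeeping is the main technical obstacle; everything else is a short direct calculation.
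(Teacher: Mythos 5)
Your proof is correct, but it takes a genuinely different, more computational route than the paper. The paper's own argument is a one-liner: on $\tilde{\pi}(\chi)^{I_1}$ the operator $\sum_{\lambda\in \bF_p}(X+1)^{[\lambda]}F$ is exactly the Hecke operator $T_{\Pi^{-1}s}=T_{\Pi^{-1}}T_s$ (by the formula $vT_n=\sum_x x^{-1}(\tilde{n})^{-1}v$ for the $\cH$-action on $I_1$-invariants), so the identity follows at once from Lemma \ref{action_Fij}: inverting part (ii) gives that $F^1_{i,j}T_{\Pi}^{-1}$ is a non-zero multiple of $F^0_{i,j+1}$ (using the invertibility of $T_{\Pi}$, Lemma \ref{identities_Hecke_operators_II} (i)), and part (iv) then yields $F^1_{i,j+1}$. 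That formalism packages away precisely the two things you verify by hand: the $I_1$-invariance of the sum (automatic, since Hecke operators preserve $\pi^{I_1}$) and the evaluation at the coset representatives $1$ and $\tilde{s}$ (already recorded in Lemma \ref{action_Fij}, whose proof rests on Lemma \ref{action_standard_basis}). Your direct computation in $\Ind^{\wt{G}}_{\wt{B}}(\tau_{\chi})$ does go through: the splitting over $B_S$ indeed gives the trivial lift of $\begin{mat} p & [\lambda]\\ 0 & 1\end{mat}$; the valuation criterion for $BI_1$ versus $BsI_1$ correctly kills every term at $1$ and all but $\lambda=0$ at $\tilde{s}$; the cocycle values $(p,-p)=1$ and $(1,p^2)=1$ are right; the relation $ug^0_{\lambda}=g^0_{\lambda+\bar{\beta}}u''$ holds in $G$ with $u''\in I_1$, and the required identity $\sigma(u,g^0_{\lambda})=\sigma(g^0_{\lambda+\bar{\beta}},u'')$ reduces, using $1+p\bZ_p\subset S$, $\det u\in S$ and symmetry of the Hilbert symbol, to $(p,\gamma p)=(\gamma p,p)$ when the lower-left entry $\gamma$ of $u$ is non-zero and to the triviality of $(d,p)$ for $d\in 1+p\bZ_p$ when $\gamma=0$; finally your scalar agrees with the one obtained from Lemma \ref{action_standard_basis} after inverting $\left(\begin{mat}1 & 0\\ 0 & p^{-1}\end{mat},1\right)$. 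What the paper's route buys is brevity and no fresh cocycle bookkeeping; what yours buys is a self-contained verification with an explicit scalar, at the cost of re-deriving material the paper has already encoded in Lemmas \ref{action_standard_basis} and \ref{action_Fij}.
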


\begin{proof}
	On the subspace $\tilde{\pi}(\chi)^{I_1}$, the operator $\sum_{\lambda\in \bF_p} (X+1)^{[\lambda]}F$ is simply the Hecke operator $T_{\Pi^{-1}s}=T_{\Pi^{-1}}T_s$, so the assertion follows from Lemma \ref{action_Fij}.
\end{proof}

\begin{cor}
	For all $i,j=0,1$, we have the following equalities of $X$-torsion subspaces inside the space $\cT(M)^{\operatorname{bc}}_0$.
	\begin{enumerate}
		\item[{\rm (i)}] $\left(k\llbracket X\rrbracket[F^2]F_{i,j}^1\right)[X]=k F_{i,j}^1$;
		\item[{\rm (ii)}] $\left(k\llbracket X\rrbracket[F]F_{i,j}^1\right)[X]=k F_{i,j}^1\oplus k F_{i,j+1}^1$;
		\item[{\rm (iii)}] $\cT(M)^{\operatorname{bc}}_0[X]=M_0$.
	\end{enumerate}
\end{cor}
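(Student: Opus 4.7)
The $\supseteq$ inclusions in (i)--(iii) are immediate: each $F_{i,j}^1\in M\subset \cT(M)^{I_1}$ is fixed by $U\cap I$ and thus killed by $X$, and by Lemma \ref{recursion_ps} the element $F_{i,j+1}^1$ (with index read modulo $2$) is, up to a nonzero scalar, equal to $\sum_{\lambda\in\bF_p}(X+1)^{[\lambda]}FF_{i,j}^1\in k\llbracket X\rrbracket[F]F_{i,j}^1$, which is likewise $X$-torsion.

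For the reverse inclusions, the key computation is to pin down each cyclic piece $k\llbracket X\rrbracket F^n F_{i,j}^1$. I would first check inside $k\llbracket X\rrbracket$ that $g(X) := \sum_{\lambda\in\bF_p}(X+1)^{[\lambda]}$ factors as $X^{p-1}u(X)$ with $u(0)=1$: the Taylor coefficient $g^{(k)}(0)\equiv \sum_{\lambda\in\bF_p}\lambda(\lambda-1)\cdots(\lambda-k+1)\pmod p$ is the sum of a polynomial of degree $k$ over $\bF_p$ and hence vanishes for $k<p-1$, while at $k=p-1$ it equals $-(p-1)!\equiv 1\pmod p$ by Wilson's theorem. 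Since $X^pFF_{i,j}^1=0$, multiplying by $X^{p-1}$ absorbs the unit $u(X)$, and Lemma \ref{recursion_ps} becomes $X^{p-1}FF_{i,j}^1 = c_1 F_{i,j+1}^1$ for some $c_1\in k^{\times}$. Iterating via $FX=X^pF$ gives $(X^{p-1}F)^n = X^{p^n-1}F^n$, whence
\[
X^{p^n-1}F^n F_{i,j}^1 \;=\; c_1^n F_{i,j+n}^1 \qquad (\text{indices modulo }2).
\]
Combined with $X^{p^n}F^n F_{i,j}^1 = 0$ (valid because $F^nF_{i,j}^1$ is fixed by $F^n(U\cap I)F^{-n}$), this identifies $k\llbracket X\rrbracket F^nF_{i,j}^1\cong k\llbracket X\rrbracket/(X^{p^n})$ with $X$-torsion precisely $kF_{i,j+n}^1$.

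The three parts then follow structurally. The relation $F^mF_{i,j}^1 = c_1^{-2}X^{(p^2-1)p^m}F^{m+2}F_{i,j}^1$, obtained by premultiplying the $n=2$ case of the above by $F^m$, realises $k\llbracket X\rrbracket F^mF_{i,j}^1\subset k\llbracket X\rrbracket F^{m+2}F_{i,j}^1$ as a nested chain whose one-dimensional $X$-torsions all coincide with $kF_{i,j}^1$; taking the union yields (i). Decomposing $k\llbracket X\rrbracket[F]F_{i,j}^1$ as the sum of its even- and odd-$F$-power parts $k\llbracket X\rrbracket[F^2]F_{i,j}^1$ and $k\llbracket X\rrbracket[F^2]FF_{i,j}^1$ gives (ii), with $X$-torsion $kF_{i,j}^1\oplus kF_{i,j+1}^1$; summing over $(i,j)$ then yields (iii).

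The principal subtlety will be the \emph{directness} of these sum decompositions, since in general $(A+B)[X]$ may strictly contain $A[X]+B[X]$. I plan to handle this via the canonical map $\cT(M)^{\operatorname{bc}}_0\to \tilde\pi(\chi)^{\operatorname{bc}}\cong \mathscr{C}^{\infty}_c(\bQ_p,k)\otimes \tau_{\chi^s}$ of Lemma \ref{bigcell_iso}: since only shrinkings of $\bZ_p$ arise under the action of $k\llbracket X\rrbracket[F]$ on $M_0$, the image actually lies in $\mathscr{C}^{\infty}(\bZ_p,k)\otimes \tau_{\chi^s}$, whose $X$-torsion is exactly $k\mathbf{1}_{\bZ_p}\otimes \tau_{\chi^s}$, matching the image of $M_0$; moreover the even- and odd-$F$-power pieces (resp.\ the four submodules attached to the $F_{i,j}^1$) map into linearly independent subspaces spanned by the pairwise distinct $f_{i,j}$ in $\tau_{\chi^s}$. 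Combined with the injectivity of the unit $M\hookrightarrow \cI\cT(M)$ and a bootstrap back to $\cT(M)^{\operatorname{bc}}_0$, this forces the required directness and hence the claimed equalities.
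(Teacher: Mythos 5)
Your analysis of the cyclic pieces is correct and is essentially the paper's own argument for (i), in sharpened form: the paper iterates Lemma \ref{recursion_ps} to see $F_{i,j}^1\in k\llbracket X\rrbracket F^{2n}F_{i,j}^1$, notes that this is a cyclic module over $k\llbracket X\rrbracket/(X^{p^{2n}})$ and hence has one-dimensional $X$-torsion, and passes to the filtered colimit; your factorization $\sum_{\lambda}(X+1)^{[\lambda]}=X^{p-1}u(X)$ with $u$ a unit (the sign in your Wilson-theorem step is off, but harmlessly so, and the conclusion that the coefficient is a unit stands) makes the same structure explicit, identifying $k\llbracket X\rrbracket F^{n}F_{i,j}^1$ as free of rank one over $k\llbracket X\rrbracket/(X^{p^{n}})$ with socle $kF_{i,j+n}^1$. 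Up to that point your route and the paper's coincide.

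The genuine gap is in how you pass from the summands to (ii) and (iii). You correctly flag that $(A+B)[X]$ may exceed $A[X]+B[X]$, but the mechanism you propose — pushing forward along the map $\cT(M)^{\operatorname{bc}}_0\to\tilde{\pi}(\chi)^{\operatorname{bc}}\cong \mathscr{C}^{\infty}_c(\bQ_p,k)\otimes\tau_{\chi^s}$ and observing that the even and odd pieces land in independent subspaces — does not close it: that map is not known to be injective at this stage (its bijectivity on $\cT(M)^{\operatorname{bc}}$ is precisely what Corollary \ref{bc_generated} and Proposition \ref{bc_irred}, which rely on the present corollary, are later used to establish), so trivial intersection of the images tells you nothing about $\cT(M)^{\operatorname{bc}}_0$ itself, and the unspecified ``bootstrap'' is carrying the whole weight; invoking injectivity of the counit here would in fact be circular. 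The fix is internal and only needs what you have already proved: every element of $A=k\llbracket X\rrbracket[F^2]F_{i,j}^1$ and of $B=k\llbracket X\rrbracket[F^2]FF_{i,j}^1$ is killed by a power of $X$ (each is a filtered union of modules annihilated by $X^{p^{2n}}$), so if $0\neq y\in A\cap B$ and $m$ is maximal with $X^m y\neq 0$, then $X^m y\in A[X]\cap B[X]=kF_{i,j}^1\cap kF_{i,j+1}^1=0$, the last equality because the $F_{i,j}^1$ stay linearly independent in $\cT(M)$ by the injectivity of the unit $M\hookrightarrow\cI\cT(M)$ — a contradiction. Hence $A+B=A\oplus B$ and $(A\oplus B)[X]=A[X]\oplus B[X]$, which is (ii); the identical argument applied to the two summands $k\llbracket X\rrbracket[F]F_{0,0}^1$ and $k\llbracket X\rrbracket[F]F_{1,0}^1$ (note $k\llbracket X\rrbracket[F]F_{i,1}^1=k\llbracket X\rrbracket[F]F_{i,0}^1$ by the recursion), whose $X$-torsions are computed by (ii), gives (iii). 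With that replacement your proof is complete; the paper itself passes over this point silently, deducing (ii) and (iii) formally from (i).
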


\begin{proof}
	Applying Lemma \ref{recursion_ps} twice yields, up to a non-zero scalar,
	\begin{equation}\label{it_2}
		F_{i,j}^1 = \sum_{\lambda,\mu\in \bF_p} (X+1)^{[\lambda]+p[\mu]}F^2 F_{i,j}^1.
	\end{equation}
	Iterating this $n$-times shows that $k F_{i,j}^1\subset \left(k\llbracket X\rrbracket F^{2n}	F_{i,j}^1\right)[X] \subset k\llbracket X\rrbracket F^{2n}	F_{i,j}^1$. The latter is a module over $k\llbracket X\rrbracket/F^{2n}XF^{-2n}=k\llbracket X\rrbracket/X^{p^{2n}}$ generated by a single element and thus has $1$-dimensional $X$-torsion, so the first inclusion has to be an equality. Passing to the colimit over all $n\in \mathbf{N}$, which is filtered by (\ref{it_2}), finishes the proof of part (i).
	
	For part (ii), note that Lemma \ref{recursion_ps} implies that $k\llbracket X\rrbracket[F]F_{i,j}^1= k\llbracket X\rrbracket[F^2]F_{i,j}^1 + k\llbracket X\rrbracket[F^2]F_{i,j+1}^1$. Applying part (i) now gives (ii).
	
	Part (iii) can be deduced from (ii) just like (ii) itself was deduced from (i).
\end{proof}	

\begin{prop}\label{bc_irred}
	The $\wt{B}$-subrepresentation $\cT(M)^{\operatorname{bc}}\subset \cT(M)$ is irreducible.
\end{prop}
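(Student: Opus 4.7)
My approach is to prove irreducibility in three stages: (a) identify $\cT(M)^{\operatorname{bc}}$ as a localization of $\cT(M)^{\operatorname{bc}}_0$, (b) show $\cT(M)^{\operatorname{bc}}_0$ is simple over $P^+\wt{Z}$, and (c) upgrade to $\wt{B}$-irreducibility of $\cT(M)^{\operatorname{bc}}$.

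For (a), I would first verify $\cT(M)^{\operatorname{bc}}=\cT(M)^{\operatorname{bc}}_0[F^{-1}]$. The group $\wt{B}$ is generated by $\wt{I}\cap \wt{B}$, $\wt{Z}$ and $F^{\pm 1}$: indeed $U=\bigcup_{n\geq 0}F^{-n}(U\cap I)F^{n}$, while $\wt{T}$ is generated by $\wt{T}\cap \wt{K}$, $F^{\pm 1}$ and $\wt{Z}$, and $\wt{I}\cap \wt{B}\subseteq P^{+}\wt{Z}$ via the Iwahori decomposition of upper-triangular matrices. Since $\cT(M)^{\operatorname{bc}}_0=k\llbracket X\rrbracket[F]M_0$ is by construction $P^{+}\wt{Z}$-stable (in particular stable under $\wt{I}\cap \wt{B}$, $\wt{Z}$ and $F$), adjoining the action of $F^{-1}$ produces the smallest $\wt{B}$-stable subspace containing $M_0$, which by definition is $\cT(M)^{\operatorname{bc}}$.

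For (b), let $V_0\subseteq \cT(M)^{\operatorname{bc}}_0$ be a nonzero $P^{+}\wt{Z}$-subrepresentation. By smoothness $V_0[X]\neq 0$, and the preceding corollary identifies this with $V_0\cap M_0\neq 0$. This intersection is $\wt{H}$-stable, and since the $F^1_{i,j}$ span distinct $\wt{H}$-eigenlines by Lemma~\ref{action_Fij}(i), $V_0$ contains some $F^1_{i_0,j_0}$. I then claim that $V_0$ is stable under two involutions on $\{F^1_{i,j}\}_{i,j=0,1}$: the first, provided by the central element $\tilde{\Pi}^{2}=(pI,1)\in \wt{Z}$, sends $F^1_{i,j}$ to a nonzero scalar multiple of $F^1_{1-i,1-j}$, as one checks by a direct computation with the cocycle (\ref{cocycle}) exploiting $\mathfrak{c}(\tilde{s}\tilde{\Pi}^{2})\in p\bZ$ and the formulas in Example~\ref{law_torus}; the second, provided by the element $\sum_{\lambda\in \bF_p}(X+1)^{[\lambda]}F\in k[P^{+}]$, sends $F^1_{i,j}$ to a nonzero scalar multiple of $F^1_{i,1-j}$ by Lemma~\ref{recursion_ps}. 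Together these two commuting involutions generate the Klein four-group action on the basis $\{F^1_{i,j}\}$, so $M_0\subseteq V_0$, whence $V_0\supseteq k\llbracket X\rrbracket[F]M_0=\cT(M)^{\operatorname{bc}}_0$.

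For (c), let $V\subseteq \cT(M)^{\operatorname{bc}}$ be a nonzero $\wt{B}$-subrepresentation. Using the filtration $\cT(M)^{\operatorname{bc}}=\bigcup_{n\geq 0}F^{-n}\cT(M)^{\operatorname{bc}}_0$ and the invertibility of $F$ on $V$, every nonzero $v\in V$ satisfies $F^{n}v\in V\cap \cT(M)^{\operatorname{bc}}_0$ for some $n\geq 0$; thus $V\cap \cT(M)^{\operatorname{bc}}_0$ is a nonzero $P^{+}\wt{Z}$-stable subspace, which equals $\cT(M)^{\operatorname{bc}}_0$ by (b). Then $\wt{B}$-stability (more precisely, $F^{-1}$-stability) of $V$ forces $V\supseteq \cT(M)^{\operatorname{bc}}_0[F^{-1}]=\cT(M)^{\operatorname{bc}}$. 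The main obstacle is the explicit verification in (b) that $\tilde{\Pi}^{2}$ realizes the diagonal swap $(i,j)\mapsto (1-i,1-j)$; this is the point where the metaplectic phenomenon first enters crucially, reflecting the fact that $\wt{Z}$ strictly contains $Z(\wt{G})=Z^{\operatorname{sq}}\times \mu_2$ and therefore acts on $\tau_\chi$ in a non-scalar fashion.
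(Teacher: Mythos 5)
Your proposal is correct and follows essentially the same route as the paper: both first prove that $\cT(M)^{\operatorname{bc}}_0$ is irreducible as a $P^+\wt{Z}$-representation by combining $\cT(M)^{\operatorname{bc}}_0[X]=M_0$ with the pairwise distinct $H$-eigencharacters of the $F^1_{i,j}$ (Lemma \ref{action_Fij} (i)), then cycle through all four basis vectors using the operator of Lemma \ref{recursion_ps} together with the action of $\tilde{\Pi}^2$, and finally deduce $\wt{B}$-irreducibility from the exhaustion $\cT(M)^{\operatorname{bc}}=\cT(M)^{\operatorname{bc}}_0[F^{-1}]=\varinjlim_n F^{-n}\cT(M)^{\operatorname{bc}}_0$. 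Your added explicit identification of the $\tilde{\Pi}^2$-action as the diagonal swap $(i,j)\mapsto(1-i,1-j)$ is consistent with Lemma \ref{action_standard_basis} and with what the paper uses implicitly.
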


\begin{proof}
	We first claim that $\cT(M)^{\operatorname{bc}}_0$ is irreducible as a $P^+\wt{Z}$-representation. Indeed, by the previous corollary, $\cT(M)^{\operatorname{bc}}_0[X]=M_0$ and $H$ acts on the basis elements $\{F_{i,j}^1\}_{i,j=0,1}$ via pairwise distinct characters, cf.\ Lemma \ref{action_Fij} (i). Thus, by smoothness, any non-zero $P^+\wt{Z}$-subrepresentation of $\cT(M)^{\operatorname{bc}}_0$ must contain some $F_{i,j}^1$. Using Lemma \ref{recursion_ps} and the action of $\tilde{\Pi}^2$, it must then contain $M_0$ and must therefore be equal to $\cT(M)^{\operatorname{bc}}_0$. The proposition now follows from the claim by writing $\cT(M)^{\operatorname{bc}}=\cT(M)^{\operatorname{bc}}_0[F^{-1}]=\varinjlim_{n\geq 1} F^{-n}\cT(M)^{\operatorname{bc}}_0$.
\end{proof}

\begin{lem}\label{T(M)/bigcell}
	The $\wt{B}$-representation $\cT(M)/\cT(M)^{\operatorname{bc}}$ is equal to the image of $M_0^{\perp}:=\bigoplus_{i,j=0,1} k F_{i,j}^0\subset M\hookrightarrow \cT(M)$
	under the projection map.
\end{lem}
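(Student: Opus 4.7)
The goal is to show $\cT(M) = \cT(M)^{\operatorname{bc}} + M_0^{\perp}$ as $k$-subspaces of $\cT(M)$. Let $V := \cT(M)^{\operatorname{bc}} + M_0^{\perp}$. Since the unit $M \to \cI\cT(M)$ is injective and $\cT(M)$ is generated as a $\widetilde{G}$-representation by the image of $M = M_0 \oplus M_0^{\perp}$, and since $M_0 \subset \cT(M)^{\operatorname{bc}} \subset V$, we have $M \subset V$. Therefore it suffices to show that $V$ is a $\widetilde{G}$-subrepresentation of $\cT(M)$. Since $\widetilde{G}$ is generated by $\widetilde{B}$ and $\tilde{s}$ (by the Bruhat decomposition $\widetilde{G} = \widetilde{B} \sqcup \widetilde{B}\tilde{s}\widetilde{B}$), this reduces to checking $\widetilde{B}V \subset V$ and $\tilde{s}V \subset V$.

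For $\widetilde{B}$-stability, $\cT(M)^{\operatorname{bc}}$ is $\widetilde{B}$-stable by construction, so the task is to verify that $\widetilde{B}M_0^{\perp} \subset V$. I would check this on a set of generators of $\widetilde{B}$, typically $\widetilde{T}_2$, $U$, and a representative of $F$. The actions of $\widetilde{T}_2$ and $U \cap I_1$ on $F^0_{i,j}$ are explicit via Lemma \ref{action_Fij}(i) (eigencharacter statement); for non-integral elements of $U$ and for $\widetilde{F}$-type elements one expands via the $\widetilde{I}_1$-coset structure of $\cInd^{\widetilde{G}}_{\widetilde{I}_1}(\mathbf{1}\boxtimes \iota)$ and applies the Hecke identity in Lemma \ref{action_Fij}(iii) to recognize the resulting elements modulo $\cT(M)^{\operatorname{bc}}$.

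For $\tilde{s}$-stability, the crucial input is the Hecke relation $F^0_{i,j}T_s = F^1_{i,j}$ from Lemma \ref{action_Fij}(iv). Unwinding this identity through the tensor product description of $\cT(M)$ using the formula for the right $\cH$-action on $I_1$-invariants (displayed just before Corollary \ref{cut}) expresses $F^1_{i,j}$ as an explicit $k$-linear combination of $\widetilde{B}$-translates of $\tilde{s}F^0_{i,j}$; since $F^1_{i,j} \in M_0 \subset \cT(M)^{\operatorname{bc}}$, this together with the $\widetilde{B}$-stability established above constrains $\tilde{s}F^0_{i,j}$ to lie in $V$. A symmetric argument using the expression of $F^0_{i,j}$ recovered via $T_{\Pi}^{-1} T_{\Pi s}$ (cf.\ Lemma \ref{action_Fij}(ii), (iii)) handles $\tilde{s}F^1_{i,j}$, and hence all of $\tilde{s}M_0$.

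The main obstacle is the verification of $\widetilde{B}$-stability: unlike in $\tilde{\pi}(\chi)$, where the quotient by the big cell is manifestly the finite-dimensional $\widetilde{B}$-representation $\tau_{\chi}$, we cannot invoke this identification in $\cT(M)$ (since showing $\cT(M) \cong \tilde{\pi}(\chi)$ is precisely the overall goal this lemma feeds into). Everything must be done intrinsically, by manipulating the tensor product presentation and the Hecke relations of Lemmas \ref{identities_Hecke_operators_I} and \ref{identities_Hecke_operators_II}; the delicate point is tracking the ``overflow'' in the Iwahori-coset decomposition of $bF^0_{i,j}$ and recognizing that it assembles into elements of $\cT(M)^{\operatorname{bc}}$ thanks to the definition of $\cT(M)^{\operatorname{bc}}$ as the $\widetilde{B}$-subrepresentation generated by $M_0$.
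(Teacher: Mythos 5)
Your reduction---show that $V=\cT(M)^{\operatorname{bc}}+M_0^{\perp}$ is $\wt{G}$-stable by checking stability under $\wt{B}$ and $\tilde{s}$---breaks down at the $\tilde{s}$ step, which is exactly the step the paper's proof is built to avoid. Two concrete problems. First, $\tilde{s}$-stability of $V$ requires $\tilde{s}\,\cT(M)^{\operatorname{bc}}\subset V$, not merely $\tilde{s}M\subset V$: the subspace $\cT(M)^{\operatorname{bc}}$ is the infinite-dimensional $\wt{B}$-subrepresentation generated by $M_0$, it is certainly not $\tilde{s}$-stable (already $\tilde{s}F^1_{i,j}$ leaves it), and controlling $\tilde{s}(b\cdot m_0)$ for arbitrary $b\in\wt{B}$ modulo $V$ amounts to redoing the whole problem; your sketch only treats $\tilde{s}$ on $M_0^{\perp}$ and $M_0$ and never addresses this. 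Second, even on $M_0^{\perp}$ the deduction is invalid: unwinding $F^0_{i,j}T_s=F^1_{i,j}$ gives $\sum_{\lambda\in\bF_p}u_{\lambda}\,\tilde{s}F^0_{i,j}=F^1_{i,j}$ with $u_{\lambda}\in U\cap I$, and knowing that this sum lies in $V$, together with $\wt{B}$-stability of $V$, does not force the individual translate $\tilde{s}F^0_{i,j}$ to lie in $V$ --- the summands are $\wt{B}$-multiples of a vector not yet known to belong to $V$, so stability of $V$ gives no leverage on them.

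The paper sidesteps $\tilde{s}$ entirely: from $\wt{G}=\wt{B}I_1\sqcup\wt{B}I_1\tilde{\Pi}$, the inclusion $M\subset\cT(M)^{I_1}$ and the invertibility of $T_{\Pi}$ (so $\tilde{\Pi}M=MT_{\Pi}^{-1}=M$), one sees that $\cT(M)$ is already generated as a $\wt{B}$-representation by $M$; hence $\cT(M)/\cT(M)^{\operatorname{bc}}$ is generated as a $\wt{B}$-representation by the image $Q$ of $M_0^{\perp}$, and it suffices to prove that $Q$ is $\wt{B}$-stable. That remaining verification is the part of your sketch that is in the right spirit: stability under $B\cap I$ and $\tilde{\Pi}^{2\bZ}$ is immediate from Lemma \ref{action_Fij}, and for $F^{\pm1}$ one uses that $M_0^{\perp}$ is stable under $T_sT_{\Pi}$ and that $F^{-1}m\equiv mT_sT_{\Pi}$ modulo the terms $\left(\begin{mat}1&0\\p[\lambda]&1\end{mat},1\right)F^{-1}m$ with $\lambda\neq 0$, which lie in $B\Pi(I_1\cap U)m\subset\wt{B}\tilde{\Pi}M_0^{\perp}=\wt{B}M_0\subset\cT(M)^{\operatorname{bc}}$. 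Without this $\wt{B}$-generation trick, your route as written has a genuine gap.
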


\begin{proof}
	Let $Q$ denote the image of $M_0^{\perp}$ in the quotient $\cT(M)/\cT(M)^{\operatorname{bc}}$. By definition, $\cT(M)$ is generated as a $\wt{G}$-representation by $M$. Writing $\wt{G}=\wt{B}I_1\sqcup \wt{B}I_1 \tilde{\Pi}$ and noting that $\tilde{\Pi}M=MT_{\Pi}^{-1}=M$, it follows from the inclusion $M\subset \cT(M)^{I_1}$ (in fact, $I_1$-stability of $M$ would be enough) that $\cT(M)$ is generated as a $\wt{B}$-representation by $M=M_0\oplus M_0^{\perp}$. As $M_0\subset \cT(M)^{\operatorname{bc}}$, we deduce that $\cT(M)/\cT(M)^{\operatorname{bc}}$ is generated as a $\wt{B}$-representation by $Q$. It therefore suffices to prove that $Q$ is $\wt{B}$-stable.
	
	It is certainly stable under the action of $B\cap I$ and $\tilde{\Pi}^{2\bZ}$, so it remains to show that $Q$ is $F^{\pm 1}$-stable. The subspace $M_0^{\perp}$ is stable under the Hecke operator $T_{s\Pi}=T_sT_{\Pi}$ by Lemma \ref{action_Fij}. The operator $T_{s\Pi}$ is given by $\sum_{\lambda\in \bF_p} \left(\begin{mat}
		1&0 \\ p[\lambda] & 1
	\end{mat},1\right)F^{-1}$, and we claim that for all $m\in M_0^{\perp}$, the element
	\[
	F^{-1}m-mT_sT_{\Pi} =\sum_{\lambda\in \bF_p^{\times}} \begin{mat}
		1&0 \\ p[\lambda] & 1
	\end{mat}F^{-1}m \in \cT(M)^{\operatorname{bc}}
	\]
	lies in $\cT(M)^{\operatorname{bc}}$. Now for all $\lambda\in \bF_p^{\times}$, $\begin{mat}
		1&0 \\ p[\lambda] & 1
	\end{mat}F^{-1}\in Bs(I_1\cap U)=B\Pi (I_1\cap U)$. Since $M_0^{\perp}$ is $(I_1\cap U)$-stable and $\cT(M)^{\operatorname{bc}}$ is $\wt{B}$-stable, it is enough to observe that $\tilde{\Pi}M_0^{\perp}=M_0\subset \cT(M)^{\operatorname{bc}}$, which follows from Lemma \ref{action_Fij}.
\end{proof}
	
\begin{cor}
	The canonical map $\cT(M)=\cT(\cI(\tilde{\pi}(\chi)))\to \tilde{\pi}(\chi)$ is an isomorphism.
\end{cor}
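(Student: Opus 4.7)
The plan is to combine the two preceding structural results, Proposition \ref{bc_irred} and Lemma \ref{T(M)/bigcell}, into a five-lemma style argument. Let $\varphi\colon \cT(M)\to \tilde{\pi}(\chi)$ denote the canonical (counit) map. First I would check that $\varphi$ restricts to a map $\cT(M)^{\operatorname{bc}}\to \tilde{\pi}(\chi)^{\operatorname{bc}}$: the source is, by definition, the $\wt{B}$-subrepresentation of $\cT(M)$ generated by $M_0$, and $\varphi(M_0)\subset \tilde{\pi}(\chi)^{\operatorname{bc}}$ (this is how $M_0$ was defined), while $\tilde{\pi}(\chi)^{\operatorname{bc}}$ is $\wt{B}$-stable.

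Next I would argue that this restriction is an isomorphism. The source $\cT(M)^{\operatorname{bc}}$ is irreducible as a $\wt{B}$-representation by Proposition \ref{bc_irred}, and the restriction is non-zero because already $\varphi$ is injective on $M$ (the unit $M\hookrightarrow \cI\cT(M)$ being injective has been used to view $M\subset \cT(M)$, and $\varphi|_M$ is the identity). Hence the restriction is injective, and its image is a non-zero $\wt{B}$-subrepresentation of $\tilde{\pi}(\chi)^{\operatorname{bc}}$ containing $M_0$; by Corollary \ref{bc_generated} this image is all of $\tilde{\pi}(\chi)^{\operatorname{bc}}$.

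Now I would pass to the quotients. The map $\varphi$ induces
\[
\ol{\varphi}\colon \cT(M)/\cT(M)^{\operatorname{bc}} \to \tilde{\pi}(\chi)/\tilde{\pi}(\chi)^{\operatorname{bc}} = \tau_{\chi}.
\]
By Lemma \ref{T(M)/bigcell}, the source is equal to the image of $M_0^{\perp}=\bigoplus_{i,j=0,1} k F_{i,j}^0$ under the projection, so is at most four-dimensional. On the other hand, $\ol{\varphi}$ sends the image of $F_{i,j}^0$ to $f_{i,j}$, i.e.\ to the standard basis of $\tau_{\chi}$ (Definition \ref{def_std_basis}); in particular $\ol{\varphi}$ is surjective onto a four-dimensional space, forcing the image of $M_0^{\perp}$ to be exactly four-dimensional and $\ol{\varphi}$ to be an isomorphism.

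The two short exact sequences $0\to \cT(M)^{\operatorname{bc}}\to \cT(M)\to \cT(M)/\cT(M)^{\operatorname{bc}}\to 0$ and $0\to \tilde{\pi}(\chi)^{\operatorname{bc}}\to \tilde{\pi}(\chi)\to \tau_{\chi}\to 0$ are related by $\varphi$, with isomorphisms on both outer terms, so the five-lemma yields that $\varphi$ itself is an isomorphism. There is no genuine obstacle here: all the work has been done in the preceding lemmas, and the only thing to verify carefully is that $\ol{\varphi}$ really is defined on the quotients and matches the standard bases, which is immediate from the definition of $F_{i,j}^0$.
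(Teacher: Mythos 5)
Your argument is correct and is essentially the paper's own proof: both compare the two short exact sequences via the counit, deduce that the outer vertical maps are isomorphisms from Proposition \ref{bc_irred}, Corollary \ref{bc_generated} and Lemma \ref{T(M)/bigcell}, and conclude by the five lemma. The only cosmetic difference is that you verify surjectivity on the quotient by noting $\ol{\varphi}$ sends the images of the $F_{i,j}^0$ to the standard basis $f_{i,j}$ of $\tau_{\chi}$, whereas the paper invokes the irreducibility of the $4$-dimensional target $\tau_{\chi}$.
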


\begin{proof}
	By construction, the canonical map of interest fits into a map of short exact sequences
	\begin{equation}\label{diagram_PS}
	\xymatrix{
		0\ar[r] & \cT(M)^{\operatorname{bc}} \ar[d] \ar[r] & \cT(M) \ar[d] \ar[r] & \cT(M)/\cT(M)^{\operatorname{bc}}\ar[d] \ar[r] & 0\\
		0 \ar[r] & \tilde{\pi}(\chi)^{\operatorname{bc}}\ar[r] & \tilde{\pi}(\chi)\ar[r] & \tau_{\chi} \ar[r] & 0.
	}
	\end{equation}
	By Corollary \ref{bc_generated} and Proposition \ref{bc_irred}, the left vertical arrow is an isomorphism. The right vertical map is an isomorphism by the irreducibility of its $4$-dimensional target and by Lemma \ref{T(M)/bigcell}. In conclusion, also the middle arrow must be an isomorphism. 
\end{proof}

We summarize the results of this section in the following theorem.

\begin{thm}\label{thm_ps}
	The adjoint pair $(\cT,\cI)$ induces a bijection
	\[
	\left\{\begin{array}{c}
		\text{Principal right}\\
		\text{$\cH$-modules}
	\end{array}\right\}\cong \left\{\begin{array}{c}
		\text{Genuine principal series}\\ 
		\text{representations of $\wt{G}$}
	\end{array}\right\},
	\]
	where both sides are considered up to isomorphism.
\end{thm}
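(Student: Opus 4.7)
The strategy is to assemble results already obtained. First, Proposition~\ref{PS_irred} together with the explicit computations~(\ref{ps_reg}) and~(\ref{ps_nreg}) from its proof shows that, for every smooth character $\chi\colon T_2 \to k^{\times}$, the pro-$p$-Iwahori invariants $\cI(\tilde{\pi}(\chi))$ form a simple principal $\cH$-module whose parameters are read off from the values of $\chi$ on the center. The intertwining criterion in the same proposition says $\tilde{\pi}(\chi) \cong \tilde{\pi}(\psi)$ if and only if $\chi|_{T^{\sq}} = \psi|_{T^{\sq}}$, so $\cI$ descends to a well-defined injection from isomorphism classes of genuine principal series into isomorphism classes of principal $\cH$-modules.

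Next, I would verify surjectivity on isomorphism classes. In the square-regular case, a principal $\cH$-module has the form $\operatorname{PS}(\chi_0,x,y,z)$ (Definition~\ref{def_simple_reg}~(iii)); by the intertwining $\operatorname{PS}(\chi_0,x,y,z)\cong \operatorname{PS}(\chi_0^s[1,0],y,x,z)$ of Remark~\ref{inter}, I may assume $y=0$. A smooth character $\chi$ of $A = p^{2\bZ}\bZ_p^{\times}$ is trivial on the pro-$p$ subgroup $1+p\bZ_p$ and is thus freely determined by its values on $[\bF_p^{\times}]$ and on $p^2$. I therefore choose $\chi\colon T_2\to k^{\times}$ with $\chi|_H = \chi_0$, with $\chi\left(\begin{mat}p^2 & 0\\0 & 1\end{mat}\right) = x^{-1}$, and with $\chi\left(\begin{mat}1 & 0\\0 & p^2\end{mat}\right) = xz^{-1}$. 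Equation~(\ref{ps_reg}) then identifies $\cI(\tilde{\pi}(\chi))$ with the given module. In the non-square-regular case the argument is identical, after reducing to the orbit $O_{\mathbf{1}}$ via Lemma~\ref{twist_lemma} and using~(\ref{ps_nreg}) instead of~(\ref{ps_reg}).

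Finally, the corollary immediately preceding the theorem shows that the counit $\cT\cI(\tilde{\pi}(\chi)) \to \tilde{\pi}(\chi)$ is an isomorphism for every principal series, so $\cT$ yields the inverse assignment on isomorphism classes: given a principal module $M$, use surjectivity to write $M\cong \cI(\tilde{\pi}(\chi))$, and compute $\cI\cT(M)\cong \cI\cT\cI(\tilde{\pi}(\chi))\cong \cI(\tilde{\pi}(\chi))\cong M$. The analytically delicate step, namely the isomorphism $\cT\cI(\tilde{\pi}(\chi))\cong \tilde{\pi}(\chi)$, is already established via the big-cell argument culminating in diagram~(\ref{diagram_PS}); the only point requiring care at this stage is the surjectivity bookkeeping, namely accounting for the fact that distinct characters $\chi$ with the same restriction to $T^{\sq}$ produce the same principal series, which is exactly what the intertwining relations in Remark~\ref{inter} encode on the Hecke side.
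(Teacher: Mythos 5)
Your proposal is correct and follows essentially the same route as the paper: the bijection on isomorphism classes is read off from the proof of Proposition \ref{PS_irred} via the identifications (\ref{ps_reg}) and (\ref{ps_nreg}) together with the intertwinings of Remark \ref{inter}, and the inverse is supplied by the counit isomorphism $\cT\cI(\tilde{\pi}(\chi))\cong\tilde{\pi}(\chi)$ established through the big-cell argument. The only difference is that you spell out the surjectivity of $\cI$ by an explicit choice of character values on $T_2$, a detail the paper leaves implicit, and your arithmetic there is correct.
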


\begin{cor}
	Let $\pi$ be a genuine principal series of $\wt{G}$. Then each element in $\pi/X\pi$ is killed by a polynomial in $k[F]$.
\end{cor}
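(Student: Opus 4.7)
The plan is to apply the right-exact functor $V \mapsto V/XV$ to the short exact sequence of $\wt{B}$-representations
\[
0 \to \tilde{\pi}(\chi)^{\operatorname{bc}} \to \tilde{\pi}(\chi) \to \tau_{\chi} \to 0
\]
(the bottom row of diagram (\ref{diagram_PS})) and show that the big-cell term vanishes in the quotient. First I note that, since $\tau_\chi$ is inflated from $\wt{T}$, the element $X+1 \in U \cap I$ acts as the identity on $\tau_\chi$; hence $X$ kills $\tau_\chi$ and $\tau_\chi/X\tau_\chi = \tau_\chi$ is the $4$-dimensional space spanned by the $f_{i,j}$, on which $F$ acts invertibly by Lemma \ref{action_standard_basis}.

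The main step will be to prove $\tilde{\pi}(\chi)^{\operatorname{bc}}/X\tilde{\pi}(\chi)^{\operatorname{bc}} = 0$. By Lemma \ref{bigcell_iso} I have $\tilde{\pi}(\chi)^{\operatorname{bc}} \cong \mathscr{C}^\infty_c(\bQ_p, \tau_\chi)$, and a direct reading of formula (\ref{B_action}) with $(a,b,d,\zeta)=(1,1,1,1)$ shows that $X$ acts on the right-hand side as $\tau_1 - 1$, where $\tau_1$ denotes translation by $1$ in the argument. I will then realize $\mathscr{C}^\infty_c(\bQ_p, \tau_\chi)$ as the filtered colimit, over $N, n \geq 0$, of the finite-dimensional subspaces $V_{N,n}$ consisting of $\tau_\chi$-valued functions supported in $p^{-N}\bZ_p$ and constant on $p^n\bZ_p$-cosets. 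Each $V_{N,n}$ is free of rank $\dim_k \tau_\chi$ over $k[\tau_1]/(\tau_1^{p^{N+n}} - 1) = k[X]/X^{p^{N+n}}$ (using $\operatorname{char} k = p$), so its $X$-cokernel is identified with $\tau_\chi$ via the summation map $L \mapsto \sum_a L(a)$. The refinement inclusion $V_{N,n} \hookrightarrow V_{N,n+1}$ splits each $p^n\bZ_p$-coset into $p$ equal-valued $p^{n+1}\bZ_p$-cosets, so on $X$-cokernels it becomes multiplication by $p = 0$. Since $(-)/X$ commutes with filtered colimits, the desired vanishing follows.

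Putting these paragraphs together will yield $\tilde{\pi}(\chi)/X\tilde{\pi}(\chi) \cong \tau_\chi$ as $k[F]$-modules, and the Cayley--Hamilton theorem applied to $F$ acting on this $4$-dimensional $k$-space will produce a single polynomial in $k[F]$ annihilating every element. The main obstacle is the vanishing in the second paragraph; it is a genuinely characteristic-$p$ phenomenon, since the refinement transition map on $X$-cokernels is ``averaging'' (multiplication by $p$), which would be an isomorphism in any other characteristic, and the big-cell cokernel would then fail to disappear.
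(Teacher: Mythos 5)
Your overall strategy is sound and its key step is genuinely different from the paper's. The paper also reduces to the two outer terms of the big-cell short exact sequence, but it works with the top row of diagram (\ref{diagram_PS}): the quotient $\cT(M)/\cT(M)^{\operatorname{bc}}$ is finite dimensional, and the vanishing of the big-cell contribution comes from Lemma \ref{recursion_ps}, which exhibits the generators of $\cT(M)^{\operatorname{bc}}_0$ as elements of $X\cT(M)^{\operatorname{bc}}_0$ (because $\sum_{\lambda\in\bF_p}(X+1)^{[\lambda]}\equiv 0 \bmod X$ in characteristic $p$), whence $\cT(M)^{\operatorname{bc}}=X\cT(M)^{\operatorname{bc}}$. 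You instead work directly with $\tilde{\pi}(\chi)^{\operatorname{bc}}\cong\mathscr{C}^{\infty}_c(\bQ_p,\tau_{\chi})$ via Lemma \ref{bigcell_iso} and prove $X$-divisibility by an elementary colimit computation; this is legitimate and self-contained (you need neither $\cT$ nor the recursion lemma), and it isolates the same characteristic-$p$ averaging phenomenon that powers Lemma \ref{recursion_ps}.

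However, your finite-level bookkeeping is wrong, though repairably so. On $V_{N,n}=\operatorname{Maps}(p^{-N}\bZ_p/p^{n}\bZ_p,\tau_{\chi})$ the translation $\tau_1$ has order $p^{n}$, not $p^{N+n}$: the image of $1$ in the cyclic group $p^{-N}\bZ_p/p^{n}\bZ_p$ generates the index-$p^{N}$ subgroup $\bZ_p/p^{n}\bZ_p$. Hence $X^{p^{n}}=0$ on $V_{N,n}$, so $V_{N,n}$ is free of rank $p^{N}\dim_k\tau_{\chi}$ over $k[X]/(X^{p^{n}})$, and its $X$-cokernel is not $\tau_{\chi}$ but $\operatorname{Maps}(p^{-N}\bZ_p/\bZ_p,\tau_{\chi})$, of dimension $p^{N}\dim_k\tau_{\chi}$, the identification being given by summing over each $\bZ_p$-coset (i.e.\ over each $\tau_1$-orbit) rather than over all of $p^{-N}\bZ_p/p^{n}\bZ_p$. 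This error does not sink the proof: since $(-)/X$ commutes with the filtered colimit, it suffices that for every $(N,n)$ some later transition map kills the $X$-cokernel, and your observation about the refinement $V_{N,n}\hookrightarrow V_{N,n+1}$ does exactly that with the corrected identification -- for $L\in V_{N,n}$ each orbit-sum at level $n+1$ counts every value of $L$ exactly $p$ times, so the induced map on $X$-cokernels is multiplication by $p=0$ and the colimit of the cokernels vanishes. With that correction the remaining steps (right-exactness of $(-)/X$, the fact that $X$ acts by zero on $\tau_{\chi}$, and Cayley--Hamilton for $F$ on the four-dimensional space $\tau_{\chi}$) go through as you describe.
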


\begin{proof}
	Write $\pi=\cT(M)$ for a principal $\cH$-module $M$. It is enough to show that the statement is true for the kernel and cokernel in the top row of diagram (\ref{diagram_PS}). For the cokernel this is clear as it is finite dimensional. For the kernel, write $\cT(M)^{\operatorname{bc}}=\cT(M)^{\operatorname{bc}}_0[F^{-1}]$ and use Lemma \ref{recursion_ps}, which implies that $\cT(M)^{\operatorname{bc}}_0=X\cT(M)^{\operatorname{bc}}_0$.
\end{proof}

\subsubsection{Extensions of principal series representations}
In this section, we compute the extensions of two genuine principal series representations. Quite surprisingly, this can be deduced from the already existing results for principal series representations of the group $G=\GL_2(\bQ_p)$. In what follows, the Ext-groups appearing will be computed in the category of smooth $k$-representations of $\wt{G}$-, resp.\ $G$.

We will use the same notation for a genuine principal series as in the last section, i.e.\ given a smooth character $\chi\colon T_2\to k^{\times}$, we put $\tilde{\pi}(\chi)=\Ind^{\wt{G}}_{\wt{B}_2}(\chi\boxtimes \iota)$, which up to isomorphism only depends on the restriction of $\chi$ to $T^{\sq}$.
	
Given a smooth character $\chi\colon T\to k^{\times}$, we put
\[
\pi(\chi)=\Ind^G_B(\chi) \text{ and } \tilde{\pi}(\chi)=\tilde{\pi}(\chi|_{T_2})=\Ind^{\wt{G}}_{\wt{B}_2}(\chi|_{T_2}\boxtimes \iota).
\]
	
\begin{prop}[{\cite[Proposition 4.3.15]{ord_parts_II}}]\label{extension_PS_G}
	Let $\chi,\chi'\colon T\to k^{\times}$ be smooth characters.
	\begin{enumerate}
		\item[{\rm (i)}] The space
		\[
		\Ext^1_G(\pi(\chi),\pi(\chi'))=0
		\]
		vanishes
		unless $\chi'=\chi$ or $\chi'=\chi^s \otimes (\omega \otimes\omega^{-1})$.
		\item[{\rm (ii)}] The map
		\[
		\Ext^1_T(\chi,\chi)\xrightarrow{\cong} 		\Ext^1_G(\pi(\chi),\pi(\chi)),
		\]
		induced by the exact functor $\Ind^G_B(-)$, is an isomorphism.
		
		\item[{\rm (iii)}] If $\chi^s\otimes (\omega\otimes \omega^{-1})\neq \chi$, then
		\[
		\dim_k	\Ext^1_G(\pi(\chi),\pi(\chi^s\otimes (\omega\otimes \omega^{-1})))=1.
		\]
	\end{enumerate}
\end{prop}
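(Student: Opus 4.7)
The plan is to invoke Emerton's theory of \emph{ordinary parts} to translate the $\Ext^1_G$-computation between principal series into a computation of $\Ext^{\bullet}_T$ between smooth characters of the torus, which is tractable. Let $\operatorname{Ord}_B$ denote the ordinary parts functor on smooth $k$-representations of $G$; after a suitable normalization it is right adjoint to parabolic induction $\Ind^G_B$, and its right-derived functors $R^i\operatorname{Ord}_B$ yield a Grothendieck-type spectral sequence
\[
E_2^{i,j} = \Ext^i_T\bigl(\chi,\,R^j\operatorname{Ord}_B(V)\bigr) \ \Longrightarrow\ \Ext^{i+j}_G(\pi(\chi),V)
\]
for any smooth $k$-representation $V$ of $G$.

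The key computational input is $R^{\bullet}\operatorname{Ord}_B$ applied to a principal series. Restricting along the Bruhat stratification $G=B \sqcup BsU$, the $B$-representation $\pi(\chi')|_B$ sits in a short exact sequence whose closed-cell quotient is $\chi'$ and whose big-cell subobject is induced from $U$ with values in a suitable twist of $\chi'^{s}$. The big-cell term has no locally finite vectors for the contracting torus element, so $\operatorname{Ord}_B$ annihilates it while $R^1\operatorname{Ord}_B$ extracts precisely the twisted-conjugate character. Carefully tracking the modulus twist coming from the adjoint action of $T$ on $U$ yields
\[
\operatorname{Ord}_B\pi(\chi') = \chi', \qquad R^1\operatorname{Ord}_B \pi(\chi') = \chi'^{s}\otimes(\omega\otimes \omega^{-1}),
\]
and $R^i\operatorname{Ord}_B\pi(\chi')=0$ for $i\geq 2$.

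Inserting this into the five-term exact sequence extracted from the spectral sequence gives
\[
0 \to \Ext^1_T(\chi,\chi') \to \Ext^1_G(\pi(\chi),\pi(\chi')) \to \Hom_T\bigl(\chi,\chi'^{s}\otimes(\omega\otimes\omega^{-1})\bigr) \to \Ext^2_T(\chi,\chi').
\]
For $T = \bQ_p^{\times}\times \bQ_p^{\times}$ and smooth mod-$p$ coefficients, the $\Ext^{\bullet}_T$-groups are easy: $\Hom_T$ and $\Ext^1_T$ between distinct smooth characters vanish, while $\Ext^1_T(\chi,\chi)\cong \Hom_{\mathrm{cont}}(T,k)$ is $2$-dimensional. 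Part (i) then follows, since non-vanishing of the middle term forces non-vanishing of one of the flanking terms, which forces $\chi'=\chi$ or $\chi'=\chi^{s}\otimes(\omega\otimes\omega^{-1})$. For (iii), under $\chi^{s}\otimes(\omega\otimes\omega^{-1})\neq\chi$ one sets $\chi'=\chi^{s}\otimes(\omega\otimes\omega^{-1})$ so that $\Ext^1_T(\chi,\chi')=0$ while $\Hom_T(\chi,\chi'^{s}\otimes(\omega\otimes\omega^{-1}))=\Hom_T(\chi,\chi)=k$. For (ii), taking $\chi'=\chi$ the $\Hom_T$-term vanishes (in the generic case) and the five-term sequence degenerates to the desired isomorphism $\Ext^1_T(\chi,\chi)\xrightarrow{\cong}\Ext^1_G(\pi(\chi),\pi(\chi))$; that this isomorphism is indeed induced by $\Ind^G_B$ follows from naturality of the spectral sequence and the adjunction.

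The main obstacle is the computation of $R^1\operatorname{Ord}_B$ on principal series, which requires Emerton's explicit description of ordinary parts as an honest derived functor on the big-cell piece, plus a careful modulus-character bookkeeping to produce the twist $\omega\otimes\omega^{-1}$. Secondary subtleties are (a) the vanishing of the $\Ext^2_T$-boundary, which follows from the vanishing $R^i\operatorname{Ord}_B\pi(\chi')=0$ for $i\geq 2$ together with the $2$-dimensionality of $\Ext^1_T(\chi,\chi)$ and a direct check of the connecting map, and (b) the boundary case $\chi=\chi^{s}\otimes(\omega\otimes\omega^{-1})$ in (ii), where $\pi(\chi)$ is reducible and one must verify the induction map is still an isomorphism—this can be done by using the explicit Jordan--H\"older decomposition and the length-$2$ structure of $\pi(\chi)$ in this degenerate case.
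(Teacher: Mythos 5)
You should first be aware that the paper does not prove this proposition at all: it is quoted verbatim from Emerton (\cite[Proposition 4.3.15]{ord_parts_II}), and your strategy --- derived ordinary parts, the adjunction spectral sequence $\Ext^i_T(\chi,R^j\operatorname{Ord}_B(V))\Rightarrow \Ext^{i+j}_G(\pi(\chi),V)$, and the computation $\operatorname{Ord}_B\pi(\chi')=\chi'$, $R^1\operatorname{Ord}_B\pi(\chi')=\chi'^s\otimes(\omega\otimes\omega^{-1})$ --- is essentially the strategy of that cited proof (modulo the bookkeeping that $\operatorname{Ord}_B$ is adjoint to induction from the \emph{opposite} Borel, and that Emerton's $H^\bullet\operatorname{Ord}_B$ must be identified with the derived functors you are using). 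For parts (i) and (iii) your outline does work, but the input you actually need there is the full vanishing $\Ext^i_T(\chi,\chi')=0$ for $\chi\neq\chi'$ and $i\leq 2$ (this is \cite[Lemma 4.3.10]{ord_parts_II}, the same lemma invoked in the proof of Lemma \ref{Ext_bc_G}); your appeal to the ``$2$-dimensionality of $\Ext^1_T(\chi,\chi)$'' is both wrong as stated --- $\Ext^1_T(\chi,\chi)\cong\Hom_{\operatorname{cts}}(T,k)$ is $4$-dimensional, two dimensions for each $\bQ_p^{\times}$-factor --- and irrelevant to controlling the boundary map into $\Ext^2_T$.

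The genuine gap is in part (ii) in the degenerate case $\chi=\chi^s\otimes(\omega\otimes\omega^{-1})$. There the term $\Hom_T(\chi,\chi'^s\otimes(\omega\otimes\omega^{-1}))=\Hom_T(\chi,\chi)$ is $1$-dimensional, so the five-term sequence only gives injectivity of $\Ext^1_T(\chi,\chi)\to\Ext^1_G(\pi(\chi),\pi(\chi))$; to get surjectivity you must prove that the edge map $\Ext^1_G(\pi(\chi),\pi(\chi))\to\Hom_T(\chi,\chi)$ vanishes, equivalently that the connecting map $\Hom_T(\chi,\chi)\to\Ext^2_T(\chi,\chi)$ is injective, and no dimension count can do that for you. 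Your proposed fix (b) moreover misidentifies the situation: writing $\chi=\chi_1\otimes\chi_2$, the condition $\chi=\chi^s\otimes(\omega\otimes\omega^{-1})$ means $\chi_1=\chi_2\omega\neq\chi_2$, so $\pi(\chi)$ is \emph{irreducible} in this case (mod-$p$ principal series of $G$ are reducible only when $\chi=\chi^s$, and in that case the $\Hom_T$-term already vanishes, so nothing needs to be proved there); hence the ``length-$2$ Jordan--H\"older structure'' you plan to use does not exist, and you have given no argument for the crucial vanishing. This is precisely the delicate point of the cited proof --- and it is the same subtlety this paper has to confront for its metaplectic analogue, via Lemma \ref{good_extension} and the injectivity statement of Lemma \ref{Ext_bc_G} (ii) in the proof of Proposition \ref{extension_PS}. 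As it stands, your argument establishes (i), (iii), and (ii) only under the additional hypothesis $\chi\neq\chi^s\otimes(\omega\otimes\omega^{-1})$.
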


As in the genuine case, we let $\pi(\chi)^{\operatorname{bc}}\subset \pi(\chi)$ denote the $B$-invariant subspace consisting of those functions whose support is contained in $BsB$ and we identify it $B$-equivariantly with $\mathscr{C}^{\infty}_c(\bQ_p,k)\otimes \chi^s$. The key ingredient for this section is the $\wt{B}$-equivariant isomorphism 
\begin{equation}\label{connection}
	\tilde{\pi}(\chi)^{\operatorname{bc}}\cong \Ind^{\wt{B}}_{\wt{B}_2}(\pi(\chi)^{\operatorname{bc}}|_{B_2}\boxtimes \iota)
\end{equation}
resulting from Lemma \ref{bigcell_iso}.

Evaluation at the identity yields a short exact sequence
\begin{equation}\label{ses_for_B}
	0\to \pi(\chi)^{\operatorname{bc}}\to \pi(\chi)\to \chi\to 0
\end{equation}
of $B$-representations. 

\begin{lem}\label{Ext_bc_G} Let $\chi,\chi'\colon T\to k^{\times}$ be smooth characters. 
	\begin{itemize}
		\item[{\rm (i)}] Assume that $\chi'\neq \chi$. The short exact sequence (\ref{ses_for_B}) induces a natural isomorphism
		\[
		\Ext^1_G(\pi(\chi),\pi(\chi'))\cong \Ext^1_B(\pi(\chi)^{\operatorname{bc}},\chi').
		\]
		\item[{\rm (ii)}] Assume that $\chi'=\chi$ or $\chi' \neq \chi^s\otimes (\omega\otimes \omega^{-1})$. The canonical map
		\[
		\Ext^1_B(\pi(\chi)^{\operatorname{bc}},\chi')\hookrightarrow \Ext^2_B(\chi,\chi'),
		\]
		induced by the sequence (\ref{ses_for_B}),
		is injective.
	\end{itemize}
\end{lem}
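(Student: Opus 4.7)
The plan is to reduce both statements to computations on the Borel subgroup via Frobenius reciprocity, and then analyze the long exact sequence of $\Ext^*_B(-,\chi')$ attached to (\ref{ses_for_B}), invoking Proposition \ref{extension_PS_G} to control the $G$-side. The first step is to observe that restriction $(-)|_B\colon \Mod^{\sm}_G(k)\to \Mod^{\sm}_B(k)$ is exact, so its right adjoint $\Ind^G_B$ preserves injectives and there are natural isomorphisms $\Ext^n_G(\pi(\chi),\pi(\chi'))\cong \Ext^n_B(\pi(\chi),\chi')$ for all $n\geq 0$. Applying $\Ext^*_B(-,\chi')$ to (\ref{ses_for_B}) then yields a long exact sequence whose degree-1 portion is
\[
\Ext^1_B(\chi,\chi')\xrightarrow{\alpha} \Ext^1_G(\pi(\chi),\pi(\chi'))\xrightarrow{\beta}\Ext^1_B(\pi(\chi)^{\operatorname{bc}},\chi')\xrightarrow{\partial} \Ext^2_B(\chi,\chi').
\]

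Part (ii) amounts to proving $\beta=0$. In the generic range $\chi'\neq \chi$ and $\chi'\neq \chi^s\otimes(\omega\otimes\omega^{-1})$ the source vanishes by Proposition \ref{extension_PS_G}(i), so the only substantive case is $\chi'=\chi$. In that case Proposition \ref{extension_PS_G}(ii) realizes every class in $\Ext^1_G(\pi(\chi),\pi(\chi))$ as the parabolic induction $\Ind^G_B(E)$ of a $T$-extension $0\to \chi \to E\to \chi\to 0$. Because $\Ind^G_B$ is exact and commutes with the big-cell truncation $(-)^{\operatorname{bc}}$, the resulting $B$-equivariant sequence $0\to \pi(\chi)^{\operatorname{bc}}\to \Ind^G_B(E)^{\operatorname{bc}}\to \pi(\chi)^{\operatorname{bc}}\to 0$ admits an evident section coming from the Bruhat decomposition of $\Ind^G_B(E)$, and after pushing out along $\pi(\chi)^{\operatorname{bc}}\to \chi$ this section survives and shows that $\beta([\Ind^G_B(E)])=0$.

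Part (i), with $\chi'\neq \chi$, asks that $\beta$ be an isomorphism; equivalently, $\alpha=0$ and $\partial=0$. The vanishing of $\partial$ is a particular case of part (ii) except when $\chi'=\chi^s\otimes(\omega\otimes\omega^{-1})$, and in that exceptional case Proposition \ref{extension_PS_G}(iii) gives $\dim_k \Ext^1_G(\pi(\chi),\pi(\chi'))=1$, while a parallel computation on the $B$-side via the identification $\pi(\chi)^{\operatorname{bc}}\cong \cInd^B_T(\chi^s)$ (up to the modulus twist) and Shapiro's lemma $\Ext^i_B(\cInd^B_T(\chi^s),\chi')\cong \Ext^i_T(\chi^s,\chi')$ shows that $\Ext^1_B(\pi(\chi)^{\operatorname{bc}},\chi')$ is also one-dimensional; dimension count then forces $\beta$ to be an isomorphism and hence $\partial=0$. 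The vanishing of $\alpha$ is obtained by checking that the connecting map $\Hom_B(\pi(\chi)^{\operatorname{bc}},\chi')\to \Ext^1_B(\chi,\chi')$ is surjective, which can be made explicit via the same Shapiro identification plus a direct matching of extension classes in $\Ext^1_B(\chi,\chi')$ (computed by Hochschild--Serre for $B=T\ltimes U$) with $T$-equivariant homomorphisms out of $\cInd^B_T(\chi^s)$. The main obstacle will be the dimension match in the exceptional case $\chi'=\chi^s\otimes(\omega\otimes\omega^{-1})$, where one must carefully track the modulus twist so that the Shapiro identification agrees with the explicit class produced by Proposition \ref{extension_PS_G}(iii); everything else is formal once Frobenius reciprocity and exactness of $\Ind^G_B$ are in hand.
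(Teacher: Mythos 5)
Your starting point---apply $\Ext^{\ast}_B(-,\chi')$ to (\ref{ses_for_B}) and use Frobenius reciprocity to identify the middle term with $\Ext^1_G(\pi(\chi),\pi(\chi'))$---is the same as the paper's, and your reduction of (ii) to ``$\beta=0$'' is fine; but the way you then compute the outer terms fails. The claimed Shapiro isomorphism $\Ext^i_B(\cInd^B_T(\chi^s),\chi')\cong\Ext^i_T(\chi^s,\chi')$ is not available: $T$ is closed but not open in $B$, so $\cInd^B_T$ is not left adjoint to restriction, and the identity already fails in degree $0$, since any $B$-homomorphism $\pi(\chi)^{\operatorname{bc}}\to\chi'$ factors through the $U$-coinvariants of $\mathscr{C}^{\infty}_c(\bQ_p,k)\otimes\chi^s$, which vanish in characteristic $p$, whereas $\Hom_T(\chi^s,\chi^s)=k$. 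Worse, if the isomorphism did hold it would contradict what you need: in your exceptional case $\chi'=\chi^s\otimes(\omega\otimes\omega^{-1})\neq\chi^s$ it would give $\Ext^1_B(\pi(\chi)^{\operatorname{bc}},\chi')\cong\Ext^1_T(\chi^s,\chi')=0$, not the one-dimensional space your dimension count requires; the twist by $\omega\otimes\omega^{-1}$ in Proposition \ref{extension_PS_G} is precisely the failure of such a naive Shapiro argument. In the generic case of (i) you also conflate injectivity of $\partial$ (which is what (ii) gives) with its vanishing: to see that $\beta$ is onto you must kill $\Ext^1_B(\pi(\chi)^{\operatorname{bc}},\chi')$, equivalently $\Ext^2_B(\chi,\chi')$, and $\alpha=0$ needs $\Ext^1_B(\chi,\chi')=0$; neither is addressed. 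The missing input is exactly the paper's: $\Ext^i_B(\chi,\chi')=\Ext^i_T(\chi,\chi')$ for $0\leq i\leq 2$ by \cite[Lemma 4.3.7]{ord_parts_II}, together with $\Ext^i_T(\chi,\chi')=0$ for $\chi\neq\chi'$ by \cite[Lemma 4.3.10]{ord_parts_II}; with these both outer terms vanish and (i) follows at once, exceptional case included.

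For (ii) the only substantive case is indeed $\chi'=\chi$, and the idea of using that every class is of the form $\Ind^G_B(E)$ (Proposition \ref{extension_PS_G} (ii)) can be made to work, but not as written. There is no ``evident section'' of $0\to\pi(\chi)^{\operatorname{bc}}\to\Ind^G_B(E)^{\operatorname{bc}}\to\pi(\chi)^{\operatorname{bc}}\to0$: splitting it amounts to splitting $\mathscr{C}^{\infty}_c(\bQ_p,k)\otimes E^s$ as a $B$-representation, which the Bruhat decomposition does not provide when $E$ is non-split; and the map ``$\pi(\chi)^{\operatorname{bc}}\to\chi$'' you push out along is the restriction of evaluation at $1$, hence zero, so by itself it does not compute $\beta$. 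A correct version of your idea: under the explicit Frobenius-reciprocity isomorphism, $\beta([\Ind^G_B(E)])$ is the push-out along $\operatorname{ev}_1$ of the pull-back of $\Ind^G_B(E)|_B$ along $\iota\colon\pi(\chi)^{\operatorname{bc}}\hookrightarrow\pi(\chi)|_B$; this pull-back is itself the push-out of the big-cell subextension $\Ind^G_B(E)^{\operatorname{bc}}$ along $\iota$, so $\beta([\Ind^G_B(E)])=(\operatorname{ev}_1\circ\iota)_*[\Ind^G_B(E)^{\operatorname{bc}}]=0$ because $\operatorname{ev}_1\circ\iota=0$---no section is needed, but the bookkeeping through the explicit Shapiro map is. The paper avoids this entirely: by Proposition \ref{extension_PS_G} (ii) the map $\alpha\colon\Ext^1_B(\chi,\chi)\to\Ext^1_G(\pi(\chi),\pi(\chi))$ is surjective (and in the remaining case of (ii) the middle term is zero by Proposition \ref{extension_PS_G} (i)), so exactness gives $\beta=0$ and hence the injectivity of $\partial$.
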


\begin{proof}
	By Shapiro's Lemma we have $\Ext^1_G(\pi(\chi),\pi(\chi'))\cong \Ext^1_G(\pi(\chi)|_B,\chi'))$.
	The short exact sequence (\ref{ses_for_B}) induces an exact sequence
	\begin{equation}\label{les}
		0\to \Ext^1_B(\chi,\chi')\to \Ext^1_B(\pi(\chi)|_B,\chi')\to \Ext^1_B(\pi(\chi)^{\operatorname{bc}},\chi')\to \Ext^2_B(\chi,\chi').
	\end{equation}
	The zero on the left-hand side holds because $\Hom_B(\pi(\chi)^{\operatorname{bc}},\chi')=0$ (as both are irreducible, but non-isomorphic). By \cite[Lemma 4.3.7]{ord_parts_II}, we have $\Ext^{i}_B(\chi,\chi')=\Ext^{i}_T(\chi,\chi')$ for $0\leq i\leq 2$.
	
	Assume that $\chi'\neq \chi$. By \cite[Lemma 4.3.10]{ord_parts_II}, we then have $\Ext^{i}_T(\chi,\chi')=0$ for all $i\geq 0$, which gives the assertion in (i).
	
	Assume now that $\chi'=\chi$ or $\chi' \neq \chi^s\otimes (\omega\otimes \omega^{-1})$. By Proposition \ref{extension_PS_G}, the map $\Ext^1_B(\chi,\chi')\to \Ext^1_B(\pi(\chi)|_B,\chi')$ is surjective proving (ii). Indeed, if $\chi'=\chi$, then it is even isomorphism; if $ \chi\neq \chi' \neq \chi^s\otimes (\omega\otimes \omega^{-1})$, then the target is trivial.
\end{proof}

Recall from Notation \ref{notation_[i,j]} that for $i,j\in \bZ/2\bZ$ and a smooth representation of a subgroup of $\wt{B}$, we put $\pi[i,j]=\pi\otimes (\omega^{i\frac{p-1}{2}}\otimes \omega^{j\frac{p-1}{2}})$. For the purpose of this section, we also define
\[
\pi(i,j)=\pi\otimes (\mu_{-1}^{i}\otimes \mu_{-1}^j),
\]
where, as introduced earlier in Secion \ref{prelim_and_not}, $\mu_{-1}$ is the smooth unramified character mapping $p$ to $-1$.

\begin{lem}\label{good_extension}
	Let $\chi\colon T^{\sq}\to k^{\times}$ be a smooth character such that $\chi=\chi^s\otimes (\omega\otimes \omega^{-1})$ (as characters on $T^{\sq}$). Then there exists a smooth character $\chi_e\colon T\to k^{\times}$ such that $\chi_e|_{T^{\sq}}=\chi$ and $\chi_e=\chi_e^s \otimes (\omega\otimes \omega^{-1})$ (as characters on $T$).
\end{lem}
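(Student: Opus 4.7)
The plan is to reduce the existence of $\chi_e$ to the extension of a single character of $S$ to $\bQ_p^{\times}$, which is automatic. Identifying $T$ with $\bQ_p^{\times}\times \bQ_p^{\times}$ via the diagonal coordinates, the subgroup $T^{\sq}$ corresponds to $S\times S$, so any smooth character $\chi\colon T^{\sq}\to k^\times$ splits uniquely as $\chi=\chi_1\otimes \chi_2$ with $\chi_i\colon S\to k^\times$. First I would unpack the hypothesis in these coordinates: the identity $\chi=\chi^s\otimes (\omega\otimes \omega^{-1})$ on $T^{\sq}$ reads
\[
\chi_1(x)\chi_2(y)=\chi_1(y)\chi_2(x)\omega(x)\omega(y)^{-1}\quad\text{for all } x,y\in S,
\]
and setting $y=1$ yields the clean equivalent formulation $\chi_1=\chi_2\cdot \omega|_S$ (a direct recomputation shows this single identity already implies the original relation).

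Next I would construct the extension. Since $\bQ_p^\times/S$ has order $4$ (as $p$ is odd) and $k^\times$ is a divisible abelian group, the character $\chi_2\colon S\to k^\times$ admits some smooth extension $\beta\colon \bQ_p^\times\to k^\times$. I then set
\[
\chi_e=(\beta\omega)\otimes \beta\colon T\to k^\times.
\]
Checking $\chi_e|_{T^{\sq}}=\chi$ is immediate from $\beta|_S=\chi_2$ and $\chi_1=\chi_2\omega|_S$. Verifying $\chi_e=\chi_e^s\otimes(\omega\otimes \omega^{-1})$ on all of $T$ is a one-line computation: both sides applied to $\mathrm{diag}(a,d)$ equal $\beta(a)\beta(d)\omega(a)$.

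There is no serious obstacle here — the only input beyond bookkeeping is the injectivity of $k^\times$ in abelian groups (needed to extend $\chi_2$ from $S$ to $\bQ_p^\times$), together with the observation that the twist-by-swap symmetry of $\chi_e$ forces the simple shape $(\beta\omega)\otimes \beta$, for which the restriction to $T^{\sq}$ is automatically compatible with the hypothesis $\chi_1=\chi_2\omega|_S$. The main conceptual step is translating the symmetry condition on $\chi$ into the relation $\chi_1=\chi_2\omega|_S$, after which the construction is forced.
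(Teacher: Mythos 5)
Your proof is correct, and it takes a genuinely different route from the paper's. The paper starts from an arbitrary smooth extension $\psi$ of $\chi$ to $T$ (again using divisibility of $k^{\times}$), observes that the hypothesis forces $\psi$ and $\psi^s\otimes(\omega\otimes\omega^{-1})$ to agree on $T^{\sq}$ and hence to differ by one of the sixteen characters of $T/T^{\sq}$, i.e.\ $\psi[i,j](a,b)=\psi^s\otimes(\omega\otimes\omega^{-1})$ for a unique quadruple $(i,j,a,b)\in(\bZ/2\bZ)^4$, then checks by hand that necessarily $i=j$ and $a=b$, and finally corrects by ``half'' of the discrepancy, setting $\chi_e=\psi[0,j](0,b)$. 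You instead work in coordinates $T\cong\bQ_p^{\times}\times\bQ_p^{\times}$, $T^{\sq}\cong S\times S$, translate the symmetry hypothesis into the single relation $\chi_1=\chi_2\cdot\omega|_S$ (your $y=1$ reduction and the converse check are both right), extend $\chi_2$ to $\beta$ on $\bQ_p^{\times}$ using that $S$ is open of finite index and $k^{\times}$ is divisible, and write down $\chi_e=(\beta\omega)\otimes\beta$ explicitly; your closing remark that the symmetry condition on $T$ is equivalent to this shape is also correct, since characters of a direct product split. What your approach buys is directness: no case analysis over the twist quadruple and a transparent description of \emph{all} symmetric extensions. What the paper's approach buys is that it stays entirely within the twisting formalism $\psi[i,j](a,b)$ used throughout that section, so the correction step is phrased in the same language as the surrounding Ext computations. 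Both arguments rest on the same essential input, the injectivity of the divisible group $k^{\times}$ in abelian groups.
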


\begin{proof}
	Let $\psi\colon T\to k^{\times}$ be any smooth character extending $\chi$, i.e.\ $\psi|_{T^{\sq}}=\chi$. The assumption on $\chi$ means that $\psi[i,j](a,b)=\psi^{s}\otimes (\omega\otimes \omega^{-1})$ for some (unique) quadruple $(i,j,a,b)$ of elements in $\bZ/2\bZ$. One quickly verifies that necessarily $i=j$ and $a=b$ as elements of $\bZ/2\bZ$. Then $\chi_e:=\psi[0,j](0,b)$ does the job.
\end{proof}

\begin{prop}\label{extension_PS}
	Let $\chi,\chi'\colon T\to k^{\times}$ be smooth characters.
	\begin{enumerate}
		\item[{\rm (i)}] The space
		\[
		\Ext^1_{\wt{G}}(\tilde{\pi}(\chi),\tilde{\pi}(\chi'))=0
		\]
		vanishes
		unless $\chi'|_{T^{\sq}}=\chi|_{T^{\sq}}$ or $\chi'|_{T^{\sq}}=\chi^s \otimes (\omega \otimes\omega^{-1})|_{T^{\sq}}$.
		
		\item[{\rm (ii)}] The map
		\[
		\Ext^1_{T_2}(\chi|_{T_2},\chi|_{T_2})\xrightarrow{\cong} 		\Ext^1_{\wt{G}}(\tilde{\pi}(\chi),\tilde{\pi}(\chi)),
		\]
		induced by the exact functor $\Ind^{\wt{G}}_{\wt{B}_2}(-\boxtimes \iota)$, is an isomorphism.
		
		\item[{\rm (iii)}] If $\chi^s\otimes (\omega\otimes \omega^{-1})|_{T^{\sq}}\neq \chi|_{T^{\sq}}$, then
		\[
		\dim_{k}	\Ext^1_{\wt{G}}(\tilde{\pi}(\chi),\tilde{\pi}(\chi^s\otimes (\omega\otimes \omega^{-1})))=1.
		\]
	\end{enumerate}
\end{prop}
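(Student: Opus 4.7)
The strategy is to mirror the proof of Lemma \ref{Ext_bc_G} on the metaplectic side and reduce the computation to Proposition \ref{extension_PS_G} for the group $G$. Abbreviating $\chi:=\chi_1\otimes \chi_2\omega^{-1}$ and $\chi':=\chi_1'\otimes \chi_2'\omega^{-1}$ (so that $\tilde{\pi}(\chi_1,\chi_2)=\Ind^{\wt{G}}_{\wt{B}_2}(\chi|_{T_2}\boxtimes\iota)=\Ind^{\wt{G}}_{\wt{B}}(\tau_\chi)$), I would first apply Shapiro's lemma (the smooth-induction analogue of second Frobenius reciprocity) to obtain a natural isomorphism
\[
\Ext^i_{\wt{G}}(\tilde{\pi}(\chi_1,\chi_2),\tilde{\pi}(\chi_1',\chi_2'))\cong \Ext^i_{\wt{B}}(\tilde{\pi}(\chi_1,\chi_2)|_{\wt{B}},\tau_{\chi'}),
\]
and then apply $\Ext^{\ast}_{\wt{B}}(-,\tau_{\chi'})$ to the $\wt{B}$-equivariant big-cell short exact sequence $0\to \tilde{\pi}(\chi_1,\chi_2)^{\operatorname{bc}}\to \tilde{\pi}(\chi_1,\chi_2)\to \tau_\chi\to 0$, in direct parallel with the long exact sequence (\ref{les}) used in the $G$-case. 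Under this dictionary the hypothesis $(\chi_1'|_S,\chi_2'|_S)=(\chi_1|_S,\chi_2|_S)$ becomes $\chi'|_{T^{\sq}}=\chi|_{T^{\sq}}$, while the swapped condition becomes $\chi'|_{T^{\sq}}=(\chi^s\otimes(\omega\otimes\omega^{-1}))|_{T^{\sq}}$.

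The two outer terms of the resulting long exact sequence are computed by reducing to $G$-side Ext-groups. For the first, Shapiro for the normal finite-index subgroup $\wt{B}_2\subset \wt{B}$ combined with the decomposition (\ref{Ind|_{T_2}}) yields
\[
\Ext^i_{\wt{B}}(\tau_\chi,\tau_{\chi'})=\bigoplus_{\psi} \Ext^i_{T_2}(\chi|_{T_2},\psi),
\]
where $\psi$ ranges over the four characters of $T_2$ extending $\chi'|_{T^{\sq}}$; by the $T_2$-analogue of \cite[Lemma 4.3.10]{ord_parts_II} this vanishes unless $\chi|_{T^{\sq}}=\chi'|_{T^{\sq}}$, in which case only $\psi=\chi|_{T_2}$ contributes and the sum reduces to $\Ext^i_{T_2}(\chi|_{T_2},\chi|_{T_2})$. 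For the second, the isomorphism (\ref{connection}), Shapiro for $\wt{B}_2\subset \wt{B}$, and Frobenius for the finite-index inclusion $B_2\subset B$ together yield
\[
\Ext^i_{\wt{B}}(\tilde{\pi}(\chi_1,\chi_2)^{\operatorname{bc}},\tau_{\chi'})=\bigoplus_{\eta} \Ext^i_B(\pi(\chi)^{\operatorname{bc}},\eta),
\]
where $\eta$ ranges over the sixteen smooth characters of $T$ extending $\chi'|_{T^{\sq}}$.

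With these two formulas in hand, each part of the proposition follows from Proposition \ref{extension_PS_G} and Lemma \ref{Ext_bc_G} by tracing through which of the two privileged characters $\chi$ and $\chi^s\otimes(\omega\otimes\omega^{-1})$ can appear among the sixteen $\eta$'s. For (i), under the hypothesis neither does, and both outer terms vanish. For (iii), only $\eta=\chi^s\otimes(\omega\otimes\omega^{-1})$ survives and supplies the one-dimensional answer via Proposition \ref{extension_PS_G}(iii), while the $\tau$-term vanishes. For (ii), the vanishing of $\Hom_{\wt{B}}(\tilde{\pi}(\chi_1,\chi_2)^{\operatorname{bc}},\tau_\chi)$ (each $\Hom_B(\pi(\chi)^{\operatorname{bc}},\eta)=0$ by irreducibility) together with the $\wt{B}$-analogue of Lemma \ref{Ext_bc_G}(ii) forces the relevant boundary maps to be injective, and the long exact sequence collapses to the claimed isomorphism with $\Ext^1_{T_2}(\chi|_{T_2},\chi|_{T_2})$.

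The main obstacle will be the bookkeeping required to verify that the isomorphism of part (ii) is indeed the one induced by the exact functor $\Ind^{\wt{G}}_{\wt{B}_2}(-\boxtimes\iota)$: this amounts to checking the naturality of the several instances of Shapiro's lemma and of the Mackey decomposition used in the reduction. A secondary technical point is the $T_2$-vanishing statement $\Ext^i_{T_2}(\chi_1,\chi_2)=0$ for distinct smooth characters, which can be verified along the lines of \cite[Lemma 4.3.10]{ord_parts_II} using the Künneth formula and the structure of $T_2$ as a product of copies of $A$.
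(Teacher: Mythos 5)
Your reductions for parts (i) and (iii) are exactly the paper's: Shapiro plus the big-cell exact sequence, decomposition of the bc-term into the sixteen $G$-side spaces $\Ext^1_B(\pi(\chi)^{\operatorname{bc}},\eta)$ with $\eta$ running over the extensions of $\chi'|_{T^{\sq}}$ to $T$, and then Lemma \ref{Ext_bc_G}(i) together with Proposition \ref{extension_PS_G}. That part of the plan is sound.

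For part (ii), however, there is a genuine gap. You claim that ``the $\wt{B}$-analogue of Lemma \ref{Ext_bc_G}(ii) forces the relevant boundary maps to be injective,'' but after decomposing, the boundary map is the direct sum over the sixteen $\eta$ extending $\chi|_{T^{\sq}}$ of maps $\Ext^1_B(\pi(\chi)^{\operatorname{bc}},\eta)\to \Ext^2_B(\chi,\eta)$, and Lemma \ref{Ext_bc_G}(ii) only covers the summands with $\eta=\chi$ or $\eta\neq\chi^s\otimes(\omega\otimes\omega^{-1})$. The problematic case is when $\chi|_{T^{\sq}}=(\chi^s\otimes(\omega\otimes\omega^{-1}))|_{T^{\sq}}$ but the chosen extension satisfies $\chi\neq\chi^s\otimes(\omega\otimes\omega^{-1})$ on $T$: then $\eta_0:=\chi^s\otimes(\omega\otimes\omega^{-1})$ is one of the sixteen characters, the summand $\Ext^1_B(\pi(\chi)^{\operatorname{bc}},\eta_0)\cong\Ext^1_G(\pi(\chi),\pi(\eta_0))$ is $1$-dimensional by Lemma \ref{Ext_bc_G}(i) and Proposition \ref{extension_PS_G}(iii), while its target $\Ext^2_B(\chi,\eta_0)=\Ext^2_T(\chi,\eta_0)$ vanishes, so the boundary map is zero on a nonzero summand and injectivity cannot be deduced for that choice of extension. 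The paper's proof avoids this by first observing that the statement depends only on $\chi|_{T^{\sq}}$ and then invoking Lemma \ref{good_extension} to replace $\chi$ by an extension satisfying $\chi=\chi^s\otimes(\omega\otimes\omega^{-1})$ on all of $T$, so that only the summand $\eta=\chi$ survives and Lemma \ref{Ext_bc_G}(ii) applies to exactly the map (\ref{injective?}); the remark following the proposition stresses that this normalization is the crucial step. Your proposal never addresses this, and the naturality bookkeeping you identify as the main obstacle is in fact the benign part.
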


\begin{proof} Assume first that $\chi'|_{T^{\sq}}\neq \chi|_{T^{\sq}}$, i.e.\ $\chi'\neq \chi[i,j](a,b)$ for any $i,j,a,b\in \bZ/2\bZ$. Writing $\tau_{\chi}|_{B_2}=\oplus_{i,j\in \bZ/2\bZ} \chi[i,j]$, we deduce that
	\[
	\Ext^1_{\wt{G}}(\tilde{\pi}(\chi),\tilde{\pi}(\chi'))\cong \Ext^1_{B_2}(\tilde{\pi}(\chi)^{\operatorname{bc}}|_{B_2},\chi')
	\]
	just as in the proof of Lemma \ref{Ext_bc_G} (i). Restricting the isomorphism (\ref{connection}) to $B_2$ and then moving the resultings twists (obtained from Mackey's decomposition and Lemma \ref{conj_ij}) over to $\chi'$, Frobenius reciprocity implies that the above Ext-space is isomorphic to
	\[
	\bigoplus_{i,j\in \bZ/2\bZ} \Ext^1_{B}(\pi(\chi)^{\operatorname{bc}},\Ind^{B}_{B_2}(\chi'[i,j]))=\bigoplus_{i,j,a,b\in \bZ/2\bZ} \Ext^1_{B}(\pi(\chi)^{\operatorname{bc}},\chi'[i,j](a,b)).
	\]	
	By our assumption and Lemma \ref{Ext_bc_G} (i), this is isomorphic to
	\[
	\bigoplus_{i,j,a,b\in \bZ/2\bZ} \Ext^1_G(\pi(\chi),\pi(\chi'[i,j](a,b))),
	\]
	which by Proposition \ref{extension_PS_G} is non-zero (and then $1$-dimensional) if and only if $\chi'[i,j](a,b)=\chi^s\otimes (\omega\otimes \omega^{-1})$ for some (necessarily unique) quadruple $(i,j,a,b)$ of elements in $\bZ/2\bZ$, or equivalently if and only if $\chi'|_{T^{\sq}}=\chi^s\otimes(\omega\otimes \omega^{-1})|_{T^{\sq}}$. This proves (i) and (iii).
	
	To prove (ii), note that the statement only depends on the restriction of $\chi$ to $T^{\sq}$. If $\chi=\chi^s \otimes (\omega\otimes \omega^{-1})$ as characters of $T^{\sq}$, Lemma \ref{good_extension} allows us to assume that this equality holds on the full group $T$, which we will do. Using the evident analogue of the long exact sequence (\ref{les}), we reduce to showing the injectivity of the map
	\begin{equation}\label{injective?}
		\Ext^1_{B_2}(\tilde{\pi}(\chi)^{\operatorname{bc}}|_{B_2},\chi)\to \Ext^2_{B_2}(\tau_{\chi}|_{T_2},\chi).
	\end{equation}
	As before, the left-hand side is equal to $\oplus_{i,j,a,b\in \bZ/2\bZ} 	\Ext^1_B(\pi(\chi)^{\operatorname{bc}},\chi[i,j](a,b))$. The right-hand side is similarly equal to $\oplus_{i,j,a,b\in \bZ/2\bZ} \Ext^2_{B}(\chi,\chi[i,j](a,b))$ and, by \cite[Lemma 4.3.7, Lemma 4.3.10]{ord_parts_II}, this equals $\Ext^2_{B}(\chi,\chi)$. By Proposition \ref{extension_PS_G}, the extension space $\Ext^1_B(\pi(\chi)^{\operatorname{bc}},\chi[i,j](a,b))$ is trivial unless\footnote{Here we are using that we chose $\chi$ so that $\chi=\chi^s\otimes (\omega\otimes \omega^{-1})$ if this happens to be true on $T^{\sq}$.} $i=j=a=b=0$ in $\bZ/2\bZ$. Thus, the injectivity of (\ref{injective?}) is equivalent to the injectivity of the connecting homomorphism
	\[
	\Ext^1_B(\pi(\chi)^{\operatorname{bc}},\chi)\to \Ext^2_B(\chi,\chi).
	\]
	This is the content of Lemma \ref{Ext_bc_G} (ii).
\end{proof}

\begin{remark}\label{Iwahori_ext}
	Twisting induces an isomorphism $\Ext^1_{T_2}(\chi|_{T_2},\chi|_{T_2})\cong \Ext^1_{T_2}(\mathbf{1},\mathbf{1})$, which in turn can be identified with $\Hom^{\operatorname{cts}}(T_2,\ol{\bF}_p)$. The latter contains the subspace of unramified homomorphisms, i.e.\ those trivial on $T_2\cap K$, and the resulting inclusion induces a commutative diagram
	\[
	\xymatrix{
		\Hom^{\operatorname{cts}}(T_2,k) \ar[r]^{\cong} & \Ext^1_{T_2}(\chi|_{T_2},\chi|_{T_2})\ar[r]^{\cong} & \Ext^1_{\wt{G}}(\tilde{\pi}(\chi),\tilde{\pi}(\chi))\\
		\Hom^{\operatorname{unr}}(T_2,k) \ar@{^{(}->}[u]\ar[rr]_{\cong} && \Ext^1_{\cH}(\cI(\tilde{\pi}(\chi)),\cI(\tilde{\pi}(\chi))). \ar@{^{(}->}[u]_{\cT}
	}
	\]
	Looking only at those continuous homomorphisms killing the scalar matrix $p^2$ yields the isomorphism $\Hom_{p^2=0}^{\operatorname{unr}}(T_2,k) \cong \Ext^1_{\cH_{p^2=1}}(\cI(\tilde{\pi}(\chi)),\cI(\tilde{\pi}(\chi)))$. In particular, the latter Ext-space is $1$-dimensional, cf.\ Proposition \ref{self_ext_pro_hecke_reg}.
\end{remark}

\begin{remark}
	Consider the diagram induced by evaluation at the identity,
	\[
	\xymatrix{
		0\ar[r] & \tilde{\pi}(\chi)^{\operatorname{bc}}\ar[r] & \tilde{\pi}(\chi) \ar[r] & \tau_{\chi}\ar[r] & 0\\
		0\ar[r] & \pi(\chi)^{\operatorname{bc}}\ar[r] \ar[u]& \pi(\chi)\ar[r] & \chi\ar[r] \ar[u] & 0,
	}
	\]
	where the left vertical $B_2$-equivariant map is given by sending $f\in \pi(\chi)^{\operatorname{bc}}$ to $\tilde{f}\in \tilde{\pi}(\chi)^{\operatorname{bc}}$ uniquely determined by $\tilde{f}\left(\left(\begin{mat}
		0 & 1\\1 & x
	\end{mat},1\right)\right)=f\left(\begin{mat}
		0 & 1\\1 & x
	\end{mat}\right)f_{0,0}$ for all $x\in \bQ_p$, and the right vertical $B_2$-equivariant map is given by mapping $1\in \chi$ to the standard basis vector $f_{0,0}\in \tau_{\chi}$. By the isomorphism (\ref{connection}) and the very definition of $\tau_{\chi}$, the $\wt{B}$-equivariant maps obtained via Frobenius reciprocity from the outer vertical arrows are isomorphisms. It is tempting to believe that this is also true for $\tilde{\pi}(\chi)$ in a compatible way, i.e.\ such that the top sequence in the above diagram is obtained from the lower one by applying the functor $\Ind^{\wt{B}}_{\wt{B}_2}((-)\boxtimes \iota)$. However, one may check that this diagram cannot be filled in, which also seems to explain why the application of Lemma \ref{good_extension} in the proof of the previous proposition is crucial.
\end{remark}

\subsection{Supersingular representations}
\begin{definition}
	A \textit{genuine supersingular representation} of $\wt{G}$ is a smooth irreducible genuine $k$-representation of $\wt{G}$, which is not isomorphic to a subquotient of a principal series. 
\end{definition}

\begin{remark}\label{ss=notps}
	By Proposition \ref{PS_irred}, every genuine principal series is irreducible, so removing ``a subquotient of'' gives an equivalent definition.
\end{remark}

The goal of this section is to prove that the adjoint pair $(\cT,\cI)$ induces a bijection between the set of isomorphism classes of supersingular $\cH$-modules and the set of isomorphism classes of \textit{admissible} genuine supersingular representations. We will later show that any smooth irreducible representation of $\wt{G}$ is already admissible, see Theorem \ref{fg+adm=fl}. For showing that $\cT(M)$ is irreducible, for a supersingular Hecke module $M$, we will use a weight-cycling argument, that has already been used for supersingular representations of $G$, see \cite[§4.1]{Paskunas_ext_ss} or \cite[§10]{herzig}.

\subsubsection{Basic lemma}\label{basic_lemma_section}
In a previous draft of this work we worked out the weight-cycling in the specific case of interest (in dimension four). However, in a lecture \cite{herzig_notes} of Florian Herzig at the Spring School on the mod-$p$ Langlands Correspondence, which took place in Essen (online) in 2021, he formulated it more generally (in dimension $n\geq 1$) and called the resulting statement ``Basic Lemma''. In this subsection, we simply reproduce his result.\\
	
\textbf{Setup.} Let $\pi$ be a smooth $k$-representation of the mirabolic subgroup $P$. Suppose that we are given $k$-linearly independent $\Gamma$-eigenvectors $v_1,\ldots,v_n\in \pi[X]$ killed by $X$, satisfying
\begin{equation}\label{cycle_equation}
	X^{s_i}F(v_i)=c_i v_{i+1} \text{ for some $s_i\in \bZ_{\geq 0}$ and $c_i\in k^{\times}$}
\end{equation}
for all $1\leq i\leq n$.
Here and in what follows we understand the subscript $i$ to be the unique element in $\{1,\ldots,n\}$ congruent to $i$ modulo $n$. We define
\[
M=\bigoplus_{i=1}^n kv_i
\]
and let
\[
\pi_M=k\llbracket X\rrbracket[F]M \subset \pi.
\]
be the $k\llbracket X\rrbracket[F]$-submodule generated by $M$ in $\pi$. Finally, for $1\leq i\leq n$, we put
\[
\pi_i=k\llbracket X\rrbracket[F^n]v_i\subset \pi_M.
\]

We wish to prove the following result.

\begin{lem}[{Basic Lemma}]\label{basic_lemma}
	\begin{enumerate}
		\item[{\rm (i)}] For each $1\leq i\leq n$, the subspace $\pi_i\subset \pi$ is $\Gamma$-stable.
		\item[{\rm (ii)}] The subspace $\pi_M\subset \pi$ is $P^+$-stable.
		\item[{\rm (iii)}] For each $1\leq i\leq n$, $\pi_i[X]=kv_i$.
		\item[{\rm (iv)}] As $k\llbracket X\rrbracket[F^n,\Gamma]$-modules, $\pi_M=\bigoplus_{i=1}^n \pi_i$.
		\item[{\rm (v)}] If $s_i=0$ for all $1\leq i\leq n$, then $\pi_i=kv_i$.
		\item[{\rm (vi)}] If $s_j>0$ for some $1\leq j\leq n$, then $\pi_i\cong \mathscr{C}^{\infty}(U\cap I,k)$ as $U\cap I$-representations and the inclusion $kv_i\subset \pi_i$ is an injective envelope in the category of smooth $U\cap I$-representations, for all $1\leq i\leq n$.
		\item[{\rm (vii)}] Assume that the following condition is satisfied: If for some $1\leq i\neq j\leq n$, the vectors $v_i,v_j$ have the same $\Gamma$-eigencharacter, then $s_{i}\neq s_j$. Then $\pi_M$ is an irreducible $P^+$-representation.
	\end{enumerate}
\end{lem}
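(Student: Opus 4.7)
The plan is to handle (i)--(ii) by direct verification, reduce (iii), (v), (vi) to a single cycling identity encoding how $F^{jn}v_i$ relates to $v_i$ up to a specific power of $X$, and then bootstrap (iv) and (vii) from these. For (i) and (ii), since $\gamma$ commutes with $F$ and $\gamma X\gamma^{-1}=(X+1)^\gamma-1\in X\,k\llbracket X\rrbracket$, applying $\gamma$ to an arbitrary element $\sum_j f_j(X)F^{jn}v_i$ of $\pi_i$ (resp.~$\sum_{i,j}f_{i,j}(X)F^jv_i$ of $\pi_M$) returns an element of the same form via $\gamma v_i=\chi_i(\gamma)v_i$; the $k\llbracket X\rrbracket[F]$-stability of $\pi_M$ is built-in.

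The engine is a cycling identity. Using $FX=X^pF$, an induction on $k$ yields
\[
X^{E_{i,k}}F^kv_i=D_{i,k}v_{i+k},\qquad E_{i,k}=\sum_{\ell=0}^{k-1}p^{k-1-\ell}s_{i+\ell},\quad D_{i,k}=\prod_{\ell=0}^{k-1}c_{i+\ell},
\]
which at $k=n$ specializes to $X^{S_i}F^nv_i=C_iv_i$ with $S_i:=E_{i,n}$ and $C_i:=D_{i,n}\in k^{\times}$, and iterates to $X^{T_j}F^{jn}v_i=C_i^{\,j}v_i$ with $T_j=S_i(p^{jn}-1)/(p^n-1)$. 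Combined with $Xv_i=0$, this will force the $k\llbracket X\rrbracket$-annihilator of $F^{jn}v_i$ to be exactly $(X^{T_j+1})$, so the compatible family $\phi_j\colon k\llbracket X\rrbracket/(X^{T_j+1})\to\pi_i$, $1\mapsto C_i^{-j}F^{jn}v_i$ (with transition maps multiplication by $X^{T_{j+1}-T_j}=X^{S_ip^{jn}}$), will assemble into an isomorphism of $k\llbracket X\rrbracket$-modules
\[
\pi_i\;\cong\;\varinjlim_j k\llbracket X\rrbracket/(X^{T_j+1}).
\]
Part (iii) drops out, (v) is the case $S_i=0$ (where the limit collapses to $kv_i$), and for (vi), when $S_i>0$, the target is a divisible $X$-torsion $k\llbracket X\rrbracket$-module with $1$-dimensional socle $kv_i$, hence the injective envelope $\mathscr{C}^\infty(U\cap I,k)$ of the trivial character.

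For (iv), I would apply $F^r$ to the analogous identity $X^{E_{i+r,n-r}}F^{n-r}v_{i+r}=D_{i+r,n-r}v_i$ to obtain $F^rv_i=D_{i+r,n-r}^{-1}X^{p^rE_{i+r,n-r}}F^nv_{i+r}\in\pi_{i+r\bmod n}$ for $0<r<n$, proving $\pi_M=\sum_i\pi_i$; directness then follows from (iii) because a relation $\sum_iw_i=0$ with $w_i\in\pi_i$ not all zero, multiplied by $X^{m-1}$ for $m:=\max_i\{\text{annihilator exponent of }w_i\}$, would produce a non-trivial $k$-linear dependence among the $v_i$.

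Finally, for (vii), any nonzero $P^+$-subrep $\pi_M'\subset\pi_M$ contains nonzero $X$-torsion (smoothness) and hence intersects $\pi_M[X]=M$ nontrivially by (iv) and (iii). Decomposing by $\Gamma$-character, I may assume $0\ne w=\sum_{i\in J}a_iv_i\in\pi_M'$ with $J$ the indices sharing a common $\Gamma$-character; by hypothesis the $s_i$ for $i\in J$ are pairwise distinct. Taking $i^{*}\in J$ that maximizes $s_i$ on $\{i\in J:a_i\ne 0\}$,
\[
X^{s_{i^{*}}}F\cdot w=\sum_{i\in J}a_iX^{s_{i^{*}}-s_i}c_iv_{i+1}=a_{i^{*}}c_{i^{*}}v_{i^{*}+1},
\]
the terms with $s_i<s_{i^{*}}$ vanishing via $Xv_{i+1}=0$. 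Thus $v_{i^{*}+1}\in\pi_M'$, and iterating the relations $X^{s_j}Fv_j=c_jv_{j+1}$ around the cycle recovers all of $M$, whence $\pi_M=k\llbracket X\rrbracket[F]M\subset\pi_M'$. The hard part throughout will be the careful bookkeeping in the cycling identity and pinning down the annihilator of $F^{jn}v_i$ as exactly $(X^{T_j+1})$ in (iii); once that is in place, everything else is relatively mechanical.
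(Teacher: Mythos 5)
Your proposal is correct and follows essentially the same route as the paper: your cycling identity is exactly the paper's preparatory lemma, and the identification $\pi_i\cong\varinjlim_j k\llbracket X\rrbracket/(X^{T_j+1})$ with its $X$-torsion analysis is the paper's argument for (iii)--(vi), with (iv) and (vii) handled by the same linear-independence and $\Gamma$-eigencharacter considerations. The only differences are cosmetic: in (vi) you invoke divisibility of the colimit over the DVR $k\llbracket X\rrbracket$ instead of the paper's Pontryagin-dual computation, and in (vii) you apply $X^{s_{i^*}}F$ for the maximal $s_{i^*}$ in one step rather than the paper's two-step use of the second-largest exponent.
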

	
This is a more detailed version of the Basic Lemma in Herzig's lecture alluded to above. The last part on the irreducibility was for instance not contained in Herzig's version. However, the additional details are essentially just a byproduct of the proof of the original version. Before jumping into the proof, we make some preparations.
	
\begin{definition}
	For $1\leq i\leq n$, define 
	\[
	e(i)=p^{n-1}s_i+p^{n-2}s_{i+1} + \ldots + p^2 s_{i+n-3}+ps_{i+n-2}+s_{i+n-1}
	\]
	and, if $m\in \bZ_{\geq 1}$, put
	\[
	e(i)_m=\sum_{j=0}^{m-1} p^{jn}e(i).
	\]
	We let
	\[
	c=\prod_{i=1}^n c_i
	\]
	be the product of the scalars appearing in (\ref{cycle_equation}).
\end{definition}

\begin{lem}\label{prep_lem} Let $1\leq i\leq n$. The following hold true:
	\begin{enumerate}
		\item[{\rm (i)}] $X^{p^{n-1}s_i+p^{n-2}s_{i+1} + \ldots + p^2 s_{i+n-3}+ps_{i+n-2}}F^{n}(v_i)=\left(\prod_{j\neq i-1} c_j\right) F(v_{i-1})$;
		\item[{\rm (ii)}] for all $m\in \bZ_{\geq 1}$, $X^{e(i)_m}F^{nm}(v_i)=c^mv_i$.
	\end{enumerate}
\end{lem}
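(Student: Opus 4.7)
The plan is to prove both parts by systematically exploiting the single cycle equation $X^{s_i}F(v_i)=c_iv_{i+1}$ together with the commutation $FX^a = X^{pa}F$ (which comes from $F$ acting on $k\llbracket X\rrbracket$ by $X\mapsto X^p$). Part (i) is essentially a bookkeeping computation: iterate the cycle around the loop $v_i\to v_{i+1}\to\cdots\to v_{i-1}$, then transport one more application of $F$. Part (ii) is then an easy consequence of (i) plus an induction on $m$.

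First I would iterate the cycle equation $n-1$ times. For $k=0,1,\dots,n-1$, write $A_k = X^{s_{i+k}}F$. Applying $A_0,A_1,\dots,A_{n-2}$ successively to $v_i$ yields, by direct induction on $k$ using $X^{s_{i+k}}F(v_{i+k})=c_{i+k}v_{i+k+1}$, the identity
\[
(A_{n-2}\cdots A_1A_0)(v_i) = \Big(\prod_{k=0}^{n-2}c_{i+k}\Big)v_{i+n-1} = \Big(\prod_{j\neq i-1}c_j\Big)v_{i-1}.
\]
Next I would convert the composite operator $A_{n-2}\cdots A_0$ into the form $X^? F^{n-1}$ by moving every $X$-power to the left through each $F$ via $FX^a = X^{pa}F$. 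A short induction on $k$ gives
\[
A_{k-1}\cdots A_0 = X^{s_{i+k-1}+ps_{i+k-2}+\cdots+p^{k-1}s_i}F^{k},
\]
and taking $k=n-1$ produces $X^{s_{i+n-2}+ps_{i+n-3}+\cdots+p^{n-2}s_i}F^{n-1}(v_i) = (\prod_{j\neq i-1}c_j)v_{i-1}$. Applying $F$ once more to both sides, and using $FX^a = X^{pa}F$ one final time to shift the exponent, turns this into exactly the equality claimed in (i).

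For (ii), the $m=1$ case follows from (i) by multiplying both sides by $X^{s_{i-1}}$: the right-hand side becomes $(\prod_{j\neq i-1}c_j)\,X^{s_{i-1}}F(v_{i-1}) = (\prod_{j\neq i-1}c_j)\,c_{i-1}\,v_i = c\,v_i$, and the new $X$-exponent on the left is $s_{i-1}+ps_{i+n-2}+\cdots+p^{n-1}s_i$, which after reindexing $s_{i-1}=s_{i+n-1}$ is precisely $e(i)=e(i)_1$. For the inductive step, assume $X^{e(i)_m}F^{nm}(v_i) = c^mv_i$. Then
\[
X^{e(i)_{m+1}}F^{n(m+1)}(v_i) = X^{e(i)_m}\cdot X^{p^{nm}e(i)}F^{nm}\cdot F^n(v_i) = X^{e(i)_m}F^{nm}\bigl(X^{e(i)}F^n(v_i)\bigr),
\]
using $X^{p^{nm}e(i)}F^{nm} = F^{nm}X^{e(i)}$. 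The inner expression equals $c v_i$ by the $m=1$ case, and applying the induction hypothesis then gives $c^{m+1}v_i$.

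The only non-trivial part is the bookkeeping of $X$-exponents when commuting past powers of $F$; once one keeps track of the geometric series $1,p,p^2,\dots$ that arises, both assertions reduce to mechanical verification. No obstacle is expected beyond careful indexing modulo $n$.
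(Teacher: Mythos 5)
Your proof is correct and follows essentially the same route as the paper: part (i) by iterating the cycle equation $X^{s_i}F(v_i)=c_iv_{i+1}$ around the loop while commuting $X$-powers past $F$ via $FX^a=X^{pa}F$, and part (ii) by applying $X^{s_{i-1}}$ to (i) for the case $m=1$ and then inducting on $m$. The only difference is that you spell out the exponent bookkeeping that the paper leaves implicit; the indexing modulo $n$ and the identity $e(i)_{m+1}=e(i)_m+p^{nm}e(i)$ are handled correctly.
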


\begin{proof}
	Part (i) follows from the relation $FX=X^pF$ and inductively applying the equations (\ref{cycle_equation}) starting with $X^{s_i}F(v_i)=c_i v_{i+1}$. The case of $m=1$ in part (ii) now follows from this by applying $X^{s_{i-1}}$ to (i) and using (\ref{cycle_equation}). Inductively applying the resulting relation gives the assertion for an arbitrary $m\in \bZ_{\geq 1}$.
\end{proof}

\begin{proof}[Proof of the Basic Lemma \ref{basic_lemma}]
	Part (i) and (ii) follow from the relations $\gamma F=F\gamma$ and $\gamma X=((X+1)^{\gamma}-1)\gamma$ for all $\gamma\in \Gamma$. By Lemma \ref{prep_lem} (i), $F\pi_{i-1}\subset \pi_i$ and thus $\pi_M=\sum_i \pi_i$. In particular, part (iv) will follow from part (iii) as the vectors $v_1,\ldots,v_n$ are linearly independent. 
	
	Fix $1\leq i\leq n$. For $m\in \bZ_{\geq 1}$, let $\pi_{i,m}=k\llbracket X\rrbracket F^{nm}(v_i)$ be the $k\llbracket X\rrbracket$-submodule of $\pi_i$ generated $F^{nm}(v_i)$. Applying $F^{nm}$ to Lemma \ref{prep_lem} (ii) (with $m=1$), shows that $kv_i\subset \pi_{i,m}\subset \pi_{i,m+1}$. By Lemma \ref{prep_lem} (ii), the $k\llbracket X\rrbracket$-linear map
	\begin{align}\label{iso_pi_{i,m}}
		k\llbracket X\rrbracket/(X^{e(i)_{m}+1}) & \xrightarrow{\cong} \pi_{i,m}\\
		f(X)& \mapsto c^{-m}f(X)F^{nm}(v_i)\nonumber
	\end{align}
	is a well-defined isomorphism. Comparing $X$-torsion shows that $\pi_{i,m}[X]$ is $1$-dimensional, necessarily equal to $kv_i$. This proves (iii). The isomorphisms (\ref{iso_pi_{i,m}}) fit into a commutative diagram
	\begin{equation}\label{com_square}
		\begin{gathered}
			\xymatrix@C+2pc{
				\pi_{i,m} \ar@{^{(}->}[r] & \pi_{i,m+1}\\
				k\llbracket X\rrbracket/(X^{e(i)_m+1}) \ar[u]^{\cong} \ar@{^{(}->}[r]^{\cdot X^{p^{nm}e(i)}} & k\llbracket X\rrbracket /(X^{e(i+1)_{m+1}+1}) \ar[u]_{\cong}.
			}	
		\end{gathered}
	\end{equation}
	For $d\in \bZ_{\geq 1}$, we identify the $k$-linear dual $\left(k\llbracket X\rrbracket/(X^{d+1})\right)^{\vee}$ with $k\llbracket X\rrbracket/(X^{d+1})$ via the $k\llbracket X\rrbracket$-linear map corresponding to the projection $\operatorname{pr}_d\in \left(k\llbracket X\rrbracket/(X^{d+1})\right)^{\vee}$ defined by $\operatorname{pr}_d(X^{j})=\delta_{jd}$, the Kronecker delta, for $0\leq j\leq d$. Under these identifications, the Pontryagin dual of the lower horizontal map is simply the canonical projection. Passing to the inverse limit, we obtain
	\[
	\pi_i^{\vee}=(\varinjlim_{m\geq 1}\pi_{i,m})^{\vee} =\varprojlim_{m\geq 1} \pi_{i,m}^{\vee}\cong \varprojlim_{m\geq 1} k\llbracket X\rrbracket/(X^{e(i)_m+1}).
	\]
	If $s_j=0$ for all $1\leq j\leq n$, then $e(i)_m=0$, so we obtain $\pi_i=kv_i$, which proves (v). If however $s_j$ is non-zero for some $1\leq j\leq n$, then $e(i)_m$ tends to infinity as $m\to \infty$. In this case we therefore obtain $\pi_i^{\vee}\cong k\llbracket X\rrbracket$ as $k\llbracket X\rrbracket$-modules, or equivalently $\pi_i\cong \mathscr{C}^{\infty}(U\cap I,k)$ as smooth $U\cap I$-representations. In particular, $\pi_{i}$ is injective in the category of smooth $U\cap I$-modules, and since $\pi_i[X]=kv_i$, we deduce that the inclusion $kv_i\subset \pi_i$ is an injective envelope proving part (vi).
	
	Assume now that the condition formulated in (vii) holds.
	Let $\tau\subset \pi_M$ be a non-zero $P^+$-subrepresentation (or equivalently a $k\llbracket X\rrbracket[F,\Gamma]$-submodule). Since $\pi_M[X]=\oplus_{i=1}^n kv_i$ is a $\Gamma$-stable subspace, there exists a subset $\{i_1,\ldots,i_r\}\subset \{1,\ldots,n\}$, for some $r\leq n$, such that
	\[
	\sum_{j=1}^r a_{i_j} v_{i_j}\in \tau \text{ for some $a_{i_j}\in k^{\times}$,}
	\]
	where the $v_{i_j}$, for $1\leq j\leq r$, have the same $\Gamma$-eigencharacter. After reordering we may assume that $s_{i_{j+1}}\geq s_{i_j}$ for all $1\leq j\leq r-1$. Applying $X^{1+s_{i_{r-1}}}F(-)$ will kill $v_{i_j}$ for $1\leq j\leq r-1$. Here we are using equation (\ref{cycle_equation}) and the fact the $Xv_{j}=0$ for all $1\leq j\leq n$. Thus, $a_{i_r} X^{1+s_{i_{r-1}}}F(v_{i_r})\in \tau$. Since $s_{i_{r-1}}\neq s_{i_r}$ (and so $s_{i_{r-1}}<s_{i_r}$) by our assumption,  applying $X^{s_{i_r}-s_{i_{r-1}}-1}$ and using equation (\ref{cycle_equation}) once again, we see that $v_{i_r + 1}\in \tau$. Another application of the same equation now shows that $\tau$ contains $v_j$ for all $1\leq j\leq n$ and must therefore be equal to $\pi_M$.
\end{proof}

\begin{cor}\label{basic_lemma_application}
	Assume that the following condition is satisfied: If for some $1\leq i\neq j\leq n$, the vectors $v_i,v_j$ have the same $\Gamma$-eigencharacter, then $s_{i}\neq s_j$. Then the subspace
	\[
	\pi_M[F^{-1}]=\varinjlim_{m\geq 1} F^{-m}\pi_M \subset \pi
	\]
	is $P$-stable and -irreducible.
\end{cor}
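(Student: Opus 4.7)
The plan is to deduce both claims from the Basic Lemma (parts (ii) and (vii)) by a soft argument that exploits the fact that the group $P$ is generated by $P^+$ together with the element $F^{-1}$.

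For $P$-stability, I would decompose the action into generators. Since $\pi_M[F^{-1}]=\varinjlim_{m\geq 1} F^{-m}\pi_M$ is by definition the union of an increasing sequence, it is automatically stable under $F$ and $F^{-1}$. The group $\Gamma$ commutes with $F$ and preserves $\pi_M$ (by the Basic Lemma (ii)), so it preserves each $F^{-m}\pi_M$. The only nontrivial check is stability under $U$. Here I would use the conjugation identity $F^m (U\cap I) F^{-m}\supset U\cap I$ reversed: given $u\in U$, we have $F^m u F^{-m}\in U\cap I$ for all sufficiently large $m\geq 0$. Hence for $v\in F^{-m}\pi_M$, writing $v=F^{-m}w$ with $w\in \pi_M$, one has
\[
uv=uF^{-m}w=F^{-m}(F^m u F^{-m})w\in F^{-m}\pi_M,
\]
since $\pi_M$ is $(U\cap I)$-stable by the Basic Lemma (ii). Taking the union over $m$ gives $U$-stability of $\pi_M[F^{-1}]$, and together with $\Gamma$- and $F^{\pm 1}$-stability this proves $P$-stability.

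For irreducibility, let $0\neq \tau\subset \pi_M[F^{-1}]$ be a $P$-subrepresentation. Pick any non-zero $v\in \tau$; then $v\in F^{-m}\pi_M$ for some $m\geq 0$, so $F^m v\in \pi_M$ and, by $F$-stability of $\tau$, also $F^m v\in \tau$. Thus $\tau\cap \pi_M$ is a non-zero subspace. Since both $\tau$ and $\pi_M$ are $P^+$-stable, so is their intersection, and the Basic Lemma (vii) forces $\tau\cap \pi_M=\pi_M$, i.e.\ $\pi_M\subset \tau$. Applying $F^{-m}$ for all $m\geq 0$ and using that $\tau$ is $F^{-1}$-stable, we conclude $\pi_M[F^{-1}]\subset \tau$, hence equality.

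The argument is essentially formal given the Basic Lemma; the only subtlety is the conjugation step for $U$-stability, but this is immediate from $FuF^{-1}=u^p$ on $U\cap I\cong \bZ_p$ read in reverse. No further obstacles are anticipated.
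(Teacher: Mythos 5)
Your argument is correct and is essentially the paper's own proof written out in detail: the paper simply invokes the $P^+$-stability and irreducibility of $\pi_M$ from the Basic Lemma together with the equality $P=F^{-\mathbf{N}}P^+$, which is exactly the generator/conjugation bookkeeping you carry out (your $U$- and $\Gamma$-steps amount to $F^m P^+ F^{-m}\subset P^+$ read in reverse, and your intersection argument is the standard consequence, using that $F$ acts invertibly since $F^{-1}\in P$). No gaps.
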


\begin{proof}
	By the Basic Lemma \ref{basic_lemma}, the subspace $\pi_M\subset \pi$ is $P^+$-stable and -irreducible. The equality $P=F^{-\mathbf{N}}P^+$ now implies the assertion.
\end{proof}

\subsubsection{From supersingular Hecke modules to supersingular representations} 
Let $M$ be a supersingular $\cH(O)$-module for some orbit $O$. Applying the Basic Lemma, we now prove that the $\cT(M)$ is irreducible as a $P$-representation (hence as a $\wt{G}$-representation), and that the canonical map $M\to \cI(\cT(M))$ is an isomorphism. Using Lemma \ref{twist_lemma}, we may and do assume that, in the non-square-regular case, $\mathbf{1}\in O$. We fix a character $\chi\in O$, with $\chi=\mathbf{1}$ in the non-square-regular case. By Definition \ref{def_simple_reg}, resp.\ \ref{def_simple_nreg}, we can write
\[
M=\begin{cases}
	\operatorname{SS}(\chi,z) \text{ if $\chi$ is square-regular}\\
	\operatorname{SS}(\lambda,z) \text{ if $\chi=\mathbf{1}$}
\end{cases}
\]
for some $z\in k^{\times}$ and $\lambda\in \{0,-1\}$. By Proposition \ref{simple_regular_ss}, resp.\ \ref{simple_non-regular}, we have
\begin{equation}\label{M_direct_sum}
	M=\bigoplus_{\psi\in \chi^{\tilde{\Pi}^{\bZ}}} Me_{\psi} \text{ and } \dim_{\ol{\bF}_p}Me_{\psi}=1,
\end{equation}
where $\chi^{\tilde{\Pi}^{\bZ}}$ denotes the $\Pi$-orbit of $\chi$ as introduced in Definition \ref{definition_orbit}.

It is explained in Remark \ref{T(ss)_non_zero} below that $\cT(M)$ is non-zero, so that the canonical map $M\hookrightarrow \cI\cT(M)$ is an injection, via which we will view elements of $M$ as elements of $\cT(M)$.

\begin{lem}\label{T_s=scalar}
	For each $\psi\in \chi^{\tilde{\Pi}^{\bZ}}$, the Hecke operator $T_s$ acts on $Me_{\psi}$ via a scalar. More precisely:
	\begin{enumerate}
		\item[{\rm (i)}] If $O$ is square-regular, then $T_s=0$ on $M$.
		\item[{\rm (ii)}] If $O$ is non-square-regular, then $T_s$ acts on $Me_{\psi}$ via $\lambda$ (resp. $0$) if $\psi=\psi^s$ (resp.\ $\psi\neq \psi^s$).
	\end{enumerate}
\end{lem}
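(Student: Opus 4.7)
The plan is to exploit the identity $T_s e_\chi = e_{\chi^s} T_s$ from Lemma \ref{identities_Hecke_operators_I}(iv), which shows that $T_s$ sends the one-dimensional summand $Me_\psi$ (one-dimensional by (\ref{M_direct_sum})) to $Me_{\psi^s}$, combined with the explicit description of the supersingular module $M$ obtained via the Morita equivalence. The computation then reduces, in each case, to the values of $S_{0,0}$ and $S_{0,2}$ on $Me_\mathbf{1}$ read off from Proposition \ref{simple_non-regular} (respectively, to a support/vanishing argument in the square-regular case).

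For part (i), suppose $O$ is square-regular and $\psi \in \chi^{\tilde{\Pi}^{\bZ}}$. Using the parametrization in Lemma \ref{orbit-description}(i), a direct check (combining Remark \ref{Pi_equiv_Remark} with the relations) shows that the eight-element orbit $O$ decomposes as the disjoint union of the $\Pi$-orbit of $\chi$ and of $\chi[1,0]$, and that $\psi^s$ always lies in the $\Pi$-orbit of $\chi[1,0]$, \emph{not} of $\chi$. Since $\operatorname{SS}(\chi,z)$ is built via the Morita equivalence \ref{Morita_regular} from the one-dimensional projector $\operatorname{pr}_0$ supported on the diagonal entry indexed by $\chi$, one sees that $Me_{\psi^s} = 0$. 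Hence $T_s(Me_\psi) \subset Me_{\psi^s} = 0$, proving (i).

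For part (ii), $O = O_\mathbf{1}$, and using Remark \ref{Pi_equiv_Remark} the $\Pi$-orbit is $\{\mathbf{1}, \mathbf{1}[0,1], \mathbf{1}[1,1], \mathbf{1}[1,0]\}$; since $\mathbf{1}^s = \mathbf{1}$, exactly $\mathbf{1}$ and $\mathbf{1}[1,1]$ are $s$-self-conjugate. Fix a generator $v_0 \in Me_\mathbf{1}$. When $\psi = \mathbf{1}$, the computation $v_0 T_s = v_0 e_\mathbf{1} T_s = v_0 (T_s e_\mathbf{1}) = v_0 S_{0,0} = \lambda v_0$ is immediate from Proposition \ref{simple_non-regular}. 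When $\psi = \mathbf{1}[0,1]$, take $v = v_0 T_\Pi \in Me_{\mathbf{1}[0,1]}$ (justified by Corollary \ref{TPi_iso-cInd}); then rewriting $T_\Pi T_s = T_{\Pi s \Pi} T_\Pi^{-1}$ using Lemma \ref{identities_Hecke_operators_II}(vii) and (ix), and noting that $v_0 T_{\Pi s \Pi} = v_0 S_{0,2} = 0$ (again Proposition \ref{simple_non-regular}), gives $v T_s = 0$. The case $\psi = \mathbf{1}[1,0]$ is obtained by replacing $T_\Pi$ with $T_\Pi^{-1}$, and the self-conjugate case $\psi = \mathbf{1}[1,1]$ is reduced to the case $\psi = \mathbf{1}$ by writing a generator as $v_0 T_\Pi^2$ and applying Lemma \ref{identities_Hecke_operators_II}(iv) (the sign $(-1)^{(p-1)/2}$ it produces gets absorbed into the choice of basis vector). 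The main bookkeeping obstacle is tracking in each step that the intermediate Hecke products genuinely project into $e_\mathbf{1} \cH e_\mathbf{1} = \cH(\mathbf{1})$, so that the vanishing relations of Proposition \ref{simple_non-regular} apply.
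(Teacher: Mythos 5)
Your overall route is the same as the paper's: part (i) via $e_\psi T_s = T_s e_{\psi^s}$ and the fact that $\psi^s$ lands in the other $\Pi$-orbit, and part (ii) by conjugating with powers of $T_\Pi$ so that everything is read off from the values of $S_{0,0}$ and $S_{0,2}$ on $Me_{\mathbf{1}}$. Part (i), the case $\psi=\mathbf{1}$, and the case $\psi=\mathbf{1}[0,1]$ are fine (for $\psi=\mathbf{1}[1,0]$ your ``replace $T_\Pi$ by $T_\Pi^{-1}$'' does work, but it needs the small extra identification $e_{\mathbf{1}}T_{\Pi^{-1}s\Pi^{-1}}e_{\mathbf{1}}=\pm T_{2,2}^{-1}S_{0,2}$, which kills $Me_{\mathbf{1}}$; alternatively reduce $\mathbf{1}[1,0]$ to $\mathbf{1}[0,1]$ by $T_\Pi^2$, where the sign from Lemma \ref{identities_Hecke_operators_II}(iv) is harmless because the eigenvalue there is $0$).

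The genuine problem is your treatment of $\psi=\mathbf{1}[1,1]$. The scalar by which $T_s$ acts on the one-dimensional space $Me_{\mathbf{1}[1,1]}$ is an eigenvalue, so it is independent of the choice of basis vector; a sign can never be ``absorbed into the choice of basis vector.'' What your computation actually shows is the following: with $w=v_0T_\Pi^2$ one has $wT_s=v_0T_\Pi^2T_s=(-1)^{\frac{p-1}{2}}v_0T_sT_\Pi^2=(-1)^{\frac{p-1}{2}}\lambda\, w$, i.e.\ $T_s$ acts on $Me_{\mathbf{1}[1,1]}$ by $(-1)^{\frac{p-1}{2}}\lambda$. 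This coincides with $\lambda$ when $\lambda=0$ or $p\equiv 1 \bmod 4$, but for $\lambda=-1$ and $p\equiv 3\bmod 4$ it is $+1$, so your argument as written cannot yield the scalar $\lambda$ claimed in the statement for this $\psi$. To be fair, the paper's own proof performs exactly the same $T_\Pi^2$-reduction without commenting on this sign, so you have hit a real subtlety of the lemma rather than invented one; note, however, that in the sequel (e.g.\ Corollary \ref{generates_weight}) only the scalar on $Me_{\mathbf{1}}$ and the vanishing or non-vanishing of the scalar on the other $Me_\psi$ are ever used, and the weight $\sigma(\mathbf{1}[1,1])$ is handled there by the $\tilde{\Pi}^2$-twist of Lemma \ref{Ztilde_action}, not by this eigenvalue. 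So either record the eigenvalue on $Me_{\mathbf{1}[1,1]}$ honestly as $(-1)^{\frac{p-1}{2}}\lambda$, or restrict the precise-scalar claim to $\psi=\mathbf{1}$; in any case delete the ``absorbed into the basis vector'' justification, which is not valid.
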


\begin{proof}
	The decomposition (\ref{M_direct_sum}) together with the relation $e_{\psi}T_s=T_se_{\psi^s}$ of Lemma \ref{identities_Hecke_operators_I} (iv) already implies (i). For (ii), note that by applying the element $T_{\Pi}^2$ and using Lemma \ref{identities_Hecke_operators_II} (iv) and (v), it suffices to treat the characters $\psi=\mathbf{1},\mathbf{1}[0,1]$. Using part (v) once again and applying the invertible element $T_{\Pi}$ from the right, the vanishing of $Me_{\mathbf{1}[0,1]}T_s=Me_{\mathbf{1}}T_{\Pi}T_s$ is equivalent to the vanishing of $Me_{\mathbf{1}}T_{\Pi}T_sT_{\Pi}$, which by parts (vi) and (ix) of the cited lemma means that $T_{\Pi s\Pi}$, or equivalently $S_{0,2}\in \cH(\mathbf{1})$, kills $Me_{\mathbf{1}}$. This holds true by the very definition of $M$, which also says that $T_s$, or equivalently $S_{0,0}\in \cH(\mathbf{1})$, acts on $Me_{\mathbf{1}}$ by the scalar $\lambda$.
\end{proof}

Recall from Definition \ref{ss_weights} that to a character $\psi\in \Hhat$ we attached the semi-simple smooth $K$-representation $W(\psi)$, that is irreducible if and only if $\psi\neq \psi^s$.

\begin{cor}\label{generates_weight}
	For each $\psi\in \chi^{\tilde{\Pi}^{\bZ}}$, the $\wt{K}$-subrepresentation $\sigma(\psi):=\left\langle \wt{K}Me_{\psi}\right\rangle\subset \cT(M)$ generated by $Me_{\psi}$ is irreducible. More precisely:	
	\begin{enumerate}
		\item[{\rm (i)}] If $O$ is square-regular, then $\sigma(\psi)=W(\psi)\boxtimes \iota$.
		\item[{\rm (ii)}] If $O$ is non-square-regular and $\psi \neq \psi^{s}$, then $\sigma(\psi)=W(\psi)\boxtimes \iota$, while
		\[
		\sigma(\mathbf{1})=\begin{cases}
			\mathbf{1}\boxtimes \iota \text{ if $\lambda=0$}\\
			\Sym^{p-1}(k^2)\boxtimes \iota \text{ if $\lambda = -1$}
		\end{cases} \text{ and \phantom{m}} \sigma(\mathbf{1}[1,1])=\sigma(\mathbf{1})\otimes {\det}^{\frac{p-1}{2}}.
		\] 
	\end{enumerate}
\end{cor}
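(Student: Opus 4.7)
\emph{Plan.} My plan is to use Frobenius reciprocity to realize $\sigma(\psi)$ as a quotient of $\cInd^{\wt K}_{\wt I}(\psi \boxtimes \iota)$, and then to identify this quotient via Proposition~\ref{JH_weight} and Lemma~\ref{T_s=scalar}. Pick $0 \neq v \in Me_\psi$, one-dimensional by (\ref{M_direct_sum}); then $v$ is $\wt I$-fixed and $\wt I$ acts on it through $\psi \boxtimes \iota$, so Frobenius reciprocity combined with the splitting $\wt K \cong K \times \mu_2$ furnishes a $\wt K$-equivariant surjection
\[
F \colon \Ind^K_I(\psi) \boxtimes \iota \twoheadrightarrow \sigma(\psi)
\]
sending the standard function $f_1$ to $v$. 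Writing $\psi|_H = \omega^a \otimes \omega^b$ with $0 \le a-b \le p-1$ and $0 \le b < p-1$, Proposition~\ref{JH_weight} exhibits $\Ind^K_I(\psi)$ as an extension of the cosocle $\Sym^{a-b}(k^2) \otimes {\det}^b$ (with $I_1$-invariants of $H$-character $\psi$) by the socle $\Sym^{p-1-(a-b)}(k^2) \otimes {\det}^a$ (with $I_1$-invariants of $H$-character $\psi^s$). Using the pro-$p$ Iwahori decomposition $I_1 = K_1 \cdot (I_1 \cap U)$ together with the $K_1$-invariance of $f_1$, one verifies that $f_2 := \sum_{\lambda \in \bF_p} \begin{mat}1 & [\lambda] \\ 0 & 1\end{mat} \tilde s \cdot f_1$ is $I_1$-invariant with $H$-character $\psi^s$, and the $\wt K$-equivariance of $F$ together with the formula for the Hecke action recalled before Corollary~\ref{cut} yields the key identity $F(f_2) = v T_s$.

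\emph{Case $\psi \neq \psi^s$.} Since $f_2$ has $H$-character distinct from $\psi$, it must lie in the socle (the unique piece of $(\Ind^K_I(\psi))^{I_1}$ carrying the character $\psi^s$), and it generates the socle since the latter is irreducible. By Lemma~\ref{T_s=scalar} we have $v T_s = 0$ both in the square-regular case and in the non-square-regular case with $\psi \neq \psi^s$; hence $F(f_2) = 0$, $\ker F$ contains the socle, and $\sigma(\psi)$ is a non-zero quotient of the irreducible cosocle $\Sym^{a-b}(k^2) \otimes {\det}^b$. Under the assumption $\psi \neq \psi^s$ this cosocle coincides with $W(\psi)$, so $\sigma(\psi) = W(\psi) \boxtimes \iota$.

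\emph{Case $\psi = \psi^s$.} Then $\psi \in \{\mathbf{1}, \mathbf{1}[1,1]\}$; since $\mathbf{1}[1,1] = {\det}^{\frac{p-1}{2}}|_H$, the identity $\Ind^K_I(\mathbf{1}[1,1]) = \Ind^K_I(\mathbf{1}) \otimes {\det}^{\frac{p-1}{2}}$ reduces everything to $\psi = \mathbf{1}$. The decisive extra input is the semisimple decomposition $\Ind^K_I(\mathbf{1}) = \mathbf{1} \oplus \Sym^{p-1}(k^2)$, which holds because the constant function generates a trivial $K$-summand meeting the Steinberg socle trivially (as the latter has no $\GL_2(\bF_p)$-invariants) and dimensions agree. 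A direct computation shows that $T_s$ acts by $0$ on $\mathbf{1}^{I_1}$ (summing $p = 0$ translates of a $K$-fixed vector) and by $-1$ on $(\Sym^{p-1}(k^2))^{I_1}$, consistently with the relation $T_s(T_s+1) = 0$ on $e_\mathbf{1} \cH$ from Lemma~\ref{identities_Hecke_operators_I}(v). Writing $f_1 = \alpha + \beta$ along this splitting and combining $T_s f_1 = -\beta$ with $v T_s = \lambda v$ from Lemma~\ref{T_s=scalar}(ii) yields $F(\alpha) = (1+\lambda)v$ and $F(\beta) = -\lambda v$: if $\lambda = 0$ the Steinberg summand falls into $\ker F$, giving $\sigma(\mathbf{1}) = \mathbf{1} \boxtimes \iota$, while if $\lambda = -1$ the trivial summand falls into $\ker F$, giving $\sigma(\mathbf{1}) = \Sym^{p-1}(k^2) \boxtimes \iota$. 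The formula for $\sigma(\mathbf{1}[1,1])$ follows by twisting.

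\emph{Main obstacle.} The hardest step will be the compatibility $F(f_2) = v T_s$, which translates the Hecke-algebra equation $vT_s = \lambda v$ (on the $\wt G$-side) into a $\wt K$-side statement about $F$. Its verification requires careful tracking of the cocycle defining $\wt G$ and of the pro-$p$ Iwahori coset bookkeeping $I_1/K_1 \cong \bF_p$; once established, the rest of the argument is a direct application of the $K$-representation theory of $\Ind^K_I(\psi)$.
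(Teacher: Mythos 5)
Your argument is correct and follows essentially the same route as the paper's proof: a Frobenius-reciprocity surjection from $\Ind^{\wt{K}}_{\wt{I}}(\psi\boxtimes \iota)$ combined with Proposition \ref{JH_weight} and Lemma \ref{T_s=scalar}, the only (harmless) deviations being that you put the socle into the kernel directly via $F(f_2)=vT_s$ instead of invoking non-splitness of the sequence, and that you treat $\mathbf{1}[1,1]$ by twisting the induction rather than via the $\tilde{\Pi}^2$-intertwiner of Lemma \ref{Ztilde_action} (legitimate, since Lemma \ref{T_s=scalar} gives the same scalar $\lambda$ on $Me_{\mathbf{1}[1,1]}$). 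Do record the small verifications that $f_2\neq 0$ (e.g.\ $f_2(\tilde{s})=1$) and, in the non-square-regular case, that both components $\alpha,\beta$ of $f_1$ are non-zero (indeed $\beta=-f_2$ and $\alpha=f_1+f_2$ is the constant function), since these are what let you conclude that the corresponding irreducible summands actually lie in $\ker F$.
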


\begin{proof}
	Frobenius reciprocity yields a $\wt{K}$-equivariant surjection $\Ind^{\wt{K}}_{\wt{I}}(\psi\boxtimes \iota)\twoheadrightarrow \sigma(\psi)$ mapping the $I_1$-invariant function $\varphi$ with $\supp(\varphi)=\wt{I}$ and $\varphi(1)=1$ to a fixed element in $Me_{\psi}$. By Lemma \ref{T_s=scalar}, the operator $T_s$ acts on $Me_{\psi}$ by a scalar. However, it does not act by a scalar on $\varphi$, and so the surjection cannot be an isomorphism, i.e.\ the kernel is non-trivial.
	
	Assume that $\psi\neq \psi^s$. By Proposition \ref{JH_weight}, we have a short exact sequence
	\[
	0\to W(\psi^s)\boxtimes \iota\to \Ind^{\wt{K}}_{\wt{I}}(\psi\boxtimes \iota)\to W(\psi)\boxtimes \iota\to 0
	\]
	of $\wt{K}$-representation with the outer terms being irreducible. Since this sequence is non-split, we deduce that $\sigma(\psi)\cong W(\psi)\boxtimes \iota$.
	
	Assume now that $\psi=\psi^s$. By Lemma \ref{Ztilde_action}, the element $\tilde{\Pi}^2$ defines a $\wt{K}$-equivariant isomorphism $\cT(M)\cong \cT(M)\otimes \omega^{\frac{p-1}{2}} \circ {\det}$ restricting to an isomorphism $\sigma(\mathbf{1}[1,1])\cong \sigma(\mathbf{1})\otimes {\det}^{\frac{p-1}{2}}$, so it is enough to treat the case $\psi=\mathbf{1}$. By Proposition \ref{JH_weight} once again, we have $\Ind^{\wt{K}}_{\wt{I}}(\mathbf{1}\boxtimes \iota)=\mathbf{1}\boxtimes \iota \oplus \Sym^{p-1}(k^2)\boxtimes \iota$. The $I_1$-invariants of the second summand are $1$-dimensional generated by $\varphi T_s$. Hence, the kernel of $\Ind^{\wt{K}}_{\wt{I}}(\mathbf{1}\boxtimes \iota)\twoheadrightarrow \sigma(\psi)$ meets $\Sym^{p-1}(k^2)\boxtimes \iota$ trivially if and only if $\varphi T_s$ does not lie in the kernel, i.e.\ if and only if $T_s$ does not kill $Me_{\mathbf{1}}$, which by Lemma \ref{T_s=scalar} just means that $\lambda = -1$.
\end{proof}

By Proposition \ref{class_weights}, we may write, for each $\psi\in \chi^{\tilde{\Pi}^{\bZ}}$,
\[
\sigma(\psi)=\Sym^{r_{\psi}}(k^2)\otimes {\det}^{b_{\psi}}
\]
for a unique $0\leq r_{\psi}\leq p-1$ and $0\leq b_{\psi}\leq p-2$.

\begin{remark}\label{weight_remark}
	By Lemma \ref{Ztilde_action}, the element $\tilde{\Pi}^2$ defines an isomorphism $\sigma(\psi[1,1])\cong \sigma(\psi)\otimes \omega^{\frac{p-1}{2}}\circ {\det}$, i.e.\ $r_{\psi[1,1]}=r_{\psi}$ and $b_{\psi[1,1]}\equiv b_{\psi}+\frac{p-1}{2} \bmod p-1$. It follows more generally from Lemma \ref{generates_weight} that a single weight $\sigma(\psi)$ determines all the other ones, namely
	\[
	b_{\psi^s[1,0]}\equiv b_{\psi}+r_{\psi} \bmod p-1 \text{ and } r_{\psi^s[1,0]}=\begin{cases}
		\frac{p-1}{2}-r_{\psi} \text{ if $0\leq r_{\psi}\leq \frac{p-1}{2}$}\\
		\frac{3(p-1)}{2}-r_{\psi} \text{ if $\frac{p-1}{2}\leq r_{\psi}\leq p-1$.}
	\end{cases}
	\]
\end{remark}

\begin{cor}\label{recursion_formula}
	For each $\psi\in \chi^{\tilde{\Pi}^{\bZ}}$ and $v_{\psi}\in Me_{\psi}$,
	\[
	X^{r_{\psi^s[1,0]}}F(v_{\psi})=(-1)^{b_{\psi^s[1,0]}} r_{\psi^s[1,0]}! v_{\psi}T_{\Pi}^{-1}.
	\]
\end{cor}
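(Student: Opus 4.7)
The strategy is to express both sides of the identity as elements of the irreducible $\widetilde{K}$-representation $\sigma(\psi^s[1,0])$ and to compute using the explicit model $\Sym^{r'}(k^2)\otimes\det^{b'}$, where $r'=r_{\psi^s[1,0]}$ and $b'=b_{\psi^s[1,0]}$. The key observation is that $F=s\Pi$, so a short cocycle computation (analogous to those in Example \ref{law_torus} and Lemma \ref{wd_operator}) yields $\tilde F=\tilde s\tilde\Pi$ in $\tilde G$. Combined with the fact that $\Pi$ normalizes $I_1$ (which makes the Hecke-operator formula collapse to a single term), we get $v_\psi T_\Pi^{-1}=\tilde\Pi v_\psi$ and hence $F(v_\psi)=\tilde s\tilde\Pi v_\psi=\tilde s\cdot(v_\psi T_\Pi^{-1})$.

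The next step is to locate $v_\psi T_\Pi^{-1}$ inside $\cT(M)$. From the shift relation $T_\Pi e_\chi=e_{\chi^s[1,0]}T_\Pi$ of Lemma \ref{identities_Hecke_operators_II}(v), right multiplication by $T_\Pi^{-1}$ is an isomorphism $Me_\psi\xrightarrow{\cong}Me_{\psi^s[1,0]}$; by the decomposition (\ref{M_direct_sum}) the target is one-dimensional and, by Corollary \ref{generates_weight} together with $(W(\psi^s[1,0])\boxtimes\iota)^{I_1}=\psi^s[1,0]\boxtimes\iota$, equals the $I_1$-fixed line of $\sigma(\psi^s[1,0])\cong\Sym^{r'}(k^2)\otimes\det^{b'}$. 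Since both sides of the desired identity are $k$-linear in $v_\psi$ and $Me_\psi$ is one-dimensional, I may rescale and identify $v_\psi T_\Pi^{-1}$ with the standard top-weight vector $e_0:=x^{r'}\otimes 1$ in this explicit model.

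It remains to compute $X^{r'}\tilde s e_0$ inside $\Sym^{r'}(k^2)\otimes\det^{b'}$. Under the splitting of Lemma \ref{split_subgroups}(i), the element $\tilde s=(s,1)$ (the splitting function evaluates to $1$ on $s$, whose $(2,1)$-entry is a unit), so $\tilde s$ acts through the ordinary $K$-action on $\Sym^{r'}(k^2)\otimes\det^{b'}$. A direct application of Definition \ref{sym_def} gives $\tilde s e_0=s\cdot(x^{r'}\otimes 1)=y^{r'}\otimes(-1)^{b'}=(-1)^{b'}e_{r'}$, where $e_i:=x^{r'-i}y^i$. A short induction on the formula $Xe_i=\sum_{j=0}^{i-1}\binom{i}{j}e_j$ (coming from $\begin{mat}1&1\\0&1\end{mat}x^{r'-i}y^i=\sum_{j=0}^i\binom{i}{j}e_j$) shows $X^{r'}e_{r'}=r'!\,e_0$, from which
\[
X^{r'}F(v_\psi)=(-1)^{b'}X^{r'}e_{r'}=(-1)^{b'}r'!\,e_0=(-1)^{b'}r'!\,v_\psi T_\Pi^{-1}.
\]
The principal technical hurdle is bookkeeping around the cocycle in the first paragraph: one must verify simultaneously that $\tilde F=\tilde s\tilde\Pi$ without a sign correction and that conjugation by $\tilde\Pi^{-1}$ (not $\tilde\Pi$) is what shifts the $H$-eigencharacter of $v_\psi$ from $\psi$ to $\psi^s[1,0]$ rather than to $\psi^s[0,1]$; without this careful matching, the weight appearing in the formula would not come out to $r_{\psi^s[1,0]}$.
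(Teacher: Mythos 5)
Your argument is correct and follows the same route as the paper: write $\tilde F=\tilde s\tilde\Pi$, use that $\tilde\Pi$ acts on $M$ through $T_{\Pi}^{-1}$ and that $v_\psi T_{\Pi}^{-1}$ spans the $I_1$-fixed line of the weight $\sigma(\psi^s[1,0])=\Sym^{r'}(k^2)\otimes{\det}^{b'}$, then verify the eigen-identity there. The only difference is that you carry out the explicit computation $X^{r'}\tilde s\,x^{r'}=(-1)^{b'}r'!\,x^{r'}$ by hand, whereas the paper cites it from Pa\v{s}k\=unas (Lemma 3.2 of the reference \cite{Paskunas_ext_ss}); your cocycle check that $(s,1)(\Pi,1)=(F,1)$ and the $e_{\psi^s[1,0]}$ bookkeeping are exactly the points the paper relies on implicitly.
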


\begin{proof}
	We may assume that $v_{\psi}\neq 0$. Since $F=\tilde{s}\tilde{\Pi}$ and $\tilde{\Pi}$ acts on $M$ via the Hecke operator $T_{\Pi}^{-1}$, we need to show that
	\begin{equation}\label{explicit_weight}
		X^{r_{\psi^s[1,0]}}\tilde{s} (v_{\psi}T_{\Pi}^{-1}) = (-1)^{b_{\psi^s[1,0]}} r_{\psi^s[1,0]}! v_{\psi}T_{\Pi}^{-1}.
	\end{equation}
	The relation $e_{\psi}T_{\Pi}^{-1}=T_{\Pi}^{-1}e_{\psi^s[1,0]}$ of Lemma \ref{identities_Hecke_operators_II} (i) implies that $0\neq v_{\psi}T_{\Pi}^{-1}\in Me_{\psi^{s}[1,0]}\subset \sigma(\psi^s[1,0])^{I_1}$. By Lemma \ref{generates_weight}, $\sigma(\psi^s[1,0])$ is a weight. Therefore, equation (\ref{explicit_weight}) is an explicit computation in $\sigma(\psi^s[1,0])=\Sym^{r_{\psi^s[1,0]}}(k^2)\otimes {\det}^{b_{\psi^s[1,0]}}$, that is for example carried out in \cite[Lemma 3.2]{Paskunas_ext_ss}.
\end{proof}

We now bring ourselves into the following setup: Fix $0\neq v_1\in Me_{\chi}$ and put $v_i=v_1T_{\Pi}^{-i+1}$ for $1\leq i\leq 4$. We then have 
\[
M=\underbrace{kv_1}_{Me_{\chi}} \oplus \underbrace{kv_2}_{Me_{\chi^s[1,0]}} \oplus \underbrace{k v_3}_{Me_{\chi[1,1]}} \oplus \underbrace{k v_4}_{Me_{\chi^s[0,1]}}
\]
We further set
\[
s_1=r_{\chi^s[1,0]}, \hspace{0.2cm}s_2=r_{\chi[1,1]},\hspace{0.2cm} s_3=r_{\chi^s[0,1]},\hspace{0.2cm} s_4=r_{\chi}
\]
and
\begin{align*}
	c_1=(-1)^{b_{\chi^s[1,0]}} r_{\chi^s[1,0]}!,\hspace{0.3cm} & 	c_2=(-1)^{b_{\chi[1,1]}}r_{\chi[1,1]}!,\\
	c_3=(-1)^{b_{\chi^s[0,1]}}r_{\chi^s[0,1]}!,\hspace{0.3cm} & c_4=(-1)^{\frac{p-1}{2}}z^{-1} (-1)^{b_{\chi}} r_{\chi}!.
\end{align*}

\noindent By Corollary \ref{recursion_formula}, we then have
\[
X^{s_i}F(v_i)=c_i v_{i+1} \text{ for all $1\leq i\leq 4$.}
\]
Here, note that, by Lemma \ref{identities_Hecke_operators_II} (iv), $T_{\Pi}^{-4}=(-1)^{\frac{p-1}{2}}T_{\Pi^{-4}}$,  which acts on $M$ via $(-1)^{\frac{p-1}{2}}z^{-1}$. By the Basic Lemma \ref{basic_lemma} (vii),
\[
\pi_M=k\llbracket X\rrbracket[F]M,
\]
is an irreducible $P^+$-subrepresentation of $\cT(M)$: in the square-regular case, the $\Gamma$-eigencharacters of the vectors $v_1,\ldots,v_4$ are pairwise distinct, while in case $\mathbf{1}\in O$, the $\Gamma$-eigencharacters of $Me_{\mathbf{1}}$ and $Me_{\mathbf{1}[0,1]}$ (resp.\ $Me_{\mathbf{1}[1,1]}$ and $Me_{\mathbf{1}[1,0]}$) coincide, but the corresponding $s_i$'s differ by $\frac{p-1}{2}$ (see Remark \ref{weight_remark}), so we can indeed apply the Basic Lemma.

\begin{prop}\label{supersingular_irred}
	The equality $\cT(M)=\pi_M[F^{-1}]$ holds true. In particular, the restriction of $\cT(M)$ to $P$ is irreducible.
\end{prop}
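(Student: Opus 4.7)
The $P$-irreducibility claim is immediate from Corollary \ref{basic_lemma_application}: the hypothesis on the $s_i$'s and the $\Gamma$-eigencharacters of the $v_i$'s was verified in the paragraph preceding the proposition (pairwise distinct characters in the square-regular case, while in the case $O=O_{\mathbf{1}}$ the repeated characters come paired with $s$-indices differing by $\tfrac{p-1}{2}$ via Remark \ref{weight_remark}). Thus $\pi_M[F^{-1}]$ is $P$-stable and $P$-irreducible, and it only remains to prove the equality $\cT(M)=\pi_M[F^{-1}]$. By the very definition of $\cT(M)=M\otimes_{\cH}\cInd^{\wt G}_{\wt I_1}(\mathbf 1\boxtimes\iota)$ and the fact that $\cInd^{\wt G}_{\wt I_1}(\mathbf 1\boxtimes\iota)$ is generated as a $\wt G$-representation by the $I_1$-fixed function supported on $\wt I_1$, one has $\cT(M)=\sum_{g\in\wt G} g\cdot M$; hence it suffices to show that $\pi_M[F^{-1}]$ is $\wt G$-stable, for then it contains $\wt G\cdot M=\cT(M)$ while the reverse inclusion is tautological.

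For the $\wt G$-stability I plan to use the Bruhat decomposition $\wt G=\wt B\wt I_1\sqcup \wt B\tilde s\wt I_1$ coming from (\ref{pro-p_decomp}) (together with the fact that $\wt I_1$ acts on $M$ through $\mathbf 1\boxtimes\iota$, so $\wt I_1\cdot M=M$). This reduces the task to two steps: (a) $\pi_M[F^{-1}]$ is $\wt B$-stable, and (b) $\tilde s\cdot M\subseteq \pi_M[F^{-1}]$. Step (b) is the easy one: from the cocycle formula (\ref{cocycle}) a direct check using $\mathfrak c(s)=1$, $\mathfrak c(\Pi)=p$, $\mathfrak c(F)=1$ gives $\sigma(s,\Pi)=1$, so one has the clean identity $\tilde s\tilde\Pi=F$ in $\wt G$; since $\tilde\Pi^{-1}$ acts on $M\subseteq\cT(M)^{I_1}$ via the invertible Hecke operator $T_\Pi$, for $v\in M$ we get $\tilde s v=F(vT_\Pi)\in FM\subseteq\pi_M$.

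Step (a) decomposes as $P$-stability (already known from Corollary \ref{basic_lemma_application}) plus $\wt Z$-stability. For the latter I will first check that $\wt Z$ preserves $M$ itself: modulo $\mu_2$ the group $\wt Z$ is generated by $\tilde\Pi^2$, which by Lemma \ref{identities_Hecke_operators_II}(ii) acts on $M$ as the invertible Hecke operator $T_\Pi^{-2}$, together with lifts of $\bZ_p^\times I\subset Z\cap K$ under the splitting of Lemma \ref{split_subgroups}(i), which act on each eigenspace $Me_{\psi}$ through the character $\psi$. Second, Lemma \ref{Ztilde_action} shows that conjugation by any element of $\wt Z$ acts trivially on $\wt I_1$ (since $\det(I_1)\subseteq 1+p\bZ_p$, on which the twist character appearing in that lemma is trivial) and commutes with $F$ up to a sign in $\mu_2$ (from the cocycle computation). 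Consequently $\wt Z\cdot(k\llbracket X\rrbracket[F]M)\subseteq k\llbracket X\rrbracket[F]M$, i.e.\ $\wt Z\cdot\pi_M\subseteq\pi_M$, and inverting $F$ gives $\wt Z\cdot\pi_M[F^{-1}]\subseteq\pi_M[F^{-1}]$.

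The main potential obstacle is bookkeeping of the $\mu_2$-cocycle signs when one pushes scalars past $F$ and past $\tilde\Pi$; these contribute only invertible scalar factors in $k^\times$, which preserve all the subspaces under consideration, so once one records that $\tilde s\tilde\Pi=F$ on the nose (no sign) and that conjugation by $\wt Z$ is trivial on $\wt I_1$, the remaining combinatorics is routine.
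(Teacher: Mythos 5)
Your proposal is correct and follows essentially the paper's own argument: both proofs establish $P$-stability (via the Basic Lemma setup) and $\wt{Z}$-stability of $\pi_M[F^{-1}]$, use the identity $\tilde{s}=F\tilde{\Pi}^{-1}$ together with the fact that $\tilde{\Pi}^{-1}$ acts on $M$ through the invertible operator $T_{\Pi}$ to get $\tilde{s}M\subseteq FM\subseteq \pi_M$, and then conclude from a decomposition of $\wt{G}$ into $\wt{B}$ times a compact piece. The only (inessential) difference is the packaging of that last step: the paper goes through the weights $\sigma(\psi)$ and $\wt{G}=\wt{B}\wt{K}$, while you use $\wt{G}=\wt{B}\wt{I}_1\sqcup\wt{B}\tilde{s}\wt{I}_1$ with $\wt{I}_1M=M$ — note this decomposition really comes from the Iwasawa decomposition together with $K=I\sqcup IsI$ (i.e.\ the decomposition already used in Lemma \ref{T(M)/bigcell}), not from (\ref{pro-p_decomp}) as cited.
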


\begin{proof}
	We first claim that $\pi_M$ contains the weights $\sigma(\psi)$ for all $\psi\in \chi^{\tilde{\Pi}^{\bZ}}$. The Iwahori decomposition and the inclusion $(I\cap sUs)F\subset F(I\cap sUs)$ imply that $\pi_M$ is $I$-stable. Now $\sigma(\psi)$, being a weight, is generated as a $K$-representation by its $I_1$-invariants $Me_{\psi}$, which are contained in $\pi_M$ by definition. Using the decomposition $K=I\sqcup IsI$, it suffices to prove that $\tilde{s}Me_{\psi}\in \pi_M$. Writing $\tilde{s}=F\tilde{\Pi}^{-1}$, we have
	\[
	\tilde{s}Me_{\psi}=F(Me_{\psi}T_{\Pi})=F(Me_{\psi^s[0,1]})\subset \pi_M.
	\]
	As already used a few times, the element $\tilde{\Pi}^2$ acting on $\cT(M)$ restricts to an isomorphism $\sigma(\psi[1,1])\cong \sigma(\psi)\otimes {\det}^{\frac{p-1}{2}}$, which implies that $M$ is stable under the action of $\wt{Z}$. Putting this together with the previous claim, we obtain
	\begin{equation}\label{equation_for_ss}
		\cT(M)\supset \pi_M[F^{-1}] =\left\langle \wt{B}.\sum_{\psi} \sigma(\psi)\right\rangle
		=\left\langle \wt{B}K.\sum_{\psi} \sigma(\psi)\right\rangle\supset \left\langle \wt{G}.M\right\rangle =\cT(M),
	\end{equation}
	i.e.\ $\pi_M[F^{-1}]=\cT(M)$. As explained prior to this proposition, the condition of Corollary \ref{basic_lemma_application} is satisfied, whence $\pi_M[F^{-1}]=\cT(M)$ is irreducible as a $P$-representation.
\end{proof}

\subsubsection{...and back} It remains to prove that the unit $M\to \cI(\cT(M))$ is an isomorphism. We will use an analogue of the short exact sequence in \cite[Theorem 6.3]{Paskunas_ext_ss}, the proof of which we will closely follow.

For $1\leq i\leq 4$, let
\[
\pi_i=k\llbracket X\rrbracket[F^4]v_i
\]
as in the setup of Section \ref{basic_lemma}. The Iwahori decomposition and the inclusion $(I\cap sUs)F\subset F(I\cap sUs)$ imply that $\pi_i$ is $\wt{I}$-stable, and $Z_1:=I_1\cap Z$ acts trivially on it. By part (iii) and (vi) of the Basic Lemma \ref{basic_lemma}, $0\neq \pi_i^{I_1}\subset \pi_i^{I\cap U}=kv_i$ (hence $\pi_i^{I_1}=kv_i$) and the inclusion $kv_i\subset \pi_i$ is an injective envelope in the category of smooth $I\cap U$-representations. Thus, $(\pi_i/k v_i)^{I\cap U}=H^1(I\cap U,k v_i)\cong \Hom_{\operatorname{cts}}(\bZ_p,k)$ is $1$-dimensional; in particular, $(\pi_i/k v_i)^{I_1}=(\pi_i/k v_i)^{I\cap U}$.

	Consider the short exact sequence
\[
0\to kv_i\to \pi_i\to \pi_i/kv_i\to 0
\]
of $\wt{I}/Z_1$-representations. The injection becomes an equality on $I_1$-invariants, so we obtain an inclusion
\[
(\pi_i/kv_i)^{I_1}\hookrightarrow H^1(I_1/Z_1,kv_i)\subset H^1(I_1/Z_1,M)\cong \Hom(I_1/Z_1,M),
\]
whose $1$-dimensional image we denote by $\Delta_i$. Using the decomposition $\pi_M=\oplus_{i=1}^4 \pi_i$, we take the direct sum of the above inclusions and call the resulting image $\Delta$:
\[
\Delta=\bigoplus_{i=1}^4 \Delta_i =\operatorname{image}\left((\pi_M/M)^{I_1}\hookrightarrow H^1(I_1/Z_1,M)\cong \Hom(I_1/Z_1,M)\right).
\]
The element $\tilde{\Pi}$ normalizes $I_1$ and its subgroup $Z_1$. Its action on $M$ thus induces an action on $H^1(I_1/Z_1,M)\cong \Hom(I_1/Z_1,M)$:
\[
[\tilde{\Pi}.f](x)=\tilde{\Pi}f(\tilde{\Pi}^{-1}x\tilde{\Pi}) \text{ for all $x\in I_1$ and $f\in \Hom(I_1/Z_1,M)$.} 
\]
In particular, $\tilde{\Pi}\Delta_i\subset H^1(I_1/Z_1,kv_{i+1})$. Note that $\tilde{\Pi}^2\Delta_i=\Delta_{i+2}$ since $\tilde{\Pi}^2\pi_i=\pi_{i+2}$.

\begin{lem}\label{Delta_cap=0}
	The intersection $\Delta\cap \tilde{\Pi}\Delta = 0$ is trivial.
\end{lem}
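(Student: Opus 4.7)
The plan is to prove the vanishing fiberwise. Since $\Delta=\bigoplus_{i=1}^{4}\Delta_i$ and, with $\tilde{\Pi}\Delta_{i-1}\subset H^1(I_1/Z_1,kv_i)$, also $\tilde{\Pi}\Delta=\bigoplus_{i=1}^{4}\tilde{\Pi}\Delta_{i-1}$, both regarded inside the graded space $\bigoplus_{i=1}^{4}H^1(I_1/Z_1,kv_i)$, one has $\Delta\cap \tilde{\Pi}\Delta=\bigoplus_{i=1}^{4}(\Delta_i\cap \tilde{\Pi}\Delta_{i-1})$. Hence it suffices to prove $\Delta_i\cap \tilde{\Pi}\Delta_{i-1}=0$ for each $i$.

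Since $I_1$ acts trivially on $kv_i$, I identify $H^1(I_1/Z_1,kv_i)\cong \Hom_{\operatorname{cts}}(I_1/Z_1,kv_i)$; the Iwahori decomposition and the centrality of $Z_1$ in $I_1\cap T$ produce an $H$-equivariant splitting into the three root pieces coming from $I_1\cap U$, $(I_1\cap T)/Z_1$, and $I_1\cap sUs$, carrying $H$-characters $[ba^{-1}]$, trivial, and $[ab^{-1}]$ respectively (read off from $h\begin{mat}1 & 1\\0 & 1\end{mat}h^{-1}=\begin{mat}1 & [ab^{-1}]\\0 & 1\end{mat}$ for $h=\begin{mat}[a] & 0\\0 & [b]\end{mat}$). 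A direct cocycle computation---applied to a lift $\tilde{w}_i\in \pi_i$ with $X\tilde{w}_i=v_i$, which exists because $\pi_i\cong \mathscr{C}^{\infty}(U\cap I,k)$ is $X$-divisible---together with $h^{-1}(1+X)^bh=(1+X)^{[ba^{-1}]b}$, identifies $\Delta_i$ with the $H$-eigenline of character $[ba^{-1}]\cdot \psi_i$, supported on the $\Hom(I_1\cap U,kv_i)$ summand. On the other hand, the conjugation formula $\tilde{\Pi}^{-1}\tilde{h}\tilde{\Pi}=(h^s,\ast)$ following from Remark \ref{Pi_equiv_Remark} (whose central $\mu_2$-factor acts trivially on the $kv_i$-valued cocycle) combined with the fact that $\tilde{\Pi}X\tilde{\Pi}^{-1}\in k\llbracket I_1\cap sUs\rrbracket$ shows that $\tilde{\Pi}\Delta_{i-1}$ has $H$-character $[ab^{-1}]\cdot \psi_i$ and is supported on the $\Hom(I_1\cap sUs,kv_i)$ summand.

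For every odd prime $p\geq 5$, the characters $[ba^{-1}]\psi_i$ and $[ab^{-1}]\psi_i$ are distinct---any $a,b\in \bF_p^{\times}$ with $(a/b)^2\neq 1$ separates them---so $\Delta_i$ and $\tilde{\Pi}\Delta_{i-1}$ lie in distinct $H$-isotypic components of $\Hom(I_1/Z_1,kv_i)$; since $p\nmid |H|=(p-1)^2$, the decomposition is a direct sum of eigenspaces and the intersection vanishes. The only subtlety is $p=3$, where $[ba^{-1}]=[ab^{-1}]$ identically and the two characters coincide; here the support statements above place the lines in the complementary summands $\Hom(I_1\cap U,kv_i)$ and $\Hom(I_1\cap sUs,kv_i)$ of the shared $H$-eigenspace, forcing a trivial intersection.

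The main technical obstacle is the $p=3$ case, where the clean character comparison collapses and one must instead verify the disjoint supports by an explicit calculation: this amounts to showing $Y\tilde{w}_i=0$ in $\pi_i$ for $Y=\begin{mat}1 & 0\\ p & 1\end{mat}-1$, which should follow from $Yv_i=0$ together with a commutator analysis of $[X,Y]$ acting on the $k\llbracket X\rrbracket[F^4]$-module $\pi_i$ and the constraint that any nonzero $sUs$-component of the cocycle representing $\Delta_i$ would force a second, non-$U$-type, $H$-isotypic direction in the 1-dimensional line $\Delta_i$.
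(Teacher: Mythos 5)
Your reduction to showing $\Delta_i\cap\tilde{\Pi}\Delta_{i-1}=0$ inside $H^1(I_1/Z_1,kv_i)$ is the same first step as the paper's, but your main mechanism---comparing $H$-eigencharacters---is genuinely different: the paper instead uses $\tilde{\Pi}^2\Delta_i=\Delta_{i+2}$ to move to an index where the eigencharacter of $v_i$ is regular and then quotes the explicit basis $\kappa^{u}_j,\kappa^{\ell}_j$ of $\Hom(I_1/Z_1,kv_j)$ and \cite[Proposition 6.2]{Paskunas_ext_ss}, concluding from $\tilde{\Pi}\kappa^{u}_i\in k\kappa^{\ell}_{i+1}$ and $\Delta_{i+1}=k(\kappa^{u}_{i+1}+b\kappa^{\ell}_{i+1})$. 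For $p\geq 5$ your character argument is correct and arguably cleaner: the connecting homomorphism is $H$-equivariant, so $\Delta_i$ is an $H$-eigenline of character $\psi_i\alpha^{-1}$ (with $\alpha$ the simple-root character of $H$) and $\tilde{\Pi}\Delta_{i-1}$ one of character $\psi_i\alpha$; since $\alpha^2\neq 1$ for $p>3$, the two lines lie in distinct isotypic components, with no regular/non-regular case distinction and no appeal to the cited Proposition 6.2. Be aware, however, that your asserted three-piece splitting of $\Hom(I_1/Z_1,kv_i)$ into root pieces is not correct as stated: homomorphisms out of $I_1/Z_1$ are constrained by the commutators of the Iwahori factors (for $p\geq 5$ the space is only $2$-dimensional, by the very result the paper cites), so restriction to the three factors is injective but not onto their direct sum. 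Consequently your ``support'' claims---in particular that the lower-unipotent restriction of the cocycle $g\mapsto (g-1)\tilde{w}_i$ vanishes---are not free; for $p\geq 5$ they are consequences of the eigencharacter comparison rather than inputs to it.

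The genuine gap is $p=3$. There $\alpha=\alpha^{-1}$, the character comparison gives nothing, and your argument then rests entirely on the unproved claim that $\Delta_i$ has no lower-unipotent component, i.e.\ that $\left(\begin{mat}1 & 0\\ p & 1\end{mat}-1\right)\tilde{w}_i=0$. Your closing sketch is only a plan: the lower unipotent does not act through the $k\llbracket X\rrbracket[F^4]$-module structure of $\pi_i$ but through the ambient $\wt{I}$-action inside $\cT(M)$, the proposed ``commutator analysis of $[X,Y]$'' is not set up, and the possible nonvanishing of exactly this component is what the paper's proof (which tolerates an unknown $b$ in $\Delta_{i+1}=k(\kappa^{u}_{i+1}+b\kappa^{\ell}_{i+1})$ and only needs $\Delta_i=k\kappa^{u}_i$ at a regular index) is designed to sidestep. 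Moreover, at $p=3$ the structure of $\Hom(I_1/Z_1,k)$ itself is more delicate than your splitting suggests. So as written your route proves the lemma only for $p\geq 5$; to cover $p=3$ you would need either to establish the vanishing of the lower-unipotent component at a regular index (essentially redoing the paper's reduction together with the computation behind the cited proposition) or to fall back on the paper's argument there.
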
 

\begin{proof}
	We need to show that for each $1\leq i\leq 4$, $\tilde{\Pi}\Delta_i\cap \Delta_{i+1}=0$. Acting by $\tilde{\Pi}$, we may replace $i$ by $i+1$ and thus assume that the $H$-eigencharacter $\chi_i$ of $v_i$ is regular, i.e.\ $\chi_i\neq \chi_i^{s}$. By \cite[Proposition 5.2]{Paskunas_ext_ss}, a $k$-basis of $H^1(I_1/Z_1,\ol{\bF}_pv_j)\cong \Hom(I_1/Z_1,kv_j)$, for $1\leq j\leq 4$,  is given by the homomorphisms $\kappa^{u}_j,\kappa^{\ell}_j$ defined by
	\[
	\kappa^{u}_j\left(\begin{mat}
		a & b\\ c & d
	\end{mat}\right)=\omega(b)v_j \text{ and } \kappa^{\ell}_j\left(\begin{mat}
		a & b\\ c & d
	\end{mat}\right)=\omega(p^{-1}c)v_j \text{ for all $\begin{mat}
			a & b\\ c & d
		\end{mat}\in I_1$.}
	\]
	Since the eigencharacter of $v_i$ is regular, we have $\Delta_i=k\kappa^{u}_i$, by Proposition 6.2 (i) in \textit{loc.\ cit.} The cited proposition also implies that $\Delta_{i+1}=k(\kappa^{u}_{i+1}+b\kappa^{\ell}_{i+1})$ for some $b\in k$. Noting that $\tilde{\Pi}\kappa^{u}_i$ is a non-zero scalar multiple of $\kappa^{\ell}_{i+1}$, we obtain
	\[
	\tilde{\Pi}\Delta_i \cap \Delta_{i+1}=k \kappa^{\ell}_{i+1} \cap k(\kappa^{u}_{i+1}+b\kappa^{\ell}_{i+1})=0,
	\]
	which finishes the proof.
\end{proof}

\begin{prop}\label{theorem_ses}
	The $\langle \wt{I},\tilde{\Pi}\rangle$-equivariant sequence
	\[
	0\to M\xrightarrow{\operatorname{diag}} \pi_M\oplus \tilde{\Pi}\pi_M\xrightarrow{(v,w)\mapsto v-w} \cT(M)\to 0
	\]
	is exact.
\end{prop}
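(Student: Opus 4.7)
The plan is to verify three claims in turn: injectivity of the diagonal, surjectivity of the subtraction, and the intersection identity $\pi_M\cap\tilde\Pi\pi_M = M$, closely following the strategy of \cite[Theorem 6.3]{Paskunas_ext_ss}.

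The inclusion $M\subset \pi_M$ is by construction, and $M = \tilde\Pi M\subset \tilde\Pi\pi_M$ follows since $\tilde\Pi$ acts on $M\subset \cT(M)$ as $T_\Pi^{-1}$, which is invertible on any supersingular module; this simultaneously gives the well-definedness and injectivity of the diagonal, the latter being immediate from the embedding $M\hookrightarrow \cI\cT(M)\subset \cT(M)$ (Remark \ref{T(ss)_non_zero}). The $\langle \wt I,\tilde\Pi\rangle$-equivariance is ensured by the $\wt I$-stability of $\pi_M$ together with $\tilde\Pi^2\pi_M = \pi_M$; the latter holds because $\tilde\Pi^2 = \widetilde{pI}$ commutes with $X$ (Lemma \ref{Ztilde_action}) and with $F$ up to the sign $(-1)^{(p-1)/2}$ (Lemma \ref{identities_Hecke_operators_II}(iii)), together with $\tilde\Pi^2 M = M$.

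For the surjectivity, I would set $V := \pi_M + \tilde\Pi\pi_M$ and prove $V = \cT(M)$ by establishing that $V$ is $\wt G$-stable; since $V\supset M$ generates $\cT(M)$ as a $\wt G$-representation, this forces $V = \cT(M)$. Since $\wt G$ is generated by $\wt I$, $\tilde s$, and $\tilde\Pi$, and $V$ is already $\wt I$- and $\tilde\Pi$-stable, the task reduces to verifying $\tilde s\pi_M\subset V$ (the remaining $\tilde s\tilde\Pi\pi_M = F\pi_M\subset \pi_M$ being automatic). Using the cocycle identity $\tilde s F\tilde s^{-1} = \tilde\Pi^2 F^{-1}$ and the commutation $\tilde\Pi^2 F = (-1)^{(p-1)/2}F\tilde\Pi^2$, a direct computation gives, for $m\in M$ and $n\geq 0$,
\[
\tilde s F^n m = \pm\,\tilde\Pi^{2n}F^{1-n}(mT_\Pi),
\]
which lies in $\pi_M$ for $n\leq 1$ and in $F^{-(n-1)}M$ for $n\geq 2$; iterating the identities $F^{-1}m = \tilde\Pi^{-1}F(mT_\Pi)\in \tilde\Pi\pi_M$ and $F^{-1}\tilde\Pi\pi_M = \tilde\Pi^{-1}F\pi_M\subset \tilde\Pi\pi_M$ shows $F^{-k}M\subset \tilde\Pi\pi_M$ for all $k\geq 1$. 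Extending from $\tilde s F^n M$ to general $\tilde s(P(X)F^n m)\in \tilde s\pi_M$ uses $\tilde s P(X) = P(u_1' - 1)\tilde s$ with $u_1' = \tilde s u_1\tilde s^{-1}$, together with the bootstrap that $\tilde s$-stability of $V$ on monomials plus $\wt I$-stability yields $\wt K$-stability (via $\wt K = \langle \wt I,\tilde s\rangle$), and hence the $u_1'$-stability needed to handle $Y_1 = u_1' - 1$ arising from the polynomial coefficients.

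The main obstacle is the intersection identity $\pi_M\cap\tilde\Pi\pi_M = M$. The inclusion $M\subset \pi_M\cap\tilde\Pi\pi_M$ has been established. For the reverse, Basic Lemma \ref{basic_lemma}(iii) gives $\pi_M[X] = M$, hence $\pi_M^{U\cap I_1} = M$; normalizing by $\tilde\Pi$ yields $(\tilde\Pi\pi_M)^{sUs\cap I_1} = \tilde\Pi M = M$. Exploiting the $\wt H$-eigenspace decomposition $\pi_M = \bigoplus_i\pi_i$ (Basic Lemma \ref{basic_lemma}(iv)) and the cyclic permutation $\tilde\Pi v_i = v_{i+1}$ (so that each summand of $\pi_M$ matches an $\wt H$-summand of $\tilde\Pi\pi_M$), I would reduce the task to showing $\pi_i\cap \tilde\Pi\pi_{i-1} = kv_i$ for each $i$: elements of $\pi_i = k\llbracket X\rrbracket[F^4]v_i$ take the form $P(X)F^{4n}v_i$, and elements of $\tilde\Pi\pi_{i-1}$ take the form $Q(Y_p)F^{-4m}v_i$ (up to scalar, with $Y_p = \tilde\Pi X\tilde\Pi^{-1}$); a ``positive versus negative $F$-degree'' dichotomy --- the metaplectic analogue of the Bruhat--Tits tree argument in \cite{Paskunas_ext_ss} --- forces the intersection into the one-dimensional overlap $kv_i\subset M$. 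Making this dichotomy rigorous while carefully tracking the Hilbert-symbol cocycle corrections is the technical heart of the argument.
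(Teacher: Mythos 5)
Your three-step skeleton (injectivity of the diagonal, surjectivity via $\wt{G}$-stability of $V:=\pi_M+\tilde{\Pi}\pi_M$, and the intersection identity $\pi_M\cap\tilde{\Pi}\pi_M=M$) is the right decomposition, and the monomial computations ($\tilde{s}F^{n}m=\pm\,\tilde{\Pi}^{2n}F^{1-n}(mT_{\Pi})$ and $F^{-k}M\subset\tilde{\Pi}\pi_M$ for $k\geq 1$) are correct. The decisive step, however, is the intersection identity, and you do not prove it. The proposed reduction to $\pi_i\cap\tilde{\Pi}\pi_{i-1}=kv_i$ is not justified: the decomposition $\pi_M=\bigoplus_i\pi_i$ of Lemma \ref{basic_lemma} (iv) is a decomposition of $k\llbracket X\rrbracket[F^4,\Gamma]$-modules, not an $H$-isotypic one (the infinite-dimensional spaces $\pi_i$ share $H$-eigencharacters beyond the bottom layer), so an element of $\pi_M\cap\tilde{\Pi}\pi_M$ need not split compatibly --- the directness of $\sum_i(\pi_i+\tilde{\Pi}\pi_{i-1})$ is itself part of what must be shown --- and the ``positive versus negative $F$-degree dichotomy'' is precisely the point left open. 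The preparations placed immediately before the proposition are designed for this step: since $\pi_M^{I_1}=M=(\tilde{\Pi}\pi_M)^{I_1}$ by Lemma \ref{basic_lemma} (iii),(iv), if $N:=\pi_M\cap\tilde{\Pi}\pi_M$ contained $M$ properly, then $(N/M)^{I_1}\neq 0$, and the connecting homomorphism $(N/M)^{I_1}\hookrightarrow H^1(I_1/Z_1,M)$ (injective because $N^{I_1}=M$) would produce a nonzero element of $\Delta\cap\tilde{\Pi}\Delta$, contradicting Lemma \ref{Delta_cap=0}. This cohomological route, following \cite[Theorem 6.3]{Paskunas_ext_ss}, is the one the paper intends; your proposal never invokes $\Delta$, the injective-envelope statements, or Lemma \ref{Delta_cap=0}.

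There is also a circularity in your surjectivity argument. Passing from $\tilde{s}F^{n}M\subset V$ to $\tilde{s}\pi_M\subset V$ requires controlling the action of lower unipotent matrices with unit entry (arising from $\tilde{s}u_1\tilde{s}^{-1}$) on the vectors $\tilde{\Pi}^{2n}F^{1-n}(mT_{\Pi})$; you propose to obtain this from $\wt{K}$-stability of $V$, but $\wt{K}$-stability of $V$ is equivalent to the inclusion $\tilde{s}V\subset V$ that you are in the middle of proving, and it does not follow from $\tilde{s}$ mapping the monomial generators into $V$, since $\tilde{s}(u\cdot x)=(\tilde{s}u\tilde{s}^{-1})\,\tilde{s}x$ with $\tilde{s}u\tilde{s}^{-1}\notin\wt{I}$. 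The gap can be closed, but only with an additional argument: decompose the Iwasawa algebra of the lower unipotent subgroup of $K$ over that of its intersection with $I_1$, push the pro-$p$ part through the relevant powers of $F$, resp.\ of $\tilde{\Pi}F\tilde{\Pi}^{-1}$, onto $I_1$-invariant vectors of $M$, and factor the unit lower unipotents via the Bruhat decomposition as $u\,h\,s\,u'$ with $u,u'\in U\cap I$ and $h\in T\cap K$, so that everything reduces to the elements $\tilde{s}F^{k}m'$ you already control together with the $\wt{I}$- and $\tilde{\Pi}$-stability of $V$. As written, both this step and the intersection identity are genuine gaps.
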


\begin{proof}
	With the previous preparations, the same proof as in \cite[Theorem 6.3]{Paskunas_ext_ss} works.
\end{proof}

\begin{cor}\label{ss:IT(M)=M}
	The unit $M\to \cI(\cT(M))$ is an isomorphism.
\end{cor}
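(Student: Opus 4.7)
The plan is to apply the left-exact functor $(-)^{I_1}$ to the short exact sequence of Proposition \ref{theorem_ses} and reduce the remaining obstruction to Lemma \ref{Delta_cap=0}.

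I first observe that $\pi_M^{I_1}=M$ and $(\tilde{\Pi}\pi_M)^{I_1}=M$. The first equality follows from the Basic Lemma \ref{basic_lemma}(iii) applied to each summand in $\pi_M=\bigoplus_{i=1}^4 \pi_i$, together with the fact that an $I_1$-fixed vector is in particular killed by $X=\begin{mat}1&1\\0&1\end{mat}-1$. The second equality then follows because $\tilde{\Pi}$ normalizes $I_1$ and acts on $M$ invertibly via the Hecke operator $T_\Pi^{-1}$.

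Taking $I_1$-invariants of the sequence of Proposition \ref{theorem_ses} therefore yields the exact sequence
\[
0\to M\xrightarrow{\operatorname{diag}} M\oplus M\to \cT(M)^{I_1}\xrightarrow{\delta} H^1(I_1,M)\xrightarrow{\alpha} H^1(I_1,\pi_M)\oplus H^1(I_1,\tilde{\Pi}\pi_M).
\]
The cokernel of the diagonal is canonically isomorphic to $M$ via $(m_1,m_2)\mapsto m_1-m_2$, and a direct check shows that the induced injection $M\hookrightarrow \cT(M)^{I_1}$ agrees with the unit of adjunction. Hence the assertion reduces to proving $\ker(\alpha)=0$.

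The final step -- which I expect to be the main obstacle -- is to identify the kernels of the two components of $\alpha$ with $\Delta$ and $\tilde{\Pi}\Delta$ respectively. Using the long exact sequence in group cohomology for $0\to M\to \pi_M\to \pi_M/M\to 0$ together with $\pi_M^{I_1}=M$, the kernel of the first component is exactly the image of $(\pi_M/M)^{I_1}$, which is $\Delta$ by construction. The parallel argument for $\tilde{\Pi}\pi_M$ identifies the kernel of the second component with the image of $(\tilde{\Pi}\pi_M/M)^{I_1}$; writing a lift $y=\tilde{\Pi}x$ and using $gy-y=\tilde{\Pi}\bigl((\tilde{\Pi}^{-1}g\tilde{\Pi})x-x\bigr)$ shows that this image is $\tilde{\Pi}\Delta$ under the $\tilde{\Pi}$-action on $H^1(I_1,M)$ recalled before Lemma \ref{Delta_cap=0}. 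Consequently $\ker(\alpha)=\Delta\cap \tilde{\Pi}\Delta=0$ by Lemma \ref{Delta_cap=0}, concluding the proof.
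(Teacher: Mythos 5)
Your argument is correct and is essentially the paper's own proof: both take $I_1$-invariants of the exact sequence of Proposition \ref{theorem_ses}, use $\pi_M^{I_1}=M$ to reduce everything to the injectivity of $H^1$ of $M$ into $H^1$ of $\pi_M\oplus\tilde{\Pi}\pi_M$, identify the two kernels with $\Delta$ and $\tilde{\Pi}\Delta$, and conclude by Lemma \ref{Delta_cap=0}. The only cosmetic difference is that you work with $H^1(I_1,-)$ rather than $H^1(I_1/Z_1,-)$; since $Z_1$ acts trivially on $\pi_M$ and inflation is injective, this changes nothing.
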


\begin{proof}
	Since $M$ is irreducible, the unit is an inclusion, so it is enough to prove that $\cI(\cT(M))$ has the same dimension as $M$. The equality $M=\pi_M^{I_1}$ reduces us to proving that the sequence in Proposition \ref{theorem_ses} remains exact after taking $I_1$-invariants, i.e.\ we need that the map $H^1(I_1/Z_1,M)\to H^1(I_1/Z_1,\pi_M\oplus \tilde{\Pi}\pi_M)$ is injective. This amounts to showing that $W\cap \tilde{\Pi}W=0$, where $W$ is the kernel of $H^1(I_1/Z_1,M)\to H^1(I_1,/Z_1,\pi_M)$. Since $\pi_M^{I_1}=M$, the kernel $W$ is equal to the image of the injection $(\pi_M/M)^{I_1}\hookrightarrow H^1(I_1/Z_1,M)$, i.e.\ $W=\Delta$. The corollary now follows from Lemma \ref{Delta_cap=0}.
\end{proof}

\begin{cor}\label{ss_socle}
	The $K$-socle of $\cT(M)$ is equal to $\bigoplus_{\psi\in \chi^{\tilde{\Pi}^{\bZ}}} \sigma(\psi)|_K$ with $\sigma(\psi)$ as in Corollary \ref{generates_weight}.
\end{cor}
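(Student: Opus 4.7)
The plan is to use the equality $\cT(M)^{I_1}=M=\bigoplus_{\psi\in\chi^{\tilde{\Pi}^{\bZ}}} Me_{\psi}$ provided by Corollary \ref{ss:IT(M)=M} together with the explicit identification of $\sigma(\psi)$ as a single weight from Corollary \ref{generates_weight}. The broad strategy is: first realize each $\sigma(\psi)$ as a simple $K$-subrepresentation of $\cT(M)$, then verify that the sum of the four $\sigma(\psi)$'s is internal, and finally show that no further weight can embed into $\cT(M)$ because its $I_1$-invariants would have to land in $M$.

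\textbf{Step 1 (The $\sigma(\psi)$'s sit in the socle, directly summed).} By Corollary \ref{generates_weight} each $\sigma(\psi)$ is an irreducible $\wt{K}$-representation, hence its restriction to $K$ is a weight, contained in $\soc_K(\cT(M))$. Moreover, taking $I_1$-invariants yields $\sigma(\psi)^{I_1}=Me_{\psi}$, since a weight has one-dimensional $I_1$-invariants. I would then consider the natural $K$-equivariant map
\[
\bigoplus_{\psi\in\chi^{\tilde{\Pi}^{\bZ}}}\sigma(\psi)\longrightarrow \cT(M)
\]
obtained from the inclusions. On $I_1$-invariants this becomes $\bigoplus_\psi Me_\psi\to \cT(M)^{I_1}=M$, which is the identity. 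Any kernel of the above map would be a nonzero $K$-subrepresentation of the semisimple module $\bigoplus_\psi\sigma(\psi)$ and would therefore contain some $\sigma(\psi)$ entirely, contradicting injectivity on $I_1$-invariants. Hence the map is injective and $\bigoplus_\psi\sigma(\psi)\hookrightarrow \cT(M)$.

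\textbf{Step 2 (No other weight fits).} Conversely, let $W\subseteq \cT(M)$ be an arbitrary irreducible $K$-subrepresentation. By Proposition \ref{class_weights} it is a weight, so $W^{I_1}$ is one-dimensional and $H$ acts on it through a single character. Since $W^{I_1}\subseteq\cT(M)^{I_1}=M=\bigoplus_{\psi\in\chi^{\tilde{\Pi}^{\bZ}}} Me_\psi$ and $H$ acts on $Me_\psi$ via $\psi$, the character of $W^{I_1}$ must be one of the $\psi$'s in the $\Pi$-orbit, and then $W^{I_1}=Me_\psi$. Consequently $W=\langle KW^{I_1}\rangle\subseteq \langle \wt{K}Me_\psi\rangle=\sigma(\psi)$; by irreducibility $W=\sigma(\psi)|_K$. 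Since the $K$-socle is the sum of all irreducible $K$-subrepresentations, this gives $\soc_K(\cT(M))\subseteq \bigoplus_\psi\sigma(\psi)|_K$, and combined with Step 1 yields the claimed equality.

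\textbf{Expected obstacle.} The only delicate point is that some of the weights $\sigma(\psi)$ could \emph{a priori} be isomorphic as abstract $K$-representations (compare the relations in Remark \ref{weight_remark}), so the directness of $\bigoplus_\psi\sigma(\psi)\subseteq\cT(M)$ does not follow merely from classifying the isomorphism types of the $\sigma(\psi)$. What makes the argument work is that the four subspaces $Me_\psi\subseteq M$ sit in pairwise distinct $H$-isotypic components, so even when two $\sigma(\psi)$'s are abstractly isomorphic they occur as genuinely different simple $K$-submodules of $\cT(M)$; the $I_1$-invariants argument in Step 1 handles this uniformly without any case distinction.
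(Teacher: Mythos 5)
Your proof is correct and follows essentially the same route as the paper: embed the four weights $\sigma(\psi)$ into $\cT(M)$ and then use $\cT(M)^{I_1}=M$ (Corollary \ref{ss:IT(M)=M}) to rule out anything further in the socle, your Step 2 merely spelling out the paper's one-line ``equality on $I_1$-invariants implies the assertion''. The worry in your closing paragraph is in fact moot: the four $\sigma(\psi)$ are pairwise non-isomorphic, since $H$ acts on their one-dimensional $I_1$-invariants through the four distinct characters of the $\Pi$-orbit of $\chi$, though, as you note, your argument never needs this.
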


\begin{proof}
	By Corollary \ref{generates_weight}, $\cT(M)$ contains the pairwise distinct weights $\sigma(\psi)$ for $\psi\in \chi^{\tilde{\Pi}^{\bZ}}$, so we obtain an inclusion $\bigoplus_{\psi\in \chi^{\tilde{\Pi}^{\bZ}}} \sigma(\psi)|_K \subset \soc_K(\cT(M))$.   By Corollary \ref{ss:IT(M)=M} this is an equality on $I_1$-invariants, which implies the assertion.
\end{proof}

\begin{thm}\label{thm_ss}
	The adjoint pair $(\cT,\cI)$ induces a bijection
	\[
	\left\{\begin{array}{c}
		\text{Supersingular right}\\
		\text{$\cH$-modules}
	\end{array}\right\} \cong \left\{\begin{array}{c}
		\text{Admissible genuine supersingular}\\ 
		\text{representations of $\wt{G}$},
	\end{array}\right\}
	\]
	where both sides are considered up to isomorphism.
\end{thm}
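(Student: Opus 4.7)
The plan is to check the bijection in both directions by assembling the preceding propositions; virtually all the hard work has already been done.

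\emph{Forward direction.} Start from a supersingular right $\cH$-module $M$. Proposition \ref{supersingular_irred} gives that $\cT(M)$ is irreducible as a $P$-representation, hence as a $\wt{G}$-representation; it is genuine by construction. By Corollary \ref{ss:IT(M)=M}, the unit $M \to \cI(\cT(M))$ is an isomorphism. Since $M$ is finite dimensional (Lemmas \ref{simple_fd_reg} and \ref{simple_fd_non-reg}), so is $\cT(M)^{I_1}$, and admissibility of $\cT(M)$ follows because $I_1$ is an open pro-$p$ subgroup (cf.\ Section \ref{gen_reps_and_notations}). Finally, $\cT(M)$ cannot be a genuine principal series: Theorem \ref{thm_ps} would then force $\cI(\cT(M)) \cong M$ to be a principal $\cH$-module, contradicting the supersingularity of $M$. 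In view of Remark \ref{ss=notps}, $\cT(M)$ is a supersingular admissible genuine representation.

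\emph{Backward direction.} Now let $\pi$ be an admissible genuine supersingular representation. Since $I_1$ is pro-$p$ and we work in characteristic $p$, $\pi^{I_1} \ne 0$; by admissibility it is finite dimensional, and hence contains a simple right $\cH$-submodule $M$. The inclusion $M \hookrightarrow \cI(\pi)$ corresponds by adjunction to a non-zero morphism $\cT(M) \to \pi$, which is surjective by irreducibility of $\pi$. Were $M$ principal, Theorem \ref{thm_ps} would make $\cT(M)$ a genuine principal series and $\pi$ a quotient of such, contradicting the supersingularity of $\pi$. Thus $M$ is supersingular, and by the forward direction $\cT(M)$ is itself irreducible, making $\cT(M) \twoheadrightarrow \pi$ an isomorphism.

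Injectivity of the assignment $M \mapsto \cT(M)$ on isomorphism classes is automatic via Corollary \ref{ss:IT(M)=M}: $\cT(M_1) \cong \cT(M_2)$ implies $M_1 \cong \cI(\cT(M_1)) \cong \cI(\cT(M_2)) \cong M_2$. There is no residual obstacle to the packaging argument itself; the substantive content is contained in Proposition \ref{supersingular_irred} and Corollary \ref{ss:IT(M)=M}, both consequences of the Basic Lemma \ref{basic_lemma} and the short exact sequence of Proposition \ref{theorem_ses}. The only tiny subtlety worth flagging is ensuring that the simple submodule $M \subset \pi^{I_1}$ must be supersingular (forced by supersingularity of $\pi$ together with Theorem \ref{thm_ps}), which is what propagates ``supersingular on the representation side'' back to ``supersingular on the Hecke side.''
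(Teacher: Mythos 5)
Your proposal is correct and follows essentially the same route as the paper: irreducibility of $\cT(M)$ via Proposition \ref{supersingular_irred}, the unit isomorphism of Corollary \ref{ss:IT(M)=M}, and Theorem \ref{thm_ps} to rule out the principal case on both sides, with the backward direction likewise extracting a simple submodule of $\cI(\pi)$ and using adjunction. Your explicit check of admissibility of $\cT(M)$ (finite-dimensional $I_1$-invariants plus the single-pro-$p$-subgroup criterion) is a welcome detail the paper leaves implicit, but it does not change the argument.
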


\begin{proof}
	Let $M$ be a supersingular $\cH$-module. By Proposition \ref{supersingular_irred}, the $\wt{G}$-representation $\cT(M)$ is irreducible. If it was not supersingular, then it would be isomorphic to a principal series, see Remark \ref{ss=notps}. By Theorem \ref{thm_ps} and Corollary \ref{ss:IT(M)=M}, this is impossible and we deduce that $\cT(M)$ is supersingular and that the assignment $M\mapsto \cT(M)$ is injective with inverse $\cI$. Let now $\pi$ be a genuine supersingular $\wt{G}$-representation. Then $\cI(\pi)$ is a non-zero finite dimensional $\cH$-module and hence contains some simple module $M$. By the classification results for simple Hecke modules, $M$ is either principal or supersingular. In any case, the attached $\wt{G}$-representation $\cT(M)$ is irreducible (as we just explained in the supersingular case and by Theorem \ref{thm_ps} in the principal case). By adjunction we obtain a non-trivial $\wt{G}$-equivariant map $\cT(M)\to \pi$, which has to be an isomorphism as both sides are irreducible. Once again using Theorem \ref{thm_ps}, we see that $M$ cannot be a principal module and therefore needs to be supersingular. This proves surjectivitiy.
\end{proof}

	\begin{cor}\label{ss_restricted_P}
	Let $\pi_1,\pi_2$ be genuine supersingular $\wt{G}$-representations. Then the restriction functor from $\wt{G}$ to $P$ induces an equality
	\[
	\Hom_{\wt{G}}(\pi_1,\pi_2)=\Hom_P(\pi_1|_P,\pi_2|_P).
	\]
	In other words, $\pi_1\cong \pi_2$ as $\wt{G}$-representations if and only if $\pi_1|_P\cong \pi_2|_P$ as $P$-representations.
\end{cor}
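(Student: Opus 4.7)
The containment $\Hom_{\wt{G}}(\pi_1,\pi_2)\hookrightarrow \Hom_P(\pi_1|_P,\pi_2|_P)$ is tautological. The $\pi_i$ are irreducible and admissible $\wt{G}$-representations (Theorem \ref{fg+adm=fl}), so Schur's lemma forces $\dim_k \Hom_{\wt{G}}(\pi_1,\pi_2)\leq 1$. By Proposition \ref{supersingular_irred}, each $\pi_i|_P$ is irreducible; moreover $\pi_i$ is countable-dimensional over $k$, so a version of Dixmier's lemma yields $\End_P(\pi_i|_P)=k$, hence $\dim_k \Hom_P(\pi_1|_P,\pi_2|_P)\leq 1$ as well. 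The asserted equality thus reduces to the second assertion: it suffices to prove that $\pi_1|_P\cong\pi_2|_P$ as $P$-representations implies $\pi_1\cong\pi_2$ as $\wt{G}$-representations.

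Write $\pi_i=\cT(M_i)$ for supersingular right $\cH$-modules $M_i$ (Theorem \ref{thm_ss}); we must show $M_1\cong M_2$. By Proposition \ref{supersingular_irred}, $\pi_i|_P=\pi_{M_i}[F^{-1}]$, where $\pi_{M_i}\subset \pi_i$ is the irreducible $P^+$-subrepresentation constructed in Section \ref{basic_lemma_section}. The plan is to observe that $\pi_{M_i}$ is intrinsic to the $P$-structure: one expects it to be the unique minimal nonzero $P^+$-subrepresentation, sitting at the bottom of the exhaustive ascending filtration $\pi_i=\bigcup_{n\geq 0} F^{-n}\pi_{M_i}$. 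A $P$-equivariant isomorphism $\phi\colon\pi_1\to\pi_2$ therefore restricts to a $P^+$-equivariant isomorphism $\pi_{M_1}\xrightarrow{\sim}\pi_{M_2}$; taking $X$-torsion (that is, $(U\cap I)$-invariants) and applying the Basic Lemma \ref{basic_lemma}(iii) yields a $\Gamma$-equivariant isomorphism $M_1\xrightarrow{\sim} M_2$. This matches the $\Pi$-orbit $O$ of the parameter $\chi$ on both sides and in particular settles the square-regular/non-regular dichotomy.

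To pin down the remaining classifying invariants, extract from the $P^+$-action on $\pi_{M_i}$ the cycle exponents $s_j=\max\{s\geq 0 : X^sFv_j\neq 0\}$ attached to the $\Gamma$-eigenvectors $v_j\in M_i$; by Corollary \ref{generates_weight} together with Remark \ref{weight_remark}, these exponents encode the weights $r_\psi$ attached to $M_i$, and in the non-regular case they further distinguish $\lambda=0$ from $\lambda=-1$. Finally, the scaling-invariant scalar $c=\prod_j c_j$ appearing in the intrinsic identity $F^{-4m}v_j=c^{-m}X^{e(j)_m}v_j$ of Lemma \ref{prep_lem}(ii) is preserved by $\phi$; via the explicit formula for $c_4$ this pins down the central parameter $z$. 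Collecting the $\Pi$-orbit, the parameter $\lambda$ (when applicable) and $z$ yields precisely the data listed in Definition \ref{def_simple_reg}(ii) or Definition \ref{def_simple_nreg}(ii), so $M_1\cong M_2$ and hence $\pi_1\cong\pi_2$ by Theorem \ref{thm_ss}.

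The main technical obstacle is the intrinsic characterization of $\pi_{M_i}$ inside $\pi_i|_P$, which underpins the second paragraph. The expected argument is that every $P^+$-subrepresentation of $\cT(M_i)=\pi_{M_i}[F^{-1}]$ lies in the chain $\{F^{-n}\pi_{M_i}\}_{n\geq 0}$, whence $\pi_{M_i}$ is the unique minimal nonzero one and in particular the only irreducible $P^+$-subrepresentation; establishing this requires a careful analysis of $P^+$-stable subspaces using the invertibility of $F$ on $\pi_i$ together with the $P^+$-irreducibility of $\pi_{M_i}$ supplied by the Basic Lemma \ref{basic_lemma}(vii).
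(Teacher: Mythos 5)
Your overall strategy is the paper's: restrict the $P$-isomorphism to the bottom $P^+$-piece $\pi_{M_i}$, compare $X$-torsion to get $M_1\cong M_2$, then recover the classifying data of the supersingular modules from the cycle relations. Two remarks on the first half. The step you flag as ``the main technical obstacle'' is in fact the easy part, and the paper sidesteps it entirely: since $M_1$ is finite dimensional and $\pi_2=\pi_{M_2}[F^{-1}]$, one has $F^{4N}f(M_1)\subset\pi_{M_2}$ for $N\gg 0$, and Lemma \ref{prep_lem}(ii) (the identity $X^{e(i)_N}F^{4N}v_i=c^Nv_i$ transported through $f$) then forces $f(M_1)\subset\pi_{M_2}$ directly; comparing $X$-torsion gives $f(M_1)\subset M_2$, with equality by symmetry. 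Your alternative route also works, but not for the reason you state: it is not needed (and not clear) that every $P^+$-subrepresentation lies in the chain $\{F^{-n}\pi_{M_i}\}$; what you need is only that every nonzero $P^+$-subrepresentation contains $\pi_{M_i}$, which follows because $F$ acts invertibly on $\pi_i$ (so any nonzero subrepresentation meets $\pi_{M_i}$) together with the $P^+$-irreducibility from the Basic Lemma \ref{basic_lemma}(vii).

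The genuine gap is in the second half, where you pass from the $\Gamma$-equivariant isomorphism $f\colon M_1\cong M_2$ to the equality of the parameters $(O,\,r_\psi,\,\lambda,\,z)$. The isomorphism is only $\Gamma$-equivariant, and in the non-square-regular case the $\Gamma$-eigencharacters in $M_i$ occur with multiplicity two; hence $f(v_{\psi^s[0,1]})$ may be a nontrivial combination $\alpha w_{\psi^s[0,1]}+\beta w_{\psi^s}$ of two basis vectors with \emph{different} cycle exponents, and your intrinsic invariant $\max\{s: X^sFf(v_j)\neq 0\}$ only records the larger of the two. So ``these exponents encode the weights'' does not by itself match $r_1(\psi)$ with $r_2(\psi)$, distinguish $\lambda=0$ from $\lambda=-1$, or justify that the product $c=\prod_jc_j$ (and hence $z$) is preserved -- the last step presupposes $f(v_\psi)=\gamma w_\psi$, which is exactly what has to be proved. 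This is where the paper's proof has its real content: one chooses $\psi$ with $r_1(\psi)$ maximal, applies $X^{r_1(\psi)}F$ to $f(v_{\psi^s[0,1]})$ via Corollary \ref{recursion_formula}, and derives the forbidden equality $r_1(\psi)=r_2(\psi[1,0])$, which is excluded by the parity dichotomy that exactly one of $\psi,\psi[1,0]$ is $s$-fixed (so exactly one of the weights lies in $\{0,p-1\}$); only then does one get $f(v_\psi)=\gamma w_\psi$, $\lambda_1=\lambda_2$, and finally $z_1=z_2$ from Lemma \ref{prep_lem}(ii). Without an argument of this kind (or some substitute ruling out the mixing), your parameter-matching step does not go through; the square-regular case, where the $\Gamma$-characters are pairwise distinct, is the only case your sketch covers.
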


\begin{proof}
	By Theorem \ref{thm_ss}, we may write $\pi_i=\cT(M_i)$ for a supersingular $\cH(O_i)$-module $M_i$, for some orbit $O_i$, for $i=1,2$. By Proposition \ref{supersingular_irred}, $\pi_i$ is then irreducible as a $P$-representation. Assume now that $f\colon \pi_1|_P\cong \pi_2|_P$ is an isomorphism of $P$-representations. Since $\pi_2=\pi_{M_2}[F^{-1}]$, we can choose $N\gg 0$ such that $F^{4N} f(M_1)\subset \pi_{M_2}$. Using Lemma \ref{prep_lem} (ii), we then obtain $f(M_1)\subset \pi_{M_2}$. Comparing $X$-torsion shows that $f(M_1)\subset M_2$, which is an equality by symmetry, i.e.\ $f$ restricts to a $\Gamma$-equivariant isomorphism $f\colon M_1\cong M_2$.
	
	We now check that $O_1=O_2$. The space $M_i$ is the direct sum of $1$-dimensional $\Gamma$-stable subspaces. If $O_i$ is square-regular, then the $\Gamma$-characters in $M_i$ appear with multiplicity one. If $O_i$ is non-square-regular, then the $\Gamma$-characters in $M_i$ appear with multiplicity two. This already implies that $O_1$ is square-regular if and only if $O_2$ is square-regular. For $i=1,2$, let us now fix a character $\chi_i\in O_i$, which we choose to satisfy $\chi_i=\chi_i^s$ in the non-square-regular case. Let $O_i^2$ denote the set of characters on $H$ obtained by squaring those in $O_i$. Then $O_i^2$ determines $O_i$. Concretely, $O_i^2=\{\chi_i^2,(\chi_i^2)^s\}$, which has cardinality two if and only if $O_i$ is square-regular. We can recover $O_i^2$ from $\{\chi_i^2|_{\Gamma},(\chi_i^2)^s|_{\Gamma}\}$, and so $O_1=O_2$. After possibly replacing $\chi_2$, we may assume that $\chi_1=\chi_2$, which we denote by $\chi$. In the non-square-regular case we may, after possibly twisting (Lemma \ref{twist_lemma}), assume that $\chi=\mathbf{1}$.
	
	For $i=1,2$ and $\psi\in \chi^{\tilde{\Pi}^{\bZ}}$, let $\sigma_i(\psi)\subset \pi_i$ denote the weight generated by $M_ie_{\psi}$, see Lemma \ref{generates_weight}, and put $r_i(\psi)=\dim_{k}\sigma_i(\psi)-1$. 
	We claim that $r_1(\psi)=r_2(\psi)$ for all $\psi\in \chi^{\tilde{\Pi}^{\bZ}}$. By Remark \ref{weight_remark}, it suffices to prove this for a single $\psi$. Choose $\psi$ such that $r_1(\psi)\geq r_1(\psi')$ for all $\psi'\in \chi^{\tilde{\Pi}^{\bZ}}$. By symmetry, we may assume that $r_1(\psi)\geq r_2(\psi)$. Assume by contradiction that $r_1(\psi)>r_2(\psi)$. Looking at the $\Gamma$-eigencharacter, we must have
	\[
	f(v_{\psi^s[0,1]})=\alpha w_{\psi^s[0,1]} + \beta w_{\psi^s} \text{ for some $\alpha,\beta\in k$ with $\alpha \beta \neq 0$,}
	\]
	where we put $w_{\psi^s}:=0$ in the square-regular case.
	By the recursion formula in Corollary \ref{recursion_formula}, we have
	\begin{equation}\label{recursion_used}
		X^{r_1(\psi)}F(v_{\psi^s[0,1]})=a v_{\psi} \text{ for some $a\in k^{\times}$.}
	\end{equation}
	Applying the $P$-equivariant isomorphism $f$ to this equation, we obtain
	\begin{equation}\label{equal2}
		af(v_{\psi})=\alpha X^{r_1(\psi)}F(w_{\psi^s[0,1]}) + \beta X^{r_1(\psi)}F(w_{\psi^s}),
	\end{equation}
	where the first summand vanishes since $r_1(\psi)>r_2(\psi)$ (using the analogue of equation (\ref{recursion_used}) for $w_{\psi^s[0,1]}$ and the fact that $Xw_{\psi}=0$). It follows that $\beta\neq 0$ and
	\begin{equation}\label{equal1}
		X^{r_1(\psi)}F(w_{\psi^s})=\beta^{-1}a f(v_{\psi})\neq 0.
	\end{equation}
	This already gives a contradiction in the square-regular case (as we put $w_{\psi^s}=0$ in this case). It remains to treat the non-square-regular case.  Since the expression in (\ref{equal1}) is non-zero, we deduce that $r_1(\psi)\leq r_2(\psi[1,0])$; and since it is killed by $X$, we then deduce that $r_1(\psi)=r_2(\psi[1,0])$ (both of these observations follow from Corollary \ref{recursion_formula}). Now either $\psi=\psi^s$ or $\psi[1,0]=\psi[1,0]^s$, i.e.\ either $r_i(\psi)\in \{0, p-1\}$ (independent of $i$) or $r_i(\psi[1,0])\in \{0, p-1\}$ (independent of $i$), but not both. Hence, the equality $r_1(\psi)=r_2(\psi[1,0])$ is impossible. This proves the claim.
	
	Using the claim we obtain that $f(v_{\psi})=\gamma w_{\psi}$ for some $\gamma\in k^{\times}$. Indeed, in the square-regular case, there is nothing to prove. In the non-square-regular case, note that $r_2(\psi[1,0])=r_1(\psi[1,0])<r_1(\psi)=r_2(\psi)$, so the second summand in equation (\ref{equal2}) vanishes, while the first one is a non-zero scalar multiple of $w_{\psi}$.
	
	For $i=1,2$, let us now write
	\[
	M_i=\begin{cases}
		\operatorname{SS}(\chi,z_i) \text{ if $O$ is square-regular}\\
		\operatorname{SS}(\lambda_i,z_i) \text{ if $\mathbf{1}\in O$,}
	\end{cases}
	\]
	where $z_i\in k^{\times}$ and $\lambda_i\in \{0,-1\}$ as in Definition \ref{def_simple_reg}, resp.\ \ref{def_simple_nreg}. It follows from Corollary \ref{generates_weight} (ii) that $\lambda_1=\lambda_2$ since $r_1(\chi)=r_2(\chi)$. In order to conclude that $M_1=M_2$, it remains to show that $z_1=z_2$. By Lemma \ref{prep_lem} (ii), we have
	\[
	X^{e(\psi)} F^4(v_{\psi})= (-1)^{\frac{p-1}{2}} z_i^{-1} \left(\prod_{\psi'\in \chi^{\tilde{\Pi}^{\bZ}}} r_1(\psi')!\right) v_{\psi},
	\]
	where the exponent $e(\psi)$ only depends on $r_1(\psi)$  (which determines all the other $r_1(\psi')$). A similar equality also holds true for $w_{\psi}$. Applying the $P$-equivariant map $f$ and using the equalities $r_1(\psi')=r_2(\psi')$ for all $\psi'\in \chi^{\tilde{\Pi}^{\bZ}}$ allows us to conclude $z_1=z_2$.
\end{proof}

\begin{cor}
	Let $\pi$ be a genuine supersingular $\wt{G}$-representation. Then each element in $\pi/X\pi$ is killed by a power of $F$.
\end{cor}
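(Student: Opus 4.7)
The plan is to invoke Theorem \ref{thm_ss} to write $\pi \cong \cT(M)$ for a supersingular simple right $\cH$-module $M$, and then adopt the setup of Section \ref{basic_lemma_section} as in the paragraph preceding Proposition \ref{supersingular_irred} (so $n=4$ and we have the cycling relations $X^{s_i}F(v_i) = c_i v_{i+1}$). The key step will be to prove the stronger statement that $\pi_M = X\pi_M$, where $\pi_M = k\llbracket X\rrbracket[F]M$. Combined with the equality $\cT(M) = \pi_M[F^{-1}]$ established in the proof of Proposition \ref{supersingular_irred}, this will imply the corollary immediately: given $v \in \pi$, some $F^k v$ lies in $\pi_M = X\pi_M \subset X\pi$.

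First I would verify that at least one exponent $s_j$ is strictly positive. If instead $s_i = 0$ for every $i$, then Basic Lemma \ref{basic_lemma}(v) gives $\pi_i = kv_i$, so $\pi_M = M$ is four-dimensional; moreover the relations $F(v_i) = c_i v_{i+1}$ show that $F$ cyclically permutes $\{v_1,\ldots,v_4\}$ up to non-zero scalars, so $F$ is already invertible on $\pi_M$, whence $\cT(M) = \pi_M[F^{-1}] = M$ would be finite dimensional, contradicting Lemma \ref{no_fd}.

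Once some $s_j > 0$ is secured, the integer $e(i) = p^3 s_i + p^2 s_{i+1} + p s_{i+2} + s_{i+3}$ is strictly positive for every $i$, so Lemma \ref{prep_lem}(ii) (with $m = 1$) yields $c\, v_i = X^{e(i)} F^4(v_i) \in X\pi_M$ for each $i$. Hence $M \subset X\pi_M$. Because $FX = X^p F$, the subspace $X\pi_M$ is stable under both $F$ and $k\llbracket X\rrbracket$, so $F^j M \subset X\pi_M$ for all $j \geq 0$, and therefore $\pi_M = k\llbracket X\rrbracket[F] M \subset X\pi_M$, giving $\pi_M = X\pi_M$. Combined with $\pi = \cT(M) = \pi_M[F^{-1}]$, for any $v \in \pi$ we can pick $k \geq 0$ with $F^k v \in \pi_M \subset X\pi$, which is the desired conclusion.

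The only slightly subtle point, and the main obstacle, is the exclusion of the degenerate case where all $s_i$ vanish; this is precisely where Lemma \ref{no_fd} must be invoked to rule out a finite-dimensional $\cT(M)$. Everything else reduces to an immediate application of the Basic Lemma machinery already assembled in Section \ref{basic_lemma_section}.
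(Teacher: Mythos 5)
Your proof is correct and follows essentially the same route as the paper: write $\pi=\cT(M)$, use $\pi=\pi_M[F^{-1}]$ from Proposition \ref{supersingular_irred}, and deduce $\pi_M=X\pi_M$ from the cycling relations of Corollary \ref{recursion_formula} (via Lemma \ref{prep_lem}). Your detour through Lemma \ref{no_fd} to exclude the degenerate case where all $s_i$ vanish is valid, though this case is also ruled out directly by Remark \ref{weight_remark}, since $r_{\psi}+r_{\psi^s[1,0]}\in\{\frac{p-1}{2},\frac{3(p-1)}{2}\}$ is always positive.
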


\begin{proof}
	Write $\pi=\cT(M)$ for a supersingular $\cH$-module $M$. By Proposition \ref{supersingular_irred}, we have $\pi=\pi_M[F^{-1}]$ and the relations in Corollary \ref{recursion_formula} imply that $\pi_M=X\pi_M$, which finishes the proof.
\end{proof}

\section{Spherical Hecke algebras}

In this section, we study the genuine mod-$p$ representation theory of $\wt{G}$ via spherical Hecke algebras by following the approach of Barthel-Livn\'{e} as outlined in the introduction. The spherical Hecke algebra, defined to be the endomorphism ring of the compact induction of some weight, turns out to be a polynomial ring in a single operator $\wt{T}$ (not to be confused with the metaplectic torus). Any smooth irreducible admissible genuine $\wt{G}$-representation is a quotient of the cokernel of a linear polynomial in $\wt{T}$, for a suitable weight. By following \cite{Breuil_Paskunas}, we show that these cokernels are of finite length and determine their Jordan-Hölder decomposition. As an application, we prove that a smooth $k$-representation of $\wt{G}$ is of finite length if and only if it is finitely generated and admissible, extending the known result for $G$.

We set things up a bit more generally to also include characteristic $0$ coefficients as in \cite{Breuil_II} allowing us to show in the next section that in case $k=\Fpbar$ the aforementioned cokernels arise as the mod-$p$ reduction of an invariant bounded lattice of a locally algebraic $\Qpbar$-representations of $\wt{G}$

\subsection{Computational preparations}

Let $R$ be a $\bZ_p$-algebra. We fix an integer $r\in \bZ_{\geq 0}$ and define the $R$-linear representation $\Sym^{r}(R^2)$ of $K$ just as in the case of $R=k$, Definition \ref{sym_def}, i.e.\ the underlying $R$-module is the space of homogeneous polynomials in $R[x,y]$ of degree $r$ with action defined by 
\[
\begin{mat}
	a & b\\ c & d
\end{mat}P(x,y)=P(ax+cy,bx+dy), \text{ for all $\begin{mat}
		a & b\\c & d
	\end{mat}\in K$ and $P(x,y)\in \Sym^{r}(R^2)$.}
\]
Extend the $K$-action to an action of  $KZ^{\sq}$ by letting the scalar matrix $p^2$ act trivially, and denote the resulting representation by $\underline{\Sym}^r(R^2)$. By Lemma \ref{centers} (i), we have $Z(\wt{G})=Z^{\sq}\times \mu_2$ and we write
\[
\tilde{\sigma}_r(R):=\underline{\Sym}^r(R^2)\boxtimes \iota
\]
for the corresponding genuine $\wt{K}Z(\wt{G})$-representation. We identify the \textit{spherical Hecke algebra} $\End_{\wt{G}}(\cInd^{\wt{G}}_{\wt{K}Z(\wt{G})}(\tilde{\sigma}_r(R)))$ with the algebra
\small
\begin{equation}\label{hecke-algebra_descr}
	\biggl\{\varphi\colon \wt{G}\to \End_R(\tilde{\sigma}_r(R)) : \begin{array}{l}
		\diamond \hspace{0.1cm} \varphi(k_2 gk_2)=k_2\circ \varphi(g) \circ k_1, \forall g\in \wt{G}, k_1,k_2\in \wt{K}Z(\wt{G})\\
		\diamond \hspace{0.1cm} \supp(\varphi) \text{ is compact} 
	\end{array} \biggr\},
\end{equation}
\normalsize
whose multiplication is given by convolution.

The following is an analogue of \cite[Lemme 2.1.4.1]{Breuil_II} and its proof is similar.

\begin{def-lem}\label{def_T} There exists a unique function $\wt{\mathbf{T}}_R$ in the algebra (\ref{hecke-algebra_descr}) with 
	\begin{enumerate}
		\item[$\bullet$] $\supp(\wt{\mathbf{T}}_R)=\wt{K}Z(\wt{G})\left(\begin{mat}
			1 & 0\\0 & p^{-2}
		\end{mat},1\right)\wt{K}Z(\wt{G})$;
		\item[$\bullet$] $\wt{\mathbf{T}}_R\left(\left(\begin{mat}
			1 & 0\\0 & p^{-2}
		\end{mat},1\right)\right)(x^{r-i}y^{i})=p^{2(r-i)}x^{r-i}y^{i}$ for all $0\leq i \leq r$.
	\end{enumerate}
	The corresponding endomorphism in $\End_{\wt{G}}(\cInd^{\wt{G}}_{\wt{K}Z(\wt{G})}(\tilde{\sigma}_r(R)))$ is denoted by $\wt{T}_R$. If $R=k$, we simply write $\wt{T}$ instead of $\wt{T}_{k}$.
\end{def-lem}

\begin{prop}\label{Hecke_poly}
	If $R$ is an $\bF_p$-algebra and $0\leq r\leq p-1$, then the $R$-algebra $\End_{\wt{G}}(\cInd^{\wt{G}}_{\wt{K}Z(\wt{G})}(\tilde{\sigma}_r(R)))$ is a commutative polynomial ring in the operator $\wt{T}_R$.
\end{prop}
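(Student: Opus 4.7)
The plan is to decompose the spherical Hecke algebra according to the support of its elements via a Cartan-type decomposition, compute the dimension of the Hecke operators supported on each double coset, and identify $\wt{T}_R$ as the generator that produces all higher-degree operators.

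First, I would establish the decomposition
\[
\wt{G}=\bigsqcup_{n\geq 0} \wt{K}Z(\wt{G})\,\tilde{\alpha}_n\,\wt{K}Z(\wt{G}), \qquad \alpha_n=\begin{mat} 1 & 0 \\ 0 & p^{-n}\end{mat},
\]
by projecting to $G$, applying the classical Cartan decomposition, and using that $\wt{K}Z(\wt{G})$ covers $KZ^{\sq}$ together with $Z(\wt{G}) = Z^{\sq}\times \mu_2$ from Lemma \ref{centers}. Under the description (\ref{hecke-algebra_descr}), the compact-support condition then yields an $R$-module decomposition of the Hecke algebra into $\bigoplus_{n\geq 0} \cH_n$, where $\cH_n$ is cut out by support in the $n$-th double coset.

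Next, I would analyze $\cH_n$ by testing the equivariance constraint $\varphi(\tilde{\alpha}_n)\rho(k_1)=\rho(\tilde{\alpha}_n k_1\tilde{\alpha}_n^{-1})\varphi(\tilde{\alpha}_n)$ on generators of $\wt{K}Z(\wt{G})\cap\tilde{\alpha}_n\wt{K}Z(\wt{G})\tilde{\alpha}_n^{-1}$. For the diagonal torus $\mathrm{diag}(a,d)\in T\cap K$, Lemma \ref{conj_ij} gives the conjugate $(\mathrm{diag}(a,d),\omega(a)^{n(p-1)/2})$: varying $d$ forces $\varphi(\tilde{\alpha}_n)$ to be diagonal in the basis $\{x^{r-i}y^i\}$, while varying $a$ (and recalling $\iota$) gives the constraint $M_{ii}\bigl(1-\omega(a)^{n(p-1)/2}\bigr)=0$. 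For \emph{odd} $n$, $\omega(a)^{n(p-1)/2}=\omega(a)^{(p-1)/2}$ takes the value $-1$ on non-square units, yielding $2M_{ii}=0$ and hence $M=0$ since $p$ is odd, so $\cH_n=0$. For \emph{even} $n=2m$, the torus condition is vacuous; checking the upper unipotents $\begin{mat} 1 & b\\0&1\end{mat}$ (whose conjugates vanish modulo $p$) forces $M(\rho(\beta)-\mathrm{id})=0$, which by the non-vanishing of binomial coefficients $\binom{j}{i}$ in $R$ for $0\leq i<j\leq r\leq p-1$ kills every diagonal entry except $\lambda_r$; the remaining lower-triangular equivariance $\begin{mat} 1&0\\p^nc&1\end{mat}$ is satisfied tautologically on $y^r$ once a short cocycle computation shows the absence of a $\mu_2$-twist. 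Hence $\cH_{2m}$ is free of rank one over $R$, with a distinguished generator $\wt{T}^{(m)}$ sending $\tilde{\alpha}_{2m}\mapsto E_{rr}$ (the projection onto $Ry^r$).

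Finally, the defining values of $\wt{T}_R$ in Definition-Lemma \ref{def_T} reduce modulo $p$ to $p^{2(r-i)}\mapsto \delta_{i,r}$, so $\wt{T}_R=\wt{T}^{(1)}$. By the support-submultiplicativity of convolution combined with the classical parity-preservation of Cartan double cosets in $\GL_2$, the support of $\wt{T}_R^m$ lies in $\bigsqcup_{k\leq 2m,\, k\text{ even}}\wt{K}Z(\wt{G})\tilde{\alpha}_k\wt{K}Z(\wt{G})$; an induction comparing the convolution at $\tilde{\alpha}_{2m}$ shows that the top-degree component of $\wt{T}_R^m$ is a non-zero $R$-multiple of $\wt{T}^{(m)}$. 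Triangularity with respect to the filtration then shows that $\{\wt{T}_R^m\}_{m\geq 0}$ is an $R$-basis of $\bigoplus_m \cH_{2m}=\cH$, and the linear independence of $\{\wt{T}^{(m)}\}$ (disjoint supports) forces the algebra map $R[X]\to \cH$ sending $X$ to $\wt{T}_R$ to be an isomorphism; commutativity is then automatic. The step I expect to be the main obstacle is the explicit verification that the top-degree component of $\wt{T}_R^m$ at $\tilde{\alpha}_{2m}$ is a \emph{unit} scalar multiple of $E_{rr}$; this requires tracking through a convolution sum over cosets of the Iwahori level and checking that no cancellation occurs on $y^r$, with the conditions $0\leq r\leq p-1$ and $p$ odd being essential.
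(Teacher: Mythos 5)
There is a genuine gap, and it sits exactly at the point the paper's proof is designed to handle. Your opening Cartan-type decomposition $\wt{G}=\bigsqcup_{n\geq 0}\wt{K}Z(\wt{G})\,\tilde{\alpha}_n\,\wt{K}Z(\wt{G})$ with $\alpha_n=\operatorname{diag}(1,p^{-n})$ is false: since $Z(\wt{G})=Z^{\sq}\times\mu_2$ (Lemma \ref{centers}), the center only absorbs \emph{even} central powers of $p$, so the double cosets of $\wt{K}Z(\wt{G})$ are indexed by pairs $(n,m)$ with $n\geq m\geq 0$ taken modulo the shift $(n,m)\mapsto(n+2,m+2)$, not by a single integer. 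Your family $\{\tilde{\alpha}_n\}$ only reaches the cosets of $\operatorname{diag}(p^n,1)$ with $n$ even and $\operatorname{diag}(p^{n+1},p)$ with $n$ odd; it misses, for instance, the cosets of $\operatorname{diag}(p,1)$, of $\operatorname{diag}(p^3,p)$, and of the odd central elements $p^{2k+1}\cdot\mathrm{id}$. Consequently your decomposition $\cH=\bigoplus_n\cH_n$ and the ensuing basis/spanning argument for $\{\wt{T}^{(m)}\}$ are not established: a priori there could be Hecke operators supported on the omitted cosets, and nothing in your argument rules them out. In particular the central-type cosets $n=m$ with $n$ odd need a separate argument, because there the unipotent conjugation does not shrink anything and the "factor through coinvariants" step gives no constraint; the paper disposes of them via Lemma \ref{Ztilde_action}, observing that a nonzero operator there would be a $K$-intertwiner between $\tilde{\sigma}_r$ and its twist by $\omega^{n\frac{p-1}{2}}\circ\det$, which forces $n$ even.

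For comparison: the paper's proof reduces to $R=\bF_p$, invokes the standard Barthel--Livn\'e/Herzig argument (\cite[Theorem 21]{herzig}) for the structure of the algebra once the support analysis is known, and supplies the only genuinely new input, namely that a bi-equivariant function supported on $\widetilde{\operatorname{diag}(p^n,p^m)}$, $n\geq m\geq 0$, vanishes unless \emph{both} $n$ and $m$ are even -- for $n>m$ by the coinvariants/invariants factorization plus the torus cocycle $\omega(\lambda)^{m\frac{p-1}{2}}\omega(\mu)^{n\frac{p-1}{2}}$ from Example \ref{law_torus}, and for $n=m$ by the twisting argument above. Your "odd $n$" computation is essentially this parity argument, but applied only to half of the relevant cosets; your even-$n$ rank-one computation and the triangularity/leading-term induction at the end are the standard parts and would go through once the full support analysis (covering all pairs $(n,m)$, including $n=m$) is in place.
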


\begin{proof}
	Since $\cInd^{\wt{G}}_{\wt{K}Z(\wt{G})}(\tilde{\sigma}_r(\bF_p))$ is finitely generated over $\bF_p[\wt{G}]$, one may reduce to the case $R=\bF_p$, see also \cite[Lemma 5.1]{paskunas}.
	Using the description of the Hecke algebra given by (\ref{hecke-algebra_descr}) and the Cartan decomposition, the assertion can be proved as in the case of $G=\GL_2(\bQ_p)$, see \cite[Theorem 21]{herzig}. We only explain why a function supported on $\left(\begin{mat}
		p^n & 0\\0 & p^{m}
	\end{mat},1\right)$, for $n\geq m\geq 0$, can only exist if $n$ and $m$ are even. Let $F$ be such a function and put $f=F\left(\left(\begin{mat}
		p^n & 0\\0 & p^{m}
	\end{mat},1\right)\right)\in \End_{\bF_p}(\tilde{\sigma}_r(\bF_p))$. First assume that $n>m$. For $x\in \bZ_p$, the equation
	\[
	\left(\begin{mat}
		p^n & 0\\ 0 & p^m
	\end{mat},1\right)\left(\begin{mat}
		1 & x\\0 & 1
	\end{mat},1\right) = \left(\begin{mat}
		1 & p^{n-m}x\\ 0 & 1
	\end{mat},1\right)\left(\begin{mat}
		p^n & 0\\0 & p^m
	\end{mat},1\right)
	\]
	implies that $f$ factors through the $1$-dimensional $U\cap K$-coinvariants $\tilde{\sigma}_r(k)_{U\cap K}$, while the equation
	\[
	\left(\begin{mat}
		1 & 0\\x & 1
	\end{mat},1\right) \left(\begin{mat}
		p^n & 0\\0 & p^m
	\end{mat},1\right) = \left(\begin{mat}
		p^n & 0\\0 & p^m
	\end{mat},1\right)\left(\begin{mat}
		1 & 0\\p^{n-m}x & 1
	\end{mat},1\right)
	\]
	implies that $f$ takes values in the $1$-dimensional $sUs\cap K$-invariants $\tilde{\sigma}_r(k)^{sUs\cap K}$. In conclusion, $f$ must be a scalar multiple of the canonical map 
	\[
	\tilde{\sigma}_r(\bF_p) \twoheadrightarrow \tilde{\sigma}_r(\bF_p)_{U\cap K} \xleftarrow{\cong} \tilde{\sigma}_r(\bF_p)^{sUs\cap K}\subset \tilde{\sigma}_r(\bF_p),
	\]
	where the isomorphism is the projection. In particular, $f$ must be $T\cap K$-equivariant. Now, by Example \ref{law_torus}, for all $\lambda,\mu\in \bF_p^{\times}$,
	\[
	\left(\begin{mat}
		p^n & 0\\ 0 & p^m
	\end{mat},1\right)\left(\begin{mat}
		[\lambda] & 0\\ 0 & [\mu]
	\end{mat},1\right) = \left(\begin{mat}
		[\lambda] & 0\\ 0 & [\mu]
	\end{mat},\lambda^{m\frac{p-1}{2}} \mu^{n\frac{p-1}{2}}\right) \left(\begin{mat}
		p^n & 0\\ 0 & p^m
	\end{mat},1\right),
	\]
	and so the $T\cap K$-equivariance of $f$ can only hold true if $f=0$ or $m$ and $n$ are even.
	
	If $n=m$, then Lemma \ref{Ztilde_action} implies that $f$ defines a $K$-equivariant map between $\tilde{\sigma}_r(k)$ and its twist by $\omega^{n\frac{p-1}{2}}\circ {\det}$. Such a map can only be non-zero if the twist is trivial, i.e.\ if $n$ is even.
\end{proof}

\begin{remark}
	If $R$ is a $\bQ_p$-algebra and $r=0$, then the proposition is still valid as follows for example from the Satake isomorphism \cite[Theorem 10.1]{McNamara}.
\end{remark}

\begin{definition}
	Given a vector $v\in \tilde{\sigma}_r(R)$ and an element $g\in \wt{G}$, denote by $[g,v]\in \cInd^{\wt{G}}_{\wt{K}Z(\wt{G})}(\tilde{\sigma}_r(R))$ the unique function with support $\wt{K}Z(\wt{G})g^{-1}$ and value $v$ at $g^{-1}$.
\end{definition}

Next, we define an explicit set of representatives of $\bZ_p/p^m\bZ_p$, for all $m\geq 0$, by letting $J_m=\left\{\sum_{j=0}^{m-1}p^{j}[\lambda_j] : \lambda_j\in \bF_p\right\}$ if $m\geq 1$ and setting $J_0=\{0\}$. Moreover, for $m\geq 0$ and $\lambda\in J_m$,  we put 
$g^0_{m,\lambda}=\begin{mat}
	p^m & \lambda\\0 & 1
\end{mat}$ and $g^1_{m,\lambda} = \begin{mat}
	1 & 0\\ p\lambda & p^{m+1}
\end{mat}.$ 
It follows from \cite[Equation (1)]{Breuil_II} that
\begin{equation}\label{decomp_G}
	\wt{G}=\bigsqcup_{m\geq 0, \lambda\in J_m} \bigsqcup_{q,j=0,1}\wt{K}Z(\wt{G}) (\tilde{g}^{q}_{m,\lambda})^{-1}\tilde{\Pi}^{-2j}.
\end{equation}

\begin{definition}
	For $m\geq 0$ and $q,j=0,1$, define the subspaces $S_m(R)\subset B_m(R)$ of $\cInd^{\wt{G}}_{\wt{K}Z(\wt{G})}(\tilde{\sigma}_r(R))$ by
	\[
	S_m(R)=\left\{f : \supp(f) \subset \bigsqcup_{\lambda\in J_m}\bigsqcup_{q,j=0,1} \wt{K}Z(\wt{G}) (\tilde{g}^{q}_{m,\lambda})^{-1}\tilde{\Pi}^{-2j}\right\},
	\]
	and $B_m(R)=\sum_{n\leq m} S_n(R)$, respectively.
\end{definition}

Note that $B_m(R)\subset B_{m+1}(R)$ for all $m\geq 0$. The decomposition (\ref{decomp_G}) implies that $\cInd^{\wt{G}}_{\wt{K}Z(\wt{G})}(\tilde{\sigma}_r(R))=\sum_{m\geq 0} B_m(R)$. Given a function $f$ in this space, one may think of the minimal $m\geq 0$ satisfying $f\in B_m(R)$ as the radius of the support of $f$.

The decomposition
\[
\wt{K}Z(\wt{G})\left(\begin{mat}
	1 & 0\\0 & p^{-2}
\end{mat},1\right)\wt{K}Z(\wt{G})=\bigsqcup_{\lambda\in J_2} \wt{K}Z(\wt{G})(\tilde{g}^0_{2,\lambda})^{-1} \sqcup \bigsqcup_{\lambda\in J_1} \wt{K}Z(\wt{G})(\tilde{g}^{1}_{1,\lambda})^{-1},
\]
which is quickly verified using $K=I\sqcup IsI$, implies that the endomorphism $\wt{T}_R\in \End_{\wt{G}}\left(\cInd^{\wt{G}}_{\wt{K}Z(\wt{G})}(\tilde{\sigma}_r(R))\right)$ is explicitly given by
\begin{equation}\label{T([1,v])}
	\wt{T}_R([1,v])=\sum_{\lambda\in J_2} [\tilde{g}^0_{2,\lambda},\wt{\mathbf{T}}_R((\tilde{g}^0_{2,\lambda})^{-1})(v)] + \sum_{\mu \in J_1} [\tilde{g}^1_{1,\mu}, \wt{\mathbf{T}}_R((\tilde{g}^1_{1,\mu})^{-1})(v)],
\end{equation}
for all $v\in \tilde{\sigma}_r(R)$. Motivated by this description, we make the following definition in analogy to \cite[Lemme 2.2.1]{Breuil_II}.

\begin{definition}\label{def_T+} Let $m\geq 0$, $\mu\in J_m$ and $v\in \tilde{\sigma}_r(R)$. 	
	\begin{enumerate}
		\item[{\rm (i)}] Define $\wt{T}_R^+([\tilde{g}^0_{m,\mu},v])=\sum_{\lambda\in J_2} [\tilde{g}^0_{m+2,\mu+p^m \lambda}, \wt{\mathbf{T}}_R((\tilde{g}^0_{2,\lambda})^{-1})(v)]$.
		\item[{\rm (ii)}] Noting that $\tilde{g}^1_{m,\mu}=\tilde{\Pi} \tilde{g}^0_{m,\mu} \tilde{s}$, set $\wt{T}_R^{+}([\tilde{g}^1_{m,\mu},v])=\tilde{\Pi} \wt{T}_R^+([\tilde{g}^0_{m,\mu} ,\tilde{s}v])$.
		\item[{\rm (iii)}] For $q\in \{0,1\}$, put 	$\wt{T}_R^+([\tilde{\Pi}^2 \tilde{g}^{q}_{m,\mu},v])=\tilde{\Pi}^2 \wt{T}_R^+([\tilde{g}^q_{m,\mu},v]).$
	\end{enumerate}
	Extending $R$-linearly, cf.\ (\ref{decomp_G}), yields the endomorphism $\wt{T}^+_R$ on $\cInd^{\wt{G}}_{\wt{K}Z(\wt{G})}(\tilde{\sigma}_r(R))$.
\end{definition}

\noindent For $m\geq 0$, we thus have $\wt{T}^+_R(S_m(R))\subset S_{m+2}(R)$ and one checks that the difference $\wt{T}_R-\wt{T}^+_R$ preserves $B_m(R)$. We obtain:

\begin{observation}\label{observation}
	Let $f\in \cInd^{\wt{G}}_{\wt{K}Z(\wt{G})}(\tilde{\sigma}_r(R))$. If $\wt{T}_R(f)\in B_m(R)$ for some $m\geq 0$, then $f\in B_{m-2}(R)$. In particular, the image of $\wt{T}_R$ intersects $B_1(R)$ trivially, and for any $\lambda\in R$, the endomorphism $\wt{T}_R-\lambda$ is not surjective.
\end{observation}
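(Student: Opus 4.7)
Plan for the proof.

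The disjoint union (\ref{decomp_G}) gives a direct-sum decomposition $\cInd^{\wt{G}}_{\wt{K}Z(\wt{G})}(\tilde{\sigma}_r(R)) = \bigoplus_{n \geq 0} S_n(R)$, so every $f$ has a well-defined top radius $N(f) = \max\{n \geq 0 \noloc f_n \neq 0\}$. Writing $\wt{T}_R = \wt{T}^+_R + (\wt{T}_R - \wt{T}^+_R)$ and combining $\wt{T}^+_R(S_n) \subset S_{n+2}$ with the preservation of $B_m(R)$ by the difference that is noted just above the Observation, the $S_{N+2}(R)$-component of $\wt{T}_R(f)$ equals $\wt{T}^+_R(f_N)$ where $N = N(f)$: for $n < N$ one has $\wt{T}^+_R(f_n) \in S_{n+2}$ with $n+2 < N+2$, and $(\wt{T}_R - \wt{T}^+_R)(f) \in B_N(R)$ contributes nothing to $S_{N+2}(R)$. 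The first assertion therefore reduces to the injectivity of $\wt{T}^+_R$ on each $S_n(R)$.

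For this key step, the plan is to use parts (ii) and (iii) of Definition \ref{def_T+} together with the invertibility of $\tilde{\Pi}^2$ and $\tilde{s}$, acting as group elements on $\cInd$, to reduce to elements of the form $[\tilde{g}^0_{n,\mu}, v]$. Since the cosets $\wt{K}Z(\wt{G})(\tilde{g}^0_{n+2, \mu+p^n\lambda})^{-1}$ are pairwise disjoint as $\lambda$ ranges over $J_2$, the vanishing of $\wt{T}^+_R([\tilde{g}^0_{n,\mu}, v])$ forces $\wt{\mathbf{T}}_R((\tilde{g}^0_{2,\lambda})^{-1})(v) = 0$ for every $\lambda \in J_2$. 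An explicit computation, based on the factorization $(\tilde{g}^0_{2,\lambda})^{-1} = \tilde{s} \cdot h \cdot \tilde{s} \cdot \tilde{u}_\lambda$ (up to a central $\mu_2$-factor) with $h = \left(\begin{mat}1&0\\0&p^{-2}\end{mat},1\right)$ and $u_\lambda = \begin{mat}1&-\lambda\\0&1\end{mat}$, and using the bi-$\wt{K}Z(\wt{G})$-equivariance of $\wt{\mathbf{T}}_R$ together with its value on $h$ from Def-Lemma \ref{def_T}, gives
\[
\wt{\mathbf{T}}_R((\tilde{g}^0_{2,\lambda})^{-1})(x^{r-i}y^i) = \sum_{k=0}^i \binom{i}{k}(-\lambda)^k p^{2(i-k)} x^{r-i+k}y^{i-k}.
\]
For $v = \sum_i c_i x^{r-i}y^i$, extracting the $x^r$-coefficient produces a polynomial $P(\lambda) = \sum_i c_i (-\lambda)^i \in R[\lambda]$ of degree at most $r \leq p-1$; its vanishing at the $p$ Teichmüller lifts $[\alpha] \in J_1 \subset J_2$, whose pairwise differences are units in $R$ since $R$ is a $\bZ_p$-algebra, forces all $c_i$ to vanish by a Vandermonde argument, whence $v = 0$. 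In characteristic zero one may alternatively use only the $\lambda = 0$ summand, since $\wt{\mathbf{T}}_R(h)$ is then diagonal and invertible on the monomial basis, so this step is essentially forced by Def-Lemma \ref{def_T} and is expected to be the main technical obstacle only in the mod-$p$ setting.

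Combining these, if $f \neq 0$ with $N(f) = N$, then $\wt{T}^+_R(f_N) \neq 0$ and thus $\wt{T}_R(f) \notin B_{N+1}(R)$; contrapositively, $\wt{T}_R(f) \in B_m(R)$ forces $N \leq m-2$, i.e.\ $f \in B_{m-2}(R)$. The first ``in particular'' is then immediate: $\wt{T}_R(f) \in B_1(R)$ forces $f \in B_{-1}(R) = 0$, so $\wt{T}_R(f) = 0$. For the second, fix $\lambda \in R$ and $0 \neq v \in \tilde{\sigma}_r(R)$, and suppose for contradiction $(\wt{T}_R - \lambda)(f) = [1,v]$ for some $f$. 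If $f = 0$, this gives $[1,v] = 0$, contradicting $v \neq 0$; otherwise $\wt{T}_R(f) = [1, v] + \lambda f$ has top radius at most $N(f)$ since $[1,v] \in B_0(R) \subseteq B_{N(f)}(R)$, whereas by the main claim $\wt{T}_R(f)$ has top radius $N(f) + 2$, a contradiction.
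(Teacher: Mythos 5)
Your argument is correct, and it is essentially the route the paper takes: the paper states the Observation as an immediate consequence of the two facts recorded just before it ($\wt{T}^+_R(S_m(R))\subset S_{m+2}(R)$ and the preservation of $B_m(R)$ by $\wt{T}_R-\wt{T}^+_R$), leaving unstated the one point you work out in detail, namely that $\wt{T}^+_R$ is injective on each $S_m(R)$. Your reduction of that injectivity to elements $[\tilde{g}^0_{m,\mu},v]$ (via parts (ii), (iii) of the definition of $\wt{T}^+_R$ and the invertible action of $\tilde{\Pi}$, $\tilde{\Pi}^2$, $\tilde{s}$), the disjointness of the cosets $\wt{K}Z(\wt{G})(\tilde{g}^0_{m+2,\mu+p^m\lambda})^{-1}$, the explicit formula for $\wt{\mathbf{T}}_R((\tilde{g}^0_{2,\lambda})^{-1})$ and the Vandermonde argument at the Teichm\"uller nodes are all sound, and the deduction of the two ``in particular'' statements is fine. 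One caveat you should make explicit: your key step uses $r\leq p-1$ (the polynomial $P$ has degree at most $r$ and must be detected by the $p$ nodes $[\alpha]$), or alternatively that $p$ is a nonzerodivisor in $R$ (your $\lambda=0$ remark). Neither hypothesis appears in the Observation as printed --- the subsection only fixes $r\in\bZ_{\geq 0}$ and an arbitrary $\bZ_p$-algebra $R$ --- and some such hypothesis is genuinely needed: over an $\bF_p$-algebra with $r\geq p$ the vector $v=x^{r-1}y-x^{r-p}y^{p}$ is killed by every $\wt{\mathbf{T}}_R((\tilde{g}^0_{2,\lambda})^{-1})$, so $\wt{T}^+_R$ is not injective on $S_m(R)$ and the first assertion of the Observation fails for $f=[1,v]$. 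Since every application in the paper has $r\leq p-1$ or $R\in\{\Zpbar,\Qpbar\}$, this is an imprecision in the statement's generality rather than a defect of your argument, but you should record where the bound on $r$ (or the flatness of $R$ over $\bZ_p$) enters.
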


\subsection{Jordan-Hölder sequence}
We now take $R=k$ and restrict $0\leq r\leq p-1$ so that the $K$-representation $\tilde{\sigma}_r(k)$ is irreducible. Following the exposition in \cite[§6]{Breuil_Paskunas}, we show that $\cInd^{\wt{G}}_{\wt{K}Z(\wt{G})}(\tilde{\sigma}_r(k))/(\wt{T}-\lambda)$, for $\lambda\in k$, has finite length as a $\wt{G}$-representation and compute its Jordan-Hölder decomposition. 

To ease notation let us put $\tilde{\sigma}_r:=\tilde{\sigma}_r(k)$ and let $\chi=\omega^r \otimes \mathbf{1}\in \Hhat$ so that $\chi\boxtimes \iota = \tilde{\sigma}_r^{I_1}$. We extend $\chi$ to a character of $HZ^{\sq}$ by letting $p^2$ act trivially.

\begin{prop}\label{cInd_Hecke}
	\begin{enumerate}
		\item[{\rm (i)}] If $\chi\neq \chi^s$, then
		\begin{align*}
			\cInd^{\wt{G}}_{\wt{K}Z(\wt{G})}(\tilde{\sigma}_r)&=\ker\left(T_s\colon \cInd^{\wt{G}}_{\wt{I}Z(\wt{G})}(\chi^s\boxtimes \iota)\to  \cInd^{\wt{G}}_{\wt{I}Z(\wt{G})}(\chi\boxtimes \iota)\right)\\
			&=T_s\cInd^{\wt{G}}_{\wt{I}Z(\wt{G})}(\chi\boxtimes \iota).
		\end{align*}
		\item[{\rm (ii)}] If $r=p-1$, then
		\begin{align*}
			\cInd^{\wt{G}}_{\wt{K}Z(\wt{G})}(\tilde{\sigma}_{p-1})&=\ker\left(T_s+1\colon \cInd^{\wt{G}}_{\wt{I}Z(\wt{G})}(\mathbf{1}\boxtimes \iota)\to  \cInd^{\wt{G}}_{\wt{I}Z(\wt{G})}(\mathbf{1}\boxtimes \iota)\right)\\
			&=T_s\cInd^{\wt{G}}_{\wt{I}Z(\wt{G})}(\mathbf{1}\boxtimes \iota).
		\end{align*}
		\item[{\rm (iii)}] If $r=0$, then
		\begin{align*}
			\cInd^{\wt{G}}_{\wt{K}Z(\wt{G})}(\tilde{\sigma}_0)&=\ker\left(T_s\colon \cInd^{\wt{G}}_{\wt{I}Z(\wt{G})}(\mathbf{1}\boxtimes \iota)\to  \cInd^{\wt{G}}_{\wt{I}Z(\wt{G})}(\mathbf{1}\boxtimes \iota)\right)\\
			&=(T_s+1)\cInd^{\wt{G}}_{\wt{I}Z(\wt{G})}(\mathbf{1}\boxtimes \iota).
		\end{align*}
	\end{enumerate}
\end{prop}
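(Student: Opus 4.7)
My plan is to deduce all three parts from Proposition \ref{JH_weight} by applying the exact functor $\cInd^{\wt{G}}_{\wt{K}Z(\wt{G})}$ together with transitivity of compact induction, and then identifying the resulting structure maps with $T_s$. For part (i), I would apply Proposition \ref{JH_weight} to both $\chi=\omega^r\otimes \mathbf{1}$ (parameters $(a,b)=(r,0)$) and $\chi^s=\mathbf{1}\otimes \omega^r$ (parameters $(a,b)=(p-1,r)$), obtaining two short exact sequences whose outer constituents $\Sym^r(k^2)$ and $\sigma':=\Sym^{p-1-r}(k^2)\otimes {\det}^r$ swap the roles of sub and quotient. Tensoring with $\iota$, extending to $\wt{K}Z(\wt{G})$ by $p^2\mapsto 1$, and applying $\cInd^{\wt{G}}_{\wt{K}Z(\wt{G})}$ with transitivity yields
\begin{align*}
0 &\to \cInd^{\wt{G}}_{\wt{K}Z(\wt{G})}(\sigma'\boxtimes \iota)\xrightarrow{i} A\xrightarrow{p_1} \cInd^{\wt{G}}_{\wt{K}Z(\wt{G})}(\tilde{\sigma}_r)\to 0,\\
0 &\to \cInd^{\wt{G}}_{\wt{K}Z(\wt{G})}(\tilde{\sigma}_r)\xrightarrow{j} A^s\xrightarrow{p_1^s} \cInd^{\wt{G}}_{\wt{K}Z(\wt{G})}(\sigma'\boxtimes \iota)\to 0,
\end{align*}
with $A=\cInd^{\wt{G}}_{\wt{I}Z(\wt{G})}(\chi\boxtimes \iota)$ and $A^s$ the analogue for $\chi^s$. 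Parts (ii) and (iii) produce analogous sequences from the single ses $0\to \mathbf{1}\to \Ind^K_I(\mathbf{1})\to \Sym^{p-1}(k^2)\to 0$ (with the roles of $\mathbf{1}$ and $\Sym^{p-1}(k^2)$ swapped depending on whether $r=p-1$ or $r=0$), now with $A=A^s=\cInd^{\wt{G}}_{\wt{I}Z(\wt{G})}(\mathbf{1}\boxtimes \iota)$.

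For part (i), the main step is to show that the composite $\Phi:=j\circ p_1\colon A\to A^s$ agrees with $T_s$ up to a nonzero scalar. Evaluating on $[1,1]\in A$, we have $\Phi([1,1])=j([1,x^r])$, which as an element of $A^s$ is supported on the $\tilde{s}$-double coset $\wt{I}Z(\wt{G})\tilde{s}\wt{I}Z(\wt{G})$: under the embedding $\Sym^r(k^2)\hookrightarrow \Ind^K_I(\chi^s)$ from the second ses, $x^r$ maps to the unique $I_1$-invariant of $H$-character $\chi$ in $\Ind^K_I(\chi^s)$, which is supported on $IsI$ rather than $I$. The Hecke operator $T_s\colon A\to A^s$ has the same support property, and Frobenius reciprocity applied to the $\tilde{s}$-component of $\Hom_{\wt{G}}(A,A^s)$ shows this component is one-dimensional, spanned by $T_s$. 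Contributions from other double cosets (involving powers of $\tilde{\Pi}$, which do appear when $r=\tfrac{p-1}{2}$, where $\chi^s$ lies in the $\Pi$-orbit of $\chi$) are supported away from $\tilde{s}$ and hence cannot contribute to $\Phi$. Consequently $\Phi=cT_s$ for some $c\in k^\times$, and so $T_sA=\Phi(A)=j(\cInd^{\wt{G}}_{\wt{K}Z(\wt{G})}(\tilde{\sigma}_r))$, giving the second equality in (i). The first equality follows by the symmetric argument applied to the composite $i\circ p_1^s\colon A^s\to A$, which is likewise a nonzero scalar multiple of $T_s\colon A^s\to A$, yielding $\ker(T_s\colon A^s\to A)=\ker(p_1^s)=j(\cInd^{\wt{G}}_{\wt{K}Z(\wt{G})}(\tilde{\sigma}_r))$.

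For parts (ii) and (iii), $\chi=\chi^s=\mathbf{1}$ and $T_s$ is an endomorphism of $A$. By Lemma \ref{identities_Hecke_operators_I}(v), $T_s^2=-T_s$, so $-T_s$ and $T_s+1$ are complementary idempotents in $\End_{\wt{G}}(A)$, yielding the decomposition $A=T_sA\oplus (T_s+1)A$ with $T_sA=\ker(T_s+1)$ and $(T_s+1)A=\ker(T_s)$. Applying $\cInd^{\wt{G}}_{\wt{K}Z(\wt{G})}$ to the two ses's (from $r=p-1$ and $r=0$) gives inclusions $\cInd^{\wt{G}}_{\wt{K}Z(\wt{G})}(\mathbf{1}\boxtimes \iota),\cInd^{\wt{G}}_{\wt{K}Z(\wt{G})}(\Sym^{p-1}(k^2)\boxtimes \iota)\hookrightarrow A$. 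A direct computation using the Hecke-operator formula shows $T_s[1,1]=0$ (it is a sum of $p$ identical vectors, which vanishes in characteristic $p$) and $(T_s+1)[1,x^{p-1}]=0$ (since $T_s$ acts by $-1$ on $\Sym^{p-1}(k^2)^{I_1}$, as standard for the $\GL_2(\bF_p)$-Hecke algebra). Hence these two subrepresentations land in the respective $T_s$-eigenspaces. A diagram chase using both ses's---the composition $\cInd\hookrightarrow A\twoheadrightarrow \cInd$ of each module with itself must be the identity, being a nonzero $\wt{G}$-endomorphism of a cyclic representation---yields a direct sum splitting $A=\cInd^{\wt{G}}_{\wt{K}Z(\wt{G})}(\mathbf{1}\boxtimes \iota)\oplus \cInd^{\wt{G}}_{\wt{K}Z(\wt{G})}(\Sym^{p-1}(k^2)\boxtimes \iota)$ that matches the $T_s$-decomposition, proving both (ii) and (iii).

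The main obstacle is the support analysis in part (i): showing that $\Phi$ lies in the $\tilde{s}$-component of the Hom-space requires a careful description of the embedding $\Sym^r(k^2)\hookrightarrow \Ind^K_I(\chi^s)$ (in particular, that $x^r$ lands in the ``big cell'' summand rather than the identity summand), together with consistent tracking of the metaplectic cocycle through the transitivity isomorphism for compact induction. Once the support argument is in place, everything reduces to Frobenius-Mackey computations parallel to those in the $\GL_2(\bQ_p)$-case treated in \cite{herzig_notes}.
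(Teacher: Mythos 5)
Your proposal uses the same basic inputs as the paper's proof (Proposition \ref{JH_weight}, exactness of $\cInd^{\wt{G}}_{\wt{K}Z(\wt{G})}(-)$, and the quadratic relation of Lemma \ref{identities_Hecke_operators_I}(v)), but you transpose the whole identification to the level of $\wt{G}$: you induce first and then match the connecting maps with $T_s$ by a support/Frobenius-reciprocity analysis. The paper goes the other way: by exactness and transitivity of compact induction it suffices to prove all three equalities after replacing $\wt{G}$ by $\wt{K}Z(\wt{G})\cong KZ^{\sq}\times \mu_2$, where they drop out of Proposition \ref{JH_weight} together with $T_s^2e_\chi=0$ (resp.\ $=-T_se_\chi$). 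Your part (i) is correct as written — the observation that the image of $x^r$ in $\Ind^K_I(\chi^s)$ is the $I_1$-eigenvector of $H$-character $\chi$, supported on $IsI$, and that the $\tilde{s}$-component of the relevant Hom-space is one-dimensional, does identify $j\circ p_1$ with a nonzero multiple of $T_s$ (and you correctly note that the extra intertwiners occurring when $r=\tfrac{p-1}{2}$ live on other double cosets) — it is simply more work than the $K$-level reduction requires.

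There is, however, one step whose justification fails: in (ii)/(iii) you claim the composite $\cInd^{\wt{G}}_{\wt{K}Z(\wt{G})}(\cdot)\hookrightarrow A\twoheadrightarrow \cInd^{\wt{G}}_{\wt{K}Z(\wt{G})}(\cdot)$ ``must be the identity, being a nonzero $\wt{G}$-endomorphism of a cyclic representation.'' That principle is false: by Proposition \ref{Hecke_poly}, $\End_{\wt{G}}(\cInd^{\wt{G}}_{\wt{K}Z(\wt{G})}(\tilde{\sigma}_r))=k[\wt{T}]$, so this cyclic representation has plenty of nonzero, non-invertible endomorphisms (e.g.\ $\wt{T}$ itself), and ``nonzero'' does not imply ``identity.'' The conclusion you need is true but must be obtained at the $K$-level: the composite is $\cInd^{\wt{G}}_{\wt{K}Z(\wt{G})}$ applied to $\mathbf{1}\to \Ind^K_I(\mathbf{1})\to \mathbf{1}$ (resp.\ the analogue for $\Sym^{p-1}(k^2)$), which is a nonzero scalar because the submodule furnished by one exact sequence meets the kernel of the other surjection trivially — equivalently, because $\Ind^K_I(\mathbf{1})\cong \mathbf{1}\oplus \Sym^{p-1}(k^2)$, the splitting the paper itself invokes in the proof of Corollary \ref{generates_weight}. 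With that repair, your computations $T_s[1,1]=0$ and $(T_s+1)[1,x^{p-1}]=0$, combined with the idempotent decomposition coming from $T_s^2=-T_s$, do complete parts (ii) and (iii).
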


\begin{proof}
	Since the functor $\cInd^{\wt{G}}_{\wt{K}Z(\wt{G})})(-)$ is exact, it suffices to prove the equalities for $\wt{G}$ replaced by $\wt{K}Z(\wt{G})\cong KZ^{\sq}\times \mu_2$. In this case, the assertions follow from Proposition \ref{JH_weight} and Lemma \ref{identities_Hecke_operators_I} (v). 
\end{proof}

\begin{cor}\label{I_1-inv_cInd} We have equalities of right $\cH$-modules:
	\begin{enumerate}
		\item[{\rm (i)}] If $\chi\neq \chi^s$ or $r=p-1$, 
		\[
		\cI\left(\cInd^{\wt{G}}_{\wt{K}Z(\wt{G})}(\tilde{\sigma}_r)\right)=T_se_{\chi}\cH_{p^2=1}.
		\]
		\item[{\rm (ii)}] If $r=0$, 
		\[
		\cI\left(\cInd^{\wt{G}}_{\wt{K}Z(\wt{G})}(\tilde{\sigma}_0)\right)= (T_s+1)e_{\mathbf{1}}\cH_{p^2=1}.
		\]
	\end{enumerate}
\end{cor}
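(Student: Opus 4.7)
The plan is to deduce both parts from Proposition \ref{cInd_Hecke} by applying the left-exact functor $\cI=(-)^{I_1}$ to the kernel descriptions there, and then converting the resulting kernels into images using Lemma \ref{T_s_for_later}.

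First I will establish the preliminary identification, for a character $\psi\in\Hhat$ extended by letting the scalar matrix $p^2$ act trivially,
\[
\cI\left(\cInd^{\wt{G}}_{\wt{I}Z(\wt{G})}(\psi\boxtimes\iota)\right)=e_\psi\cH_{p^2=1}
\]
as right $\cH_{p^2=1}$-modules. This follows by combining Frobenius reciprocity (which gives $\cI(\cInd^{\wt{G}}_{\wt{I}_1}(\mathbf{1}\boxtimes\iota))=\cH$) with the idempotent decomposition $\cInd^{\wt{G}}_{\wt{I}}(\psi\boxtimes\iota)=e_\psi\cInd^{\wt{G}}_{\wt{I}_1}(\mathbf{1}\boxtimes\iota)$ of (\ref{decomp_cInd}), and imposing the condition that the scalar $p^2$ acts trivially; in view of Lemma \ref{identities_Hecke_operators_II}(iii), the corresponding element $\tilde{\Pi}^4$ acts on $I_1$-invariants via $(-1)^{(p-1)/2}T_{\Pi^4}$, so this condition is exactly the quotient by $T_{\Pi^4}-1$.

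With this in hand, consider part (i) in the subcase $\chi\neq\chi^s$. Applying the left-exact functor $\cI$ to Proposition \ref{cInd_Hecke}(i) gives
\[
\cI\left(\cInd^{\wt{G}}_{\wt{K}Z(\wt{G})}(\tilde\sigma_r)\right)=\ker\left(T_s\colon e_{\chi^s}\cH_{p^2=1}\to e_\chi\cH_{p^2=1}\right),
\]
which by Lemma \ref{T_s_for_later}(i) equals $T_se_\chi\cH_{p^2=1}$. The remaining subcase of (i), namely $r=p-1$ (where $\chi=\mathbf{1}=\chi^s$), is handled analogously using Proposition \ref{cInd_Hecke}(ii) and Lemma \ref{T_s_for_later}(ii), which identifies the resulting $\ker(T_s+1\colon e_\mathbf{1}\cH_{p^2=1}\to e_\mathbf{1}\cH_{p^2=1})$ with $T_se_\mathbf{1}\cH_{p^2=1}$. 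Part (ii), concerning $r=0$, follows from Proposition \ref{cInd_Hecke}(iii) and Lemma \ref{T_s_for_later}(iii), which yields $\ker(T_s\colon e_\mathbf{1}\cH_{p^2=1}\to e_\mathbf{1}\cH_{p^2=1})=(T_s+1)e_\mathbf{1}\cH_{p^2=1}$.

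The only point requiring real care is the preliminary identification of $\cI$ on the principal-series-type representation $\cInd^{\wt{G}}_{\wt{I}Z(\wt{G})}(\psi\boxtimes\iota)$, where one has to correctly match the triviality of $p^2$ on the representation side with the relation $T_{\Pi^4}=1$ on the Hecke-algebra side; once this bookkeeping is in place, the corollary is an immediate consequence of the left exactness of $\cI$ combined with the three parts of Lemma \ref{T_s_for_later} that were tailored in the previous section for exactly this purpose.
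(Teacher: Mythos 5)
Your proposal is correct and follows essentially the same route as the paper: the paper likewise invokes Frobenius reciprocity to identify $\cI\bigl(\cInd^{\wt{G}}_{\wt{I}Z(\wt{G})}(\chi\boxtimes \iota)\bigr)$ with $e_{\chi}\cH_{p^2=1}$, takes $I_1$-invariants in Proposition \ref{cInd_Hecke}, and concludes by the three parts of Lemma \ref{T_s_for_later}. Your extra bookkeeping (left exactness of $\cI$, matching triviality of the central element $p^2$ with the relation $T_{\Pi^4}=1$) just makes explicit what the paper leaves implicit, so there is nothing to add.
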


\begin{proof}
	By Frobenius reciprocity, $e_{\chi}\cH_{p^2=1}\cong \cI\left(\cInd^{\wt{G}}_{\wt{I}Z(\wt{G})}(\chi\boxtimes \iota)\right)$. Taking $I_1$-invariants in Proposition \ref{cInd_Hecke} reduces us to proving the equalities
	\begin{enumerate}
		\item[{\rm (i)}] $\ker(T_s\colon e_{\chi^s}\cH_{p^2=1}\to e_{\chi}\cH_{p^2=1})=T_s e_{\chi}\cH_{p^2=1}$ for $\chi\neq \chi^s$;
		\item[{\rm (ii)}] $\ker(T_s+1\colon e_{\mathbf{1}}\cH_{p^2=1}\to e_{\mathbf{1}}\cH_{p^2=1})=T_se_{\mathbf{1}}\cH_{p^2=1}$;
		\item[{\rm (iii)}] $\ker(T_s\colon e_{\mathbf{1}}\cH_{p^2=1}\to e_{\mathbf{1}}\cH_{p^2=1})=(T_s+1)e_{\mathbf{1}}\cH_{p^2=1}$.
	\end{enumerate}
	This is the content of Lemma \ref{T_s_for_later}.
\end{proof}

\begin{remark}
	We will fix the identifications of the corollary so that the function $[1,x^r]$ corresponds to $T_se_{\chi}$ (resp.\ $(T_s+1)e_{\mathbf{1}}$).
\end{remark}

Let now $O=O_{\chi}$ be the orbit of $\chi$. As in Section \ref{Extensions}, we put
\[
\mathscr{Z}_O=\begin{cases}
	(T_{\Pi}T_s)^2+(T_sT_{\Pi})^2 \text{ if $O$ is square-regular}\\
	\sum_{i=0}^3 T_{\Pi}^{-i}\left((T_{\Pi}T_s)^2+(T_sT_{\Pi})^2 + T_{\Pi}T_sT_{\Pi}\right)e_{\mathbf{1}}T_{\Pi}^{i} \text{ if $O$ is non-square-regular},
\end{cases}
\]
which defines a central element in $\cH(O)$.

\begin{lem}\label{T=Z}
	Under the identifications in Corollary \ref{I_1-inv_cInd}, the action of the (restriction to $I_1$-invariants of the) Hecke operator $\wt{T}$ is given by the action of $\mathscr{Z}_O$.
\end{lem}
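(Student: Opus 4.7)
The plan is to exploit the cyclicity of the target to reduce the claim to a single computation on a generator.

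First, since $\wt T$ is $\wt G$-equivariant, taking $I_1$-invariants turns it into a right $\cH_{p^2=1}$-module endomorphism of $\cI(\cInd^{\wt{G}}_{\wt{K}Z(\wt{G})}(\tilde{\sigma}_r))$: the right $\cH$-action commutes with left $\wt G$-equivariant maps since it arises from composition with $\wt G$-equivariant endomorphisms of $\cInd^{\wt G}_{\wt I_1}(\mathbf{1}\boxtimes \iota)$. By Corollary \ref{center_reg} (resp.\ Corollary \ref{Z(R)}), $\mathscr{Z}_O$ projects to a central element of $\cH(O)$, so right multiplication by it defines a right $\cH_{p^2=1}$-module endomorphism of the same target. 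By Corollary \ref{I_1-inv_cInd}, this target is a cyclic right $\cH_{p^2=1}$-module generated by the element corresponding to $[1,x^r]$, so the identity reduces to verifying that $\wt T([1,x^r])$ equals this generator right-multiplied by $\mathscr{Z}_O$.

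Next, I would compute $\wt T([1,x^r])$ using formula (\ref{T([1,v])}). Setting $g_0 = \left(\begin{smallmatrix}1 & 0\\0 & p^{-2}\end{smallmatrix}\right)$, Definition-Lemma \ref{def_T} implies that $\wt{\mathbf{T}}(\tilde g_0)$ kills $x^{r-i}y^i$ for $i<r$ (since $p^{2(r-i)}=0$ in characteristic $p$) and fixes $y^r$. Writing each representative $(\tilde g^q_{m,\mu})^{-1}$ in the form $\tilde k_2 \tilde g_0 \tilde k_1$ with $\tilde k_i \in \wt K Z(\wt G)$ gives $\wt{\mathbf{T}}((\tilde g^q_{m,\mu})^{-1})(x^r) = c^r\cdot \tilde k_2\cdot y^r$, where $c$ is the lower-left entry of $\tilde k_1$. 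A case-by-case computation, tracking the cocycle (\ref{cocycle}) and using the Hilbert-symbol identities of Lemma \ref{Hilbert_symbol_identities}, localises the sum to contributions supported on a small number of Iwahori double cosets inside $\wt K Z(\wt G)\,\tilde g_0\,\wt K Z(\wt G)$.

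Third, I would match the result with right-multiplication of the generator by $\mathscr{Z}_O$, invoking parts (vi), (vii) and (ix) of Lemma \ref{identities_Hecke_operators_II} to recognise $(T_sT_\Pi)^2 = T_{(s\Pi)^2}$, $(T_\Pi T_s)^2 = T_{(\Pi s)^2}$, and $T_\Pi T_s T_\Pi = T_{\Pi s\Pi}$. Modulo $Z^{\sq}$, each of the matrices $(s\Pi)^2$, $(\Pi s)^2$, $\Pi s\Pi$ represents the spherical double coset $\wt K Z(\wt G)\,\tilde g_0\, \wt K Z(\wt G)$. In the square-regular case only the two diagonal terms contribute, giving $\mathscr{Z}_O = (T_sT_\Pi)^2 + (T_\Pi T_s)^2$; in the non-square-regular case $T_{\Pi s\Pi}$ also enters, and the outer prefactor $T_\Pi^{-i}(-)T_\Pi^i$ in the definition of $\mathscr{Z}_O$ symmetrises across the four characters in the $\Pi$-orbit of $\mathbf{1}$ (cf.\ Remark \ref{Pi_equiv_Remark}), which is precisely what produces a central element of $\cH(O)$ compatible with the Iwahori decomposition on the spherical side. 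The scalars on the two sides are pinned down using Lemma \ref{identities_Hecke_operators_I}.

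The main obstacle will be the cocycle bookkeeping when decomposing $(\tilde g^q_{m,\mu})^{-1}$ as $\tilde k_2\tilde g_0\tilde k_1$ in $\wt G$, together with the consistent treatment of the two non-square-regular cases $r=0$ and $r=p-1$, which correspond via $\Ind^K_I(\mathbf{1}) = \mathbf{1}\oplus\Sym^{p-1}(k^2)$ to the two generators $(T_s+1)e_{\mathbf{1}}$ and $T_se_{\mathbf{1}}$ appearing in Corollary \ref{I_1-inv_cInd}. Once the underlying double-coset correspondence is established, the term-by-term matching of scalars is essentially formal.
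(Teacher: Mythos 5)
Your proposal is correct and follows essentially the same route as the paper: reduce by $\cH$-equivariance and cyclicity to the single generator $[1,x^r]$, compute $\wt T([1,x^r])$ from (\ref{T([1,v])}) by factoring the coset representatives through $\left(\begin{smallmatrix}1&0\\0&p^{-2}\end{smallmatrix},1\right)$ (where in characteristic $p$ only the $y^r$-component survives), and match against $\mathscr{Z}_O$ via the identities $(T_\Pi T_s)^2=T_{(\Pi s)^2}$, $T_\Pi T_sT_\Pi=T_{\Pi s\Pi}$ and Iwahori coset decompositions, with Lemma \ref{identities_Hecke_operators_I}~(v) (i.e.\ $T_s^2e_\chi=0$, resp.\ $T_s(T_s+1)e_{\mathbf 1}=0$) eliminating the extraneous terms of $\mathscr{Z}_O$ on the generator. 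The remaining work you flag (cocycle bookkeeping and the $r=0$, $r=p-1$ cases) is exactly the routine verification the paper carries out.
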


\begin{proof}
	Since both $\wt{T}$ and $\mathscr{Z}_O$ are equivariant for the right $\cH$-action, it suffices to prove that $\wt{T}([1,x^r])=\mathscr{Z}_O([1,x^r])$. The identifications of Corollary \ref{I_1-inv_cInd} are induced by the isomorphism $e_{\chi}\cH_{p^2=1}\cong \cInd^{\wt{G}}_{\wt{I}Z(\wt{G})}(\chi\boxtimes \iota)$ obtained by Frobenius reciprocity, i.e.\ if $\varphi\in \cInd^{\wt{G}}_{\wt{I}_1Z(\wt{G})}(\mathbf{1}\boxtimes \iota)$ denotes the unique function with support $\wt{I}_1Z(\wt{G})$ and value $1$ at the identity, then the image of an element $f\in e_{\chi}\cH_{p^2=1}$ is given by $f(\varphi)$. 
	
	Together with the relations $(\tilde{g}^0_{2,\lambda})^{-1}=\tilde{s} \left(\begin{mat}
		1 & 0\\0 & p^{-2}
	\end{mat},1\right) \left(\begin{mat}
		0 & 1\\ 1 & -\lambda
	\end{mat},1\right)$ and $(\tilde{g}^1_{1,\mu})^{-1}=\left(\begin{mat}
		1 & 0\\0 & p^{-2}
	\end{mat},1\right)\left(\begin{mat}
		1 & 0\\ -p\mu & 1
	\end{mat},1\right)$, equation (\ref{T([1,v])}) implies that in characteristic $p$,
	\[
	\wt{T}([1,x^{r}])=\sum_{\lambda\in J_2} [\tilde{g}^0_{2,\lambda},x^r] + \sum_{\mu\in J_1} [\tilde{g}^1_{1,\mu},p^{2r}x^r].
	\]
	Assume that $\chi\neq \chi^s$. The element $[1,x^r]$ is identified with the element $T_se_{\chi}\in T_se_{\chi}\cH_{p^2=1}$. By Lemma \ref{identities_Hecke_operators_I} (v), $T_s^2e_{\chi}=0$ and so the operator $\mathscr{Z}_O$ acts on $T_se_{\chi}$ via $(T_sT_{\Pi})^2$. By Lemma \ref{identities_Hecke_operators_II} (vii), $(T_{\Pi}T_s)^2=T_{(\Pi s)^2}$. The decomposition $I_1 \begin{mat}
		1 & 0\\0 & p^2
	\end{mat}I_1=\sqcup_{\lambda\in J_2} I_1 (\Pi s)^2\begin{mat}
		1 & \lambda\\0 & 1
	\end{mat}$ implies that
	\begin{equation}\label{eq0}
		(T_{\Pi}T_s)^2e_{\chi}(\varphi)=\sum_{\lambda\in J_2} \begin{mat}
			1 & \lambda\\0 & 1
		\end{mat} \left(\begin{mat}
			1 & 0\\0 & p^{-2}
		\end{mat},1\right) e_{\chi}(\varphi).
	\end{equation}
	Since $T_{\Pi^4}$ acts trivially, we may replace $\left(\begin{mat}
		1 & 0\\0 & p^{-2}
	\end{mat},1\right)$ by $\left(\begin{mat}
		p^2 & 0\\0 & 1
	\end{mat},1\right)$. Applying the $\wt{G}$-equivariant homomorphism $T_s$ to the above equation and using that $T_s(T_{\Pi}T_s)^2=(T_sT_{\Pi})^2T_s$, see Lemma \ref{identities_Hecke_operators_II} (viii), gives the assertion in case $\chi\neq \chi^s$.
	
	Assume now that $r=p-1$ or $r=0$. By Lemma \ref{identities_Hecke_operators_II} (vi) and (ix), we have $T_{\Pi}T_sT_{\Pi}=T_{\Pi s\Pi}$. The decomposition $I_1 (\Pi s \Pi)I_1=\sqcup_{\lambda\in J_1} I_1 (\Pi s \Pi)\begin{mat}
		1 & 0\\p\lambda & 1
	\end{mat}$ yields
	\begin{equation}\label{eq1}
		T_{\Pi s\Pi} e_{\mathbf{1}}(\varphi)=\sum_{\lambda\in J_1} \begin{mat}
			1 &0\\p\lambda & 1
		\end{mat}\left(\begin{mat}
			0 & p^{-2}\\1 & 0
		\end{mat},1\right) e_{\mathbf{1}}(\varphi).
	\end{equation}
	Applying the $\wt{G}$-equivariant map $T_s$ to it gives the formula 
	\begin{equation}\label{eq2}
		(T_sT_{\Pi})^2e_{\mathbf{1}}(\varphi)=\sum_{\lambda\in J_1} \begin{mat}
			1 &0\\p\lambda & 1
		\end{mat}\left(\begin{mat}
			0 & p^{-2}\\1 & 0
		\end{mat},1\right) T_se_{\mathbf{1}}(\varphi).
	\end{equation}
	
	The equalities $T_s(T_s+1)e_{\mathbf{1}}=0$ of Lemma \ref{identities_Hecke_operators_I} (v) allows one to deduce from (\ref{eq0}), (\ref{eq1}) and (\ref{eq2}) that 
	\[
	\mathscr{Z}_OT_se_{\mathbf{1}}(\varphi)=\sum_{\lambda\in J_2} \begin{mat}
		1 & \lambda\\0 & 1
	\end{mat} \left(\begin{mat}
		1 & 0\\0 & p^{-2}
	\end{mat},1\right) T_se_{\mathbf{1}}(\varphi),
	\]
	and
	\begin{align*}
		\mathscr{Z}_O(T_s+1)e_{\mathbf{1}}(\varphi)&=\sum_{\lambda\in J_2} \begin{mat}
			1 & \lambda\\0 & 1
		\end{mat} \left(\begin{mat}
			1 & 0\\0 & p^{-2}
		\end{mat},1\right) (T_s+1)e_{\mathbf{1}}(\varphi)\\
		& +\sum_{\lambda\in J_1} \begin{mat}
			1 &0\\p\lambda & 1
		\end{mat}\left(\begin{mat}
			0 & p^{-2}\\1 & 0
		\end{mat},1\right) (T_s+1)e_{\mathbf{1}}(\varphi) 
	\end{align*}
	Applying $T_{\Pi^4}=1$, the first equality settles the case $r=p-1$, while the second equality covers the case $r=0$ by noting that $\tilde{s}$ fixes $(T_s+1)e_{\mathbf{1}}(\varphi)$.
\end{proof}

Recall from Definition \ref{M(chi,lambda)_def} that for each $\psi\in O$ and the element $\lambda\in k$, we have attached a right $\cH_{p^2=1}$-module $M(\psi,\lambda)$ as well as the module $M(\lambda)$ in the non-regular case. The previous corollary immediately gives:
	
\begin{cor}\label{I_1-inv_coker} As right $\cH_{p^2=1}$-modules,
	\[
	\cI\left(\cInd^{\wt{G}}_{\wt{K}Z(\wt{G})}(\tilde{\sigma}_r)\right)/(\wt{T}-\lambda)=\begin{cases}
		M(\chi,\lambda) \text{ if $r\neq 0$}\\
		M(\lambda) \text{ if $r=0$.}
	\end{cases}
	\]
\end{cor}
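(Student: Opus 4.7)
The plan is to deduce this directly from the three preceding results: Corollary \ref{I_1-inv_cInd}, Lemma \ref{T=Z}, and Lemma \ref{Z-lambda}. The point is that $\wt{T}$, being a $\wt{G}$-equivariant endomorphism of $\cInd^{\wt{G}}_{\wt{K}Z(\wt{G})}(\tilde{\sigma}_r)$, restricts to an endomorphism on $I_1$-invariants that commutes with the right $\cH$-action. Hence the cokernel of $\wt{T} - \lambda$ on $\cI(\cInd^{\wt{G}}_{\wt{K}Z(\wt{G})}(\tilde{\sigma}_r))$ is canonically a right $\cH$-module. Before anything else, I would verify that this cokernel is in fact an $\cH_{p^2=1}$-module, which follows because the scalar matrix $p^2$ acts trivially on $\tilde{\sigma}_r$ by construction, so $\tilde{\Pi}^4 \in Z(\wt{G})$ acts trivially on $\cInd^{\wt{G}}_{\wt{K}Z(\wt{G})}(\tilde{\sigma}_r)$, and therefore (using the formula for the Hecke action on $I_1$-invariants) the operator $T_{\Pi^4}$ acts as the identity on $\cI(\cInd^{\wt{G}}_{\wt{K}Z(\wt{G})}(\tilde{\sigma}_r))$.

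Next I would apply Corollary \ref{I_1-inv_cInd} to identify $\cI(\cInd^{\wt{G}}_{\wt{K}Z(\wt{G})}(\tilde{\sigma}_r))$ with $T_s e_{\chi}\cH_{p^2=1}$ if $r \neq 0$ and with $(T_s + 1) e_{\mathbf{1}} \cH_{p^2=1}$ if $r = 0$ (recalling that when $r \neq 0$, either $\chi \neq \chi^s$ or $r = p-1$, so part (i) of the corollary applies). Under this identification, Lemma \ref{T=Z} says that the induced action of $\wt{T}$ is given by the central element $\mathscr{Z}_O \in \cH(O)$. Because $\mathscr{Z}_O$ is central in $\cH(O)_{p^2=1}$, its right-multiplication action on $T_s e_\chi \cH_{p^2=1}$ (respectively $(T_s + 1) e_{\mathbf{1}} \cH_{p^2=1}$) coincides with left multiplication, so the image of $\wt{T} - \lambda$ is precisely the left submodule $(\mathscr{Z}_O - \lambda) T_s e_\chi \cH_{p^2=1}$ (respectively $(\mathscr{Z}_O - \lambda)(T_s + 1) e_{\mathbf{1}} \cH_{p^2=1}$).

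Finally, Lemma \ref{Z-lambda} guarantees injectivity of $\mathscr{Z}_O - \lambda$ on $e_\chi \cH_{p^2=1}$, hence on the submodule $T_s e_\chi \cH_{p^2=1}$ (and analogously for $(T_s + 1) e_{\mathbf{1}} \cH_{p^2=1}$). Combining these facts produces a short exact sequence of $\cH_{p^2=1}$-modules matching term-by-term the defining sequences in Definition \ref{M(chi,lambda)_def}, yielding $M(\chi,\lambda)$ when $r \neq 0$ and $M(\lambda)$ when $r = 0$. There is no serious obstacle: the genuine content of the statement is already contained in Lemma \ref{T=Z}, and the present corollary is merely a bookkeeping assembly of the pieces.
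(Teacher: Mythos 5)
Your proposal is correct and follows essentially the same route as the paper, which states this corollary as an immediate consequence of Corollary \ref{I_1-inv_cInd}, Lemma \ref{T=Z} and Definition \ref{M(chi,lambda)_def}. The extra checks you spell out (that $T_{\Pi^4}$ acts trivially so the module lives over $\cH_{p^2=1}$, that centrality of $\mathscr{Z}_O$ makes left and right multiplication agree on $T_se_{\chi}\cH_{p^2=1}$, and the injectivity from Lemma \ref{Z-lambda}) are exactly the details the paper leaves implicit.
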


We remind the reader that we parametrized the supersingular Hecke modules in definitions \ref{def_simple_reg} and \ref{def_simple_nreg}, respectively.

\begin{thm}\label{thm_JH}
	\begin{enumerate}
		\item[{\rm (i)}] If $\lambda\neq 0$, then
		\[
		\cInd^{\wt{G}}_{\wt{K}Z(\wt{G})}(\tilde{\sigma}_r)/(\wt{T}-\lambda) \cong \Ind^{\wt{G}}_{\wt{B}_2}(\psi\boxtimes \iota),
		\]
		where $\psi\colon T_2\to k^{\times}$ is the smooth character defined by the requirements $\psi|_H=\chi^s$, $\psi\left(\begin{mat}
			p^{-2}& 0\\0 & 1
		\end{mat}\right)=\lambda$ and $\psi\left(\begin{mat}
			p^{-2} & 0\\0 & p^{-2}
		\end{mat}\right)=1$.
		
		\item[{\rm (ii)}] Assume now that $\lambda=0$.
		\begin{itemize}
			\item[{\rm (a)}] If $O$ is square-regular, then there is a non-split short exact sequence
			\[
			0\to \cT(\operatorname{SS}(\chi[1,0],1))\to \cInd^{\wt{G}}_{\wt{K}Z(\wt{G})}(\tilde{\sigma}_r)/(\wt{T})\to \cT(\operatorname{SS}(\chi,1))\to 0.
			\]
			\item[{\rm (b)}] For $r=p-1$ and $r=0$, there are non-split short exact sequences
			\[
			0\to \cT(\operatorname{SS}(0,1))\to \cInd^{\wt{G}}_{\wt{K}Z(\wt{G})}(\tilde{\sigma}_{p-1})/(\wt{T})\to \cT(\operatorname{SS}(-1,1))\to 0
			\]
			and
			\[
			0\to \cT(\operatorname{SS}(-1,1)) \to \cInd^{\wt{G}}_{\wt{K}Z(\wt{G})}(\tilde{\sigma}_{0})/(\wt{T})\to \cT(\operatorname{SS}(0,1))\to 0,
			\]
			respectively, while for $r=\frac{p-1}{2}$ there is a direct sum decomposition
			\[
			\cInd^{\wt{G}}_{\wt{K}Z(\wt{G})}(\tilde{\sigma}_{\frac{p-1}{2}})/(\wt{T})\cong \cT(\operatorname{SS}(0,1)) \oplus \cT(\operatorname{SS}(-1,1)).
			\]
		\end{itemize}
	\end{enumerate}
\end{thm}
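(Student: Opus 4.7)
Set $\pi := \cInd^{\wt{G}}_{\wt{K}Z(\wt{G})}(\tilde{\sigma}_r)/(\wt{T}-\lambda)$ and let $N$ denote $M(\chi,\lambda)$ if $r>0$ and $M(\lambda)$ if $r=0$, so that by Corollary \ref{I_1-inv_coker} (and Lemma \ref{T=Z}) we have $\cI(\cInd)/(\wt{T}-\lambda)\cI(\cInd) \cong N$ as right $\cH_{p^2=1}$-modules. The vector $[1,x^r]$ is $I_1$-fixed and generates the ambient compact induction as a $\wt{G}$-representation, so $\pi$ is generated by its $I_1$-invariants; right-exactness of $\cT$ together with the counit of $(\cT,\cI)$ therefore yield a $\wt{G}$-equivariant surjection $\phi \colon \cT(N) \twoheadrightarrow \pi$. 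Dually, Observation \ref{observation} implies that $\wt{T}-\lambda$ is injective on $\cInd$ (if $\lambda\neq 0$ using that the image of $\wt{T}$ meets $B_1$ trivially; if $\lambda=0$ using $B_{-2}=0$), while Lemma \ref{Z-lambda} and Lemma \ref{T=Z} imply that $\wt{T}-\lambda$ is injective on $\cI(\cInd)$. Left-exactness of $\cI$ applied to $0\to (\wt{T}-\lambda)\cInd \to \cInd \to \pi \to 0$ then produces a natural injection $\iota \colon N \hookrightarrow \cI(\pi)$.

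\emph{Part (i).} Propositions \ref{M(chi,lambda)_regular}(i) and \ref{M(chi,lambda)_non_regular}(i) identify $N$ with a principal Hecke module, so by Theorem \ref{thm_ps} the representation $\cT(N)$ is an irreducible genuine principal series. Observation \ref{observation} forces $\pi\neq 0$, so $\phi$ is an isomorphism. The identification $\cT(N)\cong \Ind^{\wt{G}}_{\wt{B}_2}(\psi\boxtimes \iota)$ for the specific $\psi$ of the statement is then obtained by comparing $N$ (rewritten using the intertwinings of Remark \ref{inter} so that the $e_\chi$-component becomes the $e_{\chi^s}$-component) with the Hecke-module formulas (\ref{ps_reg}) and (\ref{ps_nreg}) for $\cI(\tilde{\pi}(\psi))$ from the proof of Proposition \ref{PS_irred}.

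\emph{Part (ii).} Propositions \ref{M(chi,lambda)_regular}(ii) and \ref{M(chi,lambda)_non_regular}(ii) describe $N$ as a non-split extension $0 \to N_1 \to N \to N_2 \to 0$ of two distinct supersingular Hecke modules in the cases required by the theorem, and as the split direct sum $N_1\oplus N_2$ precisely when $r=\frac{p-1}{2}$. Right-exactness of $\cT$ together with Theorem \ref{thm_ss} makes $\cT(N_1)$ and $\cT(N_2)$ distinct admissible irreducible supersingular representations and shows $\cT(N)$ has length at most two; consequently $\pi$ is finitely generated and admissible, hence of finite length by Theorem \ref{fg+adm=fl}.

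\emph{Main obstacle.} The crux is to force $\phi$ to be an isomorphism and then transfer the (non-)splittingness from $N$ to $\pi$. Every Jordan--Hölder constituent of $\pi$ is a subquotient of $\cT(N)$ and hence a supersingular representation among $\{\cT(N_1),\cT(N_2)\}$; Corollary \ref{ss:IT(M)=M} (applied to each JH factor) therefore forces $\dim_k \cI(\pi)\leq \dim_k N_1+\dim_k N_2=\dim_k N$. Combined with $\iota \colon N\hookrightarrow \cI(\pi)$, this yields equality and in particular shows that both $\cT(N_1)$ and $\cT(N_2)$ appear (each with multiplicity one) as JH factors of $\pi$. Since $\cT(N)$ has length at most two and $\phi$ is surjective onto something of length at least two, both have length exactly two and $\phi$ is an isomorphism. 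For the non-splitting statement, a hypothetical splitting $\pi\cong \cT(N_1)\oplus \cT(N_2)$ would give $\cI(\pi)\cong N_1\oplus N_2$ as Hecke modules (by Corollary \ref{ss:IT(M)=M} and additivity of $\cI$ on direct sums), and $\iota$ would embed the non-split extension $N$ into the split one; projecting to $N_1$ gives a retraction of $N_1\hookrightarrow N$, contradicting non-splittingness of $N$. In the case $r=\frac{p-1}{2}$ the splitting of $\pi$ is automatic from the splitting of $N$ and the fact that $\cT$ respects direct sums.
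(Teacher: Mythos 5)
Your proposal is correct, and part (i) is essentially the paper's argument (compare $N$ with the formulas (\ref{ps_reg})/(\ref{ps_nreg}) via Remark \ref{inter}, then use irreducibility of the principal series plus the generator $[1,x^r]$ and Observation \ref{observation} to conclude). Part (ii), however, takes a genuinely different route at the crucial step. The paper proves that the canonical map $\cT(N)\to\pi$ is an isomorphism by an explicit computation: it writes down basis elements of the supersingular submodule(s) of $N$ (e.g.\ the class of $[1,x^r]T_{\Pi}T_s$, resp.\ $[1,x^r]T_{\Pi}T_sT_{\Pi}$, resp.\ $[1,x^r]T_{\Pi}(T_s+1)$), observes that these functions lie in $B_1(k)$, and invokes Observation \ref{observation} to see they are not in $\operatorname{Im}(\wt{T})$, so the supersingular subrepresentation ``survives'' in the cokernel. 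You instead produce the injection $\iota\colon N\hookrightarrow \cI(\pi)$ abstractly and then force $\phi$ to be an isomorphism by the length/dimension count using left-exactness of $\cI$, Corollary \ref{ss:IT(M)=M} and Theorem \ref{thm_ss}; this also gives a clean, fully spelled-out proof of the non-splitness (on which the paper is terse) and of the splitting for $r=\frac{p-1}{2}$. Both routes ultimately rest on Observation \ref{observation}: what your $\iota$ really needs is injectivity of $\wt{T}-\lambda$ on the whole compact induction (so that an $I_1$-invariant element of $(\wt{T}-\lambda)\cInd^{\wt{G}}_{\wt{K}Z(\wt{G})}(\tilde{\sigma}_r)$ is $(\wt{T}-\lambda)$ of an $I_1$-invariant element, by $\wt{G}$-equivariance of $\wt{T}$); Lemma \ref{Z-lambda} plays no role here, and for $\lambda\neq 0$ the correct justification is the minimal-support-radius argument from the first clause of Observation \ref{observation} rather than the clause about $B_1(k)$ --- though for part (ii) only $\lambda=0$ is needed, where the argument is immediate. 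A further difference worth noting: the paper's explicit ``survival'' computation is exactly what Remark \ref{T(ss)_non_zero} later points to; if one substituted your proof, that input would instead be supplied by restricting $\iota$ to the supersingular submodule of $N$, which your construction indeed provides without using Section 4.

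One blemish should be repaired: your citation of Theorem \ref{fg+adm=fl} is circular, since that theorem is deduced in the paper from Theorem \ref{thm_JH}. It is also superfluous --- $\pi$ is a quotient of $\cT(N)$, which you have already shown has length at most two, so it has finite length (and admits a Jordan--H\"older series) outright; simply delete the appeal to Theorem \ref{fg+adm=fl}. With that removed, the counting argument and the (non-)splitting arguments go through as you wrote them.
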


\begin{proof}
	(i) Assume that $\bar{a}_p\neq 0$. By equation (\ref{ps_reg}) (resp.\ (\ref{ps_nreg})), Proposition \ref{M(chi,lambda)_regular} (resp.\ \ref{M(chi,lambda)_non_regular}) and Corollary \ref{I_1-inv_coker}, we have
	\[
	\cI(\Ind^{\wt{G}}_{\wt{B}_2}(\psi\boxtimes \iota))\cong 	\cI\left(\cInd^{\wt{G}}_{\wt{K}Z(\wt{G})}(\tilde{\sigma}_r)\right)/(\wt{T}-\lambda).
	\]
	(In the square-regular case, use $\operatorname{PS}(\chi^s,x,0,z)=\operatorname{PS}(\chi,0,x,z)$, see Remark \ref{inter}). The evident map from this object to the $I_1$-invariants of the cokernel of $\wt{T}-\lambda$ is non-trivial by Observation \ref{observation} (as the element $[1,x^r]$ is $I_1$-invariant and generates the compact induction as a $\wt{G}$-representation). Hence, by adjunction, we obtain a non-trivial $\wt{G}$-equivariant map
	\[
	\Ind^{\wt{G}}_{\wt{B}_2}(\psi\boxtimes \iota)=\cT\cI(\Ind^{\wt{G}}_{\wt{B}_2}(\psi\boxtimes \iota)) \to \cInd^{\wt{G}}_{\wt{K}Z(\wt{G})}(\tilde{\sigma}_r)/(\wt{T}-\lambda),
	\]
	where the first equality is due to Theorem \ref{thm_ps}. The source being irreducible, this map is necessarily injective, while the right-hand side being generated by the image of the $I_1$-invariant function $[1,x^r]$ implies that it is surjective.
	
	(ii) Assume now that $\lambda=0$. By Propositions \ref{M(chi,lambda)_regular} (resp.\ \ref{M(chi,lambda)_non_regular}) and Corollary \ref{I_1-inv_coker}, there are short exact sequences of the desired form if the middle term is replaced by $	\cT\left(\cI\left(\cInd^{\wt{G}}_{\wt{K}Z(\wt{G})}(\tilde{\sigma}_r)\right)/(\wt{T})\right)$. For this we use the right exactness of the functor $\cT$ and the fact that $\cT$ applied to a supersingular modules yields an irreducible representation by Theorem \ref{thm_ss}. It remains to show that the canonical map
	\[
	\cT\left(\cI\left(\cInd^{\wt{G}}_{\wt{K}Z(\wt{G})}(\tilde{\sigma}_r)\right)/(\wt{T})\right)\to \cInd^{\wt{G}}_{\wt{K}Z(\wt{G})}(\tilde{\sigma}_r)/(\wt{T})
	\]
	is an isomorphism. The consideration of the element $[1,x^r]$ again shows the surjectivity. By what we have just explained, it suffices to prove that each supersingular subrepresentation (or equivalently its $I_1$-invariants) of the source survives in the target.
	
	In the square-regular case, the proof of Proposition \ref{M(chi,lambda)_regular} shows that the element $T_se_{\chi}T_{\Pi}T_s$ is a basis of $\operatorname{SS}(\chi[1,0],1)(e_{\chi}+e_{\chi[1,0]})$. In the non-square-regular case, the proof of Proposition \ref{M(chi,lambda)_non_regular} shows that the element $T_se_{\mathbf{1}}T_{\Pi}T_sT_{\Pi}$ is a basis element of $\operatorname{SS}(0,1)e_{\mathbf{1}}\subset M(\mathbf{1},0)e_{\mathbf{1}}$, while $(T_s+1)e_{\mathbf{1}}T_{\Pi}T_sT_{\Pi}$ is a basis of $\operatorname{SS}(-1,1)e_{\mathbf{1}}\subset M(0)$; it also shows that in case $\chi=\omega^{\frac{p-1}{2}}\otimes \mathbf{1}$ (i.e.\ $r=\frac{p-1}{2}$), the element $T_se_{\chi}T_{\Pi}T_s$ is a basis for the subspace $\operatorname{SS}(-1,1)e_{\mathbf{1}}$, while $T_se_{\chi}T_{\Pi}(T_s+1)$ is a basis element of the subspace $\operatorname{SS}(0,1)e_{\mathbf{1}}$. By the Morita equivalences \ref{Morita_regular} and \ref{Morita_non-regular}, respectively, it is enough to prove that these basis elements survive in the cokernel of $\wt{T}$. We thus need to show that
	\begin{equation}\label{not_in_T}
		\begin{cases}[1,x^r]T_{\Pi}T_s \notin \operatorname{Im}(\wt{T}) \text{ if $0<r<p-1$}\\
			[1,x^r]T_{\Pi}T_sT_{\Pi} \notin \operatorname{Im}(\wt{T}) \text{ if $r=0,p-1$}\\
			[1,x^r]T_{\Pi}(T_s+1)\notin \operatorname{Im}(\wt{T}) \text{ if $r=\frac{p-1}{2}$.}\end{cases}
	\end{equation}
	Since these elements lie in $B_1(k)$, an application of Observation \ref{observation} finishes the proof.
\end{proof}

\begin{remark}\label{T(ss)_non_zero}
	The last part of the proof in particular shows that a supersingular $\cH_{p^2=1}$-module $M$ admits a non-trivial $\cH$-equivariant morphism to the non-zero (Observation \ref{observation}) space $\cI\left(\cInd^{\wt{G}}_{\wt{K}Z(\wt{G})}(\tilde{\sigma})/(\wt{T})\right)$ for some weight $\tilde{\sigma}$. In particular, by adjunction, $\cT(M)$ is non-zero.
\end{remark}

\begin{definition}
	For each $0\leq r \leq p-1$ with $r\neq \frac{p-1}{2}$, let $\tilde{\pi}(r)$ be the unique irreducible (supersingular) quotient of $\cInd^{\wt{G}}_{\wt{K}Z(\wt{G})}(\tilde{\sigma}_r)/(\wt{T})$. If $\eta\colon \bQ_p^{\times}\to k^{\times}$ is a smooth character, we put $\tilde{\pi}(r,\eta)=\tilde{\pi}(r)\otimes \eta\circ {\det}$.
\end{definition}

\begin{cor}\label{ss_param}
	Each genuine supersingular $\wt{G}$-representation is of the form $\tilde{\pi}(r,\eta)$ for some $0\leq r\leq p-1$, $r\neq \frac{p-1}{2}$, and a smooth character $\eta\colon \bQ_p^{\times}\to k^{\times}$.
	
	Moreover, $\tilde{\pi}(r)\cong \tilde{\pi}(r',\eta)$ if and only if one of the following holds
	\begin{align*}
		&r'=r, \text{ $\eta_{\operatorname{nr}}^4=1$ and $(\eta \eta_{\operatorname{nr}}^{-1})^2=1$;}\\
		&r'=\begin{cases}
			\frac{p-1}{2}-r \text{ if $0<r<\frac{p-1}{2}$}\\
			\frac{3}{2}(p-1)-r \text{ if $\frac{p-1}{2}<r<p-1$}\end{cases} \text{ and \phantom{m} $\eta_{\operatorname{nr}}^4=1$, $(\eta \eta_{\operatorname{nr}}^{-1} \omega^{-r})^2=1$,}
	\end{align*}
	where $\eta_{\operatorname{nr}}$ denotes the unramified part of $\eta$.
\end{cor}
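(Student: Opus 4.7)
The proof has two halves, and the unifying principle is to transport everything from the $\wt{G}$-side to the Hecke-module side via Theorem \ref{thm_ss}, where the explicit parametrisations of Definitions \ref{def_simple_reg} and \ref{def_simple_nreg} together with the twist translation of Lemma \ref{twist_lemma} make the claim accessible.

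For existence, start with a genuine supersingular $\pi$. Since $\pi$ is irreducible, $p^2 I\in Z(\wt{G})$ acts by a scalar, say $y^{-4}$ with $y\in k^{\times}$; twisting by $\mu_y\circ\det$ arranges that $p^2 I$ acts trivially, using $\mu_y(\det(p^2I))=y^{4}$. By Corollary \ref{ss_socle} the $K$-socle of $\pi$ decomposes as $\bigoplus_{\psi} \sigma(\psi)|_K$ with $\sigma(\psi)\cong\Sym^{r_\psi}(k^2)\otimes{\det}^{b_\psi}\boxtimes\iota$, and by Remark \ref{weight_remark} whenever some $r_\psi$ equals $\tfrac{p-1}{2}$ another value in the orbit is $0$ or $p-1$, so we can always pick $\psi$ with $r_\psi=r\neq\tfrac{p-1}{2}$. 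A further twist by $\omega^{-b_\psi}\circ\det$ (which preserves the triviality of $p^2I$ because $\omega(p)=1$) arranges $\tilde{\sigma}_r\subset\pi'|_{\wt{K}Z(\wt{G})}$ for the double-twisted representation $\pi'$. Adjunction yields a non-zero $\wt{G}$-map $\cInd^{\wt{G}}_{\wt{K}Z(\wt{G})}(\tilde{\sigma}_r)\to\pi'$, and by Proposition \ref{Hecke_poly} together with Schur's lemma it factors through $\cInd/(\wt{T}-\lambda)$ for some $\lambda\in k$. Theorem \ref{thm_JH}(i) and Remark \ref{ss=notps} exclude $\lambda\neq 0$; Theorem \ref{thm_JH}(ii) then gives $\pi'\cong\tilde{\pi}(r)$ (unique supersingular quotient, since $r\neq\tfrac{p-1}{2}$), and unwinding the twists produces $\pi\cong\tilde{\pi}(r,\eta)$ with $\eta$ built from $\mu_y$ and $\omega^{b_\psi}$.

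For the isomorphism criterion, by Theorem \ref{thm_ss} and Lemma \ref{twist_lemma} it is equivalent to study when
\[
\operatorname{SS}(\chi_r,1)\cong \operatorname{tw}_{\eta}^{\ast}\bigl(\operatorname{SS}(\chi_{r'},1)\bigr),
\]
where $\chi_r=\omega^r\otimes\mathbf{1}$ and I also use the analogous identifications $\operatorname{SS}(0,1),\operatorname{SS}(-1,1)$ from Theorem \ref{thm_JH}(ii)(b) for the endpoints $r\in\{0,p-1\}$. The automorphism $\operatorname{tw}_\eta$ rescales $T_{\Pi^4}$ by $\eta(p^4)=\eta_{\mathrm{nr}}(p)^4$ (as $\omega(p)=1$) and shifts support via the idempotent identity $\operatorname{tw}_\eta(e_\psi)=e_{\psi\cdot\omega^{a}\otimes\omega^{a}}$ where $\omega^a=\eta\eta_{\mathrm{nr}}^{-1}$. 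The $T_{\Pi^4}$-matching forces $\eta_{\mathrm{nr}}^4=1$. For the orbit matching, use Remark \ref{Pi_equiv_Remark} to write out the $\tilde{\Pi}$-orbit of $\chi_r$ explicitly and check element-by-element when the shifted character $\chi_{r'}\cdot(\omega^a\otimes\omega^a)^{\pm 1}$ lands there: the two solution families are exactly $r'=r$ with $2a\equiv 0\pmod{p-1}$ (yielding $(\eta\eta_{\mathrm{nr}}^{-1})^2=1$), and $r'=r_{\mathrm{dual}}$ with $a$ in a two-element coset (yielding the $(\eta\eta_{\mathrm{nr}}^{-1}\omega^{-r})^2=1$ condition after translating back). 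Finally invoke Remark \ref{inter} (supplemented by the fact that the Morita equivalence of Corollary \ref{Morita_regular} gives only two isomorphism classes of supersingular $\cH(O)$-modules per central character in the square-regular case, so all characters in one $\tilde{\Pi}$-orbit index the same $\operatorname{SS}$) to conclude that the listed conditions are both necessary and sufficient.

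The main obstacle is the bookkeeping in the second step: the interplay between the twist $\operatorname{tw}_\eta$ on idempotents, the specific representative chosen in the $\sim$-equivalence class of supersingular modules, and the orbit structure of $\Hhat$ under $N_G(T)/(T\cap I)$, all need to be tracked with the correct sign conventions (in particular the correspondence between the ramified character $\omega^a$ and the shift direction on supports), and one must separately verify the non-square-regular endpoints $r\in\{0,p-1\}$ where the parametrisation by $(\lambda,z)$ from Definition \ref{def_simple_nreg} is in force rather than by $\chi$.
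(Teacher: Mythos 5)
Your proposal is correct, and while your existence half coincides with the paper's argument (socle via Corollary \ref{ss_socle}, choice of a weight with $r\neq\frac{p-1}{2}$ via Remark \ref{weight_remark}, and the eigenvalue of $\wt{T}$ forced to be $0$ by Theorem \ref{thm_JH}(i) and supersingularity), your treatment of the intertwinings takes a genuinely different route. The paper stays on the representation side: sufficiency comes from unramified order-$4$ twists being trivial on $Z^{\sq}$, from the $\tilde{\Pi}^2$-conjugation isomorphism of Lemma \ref{Ztilde_action} (which yields the ramified quadratic twist, accompanied by $\mu_{-1}^{\frac{p-1}{2}}$), and from reading $\tilde{\sigma}_{r'}\otimes\omega^{r}\circ\det$ off the $K$-socle; necessity is then settled by comparing $K$-socles and central characters. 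You instead transport the question through the bijection of Theorem \ref{thm_ss} and Lemma \ref{twist_lemma} and decide when $\operatorname{SS}(\chi_r,1)\cong\operatorname{tw}_\eta^{\ast}(\operatorname{SS}(\chi_{r'},1))$ by matching the support idempotents ($\tilde{\Pi}$-orbits, via Remark \ref{Pi_equiv_Remark} and Remark \ref{inter}) and the $T_{\Pi^4}$-eigenvalue; this buys a uniform argument in which necessity and sufficiency are handled at once, at the price of the sign bookkeeping you flag, and the orbit-matching computation does produce exactly the two families in the statement with the correct exponent $\omega^{-r}$ provided the direction of the idempotent shift under $\operatorname{tw}_\eta$ is taken with the right sign. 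One point you defer but must make explicit: at the non-square-regular values $r\in\{0,p-1\}$ the support and the central character do not separate $\operatorname{SS}(0,1)$ from $\operatorname{SS}(-1,1)$, so you must additionally track the $T_s$-eigenvalue $\lambda$ under the twist; there the two signs $\operatorname{tw}_{\omega^{\frac{p-1}{2}}}(T_s)=(-1)^{\frac{p-1}{2}}T_s$ and $T_{\Pi^2}T_s=(-1)^{\frac{p-1}{2}}T_sT_{\Pi^2}$ (Lemma \ref{identities_Hecke_operators_II}(iv)) cancel, so $\lambda$ is preserved and no unexpected intertwining between $\tilde{\pi}(0)$ and $\tilde{\pi}(p-1)$ appears, exactly as the statement requires.
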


\begin{proof}
	Corollary \ref{ss_socle} gives an explicit description of the socle of a genuine supersingular representation. In particular, after twisting by a character $\eta\circ \det$ it contains a weight $\tilde{\sigma}_r$ for some $0\leq r \leq p-1$ with $r\neq \frac{p-1}{2}$. We may also assume that the central element $p^2$ acts trivially. Since the weight appears with multiplicity one, the operator $\wt{T}$ must act on the corresponding weight space via a scalar. By Theorem \ref{thm_JH} (i), this scalar must be $0$, i.e.\ the supersingular representation is isomorphic to $\tilde{\pi}(r)$. This proves the first part.
	
	Any unramified character $\eta$ of order at most $4$ satisfies $(\eta\circ \det)|_{Z^{\sq}}=1$ and so
	\[
	\cInd^{\wt{G}}_{\wt{K}Z(\wt{G})}(\tilde{\sigma}_r)/(\wt{T}) \otimes (\eta \circ {\det})\cong 	\cInd^{\wt{G}}_{\wt{K}Z(\wt{G})}(\tilde{\sigma}_r)/(\wt{T}).
	\]
	By Lemma \ref{Ztilde_action}, the element $\tilde{\Pi}^2$ defines a $\wt{G}$-equivariant isomorphism
	\[
	\cInd^{\wt{G}}_{\wt{K}Z(\wt{G})}(\tilde{\sigma}_r)/(\wt{T}) \cong 	\cInd^{\wt{G}}_{\wt{K}Z(\wt{G})}(\tilde{\sigma}_r)/(\wt{T}) \otimes (\mu_{-1}^{\frac{p-1}{2}}\omega^{\frac{p-1}{2}})\circ {\det}.
	\]
	Putting these two observations together, we obtain that $\tilde{\pi}(r)\cong \tilde{\pi}(r,\eta)$ for $\eta$ satisfying $\eta_{\operatorname{nr}}^4=1$ and $(\eta \eta_{\operatorname{nr}}^{-1})^2=1$ (in fact this holds for the full cokernel of $\wt{T}$). By Corollary \ref{ss_socle} and Remark \ref{weight_remark}, we know that $\tilde{\sigma}_{r'}\otimes \omega^r\circ {\det}$ lies in the $K$-socle of $\tilde{\pi}(r)$ and thus $\tilde{\pi}(r)\cong \tilde{\pi}(r')\otimes \omega^r\circ {\det}$. This shows that the described intertwinings hold true.
	
	That these exhaust all intertwinings can be deduced from looking at the $K$-socle and the central character.
\end{proof}

\subsection{Howe's theorem}

The goal of this section is to explain a proof of the following theorem.

\begin{thm}\label{fg+adm=fl}
	A smooth $k$-representation of $\wt{G}$ is of finite length if and only if it is finitely generated and admissible.
\end{thm}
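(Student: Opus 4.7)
The category splits as $\Mod^{\sm}_{\wt{G}}(k)=\Mod^{\sm}_{\wt{G},\iota}(k)\times \Mod^{\sm}_{G}(k)$, so it suffices to treat each factor separately; the non-genuine factor is the analogous statement for $G=\GL_2(\bQ_p)$, which is known. The direction from finite length to finitely generated and admissible is formal on the genuine side: finite generation follows by extracting one generator from each graded piece of a composition series, while admissibility is preserved under extensions and every smooth irreducible genuine $\wt{G}$-representation is admissible---principal series via the explicit finite-dimensional $I_1$-invariants computed in the proof of Proposition \ref{PS_irred}, and supersingulars via Corollary \ref{ss_param} together with Theorem \ref{thm_JH} and the bijection in Theorem \ref{thm_ss}.

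For the converse, let $\pi$ be genuine, finitely generated, and admissible, and abbreviate $C_r=\cInd^{\wt{G}}_{\wt{K}Z(\wt{G})}(\tilde{\sigma}_r)$. The plan mirrors Barthel--Livné, replacing their finite-length lemma by Theorem \ref{thm_JH}. Since $p^2\in Z(\wt{G})$ acts locally finitely on the admissible $\pi$, I enlarge a $\wt{K}$-stable finite-dimensional generating subspace of $\pi$ by its $p^2$-orbit to a finite-dimensional $\wt{K}Z(\wt{G})$-stable subspace $V\subset \pi$ that still generates $\pi$, so that $\pi$ is a quotient of $\cInd^{\wt{G}}_{\wt{K}Z(\wt{G})}(V)$. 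By induction on the length of $V$ as a $\wt{K}Z(\wt{G})$-representation---taking a proper simple sub $V'\subsetneq V$, letting $\pi'\subset \pi$ be the subrepresentation generated by the image of $\cInd^{\wt{G}}_{\wt{K}Z(\wt{G})}(V')$ (admissible as a subrepresentation, finitely generated by $V'$), and treating $\pi'$ and $\pi/\pi'$ separately---I reduce to the case when $V$ is irreducible. After twisting by an unramified character of $\det$, cf.\ Lemma \ref{twist_lemma}, I may further assume $V=\tilde{\sigma}_r$ for some $0\le r\le p-1$.

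The space $\Hom_{\wt{K}Z(\wt{G})}(\tilde{\sigma}_r,\pi)$ injects into $\pi^{I_1}$ by evaluation at the highest-weight vector, hence is finite-dimensional by admissibility, and it carries a right action of the polynomial algebra $k[\wt{T}]$ of Proposition \ref{Hecke_poly}. If $P(\wt{T})=\prod_{i}(\wt{T}-\lambda_i)^{m_i}$ is the minimal polynomial of $\wt{T}$ on this space, then the surjection $C_r\twoheadrightarrow \pi$ is annihilated by $P$ and factors through $C_r/P(\wt{T})\cdot C_r$. Since $\wt{T}-\lambda$ is injective on $C_r$ by Observation \ref{observation}, one has short exact sequences of the form
\[
0 \to C_r/(\wt{T}-\lambda)^{m-1}C_r \xrightarrow{\wt{T}-\lambda} C_r/(\wt{T}-\lambda)^{m}C_r \to C_r/(\wt{T}-\lambda)C_r \to 0
\]
as well as analogous sequences peeling off one complete factor $(\wt{T}-\lambda_i)^{m_i}$ of $P$ at a time. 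Combined with the length-two calculation of Theorem \ref{thm_JH}, a double induction on the total degree of $P$ yields that $C_r/P(\wt{T})\cdot C_r$ has finite length, and hence so does its quotient $\pi$.

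The main obstacle I foresee lies in the length-of-$V$ induction, where passing to $\pi/\pi'$ requires knowing that admissibility persists. I expect this to follow either from a direct adaptation of the corresponding $G$-theorem (proved via duality on the Iwasawa algebra $k\llbracket I_1\rrbracket$), or by reorganising the induction so that only finite-length admissible subrepresentations are ever quotiented out, at which point preservation of admissibility is standard via the long exact sequence for $(-)^{K_m}$ together with the finite-dimensionality of $H^1(K_m,-)$ on finite-length admissibles; alternatively, one can bypass the reduction to irreducible $V$ by handling a general $\wt{K}Z(\wt{G})$-representation $V$ directly through a study of $\End_{\wt{G}}(\cInd^{\wt{G}}_{\wt{K}Z(\wt{G})}(V))$ acting on the finite-dimensional space $\Hom_{\wt{K}Z(\wt{G})}(V,\pi)$, at the cost of losing the clean polynomial-ring structure of Proposition \ref{Hecke_poly}. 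All other ingredients---the spherical Hecke polynomial structure, the injectivity of $\wt{T}-\lambda$, and the length-two cokernel---are already supplied by the preceding sections.
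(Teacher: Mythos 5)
Your ``finitely generated and admissible $\Rightarrow$ finite length'' direction is essentially the paper's: the paper simply observes that Emerton's finiteness argument goes through once Theorem \ref{thm_JH} supplies the finite length of $\cInd^{\wt{G}}_{\wt{K}Z(\wt{G})}(\tilde{\sigma}_r)/(\wt{T}-\lambda)$, and your explicit d\'evissage (finite-dimensional $\wt{K}Z(\wt{G})$-stable generating subspace, reduction to an irreducible weight after a twist, factoring the surjection through $C_r/P(\wt{T})C_r$ using the injectivity of $\wt{T}-\lambda$ from Observation \ref{observation}) is a correct spelling-out of that argument; the admissibility-of-quotients point you flag is indeed handled by the standard duality over $k\llbracket I_1\rrbracket$, since admissibility can be tested on the single open pro-$p$ subgroup $I_1$.

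The gap is in the other half. For ``finite length $\Rightarrow$ admissible'' you must show that \emph{every} smooth irreducible genuine representation is admissible, and for the non-principal-series case you invoke Corollary \ref{ss_param}, Theorem \ref{thm_JH} and Theorem \ref{thm_ss}. That is circular: Theorem \ref{thm_ss} is a bijection onto \emph{admissible} genuine supersingular representations (admissibility is exactly what makes $\cI(\pi)$ finite dimensional, hence guarantees a simple $\cH$-submodule, in the surjectivity step), and Corollary \ref{ss_param} is proved via Corollary \ref{ss_socle}, which applies only to representations already of the form $\cT(M)$. Neither result says anything about an irreducible $\pi$ not yet known to be admissible; for such a $\pi$ there is no a priori reason that $\pi^{I_1}$ contains a simple Hecke submodule, nor that the map from some $\cInd^{\wt{G}}_{\wt{K}Z(\wt{G})}(\sigma\boxtimes\iota)$ factors through a Hecke polynomial. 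This is precisely what the paper's Howe's-theorem section supplies: the Barthel--Livn\'e-type Proposition \ref{quotient_of_coker} (every irreducible with a central character is a quotient of $\cInd^{\wt{G}}_{\wt{K}Z(\wt{G})}(\sigma\boxtimes\iota)/P(\wt{T})$), whose proof rests on the genuinely computational Lemmas \ref{injective} and \ref{finite_codim}, together with a Berger-style argument producing the central character, and then Theorem \ref{thm_JH} to conclude admissibility. Your proposal omits this entire step, so the ``only if'' implication --- and with it the admissibility of supersingular irreducibles --- remains unproved.
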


For the full subcategory $\Mod^{\sm}_G(k)$  of $\Mod^{\sm}_{\wt{G}}(k)$ this is a combination of \cite[Theorem 2.3.8]{Emerton_OP1} and \cite{Berger_central}, so we will only focus on the subcategory of genuine objects. The strategy is the same as that for the group $G$, but we still need to justify certain auxiliary results. 

Theorem \ref{thm_JH} implies that the cokernels of linear polynomials in the Hecke operator $\wt{T}$ are of finite length and the proof of \cite[Theorem 2.3.8]{Emerton_OP1} then goes through to show that any finitely generated admissible object of $\Mod^{\sm}_{\wt{G},\iota}(k)$ is indeed of finite length. 

For the converse, we need to show that a smooth irreducible genuine representation of $\wt{G}$ is admissible. As a first step, we need to check that the analogue of (the proof of) \cite[Proposition 32]{Barthel-Livne} goes through to prove the following result.

\begin{prop}\label{quotient_of_coker}
	Let $\ell$ be a field of characteristic $p$. Let $\pi$ be a smooth irreducible genuine $\ell$-representation of $\wt{G}$ admitting a central character. Then $\pi$ is a quotient of $\cInd^{\wt{G}}_{\wt{K}Z(\wt{G})}(\sigma \boxtimes \iota)/P(\wt{T})$ for some weight $\sigma$ and some irreducible polynomial $P(\wt{T})\in \ell[\wt{T}]$.
\end{prop}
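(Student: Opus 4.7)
The approach is to adapt \cite[Proposition 32]{Barthel-Livne} to the metaplectic setting. The first task is to find a weight inside $\pi$. For any nonzero $v \in \pi$, smoothness produces a principal congruence subgroup $K_m$ fixing $v$, and then the finite group $\wt{K}/K_m$ acts on $\wt{K}v \subset \pi$, making it a finite-dimensional $\wt{K}$-subrepresentation. Any minimal nonzero $\wt{K}$-subrepresentation is irreducible; by the genuine hypothesis and Proposition \ref{class_weights} applied to $\wt{K} = K \times \mu_2$ (via our chosen splitting), it has the form $\sigma \boxtimes \iota$ for some weight $\sigma$, in particular with $0 \leq r \leq p-1$.

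Since $\pi$ admits a central character, the subgroup $Z(\wt{G})$ acts on $\sigma \boxtimes \iota \subset \pi$ by that character, which uniquely extends the $\wt{K}$-action to a $\wt{K}Z(\wt{G})$-action compatible with the inclusion into $\pi$. Frobenius reciprocity then yields a nonzero $\wt{G}$-equivariant map
\[
\phi \colon \cInd^{\wt{G}}_{\wt{K}Z(\wt{G})}(\sigma \boxtimes \iota) \to \pi,
\]
which is surjective because $\pi$ is irreducible. By Proposition \ref{Hecke_poly}, the endomorphism ring of the source is the polynomial ring $\ell[\wt{T}]$, which acts on $\pi^{\sigma} := \Hom_{\wt{K}Z(\wt{G})}(\sigma \boxtimes \iota, \pi) \cong \Hom_{\wt{G}}(\cInd, \pi)$ from the right, with $\phi$ as a distinguished element.

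The heart of the proof is producing a nonzero $P(\wt{T}) \in \ell[\wt{T}]$ with $P(\wt{T}) \cdot \phi = 0$, equivalently, such that $\phi$ factors through $\cInd/P(\wt{T})$. To this end, I would consider the $\ell[\wt{T}]$-submodule $\ell[\wt{T}] \cdot \ker(\phi) \subseteq \cInd$, which is automatically $\wt{G}$-stable because $\wt{T}$ is a $\wt{G}$-equivariant endomorphism. By the maximality of $\ker(\phi)$ (coming from the irreducibility of $\pi$), this submodule equals either $\ker(\phi)$ or all of $\cInd$. In the first case $\ker(\phi)$ is $\wt{T}$-stable, so $\wt{T}$ descends to a $\wt{G}$-equivariant endomorphism $\overline{\wt{T}}$ of $\pi$; by Schur's lemma over the algebraically closed field $\ell = k$ (which is the case relevant for the admissibility application in Theorem \ref{fg+adm=fl}), $\overline{\wt{T}}$ is a scalar $\lambda \in \ell$, and $P = \wt{T} - \lambda$ does the job. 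The opposite case $\ell[\wt{T}] \cdot \ker(\phi) = \cInd$ must be ruled out via the radius-of-support filtration $B_m(\ell)$: an expression $[1,v_0] = \sum_i P_i(\wt{T}) f_i$ with $f_i \in \ker(\phi)$ is incompatible with the shifting behaviour of $\wt{T}$ on supports recorded in Observation \ref{observation}, once one chooses $v_0$ in a suitably distinguished $K$-stable line of $\sigma$.

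Once an annihilating polynomial exists, reduction to an irreducible $P$ is easy: factor $P = P_1 \cdots P_k$ into irreducibles; if $\phi' := P_1(\wt{T}) \cdot \phi$ is nonzero it is again surjective by irreducibility of $\pi$ and is annihilated by $P_2\cdots P_k$, so one iterates, while if it is zero then $P_1$ itself works. The main obstacle is the dichotomy argument in the previous paragraph, specifically excluding $\ell[\wt{T}] \cdot \ker(\phi) = \cInd$: this is the technical core and requires careful bookkeeping of how $\wt{T}$ and $\wt{T}^+$ interact with the radius filtration (cf.\ Definition \ref{def_T+}), accounting for the nontrivial cocycle defining $\wt{G}$.
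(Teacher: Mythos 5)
There is a genuine gap at what you yourself call the heart of the argument. Your dichotomy $\ell[\wt{T}]\cdot\ker(\phi)=\ker(\phi)$ or $=\cInd^{\wt{G}}_{\wt{K}Z(\wt{G})}(\sigma\boxtimes\iota)$ is fine, but neither branch is actually closed. In the first branch you invoke Schur's lemma to make the descended operator $\overline{\wt{T}}$ a scalar. Schur's lemma is not available here: over $\ell=\overline{\bF}_p$ (a countable field) it is normally deduced from admissibility, which is precisely what Theorem \ref{fg+adm=fl} is trying to prove using this proposition, so the argument is circular; and the proposition is stated for an arbitrary field $\ell$ of characteristic $p$ exactly because the downstream Berger-style argument needs it over non-algebraically-closed fields such as $k(z)$ --- this is why the conclusion is an \emph{irreducible} polynomial $P(\wt{T})$ rather than a linear one, and why restricting to ``$\ell=k$ algebraically closed'' changes the statement. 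In the second branch you only gesture at Observation \ref{observation}. That observation controls the support of $\wt{T}(f)$ for a single $f$ and shows $\wt{T}-\lambda$ is not surjective, but it says nothing about an identity $[1,v_0]=\sum_i P_i(\wt{T})f_i$ with $f_i$ ranging over the (unknown, support-unbounded) subspace $\ker(\phi)$, where arbitrary cancellations can occur; no argument is given, and it is unclear one along these lines exists.

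The paper's proof takes a different and essential route that your proposal bypasses: from $\ker(\varphi)\neq 0$ one extracts a nonzero class in $\ker(\varphi)^{I_1}e_{\psi}$, moves it to the component $\psi=\chi$ using the injectivity of $T_{\Pi}$ and $T_{\Pi s}$ (Lemma \ref{injective}), and then applies Lemma \ref{finite_codim}: any nonzero $\End_{\wt{G}}(\cInd^{\wt{G}}_{\wt{I}Z(\wt{G})}(\chi\boxtimes\iota))$-stable subspace of $\Hom_{\wt{G}}(\cInd^{\wt{G}}_{\wt{I}Z(\wt{G})}(\chi\boxtimes\iota),\cInd^{\wt{G}}_{\wt{K}Z(\wt{G})}(\sigma\boxtimes\iota))$ has finite codimension. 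This forces the image of $\ell[\wt{T}]\to\Hom_{\wt{G}}(\cInd^{\wt{G}}_{\wt{K}Z(\wt{G})}(\sigma\boxtimes\iota),\pi)$ to be a nonzero finite-dimensional $\ell[\wt{T}]$-module, which contains a nonzero element killed by an irreducible polynomial --- with no appeal to Schur's lemma and valid over any $\ell$. To repair your proposal you would need to prove a metaplectic analogue of that finite-codimension lemma (the analogue of \cite[Proposition 18]{Barthel-Livne}), which rests on the explicit pro-$p$ Iwahori Hecke module computations (Propositions \ref{Hecke_algebra_regular_char}, \ref{Hecke_module_regular_char}, \ref{H(1)}), not on the radius filtration alone.
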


The proof of this proposition requires two lemmas, the second of which is a metaplectic version of \cite[Proposition 18]{Barthel-Livne}. 

\begin{lem}\label{injective}
	Assume that $\chi:=\sigma^{I_1}$ is square-regular. The Hecke operator $T_{\Pi s}$ acts injectively on $\cInd^{\wt{G}}_{\wt{K}Z(\wt{G})}(\sigma\boxtimes \iota)^{I_1}e_{\chi[1,0]}$.
\end{lem}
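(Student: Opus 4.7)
The argument will be a metaplectic analogue of the support-radius technique used to prove Observation \ref{observation}, following the spirit of \cite[Proposition 18]{Barthel-Livne}. Set $M := \cInd^{\wt G}_{\wt K Z(\wt G)}(\sigma\boxtimes\iota)^{I_1}$, viewed as a right $\cH_{p^2=1}$-module. By Lemma \ref{identities_Hecke_operators_II}(i) the operator $T_\Pi$ is invertible, and by the character computation implicit in Lemma \ref{identities_Hecke_operators_II}(v) (together with Remark \ref{Pi_equiv_Remark}) right multiplication by $T_\Pi$ restricts to an isomorphism $Me_{\chi[1,0]}\xrightarrow{\sim} Me_{\chi^s}$. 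Hence injectivity of $T_{\Pi s}=T_\Pi T_s$ on $Me_{\chi[1,0]}$ reduces to injectivity of right multiplication by $T_s\colon Me_{\chi^s}\to Me_\chi$, which is well-defined (and has disjoint source/target) because square-regularity guarantees $\chi\neq \chi^s$.

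To show $T_s$ is injective on $Me_{\chi^s}$ I would filter $M$ by the subspaces $B_m(k)\cap M$ from the definition preceding Observation \ref{observation}, writing elements of $M$ in terms of the coset decomposition (\ref{decomp_G}). Using the explicit formula
\[
fT_s=\sum_{\lambda\in \bF_p}\begin{mat}1 & -[\lambda]\\ 0 & 1\end{mat}\tilde s f
\]
together with the Iwahori decomposition $K=I\sqcup IsI$, one checks that $T_s$ sends $B_m(k)\cap Me_{\chi^s}$ into $B_{m+1}(k)\cap Me_\chi$; the increase of radius by one reflects that $\tilde s$ interchanges the two types of Cartan representatives $\tilde g^0_{m,\mu}$ and $\tilde g^1_{m,\mu}$ while in general carrying a function of radius $m$ to one of radius $m\pm 1$.

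The heart of the proof is the verification that the induced map on associated graded pieces is injective. Concretely, if $f\in Me_{\chi^s}$ has exact radius $m_0\geq 0$, then expanding each constituent of $\tilde s f$ in the coset basis via Lemma \ref{Hecke_Lemma} shows that the $p$ summands $\begin{mat}1 & -[\lambda]\\0&1\end{mat}\tilde s f$ are supported on $p$ distinct cosets of radius exactly $m_0+1$, each carrying a copy of a single vector of $\tilde \sigma_r$; their $H$-eigencharacter is computed (again via Lemma \ref{conj_ij}) to be $\chi$, matching the target $Me_\chi$. The $p$ leading contributions therefore cannot cancel one another, and this forces the leading coefficient of $fT_s$ at radius $m_0+1$ to be non-zero unless $f$ itself is zero at radius $m_0$.

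The main obstacle is this last leading-term computation: one must propagate the metaplectic cocycle through the products $\tilde s\cdot(\tilde g^q_{m_0,\mu})^{-1}\tilde \Pi^{-2j}$ (keeping track of the sign factors supplied by Lemma \ref{Hecke_Lemma}), and verify that square-regularity of $\chi$ prevents any hidden character coincidence that could permit cancellation. Once this is done, a downward induction on the radius filtration of $\ker(T_s|_{Me_{\chi^s}})$ shows that $T_s$ has no kernel, completing the proof.
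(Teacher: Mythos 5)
Your opening reduction is correct: since $T_{\Pi s}=T_\Pi T_s$ and $T_\Pi$ is invertible with $e_{\chi[1,0]}T_\Pi=T_\Pi e_{\chi^s}$, injectivity of $T_{\Pi s}$ on $M e_{\chi[1,0]}$ (with your $M=\cInd^{\wt G}_{\wt K Z(\wt G)}(\sigma\boxtimes\iota)^{I_1}$) is indeed equivalent to injectivity of right multiplication by $T_s\colon Me_{\chi^s}\to Me_{\chi}$; this parallels the paper's first move, which instead multiplies by $T_\Pi$ on the left and passes to $T_{(\Pi s)^2}$. The gap is in the support-radius argument you propose for this reduced statement. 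The operator $T_s$ is a sum of right translations by elements of $\wt K$ (its double coset $\wt I_1\tilde s\wt I_1$ lies inside $\wt K$), and right translation by an element of $\wt K$ carries a coset $\wt K Z(\wt G)g$ to a coset at the \emph{same} distance from the standard vertex: $T_s$ permutes each sphere of the filtration by the $B_m(k)$ and never increases the radius. Consequently there is no leading term at radius $m_0+1$; the claim that the $p$ summands $\begin{mat}1&-[\lambda]\\0&1\end{mat}\tilde s f$ are supported on $p$ distinct cosets of radius exactly $m_0+1$ is false, and with it the non-cancellation mechanism on which your proof rests. A radius argument in the style of Observation \ref{observation} can only detect operators whose defining double coset leaves $\wt K Z(\wt G)$, such as $T_{(\Pi s)^2}$ or $\wt T$, not $T_s$.

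There is also a structural reason why no argument of the form ``the translates cannot cancel, hence $fT_s\neq0$ for $f\neq 0$'' can succeed unless it uses the $H$-isotypic decomposition in an essential way: $T_s$ has a large kernel on $M$. For instance, for $\sigma=\Sym^r(\ell^2)$ with $0<r<p-1$ the generator $[1,x^r]\in Me_{\chi}$ satisfies $[1,x^r]T_s=0$ (in Hecke-algebra terms $T_se_{\chi}T_s=T_s^2e_{\chi^s}=0$ by Lemma \ref{identities_Hecke_operators_I} (v)), so the $p$ translates occurring in $T_s$ do cancel, and they do so on cosets of unchanged radius. Injectivity holds only on the $\chi^s$-isotypic component, and that is precisely the module-theoretic fact the paper extracts from Corollary \ref{I_1-inv_cInd} together with Propositions \ref{Hecke_algebra_regular_char} and \ref{Hecke_module_regular_char}: after identifying $M=T_se_{\chi}\cH_{p^2=1}$, the question becomes whether $Y$ acts injectively on $Y\,\ell[X,Y]/(XY)$, which is immediate. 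If you wish to avoid this structure theory, you would have to carry out a metaplectic analogue of the explicit weight computations of Barthel--Livn\'e \cite{Barthel-Livne}, keeping track of how $\tilde s$ and $H$ act on the $\sigma$-coefficients and not merely on supports; as written, your argument does not engage with the coefficients at all, which is why it would equally ``prove'' the false statement that $T_s$ is injective on $Me_{\chi}$.
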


\begin{proof}
	After twisting by a character, we may assume that $\sigma=\Sym^{r}(\ell^2)$, for some $0\leq r\leq p-1$, and the central element $p^2$ acts trivially. By Corollary \ref{I_1-inv_cInd}, we have $\cInd^{\wt{G}}_{\wt{K}Z(\wt{G})}(\sigma\boxtimes \iota)^{I_1}e_{\chi[1,0]}=T_se_{\chi}\cH_{p^2=1}e_{\chi[1,0]}$. For proving the desired injectivity, we may act by $T_{\Pi}$ from the left and it suffices to prove injectivity of the map $T_{\Pi s}^2=T_{(\Pi s)^2}$. In view of the isomorphisms in propositions \ref{Hecke_algebra_regular_char} and \ref{Hecke_module_regular_char}, we need to show that the element $Y\in \cH(\chi[1,0])_{p^2=1}\cong \ell[X,Y]/(XY)$ acts injectively on $T_{\Pi s}\cH(\chi[1,0],\chi)_{p^2=1}\cong Y\ell[X,Y]/(XY)$, which is clear. We note that the results we used hold true over an arbitrary field of characteristic $p$; alternatively, the lemma can be checked after passing to an algebraic closure.
\end{proof}

\begin{lem}\label{finite_codim}
	Let $\psi\colon HZ^{\sq}\to k^{\times}$ be a smooth character. Any non-zero subspace $W\subset \Hom_{\wt{G}}(\cInd^{\wt{G}}_{\wt{I}Z(\wt{G})}(\psi\boxtimes \iota),\cInd^{\wt{G}}_{\wt{K}Z(\wt{G})}(\sigma\boxtimes \iota))$ stable under the action of $\End_{\wt{G}}(\cInd^{\wt{G}}_{\wt{I}Z(\wt{G})}(\psi\boxtimes \iota))$ has finite codimension.
\end{lem}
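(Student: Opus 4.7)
The plan is to combine Frobenius reciprocity with the identification of the spherical Hecke operator as a central element of the Iwahori Hecke algebra, and then to exploit the resulting $k[\wt{T}]$-module structure on the Hom space. Frobenius reciprocity identifies
\[
W_0 := \Hom_{\wt{G}}(\cInd^{\wt{G}}_{\wt{I}Z(\wt{G})}(\psi\boxtimes \iota),\cInd^{\wt{G}}_{\wt{K}Z(\wt{G})}(\sigma\boxtimes \iota)) \cong \cInd^{\wt{G}}_{\wt{K}Z(\wt{G})}(\sigma\boxtimes\iota)^{I_1} e_{\psi},
\]
and this space is non-zero only if $\psi(p^2)=1$ and $\psi|_H$ lies in the orbit $O$ of $\chi := \sigma^{I_1}$; otherwise $W_0 = 0$ and the lemma is vacuous, so we may assume these conditions hold. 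Then $E := \End_{\wt{G}}(\cInd^{\wt{G}}_{\wt{I}Z(\wt{G})}(\psi\boxtimes\iota)) = \cH(\psi)_{p^2=1}$, and the spherical operator $\wt{T}$ acts on $W_0$ by post-composition. By Lemma \ref{T=Z}, under the identifications of Corollary \ref{I_1-inv_cInd} this action coincides with multiplication by the central element $\mathscr{Z}_O \in \cH(O)_{p^2=1}$; in particular it commutes with the right $E$-action, so any $E$-stable subspace of $W_0$ is automatically $k[\wt{T}]$-stable.

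Next, I would establish that $W_0$ is finitely generated and torsion-free over the PID $k[\wt{T}]$, hence free of some finite rank $d$. Finite generation would be extracted from the support filtration $W_0 = \bigcup_m (W_0\cap B_m(k))$ together with the structure of the operator $\wt{T}^+$ of Definition \ref{def_T+}: since $\wt{T}^+$ maps $S_m(k)$ into $S_{m+2}(k)$ and $\wt{T}-\wt{T}^+$ preserves $B_m(k)$, an inductive argument reduces any element of $W_0\cap B_m(k)$ modulo $\wt{T}(W_0)$ into the finite-dimensional subspace $W_0\cap B_1(k)$. Torsion-freeness is immediate from Observation \ref{observation}: for a non-zero $f \in W_0$ of minimal support radius $m_0$ and a non-zero polynomial $P(\wt{T})$ of degree $n$ with leading coefficient $a_n$, the top term $a_n\wt{T}^n f$ has support radius exactly $m_0+2n$ while the lower-order terms have strictly smaller radius, forcing $P(\wt{T})f \neq 0$.

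To conclude, I would determine $d$ and check that $E$ mixes the $d$ free $k[\wt{T}]$-summands of $W_0$. Using the Morita equivalences of Corollaries \ref{Morita_regular} and \ref{Morita_non-regular} together with the explicit presentations in Propositions \ref{Hecke_algebra_regular_char}, \ref{Hecke_module_regular_char} and \ref{H(1)}, the structure of $W_0$ as a right $E$-module is concretely determined in both the square-regular and the non-square-regular case. The key point is to verify that appropriate generators of $E$---the operator $T_{\Pi s}$ in the square-regular case (whose injectivity on the relevant isotypic component is Lemma \ref{injective}), and the operators $S_{0,0}$ and $S_{0,2}$ in the non-square-regular case---intertwine the $d$ free summands of $W_0$ in a sufficiently transitive manner. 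Given this mixing, any non-zero $E$-stable subspace $W \subset W_0$ projects non-trivially onto every summand; each such projection is a non-zero ideal of $k[\wt{T}]$ and therefore of finite codimension, which forces $W$ itself to have finite codimension in $W_0$. The main obstacle is precisely this mixing verification: while the first two steps proceed essentially uniformly, tracking the off-diagonal Hecke operators through the Morita equivalences and the basis descriptions of Corollaries \ref{basis_regular_Hecke} and \ref{basis_H(1)} requires a careful case-by-case inspection.
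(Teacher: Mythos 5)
Your reduction steps are fine and broadly parallel to the paper: after twisting you identify the Hom space with $\cInd^{\wt{G}}_{\wt{K}Z(\wt{G})}(\sigma\boxtimes \iota)^{I_1}e_{\psi}$, note it vanishes unless $\psi\in O_{\chi}$ with $p^2$ acting trivially, and observe via Lemma \ref{T=Z} that $\wt{T}$ acts through the central element $\mathscr{Z}_O$, so that $e_{\psi}\mathscr{Z}_O e_{\psi}$ lies in $E=\cH(\psi)_{p^2=1}$ and $E$-stability already gives $k[\wt{T}]$-stability. The torsion-freeness argument from Observation \ref{observation} is also correct. But the final inference is a genuine gap: from ``$W$ projects non-trivially onto each of the $d$ free $k[\wt{T}]$-summands of $W_0$, and each projection is a non-zero ideal, hence of finite codimension'' you cannot conclude that $W$ has finite codimension. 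The diagonal submodule $\{(f,f): f\in k[\wt{T}]\}\subset k[\wt{T}]^{2}$ has both projections equal to the whole ring yet has infinite codimension. What you actually need is that $W$ has full rank $d$ over $k[\wt{T}]$, and non-vanishing of projections only gives rank $\geq 1$; no amount of ``transitivity'' of the $E$-action repairs the implication as stated, because the components of an element of $W$ need not themselves lie in $W$ unless the projections are realized by idempotents of $E$.

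This is exactly where the paper's proof differs: it either reduces to rank one or splits $W$ itself by idempotents inside $E$. In the square-regular case, after acting by the invertible $T_{\Pi}$ and moving $\psi$ within its orbit, the module becomes $T_{\Pi s}\cH(\chi)_{p^2=1}\cong \ell[Y]$ (via Propositions \ref{Hecke_algebra_regular_char} and \ref{Hecke_module_regular_char}), so a non-zero $E$-stable subspace is a non-zero ideal of $\ell[Y]$ and no mixing argument is needed. In the non-square-regular cases one uses the orthogonal idempotents $-S_{0,0}$ and $S_{0,0}+1$ of $\cH(\mathbf{1})_{p^2=1}$: since they lie in $E$, one has $W=WS_{0,0}\oplus W(S_{0,0}+1)$ with both summands contained in $W$, shown non-zero by acting with $S_{0,2}\in E$, and each summand is then a non-zero submodule of a module that is free of rank one over a polynomial ring. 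If you want to keep your $k[\wt{T}]$-rank formulation, you must prove full rank of $W$, which in practice amounts to exactly this idempotent decomposition (or an equivalent explicit computation with the generators of $E$). A secondary, smaller issue: your finite-generation step over $k[\wt{T}]$ is only asserted; Observation \ref{observation} controls leading terms injectively (good for torsion-freeness) but does not by itself provide the surjectivity-onto-leading-terms needed for the reduction into $B_1(k)$. It is easier, and closer to the paper, to read off finite generation (indeed freeness of rank $1$ or $2$) directly from Corollary \ref{I_1-inv_cInd} together with the explicit module descriptions in Propositions \ref{Hecke_algebra_regular_char}, \ref{Hecke_module_regular_char} and \ref{H(1)}.
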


\begin{proof}
	After twisting we may assume that $\sigma=\Sym^r(\ell^2)$, for some $0\leq r\leq p-1$, and the central element $p^2$ acts trivially. Write $\chi=\sigma^{I_1}$, viewed as a character of $HZ^{\sq}$. The Hom-space in the statement is equal to the $\cH(\psi)_{p^2=1}$-module $\cInd^{\wt{G}}_{\wt{K}Z(\wt{G})}(\sigma\boxtimes \iota)^{I_1}e_{\psi}$.
	
	Assume that $\chi$ is square-regular. By Corollary \ref{I_1-inv_cInd} we then have $\cInd^{\wt{G}}_{\wt{K}Z(\wt{G})}(\sigma\boxtimes \iota)^{I_1}e_{\psi}=T_se_{\chi}\cH_{p^2=1}e_{\psi}$. Since the statement is empty if $\psi$ does not lie in the orbit of $\chi$, we do from now on assume that $\psi\in O_{\chi}$. Acting by $T_{\Pi}$ from the right, we may assume that $\psi=\chi[t,0]$ for $t=0,1$. If $t=0$, then acting from the left by $T_{\Pi}$, we need to check that a non-zero $\cH(\chi)_{p^2=1}$-submodule of $T_{\Pi s}\cH(\chi)_{p^2=1}$ has finite codimension. But by propositions \ref{Hecke_algebra_regular_char} and \ref{Hecke_module_regular_char}, this just means that a $\ell[X,Y]/(XY)$-submodule of $\ell[Y]$ (with $X=0$ on it) has finite codimension, i.e.\ that every non-zero ideal of the polynomial ring $\ell[Y]$ has finite codimension. This is clear. The case $t=1$ is treated similarly.
	
	Assume now that $\chi$ is not square-regular, i.e.\ $r$ is a multiple of $\frac{p-1}{2}$. Without loss of generality, we may assume that $\psi=\mathbf{1}$. 
	
	If $r=p-1$, then by Corollary \ref{I_1-inv_cInd}, we are considering a non-zero $\cH(\mathbf{1})_{p^2=1}$-submodule $W$ of $T_s\cH(\mathbf{1})_{p^2=1}$. Using the description of $\cH(\mathbf{1})_{p^2=1}$ as in Proposition \ref{H(1)}, we decompose $W$ as
	\[
	WS_{0,0} \oplus W(S_{0,0}+1)\subset S_{0,0}\cH(\mathbf{1})_{p^2=1}S_{0,0}\oplus S_{0,0}\cH(\mathbf{1})_{p^2=1}(S_{0,0}+1)=S_{0,0}\cH(\mathbf{1})_{p^2=1}.
	\]
	We note that each summand on the left-hand side is non-zero (act by $S_{0,2}$). Now, as $\ell$-algebras, $\ell[Y]\cong S_{0,0}\cH(\mathbf{1})_{p^2=1}S_{0,0}$ via $1\mapsto -S_{0,0}$ and $Y\mapsto S_{0,0}S_{0,2}S_{0,0}$, so we are done with the inclusion of the first direct summand just like in the regular case. For the second one, note that we again have $\ell[Y]\cong (S_{0,0}+1)\cH(\mathbf{1})_{p^2=1}(S_{0,0}+1)$, where now $1\mapsto S_{0,0}+1$ and $Y\mapsto (S_{0,0}+1)S_{0,2}(S_{0,0}+1)$. As a module over this ring, the summand $S_{0,0}\cH(\mathbf{1})_{p^2=1}(S_{0,0}+1)$ is generated by the element $S_{0,0}S_{0,2}(S_{0,0}+1)$ and in particular any non-zero submodule does indeed have finite codimension.
	
	If $r=0$, then by Corollary \ref{I_1-inv_cInd}, we are considering a non-zero $\cH(\mathbf{1})_{p^2=1}$-submodule $W$ of $(S_{0,0}+1)\cH(\mathbf{1})_{p^2=1}$. Interchanging $S_{0,0}+1$ and $-S_{0,0}$ defines an automorphism of $\cH(\mathbf{1})_{p^2=1}$, which reduces to the previously treated case $r=p-1$.
	
	In case $r=\frac{p-1}{2}$, we make use of Corollary \ref{I_1-inv_cInd} once again and act by $T_{\Pi}$ from the left to reduce to showing that a non-zero $\cH(\mathbf{1})_{p^2=1}$-submodule $W$ of $T_{\Pi}T_sT_{\Pi} \cH(\mathbf{1})_{p^2=1}=S_{0,2}\cH(\mathbf{1})_{p^2=1}$ has finite codimension (here we used that $T_{\Pi}e_{\mathbf{1}}=e_{\chi} T_{\Pi}$, Lemma \ref{identities_Hecke_operators_II} (v)), which can now be proved as for $r=p-1$ by decomposing the problem according to the idempotent $-S_{0,0}$.
\end{proof}
	
\begin{proof}[Proof of Proposition \ref{quotient_of_coker}]
	Let $\pi$ be as in the statement. Picking a weight $\sigma\subset \pi|_K$ and letting the central element $p^2$ act on it according to the central character of $\pi$, we obtain a surjection $\varphi\colon \cInd^{\wt{G}}_{\wt{K}Z(\wt{G})}(\sigma\boxtimes \iota)\twoheadrightarrow \pi$. Since the compact induction is not irreducible (e.g.\ Theorem \ref{thm_JH}), the kernel is non-zero and thus $\ker(\varphi)^{I_1}e_{\psi}\neq 0$ for some $\psi\in O_{\chi}$. Either acting by $T_{\Pi}$ and using Lemma \ref{identities_Hecke_operators_II} (v) or acting by $T_{\Pi s}$ and using Lemma \ref{identities_Hecke_operators_II} (v), (vii) and Lemma \ref{identities_Hecke_operators_I} (iv), we may assume that $\psi=\chi$; here note that $T_{\Pi}$ (Lemma \ref{identities_Hecke_operators_II} (i)) and $T_{\Pi s}$ (Lemma \ref{injective}) act injectively.
	
	The inclusion $\chi=\sigma^{I_1}\subset \sigma$ gives rise to a surjective map $\cInd^{\wt{G}}_{\wt{I}Z(\wt{G})}(\chi\boxtimes \iota)\twoheadrightarrow \cInd^{\wt{G}}_{\wt{K}Z(\wt{G})}(\sigma\boxtimes \iota)$, which in turn induces a commutative diagram
	\[
	\xymatrix{
	\Hom_{\wt{G}}(\cInd^{\wt{G}}_{\wt{I}Z(\wt{G})}(\chi\boxtimes \iota), \cInd^{\wt{G}}_{\wt{K}Z(\wt{G})}(\sigma\boxtimes \iota)) \ar[r] & \Hom_{\wt{G}}(\cInd^{\wt{G}}_{\wt{I}Z(\wt{G})}(\chi\boxtimes \iota), \pi)\\
	\End_{\wt{G}}(\cInd^{\wt{G}}_{\wt{K}Z(\wt{G})}(\sigma\boxtimes \iota)) \ar[u]\ar[r] & \Hom_{\wt{G}}(\cInd^{\wt{G}}_{\wt{K}Z(\wt{G})}(\sigma\boxtimes \iota), \pi)\ar[u]
}
	\]
	with injective vertical maps, where the horizontal maps are induced by composing with $\varphi$. Lemma \ref{finite_codim} implies that the top arrow has finite image and thus the lower arrow does as well. But the image of the morphism at the bottom is non-zero (it contains $\varphi$) and it is stable under the action of the spherical Hecke algebra $\ell[\wt{T}]$ appearing as the source of the arrow. Picking a non-zero element in this finite dimensional $\ell[\wt{T}]$-submodule killed by some irreducible polynomial finishes the proof.
\end{proof}

\begin{prop}
	A smooth irreducible genuine $k$-representation of $\wt{G}$ is admissible.
\end{prop}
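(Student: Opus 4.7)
The overall plan is to follow the Barthel--Livn\'e--Breuil strategy in the genuine setting: (a) establish that $\pi$ admits a central character; (b) invoke Proposition \ref{quotient_of_coker} to realize $\pi$ as a quotient of $\cInd^{\wt{G}}_{\wt{K}Z(\wt{G})}(\sigma\boxtimes\iota)/(\wt{T}-\lambda)$ for some weight $\sigma$ and some $\lambda\in k$ (the irreducible polynomials in $k[\wt{T}]$ being linear, as $k$ is algebraically closed); and (c) conclude via Theorem \ref{thm_JH} that this cokernel is of finite length with admissible Jordan--H\"older factors, so that $\pi$ itself is admissible.

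The main obstacle is step (a), the metaplectic analogue of Berger's theorem \cite{Berger_central}. Since the subgroup $\mu_2\subset Z(\wt{G})$ acts via $\iota$ by hypothesis, and since any element of $Z(\wt{G})\cap \wt{K}$ acts by a scalar on any irreducible $\wt{K}$-subrepresentation of $\pi$ by Schur's lemma, it suffices to produce a scalar action of the single central element $(p^2\mathrm{Id},1)\in Z(\wt{G})$ on $\pi$. One approach is to adapt Berger's argument: select an irreducible $\wt{K}$-subrepresentation $\sigma\subset \pi|_{\wt{K}}$ and observe that $(p^2\mathrm{Id},1)$ permutes the (finitely many, by Proposition \ref{class_weights}) isomorphism classes of weights appearing in the $\wt{K}$-socle of $\pi$; a pigeon-hole argument then yields some $m\geq 1$ for which $(p^2\mathrm{Id},1)^m$ preserves the $\sigma$-isotypic component. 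Combining this with the polynomial structure of the spherical Hecke algebra (Proposition \ref{Hecke_poly}) and the finite-codimension result of Lemma \ref{finite_codim} should allow one to extract an eigenvalue of $(p^2\mathrm{Id},1)$ on $\pi$; the corresponding eigenspace being $\wt{G}$-stable, irreducibility forces it to exhaust all of $\pi$, giving the central character.

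With the central character in hand, steps (b) and (c) follow immediately from what has already been proved. Proposition \ref{quotient_of_coker} directly produces the surjection $\cInd^{\wt{G}}_{\wt{K}Z(\wt{G})}(\sigma\boxtimes\iota)/(\wt{T}-\lambda)\twoheadrightarrow \pi$, and Theorem \ref{thm_JH} describes the cokernel as a representation of length at most two, whose Jordan--H\"older factors are either genuine principal series $\Ind^{\wt{G}}_{\wt{B}_2}(\psi\boxtimes\iota)$ or of the form $\cT(M)$ for a specific supersingular $\cH$-module $M$. A principal series has an eight-dimensional space of $I_1$-invariants (basis $\{F_{i,j}^q\}_{i,j,q\in\{0,1\}}$, see the proof of Proposition \ref{PS_irred}), and $\cI(\cT(M))=M$ is four-dimensional by Corollary \ref{ss:IT(M)=M}; both are therefore admissible, since admissibility for smooth mod-$p$ representations of $\wt{G}$ can be tested on the single open pro-$p$ subgroup $I_1$ by \cite[Proposition 36]{herzig}. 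Admissibility being preserved under extensions and quotients, the cokernel is admissible, and hence so is $\pi$.
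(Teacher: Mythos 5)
Steps (b) and (c) of your plan are exactly how the paper concludes: once $\pi$ is known to admit a central character, Proposition \ref{quotient_of_coker} (with linear $P(\wt{T})$, as $k$ is algebraically closed) and Theorem \ref{thm_JH} give that $\pi$ is a quotient of a finite-length representation whose Jordan--H\"older factors are principal series or $\cT(M)$ for supersingular $M$, and these have finite-dimensional $I_1$-invariants, so admissibility follows. The genuine gap is in step (a), which is precisely the metaplectic analogue of Berger's theorem and cannot be dispatched the way you suggest. First, the pigeon-hole step is empty: the central element $p^2$ (more precisely $(p^2\operatorname{Id},1)\in Z(\wt{G})$, cf.\ Lemma \ref{centers}) commutes with $\wt{K}$, so it automatically preserves every $\wt{K}$-isotypic component of the socle; there is no permutation of weights to exploit. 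Second, and more seriously, ``extracting an eigenvalue'' of $p^2$ on the $\sigma$-isotypic component, i.e.\ on $\Hom_{\wt{K}}(\sigma,\pi)$, is exactly the hard point: this space is a priori infinite dimensional, and over a countable algebraically closed field such as $\Fpbar$ neither Schur's lemma nor any general principle guarantees that a $k$-linear operator on an infinite-dimensional space has an eigenvalue (multiplication by $z$ on $k(z)$ is the standard counterexample). Neither Proposition \ref{Hecke_poly} nor Lemma \ref{finite_codim} produces such an eigenvalue; Lemma \ref{finite_codim} feeds into Proposition \ref{quotient_of_coker}, which already \emph{assumes} the existence of a central character. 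So as written, step (a) is an unproved assertion and the argument is circular at its core.

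The route the paper takes (following \cite{Berger_central}) is different: one considers the $k$-algebra map $k[z]\to \End_{\wt{G}}(\pi)$ sending $z$ to the action of $p^2$; by irreducibility its kernel is a prime ideal. If the kernel is non-zero, $p^2$ acts by a scalar since $k$ is algebraically closed. If it is zero, then $\pi$ is naturally a smooth irreducible genuine representation over the field $k(z)$, which \emph{does} admit a central character, and one may apply Proposition \ref{quotient_of_coker} over a finite extension $\ell$ of $k(z)$ --- this is why that proposition is deliberately stated over an arbitrary field of characteristic $p$ rather than over $k$ --- to identify the constituents explicitly and then derive a contradiction with the irreducibility (in particular finite length) of $\pi$ as a $k$-representation, e.g.\ by exhibiting inside such a constituent a smooth subrepresentation free over a finitely generated $k$-subalgebra $R\subset \ell$ that is not a field, whence an infinite descending chain $a^n\tau_R$ for a non-unit $a\in R$. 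Some argument of this kind (or an equivalent substitute) is needed to establish the central character; without it your proof is incomplete, even though everything after step (a) matches the paper.
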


\begin{proof}
	As in \cite{Berger_central}, this can now be deduced from Proposition \ref{quotient_of_coker} and Theorem \ref{thm_JH}.
\end{proof}

\section{Relation to locally algebraic representations}

Fix an algebraic closure $\Qpbar$ of $\bQ_p$ with ring of integers $\Zpbar$. Keeping the notation of the previous section and putting $k=\Fpbar$, we would like to relate the smooth $\Fpbar$-representations $\cInd^{\wt{G}}_{\wt{K}Z(\wt{G})}(\tilde{\sigma}_r(\Fpbar))/(\wt{T}-\bar{a}_p)$, for $\bar{a}_p\in \Fpbar$ and $0\leq r\leq p-1$, to the reduction modulo the maximal ideal in $\Zpbar$ of a $\wt{G}$-stable bounded lattice of a locally algebraic $\Qpbar$-representation (or equivalently its unitary completion) of $\wt{G}$. Choosing a lift $a_p\in \Zpbar$ of $\bar{a}_p$, a natural candidate to consider is the representation
\[
\wt{\Pi}_{r,a_p}:=\cInd^{\wt{G}}_{\wt{K}Z(\wt{G})}(\tilde{\sigma}_r(\Qpbar))/(\wt{T}_{\Qpbar}-a_p).
\]

Although it will not be necessary for us, we first describe $\wt{\Pi}_{r,a_p}$ in terms of (constituents of) locally algebraic unramified principal series, which can be deduced from the corresponding statement \cite[Proposition 3.2.1]{Breuil_II} for the group $G$ by utilizing Savin's local Shimura correspondence.

\subsection{Savin's local Shimura correspondence}

Let us denote by $\cH_{\iota}(\wt{G},\wt{I})=\End_{\wt{G}}(\cInd^{\wt{G}}_{\wt{I}}(\mathbf{1}\boxtimes \iota))$ the genuine Iwahori Hecke algebra, where $\mathbf{1}$ is the trivial $\Qpbar^{\times}$-valuded character, and let $\cH(G,I)=\End_{G}(\cInd^G_I(\mathbf{1}))$ be the usual Iwahori Hecke algebra of $G$. The work \cite[Proposition 3.1.2]{Savin} of Savin provides us with a description of $\cH_{\iota}(\wt{G},\wt{I})$ in terms of generators and relations. Comparing with the description of $\cH(G,I)$, see for example \cite{Rostami}, we obtain an explicit isomorphism
\begin{equation}\label{iso_IwHecke}
	\cH_{\iota}(\wt{G},I)\cong \cH(G,I)
\end{equation}
of $\Qpbar$-algebras, see also \cite[§4]{Savin}. In representation theoretic terms, this translates into an equivalence of categories
\small
\begin{equation}\label{equiv_I}
	\left\{\begin{array}{c}\text{Smooth genuine}\\ \text{$\Qpbar$-representations of $\wt{G}$}\\ \text{generated by their $I$-invariants}\end{array}\right\} \cong \left\{\begin{array}{c}\text{Smooth }\\ \text{$\Qpbar$-representations of $G$}\\ \text{generated by their $I$-invariants}\end{array}\right\}.
\end{equation}
\normalsize

\begin{lem}\label{Savin-cInd}
	The equivalence (\ref{equiv_I}) maps $\cInd^{\wt{G}}_{\wt{K}Z(\wt{G})}(\mathbf{1}\boxtimes \iota)$ to $\cInd^G_{KZ}(\mathbf{1})$ and the endomorphism $\wt{T}_{\Qpbar}$ of the former to the endomorphism $p^{1/2}T_{\Qpbar}$ of the latter, where $T_{\Qpbar}$ corresponds to the double coset $KZ\begin{mat}
		1 & 0\\0&p^{-1}
	\end{mat}KZ$.
\end{lem}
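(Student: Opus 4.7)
The plan is to view the equivalence (\ref{equiv_I}) as the composition $\pi\mapsto \pi^{\wt{I},\iota}$, transport along Savin's isomorphism (\ref{iso_IwHecke}), and then apply the tensor functor $(-)\otimes_{\cH(G,I)}\cInd^G_I(\mathbf{1})$. Both compact inductions at hand will lie in the respective subcategories and be determined by their Iwahori invariants as right modules over the corresponding Iwahori Hecke algebras.

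Concretely, I would first check that $\cInd^{\wt{G}}_{\wt{K}Z(\wt{G})}(\mathbf{1}\boxtimes\iota)$ and $\cInd^G_{KZ}(\mathbf{1})$ are each generated by their Iwahori-fixed element $[1,\mathbf{1}]$. This is a standard consequence of the Iwasawa decomposition together with the observation (compare the proof of Proposition \ref{PS_irred}) that every compact-open subset of $\wt{K}Z(\wt{G})\backslash\wt{G}$ is a finite union of $\wt{I}$-orbits. Frobenius reciprocity then identifies
\[
\Hom_{\wt{G}}(\cInd^{\wt{G}}_{\wt{I}}(\mathbf{1}\boxtimes\iota),\cInd^{\wt{G}}_{\wt{K}Z(\wt{G})}(\mathbf{1}\boxtimes\iota))
\]
with a cyclic right $\cH_\iota(\wt{G},\wt{I})$-module, generated by the morphism induced by the inclusion $\wt{I}\subset\wt{K}Z(\wt{G})$, whose annihilator is the augmentation ideal of the finite-dimensional subalgebra $\End_{\wt{K}Z(\wt{G})}(\cInd^{\wt{K}Z(\wt{G})}_{\wt{I}}(\mathbf{1}\boxtimes\iota))\subset\cH_\iota(\wt{G},\wt{I})$; analogously on the $G$-side. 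Savin's explicit presentation of (\ref{iso_IwHecke}) matches the two finite parahoric subalgebras---both being the Iwahori-Hecke algebra of the same finite Weyl group with parameter $q=p$---so the two cyclic modules correspond, proving the first assertion of the lemma.

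To match the Hecke operators, I would compute their actions on the canonical cyclic generator $[1,\mathbf{1}]$. Both $\wt{T}_{\Qpbar}$ and $T_{\Qpbar}$ lie in the center of the respective Iwahori Hecke algebras (as generators of the spherical Hecke algebras, which embed into the centers via the Bernstein presentation) and are given by explicit sums over the single-coset decompositions of their defining double cosets, as in equation (\ref{T([1,v])}). Writing out these sums directly, together with Savin's iso (\ref{iso_IwHecke}) on the corresponding translation elements---$T_{\Qpbar}$ is a Weyl-symmetrized translation attached to $\begin{mat}1 & 0\\0 & p^{-1}\end{mat}\in X_*(T)$, while $\wt{T}_{\Qpbar}$ is attached to $\begin{mat}1 & 0\\0 & p^{-2}\end{mat}\in X_*(T^{\sq})$ (the square arising from Lemma \ref{centers}, $Z(\wt{G})=Z^{\sq}\times\mu_2$)---produces the scaling factor $p^{1/2}$. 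The main obstacle is the bookkeeping: carefully tracking the Hilbert-symbol twists that appear when evaluating products of Hecke operators on the covering side (via Lemma \ref{Hecke_Lemma}) and verifying that the specific normalization of \cite{Savin} used to specify (\ref{iso_IwHecke}) yields precisely the factor $p^{1/2}$, rather than some other power of $p$.
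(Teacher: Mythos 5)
Your strategy is essentially the one the paper uses: everything is checked on Iwahori invariants, the compact induction from $\wt{K}Z(\wt{G})$ (resp.\ $KZ$) is identified inside $\cH_{\iota}(\wt{G},\wt{I})$ (resp.\ $\cH(G,I)$) as the cyclic module generated by $1+\wt{T}_s$ (resp.\ $1+T_s$), Savin's isomorphism matches the finite Hecke subalgebras, and the factor $p^{1/2}$ is extracted by expressing the spherical operator through Iwahori--Hecke elements whose images under (\ref{iso_IwHecke}) are known. Two caveats. First, a small imprecision in the module-theoretic part: since you induce from $\wt{K}Z(\wt{G})$ and $KZ$ rather than $\wt{K}$ and $K$, the annihilator of the cyclic generator is not just the ideal coming from the finite subalgebra; it also contains the central relations $\tilde{\mathbf{1}}_{2,2}-1$ and $\mathbf{1}_{1,1}-1$, and one must note that Savin's isomorphism sends $\tilde{\mathbf{1}}_{2,2}$ to $\mathbf{1}_{1,1}$ so that these match. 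Relatedly, $\wt{T}_{\Qpbar}$ and $T_{\Qpbar}$ are not elements of the Iwahori Hecke algebras, so ``central via Bernstein'' is not literally what you use; what matters is that their restrictions to $I$-invariants commute with the right Hecke action and are therefore determined by the image of the cyclic generator.

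Second, and more importantly, the step you defer as ``bookkeeping'' is precisely where the content of the lemma sits. The paper resolves it by two explicit facts: Savin's normalization gives $\tilde{\mathbf{1}}_{2n,2m}\mapsto p^{(n-m)/2}\,\mathbf{1}_{n,m}$ (and $\wt{T}_s\mapsto T_s$), and one has the identities $T_{\Qpbar}=p^{-1}(1+T_s)\mathbf{1}_{0,-1}(1+T_s)$ in $\cH(G,I)/(\mathbf{1}_{1,1}-1)$ and $\wt{T}_{\Qpbar}=p^{-1}(1+\wt{T}_s)\tilde{\mathbf{1}}_{0,-2}(1+\wt{T}_s)$ in $\cH_{\iota}(\wt{G},\wt{I})/(\tilde{\mathbf{1}}_{2,2}-1)$, from which $\wt{T}_{\Qpbar}\mapsto p^{1/2}T_{\Qpbar}$ is immediate. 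Your proposal names the right ingredients (the translation elements attached to $\operatorname{diag}(1,p^{-1})$ and $\operatorname{diag}(1,p^{-2})$, and the dependence on Savin's normalization) but does not verify either the image $\tilde{\mathbf{1}}_{0,-2}\mapsto p^{1/2}\mathbf{1}_{0,-1}$ or the displayed factorizations, so as written the factor $p^{1/2}$ is asserted rather than proved; supplying these two computations would close the argument and bring it in line with the paper's proof.
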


\begin{proof}
	It is enough to check the assertions on $I$-invariants. Let $\wt{T}_s\in \cH_{\iota}(\wt{G},\wt{I})$ be the function with support $\wt{I}\tilde{s}\wt{I}$ sending $\tilde{s}$ to $1$ and let $T_s\in \cH(G,I)$ be the indicator function of $IsI$. For $n\geq m$, let $\tilde{\mathbf{1}}_{2n,2m}\in \cH_{\iota}(\wt{G},\wt{I})$ be the function with support $\wt{I}\left(\begin{mat}
		p^{2n} & 0\\ 0 & p^{2m}
	\end{mat},1\right)\wt{I}$ and value $1$ at $\left(\begin{mat}
	p^{2n} & 0\\ 0 & p^{2m}
\end{mat},1\right)$, and let $\mathbf{1}_{n,m}\in \cH(G,I)$ be the indicator function of $I\begin{mat}
p^n & 0\\ 0 & p^m
\end{mat}I$. The isomorphism (\ref{iso_IwHecke}) then maps $\wt{T}_s$ to $T_s$ and $\tilde{\mathbf{1}}_{2n,2m}$ to $p^{\frac{n-m}{2}}\mathbf{1}_{n,m}$.

Considered as a subspace $\cInd^G_K(\mathbf{1})^{I}\subset \cInd^G_I(\mathbf{1})^{I}\cong \cH(G,I)$, we have the equality $\cInd^G_K(\mathbf{1})^{I}=(1+T_s)\cH(G,I)$ since $1+T_s$ is the indicator function of $K$. One similarly has $\cInd^{\wt{G}}_{\wt{K}}(\mathbf{1}\boxtimes \iota)^{I}=(1+\wt{T}_s)\cH_{\iota}(\wt{G},\wt{I})$. This proves the first part. 
Concerning the Hecke operators, one may check that $T_{\Qpbar}=p^{-1}(1+T_s)\mathbf{1}_{0,-1}(1+T_s)$ in $\cH(G,I)/(\mathbf{1}_{1,1}-1)$ and $\wt{T}_{\Qpbar} = p^{-1}(1+\wt{T}_s)\tilde{\mathbf{1}}_{0,-2}(1+\wt{T}_s)$ in $\cH_{\iota}(\wt{G},\wt{I})/(\tilde{\mathbf{1}}_{2,2}-1)$.
\end{proof}

\subsection{Genuine principal series in characteristic $0$}

Fix an isomorphism $\Qpbar \cong \mathbf{C}$ and let $|\cdot|\colon \bQ_p\to \mathbf{Q}_{\geq 0}$ be the $p$-adic norm so that $|p|=p^{-1}$. Via the fixed isomorphism, we will view fractional powers of $|\cdot|$ as $\Qpbar^{\times}$-valued characters of $\bQ_p^{\times}$.

\begin{definition} 
	\begin{enumerate}
		\item[{\rm (i)}] For two smooth characters $\chi_1,\chi_2\colon A\to \Qpbar^{\times}$, define the normalized principal series
		\[
		\iota^{\wt{G}}_{\wt{B}_2}(\chi_1\otimes \chi_2)=\Ind^{\wt{G}}_{\wt{B}_2}(\chi_1|\cdot|^{1/2}\otimes \chi_2|\cdot|^{-1/2} \boxtimes \iota),
		\]
		which only depends on the restriction of $\chi_1,\chi_2$ to the subgroup $S$ of squares in $\bQ_p^{\times}$.
		\item[{\rm (ii)}] For two smooth characters $\chi_1,\chi_2\colon \bQ_p^{\times}\to \Qpbar^{\times}$, let
		\[
		\iota^G_B(\chi_1\otimes \chi_2)=\Ind^G_B(\chi_1|\cdot|^{1/2}\otimes \chi_2|\cdot|^{-1/2})
		\]
		be the usual normalized principal series of $G$ attached to $(\chi_1,\chi_2)$.
	\end{enumerate}
\end{definition}

Given a character $\chi\colon A\to \Qpbar^{\times}$, denote by $\chi'\colon \bQ_p^{\times}\to \Qpbar^{\times}$ the character defined by
\[
\chi'(x)=\chi(x^2), \text{ for all $x\in \bQ_p^{\times}$.}
\]

\begin{lem}\label{Savin_PS}
	Let $\chi_1,\chi_2\colon A\to \Qpbar^{\times}$ be smooth characters trivial on $T\cap K$. The equivalence (\ref{equiv_I}) maps $\iota^{\wt{G}}_{\wt{B}}(\chi_1\otimes \chi_2)$ to $\iota^{G}_B(\chi_1'\otimes \chi_2')$.
\end{lem}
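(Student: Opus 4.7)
The plan is to deduce the statement directly from the construction of the equivalence (\ref{equiv_I}): it suffices to verify that both representations are generated by their $I$-invariants and that the resulting $I$-invariants agree as modules once the Iwahori Hecke algebras are identified via Savin's isomorphism (\ref{iso_IwHecke}).

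First I would check that both sides are generated by their $I$-invariants and that these invariant spaces are two-dimensional. For $\iota^G_B(\chi_1'\otimes \chi_2')$ this is classical, using the Bruhat decomposition $G=BI\sqcup BsI$ together with the fact that $\chi_1',\chi_2'$ are trivial on $B\cap I$ (since $\chi_1,\chi_2$ are trivial on $T\cap K$, which forces $\chi_i'|_{\bZ_p^{\times}}=1$). For $\iota^{\wt{G}}_{\wt{B}}(\chi_1\otimes \chi_2)$ I would argue identically: the decomposition $\wt{G}=\wt{B}\wt{I}\sqcup \wt{B}\tilde{s}\wt{I}$ and the fact that the splitting of $\wt{I}$ makes $\chi_1\otimes\chi_2\boxtimes\iota$ trivial on $\wt{B}\cap \wt{I}$ yields a two-dimensional space of $I$-invariants, spanned by the genuine analogues of the functions $F^{0},F^{1}$ used in Lemma \ref{action_Fij}. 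That these generate the ambient representation follows from the same Iwahori-decomposition argument as in the proof of Proposition \ref{PS_irred}.

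Second, I would match the two Hecke modules. Under Savin's isomorphism (\ref{iso_IwHecke}), the affine simple reflections $\wt{T}_s, \wt{T}_{\Pi^2 s\Pi^{-2}}$ correspond to $T_s,T_{\Pi s \Pi^{-1}}$, and the Bernstein element attached to the dominant cocharacter $\bigl(\begin{smallmatrix}p^2 & 0\\ 0 & 1\end{smallmatrix}\bigr)\in \wt{T}$ corresponds (up to the normalization factor $p^{1/2}$ appearing in Lemma \ref{Savin-cInd}) to the Bernstein element attached to $\bigl(\begin{smallmatrix}p & 0\\ 0 & 1\end{smallmatrix}\bigr)\in T$. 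The key observation is that, in the Bernstein presentation, the eigenvalues of these commutative generators on the Iwahori-fixed basis vector supported on the big Bruhat cell read off the inducing character (normalized by $\delta_B^{1/2}$, i.e.\ by $|\cdot|^{\pm 1/2}$): on the metaplectic side one reads off $\chi_1(p^2)$ (resp.\ $\chi_2(p^2)$), and on the non-metaplectic side one reads off $\chi_1'(p)$ (resp.\ $\chi_2'(p)$). Since $\chi_i'(p)=\chi_i(p^2)$ by definition, these Bernstein-module structures agree and the two Hecke modules are isomorphic.

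The main obstacle is tracking the normalizations compatibly: on one hand the modulus character and the $|\cdot|^{\pm 1/2}$-twist in the definition of $\iota^G_B$ versus $\iota^{\wt{G}}_{\wt{B}}$, and on the other hand the factor $p^{1/2}$ in Lemma \ref{Savin-cInd} together with the implicit squaring $x\mapsto x^2$ built into $\chi\mapsto\chi'$. These all conspire to match, but one has to be careful; the cleanest way to verify compatibility is to restrict attention to the Bernstein subalgebra $\Qpbar[X_1^{\pm 1},X_2^{\pm 1}]\subset \cH(G,I)$ and compute the two scalar actions on corresponding basis vectors, using that Savin's isomorphism sends the Bernstein subalgebra to the Bernstein subalgebra via the natural identification induced by the doubling map on $\bQ_p^{\times}$.
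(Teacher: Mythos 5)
Your overall strategy---compare the Iwahori--Hecke modules of the two principal series under Savin's isomorphism, using the explicit action of the commutative (Bernstein-type) elements on the natural basis of the $I$-invariants---is the same strategy the paper uses, and in the generic case it works: the paper computes the matrices of $\tilde{\mathbf{1}}_{0,-2}$ and $\mathbf{1}_{0,-1}$ on the bases $\{\tilde{\varphi},\tilde{\varphi}_s\}$ and $\{\varphi,\varphi_s\}$ and, since $\tilde{\mathbf{1}}_{0,-2}\mapsto p^{1/2}\mathbf{1}_{0,-1}$, concludes that $\{\psi_1,\psi_2\}=\{\chi_1',\chi_2'\}$ as unordered pairs; when $\psi_1\psi_2^{-1}\neq|\cdot|^{\pm1}$ the two orderings give isomorphic principal series and one is done. (A small computational caveat: in these bases the operator is upper triangular and it is the small-cell vector, not the big-cell vector, that spans the eigenline, so "reading off the character on the big-cell vector" needs adjusting; this is fixable.)

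The genuine gap is the degenerate case $\chi_1'(\chi_2')^{-1}=|\cdot|^{\pm1}$, which your sketch never addresses and which the lemma must cover, since it is exactly this case that is used in part (ii) of the Definition-Proposition following the lemma. There, $\iota^G_B(\psi_1\otimes\psi_2)$ and $\iota^G_B(\psi_2\otimes\psi_1)$ are \emph{not} isomorphic, and their Iwahori--Hecke modules are non-isomorphic extensions with the same semisimplification; in particular they have identical eigenvalues for every element of the Bernstein subalgebra, so no eigenvalue computation can decide which of the two orderings the metaplectic principal series corresponds to. Resolving this requires an extra structural input on the metaplectic side: the paper invokes Kazhdan--Patterson (Theorem I.2.9) to see that $\iota^{\wt{G}}_{\wt{B}}(\chi_1\otimes\chi_2)$ has a unique irreducible subrepresentation containing a $K$-fixed vector, combines this with the fact that the equivalence (\ref{equiv_I}) preserves the existence of $K$-invariant vectors (because Savin's isomorphism matches the idempotents cutting out $K$-spherical vectors, as in Lemma \ref{Savin-cInd}), and thereby rules out the wrong ordering. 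Without this step (or an equally precise determination of the full module structure---submodule versus quotient---on both sides, which amounts to knowing the Jacquet-module structure of the genuine principal series and is again Kazhdan--Patterson input), your argument only pins down the unordered Satake parameter and does not prove the lemma as stated.
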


\begin{proof}
	First note that the genuine principal series must correspond to an (unramified) principal series, say $\iota^G_B(\psi_1\otimes \psi_2)$ for two smooth unramified characters $\psi_1,\psi_2$. A basis for the space $\iota^{G}_B(\psi_1\otimes \psi_2)^{I}$ is given by the two functions $\varphi,\varphi_s$ defined by the conditions $\supp(\varphi)=BI$, $\supp(\varphi_s)=BsI$ and $\varphi(1)=\varphi_s(s)=1$. A basis for the space $\iota^{\wt{G}}_{\wt{B}}(\chi_1\otimes \chi_2)^{I}$ is given by the two functions $\tilde{\varphi},\tilde{\varphi}_s$ defined similarly but requiring that $\tilde{\varphi}(1)=\tilde{\varphi}_s(\tilde{s})\in \Ind^{\wt{T}}_{\wt{T}_2}(\chi_1|\cdot|^{1/2}\otimes \chi_2|\cdot|^{-1/2}\boxtimes \iota)^{T\cap K}$ is the standard basis vector $f_{0,0}$ as in Definition \ref{def_std_basis}. Letting $\mathbf{1}_{0,-1}\in \cH(G,I)$ and $\tilde{\mathbf{1}}_{0,-2}\in \cH_{\iota}(\wt{G},\wt{I})$ be as in the proof of Lemma \ref{Savin-cInd}, one checks that with respect to the chosen bases,
	\[
	\mathbf{1}_{0,-1}=\begin{mat}
		p^{1/2}\psi_2(p) & p^{1/2}(p-1)\psi_2(p)\\0 & p^{1/2}\psi_1(p)
	\end{mat} \text{ and } \tilde{\mathbf{1}}_{0,-2}=\begin{mat}
	p\chi_2(p^2) & p(p-1)\chi_2(p^2)\\0 & p\chi_1(p^2)
\end{mat}.
	\]
	Since the isomorphism (\ref{iso_IwHecke}) identifies $\tilde{\mathbf{1}}_{0,-2}$ with $p^{1/2}\mathbf{1}_{0,-1}$, we deduce that $\{\psi_1,\psi_2\}=\{\chi_1',\chi_2'\}$. If $\psi_1\psi_2^{-1}\neq |\cdot|^{\pm 1}$, then $\iota^G_B(\psi_1\otimes \psi_2)\cong \iota^G_B(\psi_2\otimes \psi_1)$, so we are done. Assume now by contradiction that $\psi_1\psi_2^{-1}=|\cdot|$ and $\chi_1'(\chi_2')^{-1}=|\cdot|^{-1}$. By \cite[Theorem I.2.9]{Kazhdan_Patterson}, $\iota^{\wt{G}}_{\wt{B}}(\chi_1'\otimes \chi_2')$ contains a unique irreducible subrepresentation admitting a $K$-invariant vector (and so the quotient cannot admit one), while $\iota^G_B(\psi_1\otimes \psi_2)$ is a non-split extensions of an unramified character by the corresponding twist of the Steinberg representation (not admitting a $K$-invariant vector). Since the equivalence (\ref{equiv_I}) preserves the property of having $K$-invariant vectors (the isomorphism (\ref{iso_IwHecke}) maps the genuine indicator function of $\wt{K}$ to the indicator function of $K$ as observered in the proof of Lemma \ref{Savin-cInd}), we obtain the desired contradiction.
\end{proof}

\begin{def-prop}
	Let $\chi_1,\chi_2\colon A\to \Qpbar^{\times}$ be smooth characters.
	
	\begin{enumerate}
		\item[{\rm (i)}] If $\chi_1'(\chi_2')^{-1} \neq |\cdot|^{\pm 1}$, then $\iota^{\wt{G}}_{\wt{B}_2}(\chi_1\otimes \chi_2)$ is irreducible.
		
		\noindent Moreover, if $\psi_1,\psi_2\colon A\to \Qpbar^{\times}$ is another pair of smooth characters, then $\iota^{\wt{G}}_{\wt{B}_2}(\chi_1\otimes \chi_2)\cong \iota^{\wt{G}}_{\wt{B}_2}(\psi_1\otimes \psi_2)$ if and only if $(\psi_1',\psi_2')=(\chi_1',\chi_2')$ or $(\psi_1',\psi_2')=(\chi_2',\chi_1')$.
		\item[{\rm (ii)}] If $\chi_1'(\chi_2')^{-1} = |\cdot|^{-1}$, then there are non-split short exact sequence
		\[
		0\to \tilde{\pi}(\chi_1,\chi_2)\to \iota^{\wt{G}}_{\wt{B}_2}(\chi_1\otimes \chi_2)\to \tilde{\pi}(\chi_2,\chi_1)\to 0
		\]
		and
		\[
		0\to \tilde{\pi}(\chi_2,\chi_1)\to \iota^{\wt{G}}_{\wt{B}_2}(\chi_2\otimes \chi_1)\to \tilde{\pi}(\chi_1,\chi_2)\to 0,
		\]
		with $\tilde{\pi}(\chi_1,\chi_2)$, $\tilde{\pi}(\chi_2,\chi_1)$ irreducible. Moreover, $\tilde{\pi}(\chi_1,\chi_2)^K\neq 0$ if and only if $\chi_2'$ is unramified, in which case $\tilde{\pi}(\chi_1,\chi_2)$ and $\tilde{\pi}(\chi_2,\chi_1)$ correspond to $\chi_2'|\cdot|^{-1/2}\circ \det$ and $\operatorname{St}\otimes \chi_2'|\cdot|^{-1/2}\circ \det$, respectively, under the equivalence (\ref{equiv_I}).
	\end{enumerate}
\end{def-prop}

\begin{proof}
	(i) The irreducibility is \cite[Corollary I.2.8]{Kazhdan_Patterson}, while the intertwinings follow from the computation of the Jacquet module of a genuine principal series, see Proposition I.2.1 of \textit{loc.\ cit.}
	
	(ii) After twisting, we may assume that $\chi_2'$ is unramified, in which case the assertion follows from the equivalence (\ref{equiv_I}) and Lemma \ref{Savin_PS}.
\end{proof}

For $y\in \Qpbar^{\times}$, let $\tilde{\mu}_y\colon A\to \Qpbar^{\times}$ denote the unique character trivial on $T\cap K$ and mapping $p^2$ to $y$.

\begin{cor}\label{loc_alg_descr}
	Let $r\in \bZ_{\geq 0}$ and $a_p\in \Qpbar$. Let $\alpha,\beta\in\Qpbar$ be the roots of the polynomial $X^2-a_pX+p^{2(r+1)}$.
	
	\begin{enumerate}
		\item[{\rm (i)}] If $a_p\notin\{\pm p^{r+1/2}(1+p)\}$, then
		\[
		\wt{\Pi}_{r,a_p}\cong \Sym^r(\Qpbar^2)\otimes \Ind^{\wt{G}}_{\wt{B}_2}(\tilde{\mu}_{\alpha^{-1}}\otimes \tilde{\mu}_{p^2\beta^{-1}}\boxtimes \iota).
		\]
		\item[{\rm (ii)}] If $a_p\in\{\pm p^{r+1/2}(1+p)\}$, then there is a short exact sequence
		\[
		0\to \Sym^r(\Qpbar^2)\otimes \tilde{\pi}(\tilde{\mu}_{\varepsilon},\tilde{\mu}_{\varepsilon}|\cdot|^{-1/2})\to \wt{\Pi}_{r,a_p}\to \Sym^r(\Qpbar^2)\otimes \tilde{\pi}(\tilde{\mu}_{\varepsilon}|\cdot|^{-1/2},\tilde{\mu}_{\varepsilon})\to 0,
		\]
		where $\varepsilon =\frac{p^{-2r}a_p}{p(p+1)}$.
	\end{enumerate}
\end{cor}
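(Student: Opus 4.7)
The plan is to bootstrap Corollary \ref{loc_alg_descr} from Breuil's analogous statement \cite[Proposition 3.2.1]{Breuil_II} for $G$ via Savin's local Shimura correspondence (the equivalence (\ref{equiv_I})) together with the dictionary provided by Lemmas \ref{Savin-cInd} and \ref{Savin_PS}.

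The first step is a reduction to the case of trivial weight. Since $\Sym^r(\Qpbar^2)$ extends (via the projection $\wt{G}\twoheadrightarrow G$) to a non-genuine algebraic representation of $\wt{G}$, the standard twisting isomorphism yields
\[
\cInd^{\wt{G}}_{\wt{K}Z(\wt{G})}(\tilde{\sigma}_r(\Qpbar)) \cong \Sym^r(\Qpbar^2) \otimes \cInd^{\wt{G}}_{\wt{K}Z(\wt{G})}(\tilde{\eta}_r),
\]
where $\tilde{\eta}_r$ is the genuine character of $\wt{K}Z(\wt{G})$ obtained by twisting $\iota$ by (the inflation of) $\mu_{p^{-r}}\circ \det$, the latter inserted so that $p^2$ acts trivially on both sides (a quick check: $\tilde{\eta}_r(p^2) = p^{-2r}$ compensates the action of $p^2$ on $\Sym^r$ by $p^{2r}$). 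Using the explicit formulas in (\ref{T([1,v])}) and Def-Lem \ref{def_T}, I would check that under this isomorphism the Hecke operator $\wt{T}_{\Qpbar}$ on the left corresponds, up to an explicit power of $p$ coming from the eigenvalue of $\wt{\mathbf{T}}_{\Qpbar}$ on the top weight $y^r$, to the analogous spherical Hecke operator associated to $\tilde{\eta}_r$ on the right.

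Next I would apply the equivalence of categories (\ref{equiv_I}) to the right-hand side, which is generated by its $\wt{I}$-invariants (as follows from inspecting the $\wt{K}$-socle via Theorem \ref{thm_JH}). By a twisted version of Lemma \ref{Savin-cInd}, this transports to $\cInd^G_{KZ}(\eta_r)$ with $\eta_r = \mu_{p^{-r}}\circ\det$, and the spherical Hecke operator becomes a scalar multiple of $p^{1/2}T_{\Qpbar}$. Reversing the algebraic twist on the $G$-side then yields an object of the form $\cInd^G_{KZ}(\Sym^r)/(T_{\Qpbar}-\tilde{a}_p)$ for an explicit scalar $\tilde{a}_p$ determined by $a_p$, whose structure is governed by Breuil's Proposition 3.2.1. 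The locally algebraic principal series (resp.\ its irreducible constituents in the degenerate case) resulting from Breuil's theorem then transport back via Lemma \ref{Savin_PS} to the genuine unramified principal series $\Ind^{\wt{G}}_{\wt{B}_2}(\tilde{\mu}_{\alpha^{-1}}\otimes \tilde{\mu}_{p^2\beta^{-1}}\boxtimes \iota)$ (resp.\ the constituents $\tilde{\pi}(\cdot,\cdot)$) on the $\wt{G}$-side.

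The main obstacle will be the bookkeeping of normalization constants. One needs to match the metaplectic quadratic $X^2 - a_p X + p^{2(r+1)}$ against the quadratic $X^2 - a'_p X + p^{r+1}$ governing Breuil's statement on the $G$-side: the doubling of the exponent should reflect both the fact that $\wt{T}_{\Qpbar}$ is defined via the double coset of $\operatorname{diag}(1,p^{-2})$ (rather than $\operatorname{diag}(1,p^{-1})$ in the $G$-setting) and the $p^{1/2}$-factor appearing in Savin's correspondence. In case (ii) one additionally has to verify that the subrepresentation/quotient orientation of the short exact sequence is preserved under these transports, which can be checked by comparing $\wt{K}$-socles against the known $K$-socles on the $G$-side; in particular, matching $\varepsilon = p^{-2r}a_p/(p(p+1))$ with the roots $\alpha,\beta$ requires care.
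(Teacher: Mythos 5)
Your proposal follows essentially the same route as the paper: factor out the algebraic $\Sym^r(\Qpbar^2)$ (with a determinant twist fixing the action of the central element $p^2$) so that the Hecke operator becomes an explicit scalar multiple of the weight-zero operator, and then settle the weight-zero case via the equivalence (\ref{equiv_I}) together with Lemmas \ref{Savin-cInd}, \ref{Savin_PS} and \cite[Proposition 3.2.1]{Breuil_II}. The only correction is that the compensating twist must be $|\cdot|^{r/2}\circ\det=\mu_{p^{-r/2}}\circ\det$ rather than $\mu_{p^{-r}}\circ\det$ (since $\det(p^2\cdot\mathrm{id})=p^4$ gives $\mu_{p^{-r}}(p^4)=p^{-4r}\neq p^{-2r}$); with this fixed, the identification of $\operatorname{id}\otimes\wt{T}_{\Qpbar}$ with $p^{-r}\wt{T}_{\Qpbar}$ is exactly the computation the paper records.
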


\begin{proof}
	The case $r=0$ follows from the equivalence (\ref{equiv_I}), Lemma \ref{Savin-cInd}, Lemma \ref{Savin_PS} and the corresponding statement for the group $G$, which is a special case of \cite[Proposition 3.2.1]{Breuil_II}.
	The general case is now a consequence of this by noting that the isomorphism
	\begin{align*}
		(\Sym^r(\Qpbar^2)\otimes |\cdot|^{r/2}\circ \det) \otimes \cInd^{\wt{G}}_{\wt{K}Z(\wt{G})}(\mathbf{1}\boxtimes \iota)\cong \cInd^{\wt{G}}_{\wt{K}Z(\wt{G})}(\tilde{\sigma}_r(\Qpbar))\\
		v\otimes f\mapsto [g\mapsto f(g)(g.v)]
	\end{align*}
	identifies $\operatorname{id}\otimes \wt{T}_{\Qpbar}$ with $p^{-r}\wt{T}_{\Qpbar}$.
\end{proof}

\subsection{Mod-$p$ reduction}
Let now $0\leq r\leq p-1$ and $a_p\in \Zpbar$.

\begin{lem}\label{vanishes_for_all_lambda}
	Suppose that for $0\leq i \leq p-1$ we are given elements $a_i\in \ol{\bQ}_p$ such that $\sum_{i=0}^{p-1} a_i [\lambda]^{i}\in \ol{\bZ}_p$ for all $\lambda\in \bF_p$. Then $a_i\in \ol{\bZ}_p$
\end{lem}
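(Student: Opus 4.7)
The plan is to interpret the hypothesis as the statement that the Vandermonde matrix $V = ([\lambda]^{i})_{\lambda \in \bF_p,\, 0 \le i \le p-1}$ sends the vector $(a_0,\dots,a_{p-1}) \in \ol{\bQ}_p^{p}$ into the submodule $\ol{\bZ}_p^{p}$, and then to show that $V^{-1}$ has entries in $\bZ_p$, from which the conclusion follows immediately.

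To see that $V^{-1}$ has entries in $\bZ_p$, I would compute $\det(V) = \prod_{\lambda < \mu} ([\mu]-[\lambda])$ (for any fixed ordering of $\bF_p$). Since the Teichmüller lift satisfies $[\mu]-[\lambda] \equiv \mu-\lambda \pmod{p\bZ_p}$, and $\mu \neq \lambda$ in $\bF_p$ implies $\mu - \lambda \in \bF_p^{\times}$, each factor $[\mu]-[\lambda]$ is a unit in $\bZ_p$. Hence $\det(V) \in \bZ_p^{\times}$. By Cramer's rule, $V^{-1}$ has entries in $\bZ_p$, so applying $V^{-1}$ to a vector in $\ol{\bZ}_p^{p}$ produces a vector in $\ol{\bZ}_p^{p}$.

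The only step worth double-checking is the case $\lambda = 0$ (with $[0]=0$), but there $[\mu]-[0] = [\mu] \in \bZ_p^{\times}$ for $\mu \neq 0$, so no extra care is needed. In summary, the entire proof is a one-line invocation of the Vandermonde determinant being a unit in $\bZ_p$, with no real obstacle.
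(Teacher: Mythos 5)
Your proof is correct, and it takes a genuinely different (and slightly slicker) route than the paper. You encode the hypothesis as $Va\in\ol{\bZ}_p^{\,p}$ for the Vandermonde matrix $V=([\lambda]^i)_{\lambda\in\bF_p,\,0\le i\le p-1}$, observe that $\det V=\prod_{\lambda<\mu}([\mu]-[\lambda])$ is a unit in $\bZ_p$ because each factor reduces to $\mu-\lambda\neq 0$ in $\bF_p$, and conclude via Cramer's rule that $V^{-1}\in M_{p\times p}(\bZ_p)$, so $a=V^{-1}(Va)\in\ol{\bZ}_p^{\,p}$; all steps, including the $\lambda=0$ row, check out. The paper instead argues by successive approximation: it picks a finite extension $L/\bQ_p$ containing the $a_i$, scales by $\varpi^{N}$ so that $b_i=\varpi^{N}a_i\in\mathcal{O}_L$, and shows by induction on $N$ (reducing to $N=1$) that the mod-$\varpi$ reduction $\bar f(X)=\sum_i \bar b_i X^i$, having degree at most $p-1$ but vanishing at all $p$ elements of $\bF_p\subset k_L$, must be zero, whence $b_i\in\varpi^N\mathcal{O}_L$. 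The two arguments rest on the same underlying fact -- invertibility over $\bZ_p$ of evaluation at the Teichm\"uller representatives -- but yours makes it a one-shot linear-algebra statement with no choice of $L$, uniformizer, or induction, while the paper's version stays closer to the valuation-theoretic bookkeeping it needs anyway when adapting Breuil's lattice computations.
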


\begin{proof}
	Choose $L/\bQ_p$ finite large enough to contain all the $a_i$. Let $\mathcal{O}_L$ be the ring of integers with uniformizer $\varpi$ and residue field $k_L$. Let $N\gg 0$ such that $b_i:=\varpi^{N}a_i\in \mathcal{O}_L$ for all $0\leq i\leq p-1$. By assumption, $\sum_{i=0}^{p-1} b_i [\lambda]^{i}\in \varpi^{N}\mathcal{O}_L$ for all $\lambda\in \bF_p$. We claim that $b_i\in \varpi^{N}\mathcal{O}_L$. By induction, we may assume that $N=1$. In this case, the mod-$\varpi$ reduction $\bar{f}(X)\in k_L[X]$ of the polynomial $\sum_{i=0}^{p-1}b_iX^{i}\in \mathcal{O}_L[X]$ vanishes on the set $\bF_p$ of cardinality $p$, which forces $\bar{f}(X)=0$ for degree reasons, i.e.\ $b_i\in \varpi \mathcal{O}_L$ as desired.
\end{proof}

By our restriction on $r$ being at most $p-1$, the following assertion can be proved in the same way as Breuil shows Corollaire 4.1.2 in \cite{Breuil_II} by utilizing the previous lemma, which allows to obtain appropriate versions of Proposition 3.3.3 and Th\'eor\'eme 4.1.1 in \textit{loc.\ cit.}

\begin{prop}\label{prop_lattice}
		The $\wt{G}$-equivariant map
	\begin{equation}\label{inclusion_lattice}
		\cInd^{\wt{G}}_{\wt{K}Z(\wt{G})}(\tilde{\sigma}_r(\ol{\bZ}_p))/(\wt{T}_{\ol{\bZ}_p}-a_p) \hookrightarrow 	\cInd^{\wt{G}}_{\wt{K}Z(\wt{G})}(\tilde{\sigma}_r(\ol{\bQ}_p))/(\wt{T}_{\ol{\bQ}_p}-a_p)=\wt{\Pi}_{r,a_p}
	\end{equation}
	is injective and defines a $\wt{G}$-stable bounded lattice in the $\Qpbar[\wt{G}]$-module $\wt{\Pi}_{r,a_p}$, i.e.\ a $\ol{\bZ}_p[\wt{G}]$-submodule generating the whole space over $\ol{\bQ}_p$ but not containing a $\ol{\bQ}_p$-line.
\end{prop}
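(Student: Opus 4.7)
The plan is to follow Breuil's strategy in \cite[§4.1]{Breuil_II}, replacing $G$ by $\wt{G}$ and using Lemma \ref{vanishes_for_all_lambda} in place of its $\GL_2$-counterpart. The map in (\ref{inclusion_lattice}) is evidently well-defined (the inclusion $\tilde{\sigma}_r(\Zpbar)\hookrightarrow \tilde{\sigma}_r(\Qpbar)$ is $\wt{K}Z(\wt{G})$-equivariant and commutes with the Hecke operator, which is defined over $\Zpbar$). The image is clearly $\wt{G}$-stable and generates $\wt{\Pi}_{r,a_p}$ as a $\Qpbar$-vector space, so the lattice property amounts to proving the injectivity of the displayed arrow. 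By clearing denominators, this is equivalent to showing the following integrality statement: if $h\in \cInd^{\wt{G}}_{\wt{K}Z(\wt{G})}(\tilde{\sigma}_r(\Qpbar))$ satisfies $(\wt{T}_{\Qpbar}-a_p)(h)\in \cInd^{\wt{G}}_{\wt{K}Z(\wt{G})}(\tilde{\sigma}_r(\Zpbar))$, then already $h\in \cInd^{\wt{G}}_{\wt{K}Z(\wt{G})}(\tilde{\sigma}_r(\Zpbar))$.

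The argument proceeds by induction on the radius $m$ of $h$, i.e.\ the minimal integer with $h\in B_m(\Qpbar)$. Using the decomposition (\ref{decomp_G}), write $h=h_m+h'$ with $h_m\in S_m(\Qpbar)$ and $h'\in B_{m-1}(\Qpbar)$. Since $\wt{T}^+_{\Qpbar}$ shifts the radius up by $2$ while $\wt{T}_{\Qpbar}-\wt{T}^+_{\Qpbar}$ preserves $B_m$ (cf.\ Observation \ref{observation}), the $S_{m+2}(\Qpbar)$-component of $(\wt{T}_{\Qpbar}-a_p)(h)$ equals $\wt{T}^+_{\Qpbar}(h_m)$. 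Integrality of $(\wt{T}_{\Qpbar}-a_p)(h)$ therefore forces $\wt{T}^+_{\Qpbar}(h_m)\in S_{m+2}(\Zpbar)$. Expanding this on each term $[\tilde{g}^q_{m,\mu},v]$ via Definition \ref{def_T+} produces, for every fixed $(q,\mu)$, a family of sums over $\lambda\in J_2$ of expressions polynomial in $[\lambda]$ whose values lie in $\Zpbar$; this is exactly the setting of Lemma \ref{vanishes_for_all_lambda} (applied one base-vector at a time to the coefficient of each monomial $x^{r-i}y^i$, using $0\leq r\leq p-1$). Hence all coefficients of $h_m$ are integral, i.e.\ $h_m\in S_m(\Zpbar)$. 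Replacing $h$ by $h-h_m$ reduces $m$ and completes the induction; the base case $m<0$ is vacuous.

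The main obstacle, as in \cite[§4.1]{Breuil_II}, is the extraction of the integrality of individual coefficients from the integrality of sums indexed by $\lambda\in J_2$, which is precisely the content of Lemma \ref{vanishes_for_all_lambda} and where the hypothesis $r\leq p-1$ enters decisively (so that the relevant polynomials have degree $<p$ in the Teichmüller variables $[\lambda]$). The only metaplectic subtleties are that the support decomposition in (\ref{decomp_G}) carries the extra $\tilde{\Pi}^{-2j}$-factor and that $\wt{T}^+_{\Qpbar}$ is defined in terms of the genuine coset representatives $\tilde{g}^q_{m,\mu}$; neither of these affects the inductive argument once one verifies, using the explicit cocycle and Definition \ref{def_T+}, that $\wt{T}^+_{\Qpbar}$ is $\Zpbar$-integral in the sense that it sends $S_m(\Zpbar)$ into $S_{m+2}(\Zpbar)$. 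Boundedness of the resulting lattice is automatic: any $\Qpbar$-line would have to contain a $\Zpbar$-submodule of infinite rank, contradicting that each $B_m(\Zpbar)\cap (\text{line})$ is finitely generated.
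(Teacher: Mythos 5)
Your injectivity argument is, in substance, the proof the paper intends: the paper itself only says to run Breuil's proof of \cite[Corollaire 4.1.2]{Breuil_II} (via his Proposition 3.3.3 and Th\'eor\`eme 4.1.1), with Lemma \ref{vanishes_for_all_lambda} supplying the key integrality input, and your induction on the radius, isolating the $S_{m+2}(\Qpbar)$-component $\wt{T}^+_{\Qpbar}(h_m)$ of $(\wt{T}_{\Qpbar}-a_p)(h)$, is exactly that. One imprecision worth fixing: since distinct $\lambda\in J_2$ (and distinct $(q,j,\mu)$) contribute terms with pairwise disjoint supports in the decomposition (\ref{decomp_G}), integrality of $\wt{T}^+_{\Qpbar}(h_m)$ is a condition on each single value $\wt{\mathbf{T}}_{\Qpbar}((\tilde{g}^0_{2,\lambda})^{-1})(v)$ separately; no ``sums over $\lambda$'' get constrained. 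Lemma \ref{vanishes_for_all_lambda} is then applied not ``to each monomial'' but to the one coefficient of this value that is not multiplied by a positive power of $p$, namely the $x^r$-coefficient, which (up to a unit) equals $\sum_i c_i(-\lambda)^i$ when $v=\sum_i c_i x^{r-i}y^i$: a polynomial of degree at most $r\leq p-1$ in $\lambda$, evaluated in particular at all Teichm\"uller points $\lambda\in J_1\subset J_2$. That single coefficient already recovers all the $c_i$, and this is where $r\leq p-1$ enters.

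The genuine gap is the boundedness. Your reduction ``the lattice property amounts to proving the injectivity'' is not justified, and the closing argument for the absence of a $\Qpbar$-line does not work: the putative line sits in the quotient $\wt{\Pi}_{r,a_p}$, where there is no radius filtration and no intersection with $B_m(\Zpbar)$ to speak of (the preimage of a line is contained in no $B_m$), and the remark that a line is not a finitely generated $\Zpbar$-module contradicts nothing by itself. What injectivity does give is an identification of the image $\Theta$ with $\cInd^{\wt{G}}_{\wt{K}Z(\wt{G})}(\tilde{\sigma}_r(\Zpbar))/(\wt{T}_{\Zpbar}-a_p)$, and a $\Qpbar$-line inside $\Theta$ is the same thing as a nonzero element of $\bigcap_{n}p^{n}\Theta$; so the remaining task is to prove that $\Theta$ is $p$-adically separated. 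This is precisely the extra content of Breuil's Corollaire 4.1.2 beyond the integrality descent, and it requires a further argument: for instance a mod-$p^{n}$ refinement of your leading-coefficient step (the proof of Lemma \ref{vanishes_for_all_lambda} gives the statement with $\Zpbar$ replaced by $\varpi^{N}\mathcal{O}_L$), which bounds the radius of an approximate preimage modulo $p^{n}$ by the radius of the given integral function minus $2$ and thereby reduces separatedness to that of a finitely presented $\Zpbar$-module, using that each $B_m(\Zpbar)$ has finite rank; alternatively, one can exploit that $\bigcap_n p^n\Theta$ is a $\wt{G}$-stable $\Qpbar$-subspace, combined with $\Theta/p\Theta\cong \cInd^{\wt{G}}_{\wt{K}Z(\wt{G})}(\tilde{\sigma}_r(\Fpbar))/(\wt{T}-\bar{a}_p)\neq 0$ (Observation \ref{observation}) and the structure of $\wt{\Pi}_{r,a_p}$ from Corollary \ref{loc_alg_descr}. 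Some such argument must be supplied; as written, the final paragraph of your proposal would not survive.
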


\begin{remark}
	\begin{enumerate}
		\item[{\rm (i)}] We have
		\[
		\cInd^{\wt{G}}_{\wt{K}Z(\wt{G})}(\tilde{\sigma}_r(\ol{\bZ}_p))/(\wt{T}_{\ol{\bZ}_p}-a_p)\otimes_{\Zpbar} \Fpbar = \cInd^{\wt{G}}_{\wt{K}Z(\wt{G})}(\tilde{\sigma}_r(\Fpbar))/(\wt{T}-\bar{a}_p),
		\]
		where $\bar{a}_p$ is the image of $a_p$ under the projection $\Zpbar\twoheadrightarrow \Fpbar$, so from Theorem \ref{thm_JH} we deduce a description of the (semi-simplified) mod-$p$ reduction of (any $\wt{G}$-stable bounded lattice in) $\wt{\Pi}_{r,a_p}$.
		\item[{\rm (ii)}] In fact, Breuil allows $0\leq r\leq 2(p-1)$. But under the more restrictive assumption $r\leq p-1$ only the $\wt{T}^+$-part of the Hecke operator, see Definition \ref{def_T+}, needs to be considered. We have not attempted to investigate the more complicated cases $p\leq r\leq 2(p-1)$ in our setting.
	\end{enumerate}
\end{remark}

\bibliography{metaplectic_reps_arxiv}
\bibliographystyle{plain}	
	
\end{document}